%thesis.tex 
%Model LaTeX file for Ph.D. thesis at the 
%School of Mathematics, University of Edinburghx_

\documentclass[10pt,twoside,openright]{report} 
\pdfoutput=1
\usepackage{amsmath,amsfonts,amssymb,amsthm,array}
\usepackage{amsmath}
\usepackage{algorithm}
\usepackage{caption}
\usepackage{subcaption}
\usepackage{algpseudocode,algorithm,algorithmicx}
\usepackage{bm}
\usepackage{color}
\usepackage{graphicx}

\newcommand{\Exp}{\mathbb{E}}

\newcommand{\E}[1]{{\mathbb{E}\left[#1\right] }}    % expectation

\newcommand{\Prob}{\mathbb{P}}
\newcommand{\R}{\mathbb{R}}

\algrenewcommand\algorithmicrequire{\textbf{Input:}}
\algrenewcommand\algorithmicensure{\textbf{Initialize:}}

\usepackage{multirow}

% matrices
\newcommand{\bA}{\mathbf{A}}
\newcommand{\bB}{\mathbf{B}}
\newcommand{\bC}{\mathbf{C}}
\newcommand{\bD}{\mathbf{D}}

\newcommand{\bH}{\mathbf{H}}
\newcommand{\bI}{\mathbf{I}}
\newcommand{\bL}{\mathbf{L}}
\newcommand{\bM}{\mathbf{M}}

\newcommand{\bP}{\mathbf{P}}
\newcommand{\bQ}{\mathbf{Q}}

\newcommand{\bS}{\mathbf{S}}

\newcommand{\bW}{\mathbf{W}}
\newcommand{\bZ}{\mathbf{Z}}
\newcommand{\bSigma}{\mathbf{\Sigma}}
\newcommand{\bU}{\mathbf{U}}

\newcommand{\bV}{\mathbf{V}}
\newcommand{\bLambda}{\mathbf{\Lambda}}
% various
\newcommand{\eqdef}{:=}

% caligraphic

\newcommand{\cC}{{\cal C}}
\newcommand{\cD}{{\cal D}}
\newcommand{\cE}{{\cal E}}
\newcommand{\cF}{{\cal F}}
\newcommand{\cG}{{\cal G}}

\newcommand{\cL}{{\cal L}}

\newcommand{\cN}{{\cal N}}
\newcommand{\cO}{{\cal O}}

\newcommand{\cQ}{{\cal Q}}

\newcommand{\cS}{{\cal S}}

\newcommand{\cV}{{\cal V}}
\newcommand{\cX}{{\cal X}}

% matrices
\newcommand{\mA}{{\bf A}}
\newcommand{\mB}{{\bf B}}

\newcommand{\mH}{{\bf H}}
\newcommand{\mI}{{\bf I}}

\newcommand{\mL}{{\bf L}}
\newcommand{\mM}{{\bf M}}

\newcommand{\mS}{{\bf S}}

\newcommand{\mU}{{\bf U}}

\newcommand{\mW}{{\bf W}}

\newcommand{\mZ}{{\bf Z}}

\theoremstyle{plain}
\newtheorem{thm}{Theorem}[]

\newtheorem{lem}[thm]{Lemma}

\newtheorem{prop}[thm]{Proposition}
\newtheorem{defn}[thm]{Definition}
\newtheorem{rem}{Remark}[]
\newtheorem{cor}[thm]{Corollary}
\theoremstyle{remark}
\newtheorem{exa}{Example}[]

\newcommand{\blue}[1]{ {\color{black}#1}}
\newcommand{\red}[1]{{\color{black}#1}} % was red before

\newcommand\encircle[1]{%
  \tikz[baseline=(X.base)] 
    \node (X) [draw, shape=circle, inner sep=0] {\strut #1};}

\newtheorem*{assumption*}{\assumptionnumber}
\providecommand{\assumptionnumber}{}
\makeatletter
\newenvironment{assumption}[2]
 {%
  \renewcommand{\assumptionnumber}{\textbf{Assumption #1${#2}$}}%
  \begin{assumption*}%
  \protected@edef\@currentlabel{#1${#2}$}%
 }
 {%
  \end{assumption*}
 }
\makeatother

\providecommand{\kernel}[1]{{\rm Null}\left( #1\right)}
\providecommand{\range}[1]{{\rm Range}\left( #1\right)}

% TO DO NOTES
\usepackage[colorinlistoftodos,bordercolor=orange,backgroundcolor=orange!20,linecolor=orange,textsize=scriptsize]{todonotes}
\usepackage{hyperref}

%from Privacy Paper

\newcommand{\ac}{\alpha}

\DeclareMathOperator{\Var}{\mathbb{V}}
\usepackage{cancel}
\usepackage{array}
\newcolumntype{P}[1]{>{\centering\arraybackslash}p{#1}}
\newcolumntype{M}[1]{>{\centering\arraybackslash}m{#1}}

\newcommand{\algG}{14} % Gossip
\newcommand{\algE}{16} % Epsilon
\newcommand{\algB}{15} % Binary
\newcommand{\algN}{17} % Noise

\newcommand{\ones}{{\bf 1}}

%\usepackage[algo2e]{algorithm2e} 

%\input{Thesis_header2.tex}

%this is how is suggested from the format 
\title{ Randomized Iterative Methods for Linear Systems: Momentum, Inexactness and Gossip}
\author{Nicolas Loizou}
\date{2019}

\usepackage[phd]{edmaths}

\begin{document}
\maketitle

\declaration

\dedication{To my parents, \\ Christaki and Agathi.}

\begin{abstract}
In the era of big data, one of the key challenges is the development of novel optimization algorithms that can accommodate vast amounts of data while at the same time satisfying constraints and limitations of the problem under study. The need to solve optimization problems is ubiquitous in essentially all quantitative areas of human endeavor, including industry and science. In the last decade there has been a surge in the demand from practitioners, in fields such as machine learning, computer vision, artificial intelligence, signal processing and data science, for new methods able to cope with these new large scale problems. 

In this thesis we are focusing on the design, complexity analysis and efficient implementations of such algorithms. In particular, we are interested in the development of randomized first order iterative methods for solving large scale linear systems, stochastic quadratic optimization problems and the distributed average consensus problem. 

In Chapter~\ref{ChapterMomentum}, we study several classes of stochastic optimization algorithms enriched with  {\em heavy ball momentum}. Among the methods studied are: stochastic gradient descent, stochastic Newton, stochastic proximal point  and stochastic dual subspace ascent. This is the first time momentum variants of several of these methods are studied. We choose to perform our analysis in a setting in which all of the above methods are equivalent: \blue{convex quadratic problems.} We prove global non-asymptotic linear convergence rates for all methods and various measures of success, including primal function values, primal iterates, and dual function values. We also show that the primal iterates converge at an accelerated linear rate \red{in a somewhat weaker sense}. This is the first time a linear rate is shown for the stochastic heavy ball method (i.e., stochastic gradient descent method with momentum). Under somewhat weaker conditions,  we establish a sublinear convergence rate for Ces\`{a}ro averages of primal iterates.  Moreover, we propose a novel concept, which we call {\em stochastic momentum}, aimed at decreasing the cost of performing the momentum step. We prove linear convergence of several stochastic methods with stochastic momentum, and show that in some sparse data regimes and for sufficiently small momentum parameters, these methods enjoy better overall complexity than methods with deterministic momentum. Finally, we perform extensive numerical testing on artificial and real datasets. 

In Chapter~\ref{ChapterInexact}, we present a convergence rate analysis of \emph{inexact} variants of stochastic gradient descent, stochastic Newton, stochastic proximal point and stochastic subspace ascent. A common feature of these methods is that in their update rule a certain sub-problem needs to be solved exactly. We relax this requirement by allowing for the sub-problem to be solved inexactly. In particular, we propose and analyze inexact randomized iterative methods for solving three closely related problems: a convex stochastic quadratic optimization problem, a best approximation problem and its dual -- a concave quadratic maximization problem. We provide iteration complexity results under several assumptions on the inexactness error. Inexact variants of many popular and some more exotic methods, including randomized block Kaczmarz, randomized Gaussian Kaczmarz and randomized block coordinate descent, can be cast as special cases. Finally, we present numerical experiments which demonstrate the benefits of allowing inexactness.

When the data describing a given optimization problem is big enough, it becomes impossible to store it on a single machine. In such situations, it is usually preferable to distribute the data among the nodes of a cluster or a supercomputer.  In one such setting the nodes cooperate to minimize  the sum (or average) of private functions (convex or non-convex) stored at the nodes. Among the most popular protocols for solving this problem in a decentralized fashion (communication is allowed only between neighbors) are randomized gossip algorithms.

In Chapter~\ref{ChapterGossip} we propose a new approach for the design and analysis of randomized gossip algorithms which can be used to solve the distributed average consensus problem, a fundamental problem in distributed computing, where each node of a network initially holds a number or vector, and the aim is to calculate the average of these objects by communicating only with its neighbors (connected nodes). The new approach consists in establishing new connections to recent literature on randomized iterative methods for solving large-scale linear systems. Our general framework recovers a comprehensive array of well-known gossip protocols as special cases and allow for the development of block and arbitrary sampling variants of all of these methods. In addition, we present novel and provably accelerated randomized gossip protocols where in each step all nodes of the network update their values using their own information but only a subset of them exchange messages. The accelerated protocols are the first randomized gossip algorithms that converge to consensus with a provably accelerated linear rate. The theoretical results are validated via computational testing on typical wireless sensor network topologies.

Finally, in Chapter~\ref{ChapterPrivacy}, we move towards a different direction and present the first randomized gossip algorithms for solving the average consensus problem while at the same time protecting the private values stored at the nodes as these may be sensitive.
In particular, we develop and analyze three privacy preserving variants of the randomized pairwise gossip algorithm (``randomly pick an edge of the network and then replace the values stored at vertices of this edge by their average'') first proposed by Boyd et al.~\cite{boyd2006randomized} for solving the average consensus problem. The randomized methods we propose are all dual in nature.  That is, they are designed to solve the dual of the best approximation optimization formulation of the average consensus problem.
We call our three privacy preservation techniques ``Binary Oracle'', ``$\epsilon$-Gap Oracle'' and ``Controlled Noise Insertion''. We give iteration complexity bounds for the proposed privacy preserving randomized gossip protocols and perform extensive numerical experiments. 

\end{abstract}

%\afterpage{\blankpage}

\chapter*{Acknowledgments}
I would like to express my sincere gratitude to my supervisor, Prof. Peter Richt\'{a}rik, for his guidance through every facet of the research world. Thank you for being such a great advisor, mentor and teacher, and for being an inspiring role model that I live up to both in my academic and personal life. Thanks for your continuous feedback, your constant encouragement, your great suggestions for writing and presentation, for offering career advice and for all the nice moments. I couldn't ask for a better advisor, mentor and a friend during my PhD studies.

I would also like to thank my second supervisor and mentor Dr. Lukasz Szpruch for various discussions about research and differences between related fields. I am also grateful to my examination committee, Prof. Coralia Cartis and Prof. Miguel F. Anjos for their suggestions for improvement and for their time.

For stimulating discussions and fun we had together during last four years, I thank other past and current members of our research group: Robert Mansel Gower, Jakub Kone\v{c}n\'{y}, Dominik Csiba, Filip Hanzely, Konstantin Mishchenko, Samuel Horv\'{a}th, Aritra Dutta, El Houcine Bergou, Xun Qian, Adil Salim, Dmitry Kovalev, Elnur Gasanov, Alibek Sailanbayev. I would also like to thank my office mates Matt Booth, Juliet Cooke and Rodrigo Garcia Nava and the rest of my friends from Operational Research and Optimization group, Chrystalla Pavlou, Spyros Pougkakiotis, Minerva Martin del Campo, Xavier Cabezas Garcia, Ivet Galabova, Saranthorn Phusingha, Marion Lemery and Wenyi Qin for all the nice moments.

I am indebted to the University of Edinburgh and to the Principal's Career Development PhD Scholarship  for funding my PhD studies. I would like to thank 
the school of Mathematics of the University of Edinburgh, for providing me with a wonderful work environment and for funding several trips during my PhD studies.
I am also extremely grateful to Gill Law, Iain Dornan and Tatiana Chepelina who were always very helpful in overcoming every formal or administrative problems I encountered.
I am most grateful to the Dr. Laura Wisewell Travel scholarship for funding my conference travels expenses in 2016 and 2017 and to Prof. Peter Richt\'{a}rik for funding my conference travels expenses in 2018 and 2019. The chance to participate in international conferences, with all the experts in my area in one place, was invaluable.
I would also like to thank the A.G. Leventis Foundation for the financial support during both my MSc and PhD studies.

I appreciate the advice, discussions and fun I had with many amazing people during my internship at Facebook AI Research in Montreal. More specifically, I would like to thank Dr. Mike Rabbat, Dr. Nicolas Ballas and Mahnoud Assran for the excellent collaboration and Prof. Joelle Pineau, Prof. Pascal Vincent, Dr. Adriana Romero, Dr. Michal Drozdzal for the great research environment. A special thanks should go to Dr. Mike Rabbat for the immerse trust and support I received during my internship. Mike has been an inspiration to me, a great enthusiastic collaborator, and warm person in general. 

For willingness to help and provide recommendation, reference, or connection at various stages of my study, I would like to thank Dr. Martin Tak\'{a}\v{c}, Dr. Martin Jaggi, Dr. Panos Parpas, Dr. Wolfram Wiesemann, Dr. Lin Xiao, Dr. Kimon Fountoulakis and Dr. Anastasios Kyrillidis. In addition, I am extremely thankful to Prof. Apostolos Burnetas and Prof. Apostolos Giannopoulos for introducing me to the areas of operational research and convex analysis, respectively.
 
This journey would not have been possible without the support of my parents, Christaki and Agathi. Thank you for encouraging me in all of my pursuits and inspiring me to follow my dreams. I dedicate this thesis to you.

Finally, I would like to thank the two people that were the most important to me during my time in Edinburgh:

My brother George, thank you for all the funny moments and the joyful experiences. You have been a constant support throughout my PhD studies and throughout my life.  You have made my time in Edinburgh one of a kind. :) 

My girlfriend, Katerina, thank you for your endless patience and for your continuous encouragement over the last four years. Thank you for reminding me what is important in life, for traveling around the world with me, for supporting all of my decisions. Thank you for being so amazing!!! 
\tableofcontents

%\listoffigures
 
%\listoftables

\chapter{Introduction}
\label{ChapterIntroduction}
In this thesis we study the design and analysis of new efficient randomized iterative methods for solving large scale linear systems, stochastic quadratic optimization problems, the best approximation problem and quadratic optimization problems. A large part of the thesis is also devoted to the development of efficient methods for obtaining average consensus on large scale networks. As we will explain later in more detail, some of our proposed algorithms for solving the average consensus problem are carefully constructed special cases of methods for solving linear systems. All methods presented in the thesis (except two algorithms in the last chapter that converge with a sublinear rate) converge with global linear convergence rates, which means that they achieve an approximate solution of the problem fast.

In this introductory chapter we present the setting shared throughout the thesis and explain the relationships between the four problems mentioned above.~We describe some baseline methods for solving these problems and present their convergence rates. Finally, we give a summary of the main contributions of each chapter. 

\paragraph{Organization of thesis.} The thesis is divided into two main parts. 
In the first part (Chapters~\ref{ChapterMomentum} and~\ref{ChapterInexact}) we present and analyze novel \textit{momentum} (Chapter~\ref{ChapterMomentum}) and \textit{inexact} (Chapter~\ref{ChapterInexact}) variants of several randomized iterative methods for solving three closely related problems: 
\begin{enumerate}
\item[(i)] stochastic convex quadratic minimization, 
\item[(ii)] best approximation, and 
\item[(iii)] (bounded) concave quadratic maximization.
\end{enumerate} 

In the second part (Chapters~\ref{ChapterGossip} and~\ref{ChapterPrivacy}), we focus on the design and analysis of novel randomized gossip algorithms for solving the average consensus problem. This is a fundamental problem in distributed computing with the following goal: each node of a network initially holds a number or a vector, and the aim is for every node to calculate the average of these objects in a decentralized fashion (communicating with neighbors only). The proposed decentralized algorithms are inspired by recent advances in the area of randomized numerical linear algebra and optimization.  In particular, in Chapter \ref{ChapterGossip} we propose a new framework for the design and analysis of efficient randomized gossip protocols. We show how randomized iterative methods for solving linear systems can be interpreted as gossip algorithms when applied to special systems encoding the underlying network. Using the already developed framework of Chapter \ref{ChapterGossip},  we move towards a different direction and in Chapter~\ref{ChapterPrivacy} we present the first randomized gossip algorithms for solving the average consensus problem while at the same time protecting the information about the initial private values stored at the nodes. 

Excluding some introductory results presented in this section, each chapter of the thesis is self-contained, including the objective, contributions, definitions and notation. However, to the extend that this was possible and meaningful, a unified notation has been adopted throughout the thesis.

\section{Thesis' Philosophy: Place in the Literature}
Here we start by providing a bird's-eye view of the main concepts and a simple first explanation of the context and of the problems under study.  We present how this thesis is related to different areas of research and provide important connections that allow readers with different backgrounds to easily navigate through the main contributions of this work.

\subsection{Bridge across several communities}

This thesis is related to three different areas of research: linear algebra, stochastic optimization and machine learning. 

\begin{figure}[H]%[t!] 
  \centering
\includegraphics[width=14cm]{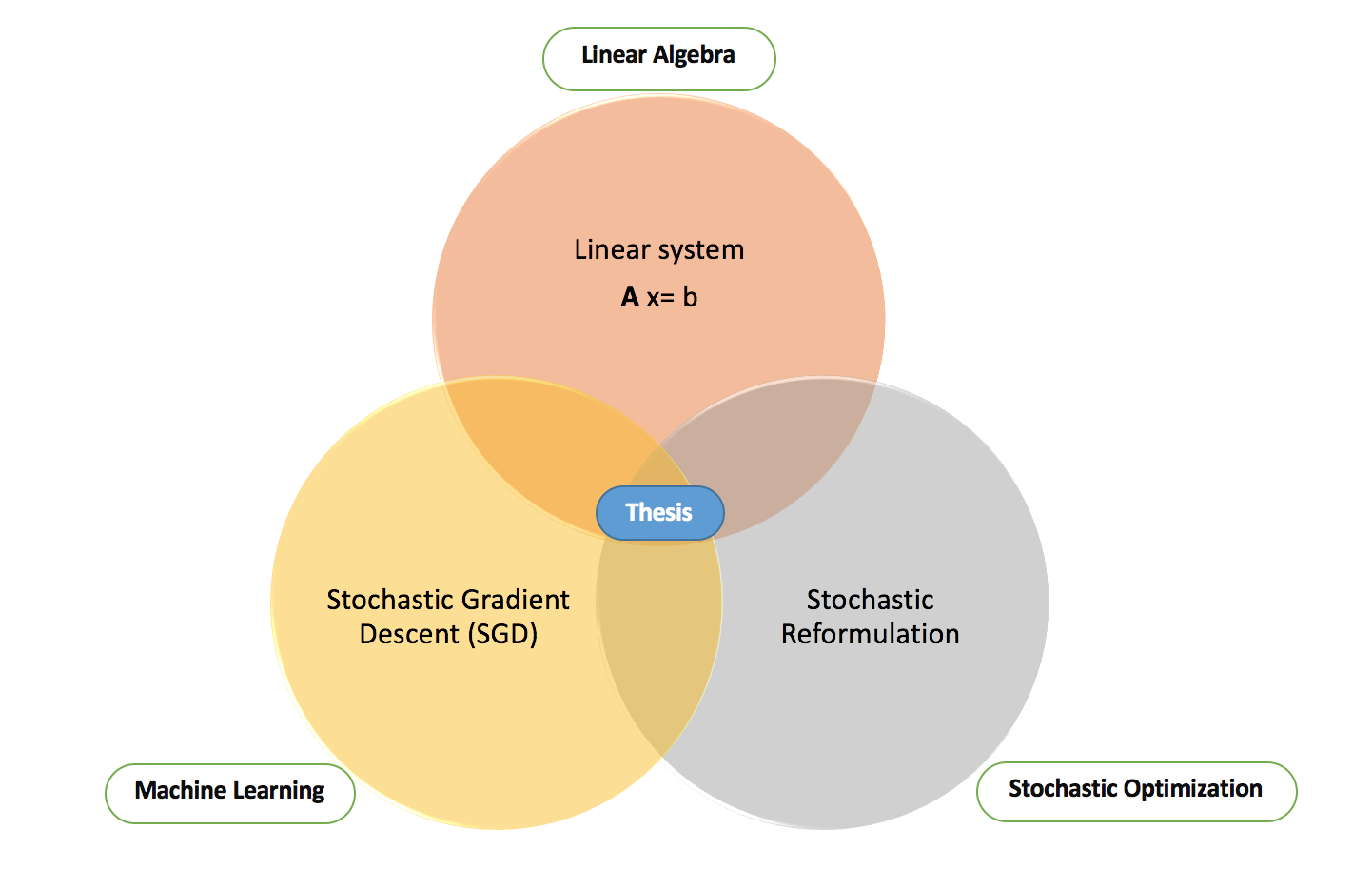}
\caption{\small Thesis' place in the literature. Linear Algebra (through methods for solving linear systems), Stochastic Optimization (through stochastic reformulations) and Machine Learning (through stochastic gradient descent) are the three main areas of research related to this thesis.}
\label{ConnectionsCyclesIntro}
\end{figure}

\noindent \textbf{Linear Algebra}

Linear systems form the backbone of most numerical codes used in industry and academia.
Solving large linear systems is a central problem in numerical linear algebra and plays an important role in computer science, mathematical computing, optimization, signal processing, engineering and many other fields.

In this thesis we are concerned with the problem of solving a \emph{consistent} linear system.
In particular,  given a matrix $\mA\in \R^{m\times n}$ and a vector $b \in \R^m$, we are interested to solve the problem:
\begin{equation} 
\label{LinearSystem_IntroThesis}
\mA x = b.
\end{equation}

\textbf{Main Assumption: Consistency.} Throughout the thesis we assume that the linear system \eqref{LinearSystem_IntroThesis} has a solution $x^* \in \R^n$ (not necessarily unique) that satisfies  $\mA x^* =b$. That is, the linear system is consistent, i.e., $\cL \eqdef \{x : \bA x=b \} \neq \emptyset$. We make no extra assumption on the form, positive definiteness, rank or any other property of matrix $\bA$. Thus, all methods proposed in this thesis converge under virtually no additional assumptions on the system beyond consistency. However, our methods are particularly well suited for the case of large over-determined linear systems. That is, to the case when the number of linear equations (rows) of the matrix is much larger than number of columns (variables) ($m \gg n$).\\

\noindent \textbf{Stochastic Optimization}

This thesis is also related to the stochastic optimization literature, through a recently proposed  stochastic optimization reformulation of linear systems \cite{ASDA}. 

It is well known that the linear system \eqref{LinearSystem_IntroThesis} can be expressed as an optimization problem as follows \cite{needell2014stochastic}:
\begin{equation} 
\label{StochProblem_Explanation}
\min_{x\in \R^n} \frac{1}{2}\| \bA x-b\|^2 = \frac{1}{2} \sum_{i=1}^m \left(\bA_{i:}^\top x- b_i \right)^2,
\end{equation}
where $\bA_{i:}$ denotes the $i^{th}$ row of matrix $\bA$. 
Note that if we denote with $\cF$ the solution set of problem \eqref{StochProblem_Explanation}, then $\cF=\cL$, where $\cL$ is the solution set of the consistent linear system \eqref{LinearSystem_IntroThesis}.

The above approach of reformulating a linear system to an optimization problem is without doubt one of the most popular. However as we will later explain in more detail, it is not the only one. For example, one may instead consider the more general formulation
\begin{equation} 
\label{cnaidalks}
\min_{x\in \R^n} \frac{1}{2}\| \bA x-b\|_{\bV}^2=\frac{1}{2}\left( \bA x-b\right)^\top \bV \left( \bA x-b \right) , 
\end{equation}
where $\bV \in \R^{m \times m}$ is a symmetric and positive definite matrix.
In \cite{ASDA}, Richt\'{a}rik and Tak\'{a}\v{c}  proposed a stochastic optimization reformulation of linear systems similar to \eqref{cnaidalks}. In particular, they consider \eqref{cnaidalks} with $\bV=\Exp_{\bS \sim \cD}[\bH]$ and allow  $\bV$ to be positive semi-definite. The expectation is over random matrices $\mS$ ($\bH$ is matrix expression involving random matrix $\bS$) that depend on an arbitrary user-defined distribution $\cD$ and the matrix $\bA$ of the linear system \eqref{LinearSystem_IntroThesis}.  
Under a certain assumption on $\cD$, for which the term {\em exactness} was coined in \cite{ASDA}, the solution set of the stochastic optimization reformulation is identical to the solution set of the linear system.  In \cite{ASDA}, the authors provide necessary and sufficient conditions for exactness. Later in Sections~\ref{naksla} and \ref{SGDSection_Intro} we describe exactness and comment on its importance in more detail.

In this thesis we design and analyze randomized iterative methods (stochastic optimization algorithms) for solving the stochastic convex quadratic minimization reformulation proposed in \cite{ASDA}.\\

\noindent \textbf{Machine Learning}

Stochastic optimization problems are at the heart of many machine learning and statistical techniques used in data science. Machine learning practitioners, as part of their data analysis,  are often interested in minimizing function $f$ which in full generality takes the form:
\begin{equation}
\label{ncalkxao}
\min_{x\in \R^n}  \big[f(x) = \Exp_{i \sim \cD} {f_{i}(x)} \big],
\end{equation}
where $\Exp_{i \sim \cD}$ denotes the expectation over an arbitrary distribution $\cD$. 

If we further assume that the distribution $\cD$ is uniform over $m$ functions $f_{i}$, the stochastic optimization problem is simplified to the finite-sum structure problem:
\begin{equation}
\label{ncasklda}
\min_{x\in \R^n} f(x)= \frac{1}{m}\sum_{i=1}^m {f_{i}(x)}.
\end{equation}
Problem \eqref{ncasklda} is referred to as Empirical Risk Minimization (ERM), and is one of the key optimization problems arising in large variety of models, ranging from simple linear regressions to deep learning.  For example, note that problem \eqref{StochProblem_Explanation} is also a special case of ERM \eqref{ncasklda} when functions $f_i: \R^n \rightarrow \R$ are chosen to be $f_i(x) \eqdef  \frac{m}{2}\left(\bA_{i:}^\top x- b_i \right)^2$. 

A trivial benchmark for solving problem \eqref{ncasklda} in the case of differentiable function is Gradient Descent (GD). That is,
$$x^{k+1} = x^k - \omega^k \nabla f(x^k)=x^k - \omega^k\frac{1}{m}\sum_{i=1}^m \nabla {f_{i}(x)},$$
where $\omega^k$ is the stepsize parameter (learning rate). 
However in modern machine learning applications the number $m$ of component functions $f_i$ can be very large ($m \gg n$). As a result, computing the full gradient in each iteration is prohibitively expensive and GD becomes impractical for most state-of-the-art applications.

To avoid such issues, machine learning practitioners use Stochastic Gradient Descent (SGD), a randomized variant of GD, first proposed by Robbins and Monro \cite{robbins1951stochastic} in 1951 and which has enjoyed a lot of success ever since. For solving \eqref{ncasklda},  SGD first uniformly at random samples function $f_i$ (where $i \in \{1,\dots,m\}$) and then performs the iteration:
$$x^{k+1} = x^k - \omega^k \nabla f_i(x^k),$$
where $\omega^k$ is the stepsize parameter (learning rate).  
SGD has become the workhorse for training supervised machine learning problems which have the generic form  \eqref{ncasklda} and many papers devoted to the understanding of its convergence behavior in different applications and under different assumptions on the functions $f_i$ \cite{nemirovskii1983problem, nemirovski2009robust, HardtRechtSinger-stability_of_SGD, Pegasos, Shamir013,pmlr-v80-nguyen18c, gower2019sgd, vaswani2018fast}.

This thesis is closely related to machine learning literature and the papers devoted to the analysis of SGD and its variants. In particular, besides other methods, we focus on analyzing SGD and two of its most popular variants: SGD with momentum (Chapter~\ref{ChapterMomentum}) and Inexact SGD (Chapter~\ref{ChapterInexact}) for solving the stochastic optimization reformulation of linear systems proposed in \cite{ASDA}.

\subsection{Roadmap}
\begin{figure}[t!]
  \centering
\includegraphics[width=14cm]{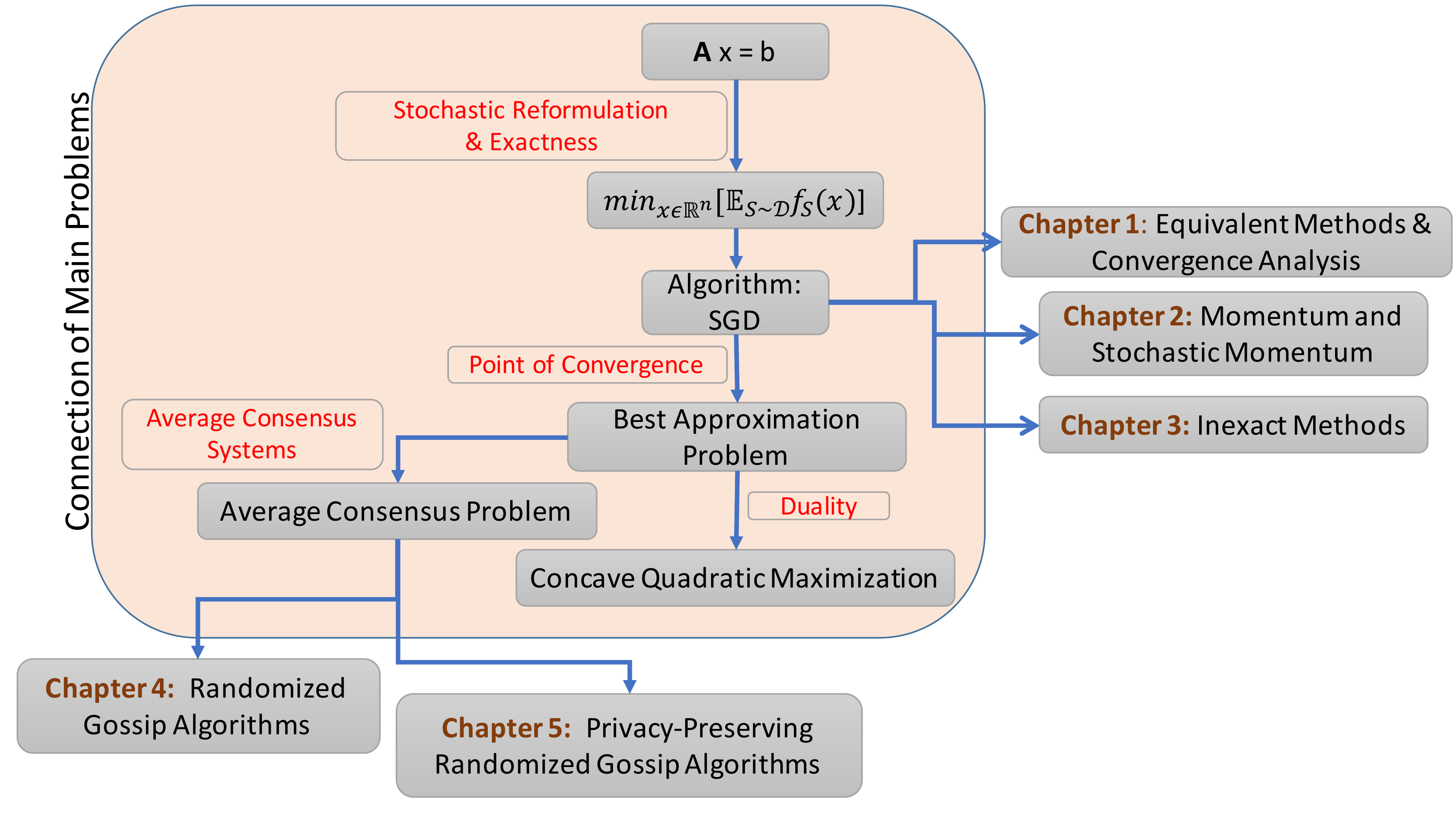}
\caption{\small Roadmap of the Thesis. Relationships between the four main problems under study: (i) linear system, (ii) stochastic convex quadratic minimization, (iii) best approximation, (iv) (bounded) concave quadratic maximization. Connection to the average consensus problem and the main chapters of the thesis.}
\label{HierarchyThesis}
\end{figure}

In this subsection,  by following the flowchart of Figure~\ref{HierarchyThesis} we present the hierarchy of the main problems under study, explain the relationships between them and provide a brief summary of the chapters of the thesis. More details will be provided in the remaining sections of the Introduction.

In this thesis, we are studying the problem of solving large-dimensional consistent \textbf{linear systems} of the form $\bA x=b$. In particular, we are adopting the stochastic optimization reformulation of linear systems first proposed in \cite{ASDA}. As we have already briefly mentioned, under a certain assumption (\emph{exactness}) on the randomness of the stochastic reformulation, the solution set of the \textbf{stochastic convex quadratic minimization problem} is equal to the solution set of the original linear system. Hence, solving the stochastic optimization problem $\Exp_{\bS \sim \cD} {f_{\mS}(x)}$ is equivalent to solving the original linear system.

For solving the stochastic convex quadratic minimization problem one can use \emph{stochastic gradient descent} (SGD), a popular stochastic optimization algorithm, particularly useful in machine learning applications (large scale setting). In Section~\ref{SGDSection_Intro} we explain how other stochastic optimization methods, like \emph{stochastic Newton }(SN) method and \emph{stochastic proximal point} (SPP) method, are identical to SGD for solving this particular problem and provide a simple analysis for their linear (exponential) convergence. 

As it turns out, SGD and its equivalent methods converge to one particular minimizer of the stochastic optimization problem: the projection of their starting point $x^0$ onto the solution set of the linear system \eqref{LinearSystem_IntroThesis}. This leads to the \textbf{best approximation problem}, which is the problem of projecting a given vector onto the solution space of the linear system. The best approximation problem is popular in numerical linear algebra and is normally solved using sketching techniques. We show how the sketch-and-project method proposed in \cite{gower2015randomized} for solving the best approximation problem has also identical updates to SGD. 

The dual of the best approximation problem is a \textbf{bounded unconstrained concave quadratic maximization problem}. In this thesis, we are also interested in the development and convergence analysis of efficient, dual in nature, algorithms for directly solving the dual of the best approximation problem. The baseline method for solving the dual of the best approximation problem is \emph{stochastic dual subspace accent} (SDSA) \cite{gower2015stochastic}. As we will explain later, the random iterates of SGD, SN and SPP arise as affine images of the random iterates produced by SDSA.

In Chapters~\ref{ChapterMomentum} and~\ref{ChapterInexact} we study novel \textit{momentum} and \textit{inexact} variants of several randomized iterative methods for solving the above problems.  Among the methods studied are: stochastic gradient descent, stochastic Newton, stochastic proximal point  and stochastic dual subspace ascent. 

As we can also see in Figure~\ref{HierarchyThesis}, a large part of the thesis will be devoted to the development of efficient methods for solving the \textbf{average consensus (AC) problem}.  In particular, we will explain how the AC problem can be expressed as a best approximation problem once we choose special linear systems encoding the underlying network (average consensus systems). In Chapter~\ref{ChapterGossip} we show how classical randomized iterative methods for solving the best approximation problem can be interpreted as gossip algorithms and explain in detail their decentralized nature. In Chapter~\ref{ChapterPrivacy}, we present the first privacy-preserving randomized gossip algorithms that solve the AC problem while at the same time protect the private values stored at the nodes as these may be sensitive.

\section{Stochastic Optimization Reformulation of Linear Systems} 
\label{naksla}
The starting point of this thesis is a general framework for studying consistent linear systems via carefully designed {\em stochastic reformulations} recently proposed by Richt\'{a}rik and Tak\'{a}\v{c} \cite{ASDA}. In particular, given the consistent linear system \eqref{LinearSystem_IntroThesis},
the authors provide four reformulations in the form of a stochastic optimization problem,  stochastic linear system,  stochastic fixed point problem and a stochastic feasibility problem.  These reformulations are equivalent in the sense that their solution sets are identical. That is, the set of minimizers of  the stochastic optimization problem is equal to the set of solutions of the stochastic linear system and so on. Under a certain assumption on the randomness defining these reformulations, for which the term {\em exactness} was coined in \cite{ASDA}, the solution sets  of these reformulations are equal to the solution set of the linear system. 

For the sake of a simplified narrative, in this thesis we choose to focus mostly on one of the above reformulations: the stochastic convex quadratic optimization problem, which can be expressed as follows:
\begin{equation}
\label{StochReform_IntroThesis} 
\min_{x\in \R^n} f(x) \eqdef \Exp_{\bS \sim \cD} {f_{\mS}(x)}.
\end{equation}
Here the expectation is over random matrices $\mS$ drawn from an arbitrary, user defined, distribution $\cD$ and $f_{\mS}$ is a stochastic convex quadratic function of a  least-squares type, defined as 
 \begin{equation}
 \label{f_s_IntroThesis}
 f_{\mS}(x) \eqdef \frac{1}{2}\|\mA x - b\|_{\mH}^2 = \frac{1}{2}(\mA x - b)^\top \mH (\mA x - b).
 \end{equation} 
Function  $f_{\mS}$ depends on the matrix $\mA\in \R^{m\times n}$ and vector $b\in \R^m$ of the linear system \eqref{LinearSystem_IntroThesis} and on a random symmetric positive semidefinite matrix  
\begin{equation}
\label{MatrixH_IntroThesis}
\mH \eqdef  \mS (\mS^\top \mA \mB^{-1} \mA^\top \mS)^\dagger \mS^\top.
\end{equation}
The $n\times n$ positive definite matrix $\mB$, in the expression of matrix $\bH$, defines the geometry of the space and throughout the thesis, gives rise to an inner product 
\begin{equation}
\label{InnerProduct_IntroThesis}
\langle x,z \rangle_\mB \eqdef \langle \mB x, z\rangle
\end{equation}
and the induced norm $\|x\|_\mB\eqdef (x^\top \mB x)^{1/2}$ on $\R^n$. By $\dagger$ we denote the Moore-Penrose pseudoinverse.  

\paragraph{On Moore-Pernose Pseudoinverse:} The Moore-Pernose pseudoinverse matrix (or simply pseudoinverse) $\bM^\dagger$ of a matrix $\bM$ was first intoduced by Moore \cite{moore1920reciprocal} and Penrose \cite{penrose1956best,penrose1955generalized} in their pioneering work. 

A computationally simple and accurate way to compute the matrix $\bM^\dagger$ is by using the singular value decomposition \cite{golub2012matrix,desoer1963note}. That is, if $\bM= \bU \bSigma \bV^\top$  is the singular value decomposition of matrix $\bM$, then $\bM^\dagger= \bU \bSigma^\dagger \bV^\top$, where the diagonal matrix $\bSigma^\dagger$  is computed by taking the reciprocal of each non-zero element on the diagonal of matrix $\bSigma$, leaving the zeros in place. That is $\bSigma^\dagger_{ii}=\frac{1}{\bSigma_{ii}}$ for all $\bSigma_{ii}>0$.   Note that by its definition Moore-Penrose pseudoinverse is uniquely defined for all matrices (not necessarily square) whose entries are real or complex numbers.

It is worth to highlight that in this thesis, we applied the Moore-Pernose pseudoinverse only on symmetric positive semidefinite matrices. In particular, if $\bM \succeq 0$ is a symmetric $m \times m$ matrix then its pseudoinverse will appear as a part of the expression $\bM^\dagger b$, where $b \in \R^m$.  Using properties of pseudoinverse this is equivalent to the least-norm solution of the least-squares problem $\min_x \|\bM x -b\|^2$ \cite{golub2012matrix, gower2015stochastic}. Hence, if the system $\bM x = b$ has a solution the following holds:
\begin{equation}
\label{moore}
\bM^\dagger b= \text{argmin} \|x\|^2 
\quad \text{subject to}  \quad \bM x = b.
\end{equation}
Let us know present some basic properties of the pseudoinverse:
\begin{itemize}
\item If matrix $\bM$ is invertible, its pseudoinverse is its inverse. That is, $\bM^\dagger=\bM^{-1}$.
\item The pseudoinverse of the pseudoinverse is the original matrix. That is , $(\bM^\dagger)^\dagger=\bM$
\item If $\bM$ is symmetric positive semidefinite matrix , then $\bM^\dagger$ is also symmetric positive semidefinite matrix.
\item There are several identities that can be used to cancel certain subexpressions or expand expressions involving pseudoinverses: $\bM \bM^\dagger\bM =\bM$, $\bM^\dagger \bM \bM^\dagger=\bM^\dagger$, $(\bM^\top \bM)^\dagger \bM^\top=\bM^\dagger$, $(\bM^\top)^\dagger = (\bM^\dagger)^\top$  and $(\bM \bM^\top)^\dagger =(\bM^\dagger)^\top \bM^\dagger $. For more useful identities see \cite{golub2012matrix}.
\end{itemize}

As we have already mentioned, problem \eqref{StochReform_IntroThesis} is constructed in such a way that the set of minimizers of $f$ is identical to the set of solutions of the given (consistent) linear system \eqref{LinearSystem_IntroThesis}.  In this sense, \eqref{StochReform_IntroThesis} can  be seen as the reformulation of the linear system \eqref{LinearSystem_IntroThesis} into a stochastic optimization problem. As argued in \cite{ASDA}, such reformulations  provide an explicit connection between the fields of linear algebra and stochastic optimization, and allow the transfer of knowledge, techniques, and algorithms from one field to another. For instance, the randomized Kaczmarz method of Strohmer and Vershynin \cite{RK} for solving \eqref{LinearSystem_IntroThesis} is equivalent to the stochastic gradient descent method applied to \eqref{StochReform_IntroThesis}, with $\cD$ corresponding to a discrete distribution over unit coordinate vectors in $\R^m$ \cite{needell2014stochastic}. However, the flexibility of being able to choose $\cD$ arbitrarily allows for numerous generalizations of the randomized Kaczmarz method \cite{ASDA}. Likewise, provably faster variants of the randomized Kaczmarz method (for instance, by utilizing importance sampling) can be designed using the connection.

Since their introduction in \cite{ASDA}, stochastic reformulations of otherwise deterministic problems have found surprising applications in various areas, and are hence an
important object of study in its own right.  For instance, using a different stochastic reformulation Gower et al. \cite{gower2019sgd} performed a tight convergence analysis of stochastic gradient descent in a more general convex setting, while \cite{gower2018stochastic} utilized ``controlled" stochastic reformulations to develop a new approach to variance reduction for finite-sum problems appearing in machine learning. Further, this approach led to the development of the first accelerated quasi-Newton matrix update rules in the literature \cite{gower2018accelerated} and to the design of efficient randomized projection methods for convex feasibility problems \cite{necoara2018randomized}; all solving open problems in the literature.

\paragraph{Closed form expressions.}

We shall often refer to matrix expressions involving the random matrix $\bS$ and the matrices $\bB$ and $\bA$. In order to keep these expressions brief throughout the thesis, it will be useful to define the $n \times n$ matrix:
\begin{equation}
\label{ZETA_Intro}
\mZ \eqdef \mA^\top \mH \mA \overset{\eqref{MatrixH_IntroThesis}}{=} {\bA}^\top \mS(\mS^\top\mA \mB^{-1} \mA^\top \mS)^{\dagger} \mS^{\top} \bA.
\end{equation}

Using matrix $\bZ$ we can easily express important quantities related to the problems under study. For example, the stochastic functions $f_\mS$ defined in \eqref{f_s_IntroThesis} can be also expressed as 
\begin{equation}
\label{f_sZeta_IntroThesis}
f_{\mS}(x) \overset{\eqref{f_s_IntroThesis}}{=} \frac{1}{2}(\mA x - b)^\top \mH (\mA x - b)= \frac{1}{2} (x-x^*)^\top \bZ (x-x^*),
\end{equation}
where $x^* \in \cL$. In addition, the gradient and the Hessian of $f_\mS$ with respect to the $\mB$ inner product \eqref{InnerProduct_IntroThesis} are equal to
\begin{equation}
\label{Gradf_S_IntroThesis}
\nabla f_\mS(x) \overset{\eqref{f_s_IntroThesis}}{=} \mB^{-1} \mA^\top \mH (\mA x - b)  = \mB^{-1} \mA^\top \mH \mA (x -x^*)  \overset{\eqref{ZETA_Intro}}{=} \mB^{-1} \mZ (x -x^*),
\end{equation}
where $x^* \in \cL$ and $\nabla^2 f_{\mS}(x)=\bB^{-1}\bZ$ \cite{ASDA}. 

Using the above expressions, the gradient and the Hessian of the objective function $f$ of problem \eqref{StochReform_IntroThesis} are given by 
$$\nabla f(x)= \Exp_{\bS \sim \cD} {[\nabla f_{\mS}(x)]} =\mB^{-1}\E{\mZ}(x-x^*),$$ and
$$\nabla^2 f = \Exp_{\bS \sim \cD}[\nabla^2 f_{\mS}(x)]= \mB^{-1}\E{\mZ},$$ respectively.

\paragraph{Projections.} Let $\Upsilon \subseteq \R^n$ be a closed convex set. 
Throughout the thesis, with $\Pi_{\Upsilon, \bB}$ we denote the projection operator onto $\Upsilon$, in the $\bB$-norm. That is, $\Pi_{\Upsilon, \bB}(x) \eqdef \arg\min_{x' \in \Upsilon} \|x'-x\|_{\bB}$.
In particular, we are interested in the projection onto $\cL$. An explicit formula for the projection onto $\cL$ is given by
\begin{equation}
\label{Projection_IntroThesis}
\Pi_{\cL,\bB}(x)\eqdef \arg\min_{x' \in \cL} \|x'-x\|_{\bB} =x-\bB^{-1}\bA^\top (\bA\bB^{-1}\bA ^ \top )^\dagger (\bA x-b).
\end{equation}
A formula for the projection onto the \textit{sketched system }$\cL_\mS \eqdef \{x : \bS^\top \bA x=\bS^\top b \}$ is obtained by simply replacing matrix $\mA$ and vector $b$ in \eqref{Projection_IntroThesis} with the matrix $\mS^\top \mA$ and vector $\mS^\top b$, respectively. In this case we write $\Pi_{\cL_{\bS},\bB}(x)$. 

\paragraph{On complexity results.}
The complexity of the linearly convergent methods presented in this thesis is described by the spectrum of the following key matrix: 
\begin{equation}
\label{MatrixW_IntroThesis}
\mW \eqdef \mB^{-1/2} \E{\mZ}\mB^{-1/2}.
\end{equation} 
Matrix $\bW$ has the same spectrum as the Hessian matrix 
$\nabla^2 f = \bB^{-1}\E{\mZ}$ and at the same time is symmetric and positive semidefinite (with respect to the standard inner product). Note that $\nabla^2 f $ is a not symmetric matrix (although it is self-adjoint with respect to the $\bB$-inner product).

Let $\mW = \mU \bLambda \mU^\top = \sum_{i=1}^n \lambda_i u_i u_i^\top $ be the eigenvalue decomposition of $\mW$, where  $\mU = [u_1,\dots,u_n]$ is an orthonormal matrix composed of eigenvectors, and $\bLambda=\textbf{Diag}(\lambda_1, \lambda_2, \dots ,\lambda_n)$ is the diagonal matrix of eigenvalues with $\lambda_1 \geq \lambda_2 \geq \cdots \geq \lambda_{n}$. In this thesis, by $\lambda_{\min}^+$ we will denote the smallest nonzero eigenvalue, and by $\lambda_{\max}  = \lambda_1$ the largest eigenvalue of matrix $\bW$. It was shown in \cite{ASDA} that $0\leq \lambda_i \leq 1$ for all $i\in [n]$.

\paragraph{Main Assumption: Exactness.} Note that in view of \eqref{f_sZeta_IntroThesis}, $f_\mS(x) = 0$ whenever $x\in \cL$. However, $f_\mS$ can be zero also for points $x$ outside of $\cL$. Clearly, $f$ is nonnegative, and $f(x)=0$ for $x\in \cL$. However, without further assumptions, the set of minimizers of $f$ can be larger than $\cL$. The exactness assumption mentioned above ensures that this does not happen. For necessary and sufficient conditions for exactness, we refer the reader to \cite{ASDA}. One of a number of equivalent characterizations of exactness is the condition: 
\begin{equation}
\label{exactnessCondition}
{\rm Null}(\Exp_{\bS \sim \cD} [\bZ])={\rm Null}(\bA).
\end{equation}
For this thesis it suffices to remark that a sufficient condition for exactness is to require $\E{\mH}$ to be positive definite. This is easy to see by observing that $f(x) = \E{f_\mS(x)} =\frac{1}{2}\|\mA x - b\|^2_{\E{\mH}}. $ In other words,  if $\cX=\text{argmin} f(x)$ is the solution set of the stochastic optimization problem \eqref{StochReform_IntroThesis} and $\cL$ the solution set of the linear system \eqref{LinearSystem_IntroThesis}, then the notion of exactness is captured by: $$\cX=\cL$$

\section{Stochastic Gradient Descent (SGD) and Equivalent Iterative Methods}
\label{SGDSection_Intro}
Problem  \eqref{StochReform_IntroThesis}  has several peculiar characteristics which are of key importance to this thesis. For instance, the Hessian of $f_{\mS}$ is a (random) projection matrix, which can be used to show that $f_{\mS}(x) = \frac{1}{2}\|\nabla f_{\mS}(x)\|_\mB^2=\frac{1}{2} \nabla f_{\mS}(x)^\top \mB \nabla f_{\mS}(x)$ (see equation \eqref{normbound} in Lemma~\ref{cnaoisna}). Moreover, as we have already mentioned the Hessian of $f$ has all eigenvalues bounded by 1, and so on. These characteristics can be used to show that several otherwise {\em distinct}  stochastic algorithms for solving the stochastic optimization problem \eqref{StochReform_IntroThesis} are {\em identical}.

In particular, the following optimization methods for solving \eqref{StochReform_IntroThesis} are identical 
\begin{itemize}
\item Stochastic Gradient Descent (SGD):
\begin{equation}
\label{SGD_IntroThesis}
x^{k+1} = x^k - \omega \nabla f_{\mS_k}(x^k),
\end{equation}
\item Stochastic Newton Method (SN)\footnote{In this method we take the $\mB$-pseudoinverse of the Hessian of $f_{\mS_k}$ instead of the classical inverse, as the inverse does not exist. When $\mB=\mI$, the $\mB$ pseudoinverse specializes to the standard Moore-Penrose pseudoinverse.}:
\begin{equation}
\label{SNM_IntroThesis}
x^{k+1} = x^k - \omega (\nabla^2 f_{\mS_k}(x^k))^{\dagger_\mB} \nabla f_{\mS_k}(x^k),
\end{equation}
\item Stochastic Proximal Point Method (SPP)\footnote{In this case, the equivalence only works for $0<\omega\leq 1$.}:
\begin{equation}
\label{SPPM_IntroThesis}
x^{k+1} = \arg\min_{x\in \R^n} \left\{ f_{\mS_k}(x) + \frac{1-\omega}{2\omega}\|x-x^k\|_{\mB}^2\right\}.
\end{equation}
\end{itemize}
In all methods $\omega>0$, is a fixed stepsize and  $\mS_k$ is sampled afresh in each iteration from distribution $\cD$. 

Note that the equivalence of these methods for solving problem \eqref{StochReform_IntroThesis} is useful for the purposes of the thesis as it allows us to study their variants  {\em with momentum} (Chapter \ref{ChapterMomentum}) and their \emph{inexact} variants (Chapter \ref{ChapterInexact}) by studying a single algorithm only. 

Using the closed form expression \eqref{Gradf_S_IntroThesis} of the gradient of functions $f_{\bS}$, the update rules of the equivalent algorithms \eqref{SGD_IntroThesis},\eqref{SNM_IntroThesis} and \eqref{SPPM_IntroThesis} can be also written as:
\begin{equation}
\label{Basic_IntroThesis}
 x^{k+1}=x^k-\omega \mB^{-1}\mA^\top \mS_{k} (\mS_{k}^\top \mA \mB^{-1} \mA^\top \mS_{k})^{\dagger} \mS_{k}^\top(\mA x^k-b)
\end{equation}

Following \cite{ASDA}, we name the algorithmic update of equation~\eqref{Basic_IntroThesis}, \textit{basic method} and we use this in several parts of this thesis to simultaneously refer to the above equivalent update rules.

By choosing appropriately the two main parameters of the basic method,  the matrix $\bB$ and distribution $\cD$ of the random matrices $\bS$, we can recover a comprehensive array of well known algorithms for solving linear systems as special cases, such as the randomized Kaczmarz method, randomized Gauss Seidel (randomized coordinate descent) and their block variants. In addition, it is worth to notice that the basic method allows for a much wider selection of these two parameters, which means that it is possible to obtain a number of new specific and possibly more exotic algorithms as special cases. Hence, by having a convergence analysis for the general method \eqref{Basic_IntroThesis} we can easily obtain the convergence rates of all these special cases by choosing carefully the combinations of the two main parameters.

\begin{exa}
As a special case of the general framework,  let us choose $\bB=\bI$ and $\bS_k=e_i$, where $i \in [m]$ is chosen in each iteration independently, with probability $p_i>0$. Here with $e_i \in \R^m$ we denote the $i^{\text{th}}$ unit coordinate vector in $ \R^m$.
 In this setup the update rule \eqref{Basic_IntroThesis} simplifies to:
\begin{equation}
\label{RK_IntroThesis}
x^{k+1}=x^k - \omega \frac{\bA_{i :} x^k -b_{i}}{\|\bA_{i :}\|^2} \bA_{i :}^ \top  .
\end{equation}
where $\bA_{i:}$ indicates the $i^{th}$ row of matrix $\bA$. This is a relaxed variant (stepsize not necessarily $\omega=1$) of the  randomized Kaczmarz method \cite{RK}.
\end{exa} 

\paragraph{On Exactness.}
An important assumption that is required for the convergence analysis of the randomized iterative methods presented in this thesis is \textit{exactness}. The exactness property is of key importance for the setting under study, and should be seen as an assumption on the distribution $\cD$ and not on matrix $\bA$. 

Clearly, an assumption on the distribution $\cD$ of the random matrices $\bS$ should be required for the convergence of \eqref{Basic_IntroThesis}. For an instance, if $\cD$ in the randomized Kaczmarz method \eqref{RK_IntroThesis},  is such that, $\bS=e_1$ with probability 1, where $e_1 \in \R^m$  be the $1^{\text{st}}$ unit coordinate vector in $ \R^m$, then the algorithm will select the same row of matrix $\bA$ in each step. For this choice of distribution it is clear that the algorithm will not converge to a solution of the linear system. The exactness assumption guarantees that this will not happen.

For necessary and sufficient conditions for exactness, we refer the reader to \cite{ASDA}. For this thesis, it suffices to remark that the exactness condition is very weak, allowing $\cD$ to be virtually any reasonable distribution of random matrices. For instance, as we have already mentioned, a sufficient condition for exactness is for the matrix $\E{\mH}$ to be positive definite \cite{gower2015stochastic}. 

A much stronger condition than exactness is $\E{\mZ}\succ 0$ which has been used for the convergence analysis of \eqref{Basic_IntroThesis} in \cite{gower2015randomized}. In this case, the matrix $\bA \in \R^{m \times n}$ of the linear system requires to have full column rank and as a result the consistent linear system has a unique solution. 

\section{Best Approximation and its Dual Problem} 
\label{BestaprooximationSection_INtro}

\paragraph{Best approximation problem.}
It was shown in \cite{ASDA} that SGD, SN and SPP converge to a very particular minimizer of $f$: the projection in the $\bB$-norm, of the starting point $x^0$ onto the solution set of the linear system \eqref{LinearSystem_IntroThesis}. That is, $x^*=\Pi_{\cL,\bB}(x^0)$. This naturally leads  to the \textit{best approximation problem}, which is the problem of projecting a given vector onto the solution space of the linear system \eqref{LinearSystem_IntroThesis}:
\begin{equation}
\label{BestApproximation_IntroThesis}
\min_{x\in \R^n} P(x) \eqdef \frac{1}{2}\|x-x^0\|_\mB^2 \quad \text{subject to} \quad \mA x = b.
\end{equation}
Note that, unlike the linear system \eqref{LinearSystem_IntroThesis}, which is allowed to have multiple solutions, the best approximation problem has always (from its construction) a unique solution. 

For solving problem \eqref{BestApproximation_IntroThesis}, the \emph{Sketch and Project Method (SPM)}: 

\begin{equation}
\label{Sket}
\begin{aligned}
& \quad \quad \quad x^{k+1}
& & \eqdef \text{argmin}_{x \in \R^n} \|x-x^k\|_{\bB}^2\\
& \text{subject to}
& & \bS_k^\top \bA x=\bS_k^\top b \;.
\end{aligned}
\end{equation}
was analyzed in \cite{gower2015randomized, gower2015stochastic}. The name ``sketch-and-project" method is justified by the iteration structure which consists of two steps: (i) draw a random matrix $\bS_k$ from distribution $\cD$  and formulate the {\em sketched} system $\cL_{\mS_k}$, (ii) {\em project} the last iterate $x^k$ onto $\cL_{\mS_k}$.  Analysis in \cite{gower2015randomized} was done under the assumption that $\mA$ has full column rank. This assumption was lifted in \cite{gower2015stochastic}, and a {\em duality} theory for the method developed. 

Using the closed form expression of projection \eqref{Projection_IntroThesis}, the iterative process of \eqref{Sket} can be equivalently written as \cite{gower2015randomized}:
\begin{equation}
\label{spstepsize1}
x^{k+1} =  \Pi_{\cL_{\mS_k}, \bB}(x^k) \overset{\eqref{Projection_IntroThesis}}{=} x^k-\bB^{-1}\bA^\top \bS_k (\bS_k^\top \bA\bB^{-1}\bA ^ \top \bS_k)^\dagger (\bS_k^\top \bA x^k- \bS_k^\top b),
\end{equation}
and for the more general case of $\omega\neq1$ the update takes the form:
\begin{equation}
\label{SPM_IntroThesis}
x^{k+1} =  \omega \Pi_{\cL_{\mS_k}, \bB}(x^k) + (1-\omega) x^k.
\end{equation}

By combining the definition of projection \eqref{SPM_IntroThesis} and the update rule of equation \eqref{Projection_IntroThesis} it can be easily observed that the sketch and project method is identical to the basic method \eqref{Basic_IntroThesis}. As a result, it is also identical to the previously mentioned algorithms, SGD \eqref{SGD_IntroThesis}, SN \eqref{SNM_IntroThesis} and SPP \eqref{SPPM_IntroThesis}.  Thus, these methods can be also interpreted as randomized projection algorithms.

\paragraph{On Sketching.} In numerical linear algebra, sketching is one of the most popular techniques used for the evaluation of an approximate solution of large dimensional linear systems $\bA x= b$ where $\bA \in \R^{m \times n}$ and $b \in \R^m$ \cite{woodruff2014sketching}. 

Let $\bS  \in \R^{m \times q}$ be a random matrix with the same number of rows as $\bA$ but far fewer columns ($n \gg q$). The goal of sketching is to design the distribution of random matrix $\bS$ such that the solutions set of the much smaller (and potential much easier to solve) sketched system $\bS^\top \bA x=\bS^\top b$ to be close to the solution set of the original large dimensional system, with high probability. That is, $\|\bS^\top \bA x-\bS^\top b\|^2 \leq (1+\epsilon)\|\bA x-b\|^2$ with high probability. Determining the random matrix $\bS$ that satisfy the above constraint can be challenging and often depends on the properties and form of matrix $\bA$. For recent advances in the area of sketching we suggest \cite{woodruff2014sketching, ghashami2016frequent, desai2016improved, mahoney2011randomized, drineas2011faster, pilanci2016iterative}.

In our setting, as we have already described above, sketching is part of our iterative process. In each iteration sketching is used in combination with a projection step in order to evaluate an exact solution of the sketched system. 

\paragraph{On Sketch and Project Methods.} Variants of the sketch-and-project method  have been recently proposed for solving several other problems.
 \cite{gower2016randomized, gower2016linearly} use sketch-and-project ideas for the development of linearly convergent randomized iterative methods for computing/estimating the inverse and pseudoinverse of a large matrix, respectively. A limited memory variant of the stochastic block BFGS method  for solving the empirical risk minimization problem arising in machine learning  was proposed by \ \cite{gower2016stochastic}. Tu et al.\ \cite{tu2017breaking} utilize the sketch-and-project framework to show that breaking block locality can accelerate block Gauss-Seidel methods. In addition, they develop an accelerated variant of the method for a specific distribution $\cD$.  An accelerated (in the sense of Nesterov) variant of the sketch and prokect method proposed in \cite{gower2018accelerated} for the more general Euclidean setting and applied to matrix inversion and quasi-Newton updates. As we have already mentioned, in \cite{ASDA},  through the development of stochastic reformulations, a stochastic gradient descent interpretation of the sketch and project method have been proposed. Similar stochastic reformulations that have the sketch and project method as special case were also proposed in the more general setting of convex optimization \cite{gower2019sgd} and in the context of variance-reduced methods \cite{gower2018stochastic}.

\paragraph{The dual problem.}
Duality is an important tool in optimization literature and plays a major role in the development and understanding of many popular randomized optimization algorithms.
In this thesis, we are also interested in the development of efficient, dual in nature, algorithms for directly solving the dual of the best approximation problem.

In particular, the Lagrangian dual of \eqref{BestApproximation_IntroThesis} is the (bounded) unconstrained concave quadratic maximization problem\footnote{Technically problem~\eqref{DualProblem_IntroThesis} is both the Lagrangian and the Fenchel dual of \eqref{BestApproximation_IntroThesis} \cite{gower2015stochastic}.}
\begin{equation}
\label{DualProblem_IntroThesis}
\max_{y\in \R^m} D(y) \eqdef (b-\bA x^0)^\top y - \frac{1}{2}\|\bA^\top y\|^2_{\bB^{-1}}.
\end{equation}
Boundedness follows from consistency. It turns out that by varying $\mA, \mB$ and $b$ (but keeping consistency of the linear system), the dual problem in fact captures {\em all} bounded unconstrained concave quadratic maximization problems.

Let us define an affine mapping from $\R^m$ to $\R^n$ as follows:
\begin{equation}
\label{Corresp_IntroThesis1}
\phi(y) \eqdef x^0 + \mB^{-1} \mA^\top y.
\end{equation}
It turns out, from Fenchel duality\footnote{For more details on Fenchel duality, see \cite{borwein2010convex}.}, that for any dual optimal $y^*$, the vector $\phi(y^*)$ must be primal optimal \cite{gower2015stochastic}. That is:
\begin{equation}
\label{Corresp_IntroThesisOptimal}
x^* = \phi(y^*) = x^0 + \mB^{-1} \mA^\top y^*.
\end{equation}

A dual variant of the basic method for solving problem \eqref{DualProblem_IntroThesis} was first proposed in \cite{gower2015stochastic}. The dual method---{\em Stochastic Dual Subspace Ascent (SDSA)}---updates the dual vectors $y^k$ as follows:
\begin{equation}
\label{SDSA_IntroThesis}
y^{k+1} = y^k + \omega  \mS_k \lambda^k,
\end{equation}
where the random matrix $\mS_k$ is sampled afresh in each iteration from distribution $\cD$, and $\lambda^k$ is chosen to maximize the dual objective $D$: $\lambda^k \in \arg\max_\lambda D(y^k + \mS_k \lambda)$.  More specifically, SDSA is defined by picking the maximizer with the smallest (standard Euclidean) norm. This leads to the formula:
\begin{equation}
\label{lambdak_IntroThesis}
\lambda^k =  \left(\mS_k^\top \bA \bB^{-1}\bA^\top \mS_k \right)^\dagger\bS_k^\top \left(b-\bA(x^0 + \bB^{-1}\bA^\top y^k) \right).
\end{equation}

Note that, SDSA proceeds by moving in random subspaces spanned by the random columns of $\mS_k$. In the special case when $\omega=1$ and  $y^0=0$,  Gower and Richt\'{a}rik \cite{gower2015stochastic} established the following relationship (affine mapping $\phi:\R^m \mapsto \R^n$) between the iterates $\{x^k\}_{k\geq0}$ produced by the primal methods \eqref{SGD_IntroThesis}, \eqref{SNM_IntroThesis},  \eqref{SPPM_IntroThesis},  \eqref{SPM_IntroThesis} (which are equivalent), and the iterates $\{y^k\}_{k\geq0}$ produced by the dual method \eqref{SDSA_IntroThesis}:
\begin{equation}
\label{Corresp_IntroThesis}
x^{k} = \phi(y^k) \overset{\eqref{Corresp_IntroThesis1}}{=} x^0 + \mB^{-1} \mA^\top y^k.
\end{equation}

In Section~\ref{IntroductoryAnalysis} we show with a simple proof how this equivalence extends beyond the $\omega=1$ case, specifically for $0<\omega<2$ (see Proposition~\ref{bcaoskla}). Later, a similar approach will be used in the derivation of the momentum and inexact variant of SDSA in Chapters~\ref{ChapterMomentum} and \ref{ChapterInexact} respectively.

An interesting property that holds between the suboptimalities of the primal methods and SDSA is that the dual suboptimality of $y$ in terms of the dual function values is equal to the primal suboptimality of $\phi(y)$ in terms of distance \cite{gower2015stochastic}. That is,
\begin{equation}
\label{identity}
D(y^*)-D(y)=\frac{1}{2}\| \phi(y^*)- \phi(y)\|^2_{\bB}.
\end{equation}
This simple-to-derive result (by combining the expression of the dual function $D(y)$ \eqref{DualProblem_IntroThesis} and the equation \eqref{Corresp_IntroThesis1}) gives for free the convergence analysis of SDSA, in terms of dual function suboptimality once the analysis of the primal methods is available (see Proposition~\ref{PropositionDualPrimal} in Section~\ref{IntroductoryAnalysis}). 

Note that SDSA update \eqref{SDSA_IntroThesis}+\eqref{lambdak_IntroThesis} depends on the same two parameters, matrix $\bB$ and distribution $\cD$, of the basic method (SGD, SN and SPP). Therefore, similar to the previous subsection, by choosing appropriately the two parameters we can recover many known algorithms as special cases of SDSA. 

\begin{exa} 
Let $\bB=\bI$ and $\bS_k=e_i$, where $i \in [m]$ is chosen in each iteration independently, with probability $p_i>0$. Here with $e_i \in \R^m$ we denote the $i^{\text{th}}$ unit coordinate vector in $ \R^m$.
 In this setup the update rule \eqref{SDSA_IntroThesis} simplifies to:
\begin{equation}
\label{RCD_DualThesis}
y^{k+1}=y^k + \omega \frac{b_{i}-\bA_{i :} x^0 -\bA_{i :} \bA^\top y^k}{\|\bA_{i :}\|^2} e_{i}.
\end{equation}
where $\bA_{i:}$ indicates the $i^{th}$ row of matrix $\bA$. 
This is the randomized coordinate ascent method \cite{nesterov2012efficiency} applied to the dual problem. Having said that, the analysis provided in \cite{nesterov2012efficiency} does not apply because the objective of the dual problem is not strongly concave function.
\end{exa}

\section{Simple Analysis of Baseline Methods}
\label{IntroductoryAnalysis}
Having presented the problems that we are interested in this thesis and explained the relationships between them, let us know present some interesting properties of our setting and a simple convergence analysis of the baseline methods for solving them.  

In Sections~\ref{SGDSection_Intro} and \ref{BestaprooximationSection_INtro} we have introduced these baseline methods. As a reminder, these are the stochastic gradient descent (SGD) \eqref{SGD_IntroThesis}, stochastic Newton method (SN) \eqref{SNM_IntroThesis}, stochastic proximal point method (SPP) \eqref{SPPM_IntroThesis}, sketch and project method (SPM) \eqref{SPM_IntroThesis}, and  stochastic dual subspace ascent (SDSA)\eqref{SDSA_IntroThesis}.

To simplify the presentation, in the remaining sections of the introduction we focus on two of these algorithms: SGD and SDSA. Recall at this point that SGD, SN, SPP and SPM have identical updates for the problems under study. This means that the analysis presented here for SGD holds for all of these methods.

We start by presenting some interesting properties of the stochastic quadratic optimization problem \eqref{StochReform_IntroThesis} and discussing connections with existing literature. Then we focus on the convergence analysis results.

\subsection{Technical preliminaries}
Recently, linear convergence of opimization methods has been established under several conditions that are satisfied in many realistic scenarios. We refer the interested reader to \cite{karimi2016linear} and \cite{necoara2015linear} for more details on these conditions and how they are related to each other.

In this thesis, we are particularly interested in the Quadratic Growth condition (QG).  We say that a function $f$ satisfies the QG inequality if the following holds for some $\mu>0$:
\begin{equation}
\label{QuadraticGrowth}
\frac{\mu}{2} \|x-x^*\|^2 \leq f(x) -f(x^*). 
\end{equation}
Here $x^*$ is the projection of vector $x$ onto the solution set $\cX$ of the optimization problem $\min_{x \in \R^n} f(x)$ and $f(x^*)$ denotes the optimal function value.

Under this condition it can be shown that SGD with constant stepsize $\omega$ converges with a linear rate up to a neighborhoud around the optimal point that is proportional to the value of $\omega$ \cite{karimi2016linear}.  For general convex function the convergence of SGD is sublinear \cite{robbins1951stochastic,nemirovski2009robust}.

In \cite{ASDA} it was shown the the function of the stochastic quadratic optimization problem \eqref{StochReform_IntroThesis} satisfies the QG condition as well. In particular the following lemma was proved.

\begin{lem}[Quadratic bounds, \cite{ASDA}]
\label{bounds}
For all $x \in \R^n$ and $x^* \in \cL$ the objective function of the stochastic optimization problem \eqref{StochReform_IntroThesis} satisfies:
\begin{equation}
\label{b1}
\lambda_{\min}^+ f(x) \leq \frac{1}{2} \|\nabla f(x) \|^2_{\bB} \leq \lambda_{\max} f(x)
\end{equation}
and 
\begin{equation}
\label{b2}
f(x) \leq  \frac{\lambda_{\max}}{2} \|x-x^*\|^2_{\bB}.
\end{equation}
Moreover, if exactness is satisfied, and we let $x^* =\Pi_{\cL, \bB}(x)$, we have 
\begin{equation}
\label{b3}
\frac{\lambda_{\min}^+}{2} \|x-x^*\|^2_{\bB} \leq  f(x)  .
\end{equation}
\end{lem}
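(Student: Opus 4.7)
The plan is to reduce all three bounds to spectral inequalities on the symmetric positive semidefinite matrix $\bW = \bB^{-1/2}\Exp[\bZ]\bB^{-1/2}$, whose eigenvalues lie in $[0,1]$. To that end, fix $x \in \R^n$ and $x^* \in \cL$, and introduce the change of variables $y \eqdef \bB^{1/2}(x-x^*)$. Using \eqref{f_sZeta_IntroThesis} and taking expectations, I would first rewrite the three basic quantities as
\begin{align*}
2f(x) &= (x-x^*)^\top \Exp[\bZ](x-x^*) = y^\top \bW y,\\
\|\nabla f(x)\|_{\bB}^2 &= (x-x^*)^\top \Exp[\bZ]\bB^{-1}\Exp[\bZ](x-x^*) = y^\top \bW^2 y,\\
\|x-x^*\|_{\bB}^2 &= y^\top y,
\end{align*}
where I used $\nabla f(x) = \bB^{-1}\Exp[\bZ](x-x^*)$ from \eqref{Gradf_S_IntroThesis} and the identity $\Exp[\bZ] = \bB^{1/2}\bW\bB^{1/2}$.

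For inequalities \eqref{b1} and \eqref{b2}, I would expand $y$ in the eigenbasis $u_1,\dots,u_n$ of $\bW$ and write $y = \sum_i c_i u_i$, giving $y^\top \bW y = \sum_i c_i^2 \lambda_i$ and $y^\top \bW^2 y = \sum_i c_i^2 \lambda_i^2$. Since $\lambda_i \in [0,\lambda_{\max}]$ and every nonzero $\lambda_i$ satisfies $\lambda_i \geq \lambda_{\min}^+$, the coordinatewise bounds $\lambda_{\min}^+\lambda_i \leq \lambda_i^2 \leq \lambda_{\max}\lambda_i$ yield \eqref{b1}, and $\lambda_i \leq \lambda_{\max}$ immediately yields \eqref{b2}.

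The interesting step is \eqref{b3}, where I will use exactness. The idea is to show that when $x^* = \Pi_{\cL,\bB}(x)$, the corresponding vector $y = \bB^{1/2}(x-x^*)$ is in fact supported on the range of $\bW$, so that the trivial bound $y^\top \bW y \geq \lambda_{\min}^+\|y\|^2$ (which in general only holds on $\mathrm{Range}(\bW)$) becomes available. Concretely, the variational characterization of the $\bB$-projection gives $(x-x^*)^\top \bB n = 0$ for all $n \in \mathrm{Null}(\bA)$, i.e.\ $y$ is standard-Euclidean orthogonal to $\bB^{1/2}\mathrm{Null}(\bA)$. Exactness \eqref{exactnessCondition} gives $\mathrm{Null}(\Exp[\bZ]) = \mathrm{Null}(\bA)$, and since $\bB^{1/2}$ is invertible,
$$\mathrm{Null}(\bW) = \bB^{1/2}\mathrm{Null}(\Exp[\bZ]) = \bB^{1/2}\mathrm{Null}(\bA).$$
Hence $y \perp \mathrm{Null}(\bW)$, so $y \in \mathrm{Range}(\bW)$. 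In the eigenbasis this means $c_i = 0$ whenever $\lambda_i = 0$, and therefore $y^\top \bW y = \sum_{i:\lambda_i>0} c_i^2 \lambda_i \geq \lambda_{\min}^+ \sum_i c_i^2 = \lambda_{\min}^+ \|y\|^2$, which is \eqref{b3}.

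The only nontrivial point is the identification of $\mathrm{Null}(\bW)$ with $\bB^{1/2}\mathrm{Null}(\bA)$ via exactness; everything else is a one-line spectral computation after the change of variables.
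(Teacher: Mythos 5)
Your proof is correct. Note that the paper itself does not prove this lemma --- it is quoted from \cite{ASDA} without proof --- so there is no in-paper argument to compare against; your reduction to the spectral inequalities $\lambda_{\min}^+\, y^\top \bW y \leq y^\top \bW^2 y \leq \lambda_{\max}\, y^\top \bW y$ and $y^\top \bW y \leq \lambda_{\max} \|y\|^2$ via $y = \bB^{1/2}(x-x^*)$ is the standard route and matches the spirit of the original source. The one genuinely delicate step, identifying $\mathrm{Null}(\bW) = \bB^{1/2}\mathrm{Null}(\bA)$ from exactness and combining it with the $\bB$-orthogonality of $x - \Pi_{\cL,\bB}(x)$ to $\mathrm{Null}(\bA)$ so that $y \in \mathrm{Range}(\bW)$, is handled correctly (this is exactly the content of Lemma~\ref{Forweakconvergence}, which the paper also imports from \cite{ASDA}).
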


Note that inequality \eqref{b3} is precisely the quadratic growth condition \eqref{QuadraticGrowth} with $\mu=\lambda_{\min}^+$ and $f(x^*)=0$. 

The following identities were also established in \cite{ASDA}. For completeness, we include different (and somewhat simpler) proofs here.
\begin{lem}
\label{cnaoisna}
For all $x \in \R^n$ and any $\bS \sim \cD$ we have
\begin{equation}
\label{normbound}
f_{\bS}(x) = \frac{1}{2}\|\nabla f_{\bS}(x)\|^2_{\bB}.
\end{equation}
Moreover, if $x^*\in \cL $ (i.e., if $x^*$ satisfies $\mA x^* =b$), then for all $x\in \R^n$ we have
\begin{equation}
\label{functionequivalence}
f_{\bS}(x) = \frac{1}{2}\langle \nabla f_{\bS}(x),x-x^* \rangle_{\bB}, 
\end{equation}
and
\begin{equation}
\label{asnda}
f(x) = \frac{1}{2}\langle \nabla f(x),x-x^* \rangle_{\bB}.
\end{equation}
\end{lem}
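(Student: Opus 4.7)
The plan is to prove \eqref{normbound} by direct expansion of both sides using the closed-form expressions, then obtain \eqref{functionequivalence} by a similar expansion exploiting that $\bA x^* = b$, and finally deduce \eqref{asnda} by taking expectations.

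First I would prove \eqref{normbound}. Set $\bM \eqdef \bS^\top \bA \bB^{-1} \bA^\top \bS$, so that $\bH = \bS \bM^\dagger \bS^\top$. Using the gradient formula \eqref{Gradf_S_IntroThesis}, I compute
\[
\|\nabla f_{\bS}(x)\|_{\bB}^2 = (\bA x - b)^\top \bH \bA \bB^{-1} \bB \bB^{-1} \bA^\top \bH (\bA x - b) = (\bA x - b)^\top \bH \bA \bB^{-1} \bA^\top \bH (\bA x - b).
\]
The key algebraic fact is $\bH \bA \bB^{-1} \bA^\top \bH = \bH$: indeed, substituting $\bH = \bS \bM^\dagger \bS^\top$ gives
\[
\bH \bA \bB^{-1} \bA^\top \bH = \bS \bM^\dagger (\bS^\top \bA \bB^{-1} \bA^\top \bS) \bM^\dagger \bS^\top = \bS \bM^\dagger \bM \bM^\dagger \bS^\top = \bS \bM^\dagger \bS^\top = \bH,
\]
using the Moore--Penrose identity $\bM^\dagger \bM \bM^\dagger = \bM^\dagger$ listed earlier in the preliminaries. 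Thus $\|\nabla f_{\bS}(x)\|_{\bB}^2 = (\bA x - b)^\top \bH (\bA x - b) = 2 f_{\bS}(x)$, which is \eqref{normbound}.

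Next, for \eqref{functionequivalence}, I expand the inner product on the right using \eqref{Gradf_S_IntroThesis}:
\[
\langle \nabla f_{\bS}(x), x - x^* \rangle_{\bB} = (\bA x - b)^\top \bH \bA \bB^{-1} \bB (x - x^*) = (\bA x - b)^\top \bH (\bA x - \bA x^*).
\]
Since $x^* \in \cL$ means $\bA x^* = b$, the right-hand side equals $(\bA x - b)^\top \bH (\bA x - b) = 2 f_{\bS}(x)$, giving \eqref{functionequivalence}.

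Finally, \eqref{asnda} follows by taking $\Exp_{\bS \sim \cD}[\,\cdot\,]$ on both sides of \eqref{functionequivalence}: the left becomes $f(x)$ by definition, and on the right, linearity of expectation together with the fact that $x - x^*$ is deterministic yields $\tfrac{1}{2}\langle \Exp_{\bS \sim \cD}[\nabla f_{\bS}(x)], x - x^* \rangle_{\bB} = \tfrac{1}{2}\langle \nabla f(x), x - x^* \rangle_{\bB}$. The main (very mild) obstacle is the pseudoinverse identity $\bH \bA \bB^{-1} \bA^\top \bH = \bH$; everything else is bookkeeping with the $\bB$-inner product.
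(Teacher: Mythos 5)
Your proof is correct and follows essentially the same route as the paper: direct expansion of both sides using the closed-form gradient, with the projection-type identity doing the work for \eqref{normbound}, cancellation against $\bA x^* = b$ for \eqref{functionequivalence}, and expectation for \eqref{asnda}. The only (cosmetic) difference is that the paper invokes $\bZ \bB^{-1}\bZ = \bZ$ as a known fact from the reference, whereas you verify the equivalent identity $\bH \bA \bB^{-1}\bA^\top \bH = \bH$ directly from $\bM^\dagger \bM \bM^\dagger = \bM^\dagger$, which makes your version slightly more self-contained.
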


\begin{proof} In view of \eqref{Gradf_S_IntroThesis}, and  since $\mZ \mB^{-1} \mZ = \mZ$ (see \cite{ASDA}), we have
\begin{eqnarray*}\|\nabla f_{\bS}(x)\|^2_{\bB} & \overset{\eqref{Gradf_S_IntroThesis}}{=}& \|\bB^{-1} \mZ (x-x^*)\|^2_{\bB} = (x-x^*)^\top \mZ \bB^{-1} \bZ (x-x^*) =  (x-x^*)^\top \mZ (x-x^*) \\
&\overset{\eqref{ZETA_Intro}}{=} &(x-x^*)^\top \mA^\top \mH \mA (x-x^*)  = (\mA x-b)^\top \mH (\mA  x- b )  \overset{\eqref{f_s_IntroThesis}}{=} 2f_{\bS}(x).\end{eqnarray*}
Moreover,
\begin{eqnarray*}
\langle \nabla f_{\bS}(x),x-x^* \rangle_{\bB} & \overset{\eqref{Gradf_S_IntroThesis}}{=}& \langle \bB^{-1} \bZ (x-x^*),x-x^* \rangle_{\bB}\\
& =& (x-x^*)^\top \bZ \bB^{-1} \bB (x-x^*) \quad \overset{\eqref{f_sZeta_IntroThesis}}{=} \quad 2f_{\bS}(x).
\end{eqnarray*}
By taking expectations in the last identity with respect to the random matrix $\bS$, we get
$
\langle \nabla f(x),x-x^* \rangle_{\bB}=2f(x).
$
\end{proof}

\paragraph{No need for variance reduction}
SGD  is arguably  one of the most popular algorithms in machine learning. Unfortunately, SGD suffers from slow convergence, which is due to the fact that the variance of the stochastic gradient as an estimator of the gradient does not naturally diminish. For this reason, SGD is typically used with a decreasing stepsize rule, which ensures that the variance converges to zero. However, this has an adverse effect on the convergence rate. For instance, SGD has a sublinear rate even if the function to be minimized is strongly convex (conergence to the optimum point, not to a neighborhoud around it). To overcome this problem, a new class of so-called  {\em variance-reduced} methods was developed over the last 8 years, including SAG \cite{schmidt2017minimizing}, SDCA \cite{SDCA, richtarik2014iteration}, SVRG/S2GD  \cite{johnson2013accelerating, S2GD}, minibatch SVRG/S2GD \cite{mS2GD}, and SAGA \cite{defazio2014saga, defazio2016simple}.
 
In our setting, we assume that the linear system \eqref{LinearSystem_IntroThesis} is feasible. Thus, it follows that the stochastic gradient vanishes at the optimal point (i.e., $\nabla f_{\bS}(x^*)=0$ for any $\mS$). This suggests that additional variance reduction techniques are not necessary since the variance of the stochastic gradient drops to zero as we approach the optimal point $x^*$. In particular, in our context, SGD with fixed stepsize enjoys linear rate without any variance reduction strategy (see Theorem \ref{BasicMethodConvergence}). Hence, in this thesis we can bypass the development of variance reduction techniques, which allows us to focus on the momentum term in Chapter~\ref{ChapterMomentum} on the inexact computations in Chapter~\ref{ChapterInexact} and on gossip protocols that converge to consensus in Chapters~\ref{ChapterGossip} and \ref{ChapterPrivacy}.

\subsection{Theoretical guarantees}
The following convergence rates of SGD and SDSA are easy to establish, having the bounds and identities of Lemmas \ref{bounds} and \ref{cnaoisna}. Nevertheless we present the statements of the theorems and proofs for completeness and because we use similar ideas and approaches in the rest of the thesis. For the benefit of the reader, we also include the derivations of the two equations \eqref{Corresp_IntroThesis} and \eqref{identity} presented in the previous section which connect the primal and the dual iterates.

\begin{thm}[\cite{ASDA}]
\label{BasicMethodConvergence}
Let assume exactness and let $\{x^k\}_{k=0}^\infty$ be the iterates produced by SGD with constant stepsize $\omega\in(0,2)$.  Set $x^*=\Pi_{\cL,\bB}(x^0)$. Then,
\begin{equation}
\label{convergenceSGD}
\Exp[\|x^{k}-x^*\|^2_{\bB}] \leq\left[1-\omega(2-\omega) \lambda_{\min}^+ \right]^k \|x^0-x^*\|^2_{\bB}.
\end{equation}
\end{thm}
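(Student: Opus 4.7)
The plan is a one-step contraction argument in the $\bB$-norm. First I would expand the squared distance after one SGD step:
\begin{equation*}
\|x^{k+1} - x^*\|_{\bB}^2 = \|x^k - x^*\|_{\bB}^2 - 2\omega \langle \nabla f_{\bS_k}(x^k), x^k - x^* \rangle_{\bB} + \omega^2 \|\nabla f_{\bS_k}(x^k)\|_{\bB}^2.
\end{equation*}
The two identities from Lemma~\ref{cnaoisna}, namely \eqref{normbound} and \eqref{functionequivalence}, then convert both the linear and the quadratic term into multiples of $f_{\bS_k}(x^k)$, yielding the exact identity
\begin{equation*}
\|x^{k+1} - x^*\|_{\bB}^2 = \|x^k - x^*\|_{\bB}^2 - 2\omega(2-\omega)\, f_{\bS_k}(x^k).
\end{equation*}

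Next I would take expectation conditioned on $x^k$, using only the fact that $\bS_k$ is drawn from $\cD$ independently of the past, to obtain
\begin{equation*}
\Exp\big[\|x^{k+1} - x^*\|_{\bB}^2 \,\big|\, x^k\big] = \|x^k - x^*\|_{\bB}^2 - 2\omega(2-\omega)\, f(x^k).
\end{equation*}
At this point I need to lower bound $f(x^k)$ by a multiple of $\|x^k - x^*\|_{\bB}^2$ using \eqref{b3}. The bound in Lemma~\ref{bounds} is stated with $x^* = \Pi_{\cL,\bB}(x^k)$, whereas we want it with $x^* = \Pi_{\cL,\bB}(x^0)$, so I would first establish the projection invariance $\Pi_{\cL,\bB}(x^k) = \Pi_{\cL,\bB}(x^0)$ for all $k$. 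This is the only non-routine step: starting from \eqref{Projection_IntroThesis} and writing any SGD increment as $x^{k+1}-x^k = -\omega\bB^{-1}\bA^\top s$, one shows $\Pi_{\cL,\bB}(x^{k+1}) - \Pi_{\cL,\bB}(x^k) = \omega \bB^{-1}\bA^\top(\bI - (\bA\bB^{-1}\bA^\top)^{\dagger}\bA\bB^{-1}\bA^\top)s$, and this vanishes because $\mathrm{Null}(\bA\bB^{-1}\bA^\top) = \mathrm{Null}(\bA^\top)$ (using positive-definiteness of $\bB^{-1}$), so $\bA^\top$ kills the residual.

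With the invariance in hand I apply \eqref{b3} with $x^* = \Pi_{\cL,\bB}(x^k)$ to get $f(x^k) \geq \tfrac{\lambda_{\min}^+}{2}\|x^k - x^*\|_{\bB}^2$, which substituted above gives
\begin{equation*}
\Exp\big[\|x^{k+1} - x^*\|_{\bB}^2 \,\big|\, x^k\big] \leq \bigl(1 - \omega(2-\omega)\lambda_{\min}^+\bigr)\|x^k - x^*\|_{\bB}^2.
\end{equation*}
Finally, taking total expectation, using the tower property, and unrolling the recursion $k$ times yields the claimed bound. The only technical obstacle I anticipate is the projection-invariance step; the rest is a direct combination of the pre-established identities and the quadratic-growth inequality, and the stepsize window $\omega \in (0,2)$ enters precisely as the range where the contraction factor $1 - \omega(2-\omega)\lambda_{\min}^+$ is strictly less than one.
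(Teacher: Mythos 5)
Your proof is correct and follows essentially the same route as the paper's: expand the squared $\bB$-distance after one step, apply the identities \eqref{normbound} and \eqref{functionequivalence} to reduce everything to $-2\omega(2-\omega)f_{\bS_k}(x^k)$, take expectations, invoke the quadratic growth bound \eqref{b3}, and unroll. The one thing you add --- explicitly verifying the invariance $\Pi_{\cL,\bB}(x^k)=\Pi_{\cL,\bB}(x^0)$ before applying \eqref{b3} with the fixed reference point $x^*=\Pi_{\cL,\bB}(x^0)$ --- is a legitimate gap-filling step that the paper leaves implicit in this particular proof (it proves the analogous invariance only for the momentum variant, in Proposition~\ref{prop:projections}, via a range-space argument), and your null-space computation establishing it is sound.
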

\begin{proof}
\begin{eqnarray}
\label{x_k_omega}
\|x^{k+1}-x^*\|^2_{\bB} & \overset{\eqref{SGD_IntroThesis}}{=}& \|x^k-\omega \nabla f_{\bS_k}(x^k)-x^*\|^2_{\bB}\notag\\
&=& \|x^k-x^*\|^2_{\bB}-2 \omega \langle x^k-x^*,\nabla f_{\bS_k}(x^k) \rangle_{\bB} +\omega^2 \|\nabla f_{\bS_k}(x^k)\|^2_{\bB} \notag\\
& \overset{\eqref{normbound},\eqref{functionequivalence}}{=} & \|x^k-x^*\|^2_{\bB}-4\omega f_{\bS_k}(x^k)+2\omega^2 f_{\bS_k}(x^k) \notag\\
&=& \|x^k-x^*\|^2_{\bB}-2\omega(2-\omega)f_{\bS_k}(x^k).
\end{eqnarray}
By taking expectation with respect to $\mS_k$ and using quadratic growth inequality \eqref{b3}:
\begin{eqnarray}
\label{exp_x_k_omega}
\Exp_{\mS_k}[\|x^{k+1}-x^*\|^2_{\bB}] &=& \|x^k-x^*\|^2_{\bB}-2\omega(2-\omega)f(x^k)\notag\\
&\overset{\omega \in (0,2), \, \eqref{b3}}{\leq}& \|x^k-x^*\|^2_{\bB}-\omega(2-\omega) \lambda_{\min}^+  \|x^k-x^*\|^2_{\bB}\notag\\
&=&\left[1-\omega(2-\omega) \lambda_{\min}^+ \right] \|x^k-x^*\|^2_{\bB}.
\end{eqnarray}
Taking expectation again and by unrolling the recurrence we obtain \eqref{convergenceSGD}.
\end{proof}

\begin{prop} 
\label{bcaoskla}
Let $\{x^k\}_{k=0}^\infty$ be the iterates produced by SGD\eqref{SGD_IntroThesis} with $\omega\in(0,2)$. Let $y^0=0$, and let $\{y^k\}_{k=0}^\infty$ be the iterates of SDSA \eqref{SDSA_IntroThesis}.   Assume that the  methods use the same stepsize $\omega \in(0,2)$ and the same sequence of random matrices $\mS_k$.  Then $x^k = \phi(y^k) = x^0 + \mB^{-1} \mA^\top y^k$ holds
for all $k$. That is, the primal iterates arise as affine images of the dual iterates.
\end{prop}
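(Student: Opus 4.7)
The plan is a straightforward induction on $k$, leveraging the explicit formula \eqref{Basic_IntroThesis} for the SGD (equivalently, basic method) update together with the definitions \eqref{SDSA_IntroThesis} and \eqref{lambdak_IntroThesis} of the SDSA update. No spectral or probabilistic machinery is needed, since the claim is a deterministic pathwise identity once the two methods are driven by the same $\{\mS_k\}$.

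For the base case $k=0$, the assumption $y^0 = 0$ gives $\phi(y^0) = x^0 + \mB^{-1}\mA^\top \cdot 0 = x^0$, matching the initialization of SGD. For the inductive step, I assume $x^k = x^0 + \mB^{-1}\mA^\top y^k$ and compute
\[
x^0 + \mB^{-1}\mA^\top y^{k+1} \;\overset{\eqref{SDSA_IntroThesis}}{=}\; x^0 + \mB^{-1}\mA^\top y^k + \omega \, \mB^{-1}\mA^\top \mS_k \lambda^k \;=\; x^k + \omega \, \mB^{-1}\mA^\top \mS_k \lambda^k,
\]
using the inductive hypothesis in the last step.

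The key observation is that the formula \eqref{lambdak_IntroThesis} for $\lambda^k$ involves exactly the residual $b - \mA(x^0 + \mB^{-1}\mA^\top y^k)$, which by the inductive hypothesis equals $b - \mA x^k$. Hence
\[
\lambda^k \;=\; (\mS_k^\top \mA\mB^{-1}\mA^\top \mS_k)^\dagger \mS_k^\top (b - \mA x^k),
\]
and plugging this into the previous display yields
\[
x^0 + \mB^{-1}\mA^\top y^{k+1} \;=\; x^k - \omega \, \mB^{-1}\mA^\top \mS_k (\mS_k^\top \mA\mB^{-1}\mA^\top \mS_k)^\dagger \mS_k^\top (\mA x^k - b),
\]
which is precisely the SGD update \eqref{Basic_IntroThesis}, i.e., $x^{k+1}$.

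There is no real obstacle here; the statement is essentially algebraic, and the proof works for every $\omega$ (the restriction $\omega \in (0,2)$ plays no role in the identity itself and is inherited only from the hypothesis of the proposition). The only thing to be careful about is to substitute the inductive hypothesis inside $\lambda^k$ before combining terms, so that the pseudoinverse factor $(\mS_k^\top \mA\mB^{-1}\mA^\top \mS_k)^\dagger \mS_k^\top$ aligns exactly with the one appearing in \eqref{Basic_IntroThesis}; the sign is handled by writing $b - \mA x^k = -(\mA x^k - b)$.
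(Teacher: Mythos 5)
Your proof is correct and follows essentially the same route as the paper's: both arguments hinge on the observation that $\lambda^k$ depends on $y^k$ only through the residual $b - \mA\,\phi(y^k)$, so that the affine image $\phi(y^{k+1})$ obeys exactly the SGD recursion started from $\phi(y^0)=x^0$. The paper phrases this as "the sequence $\{\phi(y^k)\}$ satisfies the same recursion as $\{x^k\}$" while you make the induction explicit, but the content is identical, including your (correct) remark that the restriction $\omega\in(0,2)$ plays no role in the identity itself.
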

\begin{proof}
First note that
\begin{equation}
\label{acnsjdakdsa}
\nabla f_{\mS_k}(\phi(y^k))\overset{\eqref{Gradf_S_IntroThesis}}{=} \mB^{-1}\mA^\top  \mS_k ( \mS_k^\top \mA \mB^{-1} \mA^\top \mS_k)^\dagger \mS_k^\top (\mA \phi(y^k) - b) \overset{\eqref{lambdak_IntroThesis}}{=}  - \mB^{-1}\mA^\top  \mS_k \lambda^k.
\end{equation}
We now use this to show that
\begin{eqnarray*}
\phi(y^{k+1}) &\overset{\eqref{Corresp_IntroThesis}}{=}& x^0  + \bB^{-1}\bA^\top y^{k+1} \overset{\eqref{SDSA_IntroThesis}}{=} x^0+\bB^{-1}\bA^\top\left[y^k+\omega \bS_k \lambda^k \right]\\
&=& x^0+\bB^{-1}\bA^\top y^k +\omega \bB^{-1}\bA^\top \bS_k \lambda^k \\
&\overset{\eqref{Corresp_IntroThesis},\eqref{acnsjdakdsa}}{=}& \phi(y^k) -\omega \nabla f_{\bS_k}(\phi(y^k)).
\end{eqnarray*}
So, the sequence of vectors $\{\phi(y^k)\}$ satisfies the same recursion to the sequence $\{x^k\}$ defined by SGD.  It remains to check that the starting vectors of both recursions coincide. Indeed, since $y^0=0$, we have $x^0 = \phi(0) =  \phi(y^0)$.
\end{proof}

\begin{prop}[\cite{gower2015stochastic}]
\label{PropositionDualPrimal}
Let $y^*$ be a solution of the dual problem \eqref{DualProblem_IntroThesis}. Then,
$$D(y^*)-D(y^k)=\frac{1}{2}\|x^k-x^*\|^2_{\bB}.$$
\end{prop}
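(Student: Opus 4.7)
The plan is to combine the two ingredients that the introduction has already assembled: the free identity \eqref{identity} relating dual suboptimality to primal distance under the affine map $\phi$, and the primal–dual correspondence $x^k=\phi(y^k)$ established in Proposition~\ref{bcaoskla}.

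First, I would invoke \eqref{identity} at $y=y^k$, which gives
\[
D(y^*)-D(y^k)=\tfrac{1}{2}\|\phi(y^*)-\phi(y^k)\|^2_{\bB}.
\]
Then I would note that, by Fenchel duality (equation \eqref{Corresp_IntroThesisOptimal}), any dual optimizer $y^*$ satisfies $\phi(y^*)=x^*$, where $x^*=\Pi_{\cL,\bB}(x^0)$ is the unique optimal solution of the best approximation problem \eqref{BestApproximation_IntroThesis}. Finally, I would apply Proposition~\ref{bcaoskla}, which (since the hypotheses $y^0=0$, shared stepsize $\omega\in(0,2)$, and common random matrices $\bS_k$ are in force) yields $\phi(y^k)=x^k$. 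Substituting both equalities into the right-hand side of the display above gives exactly $D(y^*)-D(y^k)=\tfrac12\|x^k-x^*\|^2_{\bB}$.

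In short, the claim is essentially a one-line consequence of two facts already in the text, so there is no real obstacle; the only thing to be careful about is making sure the correspondence $x^k=\phi(y^k)$ is being applied under the same standing assumptions as in Proposition~\ref{bcaoskla} (matched initialization $y^0=0$, matched stepsize, and matched sampling), and that $x^*$ in the statement is being identified with the image $\phi(y^*)$ of a dual optimum rather than some other minimizer. If a self-contained derivation of \eqref{identity} were desired instead of quoting it, one would expand $D(y^*)-D(y^k)$ using the definition of $D$ in \eqref{DualProblem_IntroThesis}, use the optimality condition $\bA\phi(y^*)=b$ (equivalent to $\nabla D(y^*)=0$) to rewrite $b-\bA x^0=\bA\bB^{-1}\bA^\top y^*$, and then complete the square in the $\bB^{-1}$-norm to recognize $\tfrac12\|\bA^\top(y^*-y^k)\|^2_{\bB^{-1}}=\tfrac12\|\phi(y^*)-\phi(y^k)\|^2_{\bB}$; but given \eqref{identity} is granted, no such computation is needed.
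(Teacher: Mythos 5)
Your fallback computation is exactly the paper's proof: expand $D(y^*)-D(y^k)$ from \eqref{DualProblem_IntroThesis}, substitute $b-\bA x^0=\bA\bB^{-1}\bA^\top y^*$ (which follows from \eqref{Corresp_IntroThesisOptimal}, equivalently from $\nabla D(y^*)=0$), complete the square to obtain $\tfrac12\|\bB^{-1}\bA^\top(y^*-y^k)\|^2_{\bB}$, and identify this with $\tfrac12\|x^k-x^*\|^2_{\bB}$ via \eqref{Corresp_IntroThesisOptimal} and \eqref{Corresp_IntroThesis}. So the substance is correct and matches the paper.

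One caveat about your primary route: within the logical structure of this chapter, Proposition~\ref{PropositionDualPrimal} \emph{is} the promised derivation of \eqref{identity} (the text explicitly says the derivations of \eqref{Corresp_IntroThesis} and \eqref{identity} are included "for the benefit of the reader" as Propositions~\ref{bcaoskla} and~\ref{PropositionDualPrimal}). Quoting \eqref{identity} to prove the proposition is therefore circular here, even though \eqref{identity} is also a citable result from \cite{gower2015stochastic}. Since you supply the self-contained argument anyway, the proof stands; just present the direct computation as the proof rather than as an optional appendix to it. Your remarks about needing the matched initialization $y^0=0$, stepsize, and sampling for $x^k=\phi(y^k)$, and about identifying $x^*$ with $\phi(y^*)$, are correct and worth keeping.
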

\begin{proof}
\begin{eqnarray}
D(y^*)-D(y^k) & \overset{\eqref{DualProblem_IntroThesis}}{=}& (b-\bA x^0)^\top y^* - \frac{1}{2}\|\bA^\top y^*\|^2_{\bB^{-1}} -(b-\bA x^0)^\top y^k + \frac{1}{2}\|\bA^\top y^k\|^2_{\bB^{-1}}\notag\\
& =& (b-\bA x^0)^\top (y^*-y^k) - \frac{1}{2} (\bA^\top y^*)^\top {\bB^{-1}}\bA^\top y^* \notag\\&& + \,\, \frac{1}{2} (\bA^\top y^k)^\top \bB^{-1} \bA^\top y^k\notag\\
& \overset{(*)}{=}& (\bA \bB^{-1}\bA^\top y^*)^\top (y^*-y^k) - \frac{1}{2} (\bA^\top y^*)^\top {\bB^{-1}}\bA^\top y^* \notag\\&& +\,\, \frac{1}{2} (\bA^\top y^k)^\top \bB^{-1} \bA^\top y^k\notag\\
& =& \frac{1}{2} (y^*-y^k) \bA \bB^{-1} \bA^\top (y^*-y^k)\notag\\
& =& \frac{1}{2}\| \bB^{-1} \bA^\top (y^*-y^k)\|^2_{\bB}\notag\\
& \overset{\eqref{Corresp_IntroThesisOptimal}+\eqref{Corresp_IntroThesis}}{=} & \frac{1}{2}\|x^k-x^*\|^2_{\bB}.
\end{eqnarray}
In the $(*)$ equality above we use \eqref{Corresp_IntroThesisOptimal} for the optimal primal and dual values. In particular, $x^0=x^*-\mB^{-1} \mA^\top y^* \Rightarrow \bA x^0=\bA x^*-\bA\mB^{-1} \mA^\top y^*=b-\bA\mB^{-1} \mA^\top y^*$.
\end{proof}

The next theorem has been proved in \cite{gower2015stochastic} for the case of $\omega=1$.  Here we extend this convergence to the more general case of $0<\omega<2$.
 
\begin{thm}
\label{TheoremSDSA_IntroThesis}
Let us assume exactness. Choose $y^0= 0 \in \R^m$. Let $\{y^k\}_{k=0}^\infty$ be the sequence of random iterates produced by SDSA with stepsize $0\leq \omega \leq 2$. Then,
\begin{equation}
\label{convergenceSDSA}
\Exp[D(y^*)-D(y^k)] \leq \left[1-\omega(2-\omega) \lambda_{\min}^+ \right]^k \left( D(y^*)-D(y^0) \right).
\end{equation}
\end{thm}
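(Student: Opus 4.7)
The plan is to reduce the dual convergence statement to the already-established primal convergence of SGD, using the identity bridging primal distances and dual suboptimalities together with the affine correspondence between the two sequences.

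First, I would invoke Proposition~\ref{PropositionDualPrimal} twice. Applied at iteration $k$, it yields
\[
D(y^*) - D(y^k) \;=\; \tfrac{1}{2}\|x^k - x^*\|^2_{\bB},
\]
where $\{x^k\}$ is the sequence defined via the affine map, $x^k \eqdef \phi(y^k) = x^0 + \bB^{-1}\bA^\top y^k$, and $x^* = \phi(y^*)$. Applied at $k=0$ (where $y^0 = 0$ implies $x^0 = \phi(0)$), it yields
\[
D(y^*) - D(y^0) \;=\; \tfrac{1}{2}\|x^0 - x^*\|^2_{\bB}.
\]
So it suffices to control $\Exp[\|x^k - x^*\|^2_{\bB}]$ in terms of $\|x^0 - x^*\|^2_{\bB}$.

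Next, I would invoke Proposition~\ref{bcaoskla}: since SDSA is initialized at $y^0 = 0$ and uses the same stepsize $\omega \in (0,2)$ and the same random matrices $\bS_k$ as SGD, the primal iterates $\phi(y^k)$ coincide pointwise with the iterates generated by SGD starting from $x^0$. In particular, the sequence $\{x^k\} = \{\phi(y^k)\}$ is precisely an SGD trajectory, so Theorem~\ref{BasicMethodConvergence} applies and gives
\[
\Exp\bigl[\|x^k - x^*\|^2_{\bB}\bigr] \;\leq\; \bigl[1 - \omega(2-\omega)\lambda_{\min}^+\bigr]^k \|x^0 - x^*\|^2_{\bB}.
\]

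Finally, combining the three displays (taking expectation in the first identity, plugging in the SGD rate, and using the second identity to convert $\|x^0 - x^*\|^2_{\bB}$ back into $2(D(y^*) - D(y^0))$) produces the claimed bound \eqref{convergenceSDSA}. There is no genuine obstacle here: the only mild subtlety is verifying that Proposition~\ref{bcaoskla} indeed extends the $\omega = 1$ primal-dual correspondence of \cite{gower2015stochastic} to the full range $\omega \in (0,2)$, but this has already been done just above. The quadratic growth bound \eqref{b3}, which required exactness and drove the SGD rate, is what ultimately delivers the dual rate through this chain of equivalences.
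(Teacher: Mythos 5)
Your proposal is correct and follows essentially the same route as the paper, which proves the theorem by combining Theorem~\ref{BasicMethodConvergence} with Proposition~\ref{PropositionDualPrimal} (with Proposition~\ref{bcaoskla} supplying the primal--dual correspondence implicitly). You have simply written out the chain of identities in more detail than the paper's one-line proof.
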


\begin{proof} This follows by applying Theorem~\ref{BasicMethodConvergence} together with Proposition~\ref{PropositionDualPrimal}.
\end{proof}

\subsection{Iteration Complexity}
In several parts of this thesis we compare the performance of linearly convergent algorithms using their iteration complexity bounds. That is, we derive a lower bound on the number of iterations that are sufficient to achieve a prescribed accuracy. The following lemma shows the derivation of this bound for the sequence $\{A^k\}_{k=0}^\infty$.

\begin{lem}
\label{laksoakls}
Consider a non-negative sequence $\{A^k\}_{k=0}^\infty$ satisfying 
\begin{equation}
\label{laksmnoia}
A^k \leq \rho^k A^0,
\end{equation}
where $\rho \in (0,1)$. Then, for a given $\epsilon \in (0,1)$ and for:
\begin{equation}
\label{anlksado}
k \geq \frac{1}{1-\rho} \log \left(\frac{1}{\epsilon} \right)
\end{equation}
it holds that:
\begin{equation}
A^k \leq \epsilon A^0
\end{equation}
\end{lem}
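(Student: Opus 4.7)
The plan is to chain two simple inequalities: the assumed geometric decay $A^k \leq \rho^k A^0$, together with a standard logarithmic bound that converts the rate $\rho$ into the more convenient quantity $1-\rho$ appearing in the statement.

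First I would observe that it suffices to show $\rho^k \leq \epsilon$, since then multiplying by the non-negative quantity $A^0$ and invoking \eqref{laksmnoia} yields the desired conclusion $A^k \leq \rho^k A^0 \leq \epsilon A^0$. Taking logarithms (recall $\rho \in (0,1)$ so $\log(1/\rho)>0$), the inequality $\rho^k \leq \epsilon$ is equivalent to
\begin{equation*}
k \;\geq\; \frac{\log(1/\epsilon)}{\log(1/\rho)}.
\end{equation*}
Thus, to deduce this from the hypothesis \eqref{anlksado}, it suffices to verify the elementary inequality
\begin{equation*}
\frac{1}{\log(1/\rho)} \;\leq\; \frac{1}{1-\rho}, \qquad \text{i.e.,} \qquad \log(1/\rho) \;\geq\; 1-\rho.
\end{equation*}

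The last inequality is a standard consequence of the concavity of the logarithm: writing $\log(1/\rho) = -\log(\rho) = -\log(1-(1-\rho))$ and using the well-known bound $-\log(1-t) \geq t$ valid for all $t \in [0,1)$ (with $t=1-\rho$) gives exactly $\log(1/\rho) \geq 1-\rho$. Combining this with the hypothesis yields
\begin{equation*}
k \;\geq\; \frac{1}{1-\rho}\log\!\left(\frac{1}{\epsilon}\right) \;\geq\; \frac{\log(1/\epsilon)}{\log(1/\rho)},
\end{equation*}
hence $\rho^k \leq \epsilon$, and finally $A^k \leq \rho^k A^0 \leq \epsilon A^0$, as required.

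There is no real obstacle in this proof; the only subtlety is the logarithmic comparison $-\log(1-t) \geq t$, which is a textbook fact and can be justified by a one-line Taylor expansion or by noting that $f(t) = -\log(1-t) - t$ satisfies $f(0)=0$ and $f'(t) = \tfrac{t}{1-t} \geq 0$ on $[0,1)$.
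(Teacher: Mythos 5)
Your proof is correct and follows essentially the same route as the paper's: both arguments hinge on the elementary inequality $\log(1/\rho) \geq 1-\rho$ for $\rho \in (0,1)$ and then chain the geometric decay through logarithms. Your presentation via $\rho^k \leq \epsilon$ is marginally cleaner (it avoids taking $\log(A^0/A^k)$, which implicitly assumes $A^k>0$), but the mathematical content is identical.
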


\begin{proof}
Note that since $\rho \in (0,1)$, we have:
\begin{equation}
\label{dnqoiakl}
\frac{1}{1-\rho}\log \left(\frac{1}{\rho}\right) >1.
\end{equation}
Therefore,
\begin{equation}
\label{loasloaslkna}
\log\left(\frac{A^0}{A^k}\right)\overset{\eqref{laksmnoia}}{\geq} k \log \left(\frac{1}{\rho}\right)\overset{\eqref{anlksado}}{\geq} \frac{1}{1-\rho} \log \left(\frac{1}{\epsilon} \right) \log \left(\frac{1}{\rho}\right)\overset{\eqref{dnqoiakl}}{\geq}\log \left(\frac{1}{\epsilon} \right).
\end{equation}
Applying exponentials to the above inequality completes the proof.
\end{proof}

As an instance, of how the above lemma can be used, recall the convergence result of Theorem~\ref{BasicMethodConvergence}, where we have proved that SGD with constant stepsize $\omega \in (0,2)$ converges as follows:
$$
\Exp[\|x^{k}-x^*\|^2_{\bB}] \leq\left[1-\omega(2-\omega) \lambda_{\min}^+ \right]^k \|x^0-x^*\|^2_{\bB}.
$$
In this setting, Lemma~\ref{laksoakls} can be utilized with $A^k=\Exp[\|x^{k}-x^*\|^2_{\bB}]$ and $\rho=1-\omega(2-\omega) \lambda_{\min}^+$ to obtain:
$$k \geq \frac{1}{\omega(2-\omega) \lambda_{\min}^+} \log \left(\frac{1}{\epsilon} \right) \quad \Rightarrow \quad \Exp[\|x^k-x^*\|_{\bB}^2]\leq \epsilon \|x^0-x^*\|_{\bB}^2.$$

In this case we say that SGD converges with iteration complexity
$$\cO\left(\frac{1}{\omega(2-\omega) \lambda_{\min}^+} \log \left(\frac{1}{\epsilon} \right)\right).$$

\section{Structure of the Thesis}
\label{StructureIntro}
In the remainder of this section we give a summary of each chapter of this thesis. The detailed proofs and careful deductions of any claims made here are left to the chapters.

\subsection{Chapter 2: Randomized Iterative Methods with Momentum and Stochastic Momentum}
The baseline first-order method for minimizing a differentiable function $f$ is the \textit{gradient descent (GD)} method, $$x^{k+1} = x^k - \omega^k \nabla f(x^k),$$ where $\omega^k>0$ is a stepsize \cite{cauchy1847methode}. For convex functions with $L$-Lipschitz gradient,  GD converges at at the rate of $\cO(L/\epsilon)$. When, in addition, $f$ is $\mu$-strongly convex, the rate is linear: $\cO((L /\mu) \log(1/\epsilon))$ \cite{nesterov2013introductory}. To improve the convergence behavior of the method, Polyak proposed to modify GD by the introduction of a (heavy ball) momentum term\footnote{
\blue{A more popular, and certainly theoretically much better understood alternative to Polyak's momentum is the momentum introduced by Nesterov \cite{nesterov1983method, nesterov2013introductory}, leading to the famous {\em accelerated gradient descent (AGD)} method. This method converges non-asymptotically and globally; with optimal sublinear rate $\cO(\sqrt{L/\epsilon})$  \cite{nemirovskii1983problem} when applied to minimizing a smooth convex objective function (class $\cF^{1,1}_{0,L}$), and with the optimal linear  rate $\cO(\sqrt{L/\mu} \log(1/\epsilon))$ when minimizing smooth strongly convex functions (class $\cF^{1,1}_{\mu,L}$).   Recently, variants of Nesterov's momentum have also been introduced for the acceleration of stochastic gradient descent. We refer the interested reader to \cite{ghadimi2016accelerated, allen2017katyusha, jofre2017variance, jalilzadeh2018optimal, Zhou2018, zhou2018icml, Loopless} and the references therein. Both Nesterov's and Polyak's update rules are known in the literature as ``momentum'' methods. In Chapter~\ref{ChapterMomentum}, however, we focus exclusively on Polyak's heavy ball momentum.}}, $\beta(x^k-x^{k-1})$ \cite{polyak1964some, polyak1987introduction}. This leads to the gradient descent method with momentum (mGD), popularly  known as the {\em heavy ball method:} 
$$x^{k+1} = x^k - \omega^k \nabla f(x^k) + \beta (x^k-x^{k-1}).$$  More specifically, Polyak proved that with the correct choice of the stepsize parameters $\omega^k$ and momentum parameter $\beta$, a \textit{local} accelerated linear convergence rate of  $\cO(\sqrt{L/\mu}\log(1/\epsilon))$  can be achieved in the case of twice continuously differentiable, $\mu$-strongly convex objective functions with $L$-Lipschitz gradient \cite{polyak1964some, polyak1987introduction}.

The theoretical behavior of the above deterministic heavy ball method is now well understood in different settings. In contrast to this, there has been less progress in understanding the convergence behavior of  {\em stochastic} variants of the heavy ball method. The key method in this category is stochastic gradient descent with momentum (mSGD; stochastic heavy ball method): $x^{k+1} = x^k -\omega^k g (x^k)  + \beta (x^k-x^{k-1}),$ where $g$ is an unbiased estimator of the true gradient $\nabla f(x^k)$.
 In our setting, where our goal is to solve the stochastic optimization problem \eqref{StochReform_IntroThesis},  mSGD takes the following form:
$$x^{k+1} = x^k - \omega \nabla f_{\mS_k}(x^k)+ \beta (x^k-x^{k-1}),$$
where $\omega > 0$ denotes a fixed stepsize and matrix $\mS_k$ is sampled afresh in each iteration from distribution $\cD$.

In Chapter \ref{ChapterMomentum}, we study several classes of stochastic optimization algorithms enriched with the {\em heavy ball momentum} for solving the three closely related problems already described in the introduction. These are the stochastic quadratic optimization problem \eqref{StochReform_IntroThesis}, the best approximation problem \eqref{BestApproximation_IntroThesis} and the dual quadratic optimization problem \eqref{DualProblem_IntroThesis}. Among the methods studied are: stochastic gradient descent \eqref{SGD_IntroThesis}, stochastic Newton \eqref{SNM_IntroThesis}, stochastic proximal point \eqref{SPPM_IntroThesis} and stochastic dual subspace ascent \eqref{SDSA_IntroThesis}. This is the first time momentum variants of several of these methods are studied.  
We prove global non-asymptotic linear convergence rates for all methods and various measures of success, including primal function values, primal iterates, and dual function values. We also show that the primal iterates converge at an accelerated linear rate in a somewhat weaker sense. This is the first time a linear rate is shown for the stochastic heavy ball method (i.e., stochastic gradient descent method with momentum). Under somewhat weaker conditions,  we establish a sublinear convergence rate for Ces\`{a}ro averages of primal iterates.  Moreover, we propose a novel concept, which we call {\em stochastic momentum}, aimed at decreasing the cost of performing the momentum step. We prove linear convergence of several stochastic methods with stochastic momentum, and show that in some sparse data regimes and for sufficiently small momentum parameters, these methods enjoy better overall complexity than methods with deterministic momentum. Finally, we perform extensive numerical testing on artificial and real datasets. 

\subsection{Chapter 3: Inexact Randomized Iterative Methods}
A common feature of existing randomized iterative methods is that in their update rule a particular subproblem needs to be solved \emph{exactly}. In the large scale setting, often this step can be computationally very expensive. The purpose of the work in Chapter \ref{ChapterInexact} is to reduce the cost of this step by incorporating \emph{inexact updates} in the stochastic methods under study.

From a stochastic optimization viewpoint, we analyze the performance of inexact SGD (iSGD):
$$x^{k+1}=x^k-\omega \nabla f_{\mS_k}(x^k)  + \epsilon^{k},$$
where $\omega >0$ denotes a fixed stepsize, matrix $\mS_k$ is sampled afresh in each iteration from distribution $\cD$ and $\epsilon^{k}$ represents a (possibly random) error coming from inexact computations.

In Chapter~\ref{ChapterInexact}, we propose and analyze {\em inexact} variants of the exact algorithms presented in previous sections for solving the stochastic optimization problem \eqref{StochReform_IntroThesis}, the best approximation problem \eqref{BestApproximation_IntroThesis} and the dual problem \eqref{DualProblem_IntroThesis}. Among the methods studied are: stochastic gradient descent (SGD), stochastic Newton (SN), stochastic proximal point (SPP), sketch and project method (SPM) and stochastic subspace ascent (SDSA). 
In all of these methods, a certain potentially expensive calculation/operation needs to be performed in each step; it is this operation that we propose to be performed \emph{inexactly}.  For instance, in the case of SGD, it is the computation of the stochastic gradient $\nabla f_{\mS_k}(x^k)$, in the case of  SPM is the computation of the projection $\Pi_{\cL_{\mS}, \bB}(x^k)$, and in the case of SDSA it is the computation of the dual update $\mS_k \lambda^k$.

We perform an iteration complexity analysis under an abstract notion of inexactness and also under a more structured form of inexactness appearing in practical scenarios. Typically, an inexact solution of these subproblems can be obtained much more quickly than the exact solution. Since in practical applications the savings thus obtained are larger than the increase in the number of iterations needed for convergence, our inexact methods can be dramatically faster.

Inexact variants of many popular and some more exotic methods, including  randomized block Kaczmarz, randomized Gaussian Kaczmarz and randomized block coordinate descent, can be cast as special cases of our analysis. Finally, we present numerical experiments which demonstrate the benefits of allowing inexactness.

\subsection{Chapter 4: Revisiting Randomized Gossip Algorithms}
In Chapter~\ref{ChapterGossip} we present a new framework for the analysis and design of randomized gossip algorithms for solving the average consensus (AC) problem,  a fundamental problem in distributed computing and multi-agent systems.

In the AC problem we are given an undirected connected network $\cG=(\cV,\cE)$ with node set $\cV=\{1,2,\dots,n\}$ and edges $\cE$. Each node $i \in \cV$ ``knows'' a private value $c_i \in \R$. The goal of AC is for every node to compute the average of these private values, $\bar{c}\eqdef\frac{1}{n}\sum_i c_i$, in a decentralized fashion. That is, the exchange of information can only occur between connected nodes (neighbors). 

In an attempt to connect the AC problem to optimization, consider the simple optimization problem:
\begin{equation}
\label{canosxaadada}
\min_{x = (x_1,\dots, x_n) \in \R^n} \frac{1}{2} \|x-c\|^2 
\quad \text{subject to}  \quad x_1=x_2=\dots=x_n, 
\end{equation}
where $c=(c_1,\dots,c_n)^\top$ is the vector of the initial private values $c_i$. Observe that its optimal solution $x^*$ must necessarily satisfy $x^*_i=\bar{c}$ for all $i \in [n]$, where $\bar{c}$ is the value that each node needs to compute in the AC problem. Note also that, if we represent the constraint $x_1=x_2=\dots=x_n$ of \eqref{canosxaadada} as a linear system then the optimization problem \eqref{canosxaadada} is an instance of the best approximation problem \eqref{BestApproximation_IntroThesis} with $\bB=\bI$ (identity matrix). Perhaps, there is a deeper link here? Indeed, it turns out that properly chosen randomized algorithms for solving \eqref{canosxaadada} can be interpreted as decentralized protocols for solving the AC problem.

A simple way to express the constraint of problem \eqref{canosxaadada} as linear system $\bA x= b$ is by selecting $\bA$ to be the incidence matrix of the network and the right hand side to be the zero vector ($b=0 \in \R^m$). By using this system the most basic randomized gossip algorithm (``randomly pick an edge $e=(i,j) \in \cE$ and then replace the values stored at vertices $i$ and $ j$ by their average") is an instance of the randomized Kaczmarz (RK) method \eqref{RK_IntroThesis} for solving consistent linear systems, applied to this system.

Using this observation as a starting point, in Chapter~\ref{ChapterGossip} we show how classical randomized iterative methods for solving linear systems can be interpreted as gossip algorithms when applied to special systems encoding the underlying network and explain in detail their decentralized nature. 
Our general framework recovers a comprehensive array of well-known gossip algorithms as special cases, including the pairwise randomized gossip algorithm and path averaging gossip, and allows for the development of provably faster variants. The flexibility of the new approach enables the design of a number of new specific gossip methods. 
For instance, we propose and analyze novel \textit{block} and the first provably  \textit{accelerated} randomized gossip protocols, and \textit{dual} randomized gossip algorithms.

From a numerical analysis viewpoint, our work is the first that explores in depth the decentralized nature of randomized iterative methods for linear systems and proposes them as methods for solving the average consensus problem. 

We evaluate the performance of the proposed gossip protocols by performing extensive experimental testing on typical wireless network topologies.

\subsection{Chapter 5: Privacy Preserving Randomized Gossip Algorithms}
In Chapter~\ref{ChapterPrivacy}, we present three different approaches to solving the Average Consensus problem while at the same time protecting the information about the initial values of the nodes. To the best of our knowledge, this work is the first which combines the \emph{gossip framework} with the privacy concept of protection of the initial values. 

The randomized methods we propose are all dual in nature. That is, they solve directly the dual problem \eqref{DualProblem_IntroThesis}. In particular, the three different techniques that we use for preserving the privacy are ``Binary Oracle'', ``$\epsilon$-Gap Oracle'' and ``Controlled Noise Insertion''. 

{\bf Binary Oracle:}
We propose to reduce the amount of information transmitted in each iteration to a single bit. More precisely, when an edge is selected, each corresponding node will only receive information whether the value on the other node is smaller or larger. Instead of setting the value on the selected nodes to their average, each node increases or decreases its value by a pre-specified amount.

{\bf $\epsilon$-Gap Oracle:}
In this case, we have an oracle that returns one of three options and is parametrized by $\epsilon$. If the difference in values of sampled nodes is larger than $\epsilon$, an update similar to the one in Binary Oracle is taken. Otherwise, the values remain unchanged. An advantage compared to the Binary Oracle is that this approach will converge to a certain accuracy and stop there, determined by $\epsilon$ (Binary Oracle will oscillate around optimum for a fixed stepsize). However, in general, it will disclose more information about the initial values.

{\bf Controlled Noise Insertion:}
This approach protects the initial values by inserting noise in the process. Broadly speaking, in each iteration, each of the sampled nodes first adds a noise to its current value, and an average is computed afterwards. Convergence is guaranteed due to the correlation in the noise across iterations. Each node remembers the noise it added last time it was sampled, and in the following iteration, the previously added noise is first subtracted, and a fresh noise of smaller magnitude is added. Empirically, the protection of initial values is provided by first injecting noise into the system, which propagates across the network, but is gradually withdrawn to ensure convergence to the true average.

We give iteration complexity bounds for all proposed privacy preserving randomized gossip algorithms and perform extensive numerical experiments.

\section{Summary}
The content of this thesis is based on the following publications and preprints:

\paragraph{Chapter 2}
\begin{itemize}
\item Nicolas Loizou and Peter Richt\'{a}rik.``Momentum and Stochastic Momentum for Stochastic Gradient, Newton, Proximal Point and Subspace Descent Methods", arXiv preprint arXiv:1712.09677 (2017). \cite{loizou2017momentum}
\item Nicolas Loizou and Peter Richt\'{a}rik. ``Linearly Convergent Stochastic Heavy Ball Method for Minimizing Generalization Error", Workshop on Optimization for Machine Learning, NIPS 2017. \cite{loizou2017linearly}
\end{itemize}

\paragraph{Chapter 3}
\begin{itemize}
\item Nicolas Loizou and Peter Richt\'{a}rik. ``Convergence Analysis of Inexact Randomized Iterative Methods", arXiv preprint arXiv:1903.07971 (2019). \cite{loizou2019Inexact}
\end{itemize}

\paragraph{Chapter 4}
\begin{itemize}
\item Nicolas Loizou and Peter Richt\'{a}rik. ``A New Perspective on Randomized Gossip Algorithms", IEEE Global Conference on Signal and Information Processing (GlobalSIP), pp.440-444, 2016  \cite{LoizouRichtarik}
\item Nicolas Loizou and Peter Richt\'{a}rik. ``Accelerated Gossip via Stochastic Heavy Ball Method." 56th Annual Allerton Conference on Communication, Control, and Computing (Allerton) (pp. 927-934), 2018. \cite{loizou2018accelerated}
\item Nicolas Loizou, Mike Rabbat and Peter Richt\'{a}rik. ``Provably Accelerated Randomized Gossip Algorithms" 2019 IEEE International Conference on Acoustics, Speech and Signal Processing (ICASSP), pp. 7505-7509 \cite{loizou2018provably}
\item Nicolas Loizou and Peter Richt\'{a}rik. ``Revisiting Randomized Gossip Algorithms: General Framework, Convergence Rates and Novel Block and Accelerated Protocols", arXiv preprint arXiv:1905.08645, \cite{loizou2019revisiting}.
\end{itemize}

\paragraph{Chapter 5}
\begin{itemize}
\item Filip Hanzely, Jakub Kone\v{c}n\'{y}, Nicolas Loizou,  Peter Richt\'{a}rik and  Dmitry Grishchenko. ``A Privacy Preserving Randomized Gossip Algorithm via Controlled Noise Insertion", NeurIPS 2018 - Privacy Preserving Machine Learning Workshop,  \cite{hanzely2019privacy}
\item  Filip Hanzely, Jakub Kone\v{c}n\'{y}, Nicolas Loizou,  Peter Richt\'{a}rik and Dmitry Grishchenko.  ``Privacy Preserving Randomized Gossip Algorithms",  arXiv preprint arXiv:1706.07636, 2017 \cite{hanzely2017privacy}
\end{itemize}

During the course of my study, I also co-authored the following works which were not used in the formation of this thesis:
\begin{itemize}
\item Mahmoud Assran, Nicolas Loizou, Nicolas Ballas and Mike Rabbat. ``Stochastic Gradient Push for Distributed Deep Learning", Proceedings of the 36th International Conference on Machine Learning (ICML), 2019 \cite{assran2018stochastic}
\item Robert Mansel Gower, Nicolas Loizou, Xun Qian, Alibek Sailanbayev, Egor Shulgin and Peter Richt\'{a}rik. ``SGD: General Analysis and Improved Rates" Proceedings of the 36th International Conference on Machine Learning (ICML), 2019 \cite{gower2019sgd}
\item Nicolas Loizou. ``Distributionally Robust Games with Risk-Averse Players", In Proceedings of 5th International Conference on Operations Research and Enterprise Systems (ICORES), 186-196, 2016 \cite{loizou2016distributionally}
\end{itemize}

In \cite{gower2019sgd}, we propose a general theory describing the convergence of Stochastic Gradient Descent (SGD) under the ``arbitrary sampling paradigm". Our theory describes the convergence of an infinite array of variants of SGD, each of which is associated with a specific probability law governing the data selection rule used to form minibatches. This is the first time such an analysis is performed, and most of our variants of SGD were never explicitly considered in the literature before. 

In \cite{assran2018stochastic}, we study Stochastic Gradient Push (SGP), an algorithm which combines PushSum gossip protocol with stochastic gradient updates for distributed deep learning. We prove that SGP converges to a stationary point of smooth, non-convex objectives at the same sub-linear rate as SGD, that all nodes achieve consensus, and that SGP achieves a linear speedup with respect to the number of compute nodes. Furthermore, we empirically validate the performance of SGP on image classification (ResNet-50, ImageNet) and machine translation (Transformer, WMT’16 En- De) workloads.

In \cite{loizou2016distributionally} we present a new model of incomplete information games without private information in which the players use a distributionally robust optimization approach to cope with the payoff uncertainty. 

\chapter{Randomized Iterative Methods with Momentum and Stochastic Momentum}
\label{ChapterMomentum}

\section{Introduction} \label{sec:intro}

Two of the most popular algorithmic ideas  for solving optimization problems involving big volumes of data are {\em stochastic approximation}  and {\em momentum}. By stochastic approximation we refer to the practice pioneered by Robins and Monro \cite{robbins1951stochastic} of replacement of costly-to-compute quantities (e.g.,  gradient of the objective function) by cheaply-to-compute {\em stochastic} approximations thereof (e.g., unbiased estimate of the gradient).  By momentum we refer to the {\em heavy ball} technique originally developed by Polyak \cite{polyak1964some} to accelerate the convergence rate of gradient-type methods.  

While much is known about the effects of {\em stochastic approximation} and {\em momentum} in isolation, surprisingly little is known about the {\em combined effect} of these two popular algorithmic techniques. For instance, to the best of our knowledge, there is no context in which a method combining stochastic approximation with momentum  is known to have a linear convergence rate. One of the  contributions of this work is to show that there are important problem classes for which a linear rate can indeed be established for a range of stepsize and momentum parameters.

\subsection{The setting}
In this chapter we study the three closely related problems described in the introduction of this thesis. These are: 
\begin{enumerate}
\item[(i)] stochastic quadratic optimization \eqref{StochReform_IntroThesis}, 
\item[(ii)] best approximation \eqref{BestApproximation_IntroThesis}, and 
\item[(iii)] (bounded) concave quadratic maximization \eqref{DualProblem_IntroThesis}.
\end{enumerate} 

In particular we are interested in the complexity analysis and efficient implementation of momentum variants of the baseline algorithms presented in the introduction. As a reminder, these methods are the following: stochastic gradient descent (SGD), stochastic Newton method (SN), stochastic proximal point methods (SPP), sketch and project method (SPM) and stochastic dual subspace ascent (SDSA).

We are not aware of any successful attempts to analyze momentum variants of SN and SPP, SPM and SDSA and to the best of our knowledge there are no linearly convergent variants of SGD with momentum in any setting.

In addition, we propose and analyze a novel momentum strategy for SGD, SN, SPP and SPM, which we call \emph{stochastic momentum}. It is a stochastic approximation of the popular deterministic heavy ball momentum which in some situations could be particularly beneficial in terms of overall complexity. Similar to the classical momentum, we prove linear convergence rates for this momentum strategy. 

\subsection{Structure of the chapter}

This chapter is organized as follows. In Section~\ref{sec:contributions} we summarize our contributions in the context of existing literature. In  Section~\ref{sec:primal} we describe and analyze  primal  methods  with momentum (mSGD, mSN and mSPP), and in Section~\ref{sec:dual} we describe and analyze the dual method with momentum (mSDSA). In Section~\ref{sec:sm} we describe and analyze primal methods with stochastic momentum (smSGD, smSN and smSPP). Numerical experiments are presented in Section~\ref{sec:experiments}.  Proofs of all key results can be found in Section~\ref{ProofsMomentum}.

 \subsection{Notation}
\label{notationSection}
The following notational conventions are used in this chapter. Boldface upper-case letters denote matrices; $\bI$ is the identity matrix. By $\cL$ we denote the solution set of the linear system $\bA x=b$. By $\cL_{\bS}$, where $\mS$ is a random matrix, we denote the solution set of the {\em sketched} linear system $\bS^\top \bA x= \bS^\top b$. \blue{By $\bA_{i:}$ and $\bA_{:j}$ we indicate the $i_{th}$ row and the $j_{th}$ column of matrix $\bA$, respectively.} Unless stated otherwise, throughout the chapter, $x^*$ is the projection of $x^0$ onto $\cL$ in the $\mB$-norm: $x^*=\Pi_{\cL, \bB}(x^0)$. We also write $[n]\eqdef \{1,2, \dots ,n\}$. \blue{Finally, we say that a function $f : \R^n\rightarrow \R$ belongs to the class $\cF^{1,1}_{0,L}$ if it is convex, continuously differentiable, and its gradient is Lipschitz continuous with constant $L$. If in addition the function $f$ is $\mu$-strongly convex with strong convexity constant $\mu>0$, then we say that it belongs to the class $\cF^{1,1}_{\mu,L}$. When it is also twice continuously differentiable, it belongs to the function class $\cF^{2,1}_{\mu,L}$.}

\section{Momentum Methods and Main Contributions} \label{sec:contributions}
In this section we give a brief review of the relevant literature, and provide a  summary of our contributions.

\subsection{Heavy ball method}

As we have already mentioned in Section~\ref{StructureIntro}, Polyak's seminal work \cite{polyak1964some, polyak1987introduction} showed that deterministic heavy ball method:
$$x^{k+1} = x^k - \omega^k \nabla f(x^k) + \beta (x^k-x^{k-1}),$$
converges with a \textit{local} accelerated linear convergence rate of  $\cO(\sqrt{L/\mu}\log(1/\epsilon))$  in the case of twice continuously differentiable, $\mu$-strongly convex objective functions with $L$-Lipschitz gradient (function class $\mathcal{F}_{\mu, L}^{2,1}$). 

Recently, Ghadimi et al.~\cite{ghadimi2015global} performed a \textit{global} convergence analysis for the heavy ball method. In particular, the authors showed that for a certain combination of the stepsize and momentum parameter, the method  converges sublinearly to the optimum when the objective function is convex  and has Lipschitz gradient ($f\in \mathcal{F}_{0, L}^{1,1}$), and  linearly when the function is also strongly convex ($f\in \mathcal{F}_{\mu, L}^{1,1}$). A particular selection of the parameters $\omega$ and $\beta$ that gives the desired accelerated linear rate was not provided.
 
To the best of our knowledge, despite considerable amount of work on the heavy ball method, there is still no  global convergence analysis which would guarantee an accelerated linear rate for $f\in  \mathcal{F}_{\mu, L}^{1,1}$. However, in the special case of a strongly convex quadratic, an elegant proof was recently proposed in \cite{lessard2016analysis}. Using the notion of integral quadratic constraints from robust control theory, the authors proved that by choosing $\omega^k = \omega=4 / (\sqrt{L}+\sqrt{\mu})^2$ and $\beta=(\sqrt{L/\mu}-1)^2/(\sqrt{L/\mu}+1)^2$, the heavy ball method enjoys a global \textit{asymptotic} accelerated convergence rate of $\cO(\sqrt{L/\mu} \log(1/\epsilon))$. The aforementioned results are summarized in the first part of Table~\ref{ComparisonWIthHeavy}.

Extensions of the heavy ball method have been recently proposed in the proximal setting \cite{ochs2015ipiasco}, non-convex setting \cite{ochs2014ipiano, zavriev1993heavy} and for distributed optimization \cite{ghadimi2013multi}. For more recent analysis under several combinations of assumptions we suggest \cite{sun2019non,sun2019heavy,kulakova2018non}.

\subsection{Stochastic heavy ball method}

In contrast to the recent advances in our theoretical understanding of the (classical) heavy ball method, there has been less progress in understanding the convergence behavior of  {\em stochastic} variants of the heavy ball method. The key method in this category is stochastic gradient descent with momentum (mSGD; aka: stochastic heavy ball method):
\[x^{k+1} = x^k -\omega^k g (x^k)  + \beta (x^k-x^{k-1}),\]
where $g$ is an unbiased estimator of the true gradient $\nabla f(x^k)$. While mSGD is used extensively in practice, especially in deep learning \cite{sutskever2013importance, szegedy2015going, krizhevsky2012imagenet, wilson2017marginal}, its convergence behavior is not very well understood.

In fact, we are aware of only two  papers, both recent, which set out to study the complexity of mSGD: the work of Yang et al.\ \cite{yang2016unified}, and the work of Gadat et al.\ \cite{gadat2016stochastic}. In the former paper, a unified convergence analysis for stochastic gradient methods with momentum  (heavy ball and Nesterov's momentum) was proposed; and an analysis for  both convex and non convex functions was performed. For a general Lipschitz continuous convex objective function with bounded variance, a rate of $\cO(1/\sqrt{k})$ was proved. For this, the authors employed a decreasing stepsize strategy: $\omega^k=\omega^0/\sqrt{k+1}$, where $\omega^0$ is a positive constant.  In \cite{gadat2016stochastic}, the authors first describe several almost sure convergence results in the case of general non-convex coercive functions, and then provide a complexity analysis for the case of quadratic strongly convex function. However, the established rate is slow. More precisely, for strongly convex quadratic  and coercive functions, mSGD  with diminishing stepsizes $\omega^k=\omega^0 / k^\beta$ was shown to convergence as $\cO(1/k^\beta)$ when the momentum parameter is $\beta <1$, and with the rate $\cO(1/\log k)$ when $\beta=1$. The convergence rates established in  both of these papers  are sublinear. In particular, no insight is provided into whether the inclusion of the momentum term provides what \blue{it} was aimed to provide: acceleration. 

The above results are summarized in the second part of Table~\ref{ComparisonWIthHeavy}. From this perspective, our contribution lies in providing an in-depth analysis of mSGD (and, additionally, of SGD with stochastic momentum).  

Many recent papers have built upon our analysis \cite{loizou2017momentum,loizou2017linearly} and have already extended our results in several settings. For more details see  \cite{can2019accelerated,ma2018quasi,devraj2018zap,arnold2019reducing,devraj2018optimal}. 

\paragraph{On definitions of convergence presented in Table~\ref{ComparisonWIthHeavy}.}
In Table~\ref{ComparisonWIthHeavy} we present two main notions to characterize the convergence guarantees presented in the literature for the analysis of deterministic and stochastic heavy ball methods. These are, (i) Local/ Global convergence and (ii) Asymptotic/ Non-asymptotic convergence. For clarity, in this paragraph, we present these definitions of convergence.

\emph{Local convergence} we have only if the convergence guarantees depend on the starting point of the method. That is, if we can guarantee convergence only if $x^0$ is in a neighborhood of the optimal point $x^*$. If the method convergence for any starting point then we have \emph{global convergence.} 

We have \emph{asymptotic convergence} when the provided rate can be shown to hold only after specific number of iterations.  For example, we say that a deterministic method converges with asymptotic linear rate if there is $K>0$ such that for $k>K$ we have $\|x^k-x^*\|^2 <\rho ^k \|x^0-x^*\|$, where $\rho\in (0,1)$. We have a \emph{non-asymptotic convergence} if the rate satisfy the above definition for $K=0$.

\begin{table}
\begin{center}
\scalebox{0.75}{
\begin{tabular}{ |c | c| c| c | c| }
 \hline
 Method & Paper & Rate & Assumptions on $f$ & Convergence\\
 \hline \hline
\multirow{4}{*}{\begin{tabular}{c}Heavy Ball\\ (mGD) \end{tabular}} & Polyak, 1964 \cite{polyak1964some} & accelerated linear & $\mathcal{F}_{\mu, L}^{2,1}$ & local   \\ 
& Ghadimi et al, 2014 \cite{ghadimi2015global} & sublinear & $ \mathcal{F}_{0, L}^{1,1}$ & global \\
  & Ghadimi et al, 2014 \cite{ghadimi2015global} & linear  & $ \mathcal{F}_{\mu, L}^{1,1}$ & global \\
 & Lessard et al, 2016 \cite{lessard2016analysis} &  accelerated linear & $ \mathcal{F}_{\mu, L}^{1,1}$ + quadratic 
& global, asymptotic \\
 \hline
\multirow{3}{*}{\begin{tabular}{c}Stochastic \\ Heavy Ball  \\ (mSGD)\end{tabular}}  & Yang et al. 2016 \cite{yang2016unified} & sublinear & $ \mathcal{F}_{0, L}^{1,1}$ + bounded variance & global, non-asymptotic \\
& Gadat et al, 2016 \cite{gadat2016stochastic} & sublinear & $ \mathcal{F}_{\mu, L}^{1,1}$ + other assumptions & global, non-asymptotic \\
& \textbf{THIS CHAPTER} & {\bf see Table~\ref{OurResults}}  & $ \mathcal{F}_{0, L}^{1,1}$ + quadratic & global, non-asymptotic \\
 \hline
\end{tabular}}
\end{center}
\caption{Known complexity results for gradient descent with momentum (mGD, aka: heavy ball method),  and stochastic gradient descent with momentum (mSGD, aka: stochastic heavy ball method). We give the first linear and accelerated rates for mSGD. For full details on iteration complexity results we obtain, refer to Table~\ref{OurResults}.}
\label{ComparisonWIthHeavy}
\end{table}

\subsection{Connection to incremental gradient methods}

Assuming $\cD$ is discrete distribution (i.e., we sample from $M$ matrices, $\mS^1,\dots,\mS^{M}$, where $\mS^i$ is chosen with probability $p_i>0$. Here, $0<M \in R$ is fixed.), we can write the stochastic optimization problem \eqref{StochReform_IntroThesis} in the {\em finite-sum} form
\begin{equation}\label{eq:finite-sum}\min_{x\in \R^n} f(x) = \sum_{i=1}^{M} p_i f_{\mS^i}(x).\end{equation}
Choosing $x^0=x^1$, mSGD with fixed stepsize $\omega^k=\omega$ applied to \eqref{eq:finite-sum} can be written in the form
\begin{equation}
x^{k+1} =  x^k-\omega \sum_{t=1}^k\beta^{k-t} \nabla f_{\bS_t} (x^t)+\beta^k(x^1-x^0)
\overset{x^0=x^1}{=} x^k-\omega \sum_{t=1}^k\beta^{k-t} \nabla f_{\bS_t} (x^t),
\label{ExpansionOfUpdate}
\end{equation} 
where $\mS_t = \mS^i$ with probability $p_i$. Problem~\eqref{eq:finite-sum} can be also solved using incremental  average/aggregate gradient methods, such as the IAG method of Blatt et al.\ \cite{blatt2007convergent}. These methods have a similar form to \eqref{ExpansionOfUpdate}; \blue{however the past gradients are aggregated somewhat differently.} While \eqref{ExpansionOfUpdate} uses a geometric weighting of the gradients, the incremental average gradient methods use a uniform/arithmetic weighting. The  stochastic average gradient (SAG) method of Schmidt et al.\ \cite{schmidt2017minimizing} can be also written in a similar form. Note that mSGD  uses a geometric weighting of previous gradients, while the the incremental and stochastic average gradient methods use an arithmetic weighting. Incremental and incremental average gradient methods are widely studied algorithms for minimizing objective functions which can expressed as a sum of finite convex functions. For a review of key works on incremental methods and a detailed presentation of the connections with stochastic gradient descent, we refer the interested reader to the excellent survey of Bertsekas~ \cite{bertsekas2011incremental}; see also the work of Tseng~ \cite{tseng1998incremental}. 

In \cite{gurbuzbalaban2017convergence}, an incremental average gradient method with momentum was proposed for minimizing strongly convex functions. It was proved that the method converges to the optimum with linear rate. The rate is always worse than that of the no-momentum variant. However, it was  shown experimentally that in practice the method is faster, especially in problems with high condition number.  In our setting, the objective function has a very specifc structure \eqref{StochReform_IntroThesis}. It is not a finite sum problem as the distribution $\cD$ could be continous; and we also do not assume strong convexity. Thus, the convergence analysis of \cite{gurbuzbalaban2017convergence} can not be directly applied to our problem.

\subsection{Summary of contributions}

We now summarize the contributions of this chapter.

{\bf New momentum methods. } We study several  classes of stochastic optimization algorithms (SGD, SN, SPP and SDSA) {\em with momentum}, which we call mSGD, mSN, mSPP and mSDSA, respectively (see the first and second columns of Table~\ref{tbl:all_methods}). We do this in a simplified  setting with quadratic objectives where all of these algorithms are  equivalent. These methods can be seen as solving three related optimization problems: the stochastic optimization problem \eqref{StochReform_IntroThesis}, the best approximation problem \eqref{BestApproximation_IntroThesis} and its dual.  To the best of our knowledge, momentum variants of SN, SPP and SDSA were not analyzed before. 

{\footnotesize
\begin{table}[t!]
\begin{center}
\scalebox{0.8}{
\begin{tabular}{|c|c|c|}
\hline
&& \\
 \begin{tabular}{ccc}no momentum \\ ($\beta=0$) \end{tabular} & \begin{tabular}{cc} momentum \\ ($\beta\geq 0$) \end{tabular}  & \begin{tabular}{cc} stochastic momentum \\ ($\beta\geq 0$) \end{tabular} \\
&& \\
\hline
\hline
&& \\
 \begin{tabular}{c} SGD \cite[$\omega=1$]{gower2015randomized},  \cite[$\omega>0$]{ASDA}  \\ \\
  $ x^{k+1}= x^k - \omega \nabla f_{\mS_k}(x^k)$  
   \end{tabular}  & \begin{tabular}{ccc} {\bf mSGD} [Sec~\ref{sec:primal}]\\ \\
$ + \beta (x^k-x^{k-1})$ \end{tabular} & 
 \begin{tabular}{c}
 {\bf smSGD} [Sec~\ref{sec:sm}] \\ \\
 $ + n \beta e_{i_k}^\top (x^k-x^{k-1}) e_{i_k}$
 \end{tabular} 
  \\
&& \\
\hline
&& \\
  \begin{tabular}{c} 
 SN \cite{ASDA} \\
 \\
 $  x^{k+1}= x^k - \omega (\nabla^2 f_{\mS_k}(x^k))^{\dagger_\mB} \nabla f_{\mS_k}(x^k)$
  \end{tabular}
 & \begin{tabular}{c}{\bf mSN} [Sec~\ref{sec:primal}] \\\\
 $ + \beta (x^k-x^{k-1})$
\end{tabular} 
 & 
 \begin{tabular}{c}
 {\bf smSN} [Sec~\ref{sec:sm}]\\ \\
 $ + n \beta e_{i_k}^\top (x^k-x^{k-1}) e_{i_k}$
 \end{tabular} 
 \\
&& \\
\hline
&& \\
  \begin{tabular}{c}  
 SPP \cite{ASDA} \\ \\
$ x^{k+1}=  \arg\min_x \left\{ f_{\mS_k}(x) + \frac{1-\omega}{2\omega}\|x-x^k\|_{\mB}^2\right\}
$
  \end{tabular}
&   \begin{tabular}{c}{\bf mSPP} [Sec~\ref{sec:primal}] \\ \\
 $ + \beta (x^k-x^{k-1})$
\end{tabular} 
&  \begin{tabular}{c}
 {\bf smSPP} [Sec~\ref{sec:sm}]\\ \\
 $ + n \beta e_{i_k}^\top (x^k-x^{k-1}) e_{i_k}$
 \end{tabular} 
\\
&& \\
\hline
&& \\
  \begin{tabular}{ccc}SDSA \cite[$\omega=1$]{gower2015stochastic} \\ \\ 
  $y^{k+1} = y^k + \mS_k \lambda^k$ \end{tabular}  
  &  \begin{tabular}{c}{\bf mSDSA} [Sec~\ref{sec:dual}] \\ \\
 $ + \beta (y^k-y^{k-1})$
\end{tabular} 
  &   \\
&& \\
\hline
\end{tabular}
}
\end{center}

\caption{All methods analyzed in this chapter. The methods highlighted in bold (with momentum and stochastic momentum) are new. SGD = Stochastic Gradient Descent,  SN = Stochastic Newton, SPP = Stochastic Proximal Point, SDSA = Stochastic Dual Subspace Ascent. At iteration $k$, matrix $\mS_k$ is drawn in an i.i.d.\ fashion from distribution $\cD$, and a stochastic step is performed.}
\label{tbl:all_methods}
\end{table}
}

{\bf Linear rate.} We prove several (global and non-asymptotic) linear convergence results for our primal momentum methods mSGD/mSN/mSPP. First, we establish a linear rate for the decay of $\E{\|x^k-x^*\|_\mB^2}$ to zero, for a range of stepsizes $\omega> 0$ and momentum parameters $\beta\geq 0$. We show that the same rate holds for the decay of the expected function values $\E{f(x^k)-f(x^*)}$ of \eqref{StochReform_IntroThesis} to zero. Further, the same rate holds for  mSDSA, in particular, this is for the convergence of the dual objective to the optimum. For a summary of these results, and pointers to the \blue{relevant} theorems, refer to lines 1, 2 and 6 of Table~\ref{OurResults}. Unfortunately, the theoretical rate for all our momentum methods is optimized for $\beta = 0$, and  gets worse as the momentum parameter increases. However, no prior linear rate for any of these methods with momentum are known. We give the first linear convergence rate for SGD with momentum (i.e., for the stochastic heavy ball method). 

\begin{table}[t!]
\begin{center}
\scalebox{0.7}{
\begin{tabular}{ |c|c|c|c|c|c| }
 \hline
 Algorithm & $\omega$& \begin{tabular}{c} momentum \\ $\beta$ \end{tabular} & \begin{tabular}{c} Quantity \\ converging to 0 \end{tabular} & \begin{tabular}{c}Rate\\(all: global, non-asymptotic)\end{tabular} & Theorem \\
 \hline
 \hline
 mSGD/mSN/mSPP & $(0,2)$ &  $ \geq 0$ &   $\Exp\left[\|x^k-x^*\|^2_{\bB}\right]$ & linear &\ref{L2}\\
 \hline
mSGD/mSN/mSPP & $(0,2)$ &  $\geq 0$ &   $\Exp[f(x^k) - f(x^*)]$  & linear &\ref{L2}\\
 \hline
mSGD/mSN/mSPP & $(0,2)$ & $\geq 0$ & $\E{f(\hat{x}^k) - f(x^*)}$  & sublinear: $\cO(1/k)$& \ref{cesaro} \\
 \hline 
mSGD/mSN/mSPP  & 1 & $\left(1-\sqrt{0.99 \lambda_{\min}^+}\right)^2 $ & $\|\Exp[x^k-x^*]\|^2_{\bB}$ &  accelerated linear & \ref{theoremheavyball}\\
 \hline
  mSGD/mSN/mSPP  & $\frac{1}{\lambda_{\max}}$ & $ \displaystyle  \left(1-\sqrt{0.99 \frac{\lambda_{\min}^+}{\lambda_{\max}}}\right)^2 $ & $\|\Exp[x^k-x^*]\|^2_{\bB}$   &  \begin{tabular}{c}accelerated linear\\ (better than for $\omega=1$) \end{tabular} & \ref{theoremheavyball}\\
  \hline
  mSDSA & $(0,2)$ & $\geq 0$ & $\E{D(y^*) - D(y^k)}$ & linear & \ref{thm:dual-conv}  \\
  \hline
 smSGD/smSN/smSPP & $(0,2)$ & $\geq 0$ &   $\Exp[\|x^k-x^*\|^2_{\bB}]$ & linear &  \ref{thm:DSHB-L2}\\
 \hline
  smSGD/smSN/smSPP  & $(0,2)$ & $\geq 0$ &   $\Exp[f(x^k)-f(x^*)]$  & linear & \ref{thm:DSHB-L2}\\
 \hline
\end{tabular}}
\end{center}
\caption{Summary of the iteration complexity results obtained in this chapter. Parameters of the methods: $\omega$ (stepsize) and $\beta$ (momentum term). In all cases, $x^*=\Pi_{\cL,\bB}(x^0)$ is the solution of the best approximation problem.  Theorem~\ref{cesaro} refers to Ces\`{a}ro averages:  $\hat{x}^k = \frac{1}{k}\sum_{t=0}^{k-1}x^t$. Theorem~\ref{thm:dual-conv} refers to suboptimality in dual function values ($D$ is the dual function).}
\label{OurResults}
\end{table}

{\bf Accelerated linear rate.} We then study the decay of the larger quantity $\|\E{x^k-x^*}\|_\mB^2$  to zero.  In this case, we establish  an {\em accelerated} linear rate, which depends on the square root of the condition number (of the Hessian of $f$). This is a quadratic speedup when compared to  the no-momentum methods as these depend on the condition number. See lines 4 and 5 of Table~\ref{OurResults}. To the best of our knowledge, this is the first time an  accelerated rate is obtained for the stochastic heavy ball method (mSGD).  Note that there are no global non-asymptotic accelerated linear rates proved even in the non-stochastic setting (i.e., for the heavy ball method). Moreover, we are not aware of any accelerated linear convergence results for  the stochastic proximal point method.

{\bf Sublinear rate for Ces\`{a}ro averages.} We show that the Ces\`{a}ro averages, $\hat{x}^k = \frac{1}{k}\sum_{t=0}^{k-1}x^t$, of all primal momentum methods enjoy a sublinear $\cO(1/k)$ rate (see line 3 of Table~\ref{OurResults}). This holds under weaker assumptions than those which lead to the linear convergence rate. 

{\bf Primal-dual correspondence.} We show that SGD, SN and SPP with momentum arise as affine images of SDSA with momentum (see Theorem~\ref{thm:dual_corresp}). This extends the result of \cite{gower2015stochastic} where this was shown for the no-momentum methods ($\beta=0$) and in the special case of the unit stepsize ($\omega=1$).

{\bf Stochastic momentum.} We propose a new momentum strategy, which we call {\em stochastic momentum}.   Stochastic momentum is a stochastic (coordinate-wise) approximation of the deterministic momentum, and hence is much less costly, which in some situations leads to computational savings in each iteration. On the other hand, the additional noise introduced this way increases the number of iterations needed for convergence. We analyze the SGD, SN and SPP methods with stochastic momentum, and prove linear convergence rates. We prove that in some settings  the overall complexity of SGD with stochastic momentum is better than the overall complexity of SGD with momentum. For instance, this is the case if we consider the randomized Kaczmarz (RK) method as a special case of SGD, and if $\mA$ is sparse.

{\bf Space for generalizations.} We hope that the present work can serve as a starting point for the development of SN, SPP and SDSA methods with momentum for more general classes (beyond special quadratics) of convex and perhaps also nonconvex optimization problems. In such more general settings, however, the symmetry which implies equivalence of these algorithms will break, and hence a different analysis will be needed for each method.

\section{Primal Methods with Momentum} \label{sec:primal}
 Applied to problem \eqref{StochReform_IntroThesis}, i.e., 
$\min_{x\in \R^n} f(x) = \E{f_\mS(x)},$
  the gradient descent method with momentum (also known as the  heavy ball method) of Polyak \cite{polyak1964some, polyak1987introduction} takes the form
\begin{equation}
\label{HB}
x^{k+1} = x^k - \omega \nabla f(x^k) + \beta(x^k - x^{k-1}),
\end{equation}
where $\omega>0$ is a stepsize and $\beta\geq 0$ is a momentum parameter. Instead of marrying the momentum term with gradient descent, we can marry it with SGD. This leads to SGD with momentum (mSGD), also known as the {\em stochastic heavy ball method}:
\begin{equation}\label{eq:SHB-intro} x^{k+1} = x^k - \omega \nabla f_{\mS_k}(x^k) + \beta(x^k-x^{k-1}).\end{equation}

Since SGD is equivalent to SN and SPP, this way we obtain momentum variants of the stochastic Newton  (mSN) and  stochastic proximal point (mSPP) methods.  The method is formally described below:

\begin{algorithm}[H]
  \caption{mSGD / mSN / mSPP}
  \label{anjsdnaodala}
  \begin{algorithmic}[1]
    \Require{Distribution $\mathcal{D}$ from which method samples matrices; positive definite matrix $\bB \in \R^{n\times n}$; stepsize/relaxation parameter $\omega \in \R$; the heavy ball/momentum parameter $\beta$.}
    \Ensure{Choose initial points $x^0,x^1\in \R^n$}
 \For{$k=1,2,\cdots$}
 \State Generate a fresh sample $\bS_k \sim {\cal D}$
 \State Set $x^{k+1}=x^k -\omega \nabla f_{\bS_k}(x^k) + \beta(x^k - x^{k-1}) $
 \EndFor
 \State {\bf Output:} The last iterate $x^k$
 \end{algorithmic}
\end{algorithm}

To the best of our knowledge, momentum variants of SN and SPP were not considered  in the literature before. Moreover, as far as we know, there are no momentum variants of even deterministic variants of \eqref{SNM_IntroThesis}, \eqref{SPPM_IntroThesis} and \eqref{SPM_IntroThesis}, such as incremental or batch Newton method,  incremental or batch proximal point method and  incremental or batch projection method; not even for a problem formulated differently.

In the rest of this section we state our convergence results for mSGD/mSN/mSPP. 

\subsection{Convergence of iterates and function values: linear rate}
\red{In this section we study the convergence rate of the quantity $\Exp[\|x^k-x^*\|_{\mB}^2]$ to zero for mSGD/mSN/mSPP}. We show that for a range  of stepsize parameters $\omega > 0$ and  momentum terms $\beta \geq 0$, the method enjoys global  linear convergence rate; see \eqref{eq:nfiug582}. To the best of our knowledge, these results are the first of their kind for the stochastic heavy ball method.
\red{As a corollary of this result, we obtain convergence of the expected function values; see \eqref{eq:b78gbf80hf}.}

\begin{thm}
\label{L2}
Choose $x^0= x^1\in \R^n$.  Assume exactness. Let $\{x^k\}_{k=0}^\infty$ be the sequence of random iterates produced by mSGD/mSN/mSPP.  Assume $0< \omega < 2$ and $\beta \geq 0$ and that the expressions
\[a_1 \eqdef 1+3\beta+2\beta^2 - (\omega(2-\omega) +\omega\beta)\lambda_{\min}^+, \qquad \text{and}\qquad
a_2 \eqdef \beta +2\beta^2 + \omega \beta \lambda_{\max}\]
satisfy $a_1+a_2<1$. Let $x^* = \Pi_{\mathcal{L},\bB}(x^0)$. Then 
\begin{equation}\label{eq:nfiug582}\Exp\left[\|x^{k}-x^*\|^2_{\bB}\right] \leq q^k (1+\delta)  \|x^{0}-x^*\|^2_{\bB}\end{equation}
and 
\begin{equation}\label{eq:b78gbf80hf}\Exp\left[f(x^k)\right] \leq q^k  \frac{\lambda_{\max}}{2} (1+\delta) \|x^{0}-x^*\|^2_{\bB},\end{equation}
where  $q=\frac{a_1+\sqrt{a_1^2+4a_2}}{2}$ and $\delta=q-a_1$. Moreover, $a_1+a_2 \leq q <1$.
\end{thm}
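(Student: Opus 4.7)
The plan is to derive a two-term recurrence of the form $E_{k+1} \le a_1 E_k + a_2 E_{k-1}$ for $E_k \eqdef \Exp\|x^k-x^*\|_{\bB}^2$, and then convert it to the claimed linear rate via a Lyapunov-type argument with potential $V_k \eqdef E_{k+1} + \delta E_k$, where $\delta = q-a_1$.

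I would first expand $\|x^{k+1}-x^*\|_{\bB}^2$ using the mSGD update. With the shorthand $u = x^k - x^*$, $v = \nabla f_{\bS_k}(x^k)$, and $w = x^k - x^{k-1}$ this gives
\begin{equation*}
\|x^{k+1}-x^*\|_{\bB}^2 = \|u\|_{\bB}^2 - 2\omega\langle u,v\rangle_{\bB} + \omega^2\|v\|_{\bB}^2 + \beta^2\|w\|_{\bB}^2 + 2\beta\langle u,w\rangle_{\bB} - 2\omega\beta\langle v,w\rangle_{\bB}.
\end{equation*}
The no-momentum terms $\|u\|_{\bB}^2 - 2\omega\langle u,v\rangle_{\bB} + \omega^2\|v\|_{\bB}^2$ collapse, via identities \eqref{normbound} and \eqref{functionequivalence}, into $\|u\|_{\bB}^2 - 2\omega(2-\omega)f_{\bS_k}(x^k)$, exactly as in \eqref{x_k_omega}. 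For the cross term $\langle u,w\rangle_{\bB}$ I would use the parallelogram identity $2\langle u,w\rangle_{\bB} = \|u\|_{\bB}^2 + \|w\|_{\bB}^2 - \|x^{k-1}-x^*\|_{\bB}^2$, which produces a useful negative coefficient on $\|x^{k-1}-x^*\|_{\bB}^2$ and turns out to be essential for recovering the exact constant $a_2$.

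The main obstacle is the mixed term $\langle v,w\rangle_{\bB}$, since $w$ is not aligned with $u$. I would handle it by invoking convexity of the quadratic $f_{\bS_k}$ in the $\bB$-inner product to obtain
\begin{equation*}
-\langle v,w\rangle_{\bB} = \langle \nabla f_{\bS_k}(x^k),\, x^{k-1}-x^k\rangle_{\bB} \le f_{\bS_k}(x^{k-1}) - f_{\bS_k}(x^k).
\end{equation*}
After taking conditional expectation over $\bS_k$, independence gives $\Exp[f_{\bS_k}(x^{k-1})] = f(x^{k-1})$ and $\Exp[f_{\bS_k}(x^k)] = f(x^k)$. I would then use $\|w\|_{\bB}^2 \le 2\|u\|_{\bB}^2 + 2\|x^{k-1}-x^*\|_{\bB}^2$, the quadratic growth inequality \eqref{b3} to absorb the negative $f(x^k)$ contribution into the coefficient of $\|u\|_{\bB}^2$ (producing the $\lambda_{\min}^+$ term in $a_1$), and the upper bound \eqref{b2} to absorb the positive $f(x^{k-1})$ contribution into the coefficient of $\|x^{k-1}-x^*\|_{\bB}^2$ (producing the $\lambda_{\max}$ term in $a_2$). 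Collecting all coefficients and taking total expectation should yield $E_{k+1} \le a_1 E_k + a_2 E_{k-1}$ with exactly the constants in the statement.

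With the recurrence in hand, I would pass to $V_k = E_{k+1}+\delta E_k$. Since $q$ is the larger root of $t^2 - a_1 t - a_2 = 0$, it satisfies $q^2 = a_1 q + a_2$, equivalently $a_1 + \delta = q$ and $a_2 = q\delta$. Substituting the recurrence gives $V_{k+1} \le (a_1+\delta) E_{k+1} + a_2 E_k = q V_k$, hence $V_k \le q^k V_0$. The initialization $x^0=x^1$ yields $E_1 = E_0$ and thus $V_0 = (1+\delta)E_0$, which combined with $E_{k+1} \le V_k$ delivers \eqref{eq:nfiug582}. The function-value bound \eqref{eq:b78gbf80hf} is then immediate by applying \eqref{b2} and taking expectations. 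The three algebraic claims $q<1$, $a_1+a_2 \le q$, and $\delta \ge 0$ are short routine checks: $q<1$ is equivalent (after squaring) to $a_1+a_2<1$; $a_1+a_2 \le q$ squares to $4a_2(a_1+a_2-1)\le 0$, again following from the hypothesis; and $\delta = q-a_1 \ge 0$ holds because $a_2 \ge 0$ implies $\sqrt{a_1^2+4a_2} \ge a_1$.
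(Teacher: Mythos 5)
Your proposal is correct and follows essentially the same route as the paper's proof: the same expansion of $\|x^{k+1}-x^*\|_{\bB}^2$, the same use of \eqref{normbound}--\eqref{functionequivalence}, the parallelogram identity for the cross term, convexity for $\langle \nabla f_{\bS_k}(x^k), x^{k-1}-x^k\rangle_{\bB}$, and the bounds \eqref{b2}--\eqref{b3}, leading to the recurrence $E_{k+1}\le a_1E_k+a_2E_{k-1}$. Your Lyapunov step with $V_k=E_{k+1}+\delta E_k$ and $q^2=a_1q+a_2$ is exactly the content of the paper's Lemma~\ref{LemmaGlobal}, so no further comment is needed.
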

\begin{proof} See Section~\ref{app:1}.
\end{proof}

In the above theorem we obtain a global linear rate. To the best of our knowledge, this is the first time that linear rate is established for a stochastic variant of the heavy ball method (mSGD) in any setting. All existing results are sublinear. These seem to be  the first momentum variants of SN and SPP methods.

If we choose $\omega \in (0,2)$, then the condition $a_1+a_2<1$ is satisfied for all   \begin{equation}\label{rangesSHB} 0\leq \beta< \frac{1}{8} \left( -4+\omega \lambda_{\min}^+-\omega \lambda_{\max} +\sqrt{(4-\omega \lambda_{\min}^++\omega \lambda_{\max})^2+16\omega (2-\omega) \lambda_{\min}^+ }\right).\end{equation}

If $\beta=0$, mSGD reduces to  SGD analyzed in \cite{ASDA}. In this special case, $q = 1-\omega(2-\omega)\lambda_{\min}^+$, which is the rate established in \cite{ASDA}. Hence, our result is more general.

Let $q(\beta)$ be the rate as a function of $\beta$. Note that since $\beta\geq 0$, we have
\begin{eqnarray} q(\beta) &\geq & a_1 + a_2 \notag \\
&=& 1 + 4\beta + 4\beta^2 + \omega\beta(\lambda_{\max}-\lambda_{\min}^+) - \omega(2-\omega)\lambda_{\min}^+ \notag \\
&\geq & 1-\omega(2-\omega)\lambda_{\min}^+ = q(0).\label{eq:qbeta}\end{eqnarray}
Clearly, the lower bound on $q$ is an increasing function of $\beta$. 
Also, for any $\beta$ the rate is always inferior to that of SGD ($\beta=0$). It is an open problem whether one can prove a strictly better rate for mSGD than for SGD.

Our next proposition states that $\Pi_{\cL,\mB}(x^k) = x^*$ \blue{(recall that $x^*\eqdef \Pi_{\cL, \mB}(x^0)$)} for all iterations $k$ of mSGD. \blue{This invariance property plays an important role in our convergence analysis, } and ``explains'' why the algorithm to converges to $x^*$. 

\begin{prop}\label{prop:projections}
Let $x^0=x^1\in \R^n$ be the starting points of the mSGD method and let $\{x^k\}$ be the random iterates generated by mSGD. Then $\Pi_{\cL,\bB}(x^k)=\Pi_{\cL,\bB}(x^0)$ for all $k\geq 0$.
\end{prop}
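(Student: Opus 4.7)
The natural strategy is to show, by induction on $k$, that $x^k - x^0 \in \range{\mB^{-1}\mA^\top}$, and then invoke the standard characterization of the $\mB$-projection onto $\cL$: a point $z\in\cL$ equals $\Pi_{\cL,\mB}(x)$ if and only if $x - z \in \range{\mB^{-1}\mA^\top}$ (this is precisely the KKT/optimality condition for $\min_{z\in\cL}\tfrac12\|z-x\|_\mB^2$, since $\mB(z-x)$ must lie in the row space of $\mA$).

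\textbf{Base case.} Since $x^0 = x^1$, we trivially have $x^0-x^0 = 0$ and $x^1-x^0 = 0$, both in $\range{\mB^{-1}\mA^\top}$.

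\textbf{Inductive step.} Assume $x^{k-1}-x^0$ and $x^k-x^0$ both lie in $\range{\mB^{-1}\mA^\top}$. From the mSGD update,
\[
x^{k+1} - x^0 \;=\; (x^k - x^0) \;-\; \omega\, \nabla f_{\mS_k}(x^k) \;+\; \beta(x^k - x^{k-1}).
\]
By the inductive hypothesis, the first and third terms lie in $\range{\mB^{-1}\mA^\top}$. For the middle term, the closed form \eqref{Gradf_S_IntroThesis} (equivalently the equality $\nabla f_{\mS_k}(x^k) = \mB^{-1}\mA^\top \mS_k(\mS_k^\top\mA\mB^{-1}\mA^\top\mS_k)^\dagger \mS_k^\top(\mA x^k-b)$) shows that $\nabla f_{\mS_k}(x^k) \in \range{\mB^{-1}\mA^\top}$. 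Adding these gives $x^{k+1}-x^0 \in \range{\mB^{-1}\mA^\top}$, completing the induction.

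\textbf{Conclusion.} Let $x^* \eqdef \Pi_{\cL,\mB}(x^0)$. Then $x^0 - x^* \in \range{\mB^{-1}\mA^\top}$, so for every $k\geq 0$,
\[
x^k - x^* \;=\; (x^k - x^0) + (x^0 - x^*) \;\in\; \range{\mB^{-1}\mA^\top}.
\]
Since $x^* \in \cL$ and the difference $x^k - x^*$ lies in $\range{\mB^{-1}\mA^\top}$, the projection characterization above forces $\Pi_{\cL,\mB}(x^k) = x^* = \Pi_{\cL,\mB}(x^0)$. No step in this argument is really an obstacle; the only subtlety is recognizing that the mSGD increment (gradient step plus momentum term) is manifestly a $\mB^{-1}\mA^\top$-image, which is exactly what makes $\Pi_{\cL,\mB}$ an invariant of the iteration.
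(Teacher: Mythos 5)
Your proof is correct and follows essentially the same route as the paper: an induction showing $x^k \in x^0 + \range{\mB^{-1}\mA^\top}$ (using that both the stochastic gradient and the momentum increment lie in that range), followed by the observation that $\range{\mB^{-1}\mA^\top}$ is the $\mB$-orthogonal complement of ${\rm Null}(\mA)$, so all iterates share the same $\mB$-projection onto $\cL$. Your phrasing of the last step via the KKT characterization of $\Pi_{\cL,\mB}$ is just a restatement of the paper's "parallel to ${\rm Null}(\mA)$" argument.
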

\begin{proof} Note that in view of \eqref{f_s_IntroThesis}, $ \nabla f_{\mS}(x) = \mB^{-1}\mA^\top \mH (\mA x - b) \in {\rm Range}(\mB^{-1}\mA^\top)$. Since 
\[x^{k+1} = x^k -\omega \nabla f_{\bS_k}(x^k) + \beta(x^k - x^{k-1}),\]
and since $x^0=x^1$, it can shown by induction that $x^k \in x^0 + {\rm Range}(\mB^{-1}\mA^\top)$ for all $k$. However, ${\rm Range}(\mB^{-1}\mA^\top)$ is the orthogonal complement to ${\rm Null}(\mA)$ in the $\mB$-inner product. Since $\cL$ is parallel to ${\rm Null}(\mA)$, vectors $x^k$ must have the same $\mB$-projection onto $\cL$ for all $k$: $\Pi_{\cL, \mB}(x^0) = x^*$.
\end{proof}

\blue{This property also intuitively explains  why mSGD converges to the projection of the {\em starting} point onto $\cL$. Indeed, one may ask: why is the starting point special? After all, each iterate depends on the previous two iterates only, and all older iterates, including the starting point $x^0$, seem to be eventually ``forgotten''. Still, the iterative process has the property that all iterates live in the affine space passing through $x^0$ and orthogonal to $\cL$, which means that the projection of {\em all} iterates onto $\cL$ is identical.
}

\subsection{Ces\`{a}ro average: sublinear rate without exactness assumption}

In this section we present the convergence analysis of the function values computed on the Ces\`{a}ro average. Again our results are global in nature. To the best of our knowledge \blue{these} are the first results that show $\cO(1/k)$ convergence of the stochastic heavy ball method. Existing results apply in more general settings at the expense of slower rates. In particular, \cite{yang2016unified} and \cite{gadat2016stochastic} get $\cO(1/\sqrt{k})$ and $\cO(1/k^{\beta})$ convergence \blue{when $\beta \in (0,1)$}, respectively. When $\beta=1$, \cite{gadat2016stochastic} gets $\cO(1/\log(k))$ rate.

\begin{thm}
\label{cesaro}
Choose $x^0=x^1$ and let $\{x^k\}_{k=0}^\infty$ be the random iterates produced by mSGD/mSN/ mSPP, where the momentum parameter $0\leq \beta <1$ and relaxation parameter (stepsize) $\omega > 0$ satisfy $\omega + 2\beta <2$. Let $x^*$ be any vector satisfying $f(x^*)=0$. If we let $\hat{x}^k=\frac{1}{k}\sum_{t=1}^{k}x^t$, then
$$\Exp\left[f(\hat{x}^k)\right] \leq \frac{(1-\beta)^2\|x^0-x^*\|_{\mB}^2 + 2\omega \beta f(x^0)}{2\omega(2-2\beta-\omega) k}.$$
\end{thm}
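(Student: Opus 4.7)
The plan is to eliminate the two-step nature of the recursion through the change of variables $z^k \eqdef x^k + \tfrac{\beta}{1-\beta}(x^k - x^{k-1})$. A direct algebraic manipulation of the update rule in Algorithm~\ref{anjsdnaodala} shows that $z^k$ satisfies the one-step SGD-like recursion $z^{k+1} = z^k - \tfrac{\omega}{1-\beta}\nabla f_{\mS_k}(x^k)$, and since $x^0 = x^1$ we also have $z^1 = x^0$, which pins down the initial condition.

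First, I would expand $\|z^{k+1} - x^*\|_{\bB}^2$ via this recursion. Writing $z^k - x^* = (x^k - x^*) + \tfrac{\beta}{1-\beta}(x^k - x^{k-1})$, applying \eqref{normbound} to the quadratic term and \eqref{functionequivalence} to the $(x^k-x^*)$ piece of the cross term yields
\[
\|z^{k+1} - x^*\|_{\bB}^2 = \|z^k - x^*\|_{\bB}^2 - \tfrac{4\omega}{1-\beta}f_{\mS_k}(x^k) + \tfrac{2\omega^2}{(1-\beta)^2}f_{\mS_k}(x^k) - \tfrac{2\omega\beta}{(1-\beta)^2}\langle \nabla f_{\mS_k}(x^k), x^k - x^{k-1}\rangle_{\bB}.
\]
The identity \eqref{functionequivalence} remains legitimate for our $x^*$ even without exactness: since $f_{\mS}\geq 0$ and $\Exp[f_{\mS}(x^*)] = f(x^*) = 0$, we have $f_{\mS}(x^*)=0$ almost surely, hence $\mH(\mA x^* - b)=0$ a.s., which is all that is required in the proof of \eqref{functionequivalence}. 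Taking conditional expectation in $\mS_k$ and applying the convexity bound $\langle \nabla f(x^k), x^k - x^{k-1}\rangle_{\bB} \geq f(x^k) - f(x^{k-1})$ to the remaining cross term yields the one-step inequality
\[
\Exp\!\left[\|z^{k+1} - x^*\|_{\bB}^2 \,\big|\, x^k, x^{k-1}\right] \leq \|z^k - x^*\|_{\bB}^2 - \tfrac{2\omega(2-\beta-\omega)}{(1-\beta)^2} f(x^k) + \tfrac{2\omega\beta}{(1-\beta)^2} f(x^{k-1}).
\]

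Finally I would sum this inequality over $k = 1, \dots, K$ and take full expectation. The function-value terms partially telescope: the $-\tfrac{2\omega(2-\beta-\omega)}{(1-\beta)^2}$ coefficient on $f(x^k)$ at iteration $k$ combines with the $+\tfrac{2\omega\beta}{(1-\beta)^2}$ coefficient on $f(x^{k-1})$ from iteration $k+1$ to leave a net coefficient $-\tfrac{2\omega(2-2\beta-\omega)}{(1-\beta)^2}$ on each interior $f(x^k)$, which is strictly negative precisely because of the hypothesis $\omega + 2\beta < 2$. Dropping the nonnegative boundary term $\Exp[\|z^{K+1}-x^*\|_{\bB}^2]$, substituting $z^1 = x^0$, dividing by $K$, and applying Jensen's inequality to the convex function $f$ at $\hat{x}^K$ delivers the claimed bound. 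The main obstacle is controlling the cross-term $\langle \nabla f_{\mS_k}(x^k), x^k - x^{k-1}\rangle_{\bB}$ that couples two consecutive iterates; the change of variables absorbs the bulk of the momentum into $z^k$, leaving a single tractable cross term that is handled via one convexity inequality rather than a second-moment estimate of the momentum direction.
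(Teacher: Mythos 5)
Your proposal is correct and follows essentially the same route as the paper: the paper's proof introduces $p^t=\tfrac{\beta}{1-\beta}(x^t-x^{t-1})$ and tracks $d^t=\|x^t+p^t-x^*\|_{\mB}^2$, which is exactly your $z^k$-substitution, and then uses the identities \eqref{normbound}, \eqref{functionequivalence}/\eqref{asnda} together with the same convexity inequality before telescoping (the paper packages the telescoping via a Lyapunov quantity $\theta^t=\Exp[d^t]+\tfrac{2\omega\beta}{(1-\beta)^2}\Exp[f(x^{t-1})]$, but this is the same computation). Your side remark justifying \eqref{functionequivalence} for any $x^*$ with $f(x^*)=0$ rather than $x^*\in\cL$ is a valid and welcome addition, since the theorem is stated without the exactness assumption.
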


\begin{proof} See Section~\ref{app:acc212}.
\end{proof}

In the special case of $\beta=0$, the above theorem gives the rate
$$\Exp\left[f(\hat{x}^k)\right] \leq \frac{\|x^0-x^*\|^2_{\bB}}{2\omega (2 -\omega) k} .$$
This is the convergence rate for Ces\`{a}ro averages of the ``basic method'' (i.e., SGD) established in \cite{ASDA}. 

Our proof strategy  is similar to  \cite{ghadimi2015global} in which the first global convergence analysis of the (deterministic) heavy ball method was presented. There it was shown that when the objective function has a Lipschitz continuous gradient, the Ces\`{a}ro averages of the iterates converge to the optimum at a rate of $\cO(1/k)$. To the best of our knowledge, there are no results in the literature that prove the same rate of convergence in the stochastic case for any class of objective functions.  

In \cite{yang2016unified} the authors analyzed  mSGD for general Lipshitz continuous convex objective functions (with bounded variance) and proved the {\em sublinear} rate $\cO(1/\sqrt{k})$. In \cite{gadat2016stochastic}, a complexity analysis is provided for the case of quadratic strongly convex smooth coercive functions. A  sublinear convergence  rate of $\cO(1/k^\beta)$, where $\beta \in (0,1)$, was proved. In contrast to our results, where we assume fixed stepsize $\omega$, both papers analyze mSGD with diminishing stepsizes.

\subsection{Accelerated linear rate for expected iterates}

In this section we show that by a proper combination of the relaxation (stepsize) parameter  $\omega$ and the momentum parameter $\beta$, mSGD/mSN/mSPP enjoy an {\em accelerated} linear convergence rate in mean.  \blue{That is, while SGD  needs $\cO(\theta\log (1/\epsilon))$ iterations to find $x^k$ such that $\|\E{x^k}-x^*\|_\mB^2 \leq \epsilon$ \cite{ASDA},  mSGD only needs $\cO(\sqrt{\theta}\log (1/\epsilon))$ iterations (see Theorem~\ref{theoremheavyball}(ii)), where $\theta = \lambda_{\max}/\lambda_{\min}^+$.  The word {\em acceleration} typically refers to improvement from a leading factor of $\theta$ to $\sqrt{\theta}$, which is significant in the ill-conditioned case, i.e., when $\theta$ is very large. In other words, the linear rate  $(1-\sqrt{0.99/\theta})^k$ of mSGD  is much better than linear rate $(1-1/\theta)^k$ of SGD, which in view of \eqref{eq:qbeta} is better  than the linear rate of mSGD  established in \eqref{eq:nfiug582} (for a different quantity converging to zero) . }

\begin{thm}
\label{theoremheavyball}
Assume exactness. Let $\{x^k\}_{k=0}^{\infty}$ be the sequence of random iterates produced by mSGD / mSN / mSPP, started with $x^0, x^1 \in \R^n$ satisfying the relation $x^0-x^1 \in {\rm Range}(\bB^{-1} \bA^ \top)$, with relaxation parameter (stepsize)  $0<\omega \leq1/\lambda_{\max}$ and momentum parameter  $(1-\sqrt{\omega \lambda_{\min}^+})^2 < \beta <1$. Let $x^* = \Pi_{\cL,\mB}(x^0)$. Then there exists constant $C >0$ such that for all $k\geq0$ we have 
$$\left\|\Exp\left[x^{k} -x^*\right] \right\|_{\bB}^2  \leq \beta^k C.$$

\begin{itemize}
\item[(i)] If we choose $ \omega= 1$ and $\beta= \left(1- \sqrt{0.99 \lambda_{\min}^+}\right) ^2$ then
 $\left\|\Exp\left[x^{k} -x^*\right] \right\|_{\bB}^2  \leq \beta^k C$
and the iteration complexity becomes
$ \cO\left(\sqrt{1/ \lambda_{\min}^+}\log(1/\epsilon)\right)$.
\item[(ii)] If we choose $ \omega= 1/\lambda_{\max}$ and $\beta= \left(1- \sqrt{ \frac{0.99\lambda_{\min}^+}{\lambda_{\max}}}\right)^2$ then
$\left\|\Exp\left[x^{k} -x^*\right] \right\|_{\bB}^2  \leq \beta^k C$
and the iteration complexity becomes
$\cO \left(\sqrt{\lambda_{\max}/ \lambda_{\min}^+}\log(1/\epsilon)\right)$.

\end{itemize}
\end{thm}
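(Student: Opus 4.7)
My plan is to take expectations through the mSGD update, reduce to a decoupled second-order scalar recursion per eigendirection of $\bW$, and then choose $\omega,\beta$ so that the recursion has complex-conjugate characteristic roots of modulus exactly $\sqrt{\beta}$, yielding a $\beta^k$ decay. First, using $\nabla f_{\bS_k}(x^k)=\bB^{-1}\bZ_k(x^k-x^*)$ from \eqref{Gradf_S_IntroThesis} and the tower property (since $\bS_k$ is independent of $x^k,x^{k-1}$), I get
\begin{equation*}
\Exp[x^{k+1}-x^*] = \left((1+\beta)\bI-\omega\bB^{-1}\Exp[\bZ]\right)\Exp[x^k-x^*] - \beta\,\Exp[x^{k-1}-x^*].
\end{equation*}
Setting $\tilde r^k \eqdef \bB^{1/2}\Exp[x^k-x^*]$ and recalling $\bW=\bB^{-1/2}\Exp[\bZ]\bB^{-1/2}$ from \eqref{MatrixW_IntroThesis}, this becomes $\tilde r^{k+1}=\bigl((1+\beta)\bI-\omega\bW\bigr)\tilde r^k-\beta\tilde r^{k-1}$, and $\|\Exp[x^k-x^*]\|_\bB^2 = \|\tilde r^k\|^2$.

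Next, I diagonalize $\bW=\bU\bLambda\bU^\top$ and let $s^k \eqdef \bU^\top\tilde r^k$. This decouples the recurrence coordinate-wise:
\begin{equation*}
s^{k+1}_i = (1+\beta-\omega\lambda_i)\,s^k_i - \beta\,s^{k-1}_i, \qquad i=1,\dots,n.
\end{equation*}
For coordinates with $\lambda_i=0$ (the null space of $\bW$), the characteristic roots are $1$ and $\beta$, which would break acceleration. The starting-point hypothesis handles this: $x^0-x^*=x^0-\Pi_{\cL,\bB}(x^0)\in{\rm Range}(\bB^{-1}\bA^\top)$ by definition of the $\bB$-projection, and $x^0-x^1\in{\rm Range}(\bB^{-1}\bA^\top)$ by assumption, hence both $\tilde r^0$ and $\tilde r^1$ lie in $\bB^{1/2}{\rm Range}(\bB^{-1}\bA^\top)={\rm Range}(\bB^{-1/2}\bA^\top)$, which by exactness \eqref{exactnessCondition} coincides with ${\rm Range}(\bW)$. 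So the null-space components of $s^0,s^1$ vanish and stay zero along the recursion; only indices $i$ with $\lambda_i>0$ contribute.

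For $\lambda_i>0$, the characteristic polynomial $\xi^2-(1+\beta-\omega\lambda_i)\xi+\beta=0$ has discriminant $\Delta_i=(1+\beta-\omega\lambda_i)^2-4\beta$. The two conditions I need are $(1-\sqrt\beta)^2\le\omega\lambda_i\le(1+\sqrt\beta)^2$, so that $\Delta_i\le 0$ and hence the roots are complex-conjugate with $|\xi_i|^2=\beta$. The upper bound is immediate from $\omega\le1/\lambda_{\max}$ since then $\omega\lambda_i\le 1\le(1+\sqrt\beta)^2$. For the lower bound, the hypothesis $\beta>(1-\sqrt{\omega\lambda_{\min}^+})^2$ gives $1-\sqrt\beta<\sqrt{\omega\lambda_{\min}^+}$, so $(1-\sqrt\beta)^2<\omega\lambda_{\min}^+\le\omega\lambda_i$. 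Writing $s^k_i = A_i\xi_i^k+\bar A_i\bar\xi_i^k$ (real-valued, so the coefficients are complex conjugates), I obtain $|s^k_i|^2\le 4|A_i|^2\beta^k$, and summing over $i$ gives $\|\Exp[x^k-x^*]\|_\bB^2=\sum_i|s^k_i|^2\le C\beta^k$ with $C=4\sum_i|A_i|^2$ determined by $s^0,s^1$ (equivalently by $x^0,x^1$).

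Finally, parts (i) and (ii) follow by plugging in the stated parameter choices and checking the hypotheses are met: both choices satisfy $\omega\le1/\lambda_{\max}$ (using $\lambda_{\max}\le 1$ from \cite{ASDA} for (i)) and $\beta=(1-\sqrt{\omega\lambda_{\min}^+\cdot 0.99})^2>(1-\sqrt{\omega\lambda_{\min}^+})^2$. To convert the $\beta^k$ rate to an iteration complexity, I apply Lemma~\ref{laksoakls} with $\rho=\beta$, using $\frac{1}{1-\beta}\le\frac{1}{\sqrt{0.99\,\omega\lambda_{\min}^+}}=\cO(1/\sqrt{\omega\lambda_{\min}^+})$, which gives $\cO(\sqrt{1/\lambda_{\min}^+}\log(1/\epsilon))$ for (i) and $\cO(\sqrt{\lambda_{\max}/\lambda_{\min}^+}\log(1/\epsilon))$ for (ii). The main obstacle I anticipate is the bookkeeping for the null-space/range decomposition in step two: the starting-point assumption is there precisely to kill the otherwise non-decaying mode associated with $\lambda_i=0$, and this must be tracked carefully to ensure the $C$ in the bound is finite and independent of $k$.
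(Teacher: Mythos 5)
Your proposal is correct and follows essentially the same route as the paper's proof: pass to the expected iterates, multiply by $\bB^{1/2}$, diagonalize $\bW$, kill the $\lambda_i=0$ modes via the range/starting-point assumption, and solve the decoupled second-order recursions whose characteristic roots are complex conjugates of modulus $\sqrt{\beta}$. The only cosmetic differences are that the paper invokes Proposition~\ref{prop:projections} and Lemma~\ref{Forweakconvergence} where you argue directly with ${\rm Range}(\bW)$, and it cites Lemma~\ref{recurence} for the trigonometric form of the recurrence solution where you write $A_i\xi_i^k+\bar A_i\bar\xi_i^k$.
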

\begin{proof} See Section~\ref{app:acc}.
\end{proof}

Note that the convergence factor is precisely equal to the value of the momentum parameter $\beta$. Let $x$ be any random vector in $\R^n$ with finite mean $\Exp[x]$, and $x^*\in \R^n$ is any reference vector (for instance, any solution of $\mA x = b$). Then we have the identity (see, for instance \cite{gower2015randomized})
\begin{equation}
\label{weakstrong}
\E{\|x-x^*\|_{\bB}^2} = \left\|\E{x-x^*} \right\|_{\bB}^2 + \E{\|x-\Exp[x]\|^2_{\bB}}.
\end{equation}
\red{This means that the quantity $\E{\|x-x^*\|_{\bB}^2}$ appearing in the convergence result of Theorem~\ref{L2} is larger than 
$\|\E{x-x^*}\|_{\bB}^2$ appearing in the the convergence result of Theorem~\ref{theoremheavyball}, and hence harder to push to zero. As a corollary, the convergence  rate of $\E{\|x-x^*\|_{\bB}^2}$ to zero established in Theorem~\ref{L2})   implies the same rate for the convergence of  $\|\E{x-x^*}\|_{\bB}^2$ to zero. However, note that in Theorem~\ref{theoremheavyball} we have established an {\em accelerated} rate for $\|\E{x-x^*}\|_{\bB}^2$.  A similar theorem, also obtaining an accelerated rate for $\|\E{x-x^*}\|_{\bB}^2$, was established in \cite{ASDA} for an accelerated variant of SGD in the sense of Nesterov.}

\section{Dual Methods with Momentum} \label{sec:dual}

In the previous sections we focused on methods for solving the stochastic optimization problem \eqref{StochReform_IntroThesis} and the best approximation problem~\eqref{BestApproximation_IntroThesis}. In this section we focus on the dual of the best approximation problem, and propose a momentum variant of SDSA, which we call mSDSA. 

\begin{algorithm}[H]
  \caption{Stochastic Dual Subspace Ascent with Momentum (mSDSA)}
  \begin{algorithmic}[1]
    \Require{Distribution $\mathcal{D}$ from which method samples matrices; positive definite matrix $\bB \in \R^{n\times n}$; stepsize/relaxation parameter $\omega \in \R$ the heavy ball/momentum parameter $\beta$. }
    \Ensure{Choose initial points $y^0 =  y^1 = 0 \in \R^m$}
 \For{$k=1,2,\cdots$}
 \State Draw a fresh $\bS_k \sim \cD$
 \State Set $\lambda^k = \left(\mS_k^\top \bA \bB^{-1}\bA^\top \mS_k \right)^\dagger\bS_k^\top \left(b-\bA(x^0 + \bB^{-1}\bA^\top y^k) \right)$
 \State Set $y^{k+1}=y^k+\omega \mS_k \lambda^k +\beta(y^k-y^{k-1})$
 \EndFor
 \State {\bf Output:} last iterate $y^k$
 \end{algorithmic}
\end{algorithm}

\subsection{Correspondence between primal and dual methods}

In our first result we show that the random iterates of the mSGD/mSN/mSPP methods arise as an affine image of mSDSA under the mapping $\phi$ defined in \eqref{Corresp_IntroThesis1}.

\begin{thm}[Correspondence Between Primal and Dual Methods]  \label{thm:dual_corresp} Let $x^0=x^1$ and let $\{x^k\}$ be the iterates of mSGD/mSN/mSPP. Let $y^0=y^1=0$, and let $\{y^k\}$ be the iterates of mSDSA.   Assume that the  methods use the same stepsize $\omega > 0$, momentum parameter $\beta\geq 0$, and the same sequence of random matrices $\mS_k$.  Then  
\[x^k = \phi(y^k) = x^0 + \mB^{-1} \mA^\top y^k\]
for all $k$. That is, the primal iterates arise as affine images of the dual iterates.
\end{thm}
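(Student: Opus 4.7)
My plan is to prove the statement by induction on $k$, closely mirroring the argument of Proposition~\ref{bcaoskla} but now handling the extra momentum term. The key ingredient, already established in the excerpt as equation~\eqref{acnsjdakdsa}, is the identity
\[
\nabla f_{\mS_k}(\phi(y^k)) \;=\; -\,\mB^{-1}\mA^\top \mS_k \lambda^k,
\]
which converts the dual ``step direction'' $\mS_k\lambda^k$ into (minus) the stochastic gradient evaluated at the primal iterate corresponding to $y^k$. Because $\phi$ is an affine map, differences of dual iterates map linearly: $\phi(y^k)-\phi(y^{k-1}) = \mB^{-1}\mA^\top(y^k-y^{k-1})$, which is exactly what is needed to transport the momentum term through $\phi$.

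The base cases are immediate: since $y^0=y^1=0$, we have $\phi(y^0)=\phi(y^1)=x^0=x^1$. For the inductive step, assuming $x^j=\phi(y^j)$ for $j=k-1$ and $j=k$, I would apply $\phi$ to the mSDSA recursion and expand:
\[
\phi(y^{k+1}) \;=\; x^0 + \mB^{-1}\mA^\top\!\bigl[y^k + \omega \mS_k\lambda^k + \beta(y^k - y^{k-1})\bigr],
\]
then split the right-hand side into $\phi(y^k)$, $\omega\, \mB^{-1}\mA^\top \mS_k\lambda^k$, and $\beta\bigl[\phi(y^k)-\phi(y^{k-1})\bigr]$. Substituting the identity above rewrites the middle piece as $-\omega\nabla f_{\mS_k}(\phi(y^k))$, and the inductive hypothesis replaces $\phi(y^k),\phi(y^{k-1})$ by $x^k,x^{k-1}$. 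The resulting expression matches exactly the mSGD update rule of Algorithm~\ref{anjsdnaodala}, giving $\phi(y^{k+1})=x^{k+1}$.

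There is no genuine ``hard part'' here—the result is essentially a bookkeeping exercise once Proposition~\ref{bcaoskla} is understood—but the one place to be careful is ensuring that the starting conditions are aligned so that both recursions agree on two consecutive iterates (not just one), since the momentum method is a second-order recursion. This is precisely why the hypothesis $x^0=x^1$ on the primal side is paired with $y^0=y^1=0$ on the dual side; with these choices the double base case of the induction is satisfied, and the rest of the argument propagates without further subtlety. The same reasoning shows why SGD, SN, SPP (which all coincide in our quadratic setting by the equivalence discussed in Section~\ref{SGDSection_Intro}) can be treated uniformly by proving the claim only for mSGD.
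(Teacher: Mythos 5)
Your proposal is correct and follows essentially the same route as the paper's proof: the key identity $\nabla f_{\mS_k}(\phi(y^k)) = -\mB^{-1}\mA^\top \mS_k\lambda^k$, the affine transport of the momentum difference through $\phi$, and the double base case $\phi(y^0)=\phi(y^1)=x^0=x^1$ are exactly the ingredients used there. The only cosmetic difference is that you frame the argument explicitly as an induction, whereas the paper phrases it as the two sequences satisfying the same second-order recursion with matching initial pairs.
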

\begin{proof}
First note that
\begin{eqnarray*}\nabla f_{\mS_k}(\phi(y^k)) &\overset{\eqref{Gradf_S_IntroThesis}}{=}& \mB^{-1}\mA^\top  \mS_k ( \mS_k^\top \mA \mB^{-1} \mA^\top \mS_k)^\dagger \mS_k^\top (\mA \phi(y^k) - b) \; =\;  - \mB^{-1}\mA^\top  \mS_k \lambda^k.
\end{eqnarray*}
We now use this to show that
\begin{eqnarray*}
\phi(y^{k+1}) &\overset{\eqref{Corresp_IntroThesis}}{=}& x^0  + \bB^{-1}\bA^\top y^{k+1}\\
&=& x^0+\bB^{-1}\bA^\top\left[y^k+\omega \bS_k \lambda^k +\beta(y^k-y^{k-1})\right]\\
&=& \underbrace{x^0+\bB^{-1}\bA^\top y^k}_{\phi(y^k)}+\omega \underbrace{\bB^{-1}\bA^\top \bS_k \lambda^k}_{-\nabla f_{\bS_k}(\phi(y^k))} +\beta \bB^{-1}\bA^\top (y^k-y^{k-1})\\
&=& \phi(y^k)- \omega \nabla f_{\bS_k}(\phi(y^k)) +\beta  (\bB^{-1}\bA^\top y^k-\bB^{-1}\bA^\top y^{k-1})\\
&\overset{\eqref{Corresp_IntroThesis}}{=}& \phi(y^k) -\omega \nabla f_{\bS_k}(\phi(y^k)) +\beta  (\phi(y^k) - \phi(y^{k-1})).
\end{eqnarray*}
So, the sequence of vectors $\{\phi(y^k)\}$ mSDSA satisfies the same recursion of degree as the sequence $\{x^k\}$ defined by mSGD.  It remains to check that the first two elements of both recursions coincide. Indeed, since $y^0=y^1=0$ and $x^0=x^1$, we have $x^0 = \phi(0) =  \phi(y^0)$, and $x^1 = x^0= \phi(0) =  \phi(y^1)$.
\end{proof}

\subsection{Convergence}

We are now ready to state a linear convergence convergence result describing the behavior of mSDSA in terms of the dual function values $D(y^k)$.

\begin{thm}[Convergence of dual objective] 
\label{thm:dual-conv}
Choose $y^0= y^1\in \R^n$.  Assume exactness. Let $\{y^k\}_{k=0}^\infty$ be the sequence of random iterates produced by mSDSA.  Assume $0\leq \omega \leq 2$ and $\beta \geq 0$ and that the expressions
\[a_1 \eqdef 1+3\beta+2\beta^2 - (\omega(2-\omega) +\omega\beta)\lambda_{\min}^+, \qquad \text{and}\qquad
a_2 \eqdef \beta +2\beta^2 + \omega \beta \lambda_{\max}\]
satisfy $a_1+a_2<1$. Let $x^* = \Pi_{\mathcal{L},\bB}(x^0)$ and let $y^*$ be any dual optimal solution. Then 
\begin{equation}
\label{eq:nfiug5822}
\Exp[D(y^*)-D(y^k)] \leq q^k (1+\delta) \left[D(y^*)-D(y^0)\right]
\end{equation}
where  $q=\frac{a_1+\sqrt{a_1^2+4a_2}}{2}$ and $\delta=q-a_1$. Moreover, $a_1+a_2 \leq q <1$.
\end{thm}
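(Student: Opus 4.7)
The plan is to deduce this theorem as a direct corollary of the primal convergence guarantee in Theorem~\ref{L2}, exploiting the primal--dual correspondence and the duality gap identity. Since everything needed is already in place, no new machinery is required; the main (minor) obstacle is simply verifying that the identity $D(y^*) - D(y) = \tfrac{1}{2}\|\phi(y) - x^*\|^2_{\bB}$ (Proposition~\ref{PropositionDualPrimal}), which was stated for SDSA, in fact depends only on the affine relation $x = \phi(y)$ and thus transfers verbatim to the momentum setting.

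First I would set up a coupling. Let $\{y^k\}$ be the mSDSA iterates initialized as in the algorithm ($y^0=y^1=0$), and define the primal starting points $x^0 = x^1$ to be the vector $x^0$ appearing in the definition of $\phi$ and $D$. Run mSGD from $x^0 = x^1$ with the same stepsize $\omega$, the same momentum parameter $\beta$, and the same sequence $\{\bS_k\}$. Theorem~\ref{thm:dual_corresp} then gives
\begin{equation*}
x^k \;=\; \phi(y^k) \;=\; x^0 + \bB^{-1}\bA^\top y^k \qquad \text{for all } k\geq 0.
\end{equation*}

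Second, I would invoke the dual--primal identity. The proof of Proposition~\ref{PropositionDualPrimal} uses only the optimality relation $x^* = \phi(y^*)$ from~\eqref{Corresp_IntroThesisOptimal} together with the definition of $\phi$; it places no assumption on how $y^k$ was produced. Applying it to both $y^k$ and $y^0$ yields
\begin{equation*}
D(y^*) - D(y^k) \;=\; \tfrac{1}{2}\|x^k - x^*\|^2_{\bB}, \qquad D(y^*) - D(y^0) \;=\; \tfrac{1}{2}\|x^0 - x^*\|^2_{\bB},
\end{equation*}
where $x^* = \Pi_{\cL,\bB}(x^0)$.

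Finally, I would apply Theorem~\ref{L2} to the coupled primal sequence. The hypotheses of Theorem~\ref{thm:dual-conv} ($0<\omega<2$, $\beta\geq 0$, $a_1+a_2<1$, exactness, and $x^0=x^1$) are exactly those of Theorem~\ref{L2}, so
\begin{equation*}
\Exp\!\left[\|x^k - x^*\|^2_{\bB}\right] \;\leq\; q^k (1+\delta)\,\|x^0 - x^*\|^2_{\bB},
\end{equation*}
with $q=\tfrac{a_1+\sqrt{a_1^2+4a_2}}{2}$ and $\delta = q - a_1$, and $a_1+a_2 \le q < 1$. Taking expectations in the dual--primal identity and substituting the two equalities from the previous step yields
\begin{equation*}
\Exp[D(y^*) - D(y^k)] \;=\; \tfrac{1}{2}\Exp\!\left[\|x^k - x^*\|^2_{\bB}\right] \;\leq\; q^k(1+\delta)\cdot\tfrac{1}{2}\|x^0 - x^*\|^2_{\bB} \;=\; q^k(1+\delta)\bigl[D(y^*) - D(y^0)\bigr],
\end{equation*}
which is precisely~\eqref{eq:nfiug5822}. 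The bounds on $q$ carry over unchanged from Theorem~\ref{L2}.
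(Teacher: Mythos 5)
Your proposal is correct and is essentially the paper's own proof: the paper derives Theorem~\ref{thm:dual-conv} in one line by combining Theorem~\ref{L2}, the primal--dual correspondence of Theorem~\ref{thm:dual_corresp}, and the identity $D(y^*)-D(y^k)=\tfrac{1}{2}\|x^k-x^*\|^2_{\bB}$. Your write-up simply spells out the coupling and the observation that Proposition~\ref{PropositionDualPrimal} depends only on the affine map $\phi$, which is a worthwhile clarification but not a different argument.
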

\begin{proof} This follows by applying Theorem~\ref{L2} together with Theorem \ref{thm:dual_corresp} and the identity $\frac{1}{2}\|x^k-x^0\|^2_\mB = D(y^*) - D(y^k)$.
\end{proof}

Note that for $\beta=0$, mSDSA simplifies to SDSA. Also recall that for unit stepsize ($\omega=1$), SDSA was analyzed in  \cite{gower2015randomized}. In the $\omega=1$ and $\beta=0$ case, our result specializes to that established in  \cite{gower2015randomized}. Following similar arguments to those in  \cite{gower2015randomized},  the same rate of convergence can be proved for the duality gap $\Exp[P(x^k)-D(y^k)]$.

\section{Methods  with Stochastic Momentum} \label{sec:sm}

To motivate {\em stochastic momentum}, for simplicity fix $\mB=\mI$, and assume that $\mS_k$ is chosen as the $j$th random unit coordinate vector  of $\R^m$ with probability $p_j>0$. In this case, SGD \eqref{SGD_IntroThesis} reduces to the randomized Kaczmarz method for solving the linear system $\mA x = b$, first analyzed for $p_j\sim \|\mA_{j:}\|^2$ by Strohmer and Vershynin \cite{RK}. 

In this case, mSGD becomes the {\em randomized Kaczmarz method with momentum} (mRK), and the iteration \eqref{eq:SHB-intro}  takes the explicit form
\[x^{k+1} = x^k - \omega \frac{\bA_{j:} x^k -b_j}{\|\bA_{j:}\|^2} \bA_{j:}^ \top + \beta(x^k-x^{k-1}).\]
Note that the cost of one iteration of this method is $\cO(\|\mA_{j:}\|_0 + n)$, where the cardinality term $\|\mA_{j:}\|_0$ comes from the stochastic gradient part, and $n$ comes from the momentum part.
When $\mA$ is sparse, the second term will dominate. Similar considerations apply for many other (but clearly not all) distributions~$\cD$. 

In such circumstances, we propose to replace the expensive-to-compute momentum term by a cheap-to-compute stochastic approximation \blue{term}. In particular, we let $i_k$ be chosen from $[n]$ uniformly at random,  and replace $x^k-x^{k-1}$ with $v_{i_k} \eqdef e_{i_k}^\top (x^k-x^{k-1})e_{i_k}$,  where $e_{i_k} \in \R^n$ is the $i_k$-th unit basis vector in $\R^n$, and $\beta$ with $\gamma \eqdef n\beta$. Note that $v_{i_k}$ can be computed in $\cO(1)$ time. Moreover, \begin{equation}\label{eq:b877gfff}\Exp_{i_k}[\gamma v_{i_k}] =\beta( x^k-x^{k-1}).\end{equation} Hence,  we replace the momentum term by  an unbiased estimator, which allows us to cut the cost to $\cO(\|\mA_{j:}\|_0)$.

\subsection{Primal methods with stochastic momentum}

We now propose a  variant of the SGD/SN/SPP methods employing stochastic momentum  (smSGD/smSN/smSPP). Since SGD, SN and SPP are equivalent, we will describe the development from the perspective of SGD.
In particular, we propose the following method:
\begin{equation}\label{eq:DSHB-intro} x^{k+1} = x^k - \omega \nabla f_{\mS_k}(x^k) + \gamma e_{i_k}^\top (x^k-x^{k-1})e_{i_k}.\end{equation}

The method is formalized below:

\begin{algorithm}[H]
  \caption{smSGD/smSN/smSPP}
  \begin{algorithmic}[1]
    \Require{Distribution $\mathcal{D}$ from which the method samples matrices;  stepsize/relaxation parameter $\omega \in \R$ the heavy ball/momentum parameter $\beta$.}
    \Ensure{ Choose initial points $x^1=x^0  \in \R^n$; set $\bB  = \mI\in \R^{n\times n}$}
 \For{$k=1,2,\cdots$}
 \State Generate a fresh sample $\bS_k \sim {\cal D}$.
 \State Pick $i_k\in [n]$ uniformly at random 
 \State Set $x^{k+1}=x^k -\omega \nabla f_{\bS_k}(x^k) + \gamma e_{i_k}^\top (x^k - x^{k-1})e_{i_k} $
 \EndFor
 \State {\bf Output:} The last iterate $x^k$
 \end{algorithmic}
\end{algorithm}

\subsection{Convergence}

In the next result we establish linear convergence of smSGD/smSN/smSPP. For this we will require the matrix $\mB$ to be equal to the identity matrix.

\begin{thm}\label{thm:DSHB-L2} Choose $x^0= x^1\in \R^n$.  Assume exactness. Let $\mB=\mI$.  Let $\{x^k\}_{k=0}^\infty$ be the sequence of random iterates produced by smSGD/smSN/smSPP.  Assume $0< \omega < 2$ and $\gamma \geq 0$ and that the expressions
\begin{equation}\label{eq:98ys8h89dh} a_1 \eqdef 1+3\frac{\gamma}{n}+2\frac{\gamma^2}{n} - \left(\omega(2-\omega) +\omega\frac{\gamma}{n}\right)\lambda_{\min}^+, \qquad \text{and}\qquad
a_2 \eqdef \frac{1}{n}(\gamma +2\gamma^2 + \omega \gamma \lambda_{\max}) 
\end{equation}
satisfy $a_1+a_2<1$. Let $x^* = \Pi_{\mathcal{L},\bI}(x^0)$. Then 
\begin{equation}\label{eq:nfiug582X}\Exp\left[\|x^{k}-x^*\|^2\right] \leq q^k (1+\delta)  \|x^{0}-x^*\|^2
\end{equation}
and 
$\Exp\left[f(x^k)\right] \leq q^k  \frac{\lambda_{\max}}{2} (1+\delta) \|x^{0}-x^*\|^2,$
where  $q\eqdef \frac{a_1+\sqrt{a_1^2+4a_2}}{2}$ and $\delta\eqdef q-a_1$. Moreover, $ a_1+a_2 \leq q < 1$.
\end{thm}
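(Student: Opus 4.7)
The plan is to follow the same two-term recursion strategy used in the proof of Theorem~\ref{L2}, but I will need to carefully handle the randomness introduced by the coordinate sampling $i_k$, which is what produces the unusual coefficients in \eqref{eq:98ys8h89dh}. Let $r^k \eqdef \|x^k - x^*\|^2$. First I would write
\begin{equation*}
x^{k+1} - x^* = (x^k - x^*) - \omega \nabla f_{\bS_k}(x^k) + \gamma\, e_{i_k}^\top(x^k-x^{k-1})\, e_{i_k},
\end{equation*}
square both sides, and take expectation over $i_k$ first (since $i_k$ is independent of $\bS_k$ and drawn uniformly on $[n]$). For any fixed $v \in \R^n$, the two identities $\Exp_{i_k}[e_{i_k}^\top v \cdot e_{i_k}] = v/n$ and $\Exp_{i_k}[(e_{i_k}^\top v)^2] = \|v\|^2/n$ transform the momentum contributions: the linear-in-momentum terms acquire a factor $\gamma/n$ (matching the role of $\beta$ in Theorem~\ref{L2}), but the \emph{squared} momentum term gets scaled by $\gamma^2/n$ rather than $(\gamma/n)^2$. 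This ``variance-cost'' mismatch is precisely the reason the coefficients $a_1,a_2$ in \eqref{eq:98ys8h89dh} are not obtained by naively substituting $\beta \leftarrow \gamma/n$ into the formulas of Theorem~\ref{L2}.

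Next I would take expectation over $\bS_k$ and invoke the identities of Lemma~\ref{cnaoisna} (specialised to $\mB = \mI$): $f_{\bS}(x) = \tfrac12\|\nabla f_{\bS}(x)\|^2 = \tfrac12\langle \nabla f_{\bS}(x), x-x^*\rangle$. This converts terms involving $\langle x^k-x^*,\nabla f_{\bS_k}(x^k)\rangle$ and $\|\nabla f_{\bS_k}(x^k)\|^2$ into multiples of $f(x^k)$, and a cross term $\langle \nabla f(x^k), x^k-x^{k-1}\rangle$ can be split by a Cauchy–Schwarz/Young bound together with $\|x^k-x^{k-1}\|^2 \leq 2\|x^k-x^*\|^2 + 2\|x^{k-1}-x^*\|^2$. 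Finally, Lemma~\ref{bounds} lets me squeeze $f(x^k)$ between $\tfrac{\lambda_{\min}^+}{2}\|x^k-x^*\|^2$ and $\tfrac{\lambda_{\max}}{2}\|x^k-x^*\|^2$, yielding the scalar two-term inequality
\begin{equation*}
\Exp[r^{k+1}] \;\leq\; a_1\, \Exp[r^k] \;+\; a_2\, \Exp[r^{k-1}],
\end{equation*}
with $a_1,a_2$ exactly as in \eqref{eq:98ys8h89dh}.

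The remaining step is to convert this recursion into the advertised geometric decay. Let $q$ be the larger root of $t^2 - a_1 t - a_2 = 0$, namely $q = (a_1+\sqrt{a_1^2+4a_2})/2$, and set $\delta = q - a_1$. A routine induction, using the algebraic identity $\delta q = a_2$ and the base case $r^0 = r^1$ (which holds because $x^0 = x^1$), yields $\Exp[r^k] \leq q^k(1+\delta) r^0$. The assumption $a_1 + a_2 < 1$ ensures $q < 1$, and a short computation gives $a_1 + a_2 \leq q$ as claimed. The function-value bound is then immediate from $f(x^k) \leq \tfrac{\lambda_{\max}}{2} r^k$. The main technical obstacle is the $\gamma^2/n$ term noted above: without careful bookkeeping one would obtain an overly optimistic $\gamma^2/n^2$ coefficient, so the key is to keep the variance contribution of the coordinate sampling separate from the contribution of its mean throughout the expansion.
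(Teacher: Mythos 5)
Your overall architecture is the right one and matches the paper's: expand the square, take the expectation over $i_k$ first (conditioned on $x^k$ and $\bS_k$), use $\Exp_{i_k}[e_{i_k}^\top v\, e_{i_k}]=v/n$ and $\Exp_{i_k}[(e_{i_k}^\top v)^2]=\|v\|^2/n$, then take the expectation over $\bS_k$ using Lemma~\ref{cnaoisna}, and finish with the two-term recursion lemma (Lemma~\ref{LemmaGlobal}). You also correctly isolate the one genuinely new phenomenon relative to Theorem~\ref{L2}: the squared momentum term scales as $\gamma^2/n$, not $(\gamma/n)^2$, which is exactly why $a_1,a_2$ are not obtained by substituting $\beta\leftarrow\gamma/n$ into the earlier formulas.

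There is, however, one step that would fail as written: your treatment of the gradient--momentum cross term $2\omega\tfrac{\gamma}{n}\langle\nabla f(x^k),x^{k-1}-x^k\rangle$ by a Cauchy--Schwarz/Young bound. Young's inequality can only contribute \emph{nonnegative} multiples of $\|\nabla f(x^k)\|^2$ (hence of $f(x^k)$, via \eqref{b1}) and of $\|x^{k-1}-x^k\|^2$; it cannot produce the term $-\omega\tfrac{\gamma}{n}\lambda_{\min}^+$ that sits inside $a_1$ in \eqref{eq:98ys8h89dh}, nor the precise coefficient $\tfrac{\omega\gamma\lambda_{\max}}{n}$ in $a_2$. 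The paper instead bounds this cross term by \emph{convexity}, $\langle\nabla f(x^k),x^{k-1}-x^k\rangle\leq f(x^{k-1})-f(x^k)$, so that the full gradient contribution becomes $-\left(2\omega(2-\omega)+2\omega\tfrac{\gamma}{n}\right)f(x^k)+2\omega\tfrac{\gamma}{n}f(x^{k-1})$; applying the quadratic growth bound \eqref{b3} to the negative $f(x^k)$ term and the upper bound \eqref{b2} to the positive $f(x^{k-1})$ term then yields exactly the $a_1,a_2$ of the statement. With your route you would still obtain a valid two-term recursion, but with strictly larger coefficients, so the theorem with the advertised constants would not follow. Replace the Young step with the convexity step and the rest of your argument goes through.
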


\begin{proof} See Section~\ref{app:7}.
\end{proof}

It is straightforward to see that if we choose $\omega \in (0,2)$, then the condition $a_1+a_2<1$ is satisfied for all $\gamma$ belonging to the interval   \[ 0\leq \gamma < \frac{1}{8}\left (-4+\omega \lambda_{\min}^+-\omega \lambda_{\max}+\sqrt{(4-\omega \lambda_{\min}^++\omega \lambda_{\max})^2+16n\omega (2-\omega)  \lambda_{\min}^+}\right).\]
The  upper bound  is similar to that for mSGD/mSN/mSPP; the only difference is an extra factor of $n$ next to the constant~16.

\subsection{Momentum versus stochastic momentum}
\label{comparison}

As indicated above, if we wish to compare mSGD with momentum parameter $\beta$ to smSGD with momentum parameter $\gamma$, it makes sense to set $\gamma = \beta n$. Indeed, this is because in view of \eqref{eq:b877gfff}, the momentum term in smSGD will then be an unbiased estimator of the deterministic momentum term used in mSGD.

Let $q(\beta)$ be the convergence constant for mSGD with stepsize $\omega=1$ and an admissible momentum parameter $\beta\geq 0$. Further, let $\bar{a}_1(\gamma),\bar{a}_2(\gamma),\bar{q}(\gamma)$ be the convergence constants for smSGD with stepsize $\omega=1$ and momentum parameter $\gamma$.  We have
\begin{eqnarray*}\bar{q}(\beta n) \geq  \bar{a}_1(\beta n) + \bar{a}_2(\beta n) &\overset{\eqref{eq:98ys8h89dh}}{=}& 1 + 4\beta + 4\beta^2n + \beta(\lambda_{\max}-\lambda_{\min}^+) - \lambda_{\min}^+\\
&\overset{\eqref{eq:qbeta}}{=}& a_1(\beta) + a_2(\beta) + 4\beta^2 (n-1)\\
&\geq & a_1(\beta) + a_2(\beta).
 \end{eqnarray*}
 Hence, the lower bound on the rate for smSGD is worse than the lower bound for mSGD. 
 
The same conclusion holds for the convergence rates themselves. Indeed, note that since $\bar{a}_1(\beta n) - a_1(\beta) = 2\beta^2(n-1) \geq 0$ and $\bar{a}_2(\beta n) - a_2(\beta) = 2\beta^2 (n-1) \geq 0$, we have
\[\bar{q}(\beta n) = \frac{\bar{a}_1(\beta n) + \sqrt{\bar{a}^2_1(\beta n) +4 \bar{a}_2(\beta n)}}{2} \geq \frac{a_1(\beta ) + \sqrt{a_1^2(\beta ) +4 a_2(\beta )}}{2} = q(\beta),\]
and hence the rate of mSGD is always better than that of smSGD.

However, the expected cost of a single iteration of mSGD may be significantly larger than that of smSGD. Indeed, let $g$ be the expected cost of evaluating a stochastic gradient. Then we need to compare $\cO(g+n)$ (mSGD) against $\cO(g)$ (smSGD). If $g\ll n$, then one iteration of smSGD is significantly cheaper than one iteration of mSGD. Let us now compare the total complexity to investigate the trade-off between the rate and cost of stochastic gradient evaluation. Ignoring constants, the total cost of the two methods (cost of a single iteration multiplied by the number of iterations) is:
\begin{equation} \label{eq:C-SHB}C_{\text{mSGD}}(\beta) \eqdef \frac{g+n}{1-q(\beta)} = \frac{g+n}{1-\frac{a_1(\beta) + \sqrt{a_1^2(\beta) + 4 a_2(\beta)}}{2}}, \end{equation}
and
\begin{equation} \label{eq:C-DSHB}C_{\text{smSGD}}(\beta n) \eqdef \frac{g}{1-\bar{q}(\beta n)} = \frac{g}{1-\frac{\bar{a}_1(\beta n) + \sqrt{\bar{a}_1^2(\beta n) + 4 \bar{a}_2(\beta n)}}{2}}. \end{equation}

Since  \begin{equation}\label{eq:iod886562}q(0) = \bar{q}(0 n),\end{equation} and since $q(\beta)$ and $\bar{q}(\beta n)$ are continuous functions of $\beta$, then because $g+n > g$, for small enough $\beta$ we will have 
$C_{\text{mSGD}}(\beta) > C_{\text{smSGD}}(\beta n).$ In particular, the speedup of smSGD compared to mSGD for $\beta \approx 0$ will be close to
\[\frac{C_{\text{mSGD}}(\beta)}{C_{\text{smSGD}}(\beta n)}  \approx \lim_{\beta' \to_+ 0} \frac{C_{\text{mSGD}}(\beta')}{C_{\text{smSGD}}(\beta' n)} \overset{\eqref{eq:C-SHB}+\eqref{eq:C-DSHB}+\eqref{eq:iod886562}}{=} \frac{g+n}{g} = 1 + \frac{n}{g}.\]

Thus, we have shown the following statement.

\begin{thm} \label{thm:DSHBspeedup} For small momentum parameters satisfying $\gamma=\beta n$, the total complexity of smSGD is approximately $1+ n/g$ times smaller than the total complexity of mSGD, where $n$ is the number of columns of $\mA$, and $g$ is the expected cost of evaluating a stochastic gradient $\nabla f_{\mS}(x)$.  
\end{thm}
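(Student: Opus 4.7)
The statement is essentially a limiting ratio computation, so my plan is to make precise the informal argument that already appears just above the theorem statement. First I would fix the regime of comparison: set $\gamma = \beta n$ so that in view of \eqref{eq:b877gfff} the stochastic momentum term in smSGD is an unbiased estimator of the deterministic momentum term in mSGD, making the two methods directly comparable. With this choice the per-iteration costs are $\cO(g+n)$ for mSGD (gradient evaluation plus the $n$-dimensional momentum update) versus $\cO(g)$ for smSGD (gradient evaluation plus an $\cO(1)$ coordinate momentum update), and the total complexities are the expressions $C_{\text{mSGD}}(\beta)$ and $C_{\text{smSGD}}(\beta n)$ defined in \eqref{eq:C-SHB} and \eqref{eq:C-DSHB}.

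Next I would observe the key compatibility of the rate constants at $\beta=0$. Inspecting the definitions of $a_1(\beta), a_2(\beta)$ in Theorem~\ref{L2} and of $\bar{a}_1(\gamma), \bar{a}_2(\gamma)$ in \eqref{eq:98ys8h89dh}, both pairs reduce at the origin to $(1-\lambda_{\min}^+, 0)$, so $q(0)=\bar{q}(0)= 1-\lambda_{\min}^+$; this is exactly \eqref{eq:iod886562}. Moreover, all of $a_1, a_2, \bar{a}_1, \bar{a}_2$ are polynomials in their momentum parameter, hence the maps $\beta \mapsto q(\beta)$ and $\beta \mapsto \bar{q}(\beta n)$ are continuous at $\beta=0$ (the square root in the formula for $q$ is continuous since its argument is strictly positive there, being equal to $(1-\lambda_{\min}^+)^2>0$ when $\lambda_{\min}^+<1$; the degenerate case $\lambda_{\min}^+=1$ can be handled separately and gives the same limit).

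Using these two facts, I would compute the limiting speedup ratio as
\begin{equation*}
\lim_{\beta \to 0^+} \frac{C_{\text{mSGD}}(\beta)}{C_{\text{smSGD}}(\beta n)}
\; = \; \frac{g+n}{g}\cdot \lim_{\beta \to 0^+}\frac{1-\bar{q}(\beta n)}{1-q(\beta)}
\; \overset{\eqref{eq:iod886562}}{=}\; \frac{g+n}{g} \; = \; 1 + \frac{n}{g},
\end{equation*}
where the last continuity-based equality uses that both $1-q(\beta)$ and $1-\bar{q}(\beta n)$ tend to the same positive constant $\lambda_{\min}^+$ as $\beta\to 0^+$. Since the ratio is continuous in $\beta$, for every $\varepsilon>0$ there exists $\beta_0>0$ such that the ratio lies within $\varepsilon$ of $1+n/g$ for all $\beta\in(0,\beta_0)$, which is the precise meaning of the word ``approximately'' in the theorem statement.

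The main (and essentially only) obstacle is verifying admissibility: I must check that the interval of $\beta$ in which the conclusion holds is nonempty, i.e., that the pairs $(\omega=1,\beta)$ and $(\omega=1,\gamma=\beta n)$ satisfy the hypotheses $a_1+a_2<1$ of Theorems~\ref{L2} and \ref{thm:DSHB-L2}. At $\beta=0$ both methods reduce to SGD whose convergence requires only $\lambda_{\min}^+>0$ (exactness), and by continuity the strict inequality $a_1+a_2<1$ persists on some interval $\beta\in[0,\beta_0)$; the same holds for $\bar{a}_1(\beta n)+\bar{a}_2(\beta n)$ after possibly shrinking $\beta_0$. Taking the smaller of the two thresholds yields a common range of small momentum parameters on which both complexity bounds apply, completing the argument.
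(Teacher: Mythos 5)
Your proposal is correct and follows essentially the same route as the paper: the paper's own derivation in Section~\ref{comparison} establishes $q(0)=\bar{q}(0\cdot n)$, invokes continuity of $q(\beta)$ and $\bar{q}(\beta n)$ in $\beta$, and computes $\lim_{\beta'\to_+0} C_{\text{mSGD}}(\beta')/C_{\text{smSGD}}(\beta' n)=(g+n)/g$ exactly as you do. Your additional checks (positivity of the square-root argument, the degenerate case $\lambda_{\min}^+=1$, and admissibility of small $\beta$ under the hypotheses of Theorems~\ref{L2} and \ref{thm:DSHB-L2}) only make the paper's informal limiting argument more precise.
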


\section{Special Cases: Randomized Kaczmarz with Momentum and Randomized Coordinate Descent with Momentum} \label{sec:cases}

In Table~\ref{SpecialCasesAlgorithms} we specify several special instances of mSGD  by choosing distinct combinations of the parameters $\cD$ and $\bB$.  We use $e_i$ to denote the $i$th unit coordinate vector in $\R^m$, and $\bI_{:C}$ for the column submatrix of the $m\times m$ identity matrix indexed by (a random) set $C$.

\begin{table}[t!]
\begin{center}
\scalebox{0.8}{
\begin{tabular}{ | p{4.3cm} | p{1.1 cm} | p{1.2cm} |  p{8.7cm} | }
  \hline
 \multicolumn{4}{| c |}{\textbf{Variants of mSGD}}\\
 \hline
 \begin{center}Variant of mSGD \end{center}& \begin{center}$\mS$\end{center} & \begin{center}$\bB$\end{center}  & \begin{center}$x^{k+1}$\end{center}\\
 \hline
  \hline
\begin{center}{\bf mRK}: randomized Kaczmarz with momentum \end{center} & $$e_{i}$$ & $$\bI$$ & $$x^k -\omega \frac{\bA_{i :} x^k -b_{i}}{\|\bA_{i :}\|^2} \bA_{i :}^ \top + \beta(x^k - x^{k-1}) $$\\
  \hline
 \begin{center} {\bf mRCD = mSDSA}: randomized coordinate desc. with momentum \end{center} & $$e_{i}$$ &  $$\bA\succ 0$$  & $$ x^k -\omega \frac{(\bA_{i:})^\top x^k -b_i}{\bA_{ii}} e_i  + \beta(x^k - x^{k-1}) $$\\
 \hline
 \begin{center} {\bf mRBK}: randomized block Kaczmarz with momentum \end{center} & $$\bI_{:C}$$ & $$\bI$$ &  $$x^k -\omega \bA_{C:}^\top (\bA_{C:}\bA_{C:}^\top)^\dagger (\bA_{C:}x^k-b_C) + \beta(x^k - x^{k-1}) $$\\
  \hline  
\begin{center}{\bf mRCN = mSDSA}:  randomized coordinate Newton descent with momentum  \end{center}& $$\bI_{:C}$$ & $$\bA\succ0$$& $$x^k -\omega \bI_{:C} (\bI_{:C}^\top\bA \bI_{:C})^\dagger \bI_{:C}^\top (\bA x^k-b) + \beta(x^k - x^{k-1}) $$\\
  \hline
\begin{center}{\bf mRGK}: randomized Gaussian Kaczmarz \end{center}   &$$ N(0,\bI)$$ & $$\bI$$ & $$x^k -\omega \frac{\mS^\top (\bA x^k -b)}{\|\bA^\top \mS \|^2} \bA^\top \mS + \beta(x^k - x^{k-1}) $$\\
  \hline 
 \begin{center} {\bf mRCD}: randomized coord. descent (least squares)\end{center} & $$ \bA_{:i}$$ & $$\bA^\top \bA$$ & $$x^k -\omega \frac{(\bA_{:i})^\top (\bA x^k -b)}{\|\bA_{:i}\|^2} e_i + \beta(x^k - x^{k-1}) $$ \\
 \hline
\end{tabular}
}
\end{center}
\caption{Selected special cases of mSGD.  In the special case of $\mB=\mA$, mSDSA is directly equivalent to mSGD (this is due to the primal-dual relationship \eqref{Corresp_IntroThesis}; see also Theorem~\ref{thm:dual_corresp}).  Randomized coordinate Newton  (RCN) method was first proposed in \cite{qu2015sdna}; mRCN is its momentum variant. Randomized Gaussian Kaczmarz (RGK) method was first proposed in \cite{gower2015randomized}; mRGK is its momentum variant.
}
\label{SpecialCasesAlgorithms}
\end{table}

The updates for smSGD can be derived by substituting the momentum term $\beta(x^k-x^{k-1})$ with its stochastic variant $n\beta e_{i_k}^\top (x^k-x^{k-1})e_{i_k}$. We do not aim to be comprehensive. For more details on the possible combinations of the parameters $\bS$ and $\bB$  we refer the interested reader to Section~3 of \cite{gower2015randomized}. 

In the rest of this section we present in detail two special cases: the randomized Kaczmarz method with momentum (mRK) and the randomized coordinate descent method with momentum (mRCD). Further, we compare \red{the convergence rates obtained in Theorem~\ref{theoremheavyball} (i.e., bounds on $\|\E{x^k}-x^*\|_\mB^2$) with rates that can be inferred from known results for their no-momentum variants.}

\subsection{mRK: randomized Kaczmarz with momentum} We now provide a discussion on \blue{mRK} (the method in the first row of Table~\ref{SpecialCasesAlgorithms}).  Let $\bB= \bI$ and let pick in each iteration the random matrix $\bS=e_i$ with probability $p_i=\|\bA_{i:}\|^2 / \|\bA\|_F^2$. In this setup the update rule of the mSGD simplifies to $$x^{k+1}=x^k -\omega \frac{\bA_{i:} x^k -b_i}{\|\bA_{i:}\|^2} \bA_{i:}^ \top + \beta(x^k - x^{k-1}) $$
and 
\begin{eqnarray}
\bW &\overset{\eqref{MatrixW_IntroThesis}}{=}& \bB^{-1/2} \mA^\top \Exp_{\mS\sim \cD}[\mH] \mA \bB^{-1/2} \; = \; \Exp[\mA ^\top \mH \mA] \notag \\
&=& \sum_{i=1}^{m}p_i\frac{\bA_{i:}^\top\bA_{i:}}{\|\bA_{i:}\|^2} \; = \; \frac{1}{\|\bA\|^2_{F}} \sum_{i=1}^{m}\bA_{i:}^\top\bA_{i:} \; =\; \frac{\bA^\top \bA}{\|\bA\|^2_F}.\label{matrixW}
\end{eqnarray}

The objective function takes the following form:
\begin{equation}
\label{functionRK}
f(x)=\Exp_{\bS \sim \mathcal{D}} [f_{\bS}(x)]= \sum_{i=1}^m p_i f_{\bS_i}(x)=\frac{\|\bA x -b\|^2}{2\|\bA\|^2_{F}}.
\end{equation}

For $\beta=0$, this method reduces to the \textit{randomized Kaczmarz method} with relaxation, first analyzed in \cite{ASDA}. If we also have $ \omega=1$, this is equivalent with the \textit{randomized Kaczmarz method} of Strohmer and Vershynin \cite{RK}.  RK without momentum ($\beta=0$)  and without relaxation ($\omega=1$) 
converges  with iteration complexity \cite{RK, gower2015randomized, gower2015stochastic} of \begin{equation}\label{eq:biugs897*9h8}\cO\left(\frac{1}{\lambda_{\min}^+(\bW)}\log(1/\epsilon)\right)=\cO\left(\frac{\|\bA\|^2_F}{\lambda_{\min}^+(\bA^\top \bA)}\log(1/\epsilon)\right).\end{equation}

In contrast, based on Theorem~\ref{theoremheavyball} we have
\begin{itemize}
\item For $ \omega= 1$ and $\beta= \left(1- \sqrt{0.99 \lambda_{\min}^+}\right)^2=\left(1- \sqrt{\frac{0.99}{\|\bA\|^2_F} \lambda_{\min}^+(\bA^\top \bA)}\right)^2$, the iteration complexity of the mRK is: \[ \cO\left(\sqrt{\frac{\|\bA\|^2_F}{\lambda_{\min}^+(\bA^\top \bA)}}\log(1/\epsilon)\right).\]
\item For $ \omega= \|\bA\|^2_F/\lambda_{\max}(\bA^\top \bA)$ and $\beta= \left(1- \sqrt{ \frac{0.99 \lambda_{\min}^+(\bA^\top \bA)}{\lambda_{\max}(\bA^\top \bA)}}\right)^2$
the iteration complexity becomes:
\[\cO\left(\sqrt{\frac{ \lambda_{\max}(\bA^\top \bA)}{\lambda_{\min}^+(\bA^\top \bA)}}\log(1/\epsilon)\right).\]
\end{itemize}

 This is quadratic improvement on the  previous best result \eqref{eq:biugs897*9h8}.

\paragraph{Related Work.} 
The Kaczmarz method for solving consistent linear systems was originally introduced by Kaczmarz in 1937 \cite{kaczmarz1937angenaherte}. This classical method selects the rows to project onto in a cyclic manner.  In practice, many different selection rules can be adopted. For non-random selection rules (cyclic, greedy, etc) we refer the interested reader to \cite{popa1995least,byrne2008applied, nutini2016convergence, popa2017convergence, CsibaPL}. In this work we are interested in {\em randomized} variants of the Kaczmarz method, first analyzed by Strohmer and Vershynin \cite{RK}. In \cite{RK} it was shown that RK converges with a linear convergence rate to the unique solution of a full-rank consistent linear system. This result sparked renewed interest in design of randomized methods for solving linear systems \cite{needell2010randomized, RBK, eldar2011acceleration, MaConvergence15, zouzias2013randomized, l2015randomized, schopfer2016linear, loizou2017linearly}.  All existing results on accelerated variants of RK use the Nesterov's approach of acceleration \cite{lee2013efficient, liu2016accelerated, tu2017breaking, ASDA}. To the best of our knowledge, no convergence analysis of mRK exists in the literature (Polyak's momentum). Our work fills this gap.

\subsection{mRCD: randomized coordinate descent with momentum} 

We now provide a discussion on the mRCD method (the method in the second row of Table~\ref{SpecialCasesAlgorithms}). If the matrix $\bA$ is positive definite, then we can choose $\bB= \bA$  and $\bS=e_i$ with probability $p_i=\frac{\bA_{ii}}{{\rm Trace}(\bA)}$. It is easy to see that $\bW=\frac{\bA}{{\rm Trace}(\bA)}$. In this case,  $\bW$ is positive definite and as a result, $\lambda_{\min}^+(\bW)=\lambda_{\min}(\bW)$. Moreover, we have
\begin{equation}
\label{functionRCD}
f(x)=\Exp_{\bS \sim \mathcal{D}} [f_{\bS}(x)]= \sum_{i=1}^m p_i f_{\bS_i}(x)=\frac{\|\bA x -b\|^2}{2{\rm Trace}(\bA)}.
\end{equation}

For $\beta=0$ and $ \omega=1$ the method is equivalent with  \emph{randomized coordinate descent} of Leventhal and Lewis \cite{leventhal2010randomized}, which was shown to converge with iteration complexity  \begin{equation}\label{eq:hbis8690d9}\text{Previous  best result:} \qquad  \cO\left(\frac{{\rm Trace}(\bA)}{\lambda_{\min}(\bA)}\log(1/\epsilon)\right).\end{equation}

In contrast, following  Theorem~\ref{theoremheavyball}, we can obtain the following iteration complexity results for mRCD:
 \begin{itemize}
  \item For $ \omega= 1$ and $\beta= \left(1- \sqrt{\frac{0.99}{{\rm Trace}(\bA)} \lambda_{\min}(\bA)}\right)^2$, the iteration complexity is \[ \cO\left(\sqrt{\frac{{\rm Trace}(\bA)}{\lambda_{\min}(\bA)}}\log(1/\epsilon)\right).\]
\item For  $ \omega= {\rm Trace}(\bA)/\lambda_{\max}(\bA)$ and $\beta= \left(1- \sqrt{ \frac{0.99 \lambda_{\min}(\bA)}{\lambda_{\max}(\bA)}}\right)^2 $  the iteration complexity becomes 
\[\cO\left(\sqrt{\frac{ \lambda_{\max}(\bA)}{\lambda_{\min}(\bA)}}\log(1/\epsilon)\right).\]
 \end{itemize}
 
 This is quadratic improvement on the previous best result \eqref{eq:hbis8690d9}.

\paragraph{Related Work.}  It is known that if $\mA$ is positive definite, the popular {\em randomized Gauss-Seidel} method can be interpreted as randomized coordinate descent (RCD). RCD methods were first analyzed by Lewis and Leventhal in the context of linear systems and least-squares problems \cite{leventhal2010randomized}, and later extended by several authors to more general settings, including smooth convex optimization \cite{nesterov2012efficiency}, composite convex optimization \cite{richtarik2014iteration}, and parallel/subspace descent variants  \cite{richtarik2016parallel}. These results were later further extended to handle  arbitrary sampling distributions  \cite{qu2016coordinate,qu2016coordinate2, qu2015quartz, SCP} . Accelerated variants of RCD were studied in \cite{lee2013efficient, fercoq2015accelerated, allen2016even}. For other non-randomized coordinate descent variants and their convergence analysis, we refer the reader to \cite{wright2015coordinate,nutini2015coordinate, CsibaPL}. To the best of our knowledge, mRCD and smRCD have never  been analyzed before in any setting.

\subsection{Visualizing the acceleration mechanism}
\label{graphica}

We devote this section to the graphical illustration of the acceleration mechanism behind momentum. Our goal is to shed more light on how the proposed algorithm works in practice. For simplicity, we illustrate this by comparing RK and mRK.

%\begin{figure}[h]
%\centering
%\begin{subfigure}{.5\textwidth}
 % \centering
 % \includegraphics[width=1\linewidth]{SHBplots/Img/RKGraph}
 % \caption{\footnotesize Randomized Kaczmarz Method \cite{RK}}
%\end{subfigure}%
%\begin{subfigure}{.5\textwidth}
 % \centering
 % \includegraphics[width=1\linewidth]{SHBplots/Img/SHBGraph2}
 % \caption{\footnotesize Randomized Kaczmarz Method with Momentum}
%\end{subfigure}
%\caption{Graphical interpretation of the {\em randomized Kaczmarz method} and the {\em randomized Kaczmarz method with momentum} in a simple example with only two  hyperplanes $H_i=\{x\;:\; \bA_{i:} x=b_i\}$ where $i=1,2$ and a unique solution $x^*$.}
%\label{graphicalrepresentation1}
%\end{figure}

\begin{figure}[h]
\centering
\begin{subfigure}{.5\textwidth}
  \centering
  \includegraphics[width=1\linewidth]{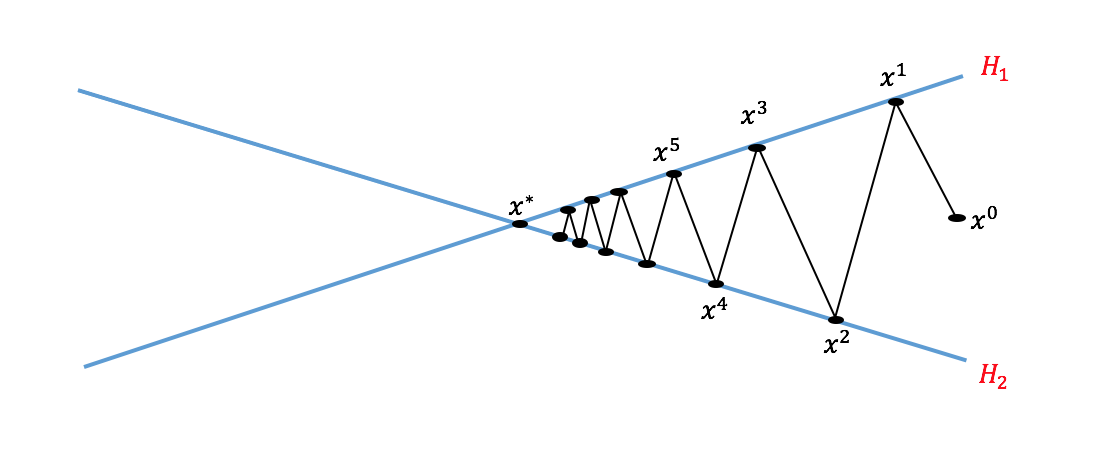}
  \caption{\footnotesize Randomized Kaczmarz Method \cite{RK}}
\end{subfigure}%
\begin{subfigure}{.5\textwidth}
  \centering
  \includegraphics[width=1\linewidth]{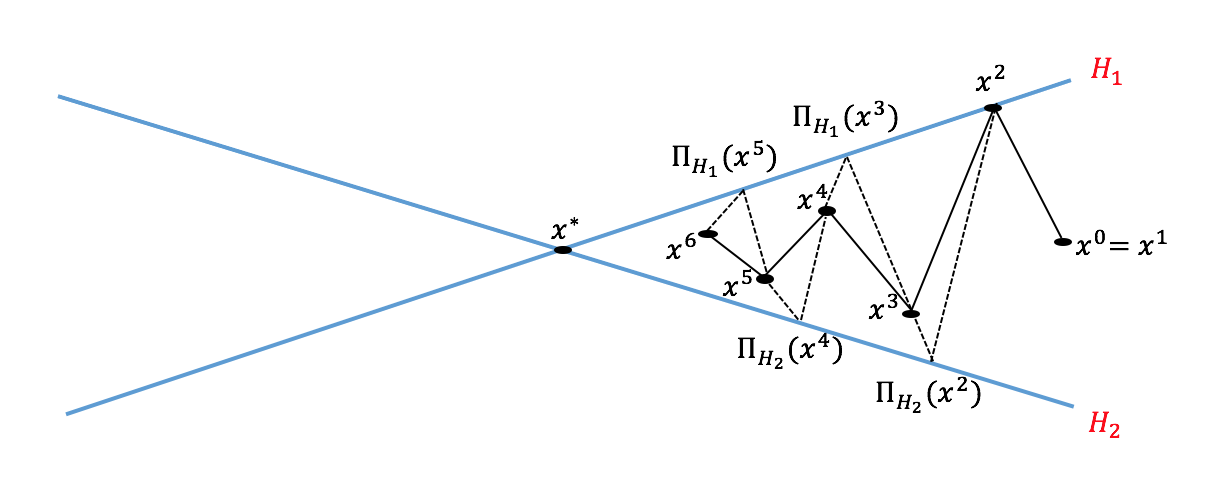}
  \caption{\footnotesize Randomized Kaczmarz Method with Momentum}
\end{subfigure}
\caption{Graphical interpretation of the {\em randomized Kaczmarz method} and the {\em randomized Kaczmarz method with momentum} in a simple example with only two  hyperplanes $H_i=\{x\;:\; \bA_{i:} x=b_i\}$ where $i=1,2$ and a unique solution $x^*$.}
\label{graphicalrepresentation}
\end{figure}

In Figure~\ref{graphicalrepresentation} we present in a simple $\R^2$ illustration of the difference between the workings of RK and mRK. Our goal is to show graphically how the addition of momentum leads to acceleration.  Given iterate $x^k$, one can think of the update rule of the mRK \eqref{eq:SHB-intro} in two steps:
\begin{enumerate}
\item \emph{The Projection:} The projection step corresponds to the first part $x^k - \omega  \nabla f_{\bS_k}(x^k)$ of the  mRK update \eqref{eq:SHB-intro} and it means that the current iterate $x^k$ is projected onto a randomly chosen hyperplane $H_i$\footnote{In the plots of Figure ~\ref{graphicalrepresentation}, the hyperplane of each update is chosen in an alternating fashion for illustration purposes}. The value of the stepsize $\omega \in (0,2)$ defines whether the projection is exact or not. When $\omega=1$ (no relaxation) the projection is exact, that is the point  $\Pi_{H_i}(x^k)$ belongs in the hyperplane $H_i$.  In Figure~\ref{graphicalrepresentation} all projections are exact.
\item \emph{Addition of the momentum term}: The momentum term (right part of the update rule) $\beta (x^k-x^{k-1})$ forces the next iterate $x^{k+1}$ to be closer to the solution $x^*$ than the corresponding point $\Pi_{H_i}(x^k)$. Note also that the vector $x^{k+1}-\Pi_{H_i}(x^k)$ is always parallel to $x^k-x^{k-1}$ for all $k\geq0$.
\end{enumerate}

\begin{rem}
In the example of Figure~\ref{graphicalrepresentation}, the performance of mRK is similar to the performance of RK until iterate $x^3$. After this point, the momentum parameter becomes more effective and the mRK method accelerates. This behavior appears also in our experiments in the next section where we work with  matrices with many rows. There we can notice that the momentum parameter seems to become more effective after the first $m+1$ iterations.
\end{rem}

\section{Numerical Experiments} 
\label{sec:experiments}

In this section we study the computational behavior of the two proposed algorithms, mSGD and smSGD. In particular, we focus mostly on the evaluation of the performance of mSGD. To highlight the usefulness of smSGD, an \blue{empirical} verification of Theorem~\ref{thm:DSHBspeedup} is presented in subsection~\ref{DSHB comp}. As we have already mentioned, both mSGD and smSGD can be interpreted as sketch-and-project methods (with relaxation), and as a result a comprehensive array of well-known algorithms can be recovered as special cases by varying the main parameters of the methods (check Section~\ref{sec:cases}). In our experiments we focus on the popular special cases of randomized Kaczmarz method (RK) and the randomized coordinate descent method (RCD) without relaxation ($\omega=1$), and  show the practical benefits of \blue{adding} the momentum term\footnote{The experiments were repeated with various values of the main parameters and initializations, and similar results were obtained in all cases.}. The choice of the stepsize $\omega=1$ is not arbitrary. Recently, in \cite{ASDA} both relaxed RK and relaxed RCD were analyzed, and it was proved that the quantity $\E{\|x^k-x^*\|^2_{\bB}}$ converges linearly to zero for $\omega \in (0,2)$, and that the best convergence rate is obtained precisely for $\omega=1$. Thus the comparison is with the best-in-theory  no-momentum variants.

Note that, convergence analysis of the error $\E{\|x^k-x^*\|^2_{\bB}}$ and of the expected function values $\E{f(x^k)}$ in Theorem~\ref{L2} shows that mSGD enjoys global non-asymptotic linear convergence rate but not faster than the no-momentum method. 
The accelerated linear convergence rate has been obtained only in the weak sense (Theorem \ref{theoremheavyball}). Nevertheless, in practice as indicated from our experiments, mSGD is faster than its no momentum variant. Note also that in all of the presented experiments the momentum parameters $\beta$ of the methods are chosen to be positive constants that do not depend on parameters that are not known to the users such as $\lambda_{\min}^+$ and $\lambda_{\max}$.

In comparing the methods with their momentum variants we use both the relative error measure $\|x^k-x^*\|^2_\bB / \|x^0-x^*\|^2_\bB $ and the function values $f(x^k)$\footnote{Remember that in our setting we have $f(x^*)=0$ for the optimal solution $x^*$ of the best approximation problem; thus $f(x)-f(x^*)=f(x)$. The function values $f(x^k)$ refer to function~\eqref{functionRK} in the case of RK and to function~\eqref{functionRCD} for the RCD.  For block variants the objective function of problem \eqref{StochReform_IntroThesis} has also closed form expression but it can be very difficult to compute. In these cases one can instead evaluate the quantity $\|\bA x-b\|^2_{\bB}$.}. In all implementations, except for the experiments on average consensus (Section~\ref{consensus}), the starting point is chosen to be $x^0=0$. In the case of average consensus the starting point must be the vector with the initial private values of the nodes of the network. 
All the code for the experiments is written in the Julia programming language.
For the horizontal axis we use either the number of iterations or the wall-clock time measured using the tic-toc Julia function.

This section is divided in three main experiments. In the first one we evaluate the performance of the mSGD method in the special cases of mRK and mRCD for solving both synthetic consistent Gaussian systems and consistent linear systems with real matrices. In the second experiment we computationally verify Theorem \ref{thm:DSHBspeedup} (comparison between the mSGD and smSGD methods). In the last experiment building upon the recent results of \cite{LoizouRichtarik} we show how the addition of the momentum accelerates the pairwise randomized gossip (PRG) algorithm for solving the average consensus problem.

\begin{table}[H]
\begin{center}
{\footnotesize
\begin{tabular}{| p{3cm} | p{3cm} | p{3cm} | p{3cm} |  }
 \hline
Assumptions & No-momentum, & Momentum, &  Stochastic Momentum,\\
& $\beta=0$ & $\beta\geq0$ &$\beta\geq0$\\
   \hline
$\bA$ general, $\bB=\bI$ & RK & mRK & smRK\\
 \hline
$\bA\succ 0$, $\bB=\bA$ & RCD & mRCD & smRCD\\
  \hline
$\bA$ incidence matrix, $\bB=\bI$ & PRG &  mPRG & smPRG \\
  \hline
\end{tabular}
}
\end{center}
\caption{Abbreviations of the algorithms (special cases of general framework) that we use in the numerical evaluation section. In all methods the random matrices are chosen to be unit coordinate vectors in $\R^m$  ($\bS=e_i$). With PRG we denote the Pairwise Randomized Gossip algorithm for solving the average consensus problem first proposed in \cite{boyd2006randomized}. Following similar notation with the rest of the chapter with mPRG and smPRG we indicate its momentum and stochastic momentum variants respectively.}
\end{table}

\subsection{Evaluation of mSGD}
\label{subsectionEvaluation}
In this subsection we study the computational behavior of mRK and mRCD when they compared with their no momentum variants for both synthetic and real data.

\subsubsection{Synthetic Data}
\label{gaussiansyntheric}
The synthetic data for this comparison is generated as follows\footnote{Note that in the first experiment we use Gaussian matrices which by construction are full rank matrices with probability 1 and as a result the consistent linear systems have unique solution. Thus, for any starting point $x^0$, the vector $z$ that \blue{is} used to create the linear system is the solution mSGD converges to.  This is not true for general consistent linear systems, with no full-rank matrix. In this case, the solution   $x^{*}=\Pi_{\cL,\bB}(x^0)$ that mSGD converges to is not necessarily equal to $z$. For this reason, in the evaluation of the relative error measure $\|x^k-x^*\|^2_\bB / \|x^0-x^*\|^2_\bB$, one should be careful and use the value $x^*=x^0+\bA^\dagger (b- \bA x^0)\overset{x^0=0}{=} \bA^\dagger b$.}.

\textbf{For mRK:}
All elements of matrix $\bA \in \R^{m \times n}$ and vector $z \in \R^n$ are chosen to be i.i.d $\mathcal{N}(0,1)$. Then the right hand side of the linear system is set to $b=\bA z$. With this way the consistency of the linear system with matrix $\bA$ and right hand side $b$ is ensured. 

\textbf{For mRCD:}  A Gaussian matrix $\bP \in \R^{m \times n}$ is generated and then matrix $\bA = \bP^\top \bP \in \R^{n \times n}$ is used in the linear system. The vector $z\in \R^n$ is chosen to be i.i.d $\mathcal{N}(0,1)$ and again to ensure consistency of the linear system, the right hand side is set to $b=\bA z$.

In particular for the evaluation of mRK we generate Gaussian matrices with $m=300$ rows and several columns while for the case of mRCD the matrix $\bP$ is chosen to be Gaussian with $m=500$ rows and several columns\footnote{RCD converge to the optimal solution only in the case of positive definite matrices. For this reason $\bA = \bP^\top \bP \in \R^{n \times n}$ is used which with probability $1$ is a full rank matrix}. 
Linear systems of these forms were extensively studied \cite{RK, geman1980limit} and it was shown that the quantity $1/\lambda_{\min}^+$(condition number) can be easily controlled. 

For each linear system we run mRK (Figures~\ref{RKperformace1} and~\ref{RKperformace12})  and mRCD (Figures~\ref{RCDperformance1} and~\ref{RCDperformance12}) for several values of momentum parameters $\beta$ and fixed stepsize $\omega=1$ and we plot the performance of the methods (average after 10 trials) for both the relative error measure and the function values. Note that for $\beta=0$ the methods are equivalent with their no-momentum variants RK and RCD respectively.

From Figures~\ref{RKperformace1}, \ref{RKperformace12}, \ref{RCDperformance1} and~\ref{RCDperformance12} it is clear that the addition of momentum term leads to an improvement in the performance of both, RK and RCD. More specifically, from the four figures we observe the following:

\begin{itemize}
\item For the well conditioned linear systems ($1/\lambda_{\min}^+$ small) it is known that even the no-momentum variant converges rapidly to the optimal solution. In these cases the benefits of the addition of momentum are not obvious. The momentum term is beneficial for the case where the no-momentum variant ($\beta=0$) converges slowly, that is when  $1/\lambda_{\min}^+$ is large (ill-conditioned linear systems). 
\item For the case of fixed stepsize $\omega=1$, the problems with small condition number require smaller momentum parameter $\beta$ to have faster convergence. Note the first two rows of Figures~\ref{RKperformace1} and \ref{RCDperformance1}, where $\beta=0.3$ or $\beta =0.4$, are good options. 
\item For large values of $1/\lambda_{\min}^+$, it seems that the choice of $\beta=0.5$ is the best. As an example for matrix $\bA\in \R^{300 \times 280}$ in Figure~\ref{RKperformace1}, (where $1/\lambda_{\min}^+=208,730$), note that to reach relative error $10^{-10}$, RK needs around 2 million iterations, while mRK with momentum parameter $\beta=0.5$ requires only half that many iterations. The acceleration is obvious also in terms of time where in 12 seconds the mRK with momentum parameter $\beta=0.5$ achieves relative error of the order $10^{-9}$ and RK requires more than 25 seconds to obtain the same accuracy. 
\item We observe that both mRK and mRCD, with appropriately chosen momentum parameters $0< \beta \leq 0.5$, always converge  faster than their no-momentum variants, RK and RCD, respectively. This is a smaller momentum parameter than $\beta \approx 0.9$ which is being used extensively with mSGD for training deep neural networks \cite{zhang2017yellowfin, wilson2017marginal, sutskever2013importance}. 
\item In \cite{de2017accelerated} a stochastic power iteration with momentum is proposed for principal component analysis (PCA). There it was demonstrated empirically that a naive application of momentum to the stochastic power iteration does not result in a faster method. To achieve faster convergence, the authors proposed mini-batch and variance-reduction techniques on top of the addition of momentum. In our setting, mere addition of the momentum term  to SGD (same is true for  special cases such as RK and RCD) leads to empirically faster methods.
\end{itemize}
\begin{figure}[!]
\centering
\begin{subfigure}{.35\textwidth}
  \centering
  \includegraphics[width=1\linewidth]{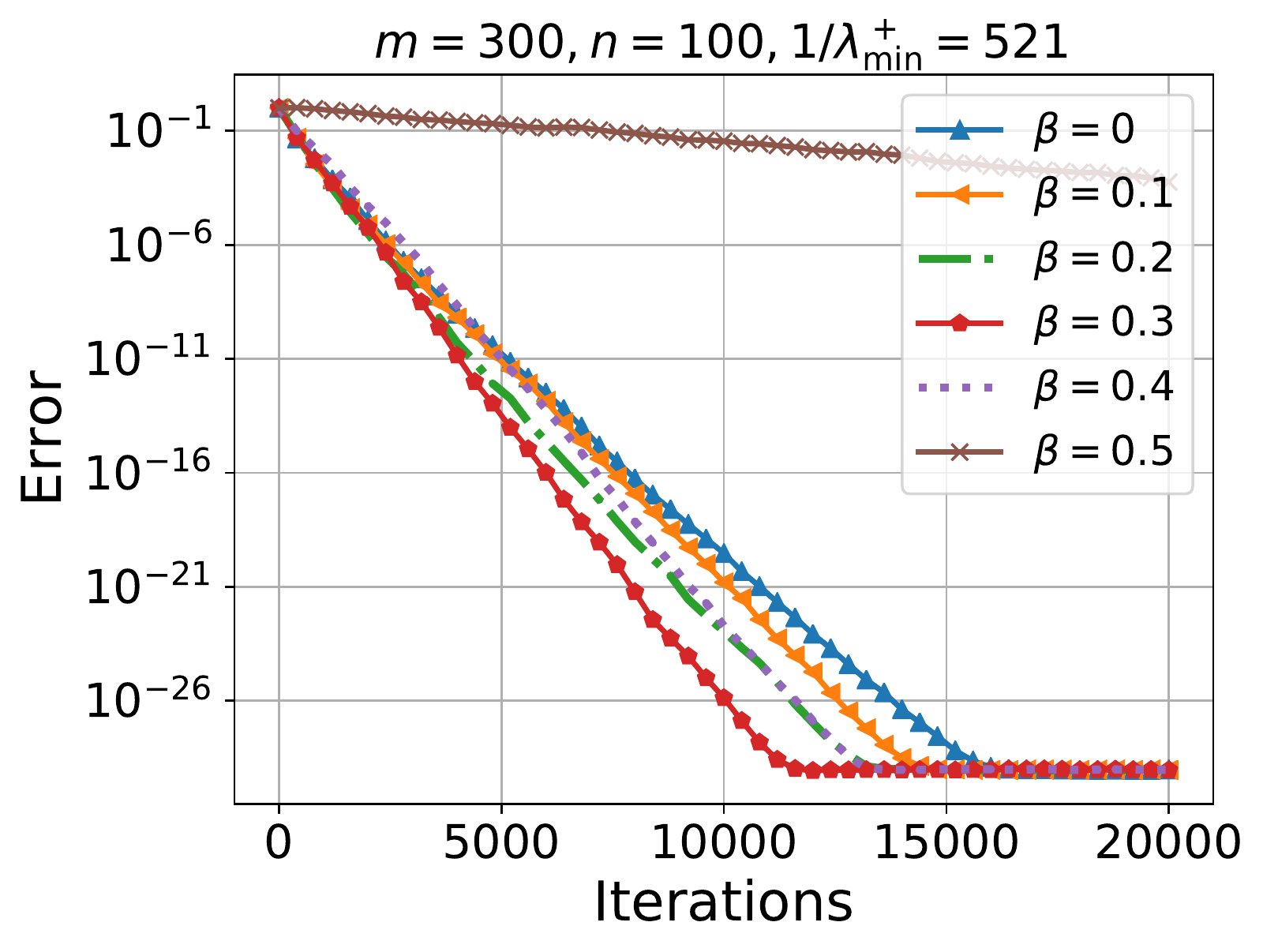}
  %\caption{}
\end{subfigure}%
\begin{subfigure}{.35\textwidth}
  \centering
  \includegraphics[width=1\linewidth]{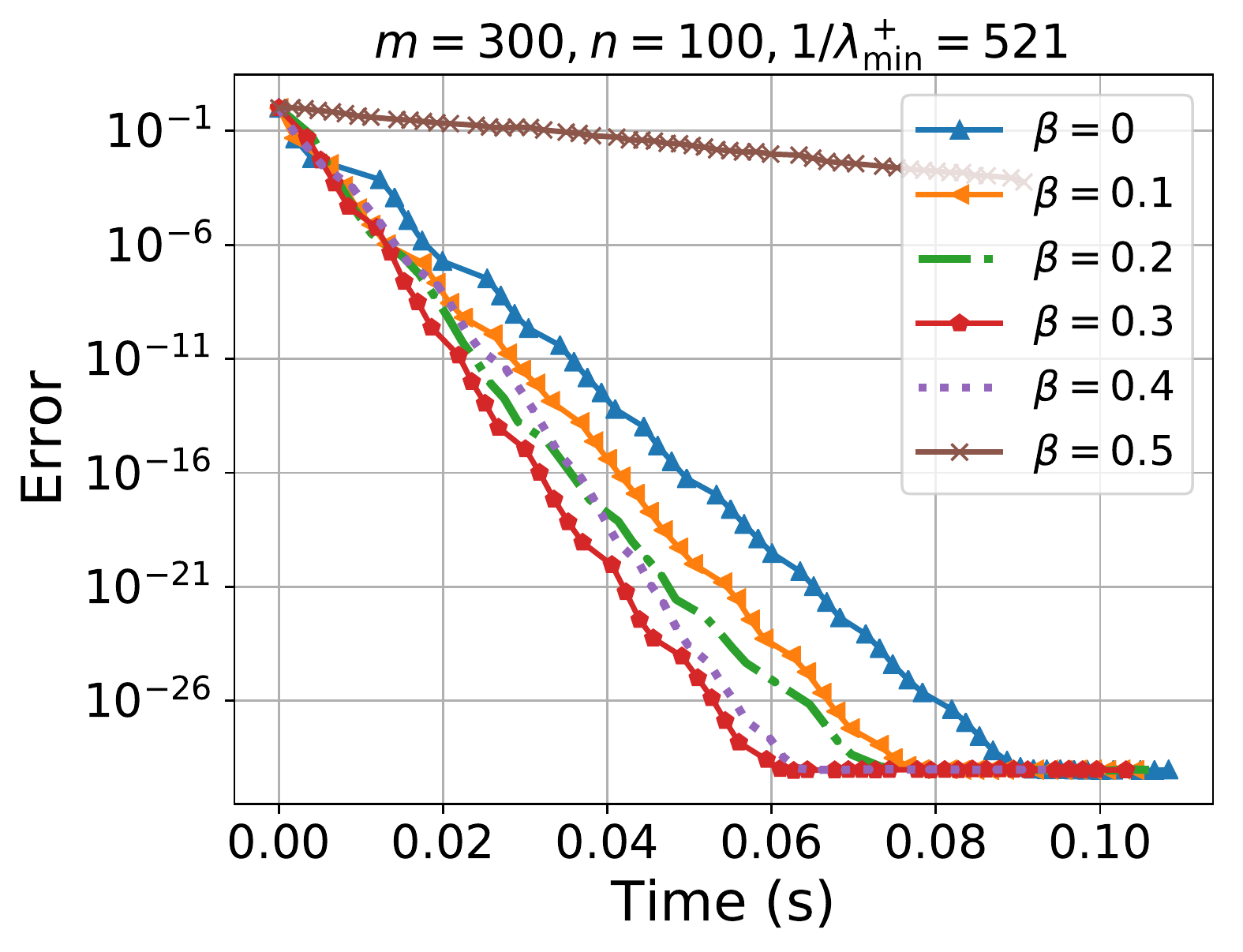}
 % \caption{}
\end{subfigure}\\
\begin{subfigure}{.35\textwidth}
  \centering
  \includegraphics[width=1\linewidth]{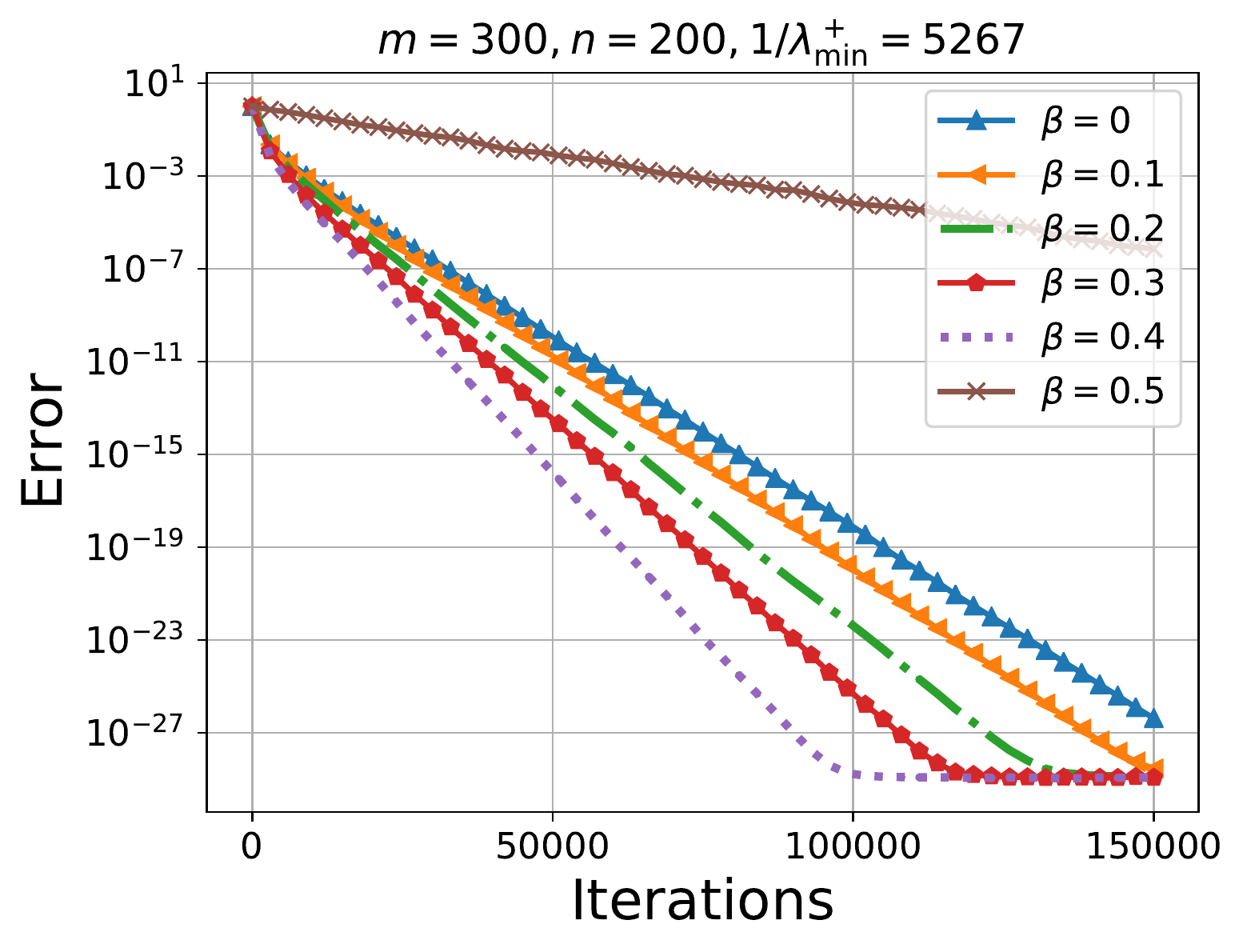}
  %\caption{}
\end{subfigure}%
\begin{subfigure}{.35\textwidth}
  \centering
  \includegraphics[width=1\linewidth]{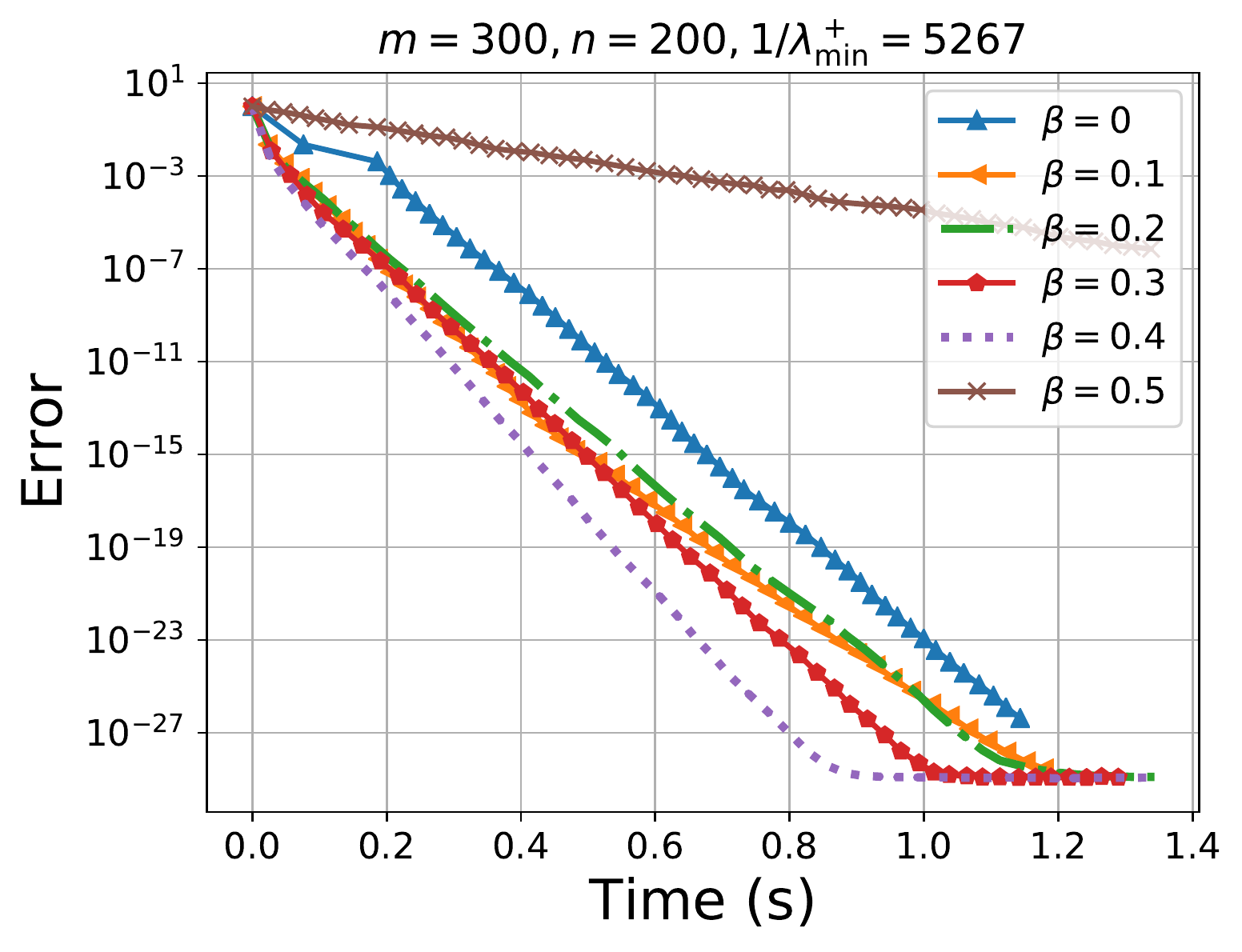}
 % \caption{}
\end{subfigure}\\
\begin{subfigure}{.35\textwidth}
  \centering
 \includegraphics[width=1\linewidth]{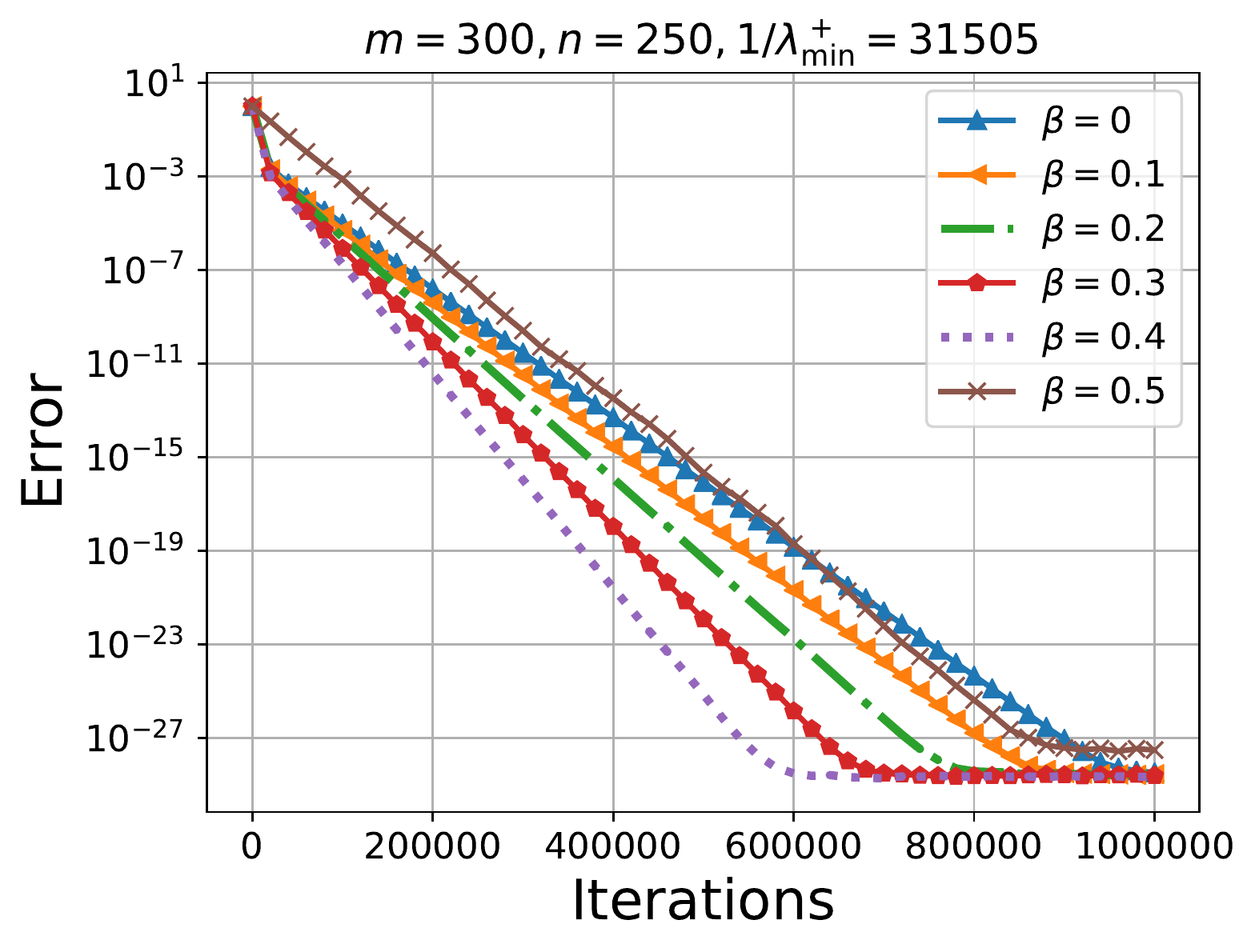}
\end{subfigure}%
\begin{subfigure}{.35\textwidth}
  \centering
\includegraphics[width=1\linewidth]{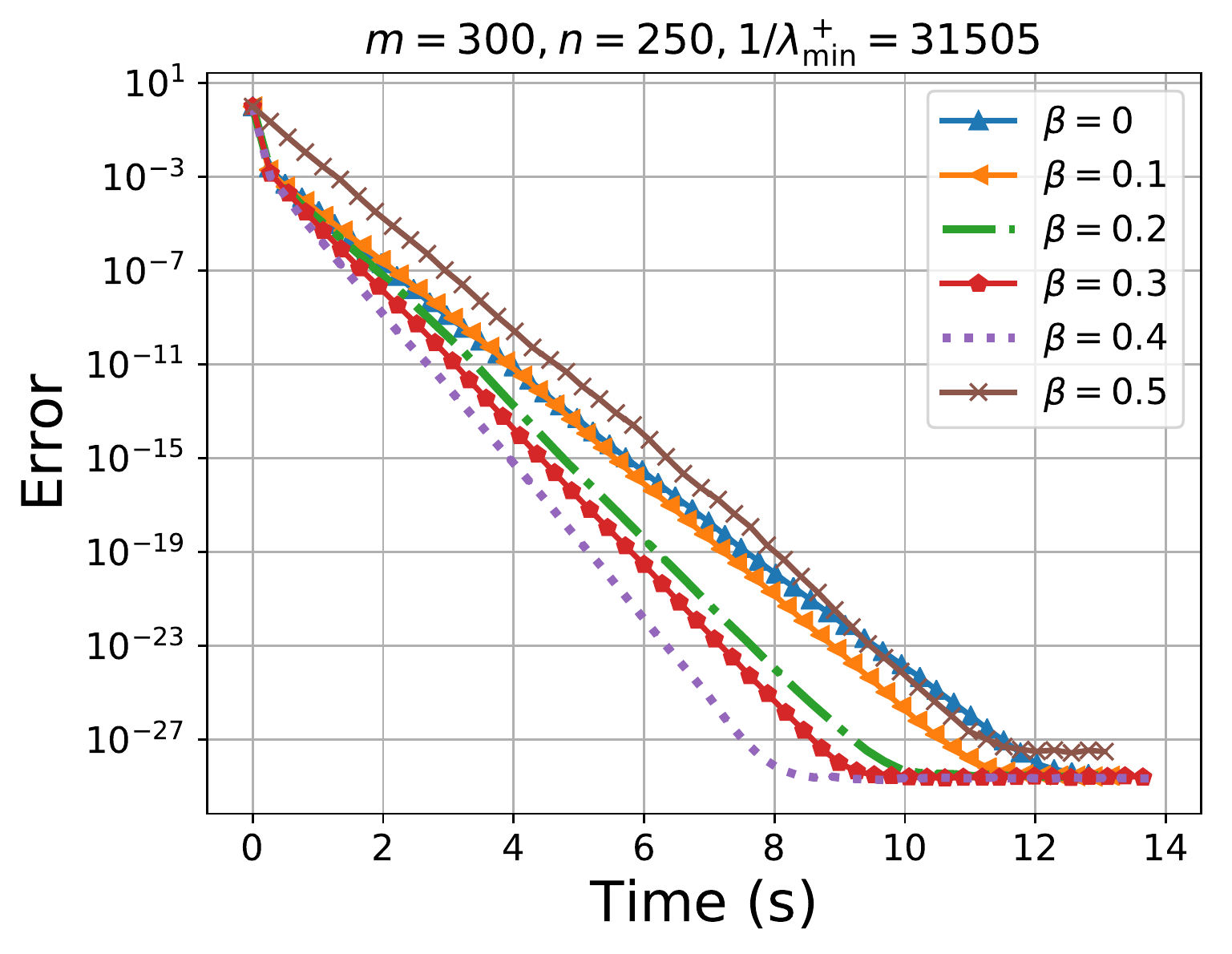}
\end{subfigure}\\
\begin{subfigure}{.35\textwidth}
  \centering
  \includegraphics[width=1\linewidth]{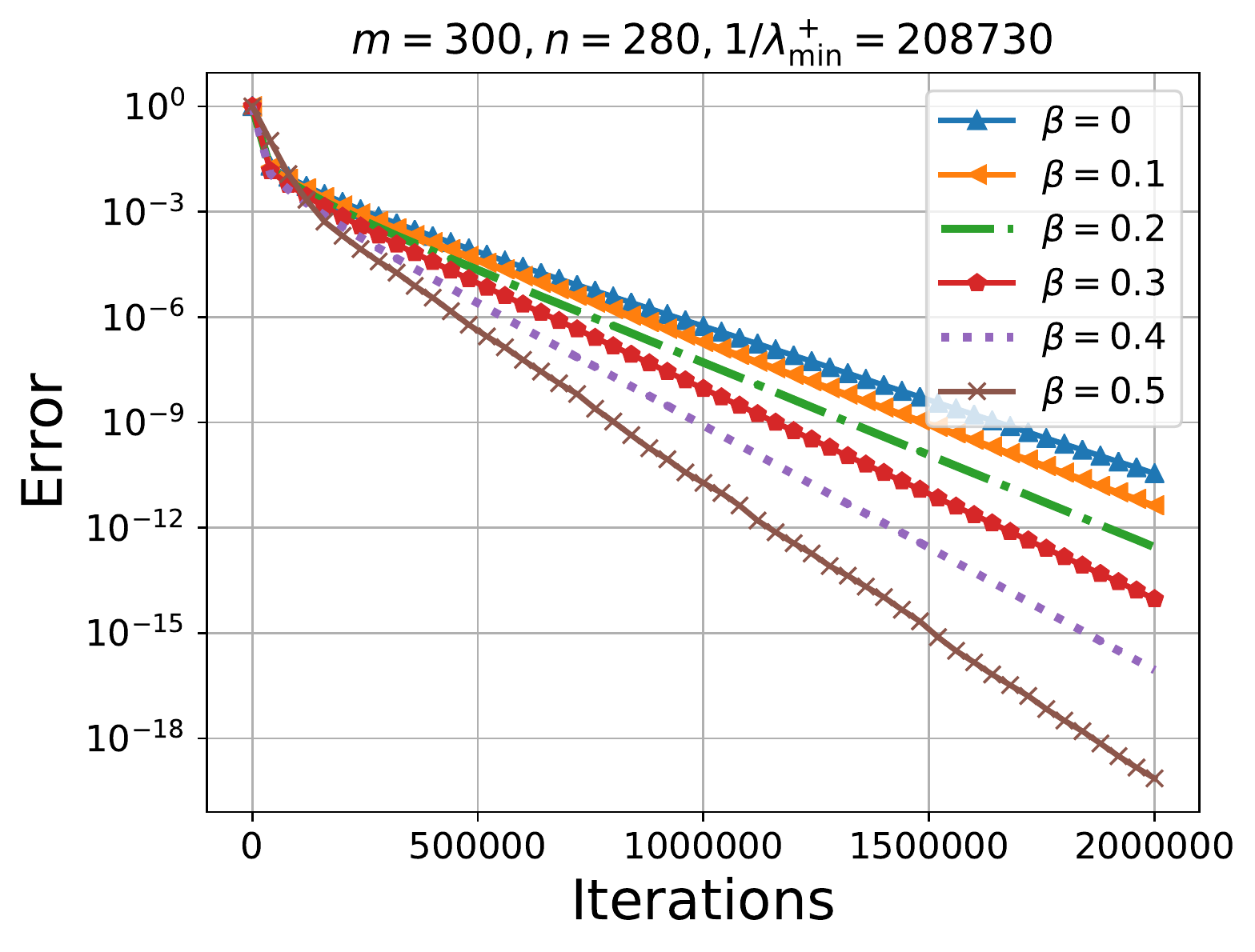}
\end{subfigure}%
\begin{subfigure}{.35\textwidth}
  \centering
  \includegraphics[width=1\linewidth]{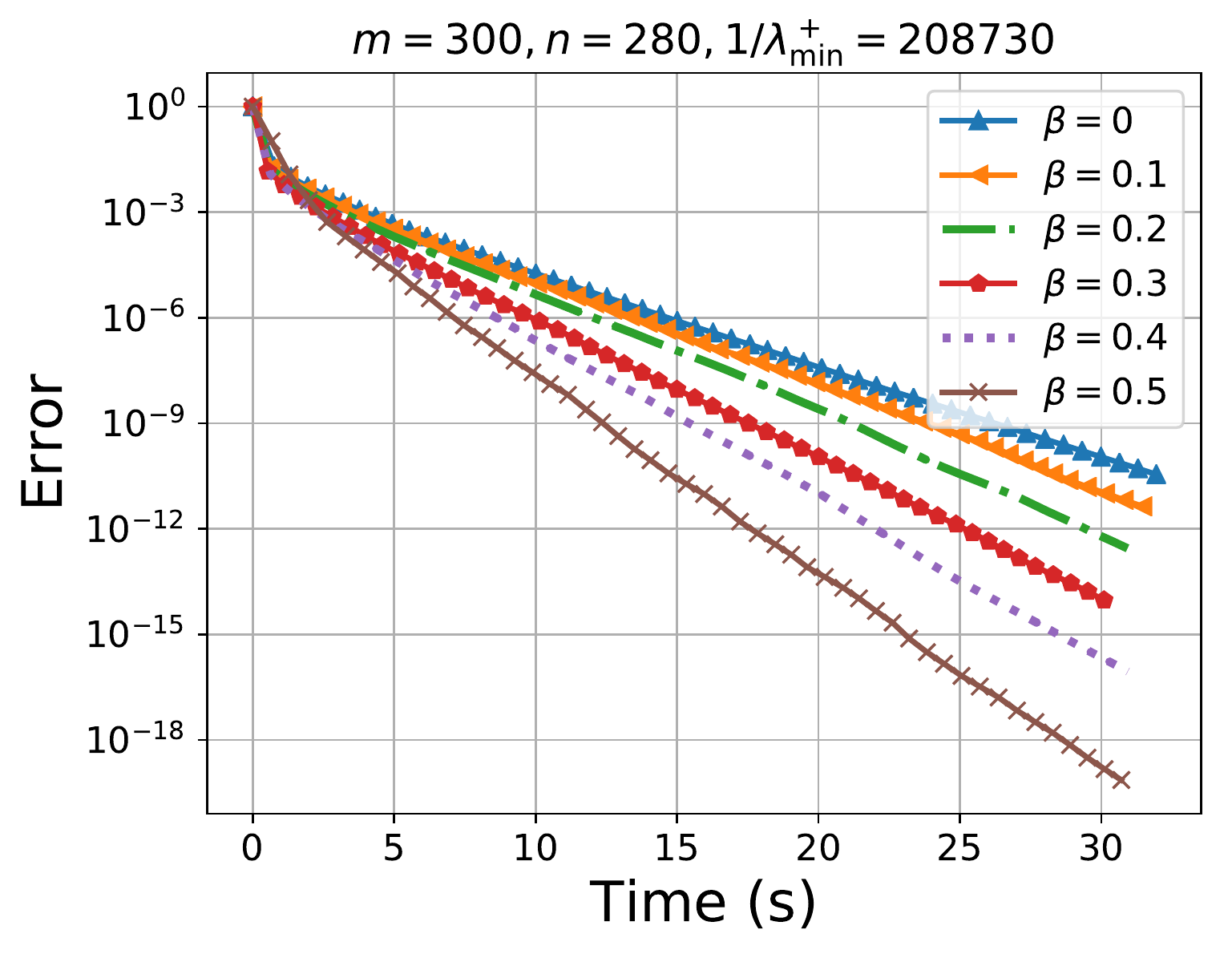}
\end{subfigure}\\
\begin{subfigure}{.35\textwidth}
  \centering
  \includegraphics[width=1\linewidth]{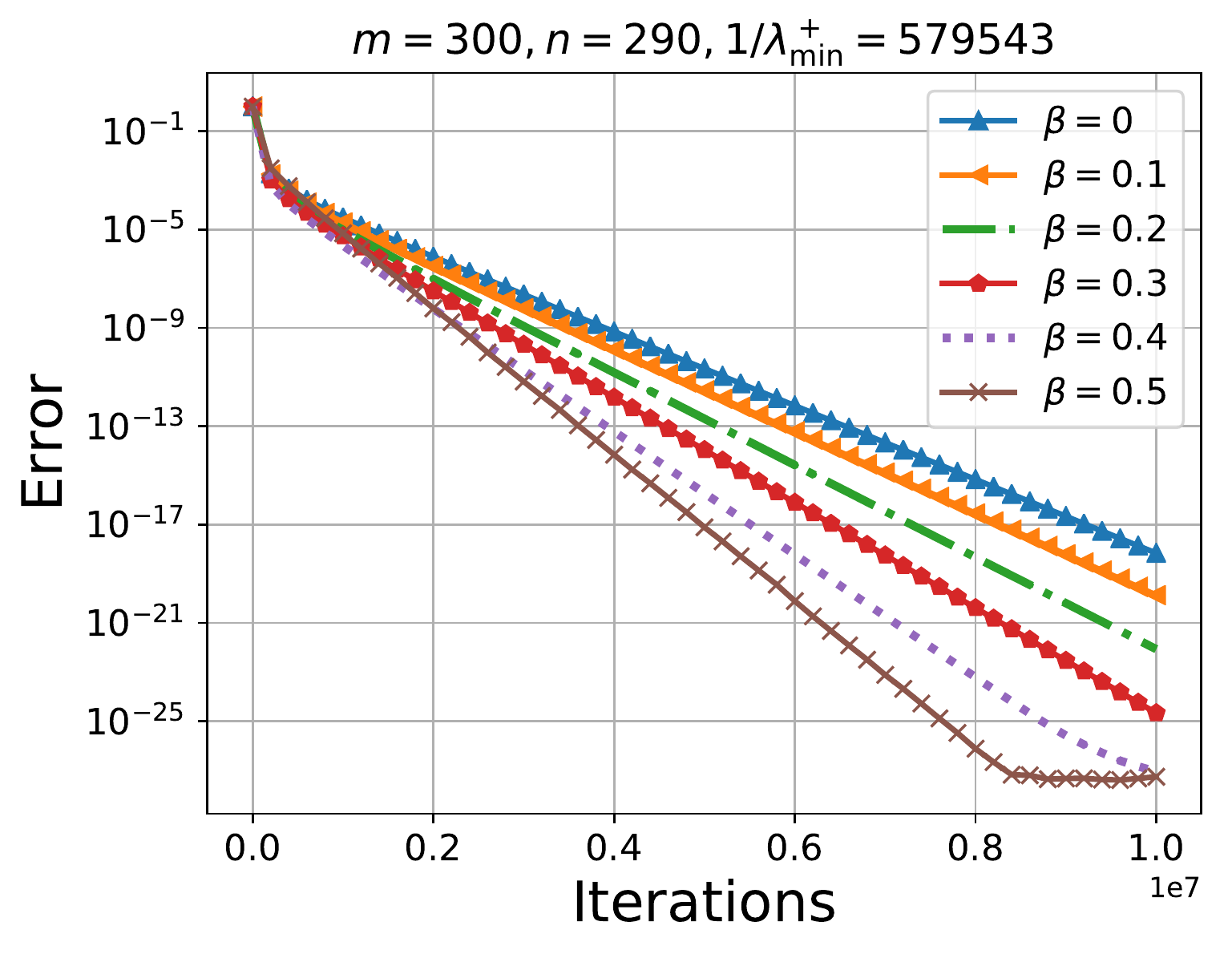}
\end{subfigure}%
\begin{subfigure}{.35\textwidth}
  \centering
  \includegraphics[width=1\linewidth]{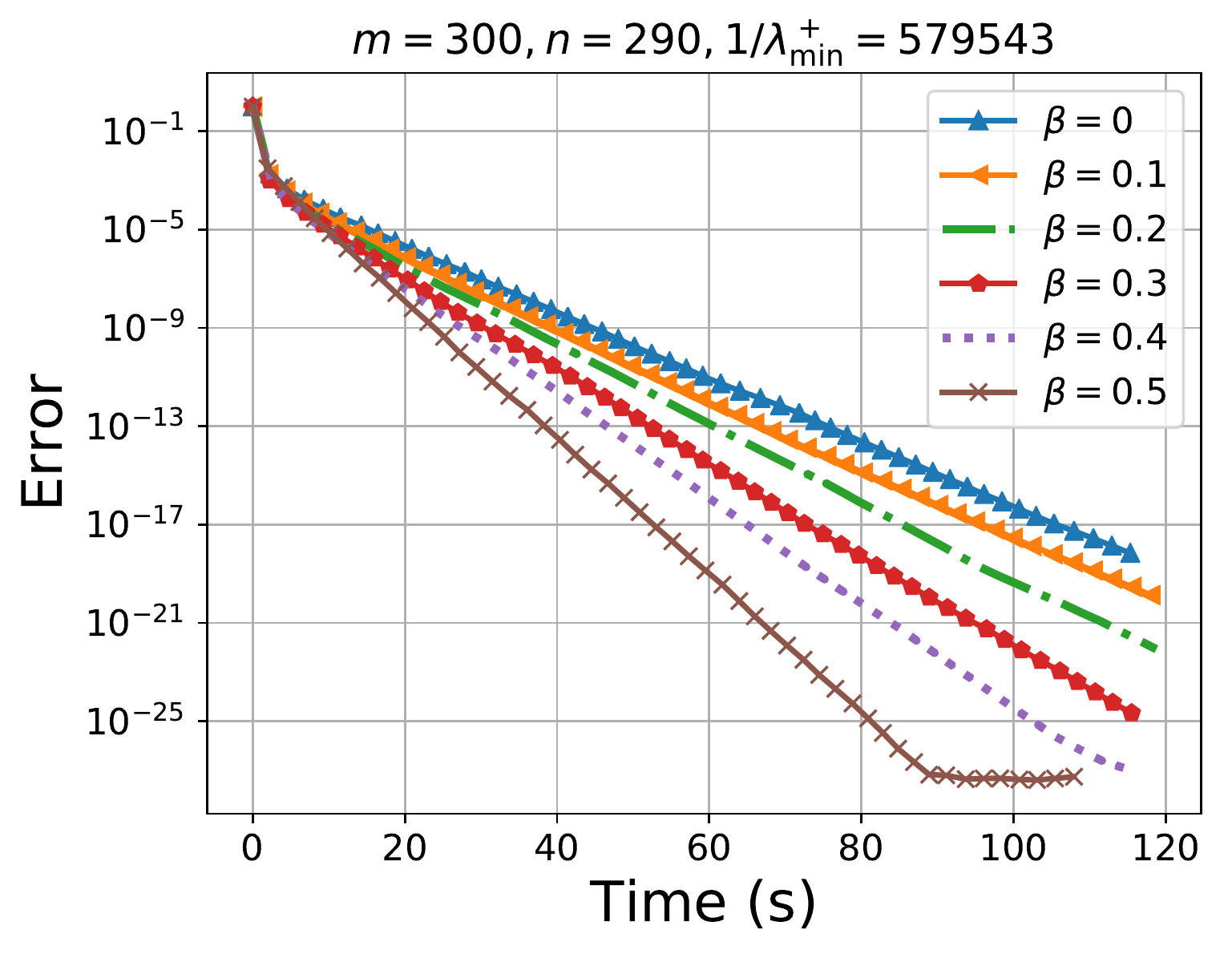}
\end{subfigure}\\
\caption{Performance of mRK \blue{(the method in the first row of Table~\ref{SpecialCasesAlgorithms})} for fixed stepsize $\omega=1$ and several momentum parameters $\beta$ for consistent linear systems with Gaussian matrix $\bA$ with $m=300$ rows and $n=100,200,250,280,290$ columns. The graphs in the first (second) column plot iterations (time) against residual error. All plots are averaged over 10 trials. The title of each plot indicates the dimensions of the matrix $\bA$ and the value of $1/\lambda_{\min}^+$. The ``Error" on the vertical axis represents the relative error $\|x^k-x^*\|^2_\bB / \|x^0-x^*\|^2_\bB \overset{\bB=\bI, x^0=0}{=}\|x^k-x^*\|^2 / \|x^*\|^2_\bB$.}
\label{RKperformace1}
\end{figure}

\begin{figure}[!]
\centering
\begin{subfigure}{.35\textwidth}
  \centering
  \includegraphics[width=\linewidth]{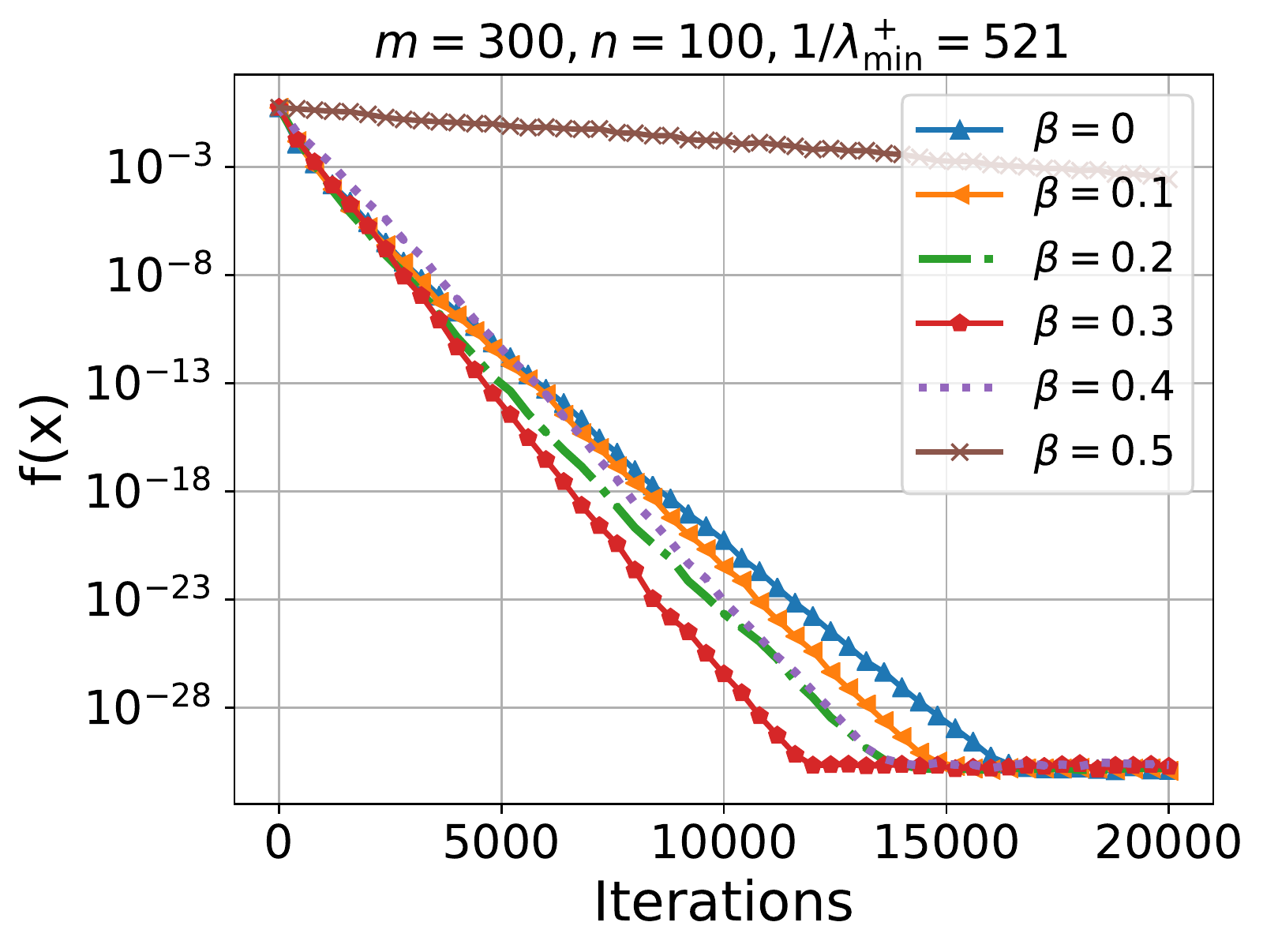}
 % \caption{}
\end{subfigure}
\begin{subfigure}{.35\textwidth}
  \centering
  \includegraphics[width=1\linewidth]{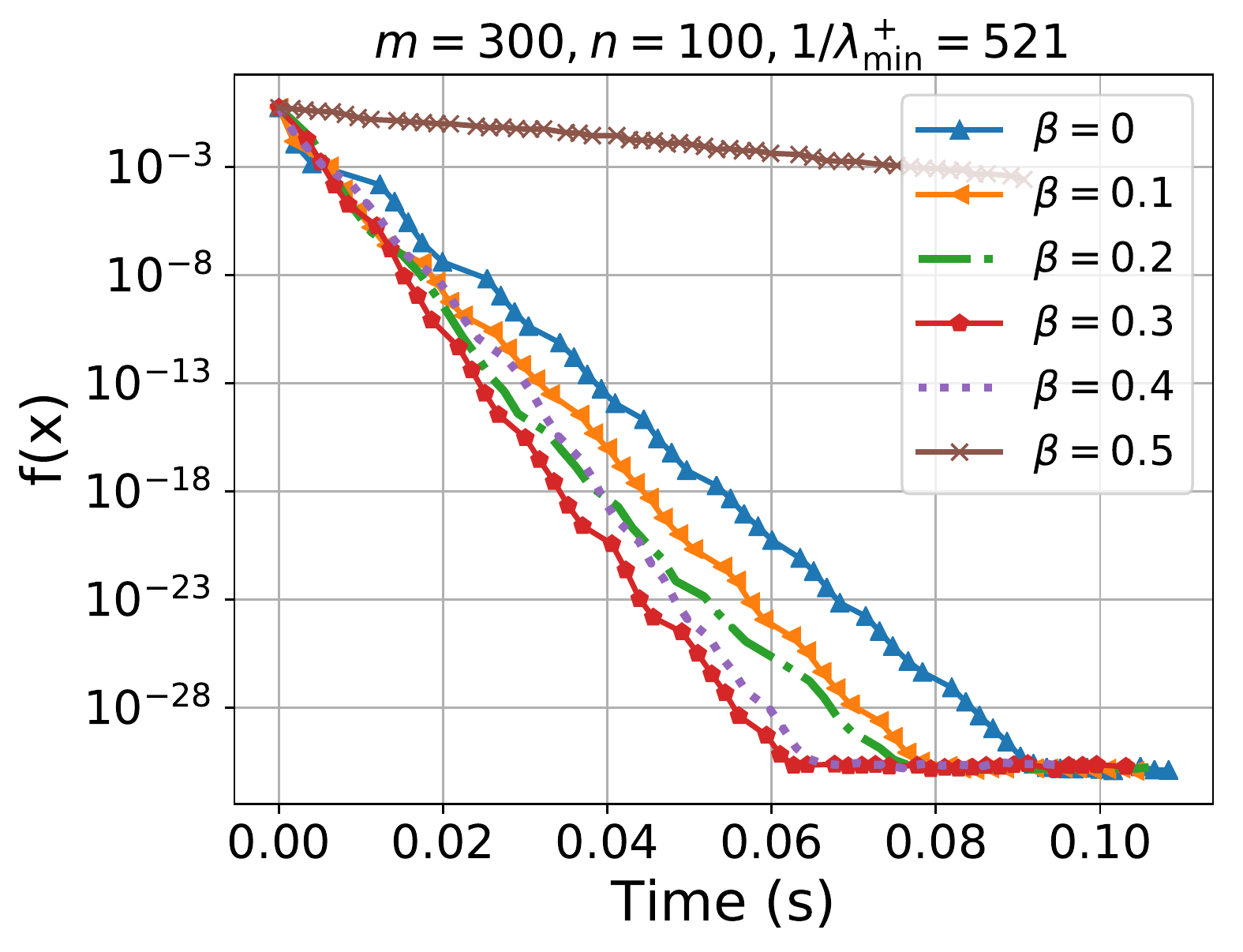}
%  \caption{}
\end{subfigure}\\
\begin{subfigure}{.35\textwidth}
  \centering
  \includegraphics[width=\linewidth]{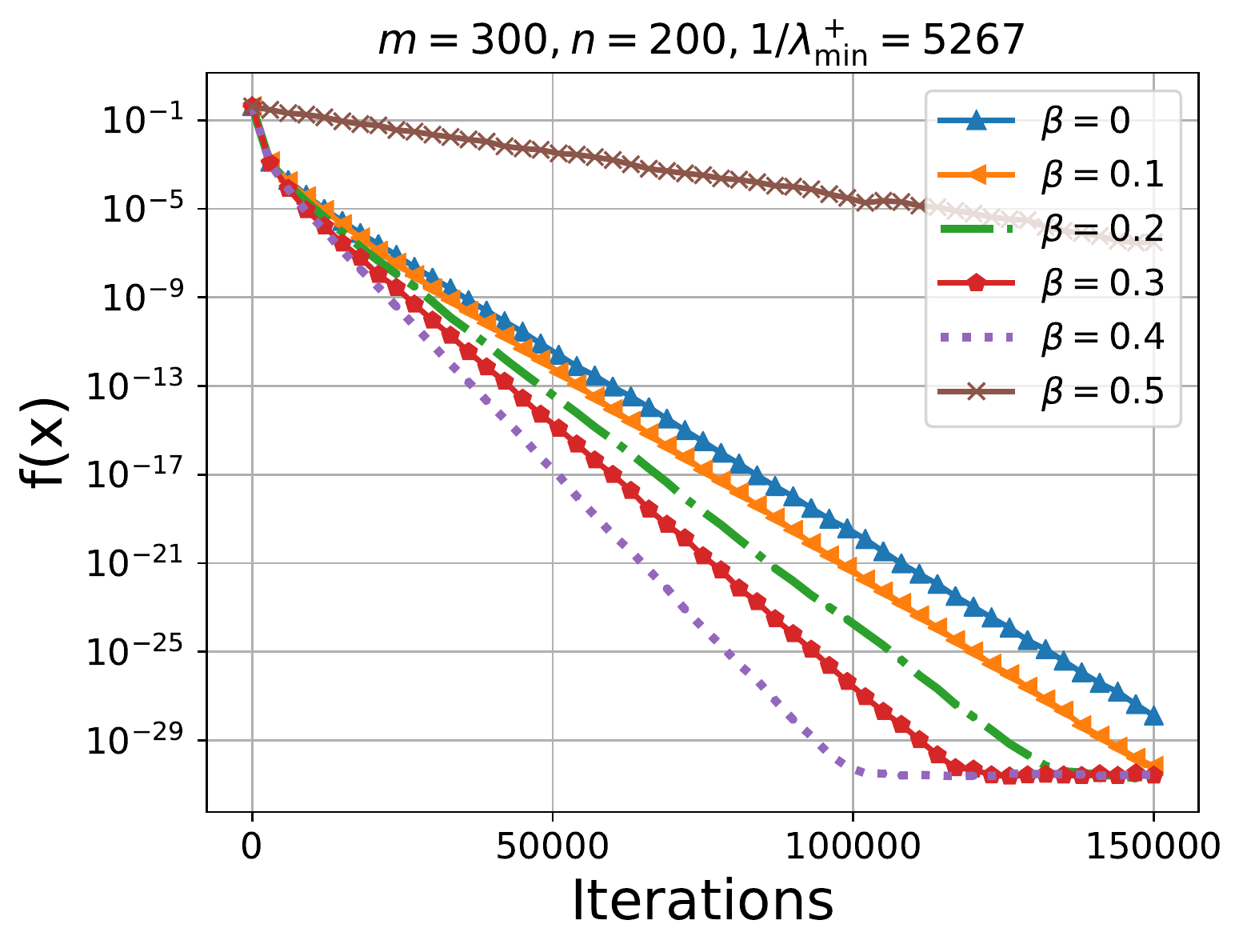}
\end{subfigure}
\begin{subfigure}{.35\textwidth}
  \centering
  \includegraphics[width=1\linewidth]{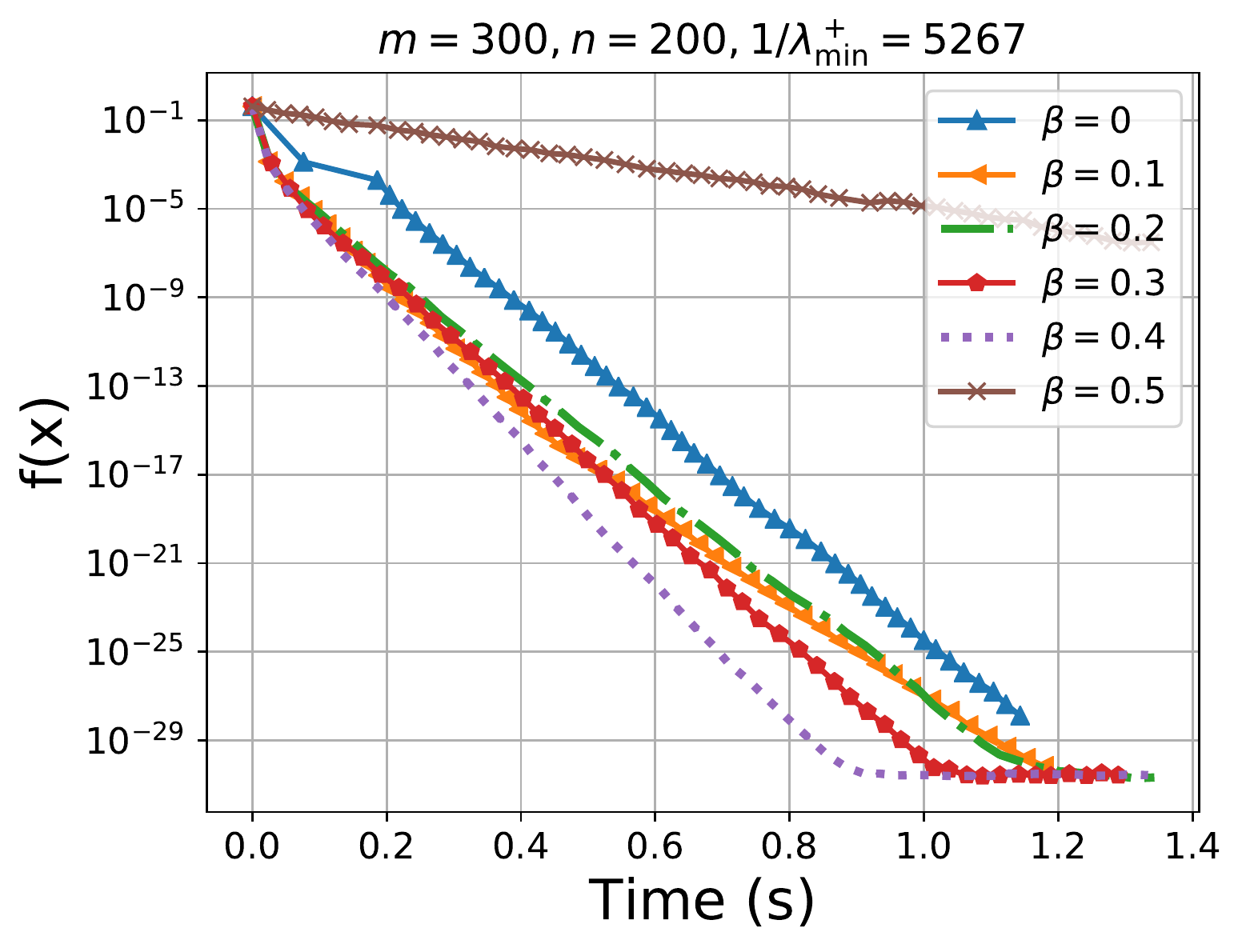}
\end{subfigure}\\
\begin{subfigure}{.35\textwidth}
  \centering
    \includegraphics[width=1\linewidth]{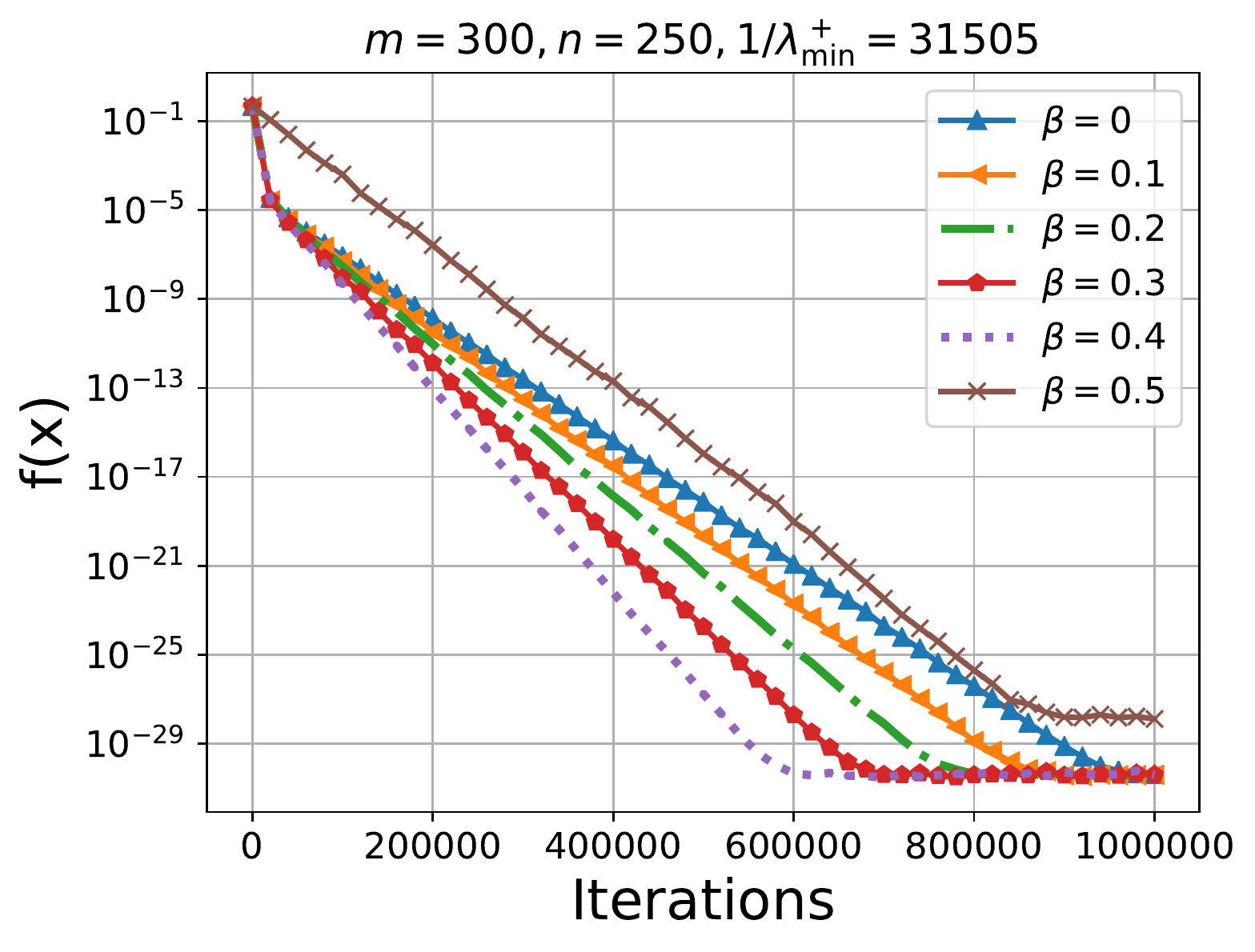}
\end{subfigure}
\begin{subfigure}{.35\textwidth}
  \centering
  \includegraphics [width=1\linewidth] {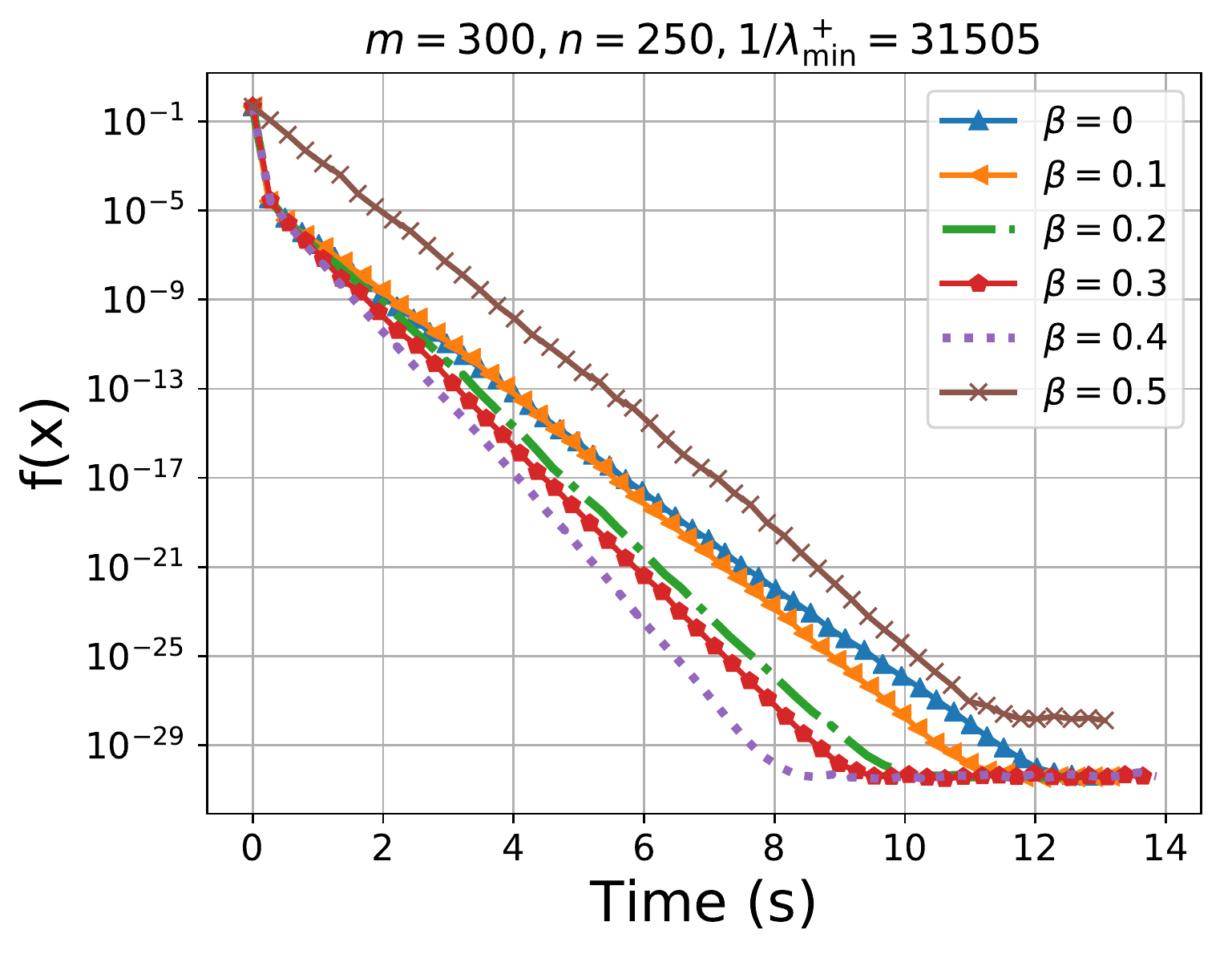} %[scale=0.24]
\end{subfigure}\\
\begin{subfigure}{.35\textwidth}
  \centering
  \includegraphics[width=1\linewidth]{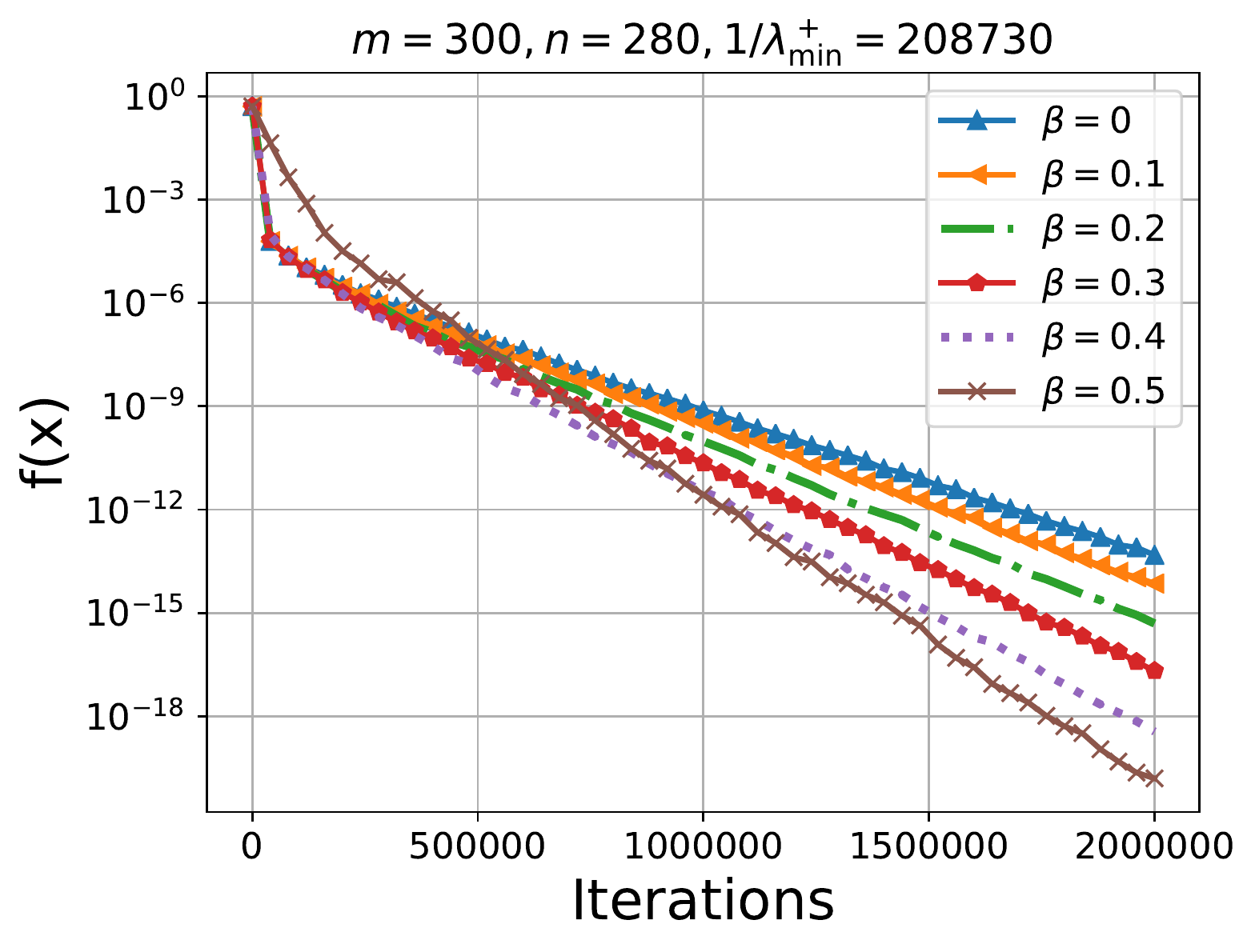}
\end{subfigure}
\begin{subfigure}{.35\textwidth}
  \centering
  \includegraphics[width=1\linewidth]{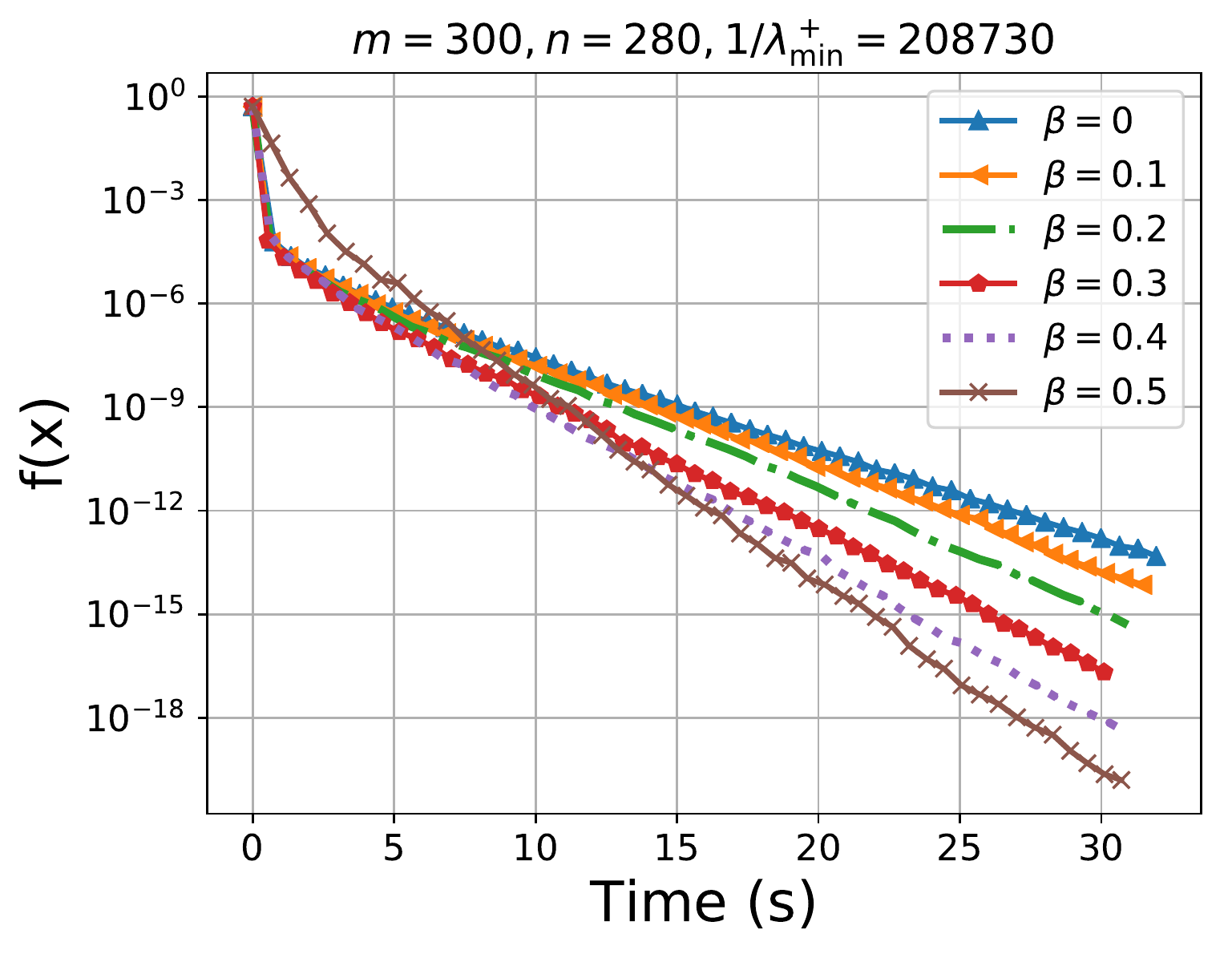}
\end{subfigure}\\
\begin{subfigure}{.35\textwidth}
  \centering
  \includegraphics[width=1\linewidth]{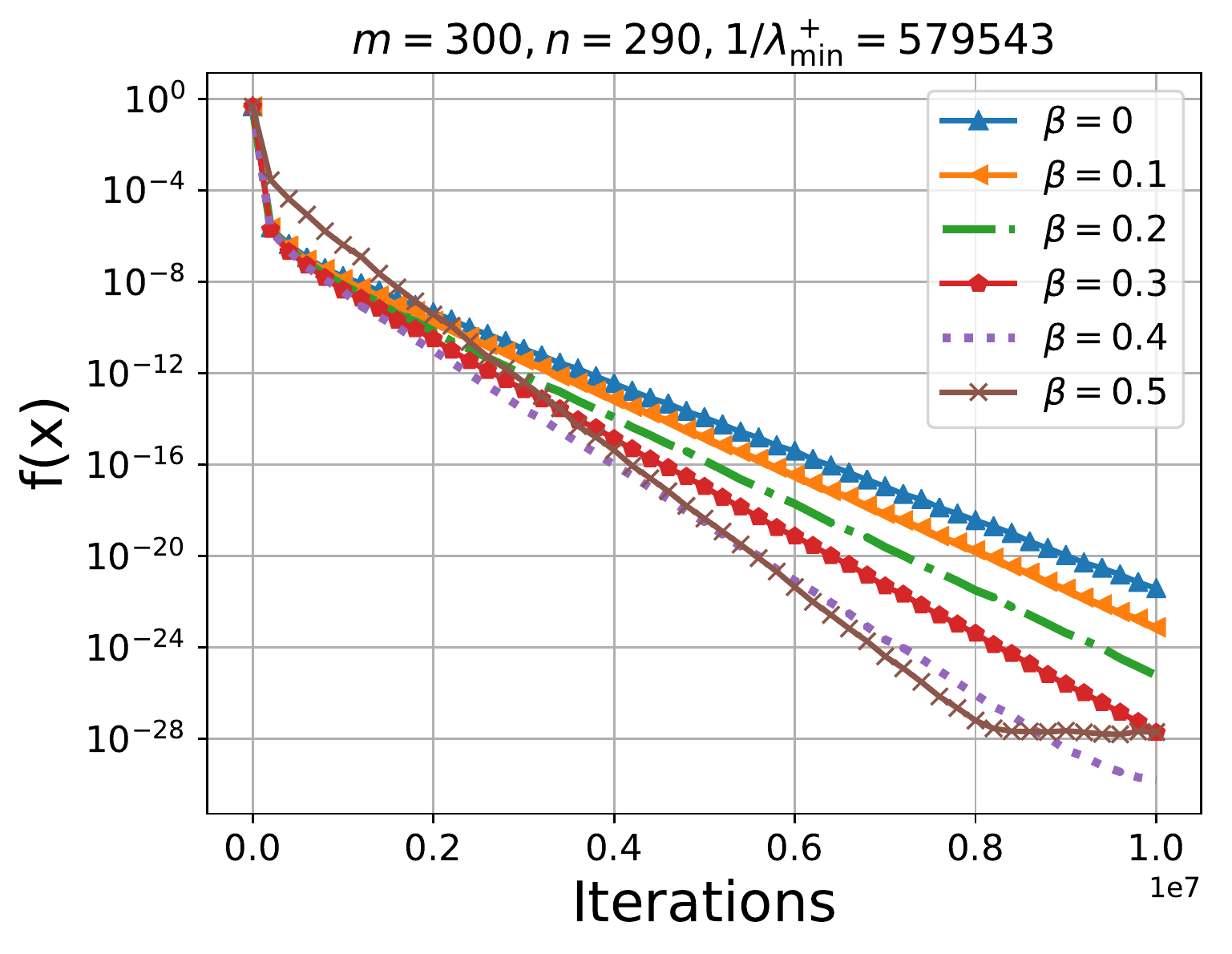}
\end{subfigure}
\begin{subfigure}{.35\textwidth}
  \centering
  \includegraphics[width=1\linewidth]{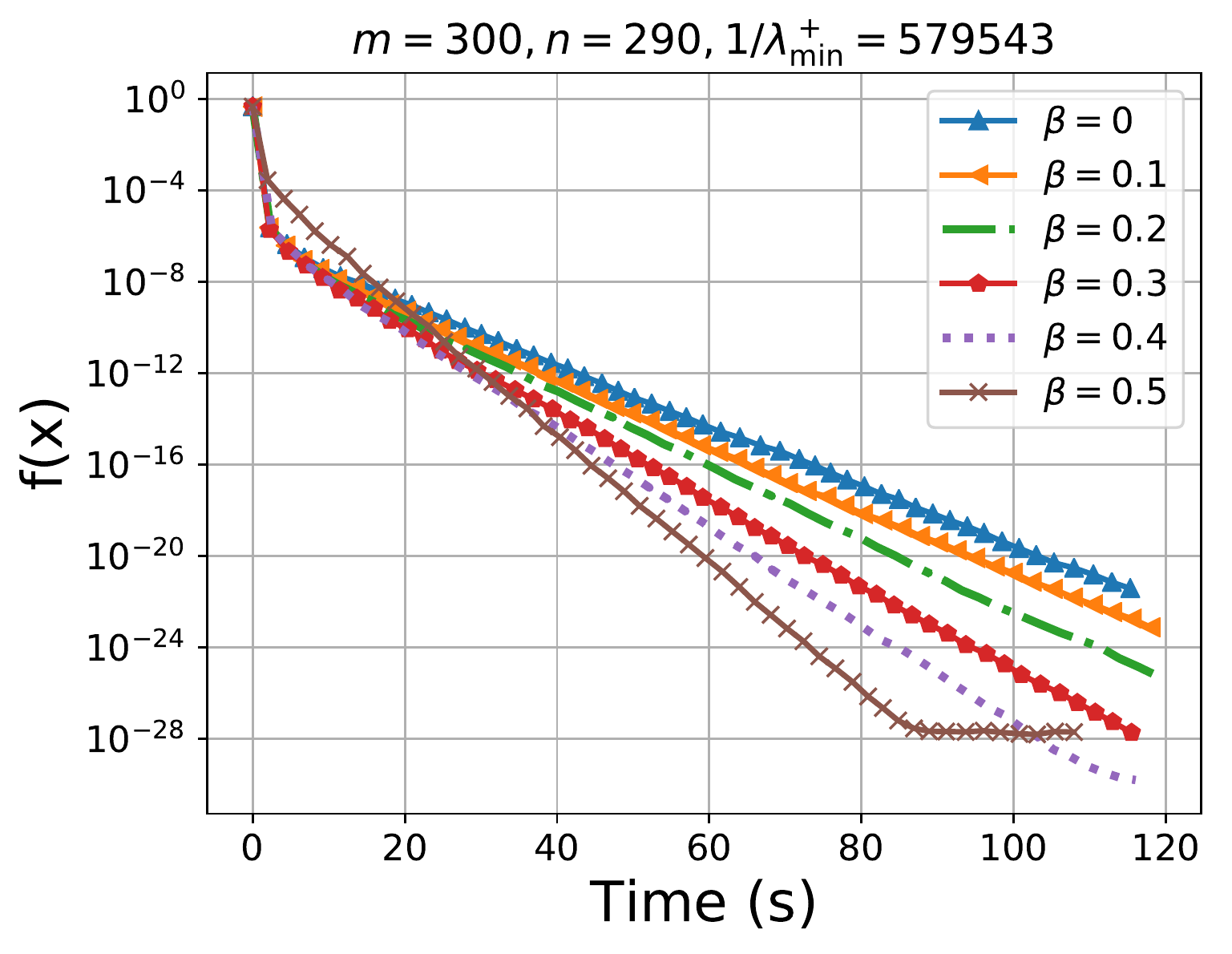}
\end{subfigure}\\
\caption{Performance of mRK \blue{(the method in the first row of Table~\ref{SpecialCasesAlgorithms})} for fixed stepsize $\omega=1$ and several momentum parameters $\beta$ for consistent linear systems with Gaussian matrix $\bA$ with $m=300$ rows and $n=100,200,250,280,290$ columns. The graphs in the first (second) column plot iterations (time) against function values. All plots are averaged over 10 trials. The title of each plot indicates the dimensions of the matrix $\bA$ and the value of $1/\lambda_{\min}^+$. The function values $f(x^k)$ refer to function~\eqref{functionRK}.}
\label{RKperformace12}
\end{figure}

\begin{figure}[!]
\centering
\begin{subfigure}{.35\textwidth}
  \centering
  \includegraphics[width=1\linewidth]{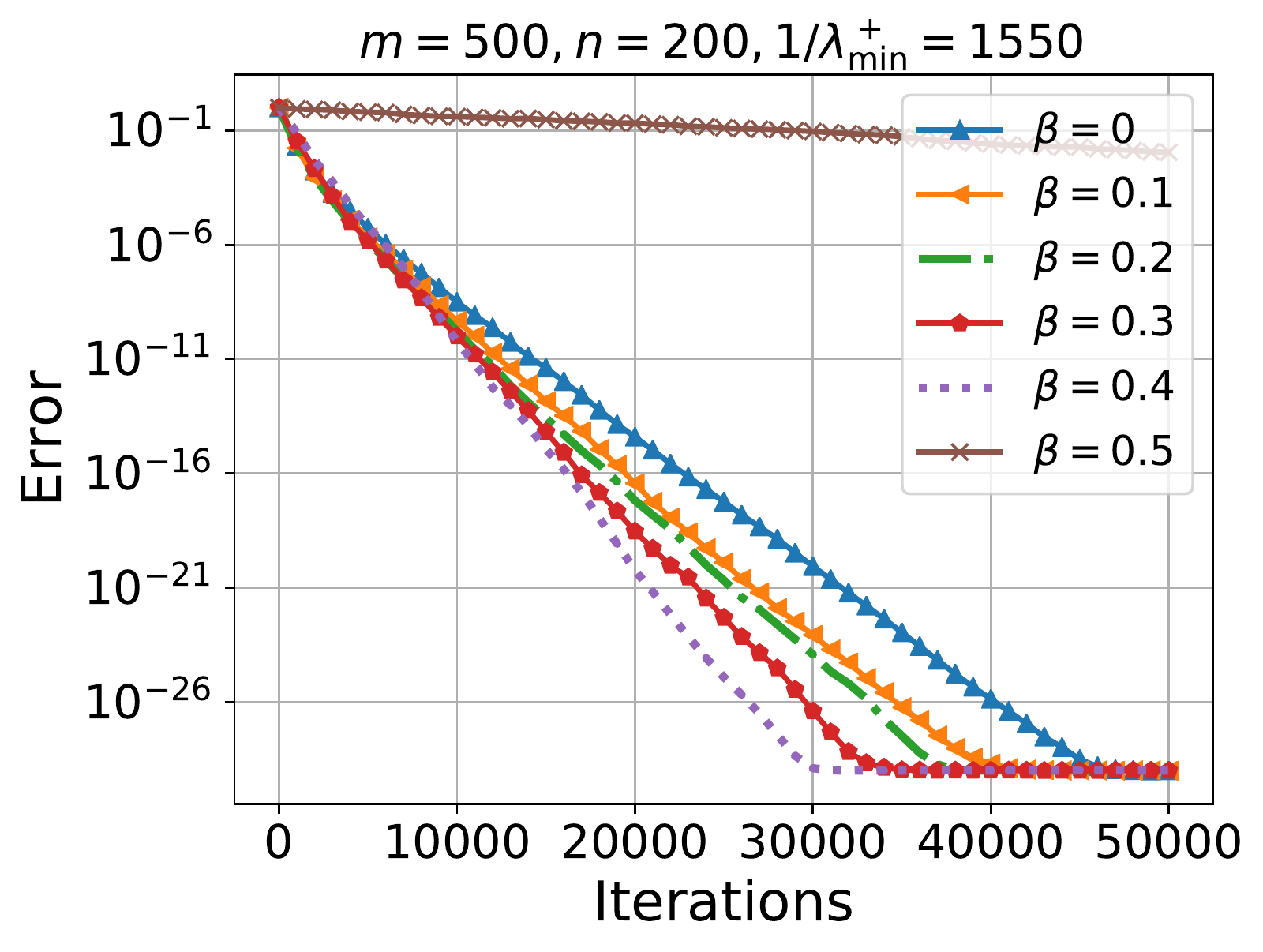}
\end{subfigure}%
\begin{subfigure}{.35\textwidth}
  \centering
  \includegraphics[width=1\linewidth]{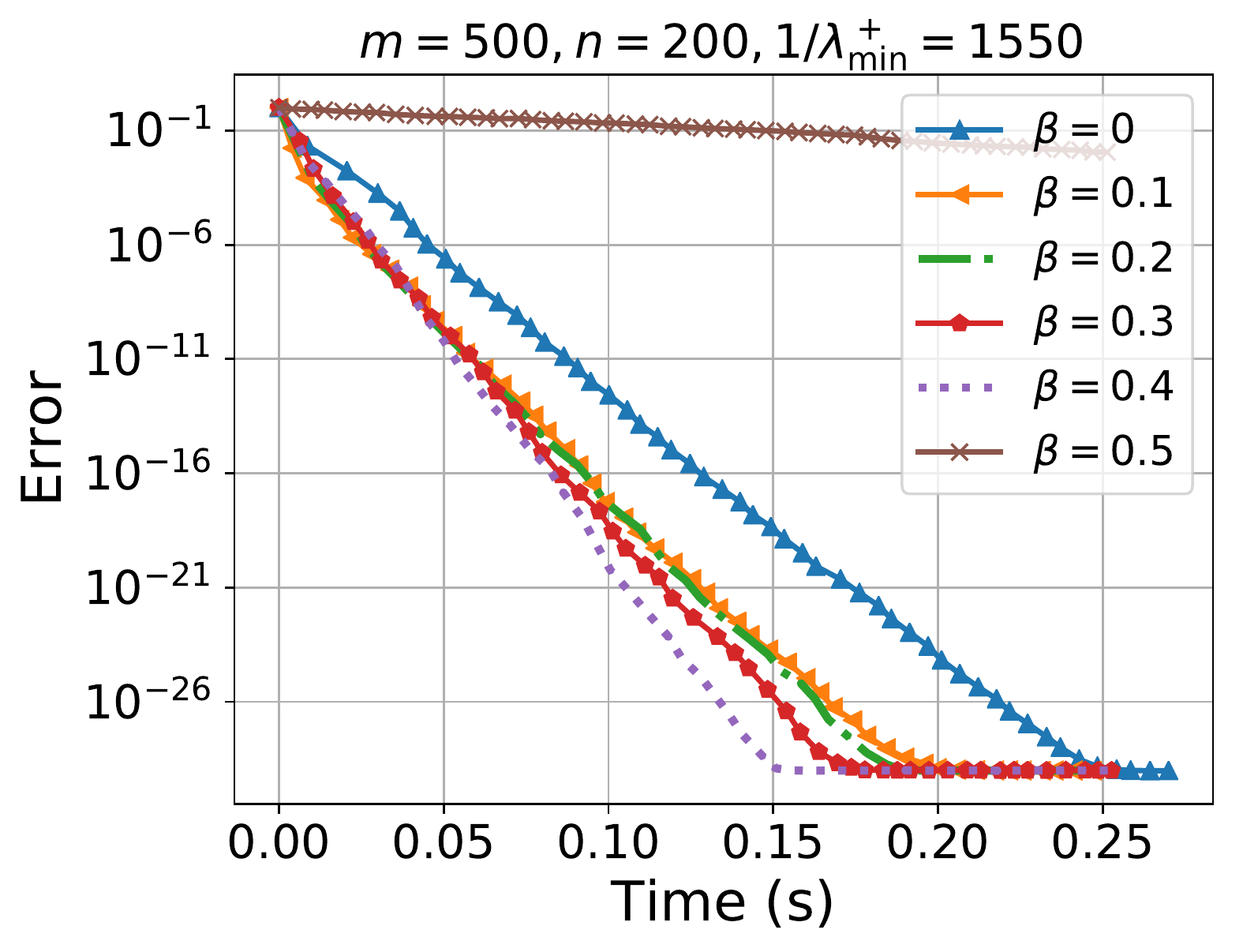}
\end{subfigure}\\
\begin{subfigure}{.35\textwidth}
  \centering
  \includegraphics[width=1\linewidth]{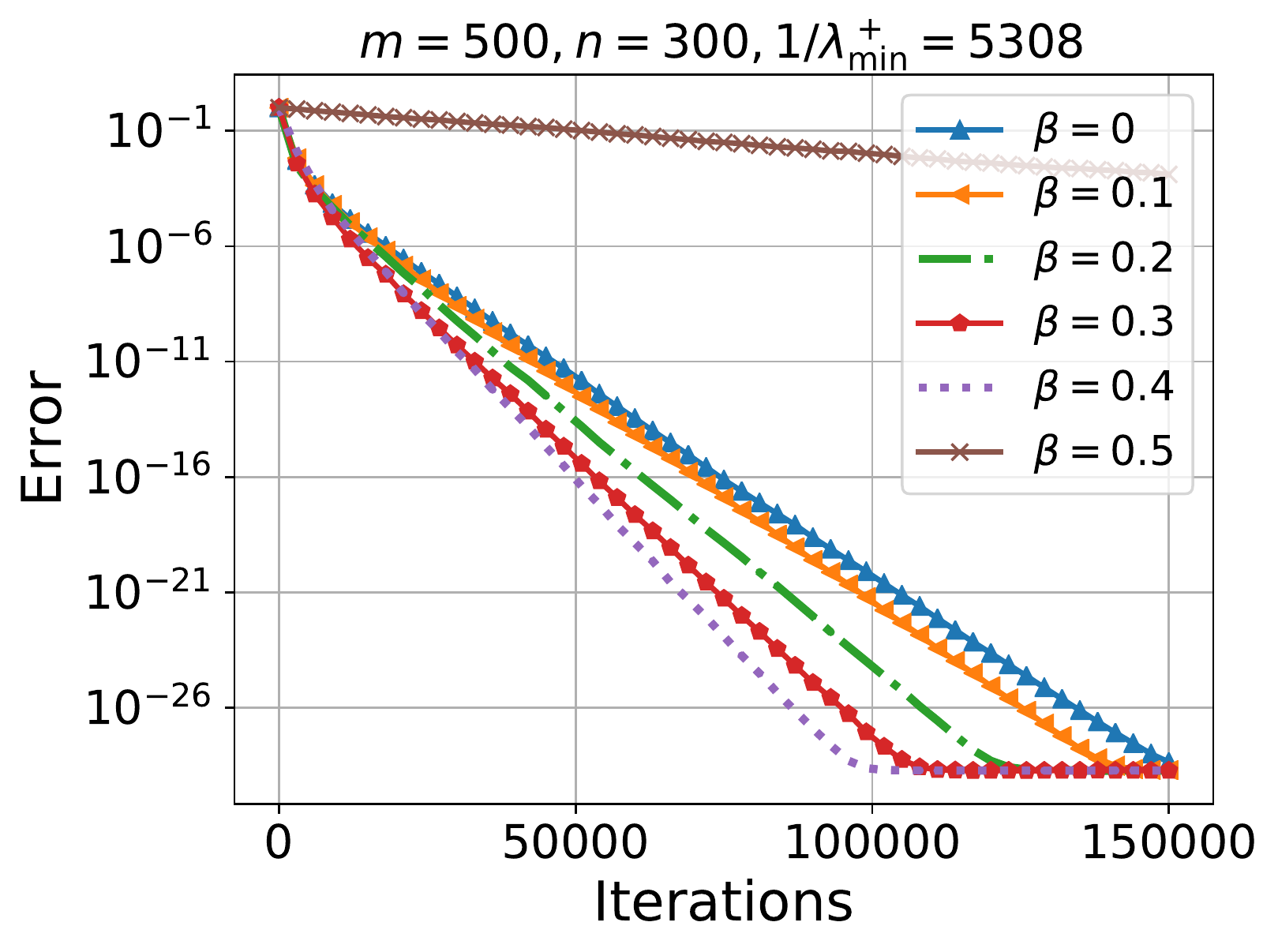}
\end{subfigure}%
\begin{subfigure}{.35\textwidth}
  \centering
  \includegraphics[width=1\linewidth]{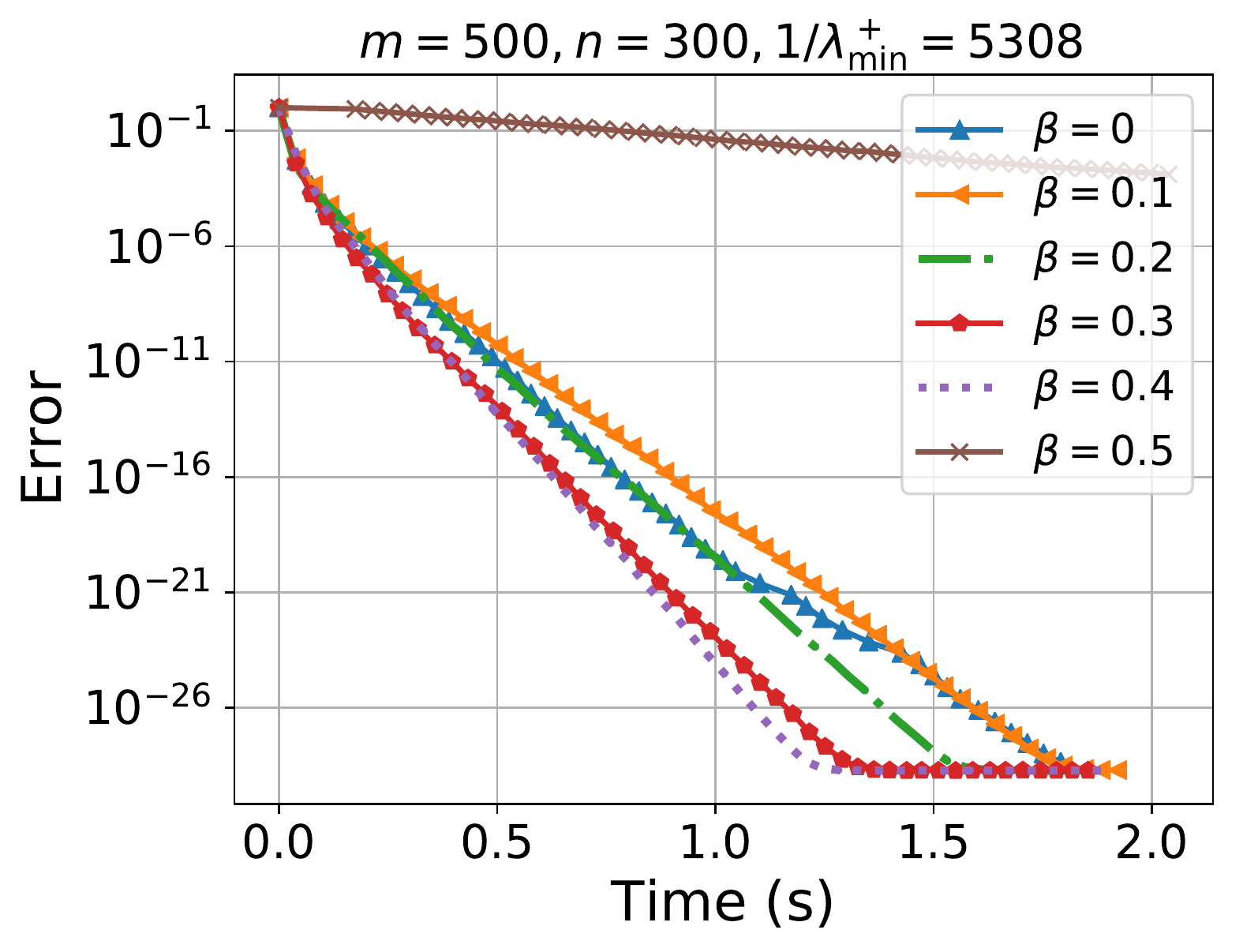}
\end{subfigure}\\
\begin{subfigure}{.35\textwidth}
  \centering
  \includegraphics[width=1\linewidth]{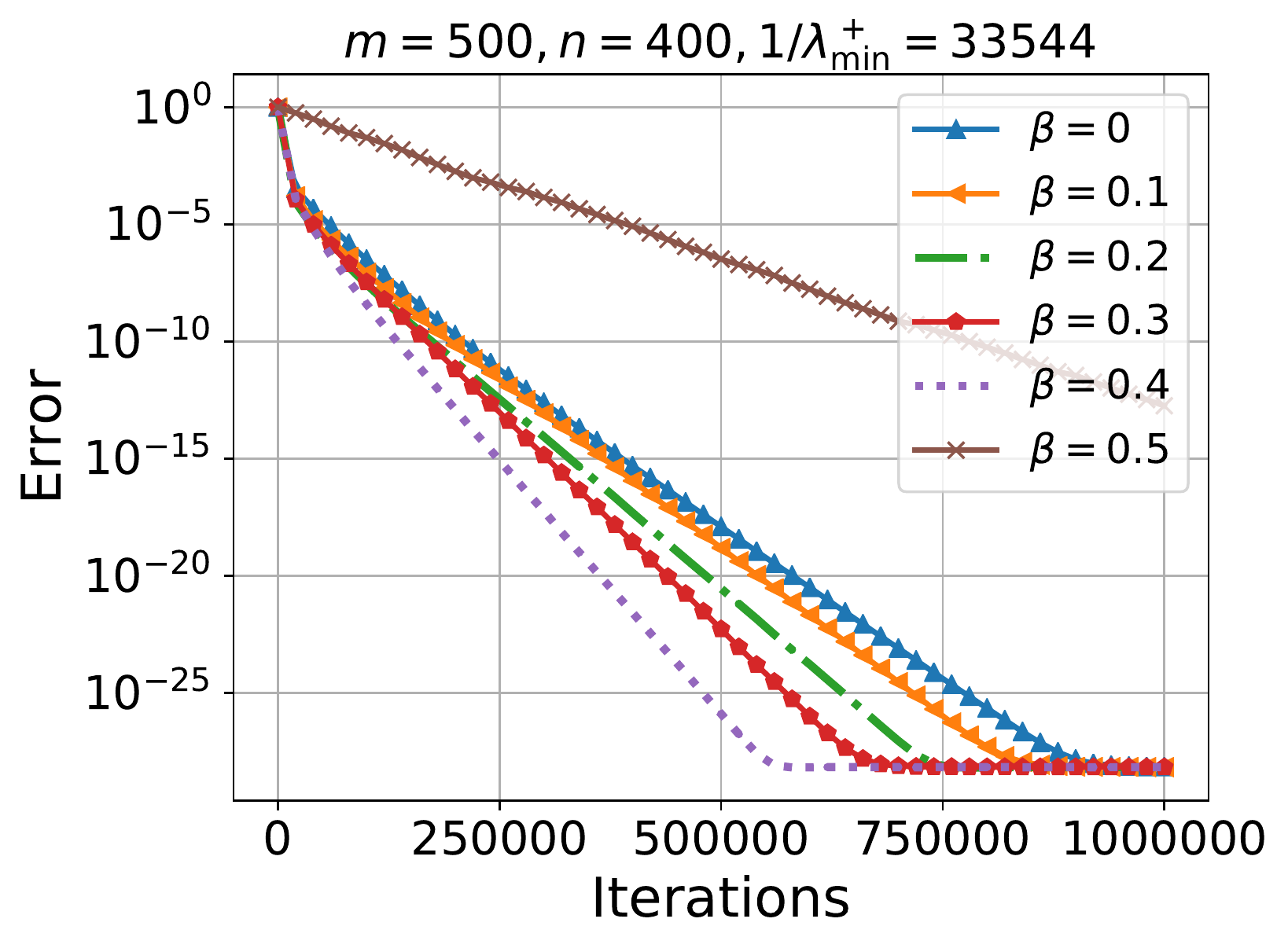}
\end{subfigure}%
\begin{subfigure}{.35\textwidth}
  \centering
  \includegraphics[width=1\linewidth]{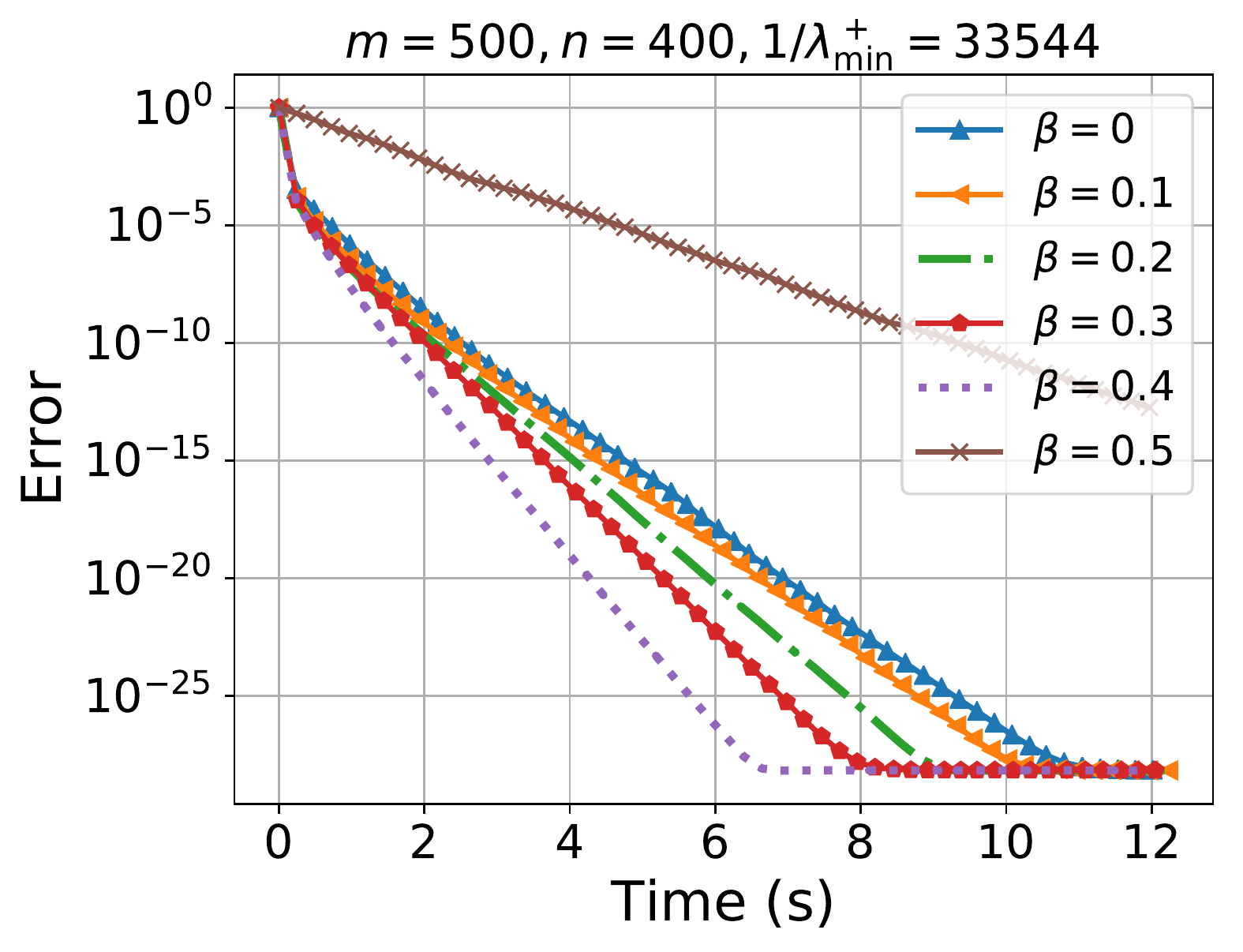}
 % \caption{}
\end{subfigure}\\
\begin{subfigure}{.35\textwidth}
  \centering
  \includegraphics[width=1\linewidth]{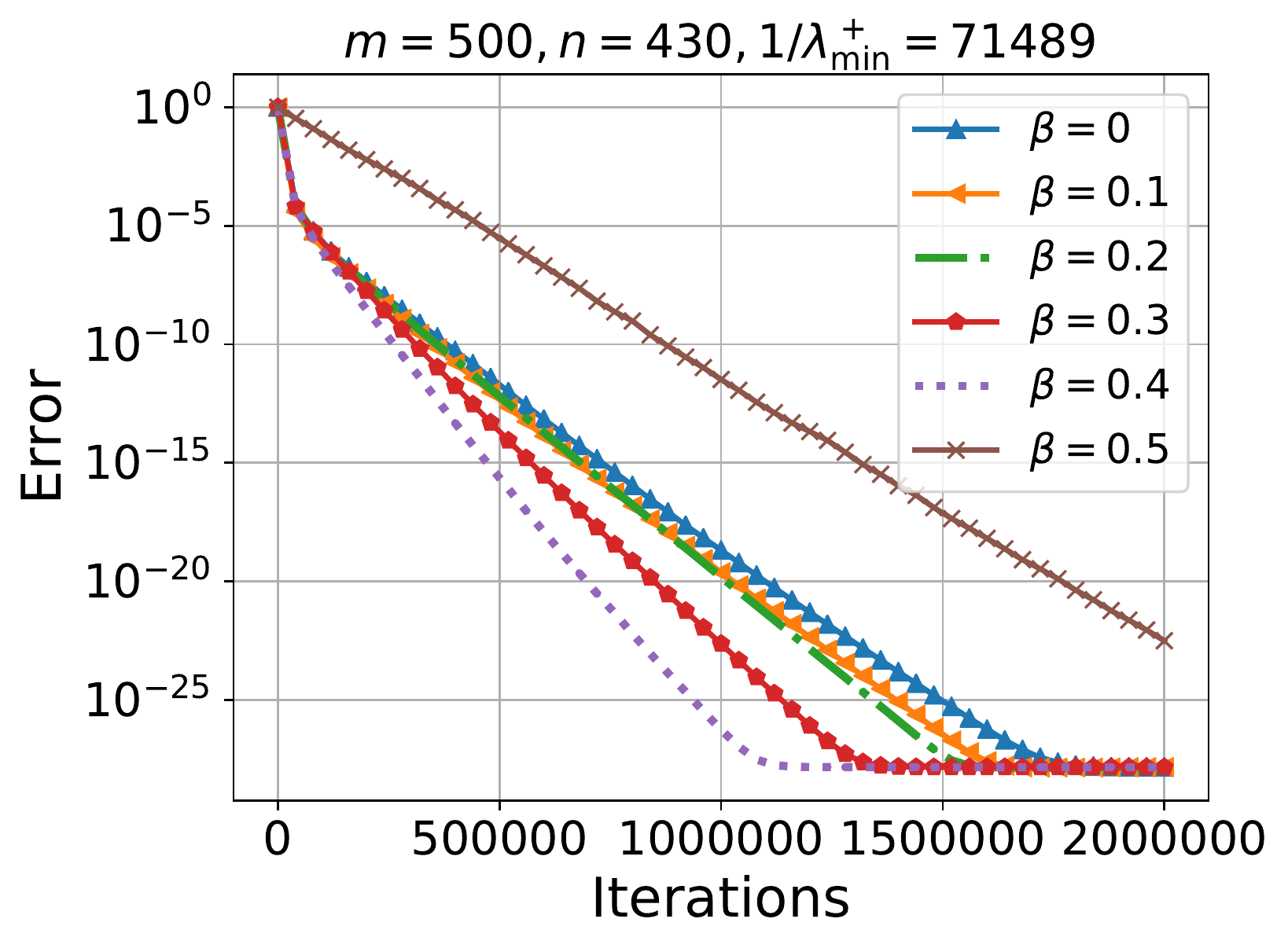}
  %\caption{}
\end{subfigure}%
\begin{subfigure}{.35\textwidth}
  \centering
  \includegraphics[width=1\linewidth]{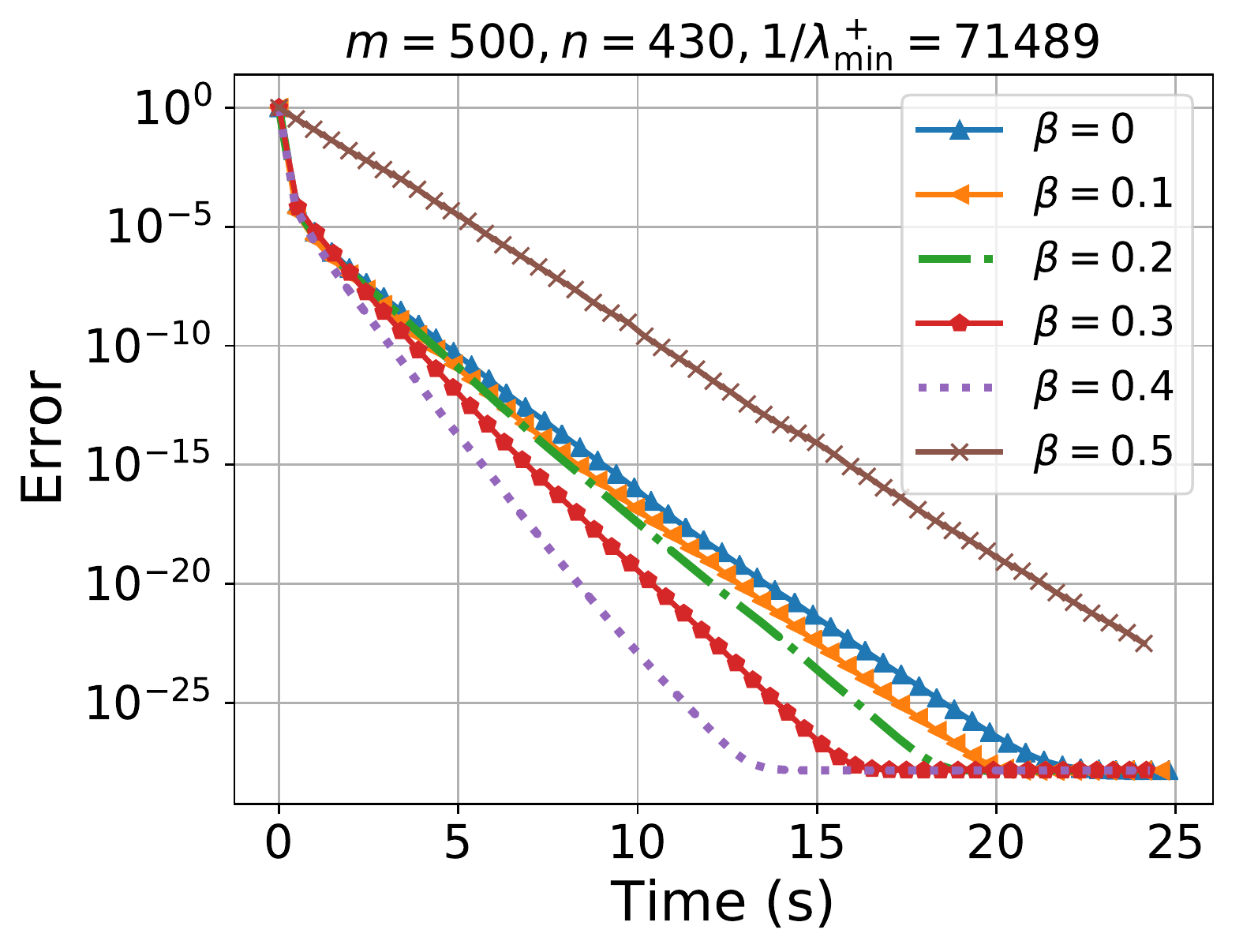}
 % \caption{}
\end{subfigure}\\
\begin{subfigure}{.35\textwidth}
  \centering
  \includegraphics[width=1\linewidth]{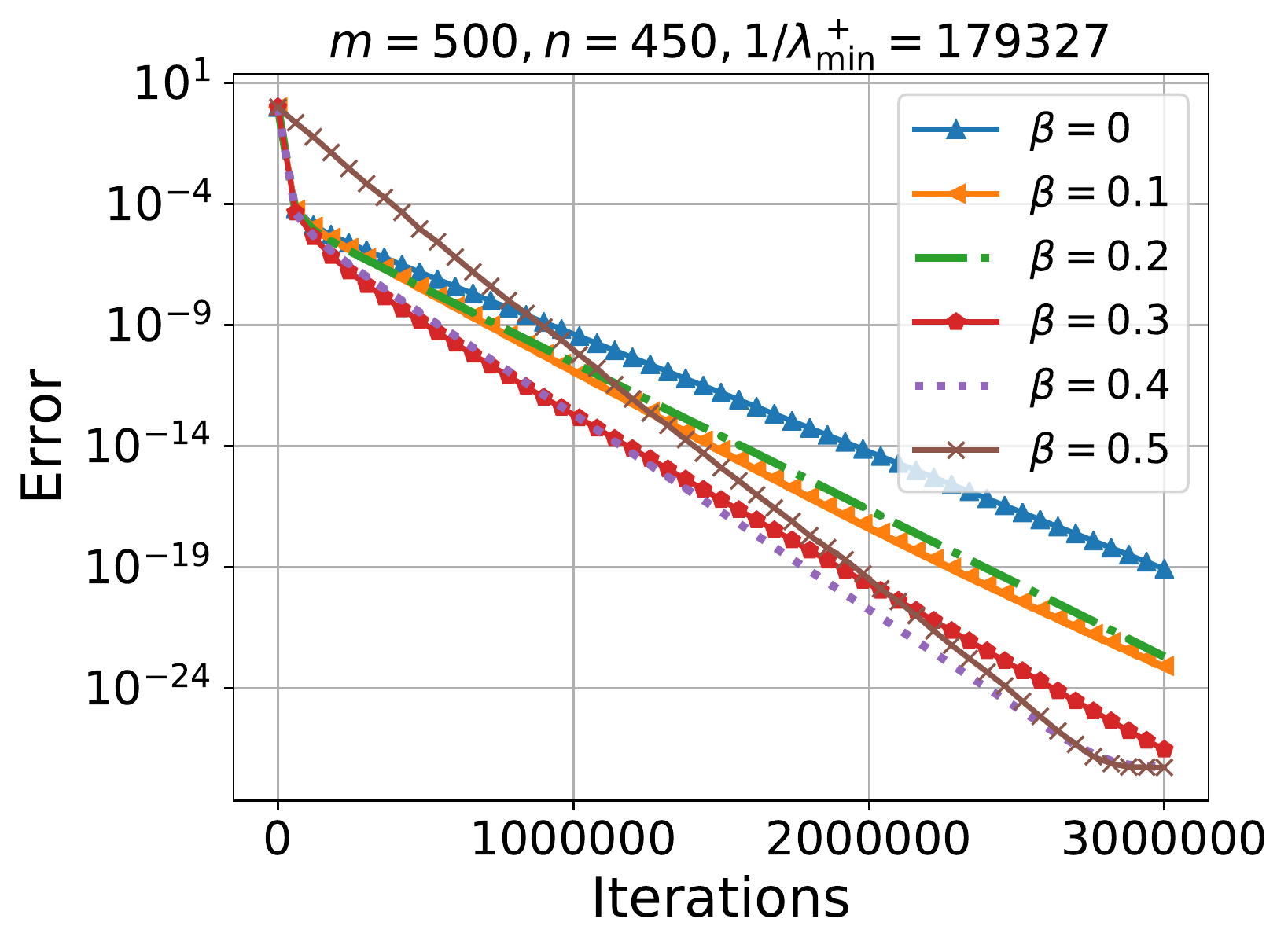}
  %\caption{}
\end{subfigure}%
\begin{subfigure}{.35\textwidth}
  \centering
  \includegraphics[width=1\linewidth]{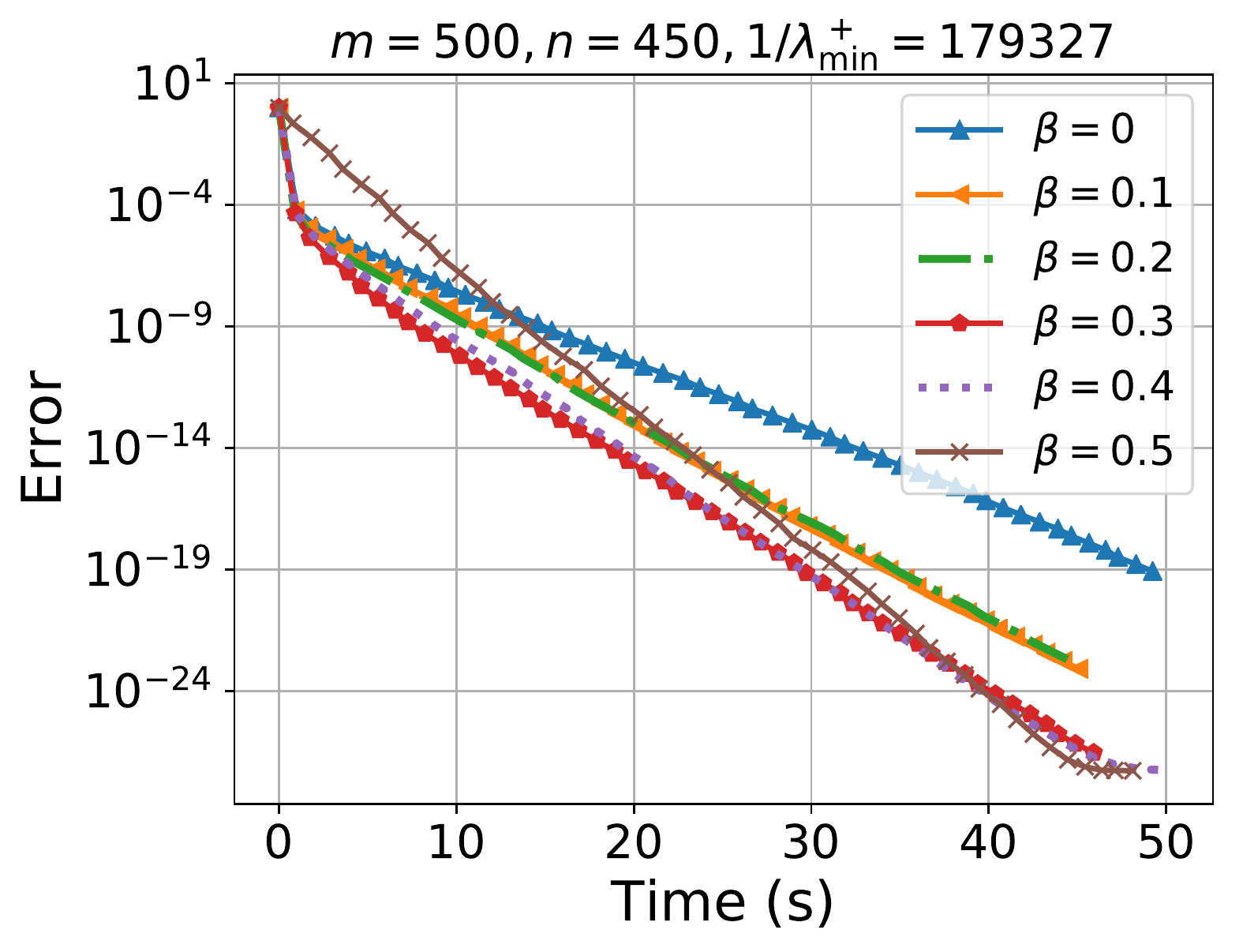}
 % \caption{}
\end{subfigure}
\caption{Performance of mRCD \blue{(the method in the second row of Table~\ref{SpecialCasesAlgorithms})}. for fixed stepsize $\omega=1$ and several momentum parameters $\beta$ for consistent linear systems with positive definite matrices $\bA=\bP^\top \bP$ where $\bP \in \R^{m \times n}$ is Gaussian matrix with $m=500$ rows and $n=200,300,400,430,450$. The graphs in the first (second) column plot iterations (time) against residual error. All plots are averaged over 10 trials. The title of each plot indicates the dimensions of the matrix $\bP$ and the value of $1/\lambda_{\min}^+$. The ``Error" on the vertical axis represents the relative error $\|x^k-x^*\|^2_\bB / \|x^0-x^*\|^2_\bB \overset{\bB=\bA, x^0=0}{=}\|x^k-x^*\|^2_\bA / \|x^*\|^2_\bA$.}
\label{RCDperformance1}
\end{figure}

\begin{figure}[!]
\centering
\begin{subfigure}{.35\textwidth}
  \centering
  \includegraphics[width=1\linewidth]{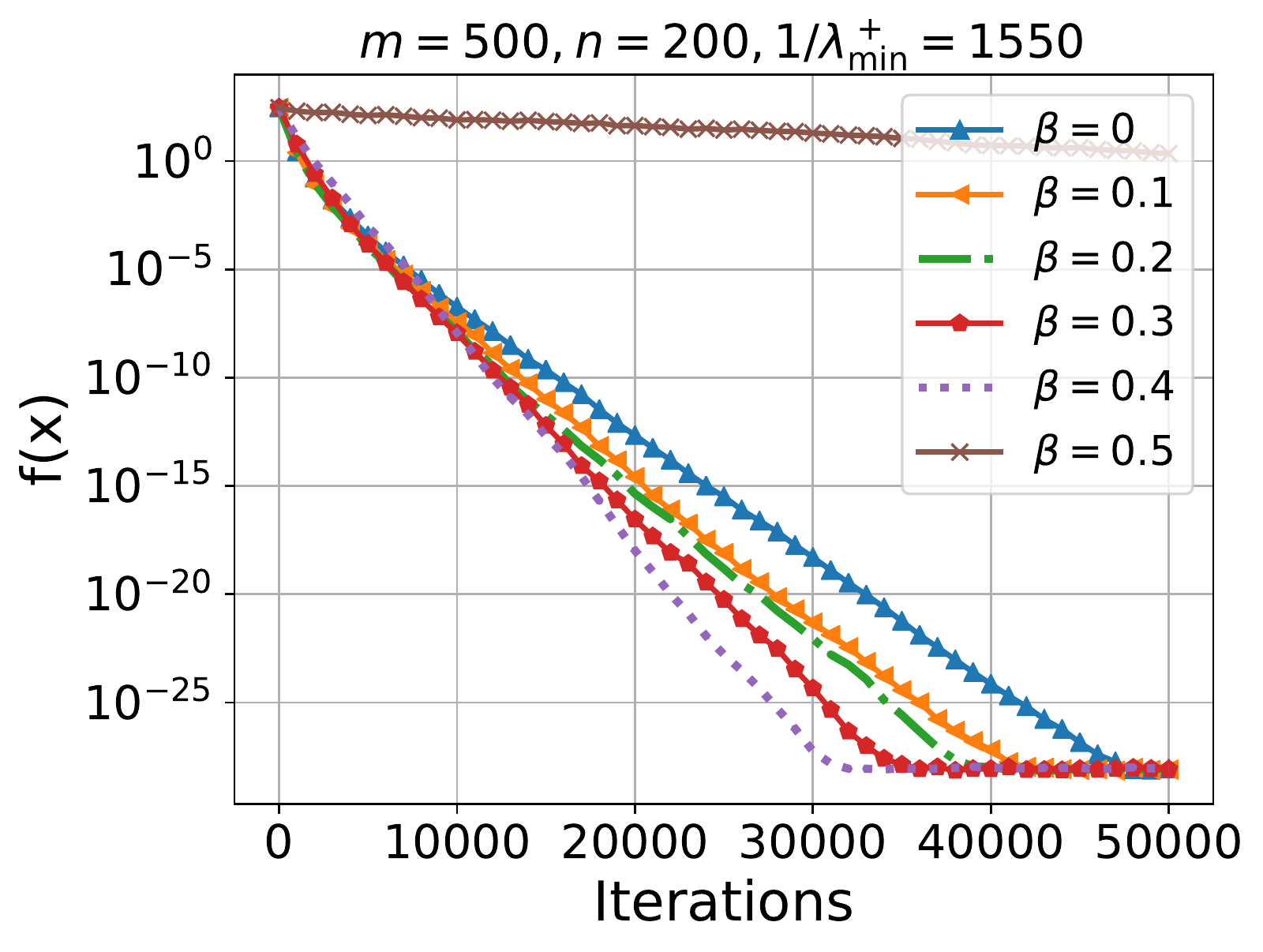}
\end{subfigure}
\begin{subfigure}{.35\textwidth}
  \centering
  \includegraphics[width=1\linewidth]{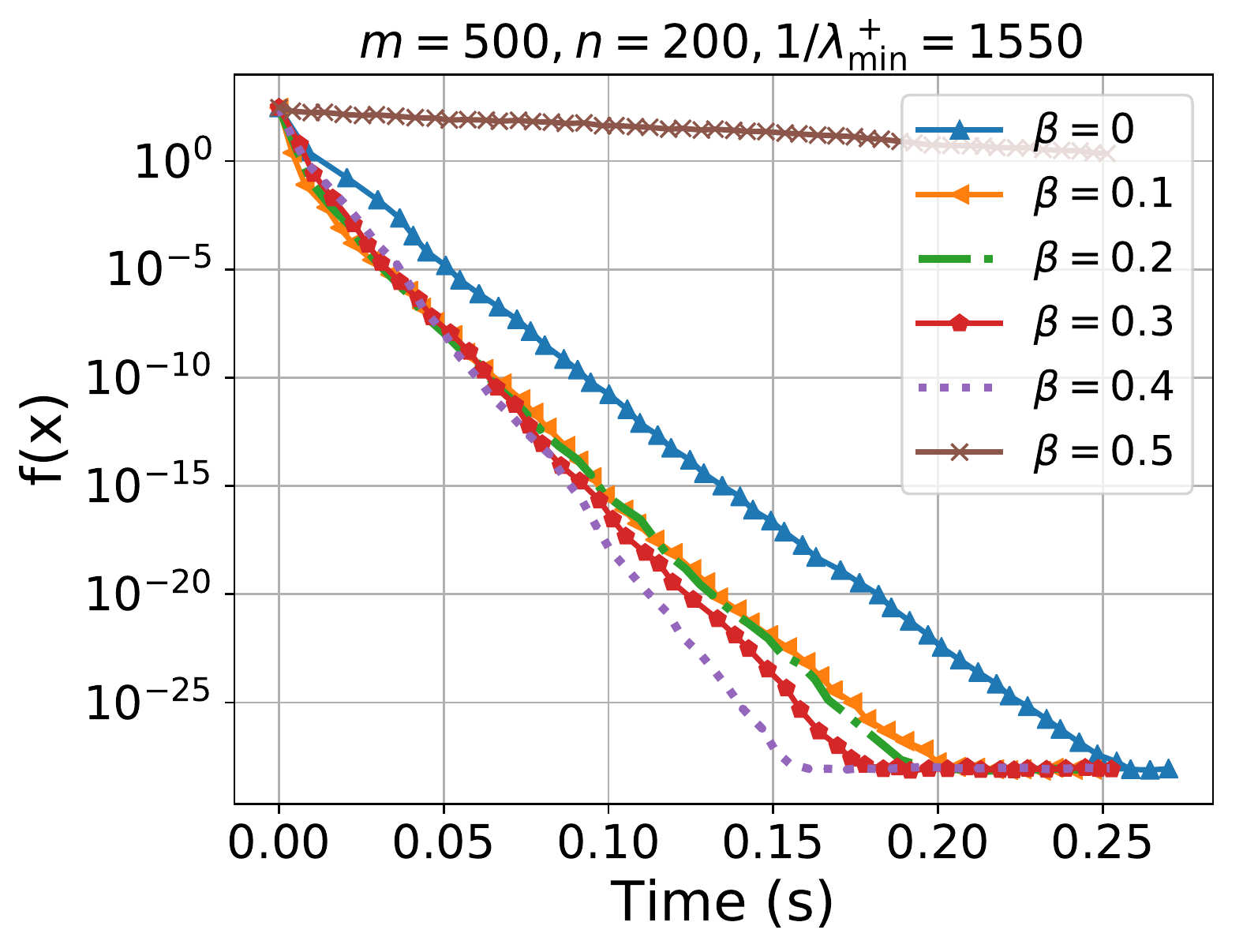}
\end{subfigure}\\
\begin{subfigure}{.35\textwidth}
  \centering
  \includegraphics[width=1\linewidth]{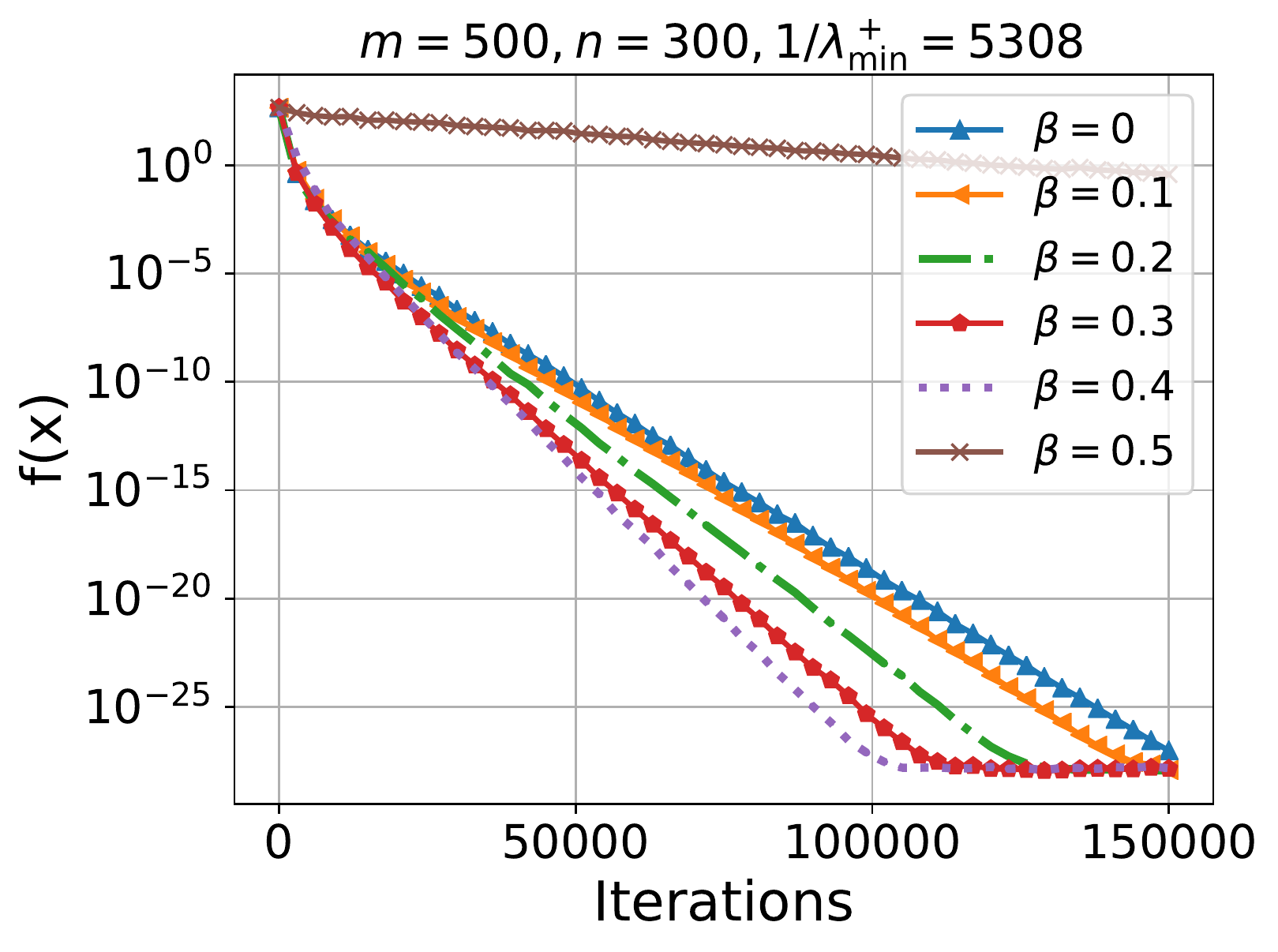}
\end{subfigure}
\begin{subfigure}{.35\textwidth}
  \centering
  \includegraphics[width=1\linewidth]{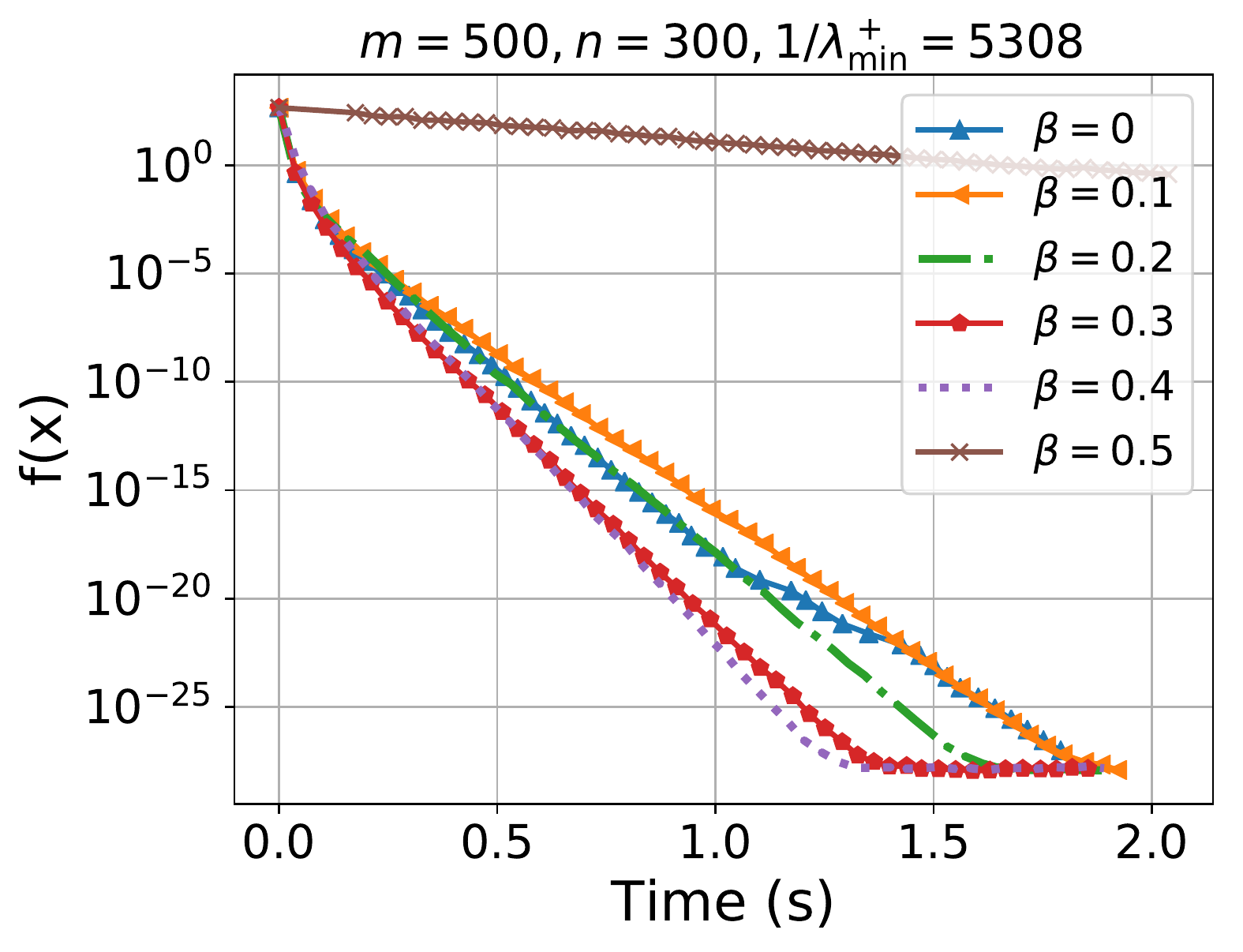}
\end{subfigure}\\
\begin{subfigure}{.35\textwidth}
  \centering
  \includegraphics[width=1\linewidth]{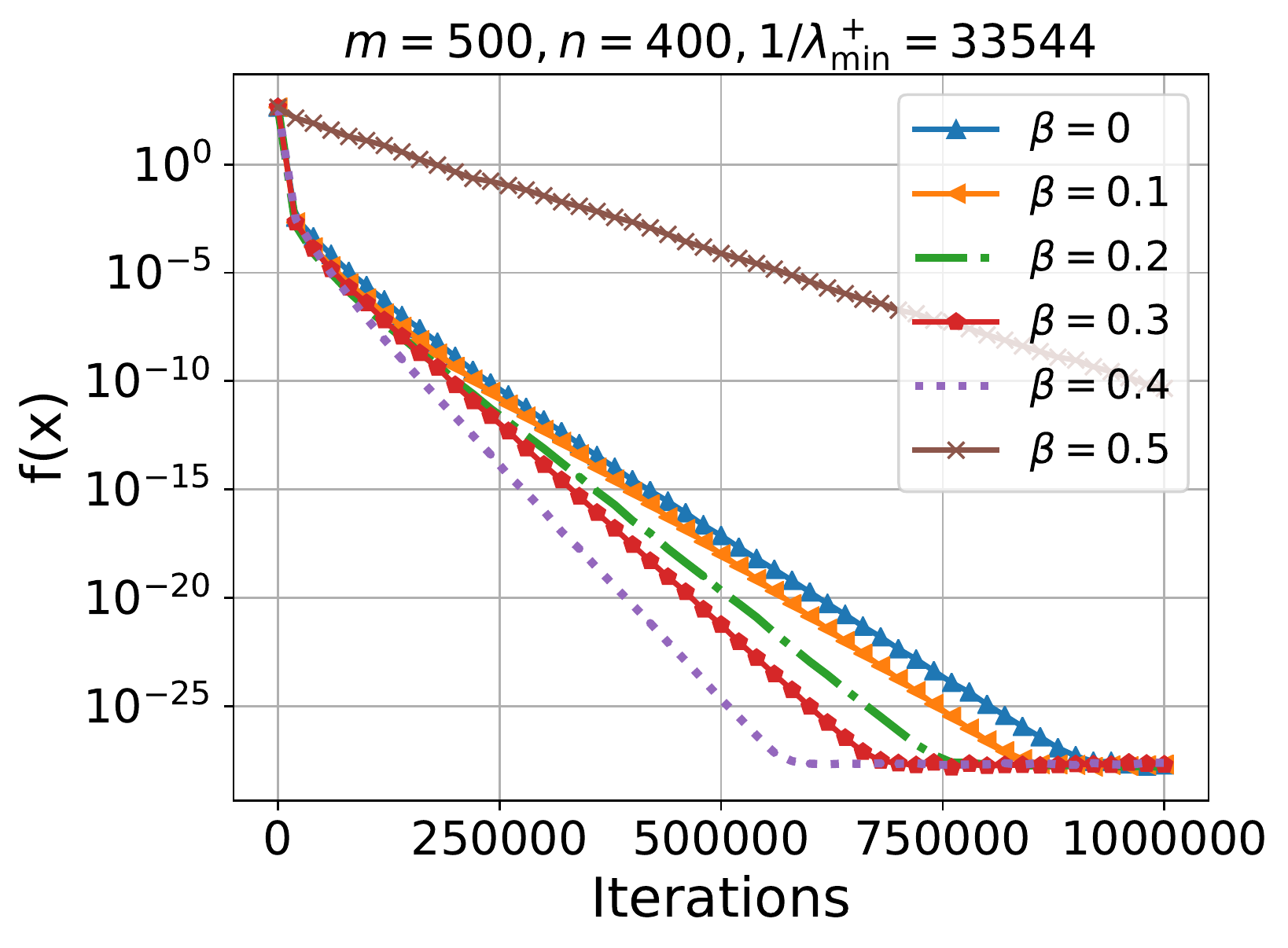}
 % \caption{}
\end{subfigure}
\begin{subfigure}{.35\textwidth}
  \centering
  \includegraphics[width=1\linewidth]{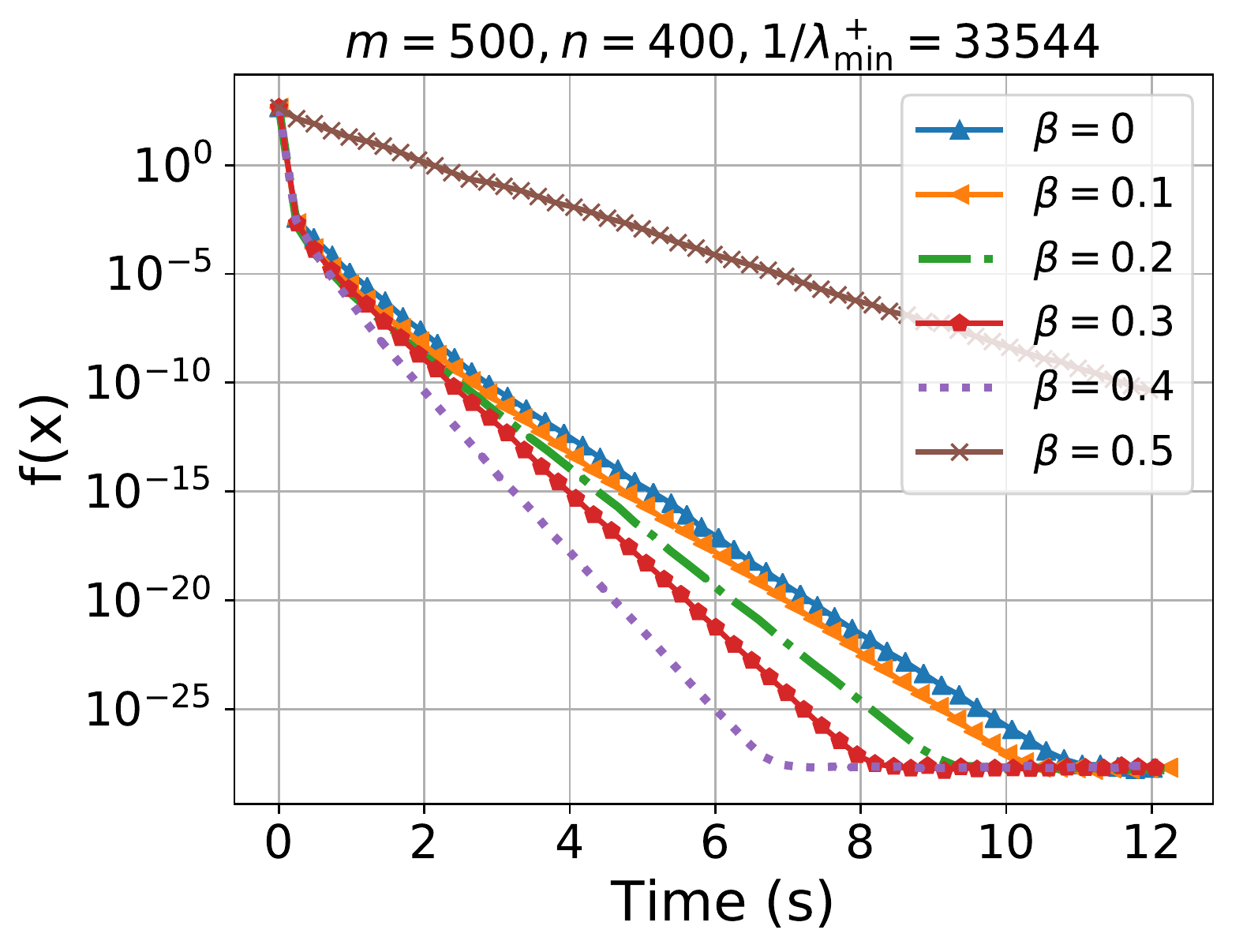}
%  \caption{}
\end{subfigure}\\
\begin{subfigure}{.35\textwidth}
  \centering
  \includegraphics[width=1\linewidth]{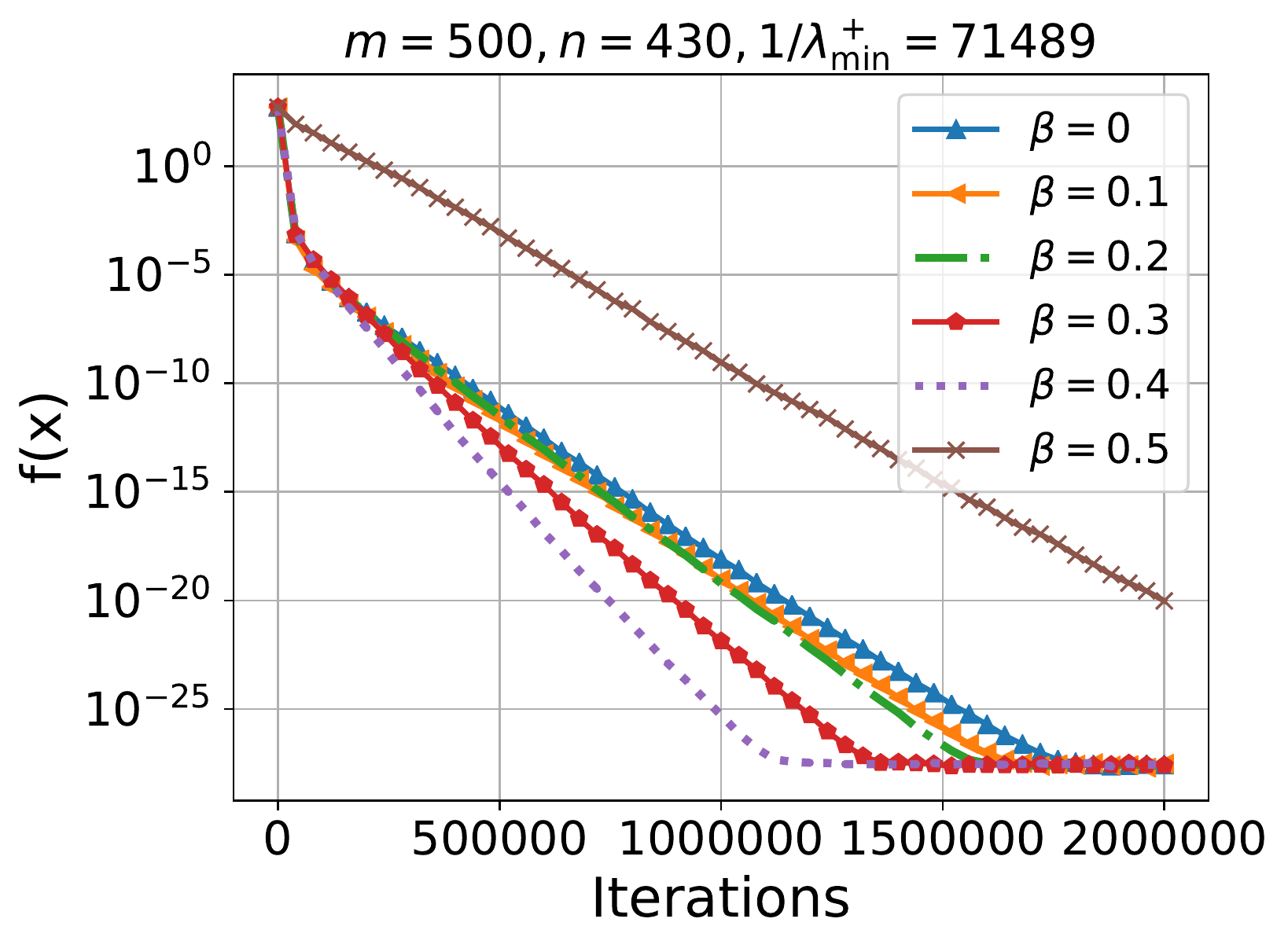}
 % \caption{}
\end{subfigure}
\begin{subfigure}{.35\textwidth}
  \centering
  \includegraphics[width=1\linewidth]{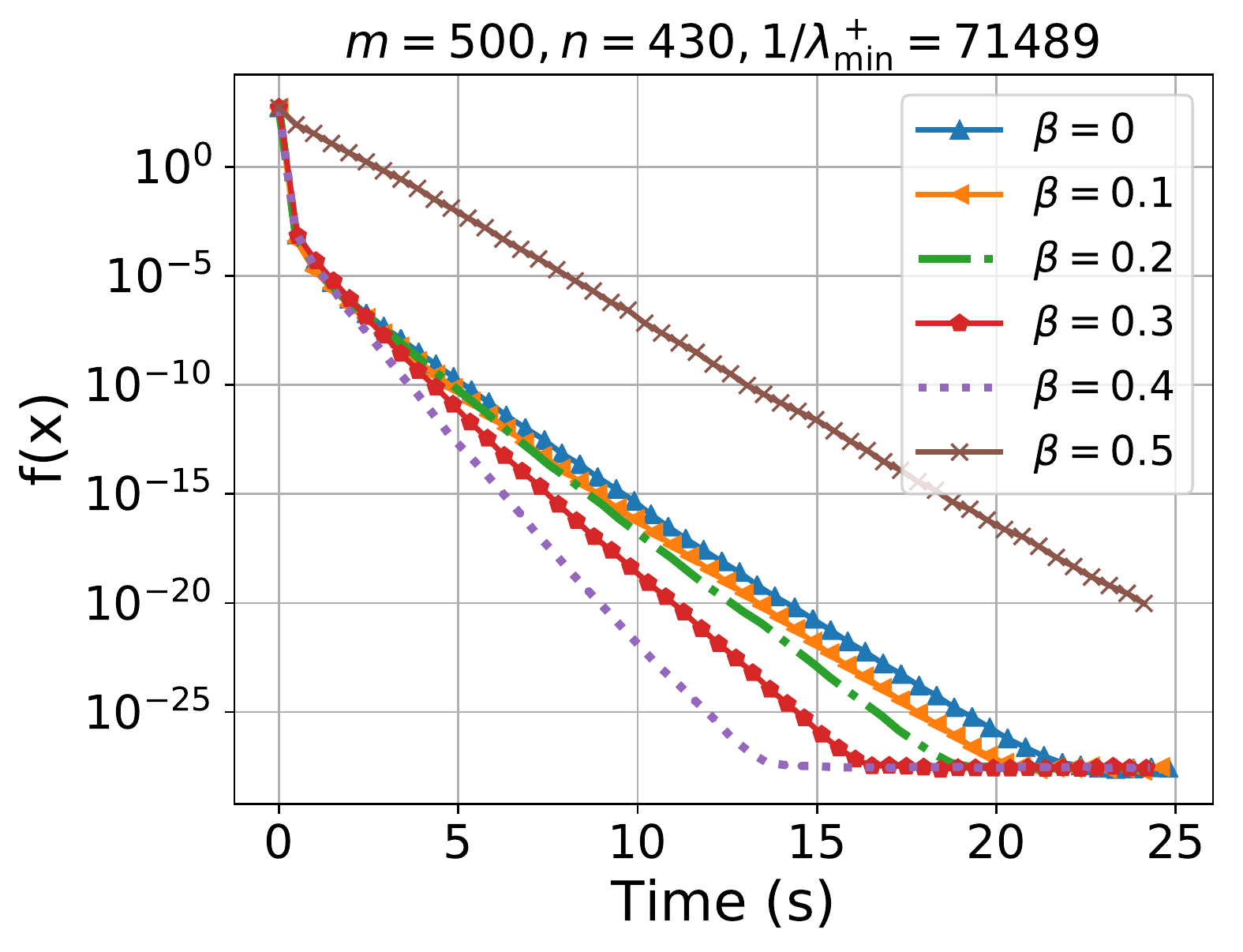}
%  \caption{}
\end{subfigure}\\
\begin{subfigure}{.35\textwidth}
  \centering
  \includegraphics[width=1\linewidth]{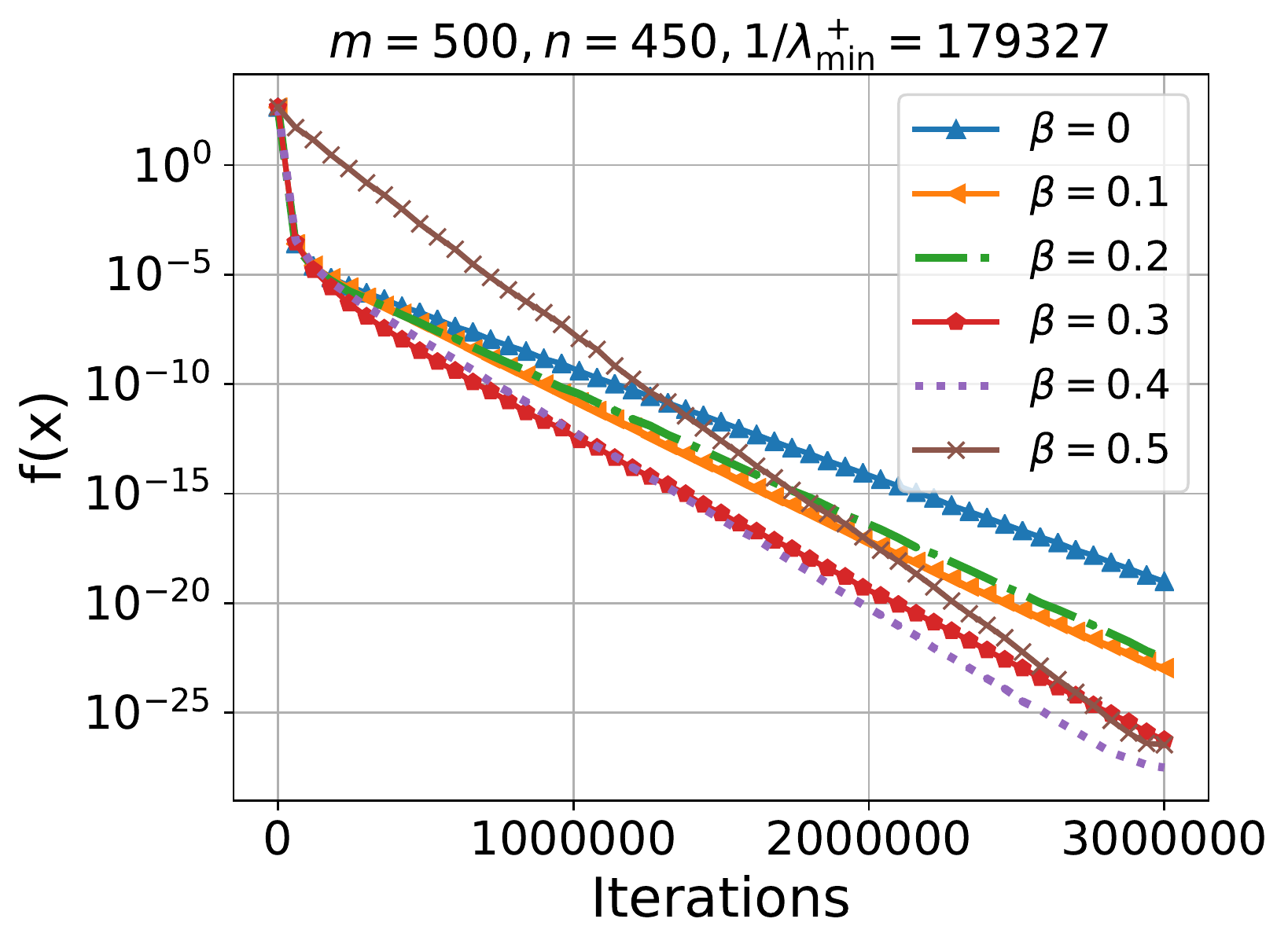}
 % \caption{}
\end{subfigure}
\begin{subfigure}{.35\textwidth}
  \centering
  \includegraphics[width=1\linewidth]{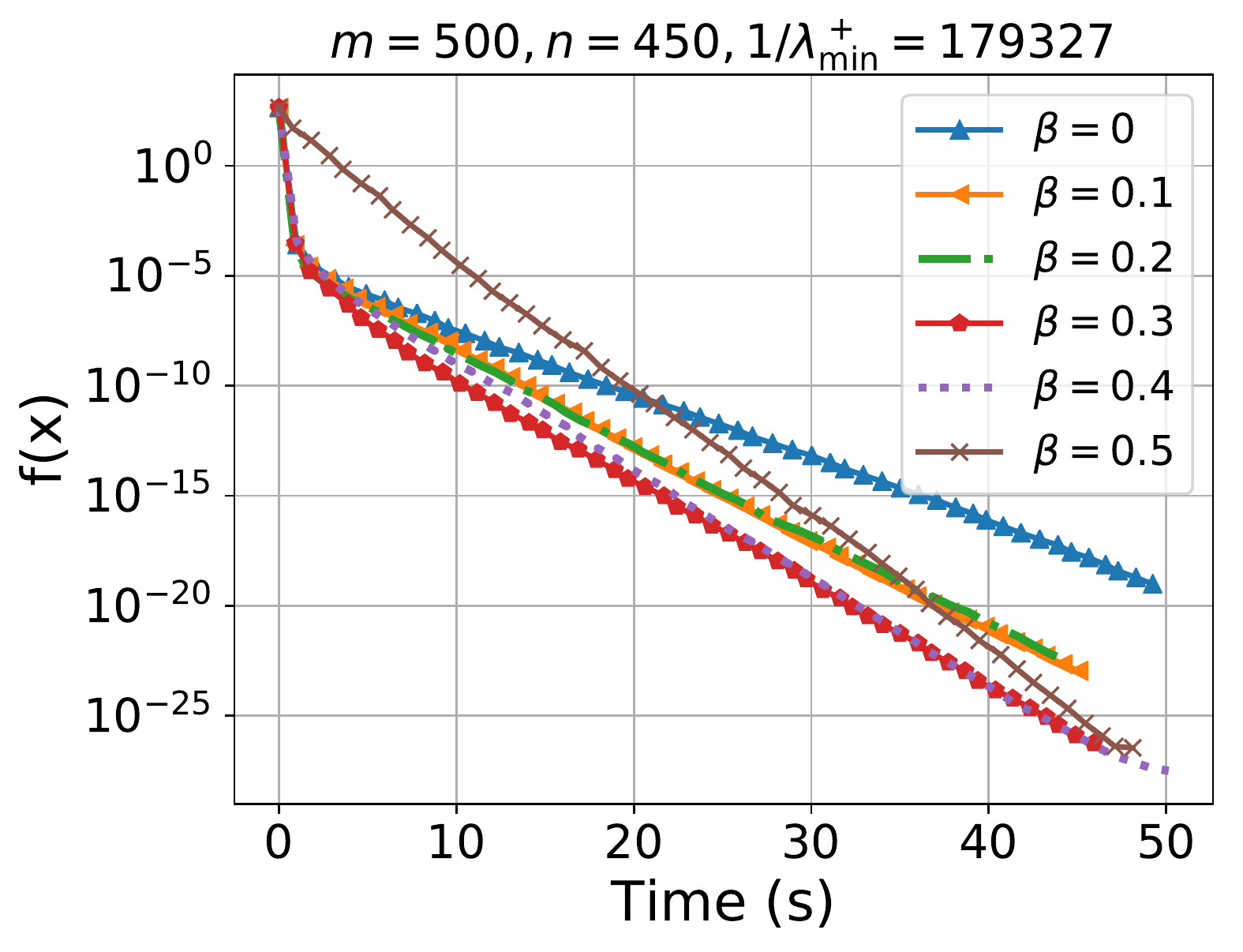}
%  \caption{}
\end{subfigure}
\caption{Performance of mRCD \blue{(the method in the second row of Table~\ref{SpecialCasesAlgorithms})}. for fixed stepsize $\omega=1$ and several momentum parameters $\beta$ for consistent linear systems with positive definite matrices $\bA=\bP^\top \bP$ where $\bP \in \R^{m \times n}$ is Gaussian matrix with $m=500$ rows and $n=200,300,400,430,450$. The graphs in the first (second) column plot iterations (time) against function values. All plots are averaged over 10 trials. The title of each plot indicates the dimensions of the matrix $\bP$ and the value of $1/\lambda_{\min}^+$. The function values $f(x^k)$ refer to function~\eqref{functionRCD}.}
\label{RCDperformance12}
\end{figure}

\subsubsection{Real Data}
In the following experiments we test the performance of mRK using real matrices (datasets) from the library of support vector machine problems LIBSVM \cite{chang2011libsvm}. Each dataset consists of a matrix $\bA \in \R^{m \times n}$  ($m$ features and $n$ characteristics) and a vector of labels $b \in \R^m$.  In our experiments we choose to use only the matrices of the datasets and ignore the label vector. As before, to ensure consistency of the linear system,  we choose a Gaussian vector $z\in \R^n$ and the right hand side of the linear system is set to $b=\bA z$. 
Similarly as in  the case of synthetic data, mRK is tested for several values of momentum parameters $\beta$ and fixed stepsize $\omega=1$. 

In Figure~\ref{RealDataplots} the performance of all methods for both relative error measure $\|x^k-x^*\|^2 / \|x^*\|^2_\bB$ and function values $f(x^k)$ is presented. Note again that $\beta=0$ represents the baseline RK method. The addition of momentum parameter is again often beneficial and leads to faster convergence. As an example, inspect the plots for the \textit{mushrooms} dataset in Figure~\ref{RealDataplots}, where mRK with $\beta=0.5$ is much faster than the simple RK method in all presented plots, both in terms of iterations and time. In particular, the addition of a momentum parameter leads to visible speedup for the datasets \textit{mushrooms}, \textit{splice}, \textit{a9a} and \textit{ionosphere}. For these datasets the acceleration is obvious in all plots both in terms of relative error and function values. For the datasets  \textit{australian}, \textit{gisette} and \textit{madelon} the speedup is less obvious in the plots of the relative error, while for the plots of function values it is not present at all.

\begin{figure}[!]
\centering
\begin{subfigure}{.23\textwidth}
  \centering
  \includegraphics[width=1\linewidth]{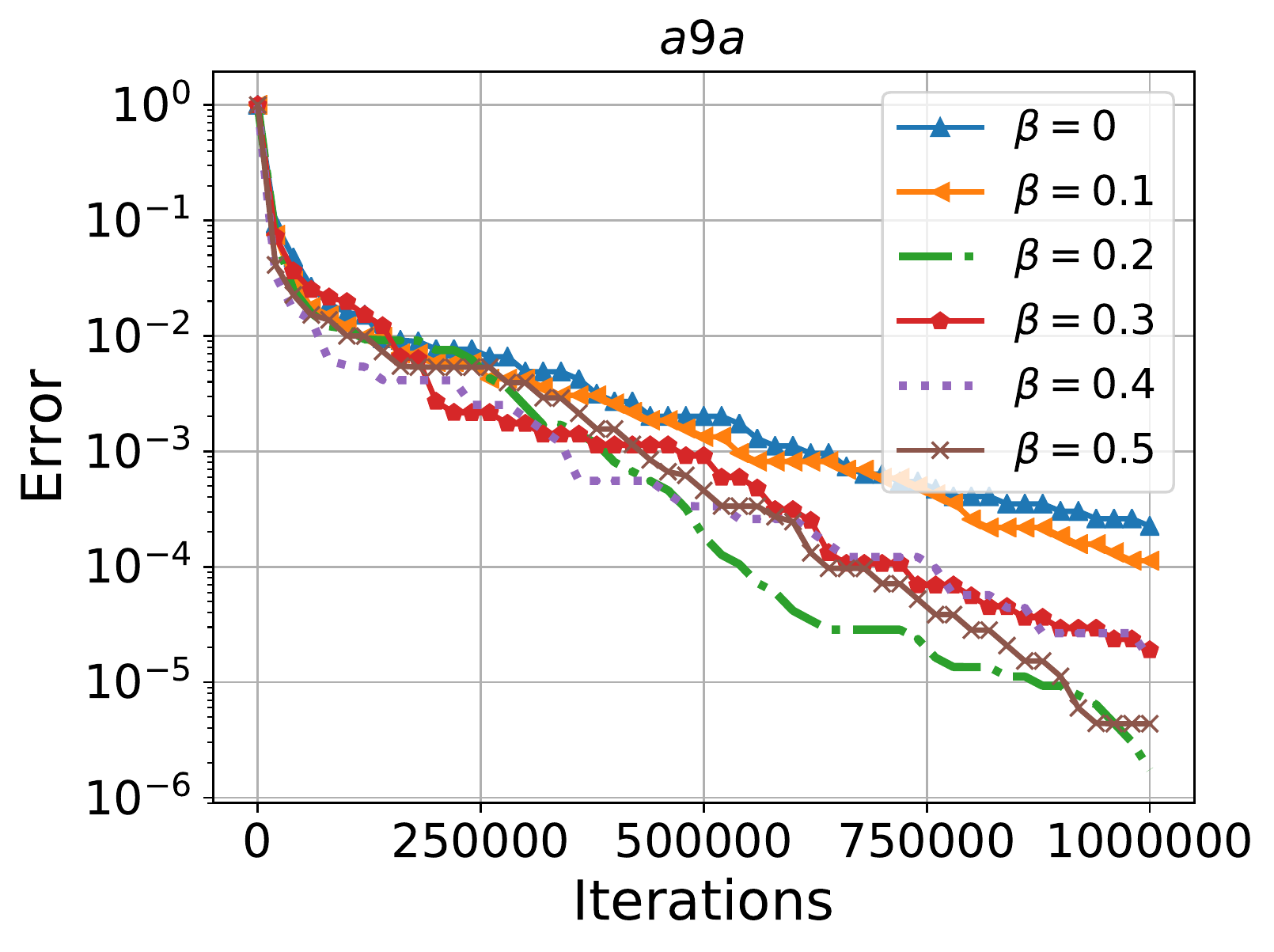}
  %\caption{}
\end{subfigure}%
\begin{subfigure}{.23\textwidth}
  \centering
  \includegraphics[width=1\linewidth]{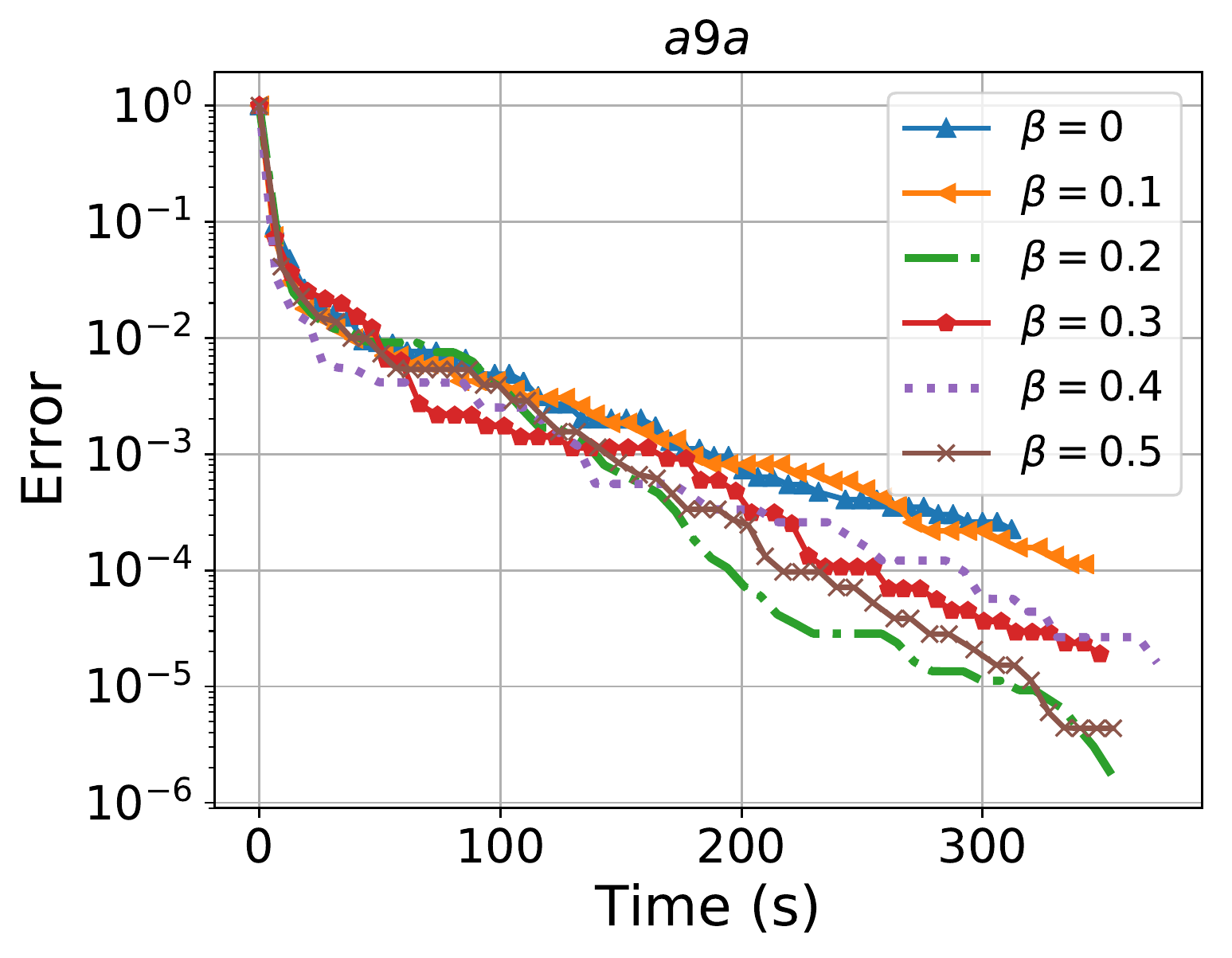}
 % \caption{}
\end{subfigure}
\begin{subfigure}{.23\textwidth}
  \centering
  \includegraphics[width=1\linewidth]{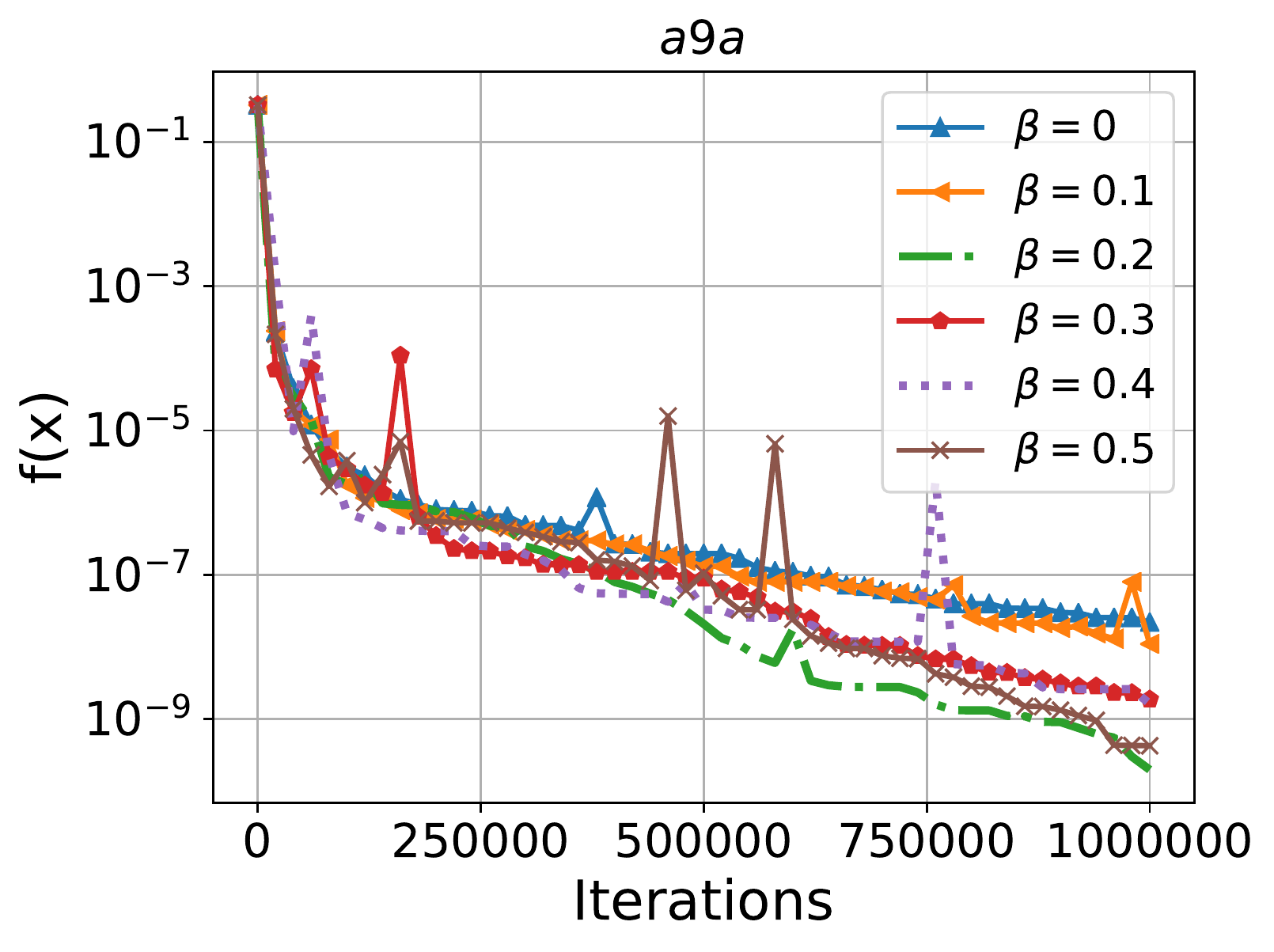}
 % \caption{}
\end{subfigure}
\begin{subfigure}{.23\textwidth}
  \centering
  \includegraphics[width=1\linewidth]{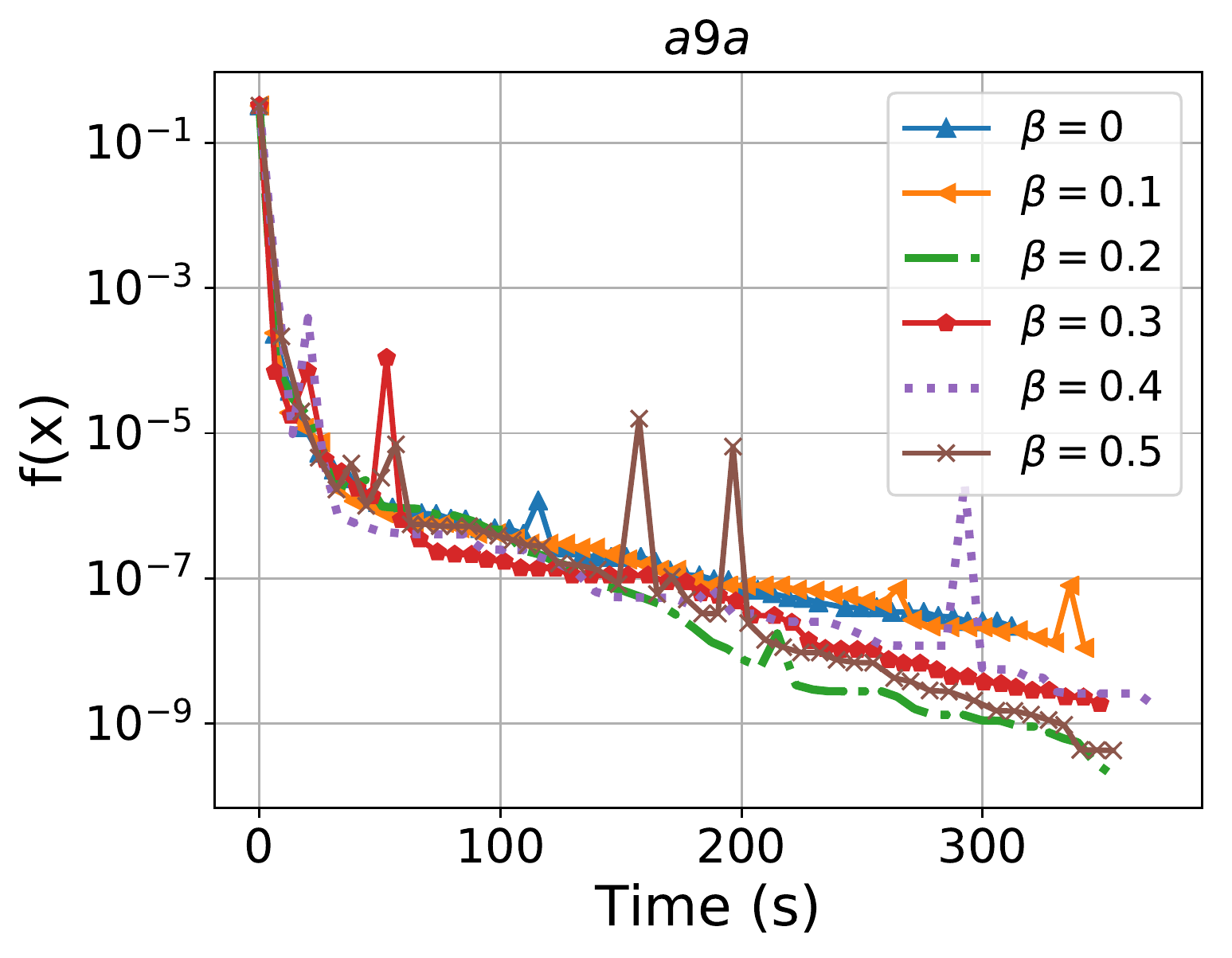}
%  \caption{}
\end{subfigure}\\
\begin{subfigure}{.23\textwidth}
  \centering
  \includegraphics[width=1\linewidth]{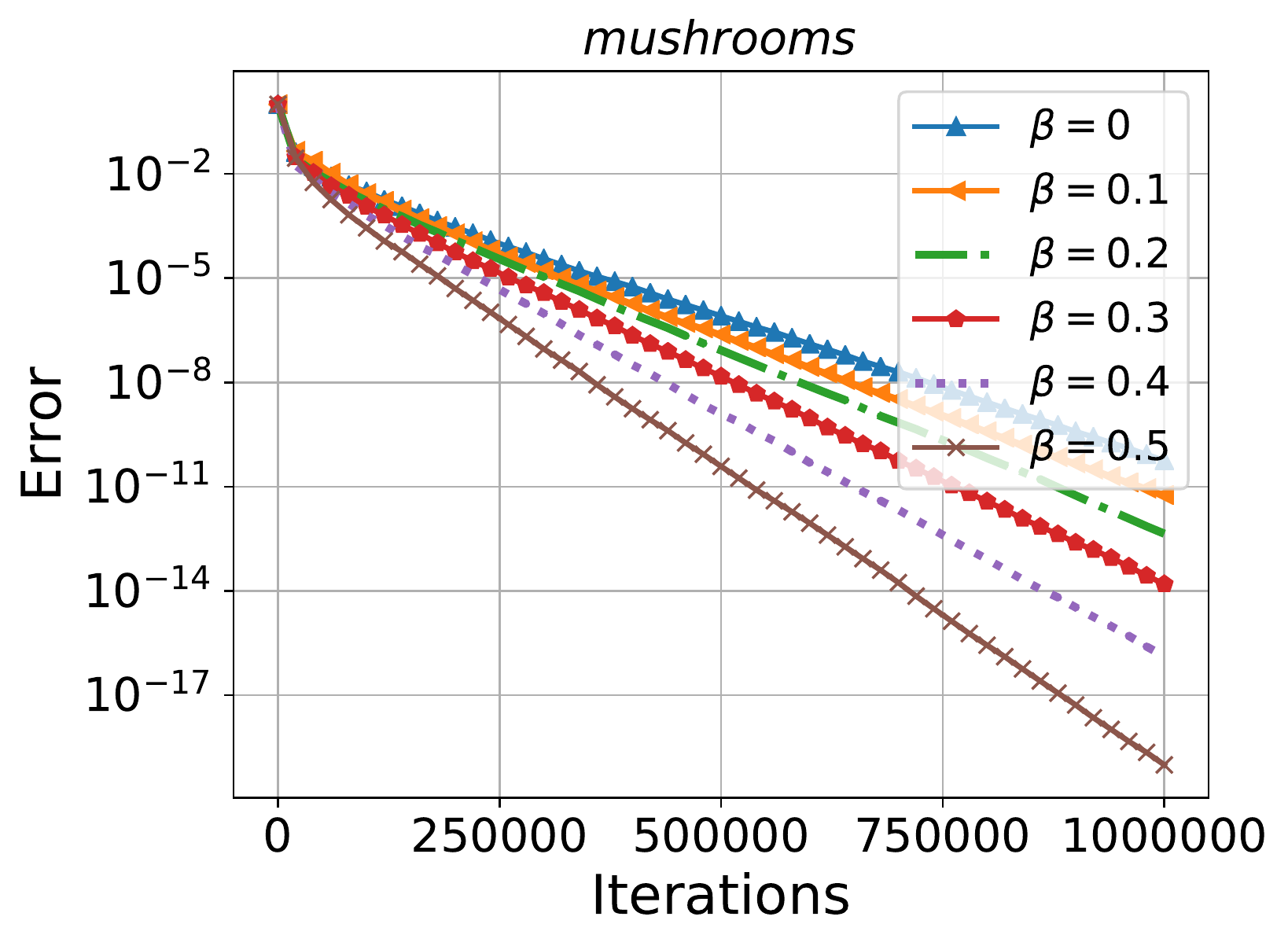}
  %\caption{}
\end{subfigure}%
\begin{subfigure}{.23\textwidth}
  \centering
  \includegraphics[width=1\linewidth]{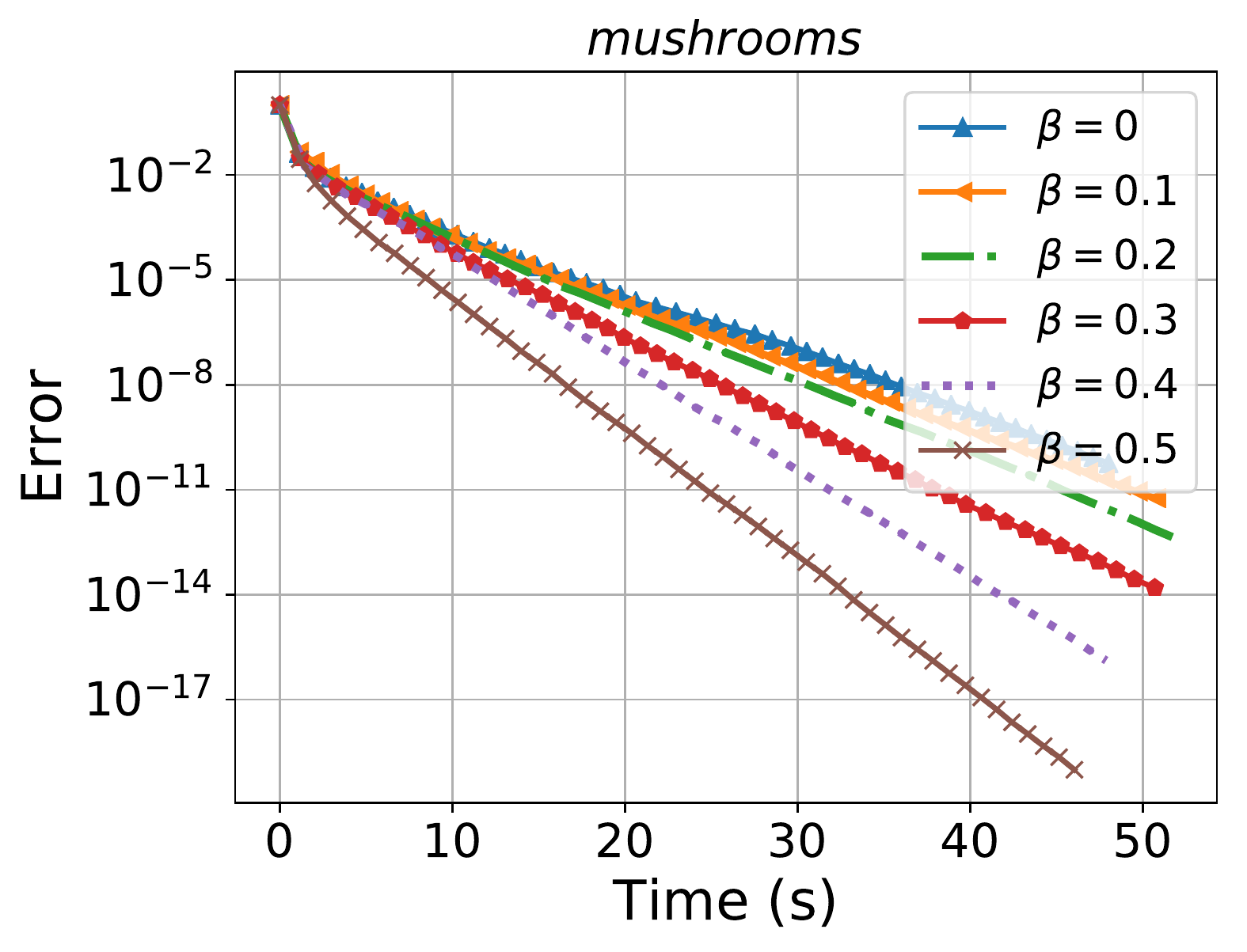}
 % \caption{}
\end{subfigure}
\begin{subfigure}{.23\textwidth}
  \centering
  \includegraphics[width=1\linewidth]{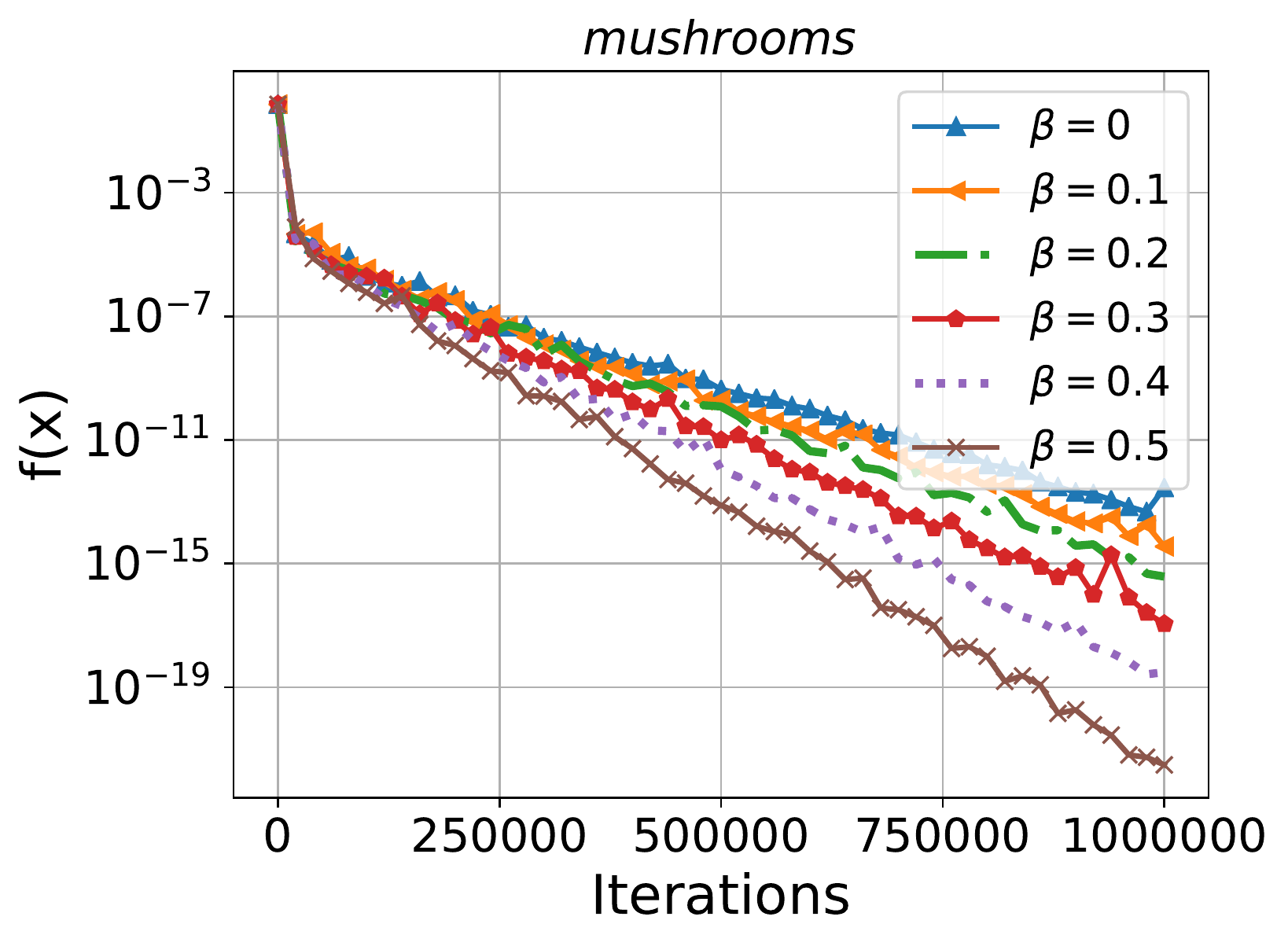}
 % \caption{}
\end{subfigure}
\begin{subfigure}{.23\textwidth}
  \centering
  \includegraphics[width=1\linewidth]{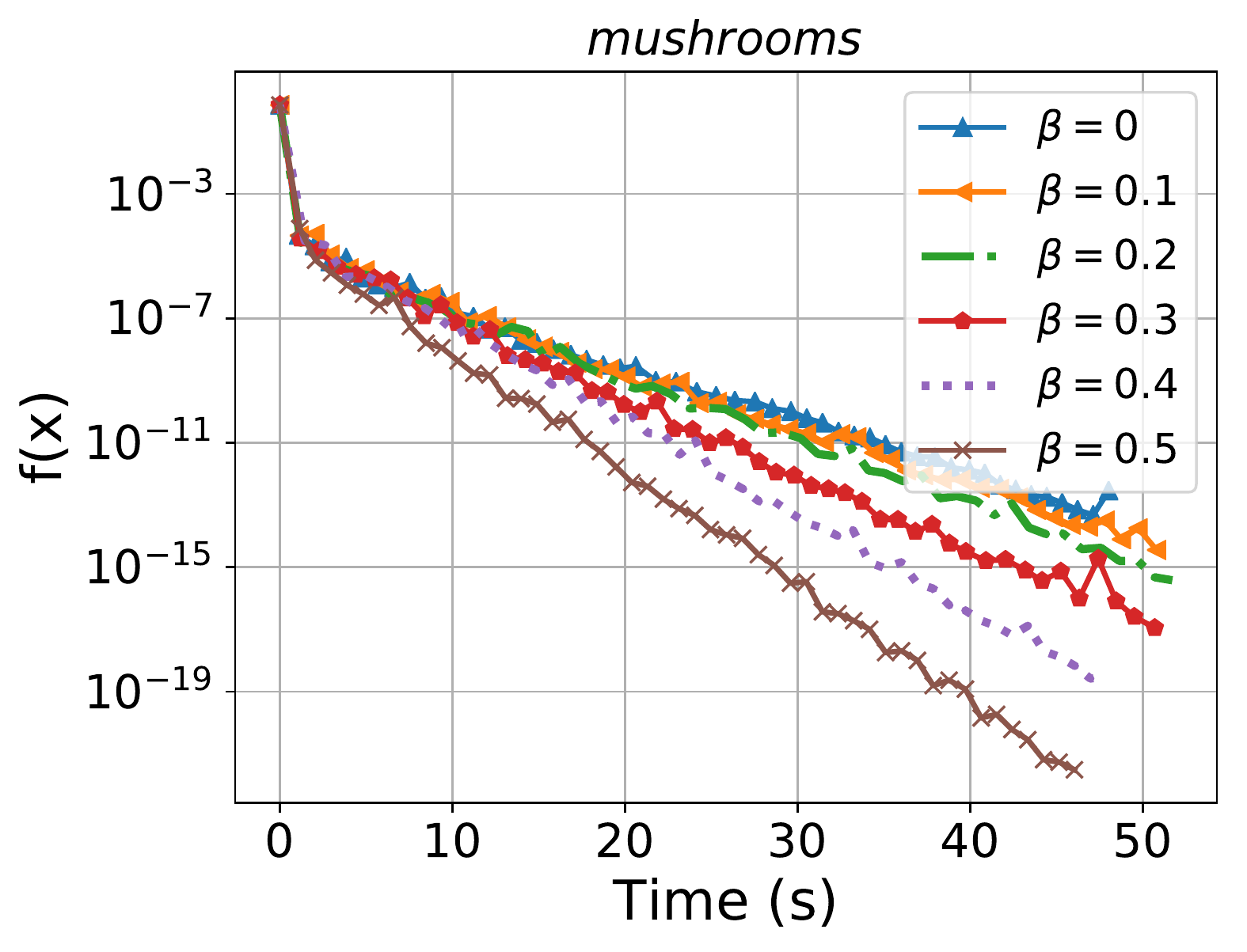}
%  \caption{}
\end{subfigure}\\
\begin{subfigure}{.23\textwidth}
  \centering
  \includegraphics[width=1\linewidth]{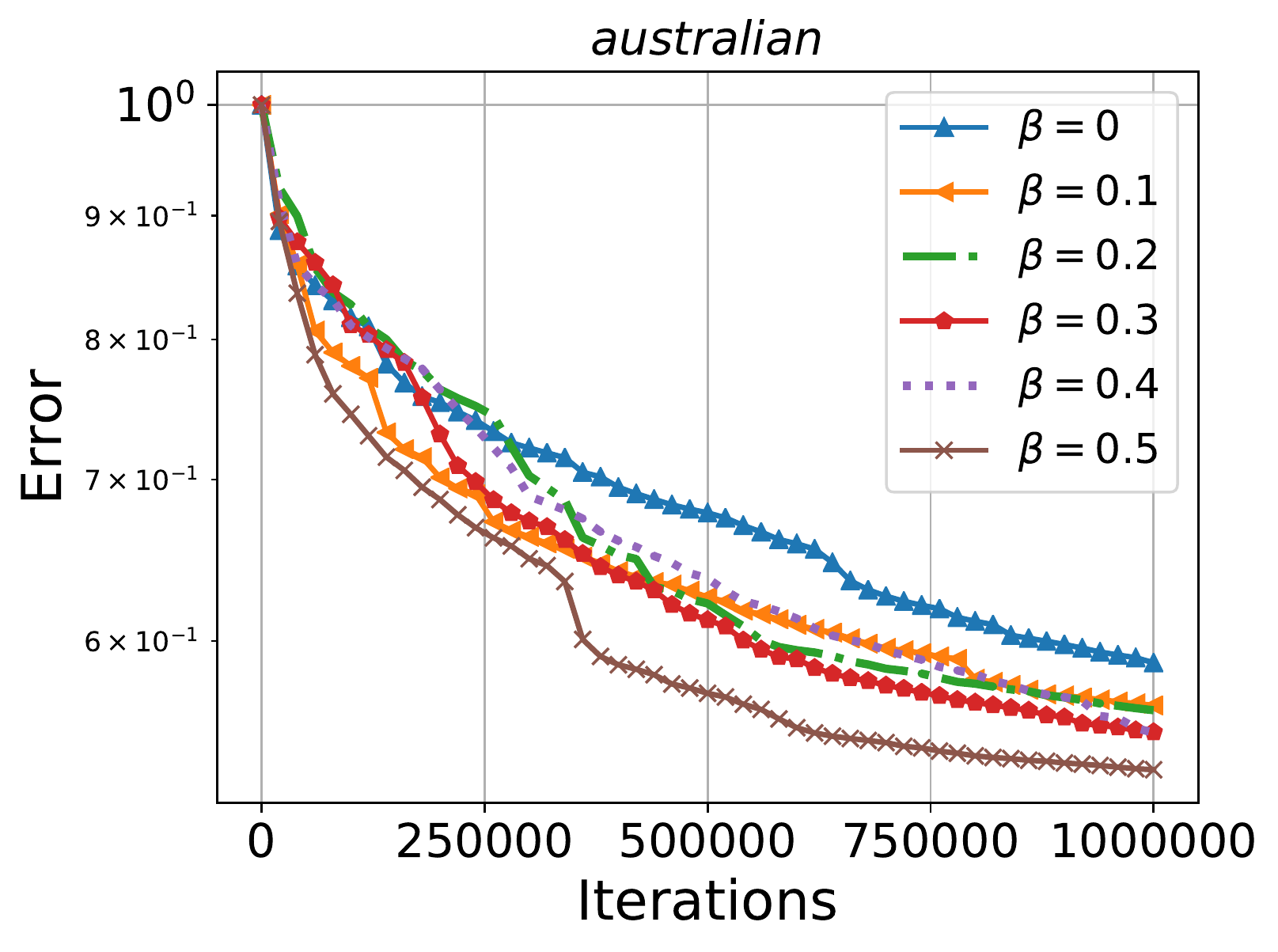}
  %\caption{}
\end{subfigure}%
\begin{subfigure}{.23\textwidth}
  \centering
  \includegraphics[width=1\linewidth]{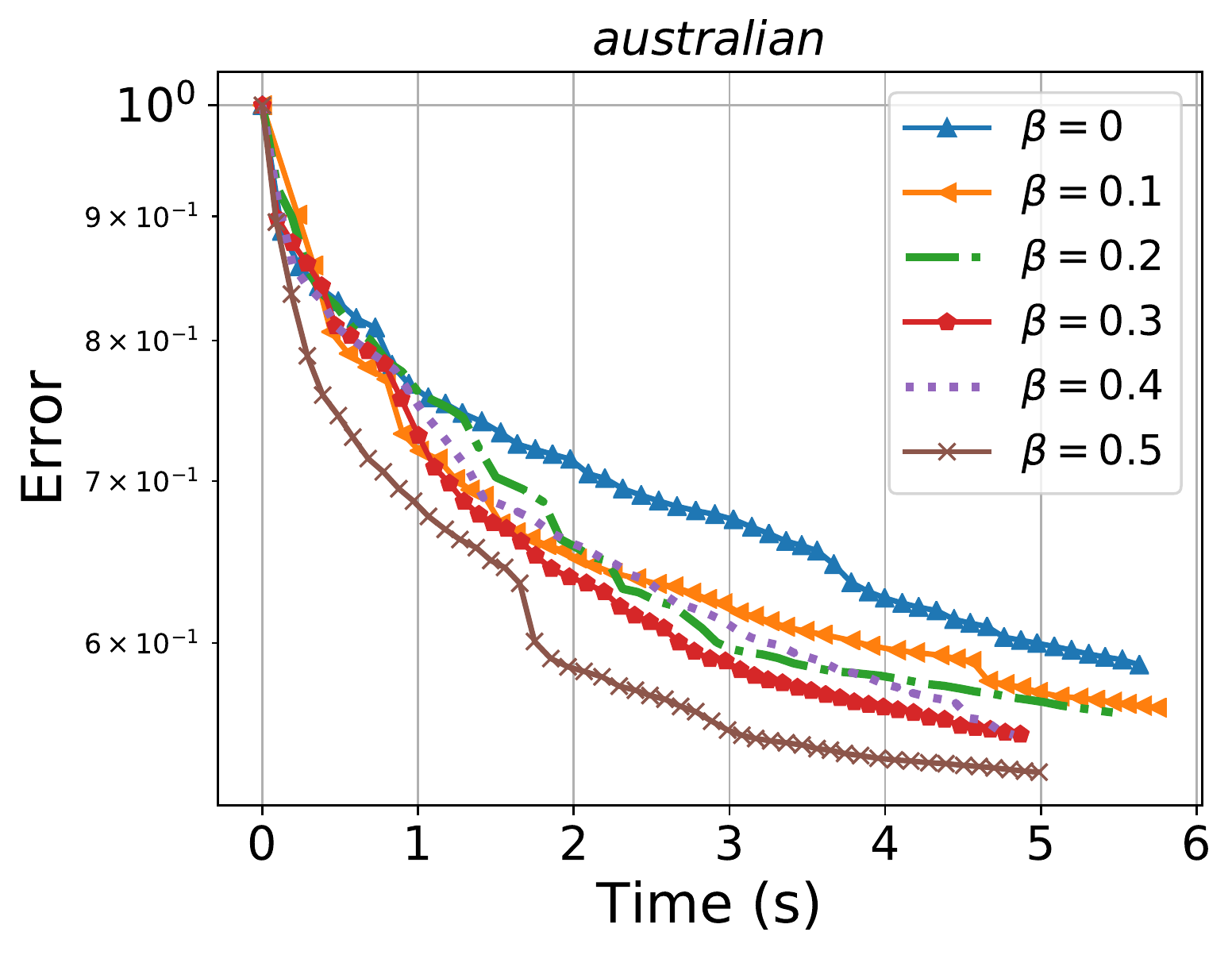}
 % \caption{}
\end{subfigure}
\begin{subfigure}{.23\textwidth}
  \centering
  \includegraphics[width=1\linewidth]{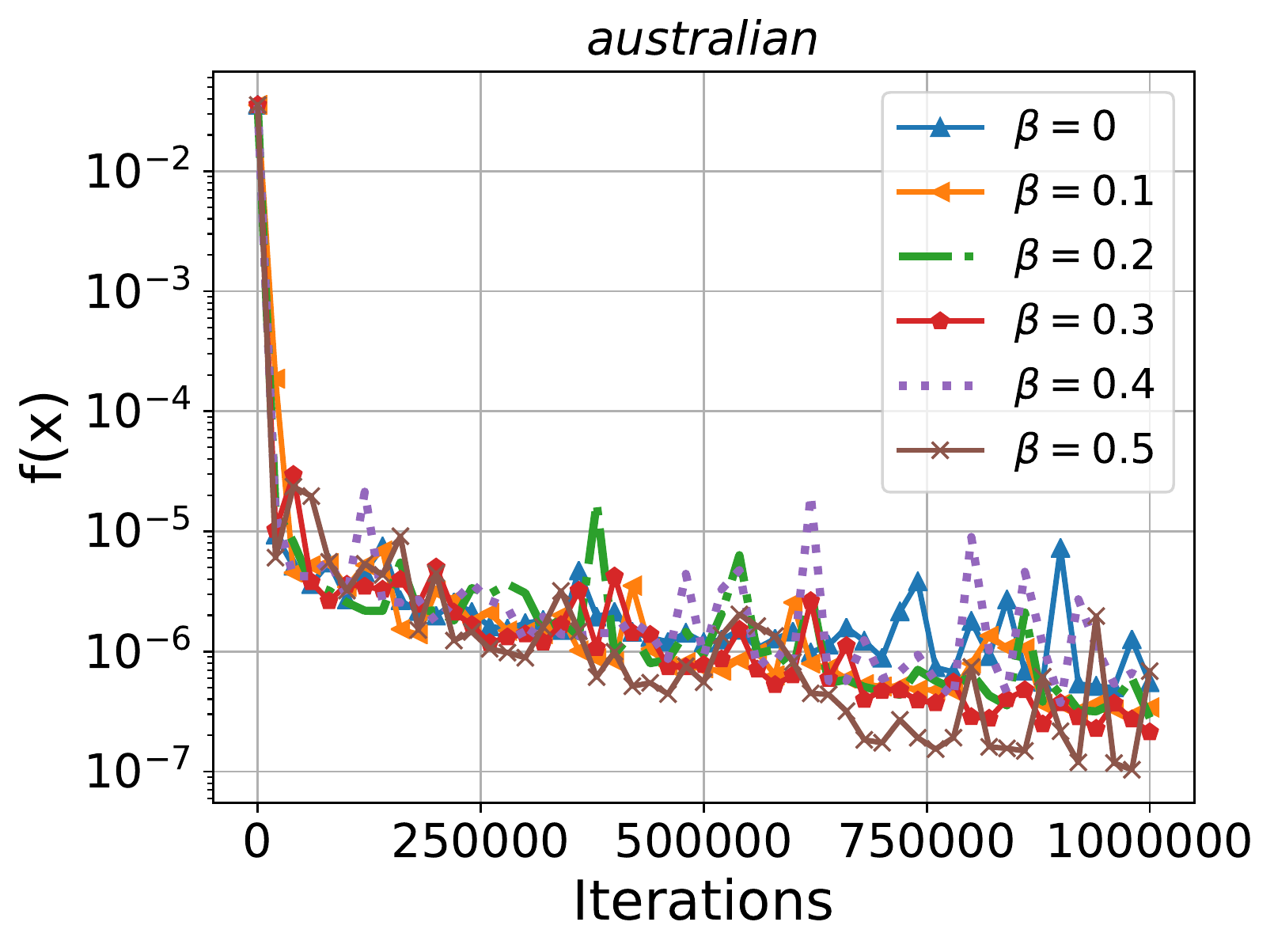}
 % \caption{}
\end{subfigure}
\begin{subfigure}{.23\textwidth}
  \centering
  \includegraphics[width=1\linewidth]{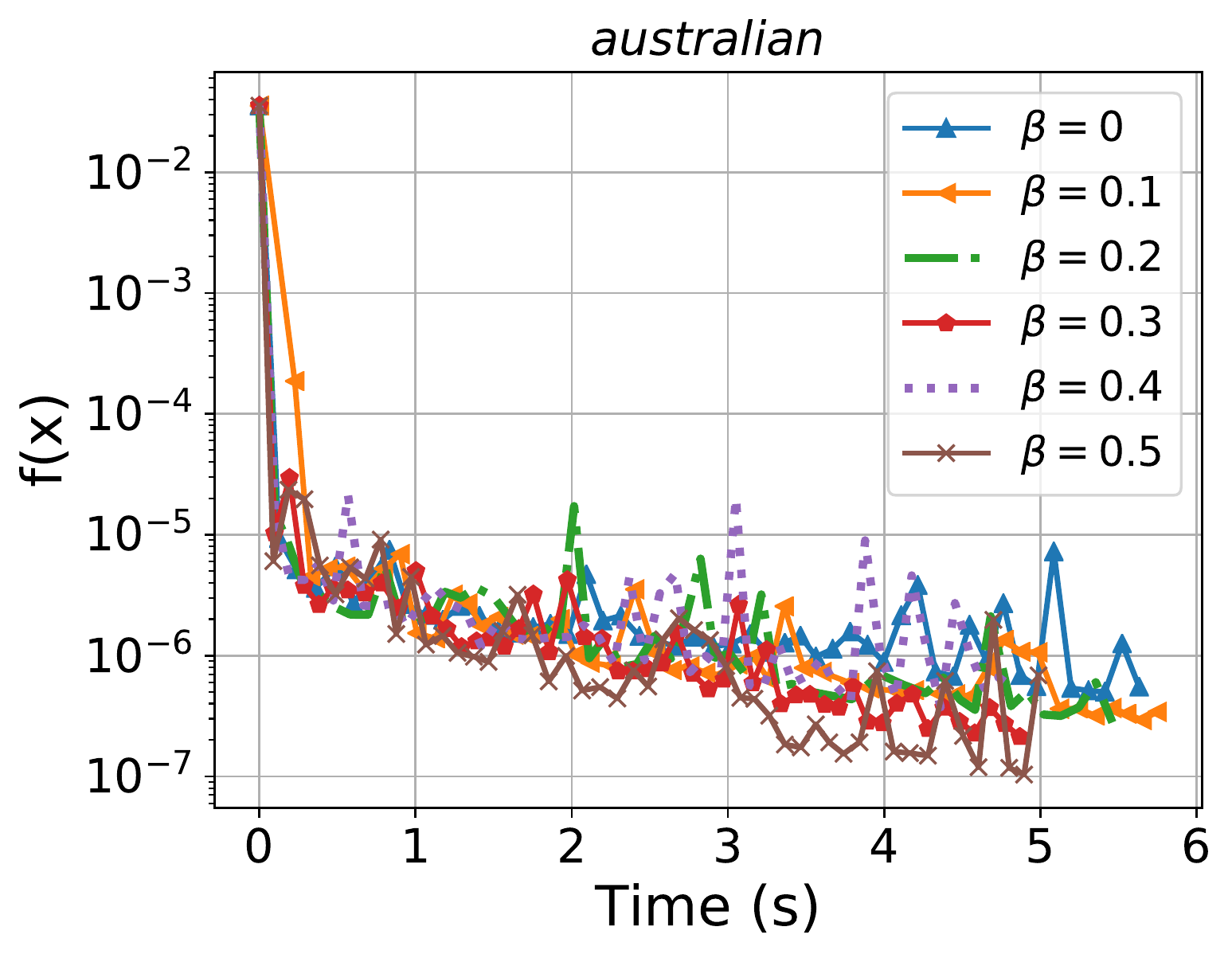}
%  \caption{}
\end{subfigure}\\
\begin{subfigure}{.23\textwidth}
  \centering
  \includegraphics[width=1\linewidth]{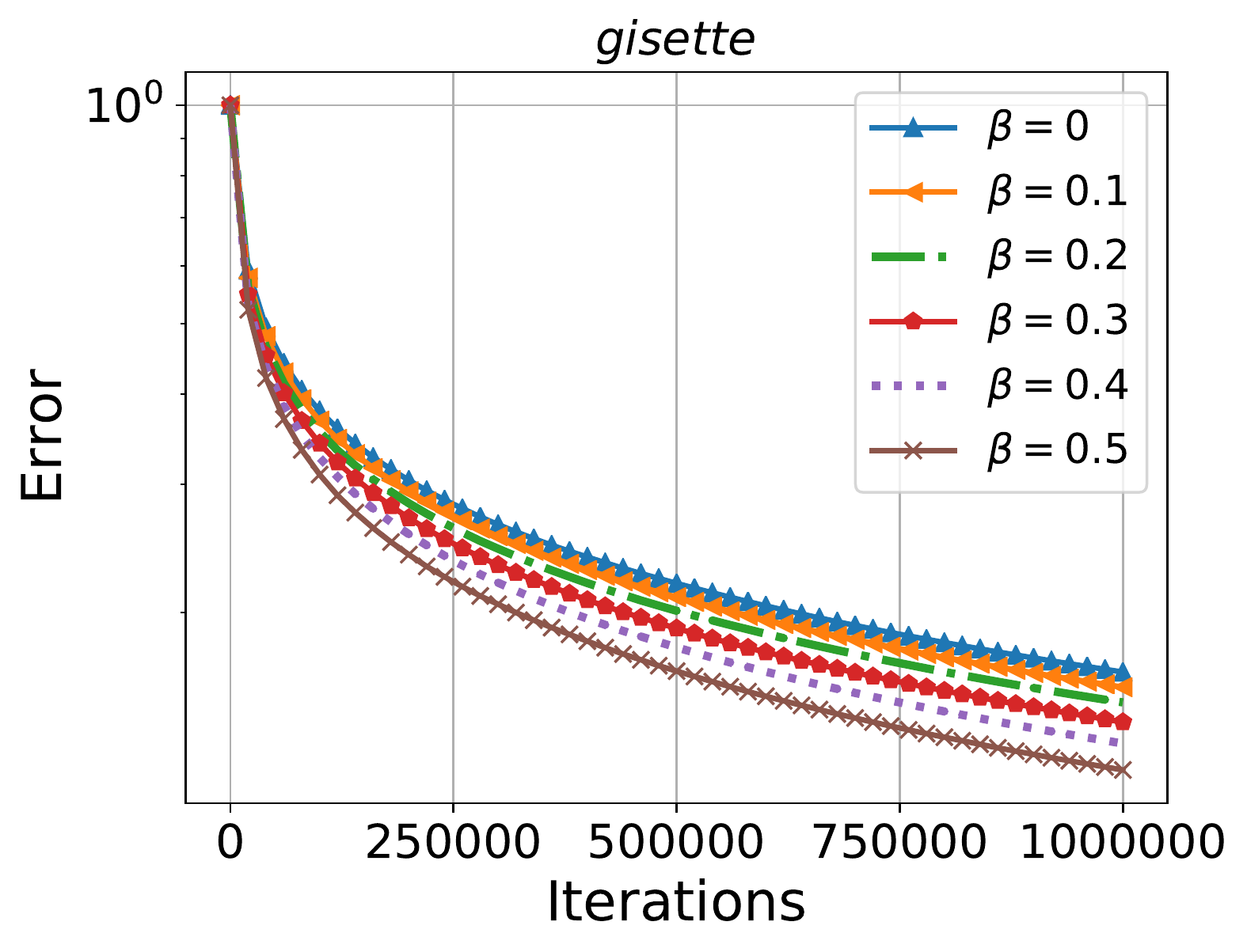}
  %\caption{}
\end{subfigure}%
\begin{subfigure}{.23\textwidth}
  \centering
  \includegraphics[width=1\linewidth]{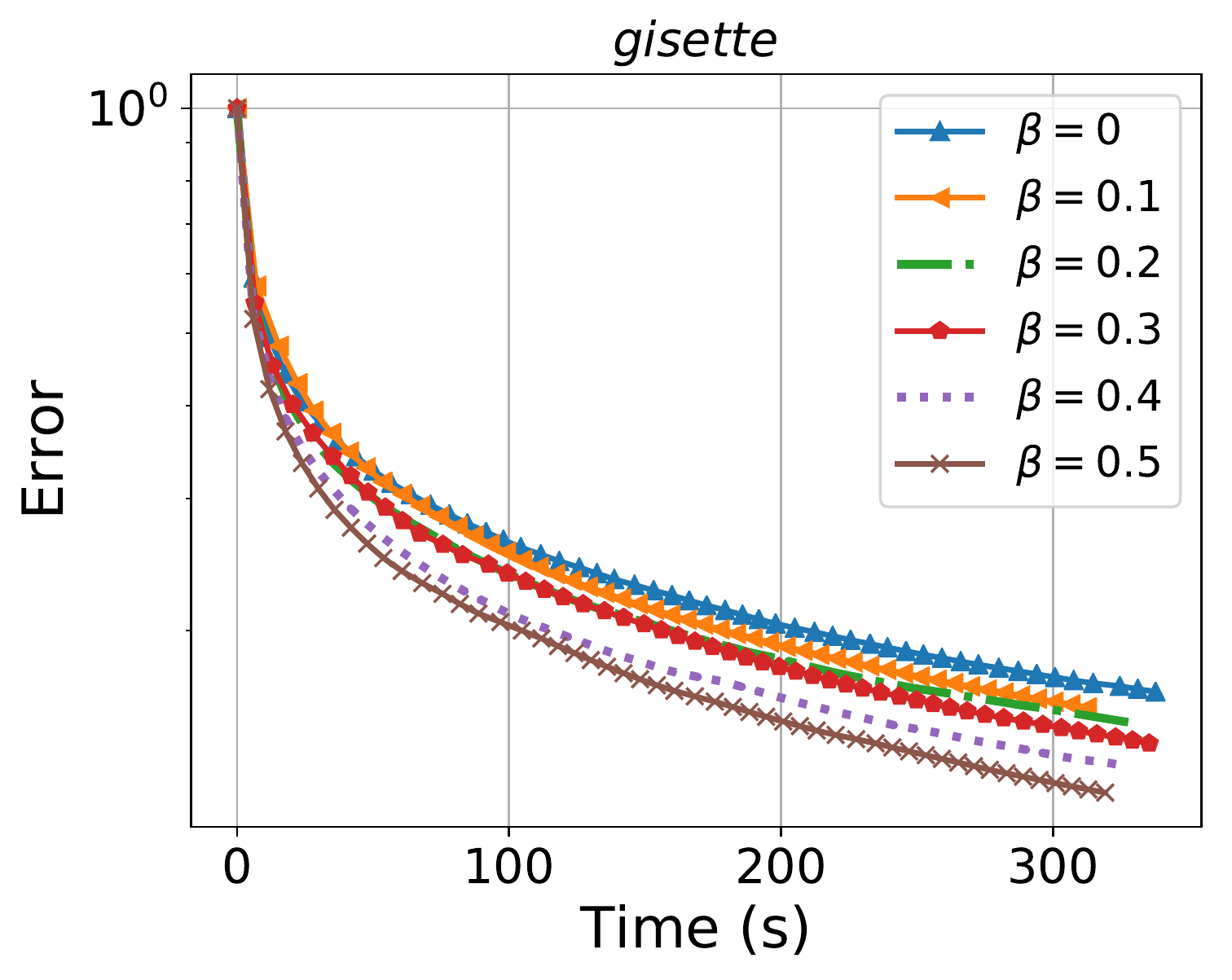}
 % \caption{}
\end{subfigure}
\begin{subfigure}{.23\textwidth}
  \centering
  \includegraphics[width=1\linewidth]{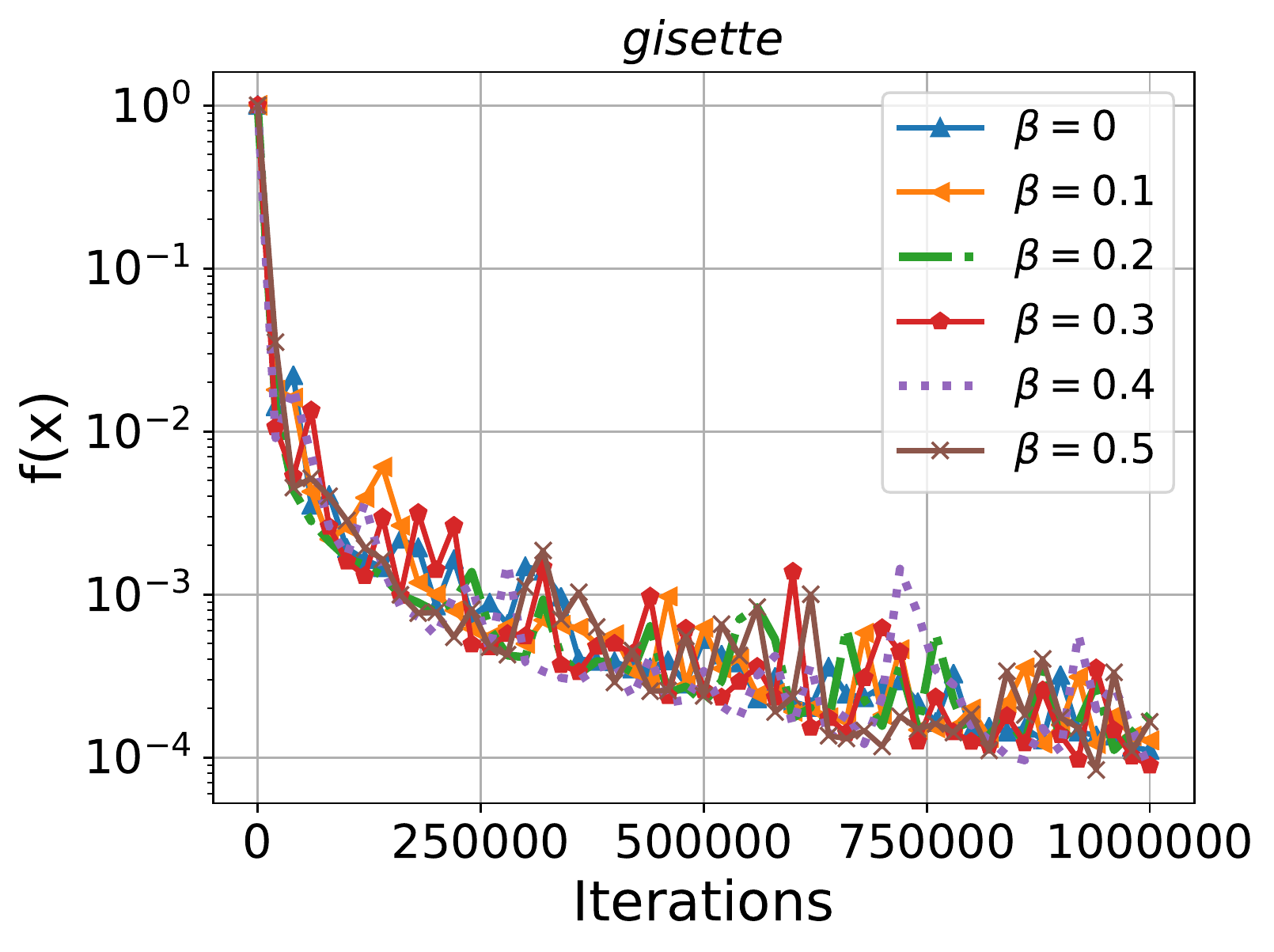}
 % \caption{}
\end{subfigure}
\begin{subfigure}{.23\textwidth}
  \centering
  \includegraphics[width=1\linewidth]{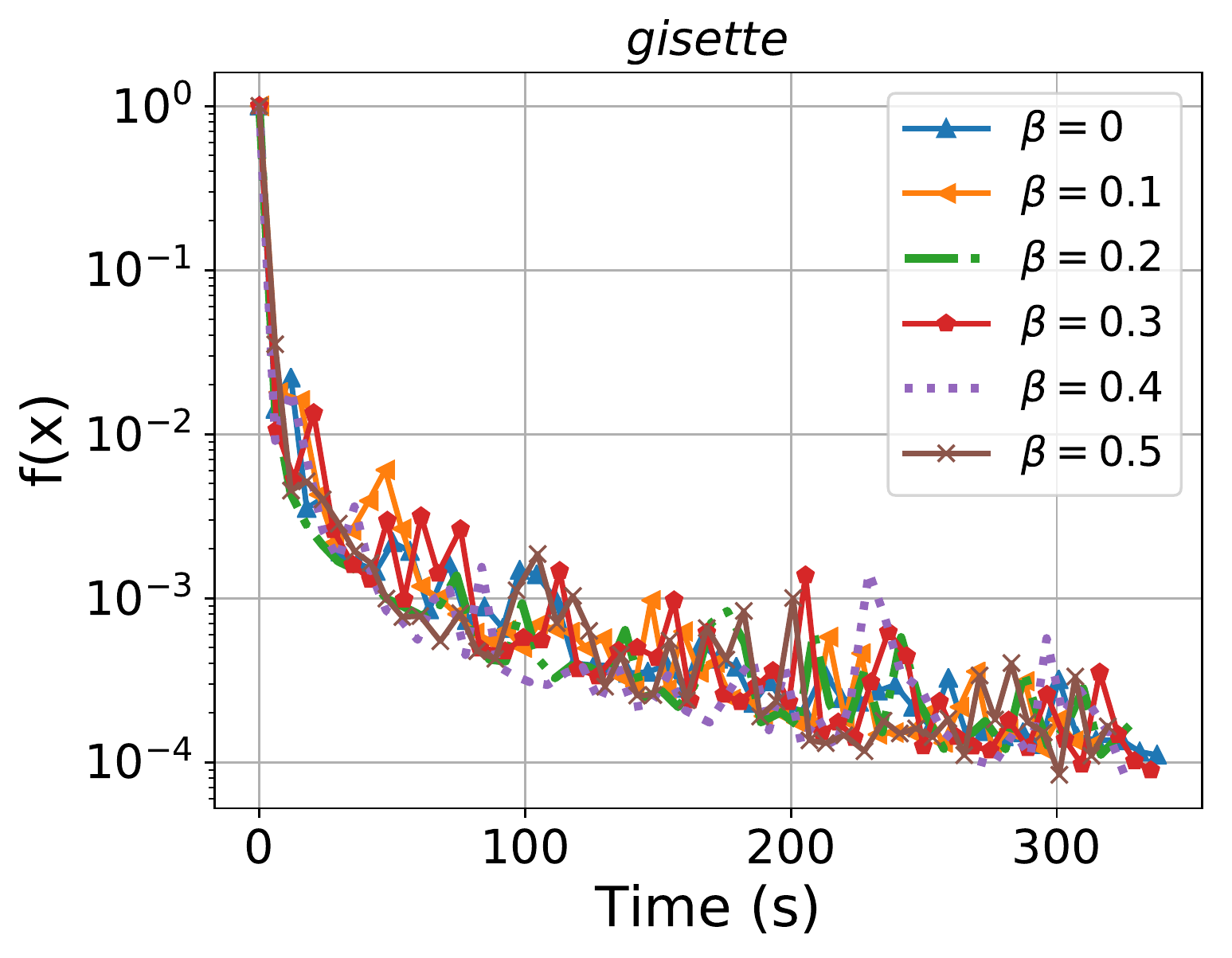}
%  \caption{}
\end{subfigure}\\
\begin{subfigure}{.23\textwidth}
  \centering
  \includegraphics[width=1\linewidth]{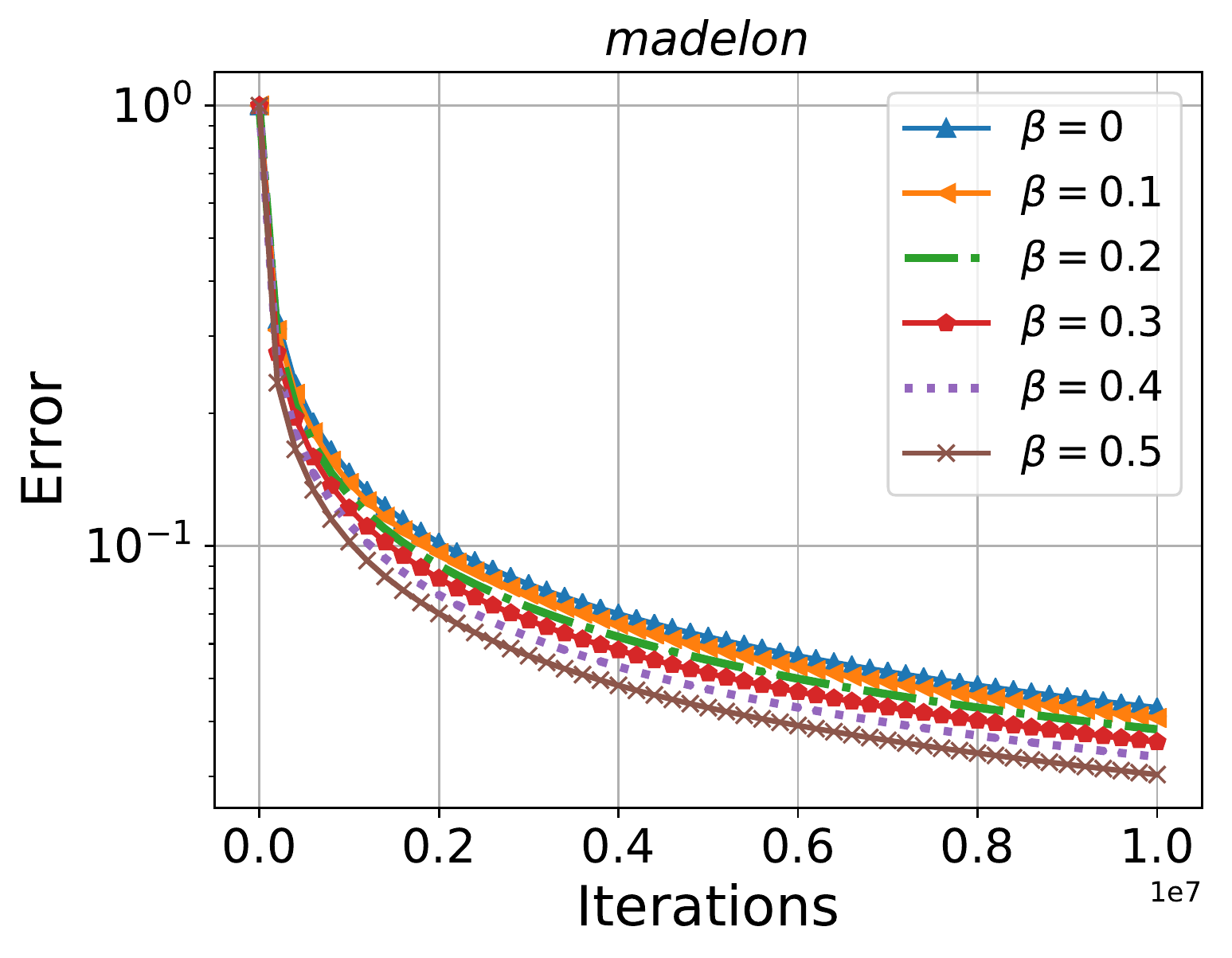}
  %\caption{}
\end{subfigure}%
\begin{subfigure}{.23\textwidth}
  \centering
  \includegraphics[width=1\linewidth]{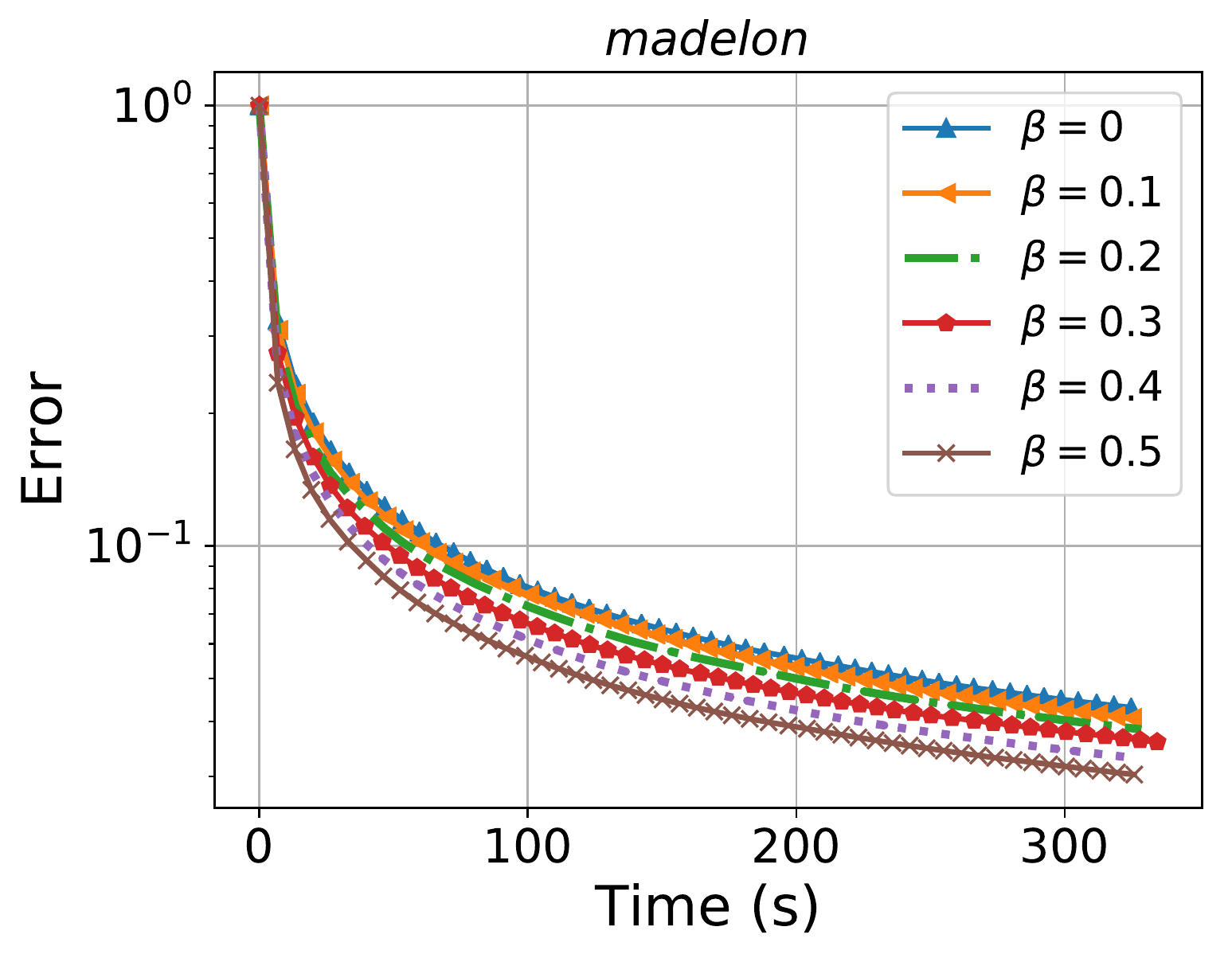}
 % \caption{}
\end{subfigure}
\begin{subfigure}{.23\textwidth}
  \centering
  \includegraphics[width=1\linewidth]{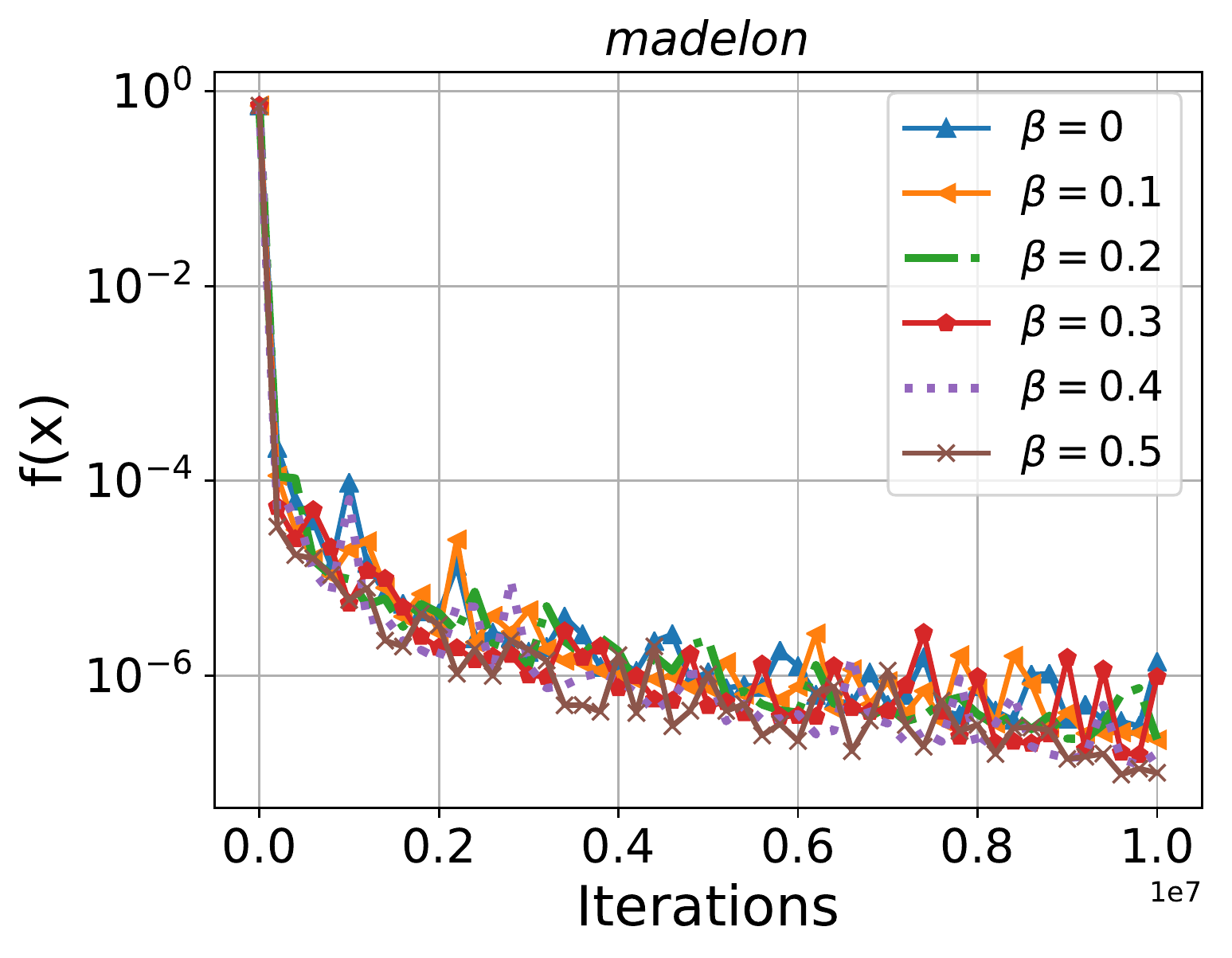}
 % \caption{}
\end{subfigure}
\begin{subfigure}{.23\textwidth}
  \centering
  \includegraphics[width=1\linewidth]{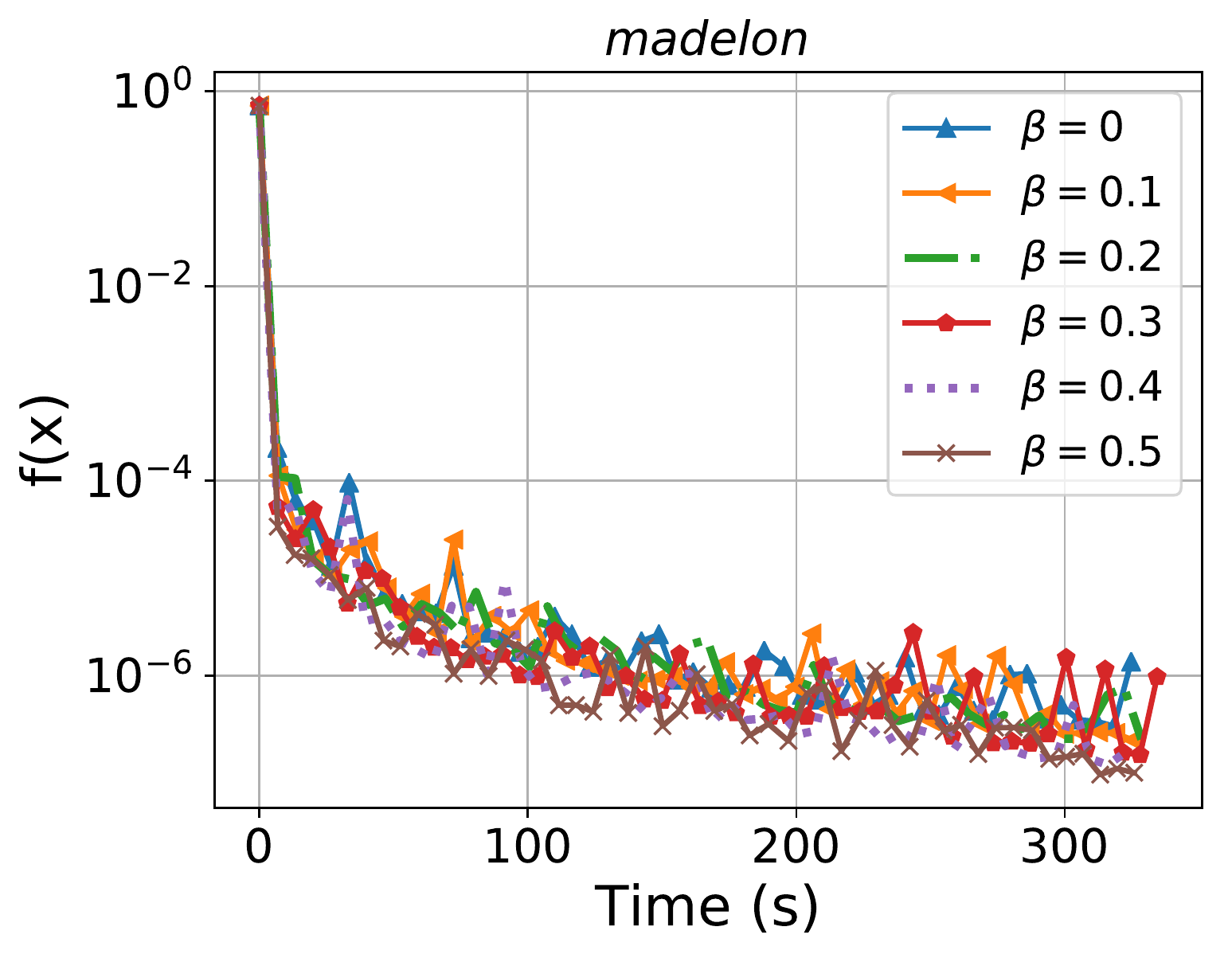}
%  \caption{}
\end{subfigure}\\
\begin{subfigure}{.23\textwidth}
  \centering
  \includegraphics[width=1\linewidth]{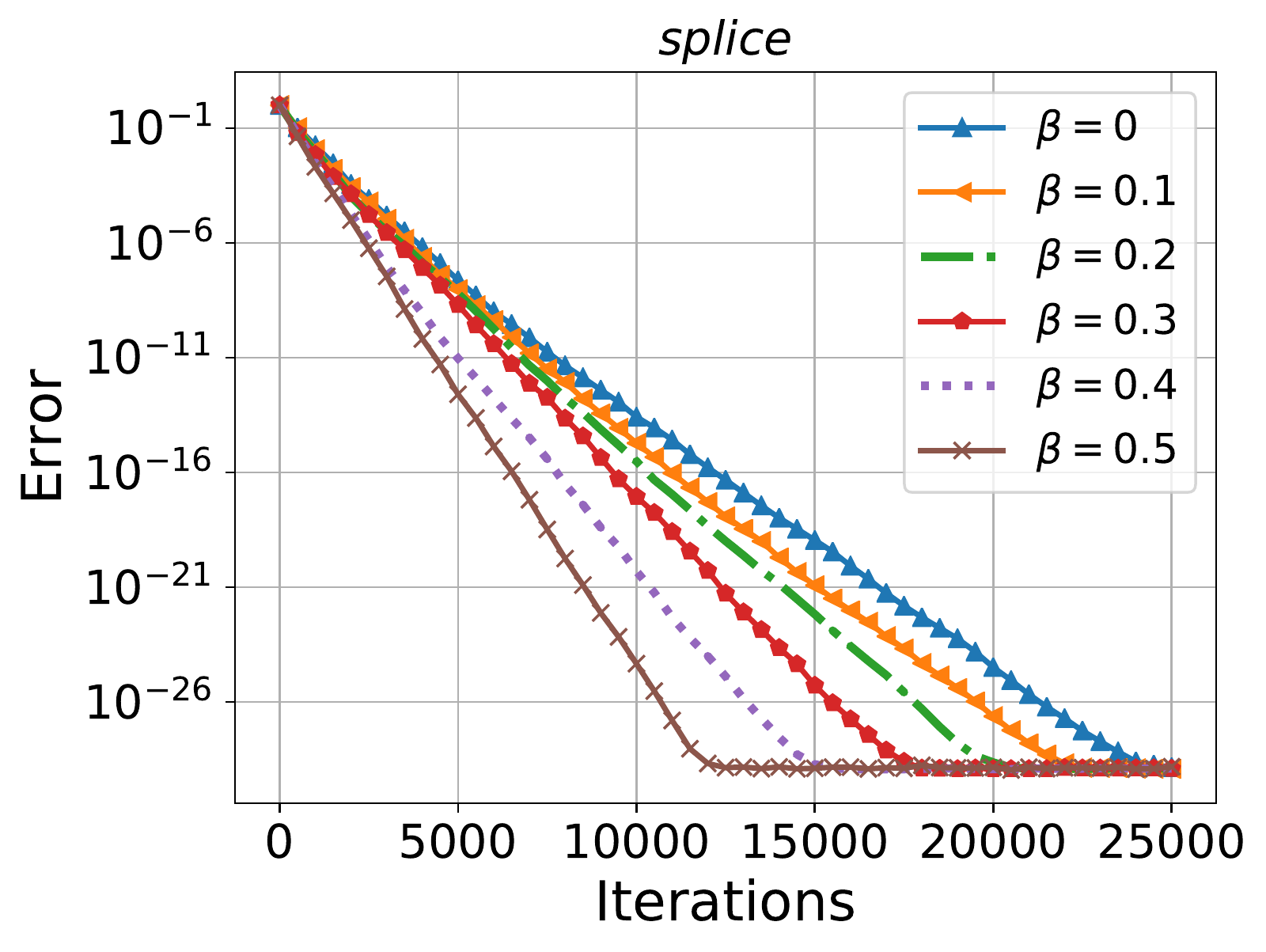}
  %\caption{}
\end{subfigure}%
\begin{subfigure}{.23\textwidth}
  \centering
  \includegraphics[width=1\linewidth]{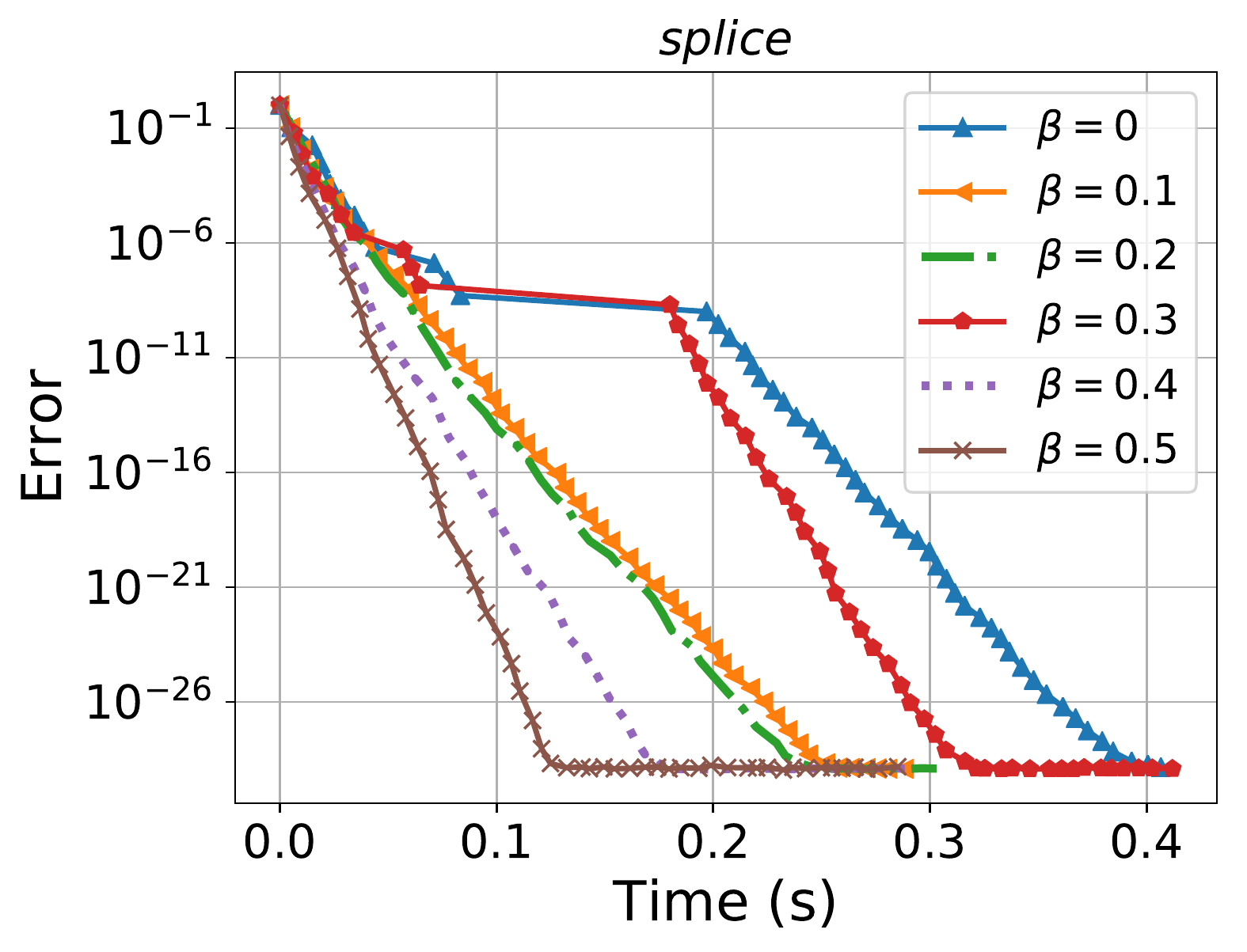}
 % \caption{}
\end{subfigure}
\begin{subfigure}{.23\textwidth}
  \centering
  \includegraphics[width=1\linewidth]{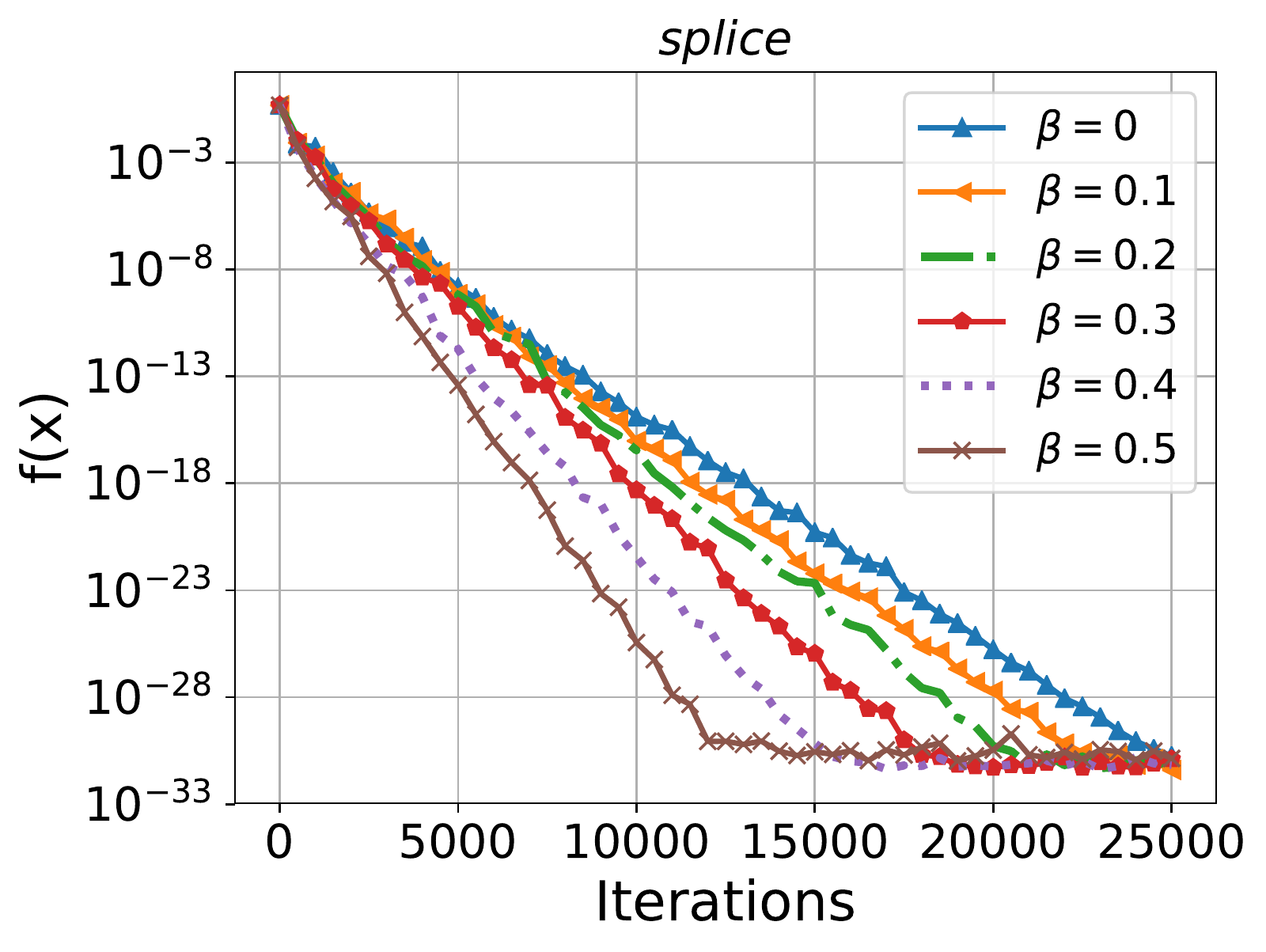}
 % \caption{}
\end{subfigure}
\begin{subfigure}{.23\textwidth}
  \centering
  \includegraphics[width=1\linewidth]{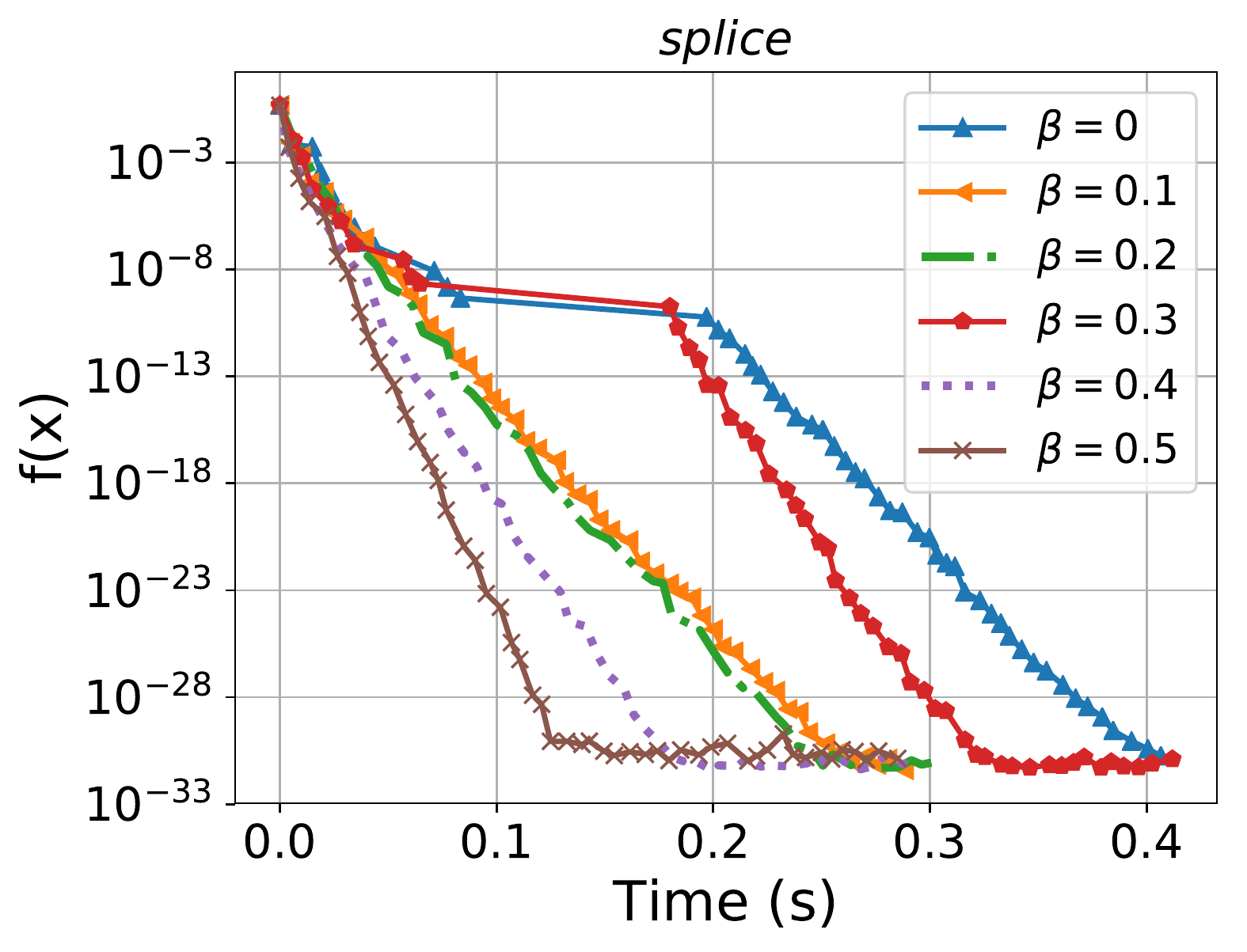}
%  \caption{}
\end{subfigure}\\
\begin{subfigure}{.23\textwidth}
  \centering
  \includegraphics[width=1\linewidth]{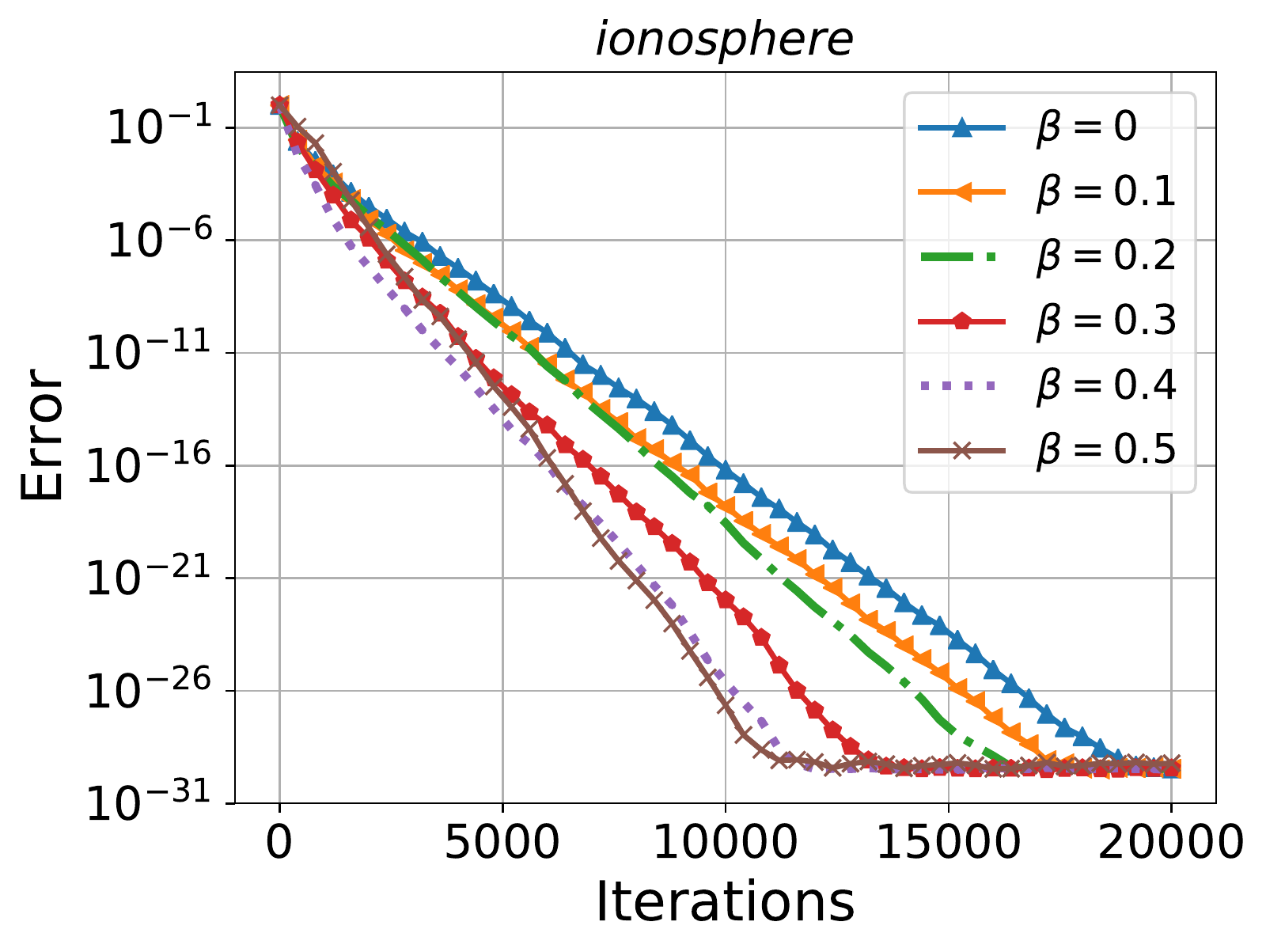}
  %\caption{}
\end{subfigure}%
\begin{subfigure}{.23\textwidth}
  \centering
  \includegraphics[width=1\linewidth]{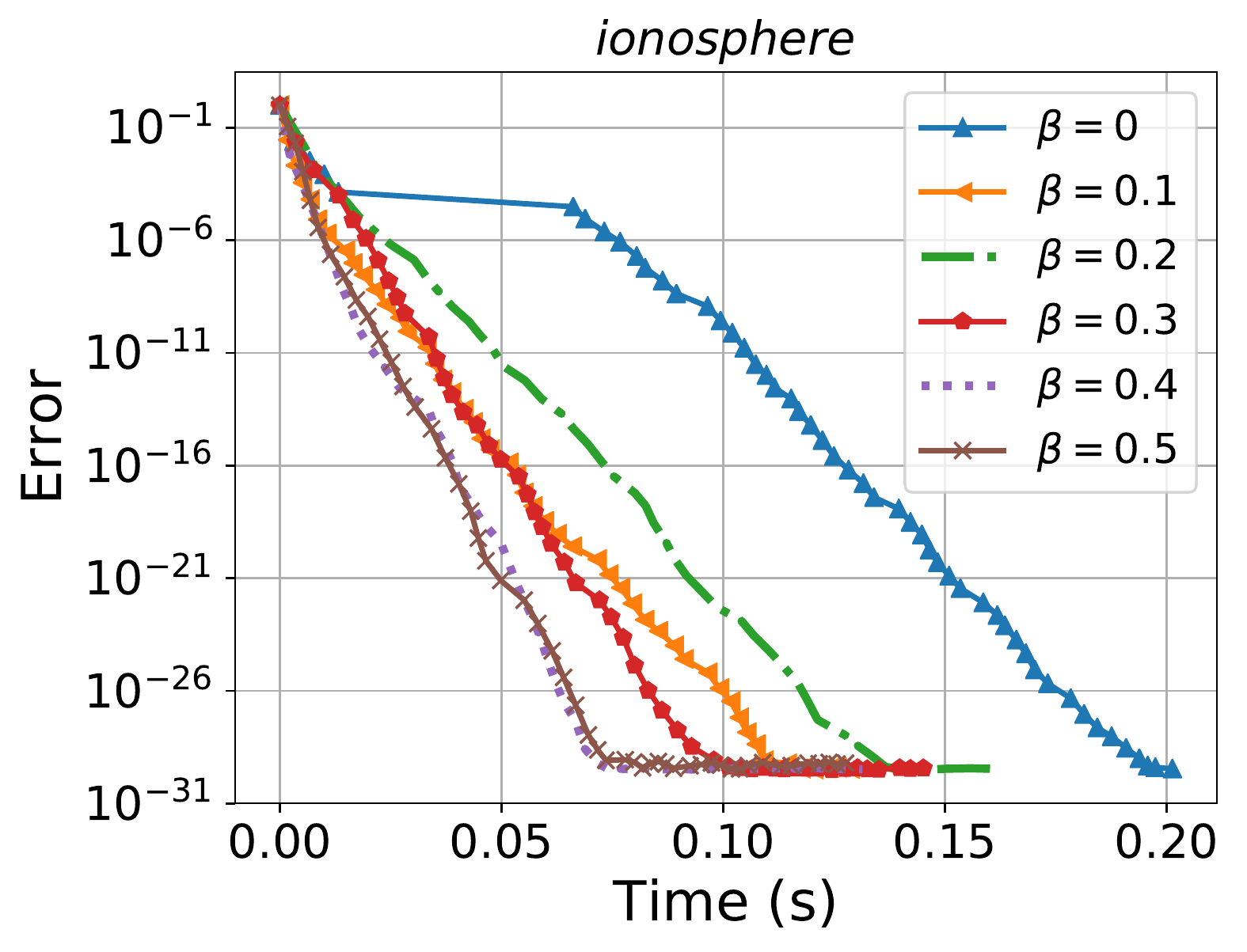}
 % \caption{}
\end{subfigure}
\begin{subfigure}{.23\textwidth}
  \centering
  \includegraphics[width=1\linewidth]{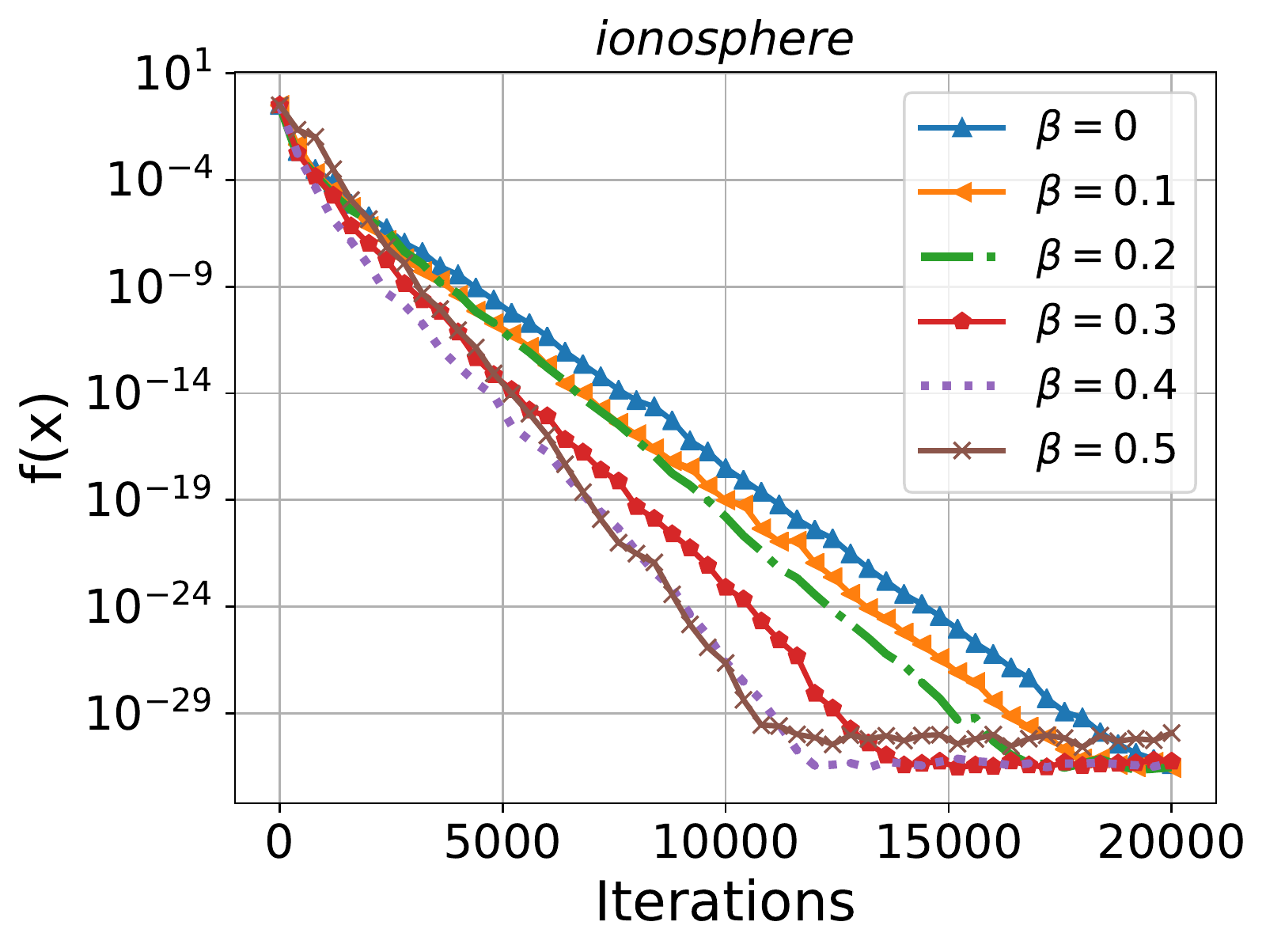}
 % \caption{}
\end{subfigure}
\begin{subfigure}{.23\textwidth}
  \centering
  \includegraphics[width=1\linewidth]{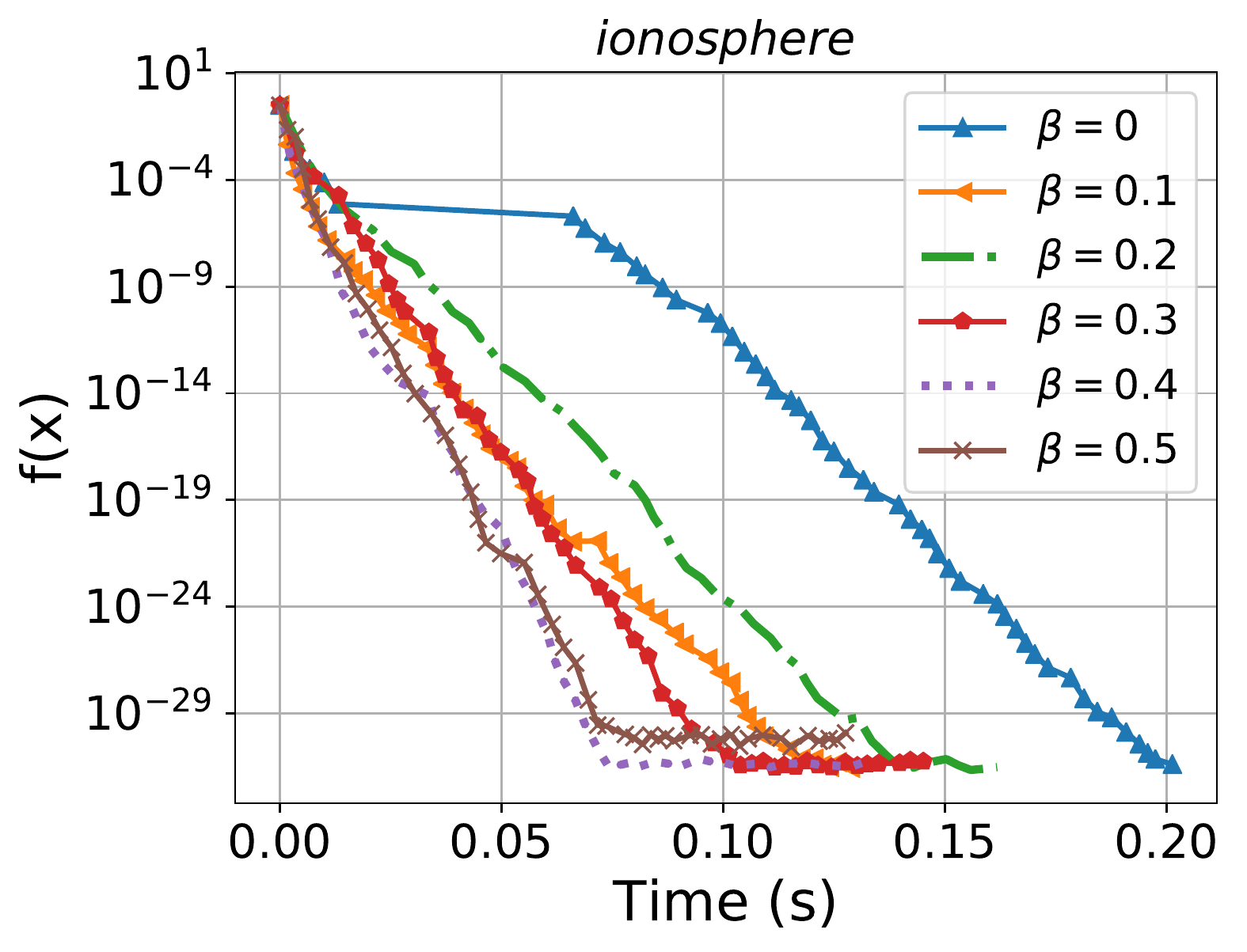}
%  \caption{}
\end{subfigure}\\
\caption{The performance of mRK for several momentum parameters $\beta$ on real data from LIBSVM \cite{chang2011libsvm}. a9a: $(m,n)=(32561,123)$, mushrooms: $(m,n)=(8124,112)$,  australian: $(m,n)=(690,14)$, gisette: $(m,n)=(6000,5000)$,  madelon: $(m,n)=(2000,500)$, splice: $(m,n)=(1000,60)$, ionosphere: $(m,n)=(351,34)$. The graphs in the first (second) column plot iterations (time) against residual error while those in the third (forth) column plot iterations (time) against function values. The ``Error" on the vertical axis represents the relative error $\|x^k-x^*\|^2_\bB / \|x^0-x^*\|^2_\bB \overset{\bB=\bI, x^0=0}{=}\|x^k-x^*\|^2 / \|x^*\|^2_\bB$ and the function values $f(x^k)$ refer to function~\eqref{functionRK}.}
\label{RealDataplots}
\end{figure}

\subsection{Comparison of momentum \& stochastic momentum}
\label{DSHB comp}
In Theorem \ref{thm:DSHBspeedup}, the total complexities (number of operations needed to achieve a given accuracy) of mSGD and smSGD have been compared and it has been shown that for small momentum parameter $\beta$,
\[C_{\beta}=\frac{C_{\text{mSGD}}(\beta)}{C_{\text{smSGD}}(\beta n)}  \approx  1 + \frac{n}{g},\]  where $C_{\text{mSGD}}$ and $C_{\text{smSGD}}$ represent the total costs of the two methods. The goal of this experiment is to show that this relationship holds also in practice. 

For this experiment we assume that the non-zeros of matrix $\bA$ are not concentrated in certain rows but instead that each row has the same number of non-zero coordinates. We denote by $g$ the number the non-zero elements per row. Having this assumption it can be shown that for the RK method the cost of one projection is equal to $4g$ operations while 
the cost per iteration of the mRK and of the smRK are $4g+3n$ and $4g+1$ respectively. For more details about the cost per iteration of the general mSGD and smSGD check Table~\ref{tableSHBandDSHB}.

As a first step a Gaussian matrix $\bA \in \R^{m \times n}$ is generated.
Then using this matrix several consistent linear systems are obtained as follows. Several values for $g \in [1,n]$ are chosen and for each one of these a matrix $\bA_g \in \R^{m \times n}$ with the same elements as $\bA$ but with $n-g$ zero coordinates per row is produced. 
For every matrix $\bA_g$, a Gaussian vector $z_g \in \R^n$ is drawn and to ensure consistency of the linear system, the right hand side is set to $b_g=\bA_g z$.

We run both mSGD and smSGD with small momentum parameter $\beta=0.0001$ for solving the linear systems $\bA_g x=b_g$ for all selected values of $g \in [1,n]$. The starting point for each run is taken to be $x^0=0 \in \R^n$.  The methods run until $\epsilon=\|x^k-x^*\|<0.001$, where $x^*=\Pi_{\cL_g}(x^0)$\footnote{\blue{To pre-compute the solution $x^*$ for each linear system $\bA_g x=b_g$ we use the closed form expression of the projection \eqref{Projection_IntroThesis}.} } and $\cL_g$ is the solution set of the linear system $\bA_g x=b_g$. In each run the number of operations needed to achieve the accuracy $\epsilon$ have been counted. 
For each linear system the average after $10$ trials of the value $\frac{C_{\text{mSGD}}(\beta)}{C_{\text{smSGD}}(\beta n)}$ is computed. 

\begin{table}[H]
\begin{center}
{\footnotesize
\begin{tabular}{ | p{4cm} | p{4cm} | p{3cm} | p{4cm} | }
 \hline
 Algorithm & Cost per iteration & Cost per Iteration (RK, mRK, smRK) \\
 \hline
 Basic Method ($\beta=0$) & $\cO(g)$ & $4g$  \\
  \hline
 mSGD & $\cO(g) +\cO(n) =\cO(n)$ &  $4g+3n$  \\
  \hline
smSGD & $\cO(g) +\cO(1) =\cO(g)$ & $4g+1$ \\
 \hline
\end{tabular}
}
\end{center}
\caption{Cost per iteration of the basic, mSGD and smSGD in the general setting and in the special cases of RK,mRK and smRK.}
\label{tableSHBandDSHB}
\end{table}

In Figure \ref{comparisonFigure} the actual ratio $\frac{C_{\text{mSGD}}(\beta)}{C_{\text{smSGD}}(\beta n)}$ and the theoretical approximation $1 + \frac{n}{g}$ are plot and it is shown that they have similar behavior. Thus the theoretical prediction of Theorem~\ref{thm:DSHBspeedup} is numerically confirmed.
In particular in the implementations we use the Gaussian matrices $\bA \in \R^{200 \times 100}$ and $\bA \in \R^{1000 \times 300}$.

\begin{figure}[!]
\centering
\begin{subfigure}{.4\textwidth}
  \centering
  \includegraphics[width=1\linewidth]{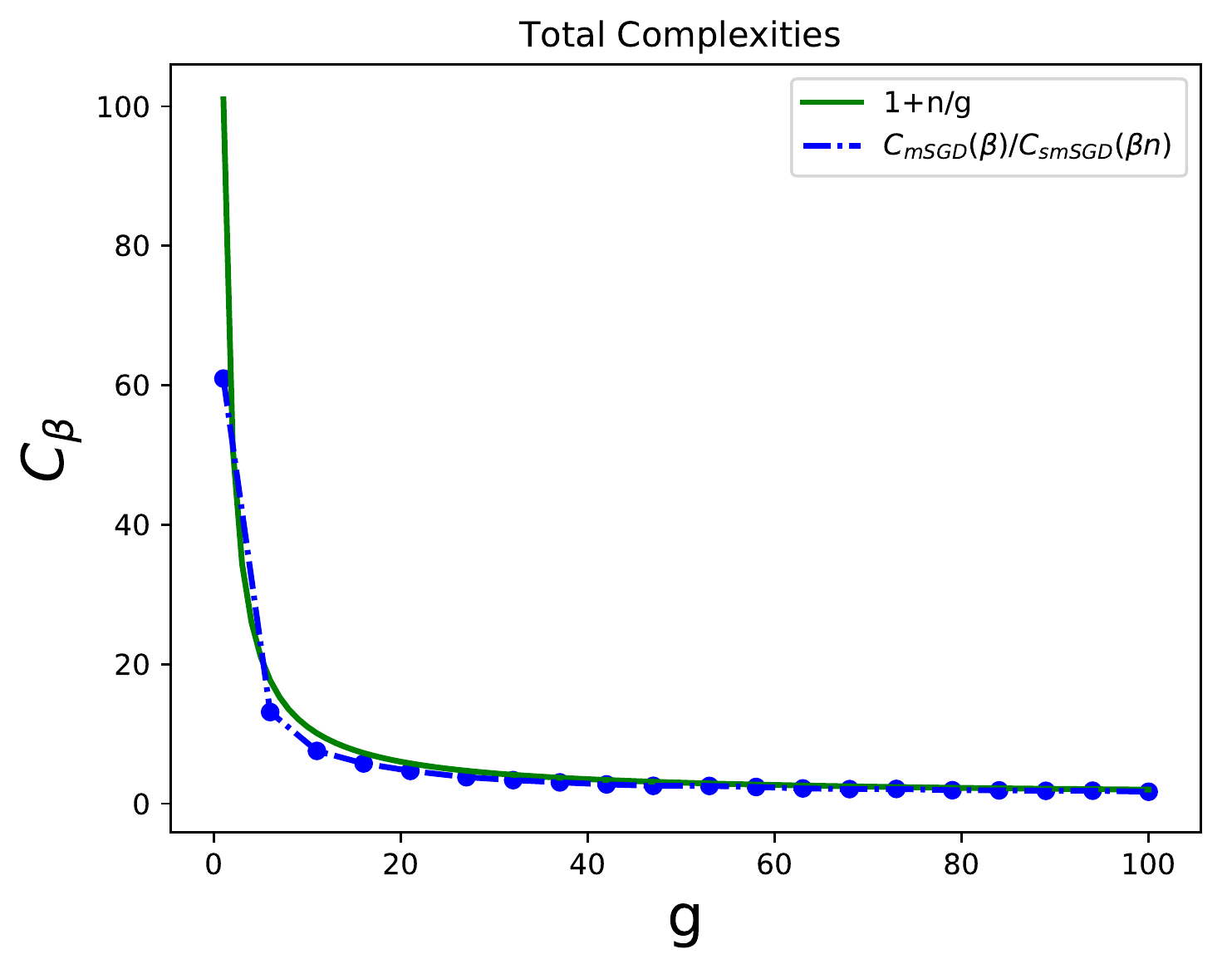}
  \caption{$\bA \in \R^{200 \times 100}$}
\end{subfigure}
\begin{subfigure}{.4\textwidth}
  \centering
  \includegraphics[width=1\linewidth]{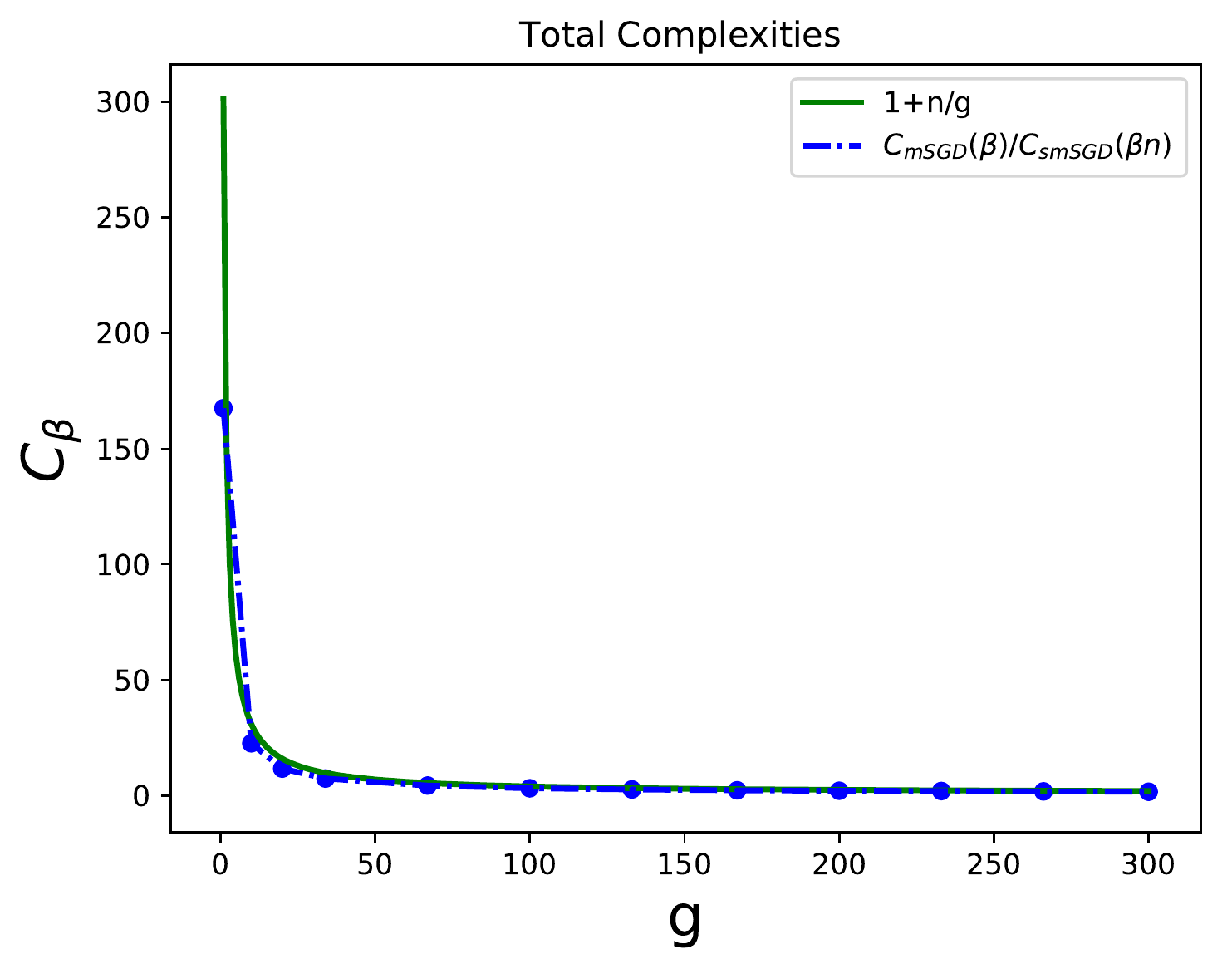}
  \caption{$\bA \in \R^{1000 \times 300}$}
\end{subfigure}
\caption{Comparison of the total complexities of mRK and smRK. The green continuous line denotes the theoretical relationship $1 + \frac{n}{g}$ that we predict in Theorem~\ref{thm:DSHBspeedup}. The blue dotted line shows the ratio of the total complexities $\frac{C_{\text{mSGD}}(\beta)}{C_{\text{smSGD}}(\beta n)} $ for several linear systems $\bA_g x=b_g$ where $g \in [1,n]$. The momentum parameter $\beta=0.0001$ is used for both methods.}
\label{comparisonFigure}
\end{figure}
\subsection{Faster method for average consensus}
\label{consensus}
\subsubsection{Background}
Average consensus (AC) is a fundamental problem in distributed computing and multi-agent systems \cite{dimakis2010gossip, boyd2006randomized}. Consider a connected undirected network $\cG=(\cV,\cE)$ with node set $\cV=\{1,2,\dots,n\}$ and edges $\cE$, ($|\cE|=m$), where each node $i \in \cV$ owns a private value $c_i \in \R$. The goal of the AC problem is \blue{for} each node of the network to compute the average of these private values, $\bar{c}\eqdef\frac{1}{n}\sum_i c_i$, via a protocol which allows communication between neighbours only. The problem comes up in many real world applications such as coordination of autonomous agents, estimation, rumour spreading in social networks, PageRank and distributed data fusion on ad-hoc networks and decentralized optimization. 

It was shown recently that several randomized methods for solving linear systems  can be interpreted as randomized gossip algorithms for solving the AC problem when applied to a special system encoding the underlying network \cite{gower2015stochastic, LoizouRichtarik}. As we have already explained both basic method \cite{ASDA} and basic method with momentum (this chapter) find the solution of the linear system that is closer to the starting point of the algorithms. That is, both methods converge linearly to $x^*=\Pi_{\cL, \mB}(x^0)$; the projection of the initial iterate onto the solution set of the linear system. \blue{As a result (check the Introduction)}, they can be interpreted as methods for solving the best approximation problem~\eqref{BestApproximation_IntroThesis}.
In the special case that
\begin{enumerate}
\item the linear system in the constraints of \eqref{BestApproximation_IntroThesis} is the homogeneous linear system ($\bA x=0$) with matrix $\bA\in \R^ {m \times n}$ being the incidence matrix of the undirected graph $\cG=(\cV,\cE)$, and 
\item the starting point of the method are the initial values of the nodes $x^0=c$,
\end{enumerate} 
it  is straightforward to see that the solution of the best approximation problem is a vector with all components equal to the consensus value $\bar{c}\eqdef\frac{1}{n}\sum_i c_i$. Under this setting, the famous randomized pairwise gossip algorithm (randomly pick an edge $e\in E$ and replace the private values of its two nodes to their average) that was first proposed and analyzed in \cite{boyd2006randomized}, is equivalent \blue{to} the RK method without relaxation ($\omega=1$) \cite{gower2015stochastic, LoizouRichtarik}.

\begin{rem}
In the gossip framework, the condition number of the linear system when RK is used has a simple structure and it depends on the characteristics of the network under study. More specifically, it depends on the number of the edges $m$ and \blue{on the Laplacian} matrix of the network\footnote{Matrix $\bA$ of the linear system is the incidence matrix of the graph and it is known that the Laplacian matrix is equal to $\bL=\bA^\top \bA$, where $\|\bA\|^2_F=2m$.}:
\begin{equation}
\label{algebconeec}
\frac{1}{\lambda_{\min}^+(\bW)}\overset{\eqref{matrixW}}{=}\frac{1}{\lambda_{\min}^+(\bA^\top \bA/ \|\bA\|^2_F)}\overset{\|\bA\|^2_F=2m}{=}\frac{2m}{\lambda_{\min}^+(\bA^\top \bA)}=\frac{2m}{\lambda_{\min}^+(\bL)},
\end{equation}
where $\bL=\bA^\top \bA$ is the Laplacian matrix of the network and the quantity $\lambda_{\min}^+(\bL)$ is the very well studied \textit{algebraic connectivity} of the graph \cite{de2007old}. 
\end{rem}

\begin{rem} The convergence analysis in this chapter holds for any consistent linear system $\bA x= b$ without any assumption on the rank of the matrix $\bA$. The lack of any assumption on the form of matrix $\bA$ allows us to solve the homogeneous linear system $\bA x=0$ where $\bA$ is the incidence matrix of the network which by construction is rank deficient. More specifically, it can be shown that ${\rm rank}(\bA)=n-1$ \cite{LoizouRichtarik}. Note that many existing methods for solving linear systems make the assumption that the matrix $\bA$ of the linear systems is full rank \cite{RK, needell2010randomized, RBK} and as a result can not be used to solve the AC problem.
\end{rem}

\subsubsection{Numerical Setup}
Our goal in this experiment is to show that the addition of the momentum term to the randomized pairwise gossip algorithm (RK in the gossip setting) can lead to faster gossip algorithms and as a result the nodes of the network will converge to the average consensus faster both in number of iterations and in time. We do not intend to analyze the distributed behavior of the method (this is \blue{an ongoing} research work). In our implementations we use three of the most popular graph topologies in the literature of wireless sensor networks. These are the line graph, cycle graph and the random geometric graph $G(n,r)$. In practice, $G(n,r)$ consider ideal for modeling wireless sensor networks, because of their particular formulation. In the experiments the $2$-dimensional $G(n,r)$ is used which is formed by placing $n$ nodes uniformly at random in a unit square with edges only between nodes that have euclidean distance less than the given radius $r$. To preserve the connectivity of $G(n, r)$ a radius $r = r(n) =  \log(n)/n$ is used \cite{penrose2003random}. 
The AC problem is solved for the three aforementioned networks for both $n=100$ and $n=200$ number of nodes. We run mRK with several momentum parameters $\beta$ for 10 trials and we plot their average. Our results are available in Figures~\ref{consensus100} and \ref{consensus200}.

Note that the vector of the initial values of the nodes can be chosen arbitrarily, and the proposed algorithms will  find the average  of these values. In Figures~\ref{consensus100} and \ref{consensus200} the initial value of each node is chosen independently at random from the uniform distribution in the interval $(0,1)$.

\subsubsection{Experimental Results}

By observing Figures~\ref{consensus100} and \ref{consensus200}, it is clear that the addition of the momentum term improves the performance of the popular pairwise randomized gossip (PRG) method \cite{boyd2006randomized}.  The choice $\beta=0.4$ as the momentum parameter improves the performance of the vanilla PRG for all networks under study and $\beta=0.5$ is a good choice for the cases of the cycle and line graph. Note that for networks such as the cycle and line graphs there are known closed form expressions for the algebraic connectivity \cite{de2007old}. Thus, using equation \eqref{algebconeec}, we can compute the exact values of the condition number $1/\lambda_{\min}^+$ for these networks. Interestingly, as we can see in Table~\ref{Algebraic Connectvity} for $n=100$ and $n=200$ (number of nodes), the condition number $1/\lambda_{\min}^+$ appearing in the iteration complexity of our methods is not very large. This is in contrast with experimental  observations from Section~\ref{gaussiansyntheric} where it was shown that the choice $\beta=0.5$ is good  for very ill conditioned problems only ($1/\lambda_{\min}^+$ very large). 

\begin{table}[H]
\begin{center}
{\footnotesize
\begin{tabular}{ | p{2cm} | p{4cm} | p{3cm} | p{3cm}|  }
 \hline
Network & Formula for $\lambda_{\min}^+(\bL)$ & $1/\lambda_{\min}^+$ for $n=100$ & $1/\lambda_{\min}^+$ for $n=200$\\
   \hline
Line & $2\left(1-\cos ({\pi}/{n})\right)$& 1013 &4052\\
 \hline
Cycle &  $2\left(1-\cos ({2\pi}/{n})\right)$&253 &1013\\
 \hline
\end{tabular}
}
\end{center}
\caption{Algebraic connectivity of cycle and line graph for $n=100$ and $n=200$}
\label{Algebraic Connectvity}
\end{table}
 
\begin{figure}[!]
\centering
\begin{subfigure}{.23\textwidth}
  \centering
  \includegraphics[width=1\linewidth]{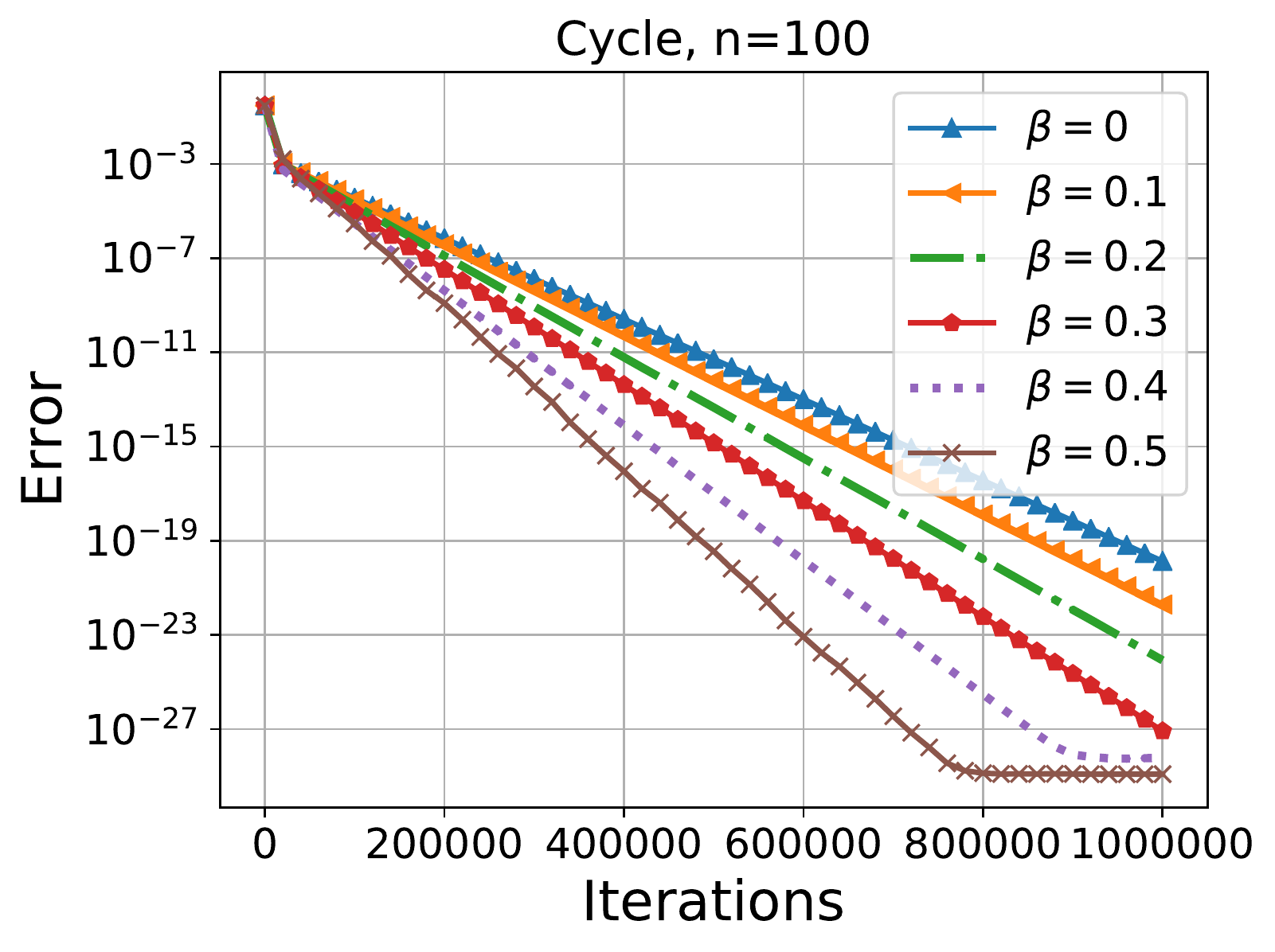}
  %\caption{}
\end{subfigure}%
\begin{subfigure}{.23\textwidth}
  \centering
  \includegraphics[width=1\linewidth]{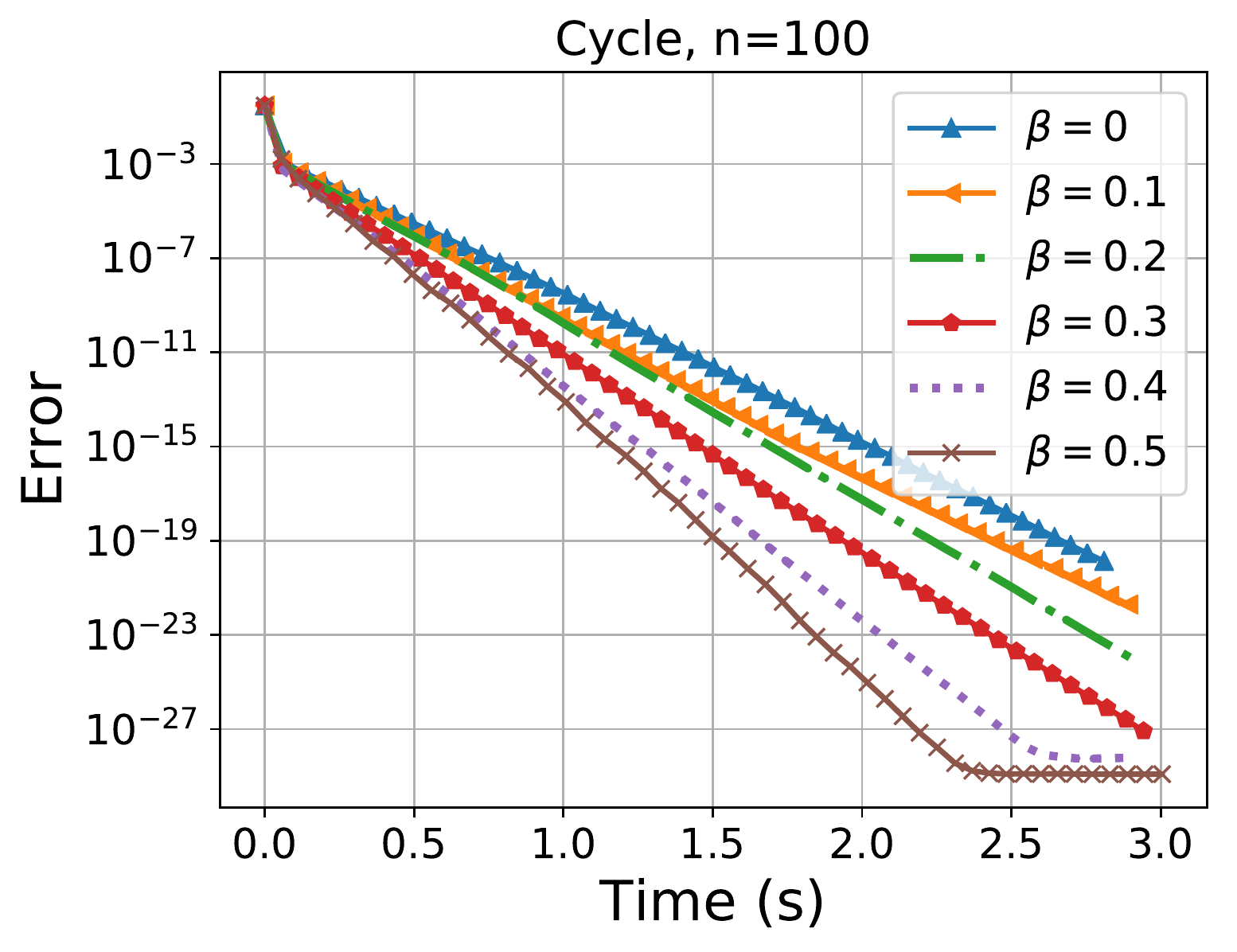}
 % \caption{}
\end{subfigure}
\begin{subfigure}{.23\textwidth}
  \centering
  \includegraphics[width=1\linewidth]{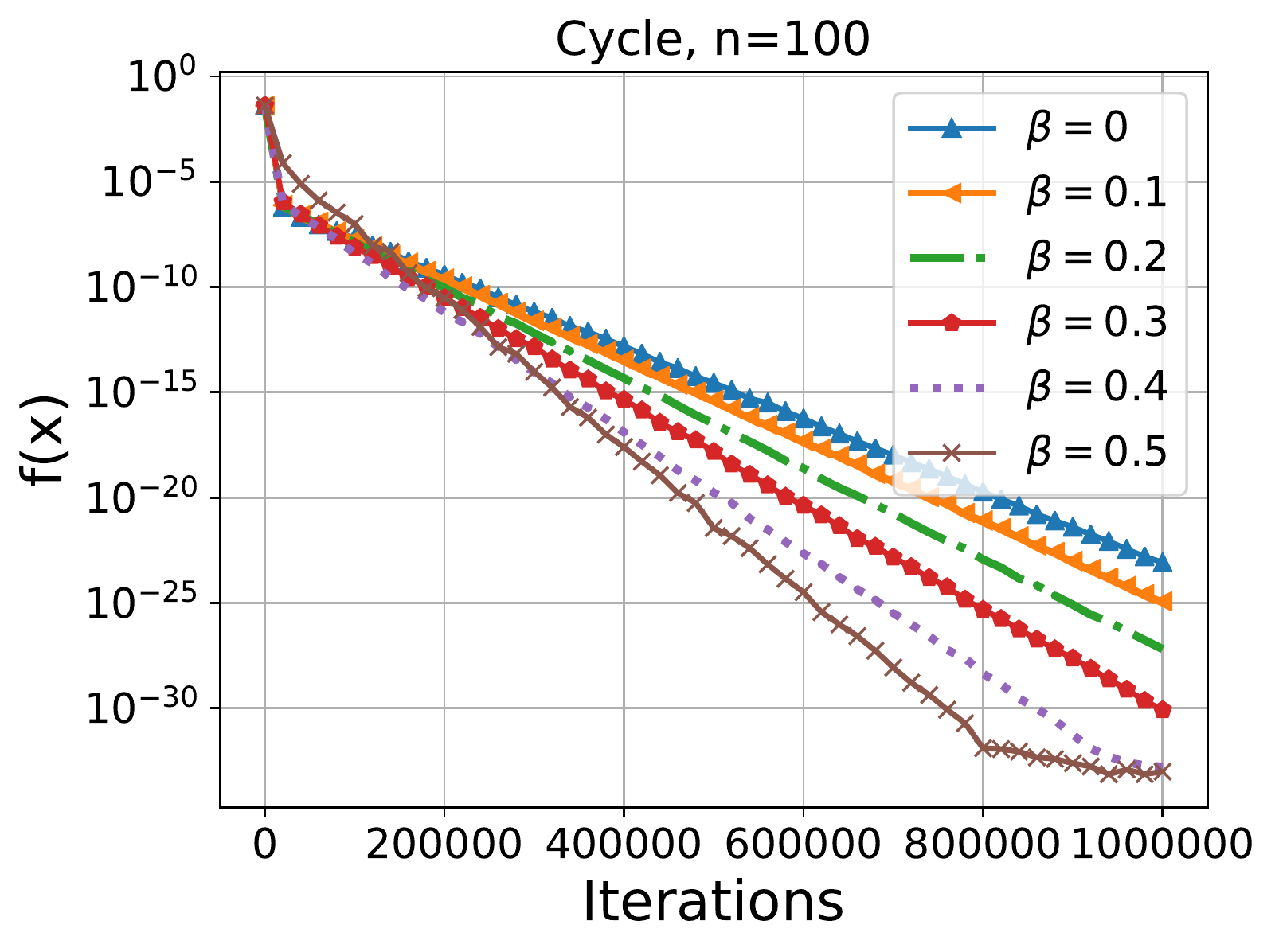}
 % \caption{}
\end{subfigure}
\begin{subfigure}{.23\textwidth}
  \centering
  \includegraphics[width=1\linewidth]{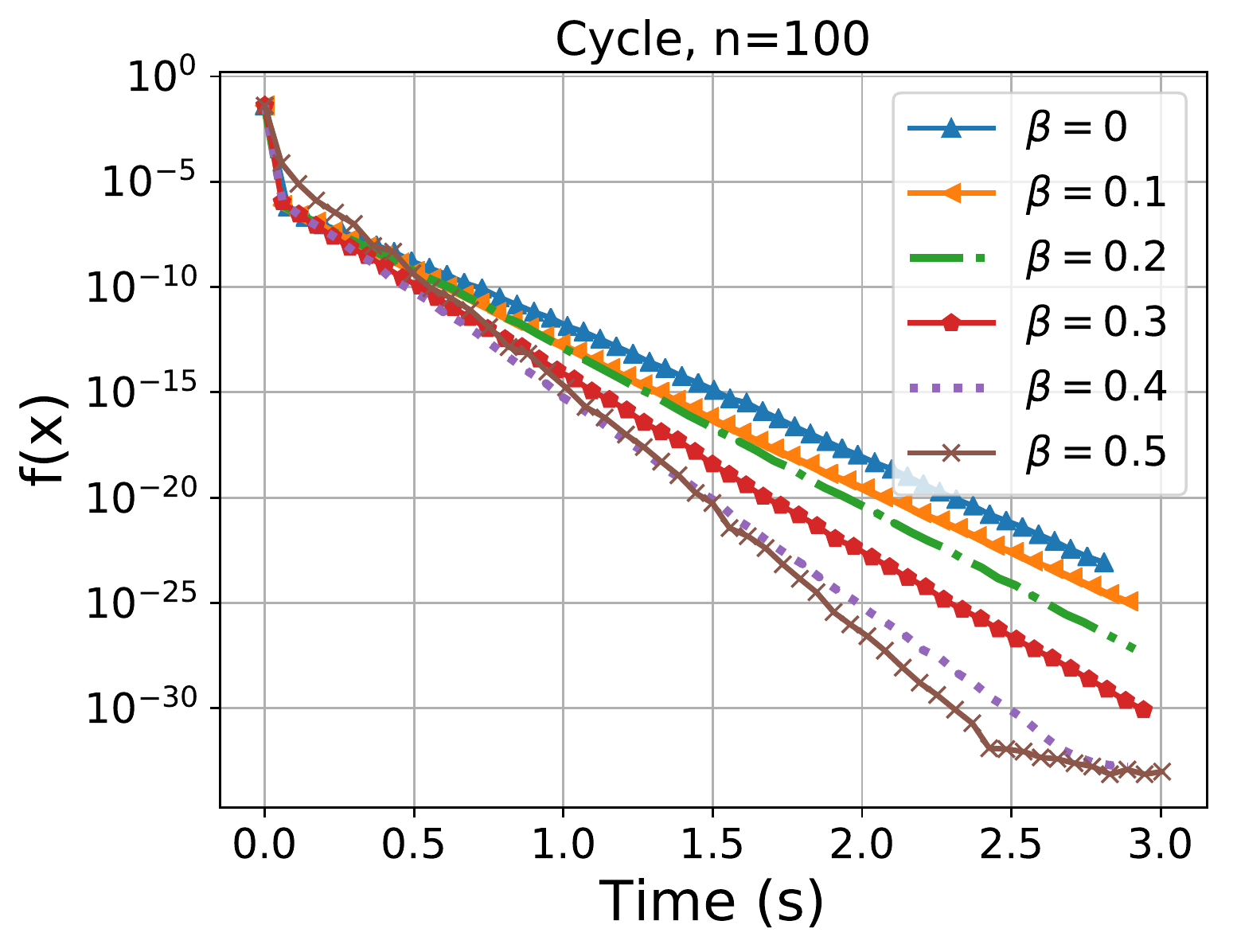}
%  \caption{}
\end{subfigure}\\
\begin{subfigure}{.23\textwidth}
  \centering
  \includegraphics[width=1\linewidth]{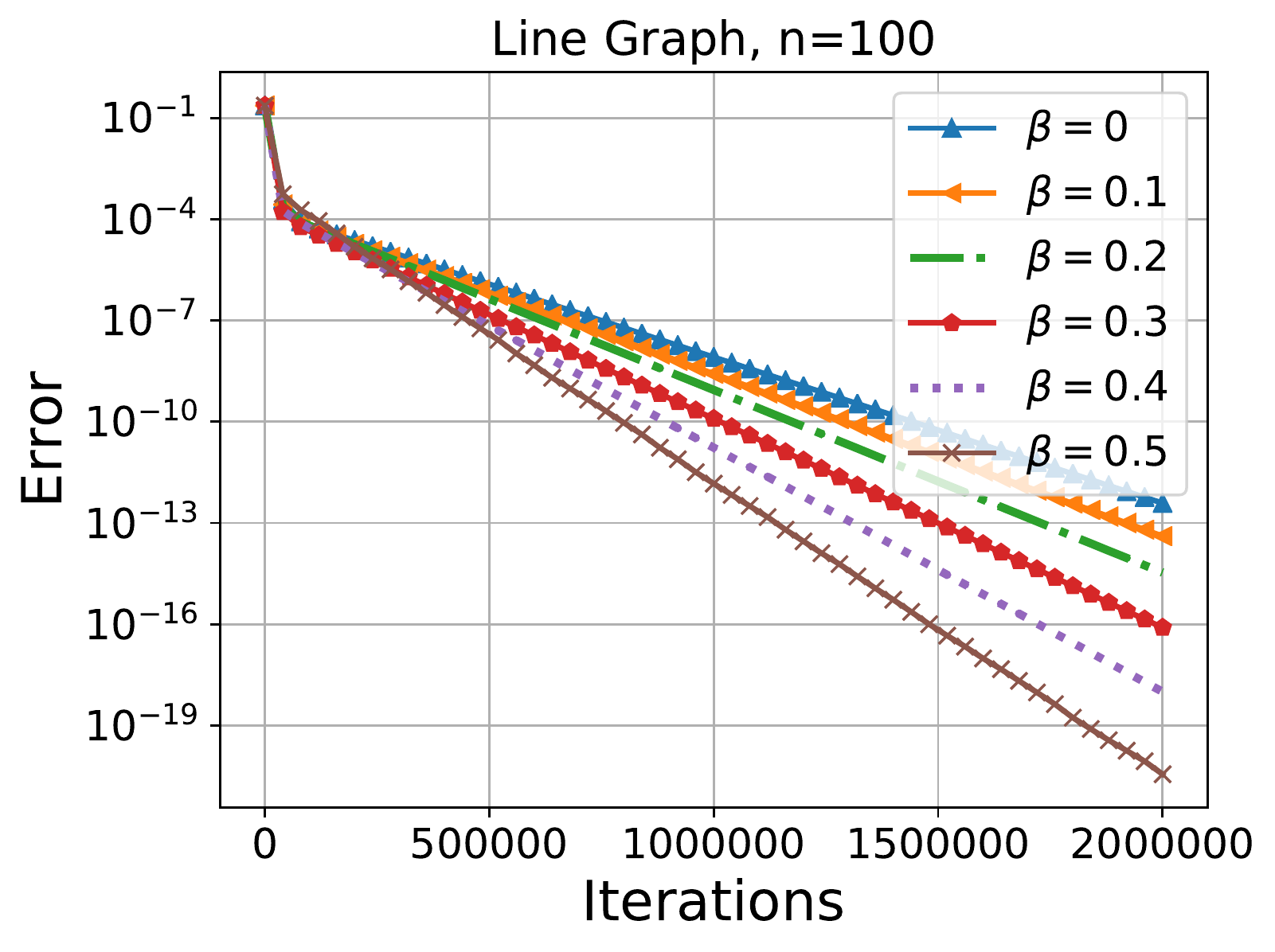}
  %\caption{}
\end{subfigure}%
\begin{subfigure}{.23\textwidth}
  \centering
  \includegraphics[width=1\linewidth]{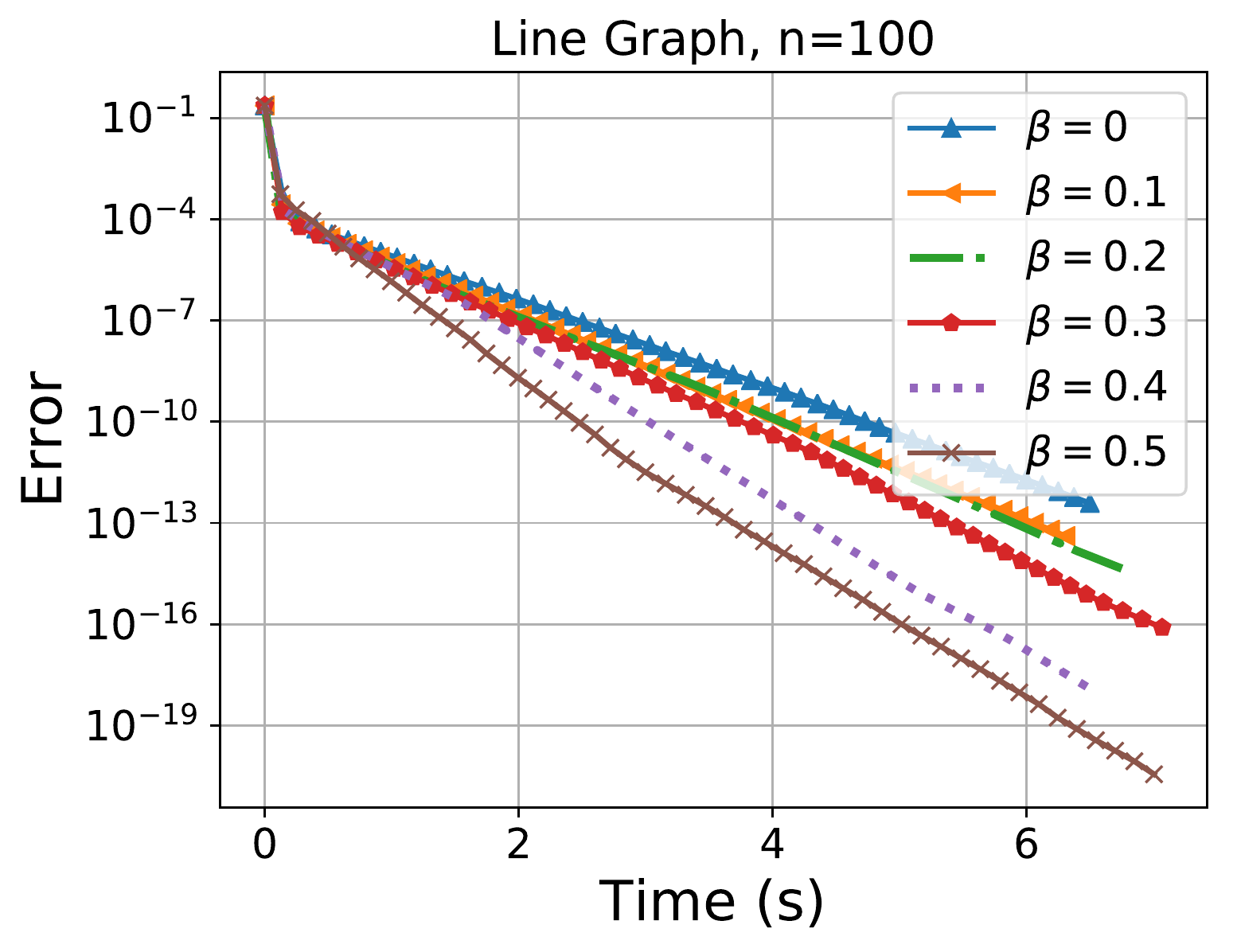}
 % \caption{}
\end{subfigure}
\begin{subfigure}{.23\textwidth}
  \centering
  \includegraphics[width=1\linewidth]{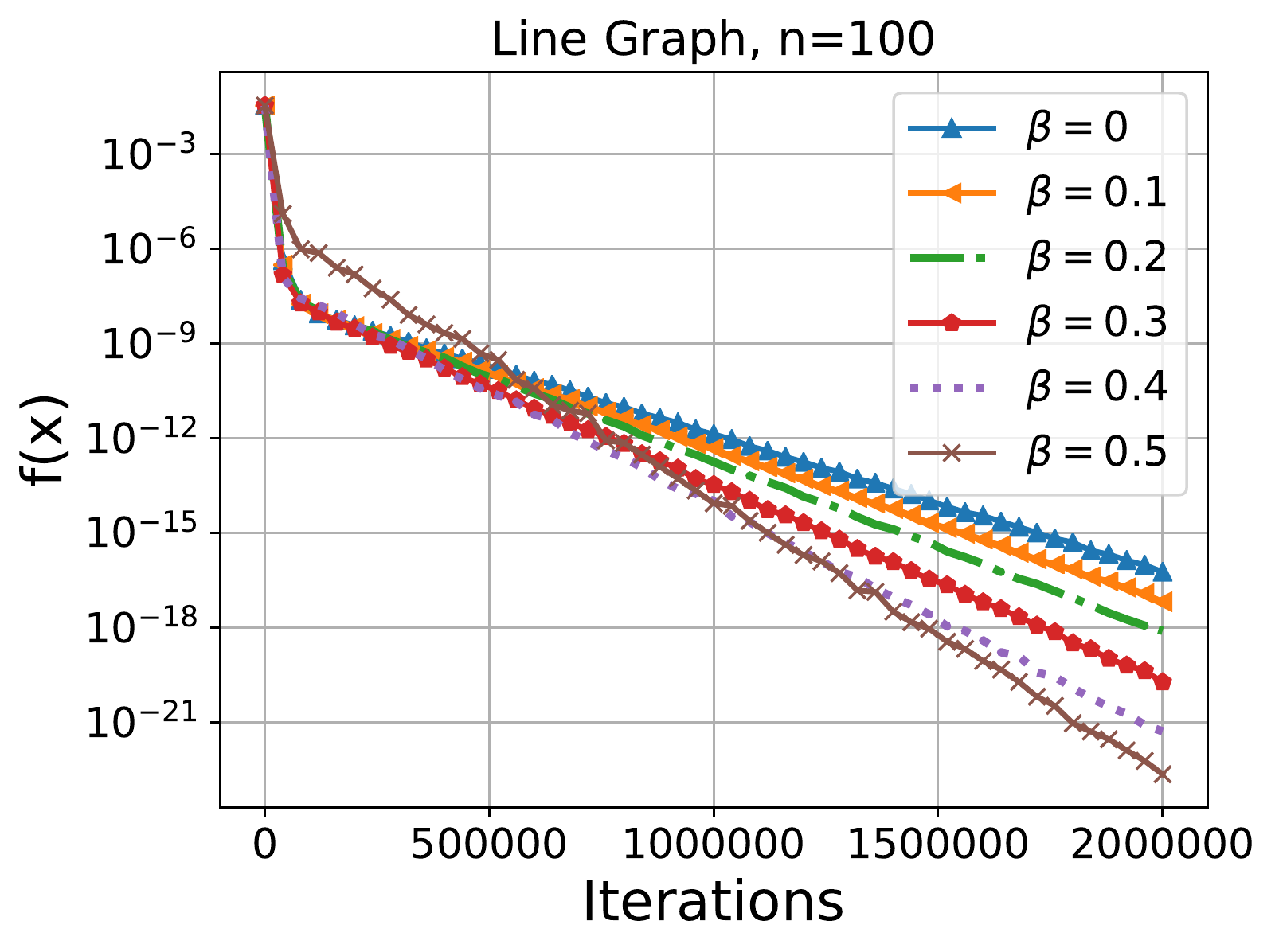}
 % \caption{}
\end{subfigure}
\begin{subfigure}{.23\textwidth}
  \centering
  \includegraphics[width=1\linewidth]{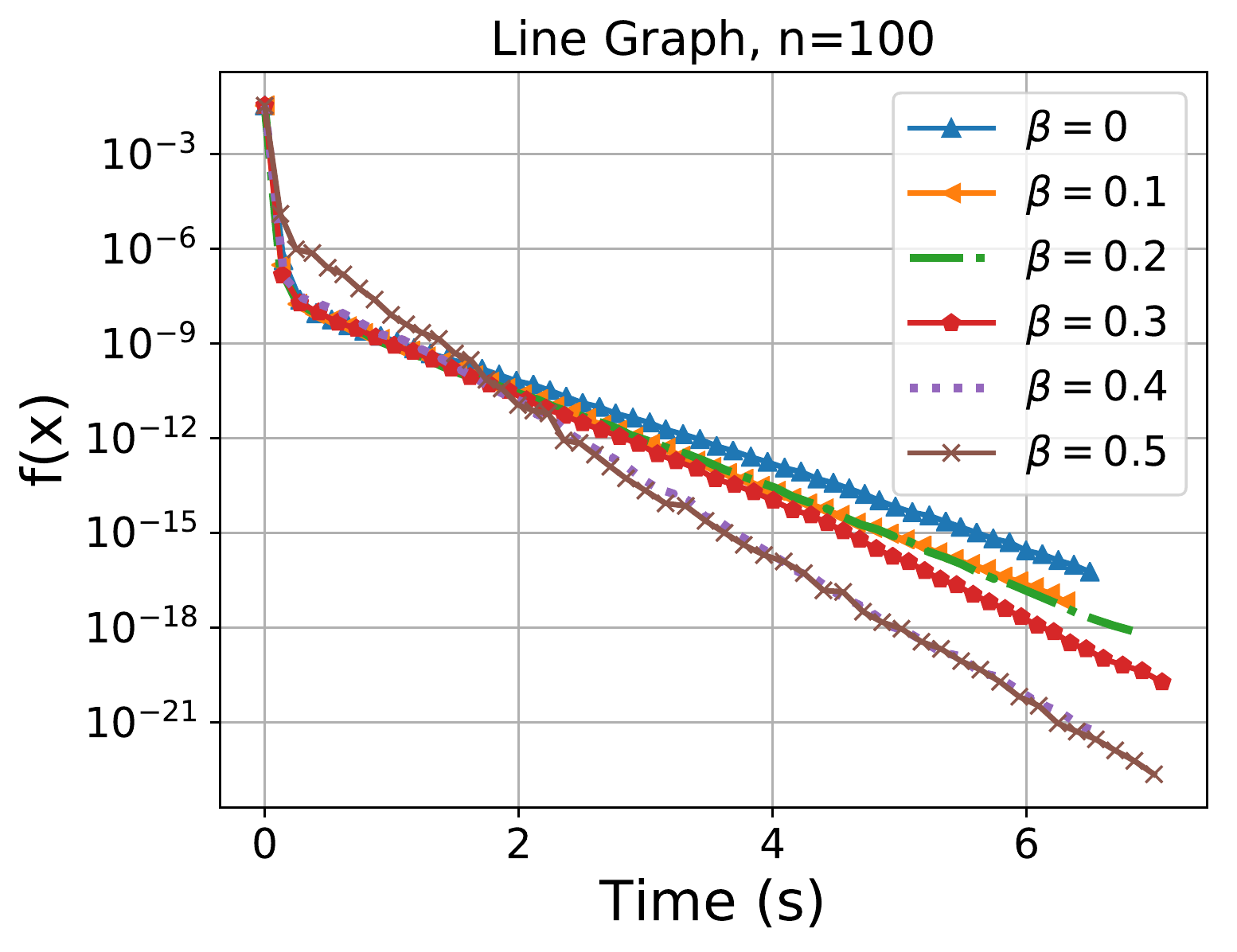}
%  \caption{}
\end{subfigure}\\
\begin{subfigure}{.23\textwidth}
  \centering
  \includegraphics[width=1\linewidth]{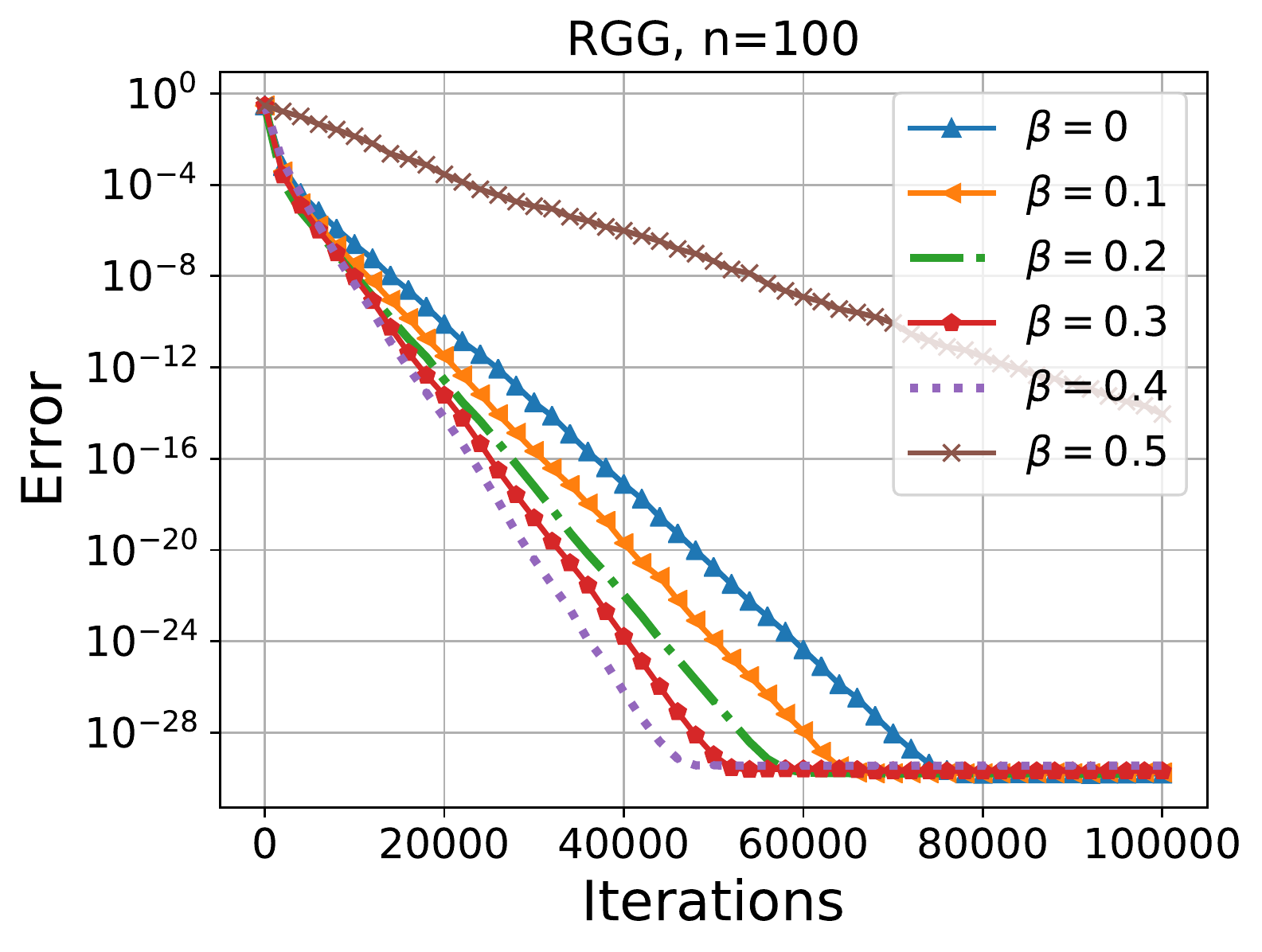}
  %\caption{}
\end{subfigure}%
\begin{subfigure}{.23\textwidth}
  \centering
  \includegraphics[width=1\linewidth]{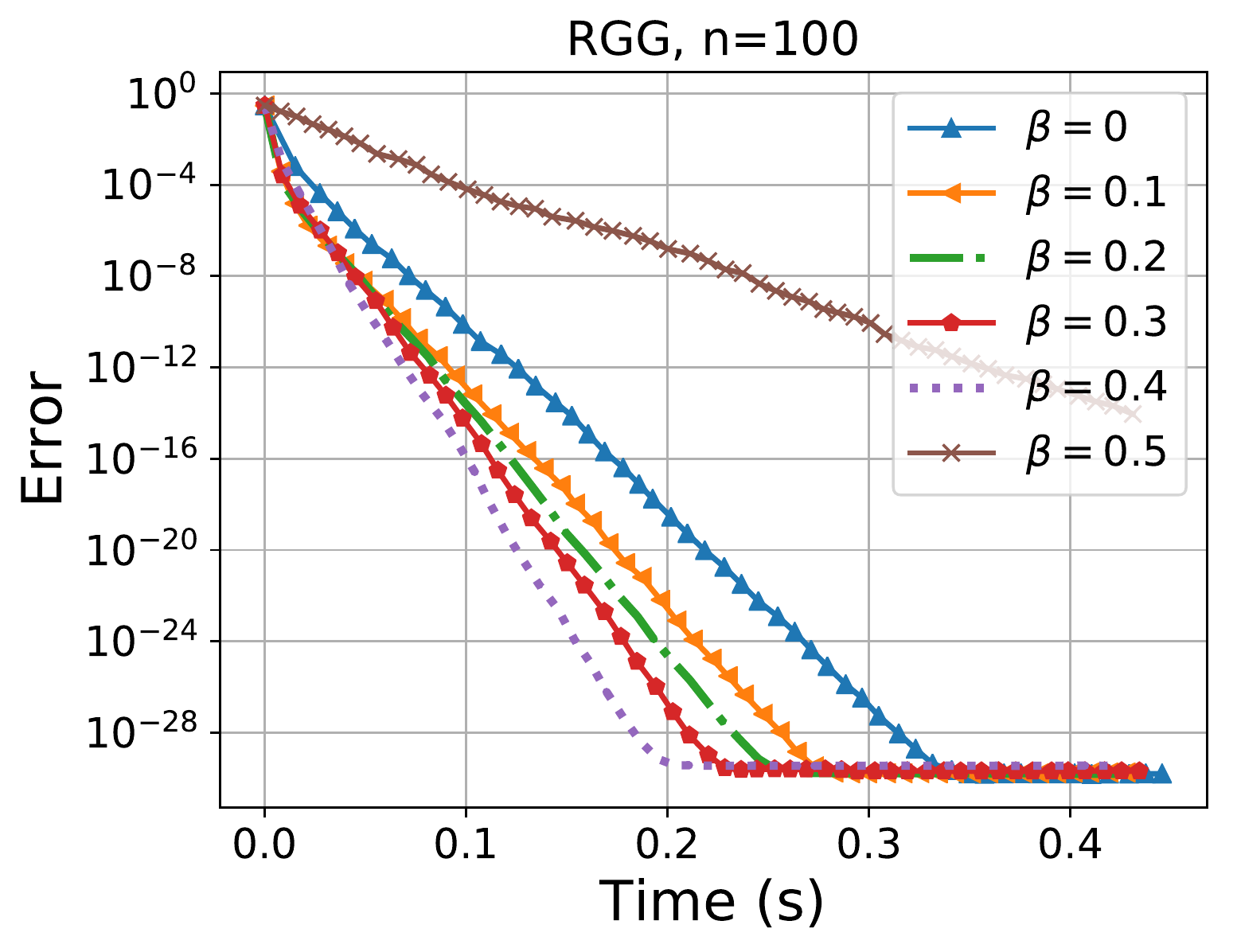}
 % \caption{}
\end{subfigure}
\begin{subfigure}{.23\textwidth}
  \centering
  \includegraphics[width=1\linewidth]{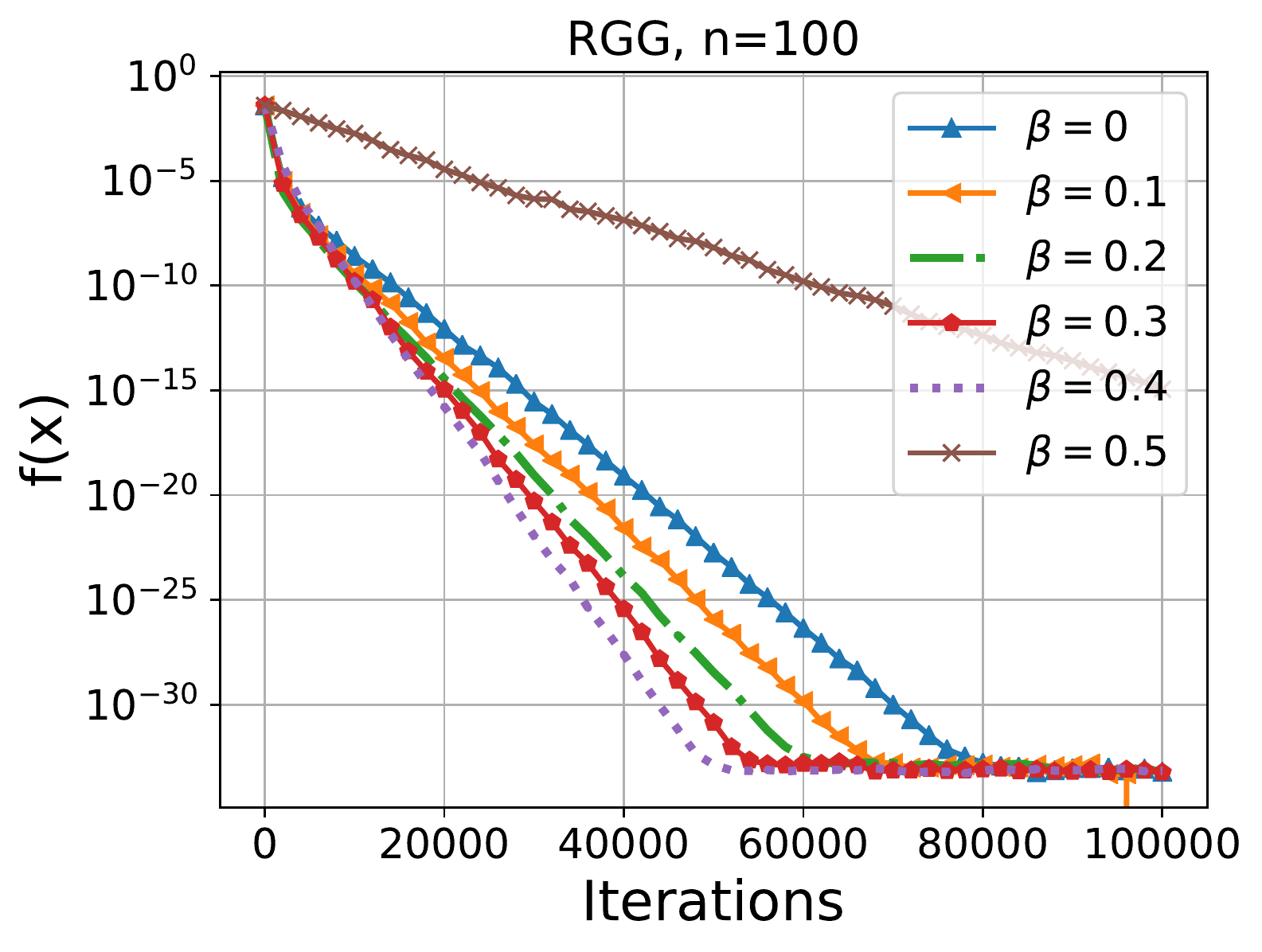}
 % \caption{}
\end{subfigure}
\begin{subfigure}{.23\textwidth}
  \centering
  \includegraphics[width=1\linewidth]{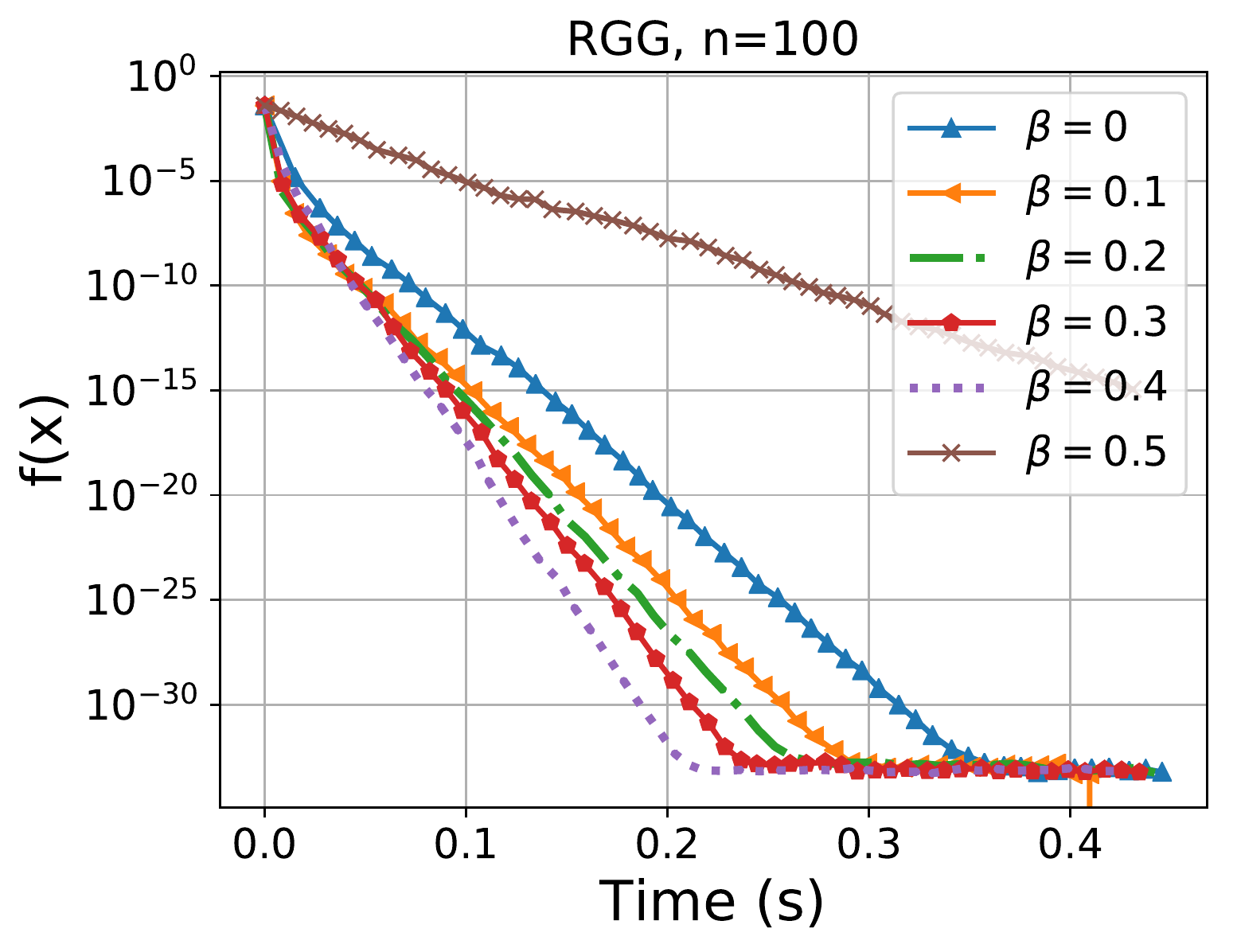}
%  \caption{}
\end{subfigure}\\
\caption{Performance of mPRG for several momentum parameters $\beta$ for solving the average consensus problem in a cycle graph, line graph and random geometric graph $G(n,r)$ with $n=100$ nodes. For the $G(n,r)$ to ensure connectivity of the network a radius $r=\sqrt{\log(n)/n}$ is used. The graphs in the first (second) column plot iterations (time) against residual error while those in the third (forth) column plot iterations (time) against function values. The ``Error" in the vertical axis represents the relative error $\|x^k-x^*\|^2_\bB / \|x^0-x^*\|^2_\bB \overset{\bB=\bI, x^0=c}{=}\|x^k-x^*\|^2 / \|c-x^*\|^2_\bB$ and the function values $f(x^k)$ refer to function~\eqref{functionRK}.}
\label{consensus100}
\end{figure}

\begin{figure}[!]
\centering
\begin{subfigure}{.23\textwidth}
  \centering
  \includegraphics[width=1\linewidth]{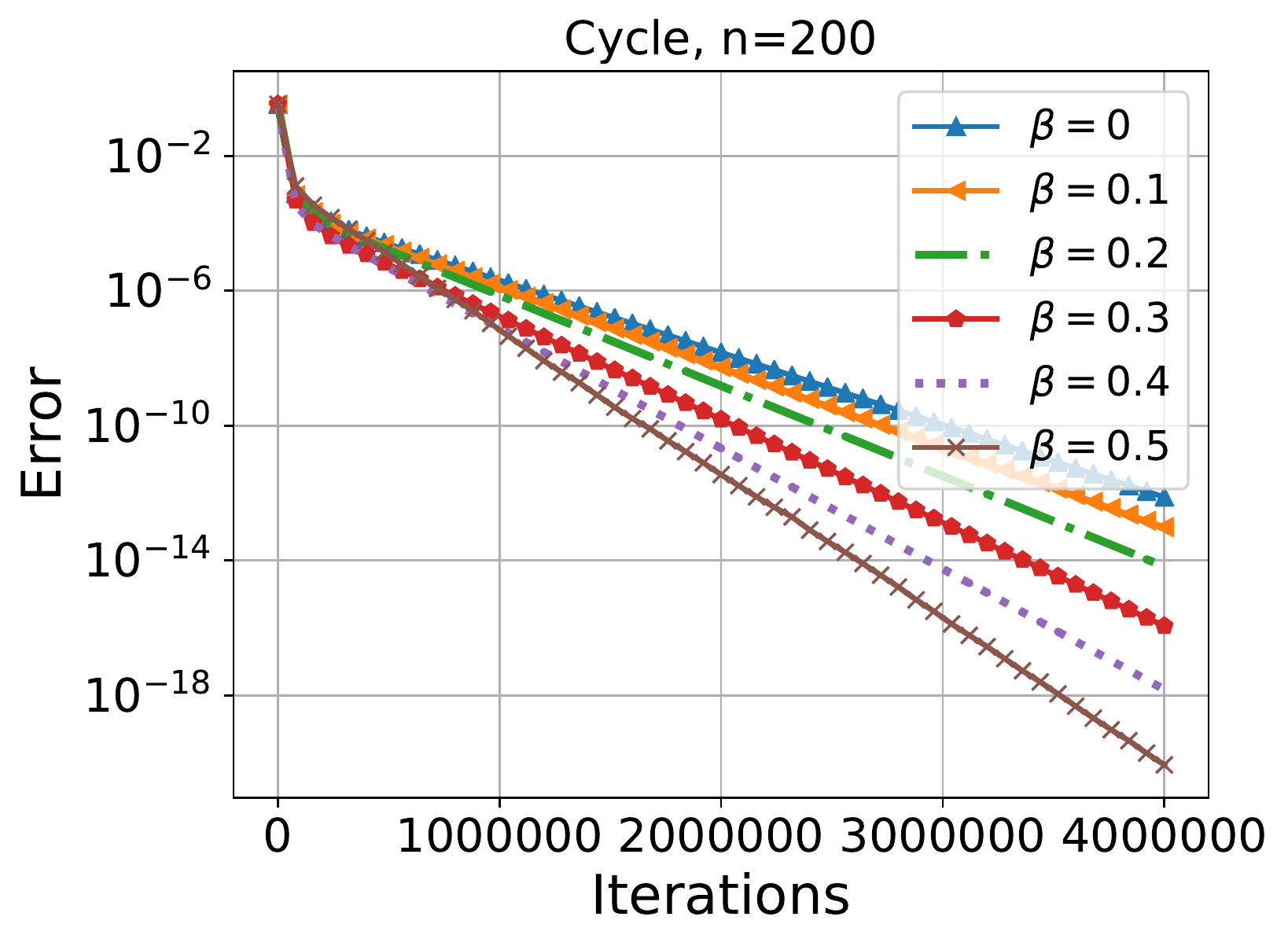}
  %\caption{}
\end{subfigure}%
\begin{subfigure}{.23\textwidth}
  \centering
  \includegraphics[width=1\linewidth]{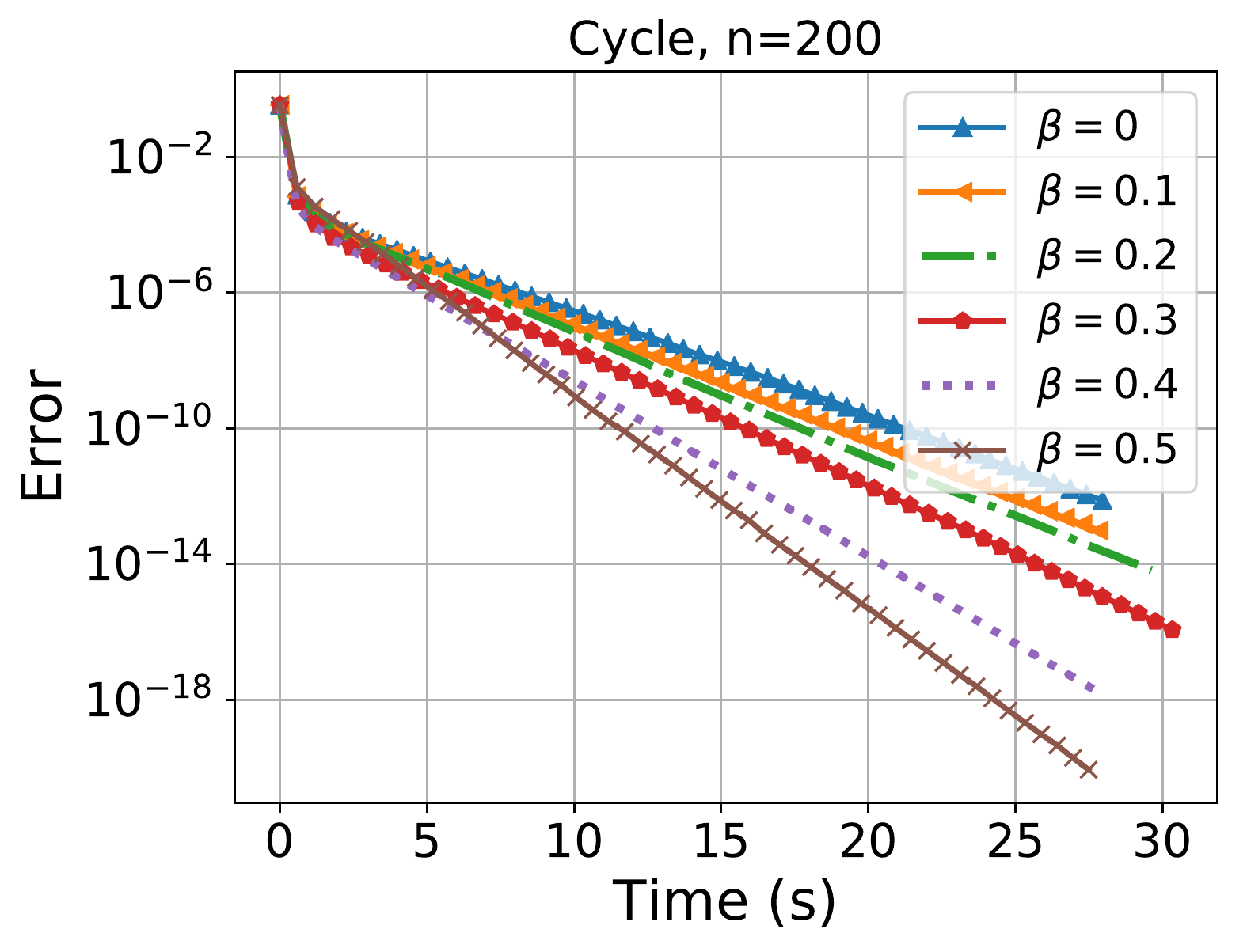}
 % \caption{}
\end{subfigure}
\begin{subfigure}{.23\textwidth}
  \centering
  \includegraphics[width=1\linewidth]{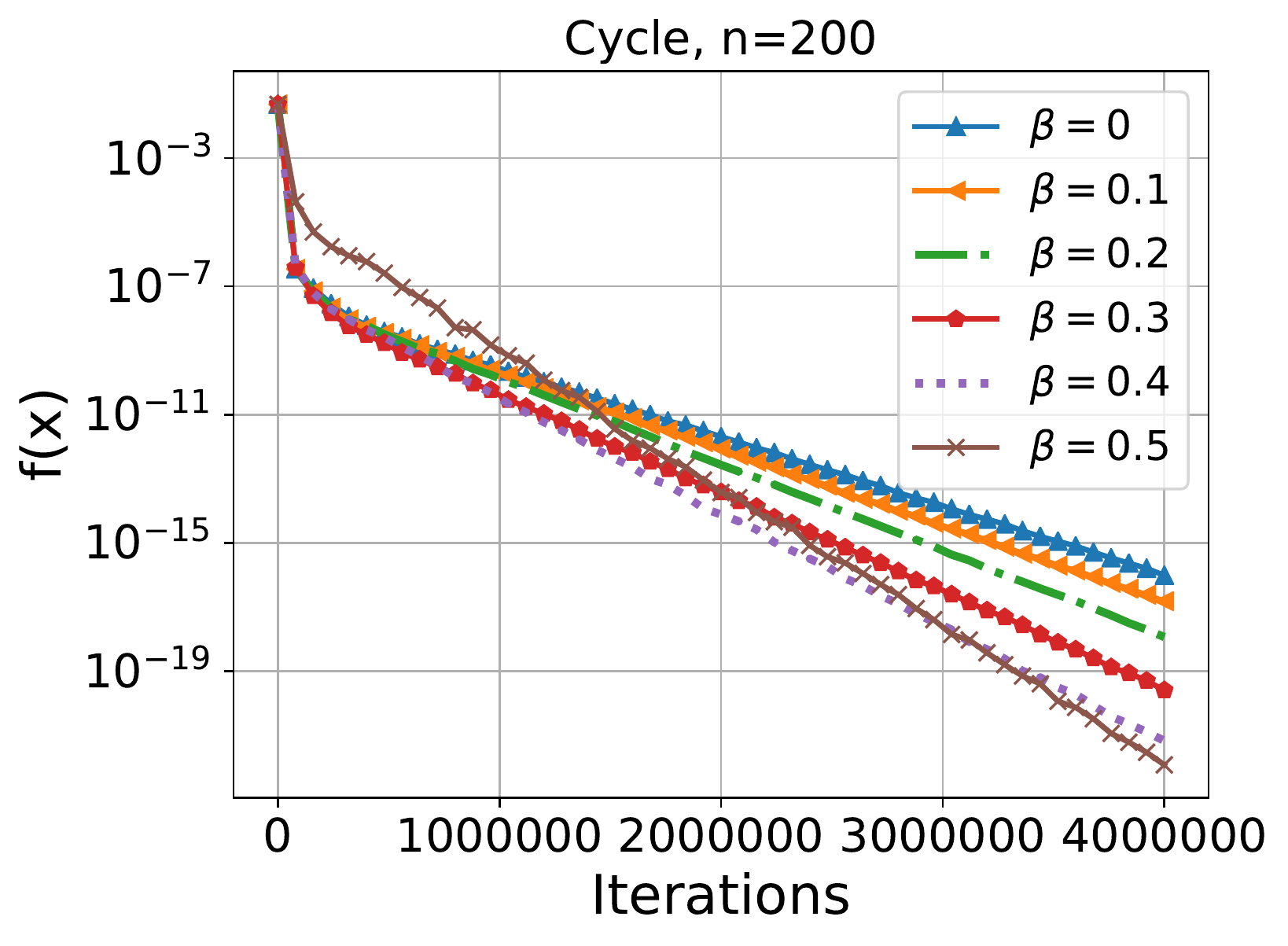}
 % \caption{}
\end{subfigure}
\begin{subfigure}{.23\textwidth}
  \centering
  \includegraphics[width=1\linewidth]{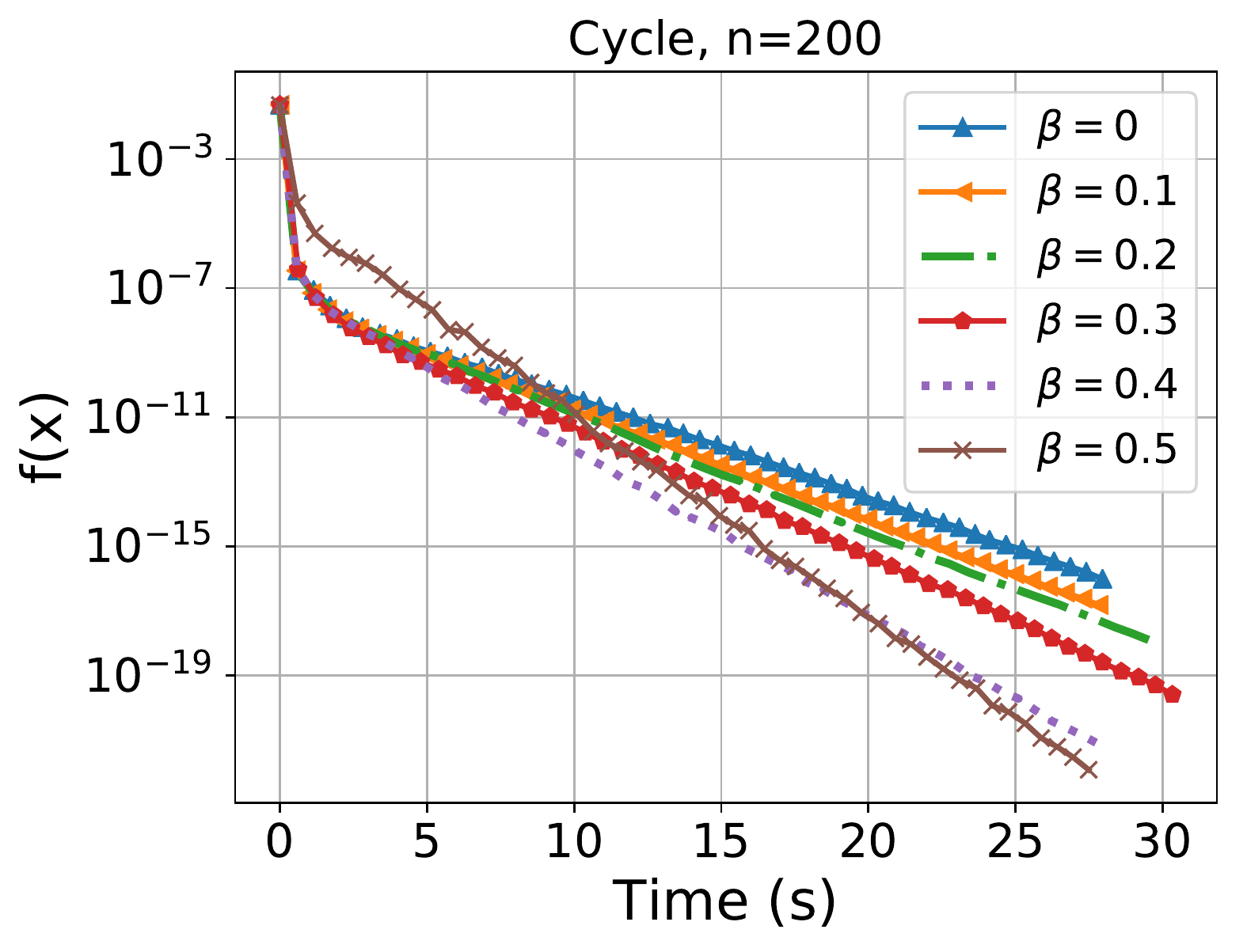}
%  \caption{}
\end{subfigure}\\
\begin{subfigure}{.23\textwidth}
  \centering
  \includegraphics[width=1\linewidth]{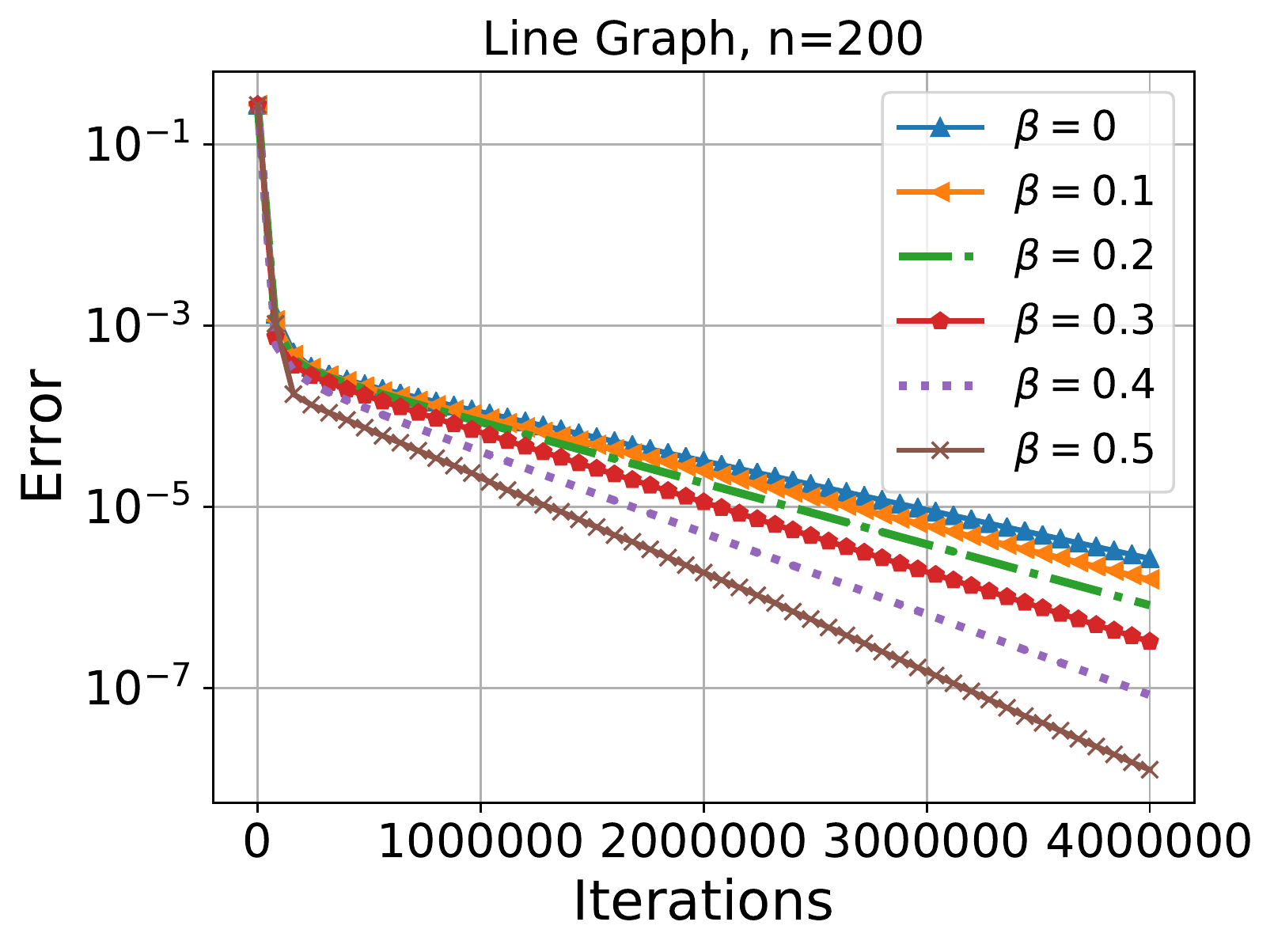}
  %\caption{}
\end{subfigure}%
\begin{subfigure}{.23\textwidth}
  \centering
  \includegraphics[width=1\linewidth]{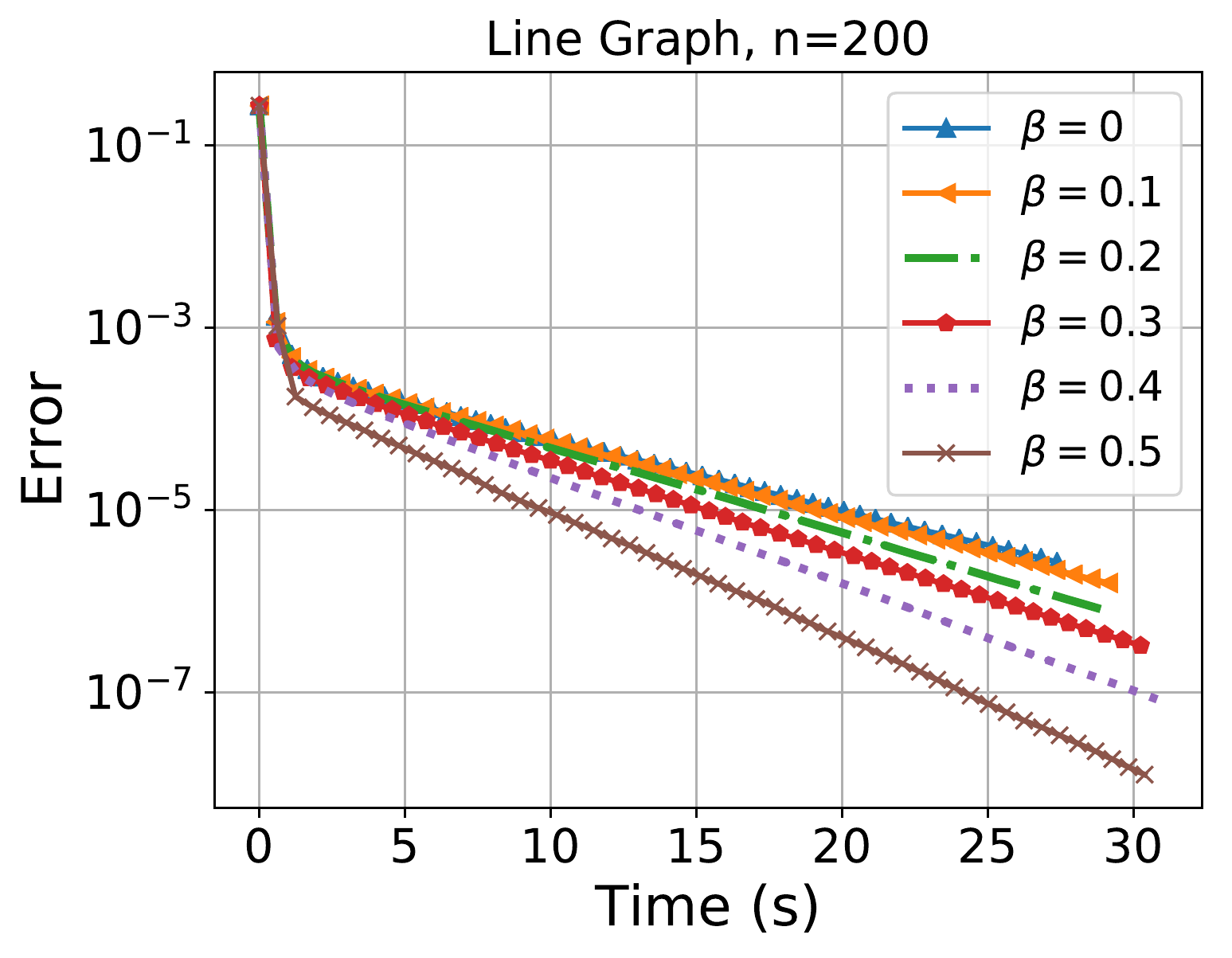}
 % \caption{}
\end{subfigure}
\begin{subfigure}{.23\textwidth}
  \centering
  \includegraphics[width=1\linewidth]{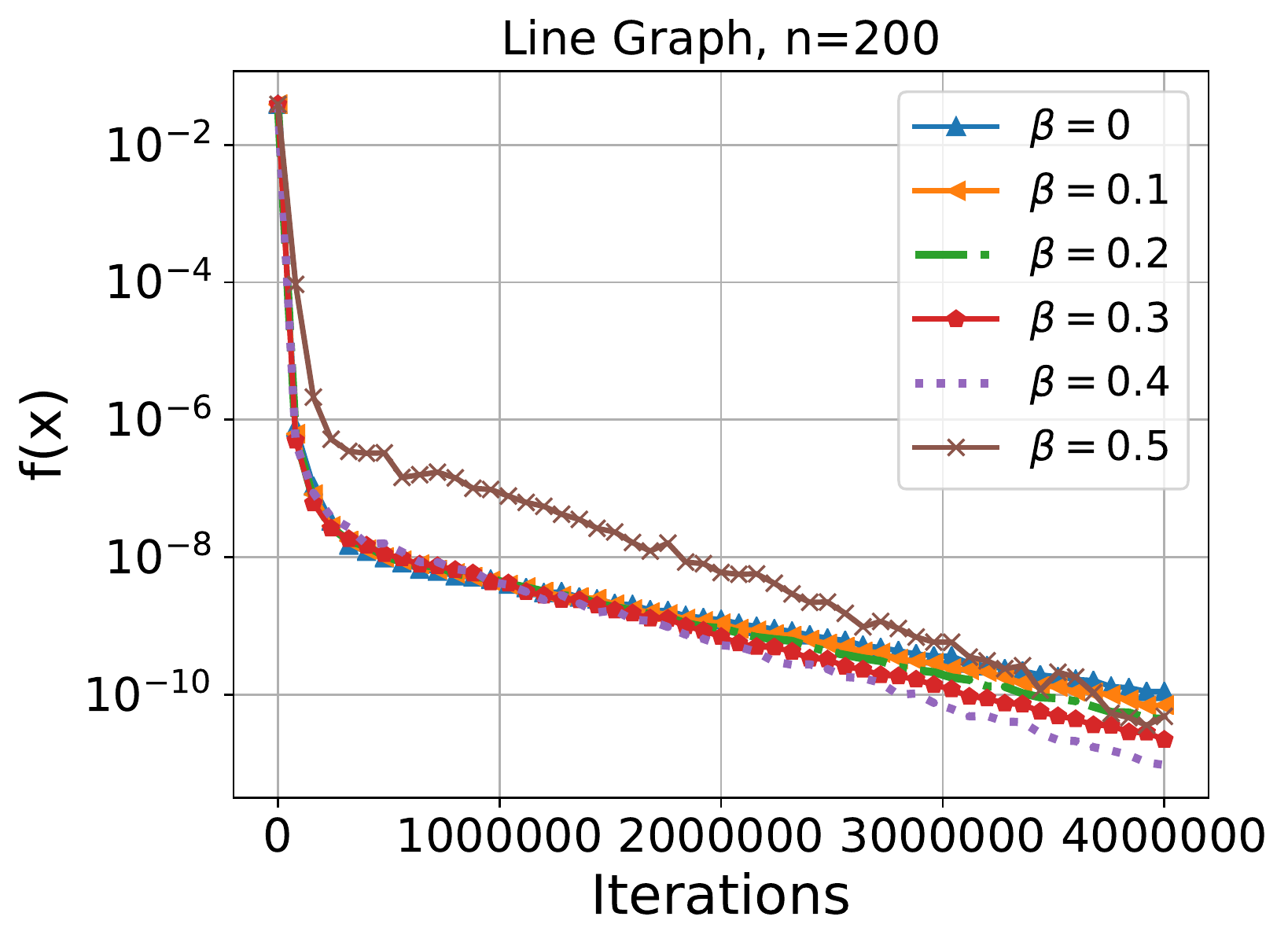}
 % \caption{}
\end{subfigure}
\begin{subfigure}{.23\textwidth}
  \centering
  \includegraphics[width=1\linewidth]{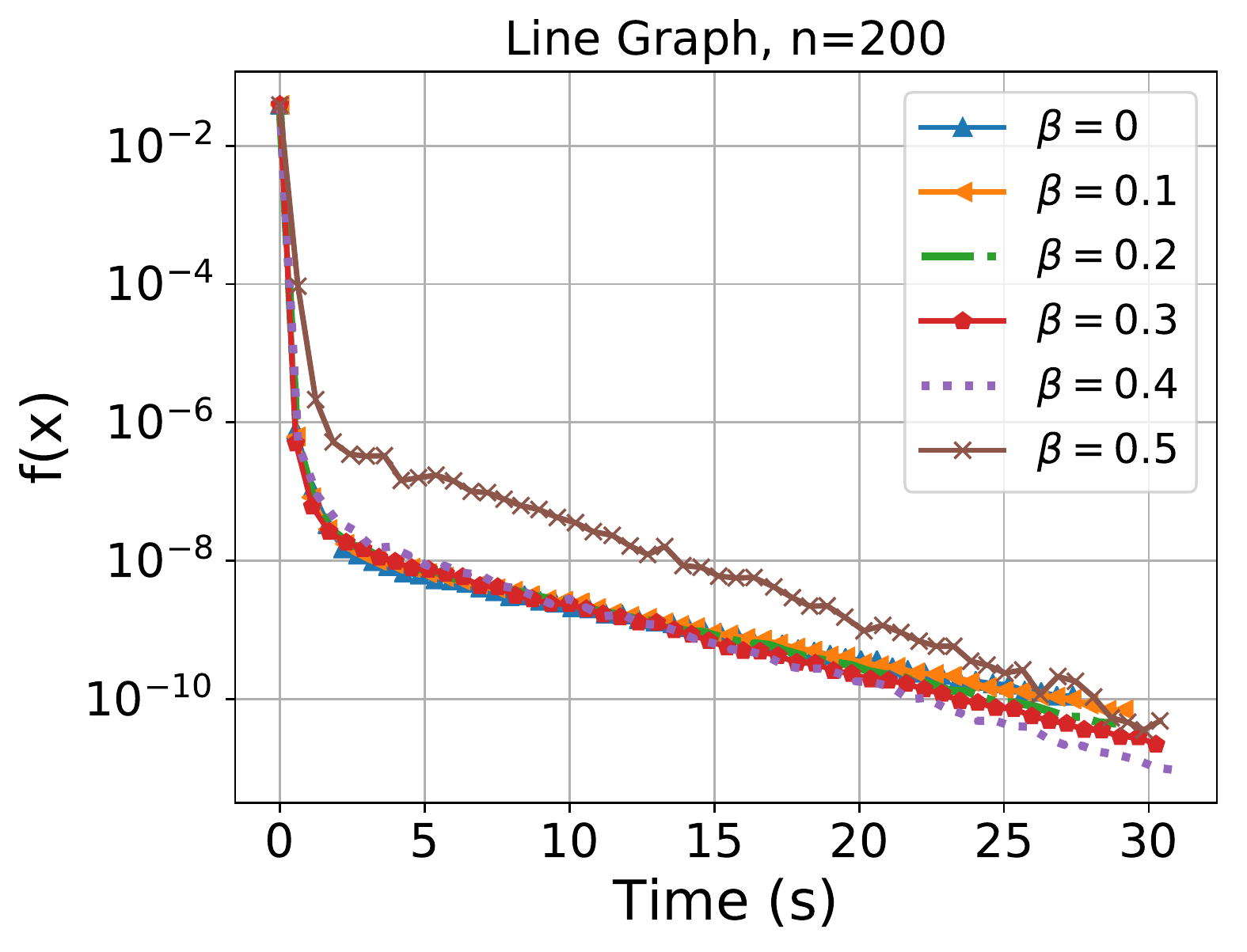}
%  \caption{}
\end{subfigure}\\
\begin{subfigure}{.23\textwidth}
  \centering
  \includegraphics[width=1\linewidth]{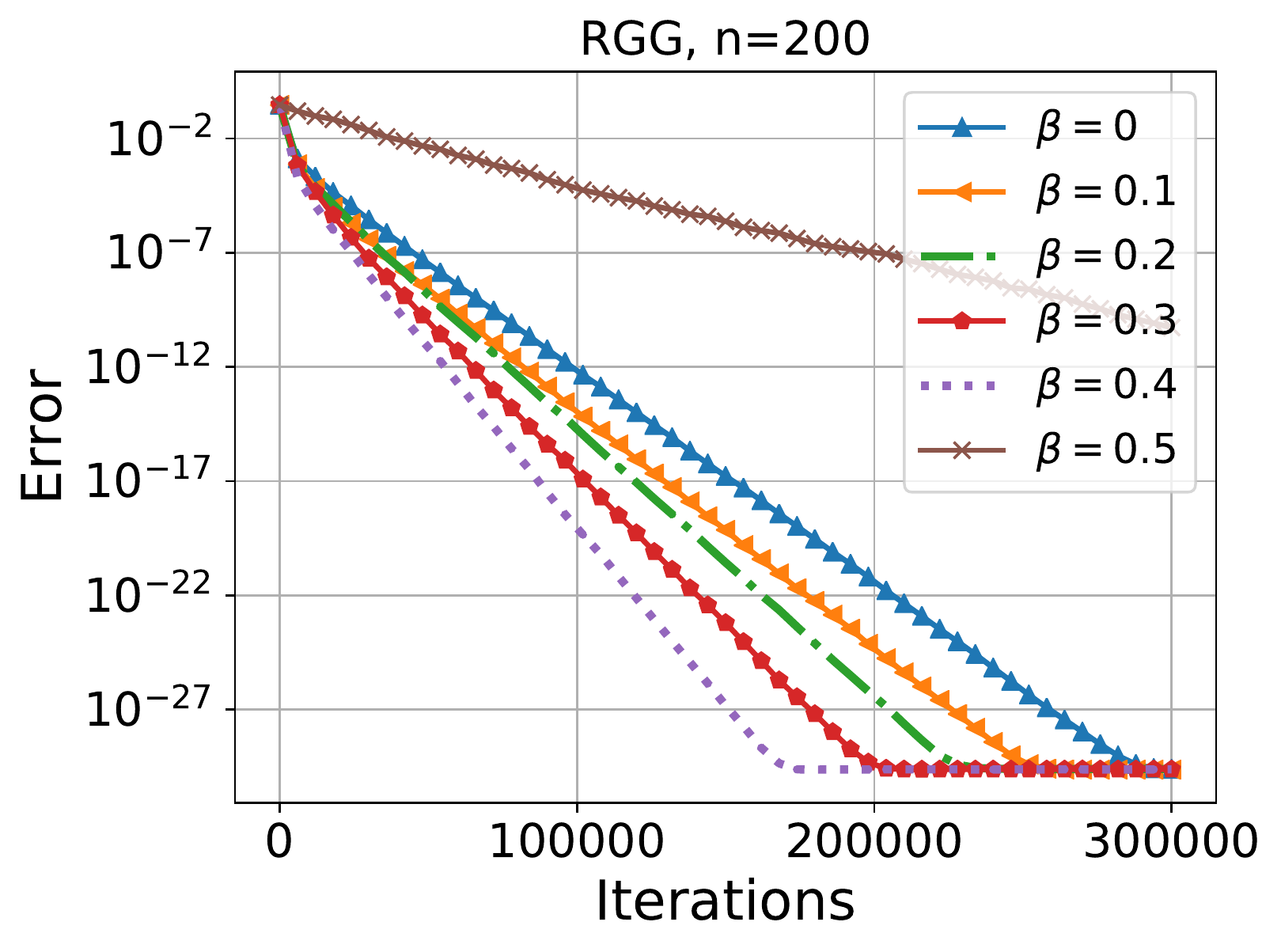}
  %\caption{}
\end{subfigure}%
\begin{subfigure}{.23\textwidth}
  \centering
  \includegraphics[width=1\linewidth]{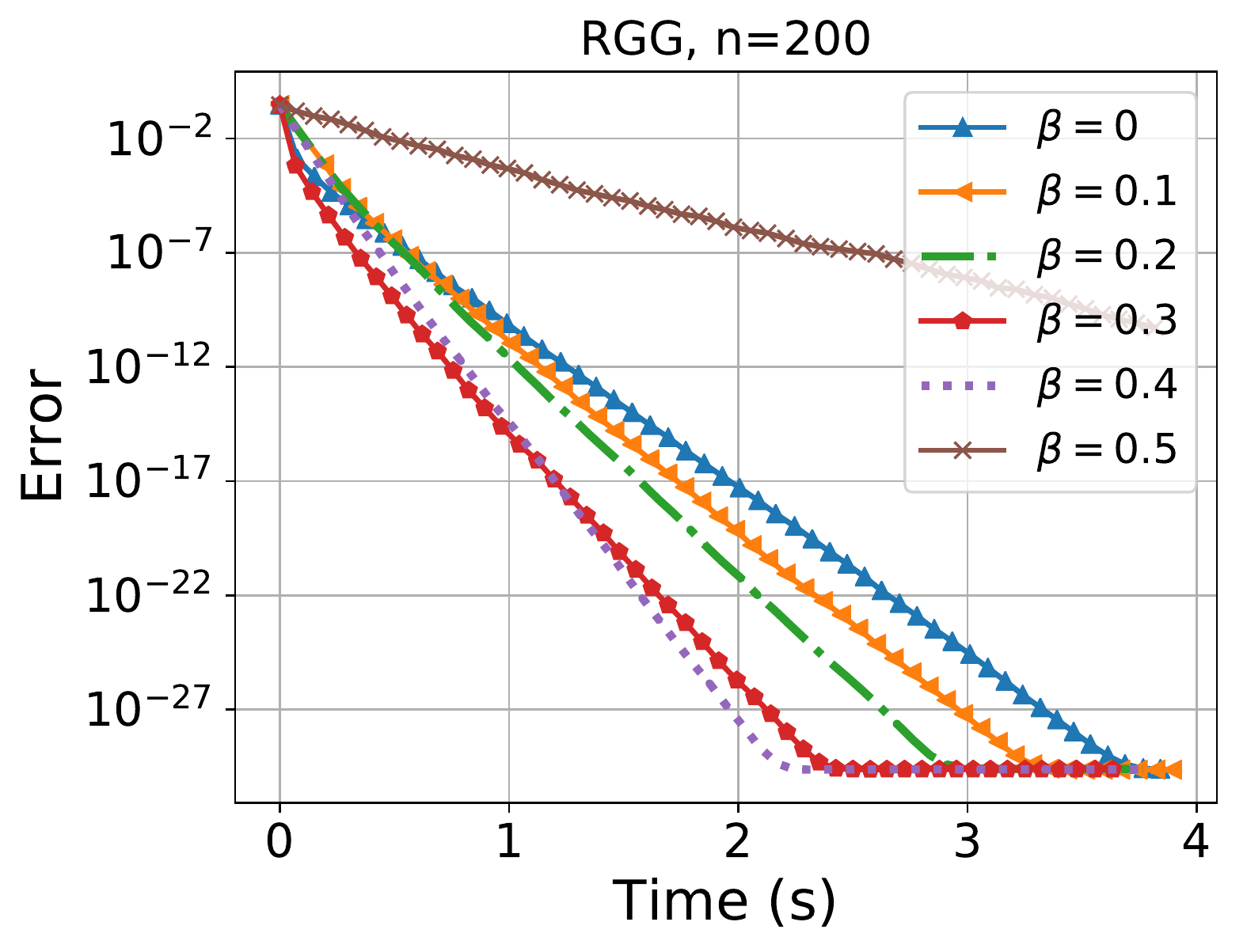}
 % \caption{}
\end{subfigure}
\begin{subfigure}{.23\textwidth}
  \centering
  \includegraphics[width=1\linewidth]{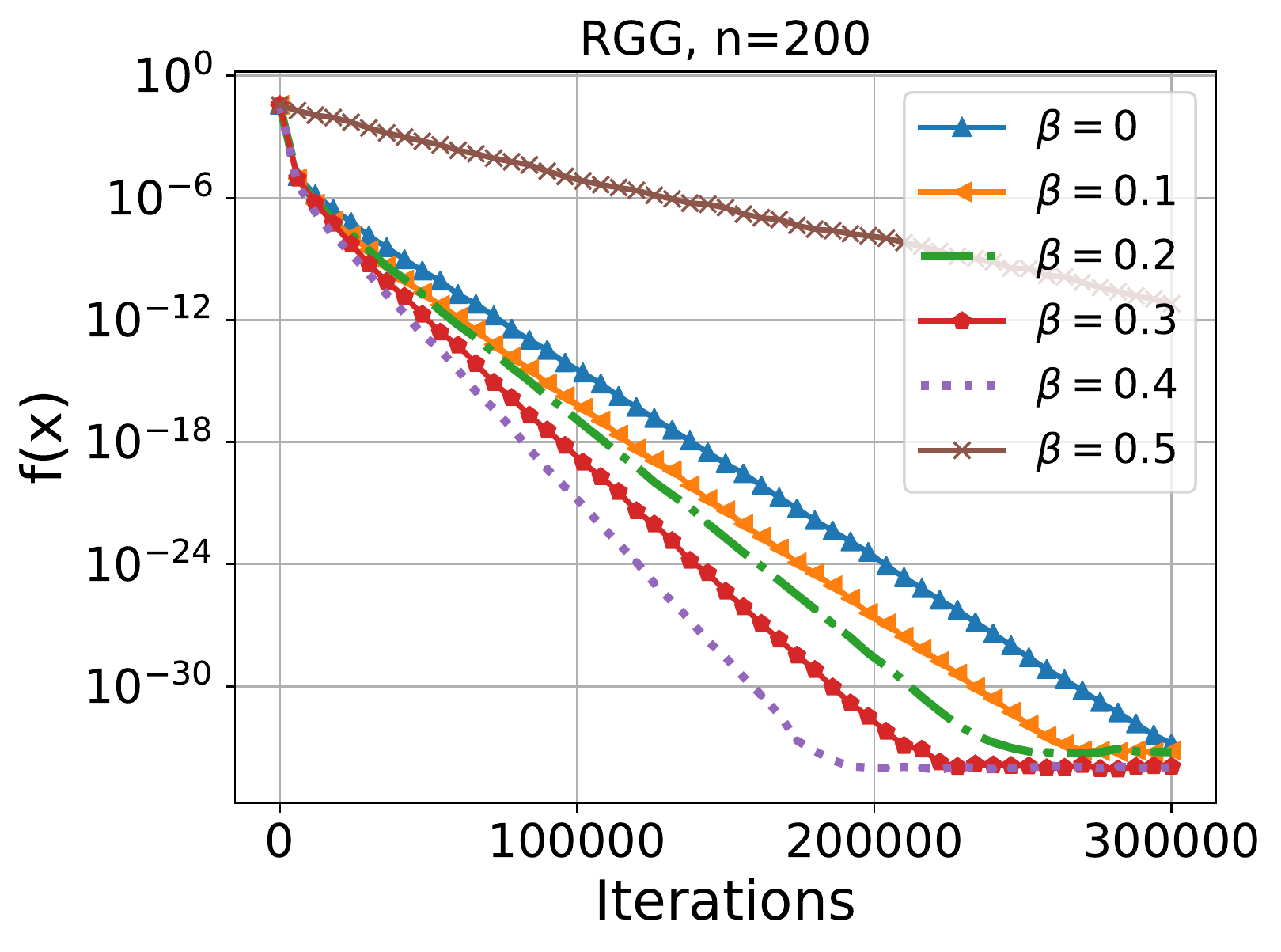}
 % \caption{}
\end{subfigure}
\begin{subfigure}{.23\textwidth}
  \centering
  \includegraphics[width=1\linewidth]{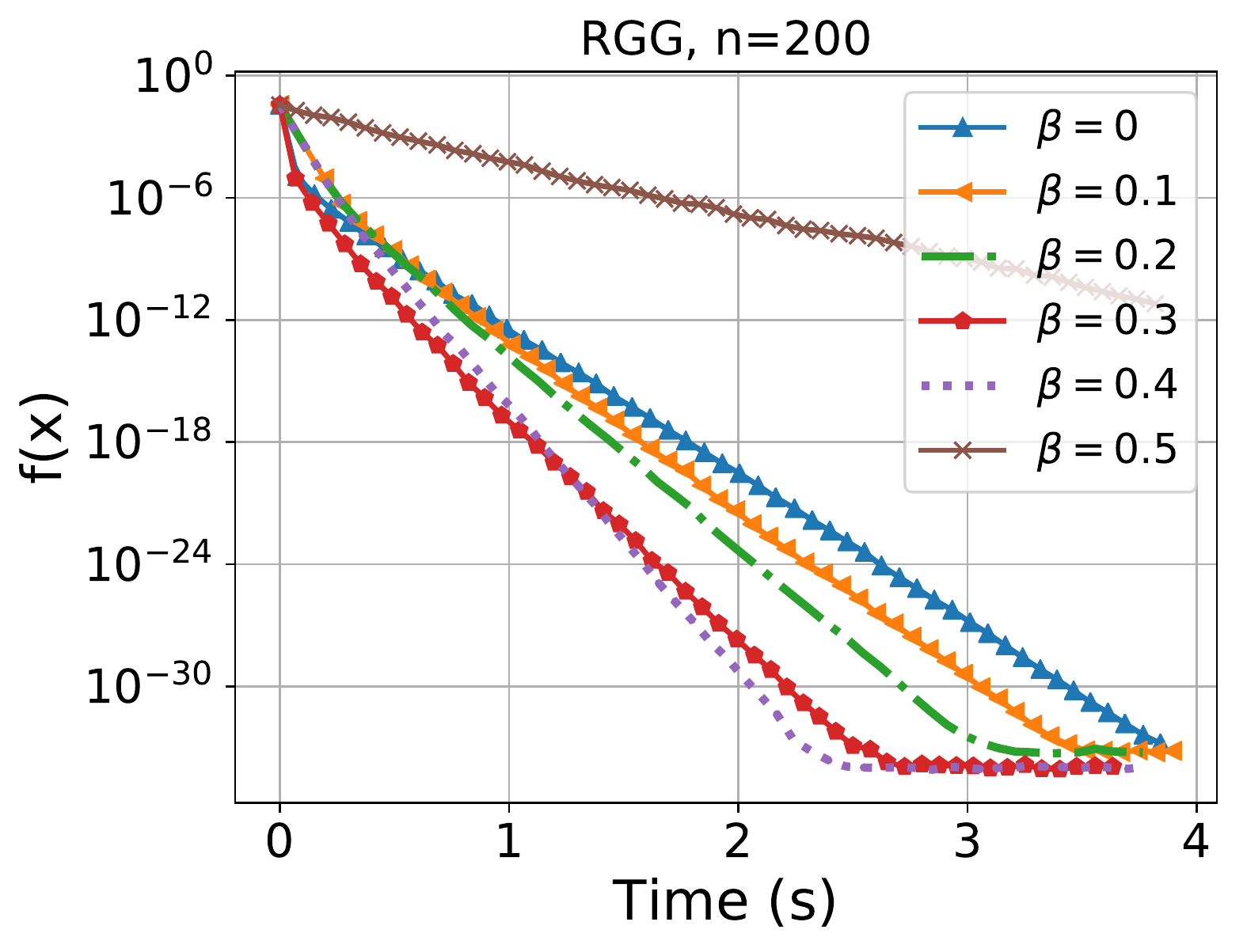}
%  \caption{}
\end{subfigure}\\
\caption{Performance of mPRG for several momentum parameters $\beta$ for solving the average consensus problem in a cycle graph, line graph and random geometric graph $G(n,r)$ with $n=200$ nodes. For the $G(n,r)$ to ensure connectivity of the network a radius $r=\sqrt{\log(n)/n}$ is used. The graphs in the first (second) column plot iterations (time) against residual error while those in the third (forth) column plot iterations (time) against function values. The ``Error" in the vertical axis represents the relative error $\|x^k-x^*\|^2_\bB / \|x^0-x^*\|^2_\bB \overset{\bB=\bI, x^0=c}{=}\|x^k-x^*\|^2 / \|c-x^*\|^2$ and the function values $f(x^k)$ refer to function~\eqref{functionRK}.}
\label{consensus200}
\end{figure}

\section{Conclusion}

\blue{In this chapter, we studied the convergence analysis of several stochastic optimization algorithms enriched with heavy ball momentum for solving  stochastic optimization problems of special structure.  We proved global, non-asymptotic linear convergence rates of all of these methods as well as accelerated linear rate for the case of the norm of expected iterates. We also introduced a new momentum strategy called \textit{stochastic momentum} which is beneficial for the case of sparse data and proved linear convergence in this setting. We corroborated our theoretical results with extensive experimental testing.

Our work is amenable to further extensions. A natural extension of our results is the analysis of heavy ball momentum variants of our proposed methods (SGD,SN, SPP, etc) in the case of general convex or strongly convex functions. While we have shown that the expected iterates converge in an accelerated manner, it is an open problem whether an accelerated rate can be established for the expected distance, i.e., for $\E{\|x^k-x^*\|_\mB^2}$.    In our analysis we also focus on the case of fixed constant step-size and momentum parameters. A study of the effect of decreasing or adaptive choice of the parameters might provide novel insights. 

The obtained results hold under the exactness condition which as we explain is very weak, allowing for virtually arbitrary distributions $\cD$ from which the random matrices are drawn. One may wish to design optimized distributions in terms of the  convergence rates or overall complexity. 

Finally, we show how the addition of momentum on top of gossip algorithms can lead to faster convergence. An interesting question is to interpret the distributed nature of these algorithms and try to understand how we can improve the analysis using the properties of the underlying network. This is precisely what we are doing later in Chapter~\ref{ChapterGossip}.} 

\section{Proofs of Main Results}
\label{ProofsMomentum}

\subsection{Technical lemmas}

\begin{lem}
\label{LemmaGlobal}
Fix $F^1=F^0\geq 0$ and let $\{F^k\}_{k\geq 0}$ be a  sequence of nonnegative  real numbers satisfying the relation 
\begin{equation} \label{eq:809h9s94nbd}F^{k+1}\leq a_1F^k +a_2 F^{k-1}, \qquad \forall k\geq 1,\end{equation} where  $a_2 \geq 0 $,  $ a_1 + a_2 <1$ and at least one of the coefficients $a_1,a_2$ is positive. Then the sequence satisfies the relation
$ F^{k+1}\leq q^{k} (1+ \delta)  F^0$ for all $k\geq 1,$
where $q=\frac{a_1+\sqrt{a_1^2+4a_2}}{2}$  and $\delta=q-a_1\geq 0$. Moreover,
\begin{equation}\label{eq:98sh8hgbf93nd} q \geq a_1 + a_2,\end{equation}
with equality if and only if $a_2=0$ (in which case $q=a_1$ and $\delta=0$).
\end{lem}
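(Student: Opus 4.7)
The recursion $F^{k+1} \le a_1 F^k + a_2 F^{k-1}$ is linear of order two, so the natural approach is to diagonalize it via its characteristic polynomial $t^2 - a_1 t - a_2 = 0$, whose roots are $\tfrac{a_1 \pm \sqrt{a_1^2+4a_2}}{2}$. The larger root is exactly $q$, and $\delta = q-a_1$ is (up to sign) the other root written in a convenient form. The main task is to translate this algebra into a clean one-step contraction by introducing an auxiliary sequence.

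Concretely, the plan is to define $G^k \eqdef F^k + \delta F^{k-1}$ for $k \ge 1$ and show $G^{k+1} \le q\, G^k$. Using the recursion and $\delta \ge 0$ (verified below),
\[
G^{k+1} = F^{k+1} + \delta F^k \le (a_1+\delta) F^k + a_2 F^{k-1} = q F^k + a_2 F^{k-1},
\]
since $a_1+\delta = q$. The characteristic identity $q^2 - a_1 q - a_2 = 0$ gives $a_2 = q(q-a_1) = q\delta$, so the right-hand side equals $q(F^k + \delta F^{k-1}) = q G^k$. Iterating yields $G^{k+1} \le q^{k} G^1 = q^k (F^1 + \delta F^0) = q^k (1+\delta) F^0$, using $F^1 = F^0$. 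Since $F^{k+1} \le F^{k+1} + \delta F^k = G^{k+1}$ (again using $\delta \ge 0$ and $F^k \ge 0$), the desired bound follows.

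It remains to justify the side conditions $0 \le \delta$, $q<1$, and $q \ge a_1+a_2$. For $\delta \ge 0$: if $a_1 \le 0$ this is immediate from $\sqrt{a_1^2+4a_2} \ge 0 \ge a_1$; if $a_1>0$, square both sides of $\sqrt{a_1^2+4a_2} \ge a_1$, which reduces to $a_2 \ge 0$. For $q<1$: one has $a_1 < 2$ (else $a_1+a_2 \ge 1$), so the inequality $\sqrt{a_1^2+4a_2} < 2-a_1$ can be squared and rearranges to $a_1+a_2<1$. For $q \ge a_1+a_2$: substitute $a_2 = q(q-a_1)$ into $q - (a_1+a_2)$ to obtain
\[
q - a_1 - a_2 = q - a_1 - q^2 + q a_1 = (1-q)(q-a_1) \ge 0,
\]
since both factors are nonnegative. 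Equality in this last display requires $q = a_1$, i.e., $\delta = 0$; combined with $a_2 = q\delta$ and $q>0$ (which follows from the positivity assumption on $(a_1,a_2)$), this forces $a_2 = 0$, and conversely $a_2 = 0$ gives $q=a_1$ and $\delta=0$.

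\textbf{Main obstacle.} Nothing deep; the only non-mechanical step is guessing the auxiliary sequence $G^k = F^k + \delta F^{k-1}$. Once the characteristic-root identity $a_2 = q\delta$ is noticed, the rest of the argument is bookkeeping. All sign constraints reduce to simple consequences of $a_2 \ge 0$ and $a_1+a_2<1$, so no delicate case analysis is required.
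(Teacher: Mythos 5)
Your proof is correct and follows essentially the same route as the paper: the paper also adds $\delta F^k$ to both sides of the recursion (equivalently, works with your auxiliary sequence $G^k = F^k + \delta F^{k-1}$), uses the characteristic-root identity $a_2 = (a_1+\delta)\delta = q\delta$ to obtain the one-step contraction $G^{k+1}\le q\,G^k$, and unrolls. The only cosmetic gap is that iterating the contraction requires $q\ge 0$, which you only mention in passing during the equality discussion; it follows immediately from $\sqrt{a_1^2+4a_2}\ge |a_1|$, exactly as the paper notes.
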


\begin{proof}
Choose   $\delta = \frac{-a_1+\sqrt{a_1^2+4a_2}}{2}$. We claim $\delta \geq 0$ and $a_2 \leq (a_1+\delta)\delta$.
Indeed, non-negativity of $\delta$  follows from $a_2\geq 0$, while the second   relation follows from the fact that $\delta$ satisfies \begin{equation}\label{eq:sbuf78b38bc}(a_1+\delta)\delta - a_2 = 0.\end{equation} 

In view of these two relations,  adding $\delta F_k$  to both sides of \eqref{eq:809h9s94nbd},  we get 
\begin{equation}\label{eq:09h9sh9089ns}
F^{k+1} + \delta F^k  \leq   (a_1+\delta)F^k + a_2 F^{k-1}\\
\leq (a_1+\delta)(F^k + \delta F^{k-1}) = q(F^k+\delta F^{k-1}).
\end{equation}

Let us now argue that  $0<q<1$. Non-negativity of $q$ follows from non-negativity of $a_2$.  Clearly, as long as $a_2>0$, $q$ is positive. If $a_2=0$, then $a_1>0$ by assumption, which implies that $q$ is positive. The inequality $q<1$ follows directly from the assumption $a_1+a_2<1$.
By unrolling the recurrence \eqref{eq:09h9sh9089ns}, we obtain
$F^{k+1} \leq F^{k+1} + \delta F^k \leq q^k (F^1+ \delta F^0) = q^{k}(1+\delta) F^{0}.$

Finally, let us establish \eqref{eq:09h9sh9089ns}. Noting that $a_1 = q-\delta$, and since  in view of \eqref{eq:sbuf78b38bc} we have $a_2=q\delta$, we conclude that $a_1+a_2 = q + \delta(q-1) \leq q$, where the inequality follows from $q <1$.
\end{proof}

\blue{Finally, let us present a simple lemma of an identity that we use in our main proofs. This preliminary result is known to hold for the case of Euclidean norms ($\bB=\bI$). We provide the proof for the more general $\bB-$norm for completeness.}

\begin{lem}
\label{formulaWithB}
Let $a,b,c $ be arbitrary vectors in $\R^n$ and let $\bB$ be a positive definite matrix. Then the following identity holds: 
$2 \langle a-c,c-b \rangle_{\bB}=\|a-b\|^2_{\bB}-\|c-b\|^2_{\bB}-\|a-c\|^2_{\bB},$
\end{lem}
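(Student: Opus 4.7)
The plan is to use the decomposition $a-b=(a-c)+(c-b)$, which is the natural way to make the inner product $\langle a-c,c-b\rangle_{\bB}$ appear from a squared norm. Since $\bB$ is positive definite and symmetric, the bilinear form $\langle x,y\rangle_{\bB}=x^\top\bB y$ is a genuine inner product, so the standard identity $\|u+v\|_{\bB}^2=\|u\|_{\bB}^2+2\langle u,v\rangle_{\bB}+\|v\|_{\bB}^2$ holds for all $u,v\in\R^n$.

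Applying this with $u=a-c$ and $v=c-b$ gives
\[
\|a-b\|_{\bB}^2=\|a-c\|_{\bB}^2+2\langle a-c,\,c-b\rangle_{\bB}+\|c-b\|_{\bB}^2,
\]
and rearranging yields exactly the claimed identity. This is a purely algebraic manipulation using bilinearity and symmetry of $\langle\cdot,\cdot\rangle_{\bB}$; there is no conceptual obstacle, and the whole argument fits in one or two lines. The only minor point worth flagging is verifying that $\langle\cdot,\cdot\rangle_{\bB}$ is symmetric, which follows immediately from $\bB^\top=\bB$, so that $\langle x,y\rangle_{\bB}=x^\top\bB y=(x^\top\bB y)^\top=y^\top\bB^\top x=\langle y,x\rangle_{\bB}$.
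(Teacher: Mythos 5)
Your proof is correct. It differs from the paper's own argument in a small but real way: the paper proves the identity by brute force, expanding both sides into monomials of the form $a^\top \bB c$, $a^\top \bB b$, etc., and checking that the two collections of terms coincide, whereas you obtain it in one step from the decomposition $a-b=(a-c)+(c-b)$ together with the expansion $\|u+v\|_{\bB}^2=\|u\|_{\bB}^2+2\langle u,v\rangle_{\bB}+\|v\|_{\bB}^2$. Your route is shorter and makes the structure of the identity (a law-of-cosines rearrangement for the $\bB$-inner product) transparent, and your remark that symmetry of $\langle\cdot,\cdot\rangle_{\bB}$ follows from $\bB^\top=\bB$ is exactly the one hypothesis that needs checking for the cross terms to combine; the paper's term-by-term expansion buys nothing extra here beyond being completely mechanical. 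Both arguments are valid and rest on the same underlying fact, namely bilinearity and symmetry of $x^\top\bB y$.
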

\begin{proof}
\begin{eqnarray*}
LHS &=& 2 \langle a-c,c-b \rangle_{\bB} = 2(a-c)^\top \bB (c-b)\notag\\
& =& 2a^\top \bB c-2a^\top \bB b-2c^\top \bB c+2c^\top \bB b
\end{eqnarray*}
and 
\begin{eqnarray*}
RHS &=& \|a-b\|^2_{\bB}-\|c-b\|^2_{\bB}-\|a-c\|^2_{\bB}\notag\\
& =& (a-b)^\top \bB (a-b)- (c-b)^\top \bB (c-b)-(a-c)^\top \bB (a-c)\notag\\
&=& a^\top\bB a-a^\top\bB b-b^\top\bB a+b^\top\bB b-c^\top\bB c+c^\top\bB b+b^\top\bB c-b^\top\bB b\notag\\
&& - a^\top\bB a+a^\top\bB c+c^\top\bB a-c^\top\bB c \notag\\
& =& 2a^\top \bB c-2a^\top \bB b-2c^\top \bB c+2c^\top \bB b
\end{eqnarray*}
LHS (left-hand side)=RHS (right-hand side) and this completes the proof.
\end{proof}

\subsection{Proof of Theorem~\ref{L2}} \label{app:1}
First, we decompose
\begin{eqnarray}
\|x^{k+1}-x^*\|^2_{\bB} & =& \|x^k-\omega \nabla f_{\bS_k}(x^k)+\beta(x^k-x^{k-1})-x^*\|^2_{\bB} \notag\\
& =& \underbrace{\|x^k-\omega \nabla f_{\bS_k}(x^k)-x^*\|^2_{\bB}}_{\encircle{1}} \notag\\
&& \quad +\underbrace{2\langle x^k-\omega \nabla f_{\bS_k}(x^k)-x^*, \beta(x^k-x^{k-1})  \rangle_{\bB}}_{\encircle{2}} \notag\\
&& \quad +\underbrace{\beta^2\|x^k-x^{k-1}\|^2_{\bB}}_{\encircle{3}}. \label{n0}
\end{eqnarray}
We will now analyze the three expressions \encircle{1}, \encircle{2}, \encircle{3} separately. The first expression can be written as
\begin{eqnarray}
\encircle{1} &=& \|x^k-x^*\|^2_{\bB}-2 \omega \langle x^k-x^*,\nabla f_{\bS_k}(x^k) \rangle_{\bB} +\omega^2 \|\nabla f_{\bS_k}(x^k)\|^2_{\bB} \notag\\
& \overset{\eqref{normbound},\eqref{functionequivalence}}{=} & \|x^k-x^*\|^2_{\bB}-4\omega f_{\bS_k}(x^k)+2\omega^2 f_{\bS_k}(x^k) \notag\\
&=& \|x^k-x^*\|^2_{\bB}-2\omega(2-\omega)f_{\bS_k}(x^k).\label{n1}
\end{eqnarray}
We will now bound the second expression. First, we have
\begin{equation}\label{eq:ibnodh90h}
\begin{aligned}
\encircle{2}
&= 2\beta \langle x^k-x^*, x^k-x^{k-1} \rangle_{\bB} +2\omega \beta \langle \nabla f_{\bS_k}(x^k), x^{k-1} - x^k  \rangle_{\bB}\\ 
&= 2\beta \langle x^k-x^*, x^k-x^{*} \rangle_{\bB} + 2\beta \langle x^k-x^*, x^*-x^{k-1} \rangle_{\bB} +2\omega \beta \langle \nabla f_{\bS_k}(x^k),x^{k-1}- x^k  \rangle_{\bB}\\ 
&= 2\beta \|x^k-x^*\|^2_{\bB}  + 2\beta \langle x^k-x^*, x^*-x^{k-1} \rangle_{\bB} +2\omega \beta \langle \nabla f_{\bS_k}(x^k),x^{k-1}- x^k  \rangle_{\bB}. 
\end{aligned}
\end{equation}
\blue{Using the identity from Lemma~\ref{formulaWithB} for the vectors $x^k, x^*$ and $x^{k-1}$ we obtain:}
$$2 \langle x^k-x^*, x^*-x^{k-1} \rangle_{\bB}=  \|x^k-x^{k-1}\|^2_{\bB}- \|x^{k-1}-x^*\|^2_{\bB}-\|x^k-x^*\|^2_{\bB}.$$
Substituting this into \eqref{eq:ibnodh90h} gives
\begin{equation}
\label{n2}
\begin{aligned}
\encircle{2}& = \beta \|x^k-x^*\|^2_{\bB}+\beta \|x^k-x^{k-1}\|^2_{\bB}-\beta \|x^{k-1}-x^*\|^2_{\bB} + 2\omega \beta \langle \nabla f_{\bS_k}(x^k),x^{k-1}- x^k  \rangle_{\bB}. 
\end{aligned}
\end{equation}

The third expression can be \blue{bounded} as
\begin{equation}
\label{n3}
\encircle{3} =\beta^2\|(x^{k}-x^*)+(x^*-x^{k-1})\|^2_{\bB}  \leq  2\beta^2\|x^{k}-x^*\|^2_{\bB}+2
\beta^2\|x^{k-1}-x^*\|^2_{\bB}.
\end{equation}

By substituting the  bounds \eqref{n1}, \eqref{n2}, \eqref{n3} into  \eqref{n0} we obtain
\begin{eqnarray*}
\|x^{k+1}-x^*\|^2_{\bB} 
& \leq & \|x^k-x^*\|^2_{\bB}-2\omega(2-\omega)f_{\bS_k}(x^k)\\
&& \quad + \beta \|x^k-x^*\|^2_{\bB}+\beta \|x^{k}-x^{k-1}\|^2_{\bB}-\beta \|x^{k-1}-x^*\|^2_{\bB} \\
&& \quad + 2\omega \beta \langle \nabla f_{\bS_k}(x^k),x^{k-1}- x^k  \rangle_{\bB}  +  2\beta^2\|x^{k}-x^*\|^2_{\bB}+2\beta^2\|x^{k-1}-x^*\|^2_{\bB}\\
& \leq & (1+3\beta + 2\beta^2)\|x^k-x^*\|^2_{\bB}+ (\beta + 2\beta^2 )\|x^{k-1}-x^*\|^2_{\bB}-2\omega(2-\omega)f_{\bS_k}(x^k)\\
&& \quad + 2\omega \beta \langle \nabla f_{\bS_k}(x^k),x^{k-1}- x^k  \rangle_{\bB}.
\end{eqnarray*}
Now by first taking expectation with respect to $\mS_k$, we obtain:
\begin{eqnarray*}
\Exp_{\mS_k}[\|x^{k+1}-x^*\|^2_{\bB}] & \leq & (1+3\beta+2\beta^2)\|x^k-x^*\|^2_{\bB}+ (\beta +2\beta^2)\|x^{k-1}-x^*\|^2_{\bB} \\
&& \quad -2\omega(2-\omega)f(x^k) + 2\omega \beta \langle \nabla f(x^k),x^{k-1}- x^k  \rangle_{\bB}\\
 & \leq & (1+3\beta+2\beta^2)\|x^k-x^*\|^2_{\bB}+ (\beta +2\beta^2)\|x^{k-1}-x^*\|^2_{\bB} \\
&& \quad -2\omega(2-\omega)f(x^k) + 2\omega \beta(f(x^{k-1})-f(x^k))\\
 & = & (1+3\beta+2\beta^2)\|x^k-x^*\|^2_{\bB}+ (\beta +2\beta^2)\|x^{k-1}-x^*\|^2_{\bB} \\
&& \quad - (2\omega(2-\omega) +2\omega\beta)f(x^k) + 2\omega \beta f(x^{k-1}).
\end{eqnarray*}
where in the second step we used the inequality
 $\langle \nabla f(x^k),x^{k-1}- x^k \rangle  \leq f(x^{k-1})-f(x^k)$ and the fact that $\omega \beta \geq 0$, which follows from the assumptions. We now apply  inequalities  \eqref{b2} and \eqref{b3}, obtaining

\begin{eqnarray*}
\Exp_{\mS_k}[\|x^{k+1}-x^*\|^2_{\bB}] & \leq &
 \underbrace{(1+3\beta+2\beta^2 - (\omega(2-\omega) +\omega\beta)\lambda_{\min}^+)}_{a_1}\|x^k-x^*\|^2_{\bB} \\
 && \quad + \underbrace{(\beta +2\beta^2 + \omega \beta \lambda_{\max})}_{a_2}\|x^{k-1}-x^*\|^2_{\bB}.
\end{eqnarray*}

By taking expectation again, and letting $F^k\eqdef \Exp[\|x^{k}-x^*\|^2_{\bB}]$, we get  the relation
\begin{equation}
\label{recur}
F^{k+1}  \leq a_1 F^k  + a_2 F^{k-1} .
\end{equation}

It suffices to apply  Lemma~\ref{LemmaGlobal} to the relation  \eqref{recur}. The conditions of the lemma are satisfied. Indeed, $a_2\geq 0$, and if $a_2=0$, then $\beta=0$ and hence $a_1=1-\omega(2-\omega)\lambda_{\min}^+>0$. The condition $a_1+a_2<1$ holds by assumption.

The convergence result in function values, $\Exp[f(x^k)]$, follows as a corollary by applying inequality \eqref{b2} to  \eqref{eq:nfiug582}.

\subsection{Proof of Theorem~\ref{cesaro}}
\label{app:acc212}

Let $p^t=\frac{\beta}{1-\beta}(x^t-x^{t-1})$ 
and $d^t = \|x^t + p^t -x^*\|_{\mB}^2$. In view of \eqref{eq:SHB-intro},  we can write 

\begin{eqnarray}
\label{casna}
x^{t+1}+p^{t+1} & =& x^{t+1}+\frac{\beta}{1-\beta}(x^{t+1}-x^{t}) \notag\\
&  \overset{\eqref{eq:SHB-intro}}{=} & x^{t}-\omega \nabla f_{\bS_t}(x^t)+\beta(x^t-x^{t-1})+\frac{\beta}{1-\beta}\left(-\omega \nabla f_{\bS_t}(x^t)+\beta(x^t-x^{t-1})\right) \notag\\
& =& x^{t}-\left[\omega +\frac{\beta}{1-\beta}\omega \right] \nabla f_{\bS_t}(x^t)+\left[\beta+\frac{\beta^2}{1-\beta}\right](x^t-x^{t-1})\notag\\
& =& x^{t}-\frac{\omega}{1-\beta}\nabla f_{\bS_t}(x^t)+\frac{\beta}{1-\beta}(x^t-x^{t-1})\notag\\
& =&  x^t+p^t-\frac{\omega}{1-\beta} \nabla f_{\bS_t}(x^t)
\end{eqnarray}

and therefore
\begin{eqnarray*}
d^{t+1} & \overset{\eqref{casna}}{=} &  \left\|x^t+p^t-\frac{\omega}{1-\beta} \nabla f_{\bS_t}(x^t) -x^* \right\|^2_{\bB} \  \\
& =& d^t -2 \frac{\omega}{1-\beta} \langle x^t+p^t-x^*,  \nabla f_{\bS_t}(x^t) \rangle_{\bB} + \frac{\omega^2}{(1-\beta)^2} \|\nabla f_{\bS_t}(x^t)\|^2_{\bB}\\
& =& d^t -\frac{2\omega}{1-\beta} \langle x^t-x^*,  \nabla f_{\bS_t}(x^t) \rangle_{\bB} - \frac{2 \omega \beta}{(1-\beta)^2}   \langle x^t-x^{t-1},  \nabla f_{\bS_t}(x^t) \rangle_{\bB}\\ 
& & \quad + \frac{\omega^2}{(1-\beta)^2} \|\nabla f_{\bS_t}(x^t)\|^2_{\bB}.
\end{eqnarray*}

Taking expectation with respect to the random matrix $\bS_t$ we obtain:
\begin{eqnarray*}
\Exp_{\mS_t}[d^{t+1}] & =& d^t -\frac{2\omega}{1-\beta} \langle x^t-x^*,  \nabla f(x^t) \rangle_{\bB} - \frac{2 \omega \beta}{(1-\beta)^2}   \langle x^t-x^{t-1},  \nabla f(x^t) \rangle_{\bB} \\
&& \quad + \frac{\omega^2}{(1-\beta)^2} 2 f(x^t) \notag\\
& \overset{\eqref{asnda}}{=} & d^t -\frac{4\omega}{1-\beta}  f(x^t) - \frac{2 \omega \beta}{(1-\beta)^2}    \langle x^t-x^{t-1},  \nabla f(x^t) \rangle_{\bB} + \frac{\omega^2}{(1-\beta)^2} 2 f(x^t)\\
& \leq & d^t -\frac{4\omega}{1-\beta}  f(x^t) - \frac{2 \omega \beta}{(1-\beta)^2}   [f(x^t)-f(x^{t-1})] + \frac{\omega^2}{(1-\beta)^2} 2 f(x^t)\\
& = & d^t + \left[ -\frac{4\omega}{1-\beta} - \frac{2 \omega \beta}{(1-\beta)^2} +\frac{2 \omega^2}{(1-\beta)^2}\right] f(x^t)  +  \frac{2 \omega \beta}{(1-\beta)^2} f(x^{t-1}),
\end{eqnarray*}
where the inequality follows from convexity of $f$.  After rearranging the terms we get
\[
\Exp_{\mS_t}[d^{t+1}] +   \frac{2 \omega \beta}{(1-\beta)^2} f(x^t) + \alpha f(x^t)  \leq d^t + \frac{2 \omega \beta}{(1-\beta)^2} f(x^{t-1}),
\]
where $\alpha =   \frac{4\omega}{1-\beta} -\frac{2 \omega^2}{(1-\beta)^2} > 0$. Taking expectations again and using the tower property, we get
\begin{equation}\label{eq:oih89hd8}
\theta^{t+1} + \alpha \Exp[f(x^t)]  \leq \theta^t, \qquad t=1,2,\dots,
\end{equation}
where $\theta^t = \Exp[d^t] + \frac{2 \omega \beta}{(1-\beta)^2}\Exp[ f(x^{t-1})]$. By summing up \eqref{eq:oih89hd8} for $t=1,\dots, k$ we get
\begin{equation}\label{eq:s098h89hffdss}\sum_{t=1}^k \Exp[f(x^t)] \leq \frac{\theta^1-\theta^{k-1}}{\alpha} \leq \frac{\theta^1}{\alpha}.\end{equation}
Finally, using Jensen's inequality, we get
\[\Exp[f(\hat{x}^k)] = \Exp \left[f\left(\frac{1}{k}\sum_{t=1}^k x^t\right)\right] \leq \Exp \left[\frac{1}{k}\sum_{t=1}^k f(x^t)\right] =  \frac{1}{k}\sum_{t=1}^k \Exp[f(x^t)] \overset{\eqref{eq:s098h89hffdss}}{\leq} \frac{\theta^1}{\alpha k}.\]

It remains to note that $\theta^1 = \|x^0-x^*\|_{\mB}^2 + \frac{2\omega \beta}{(1-\beta)^2 }f(x^0).$

%%%%%%%%%%%%%%%%%%%%%%
\subsection{Proof of Theorem~\ref{theoremheavyball}} \label{app:acc}

In the proof of Theorem~\ref{theoremheavyball} the following two lemmas are used.

\begin{lem}[\cite{ASDA}]
\label{Forweakconvergence}
Assume exactness. Let $x\in \R^n$ and $x^* = \Pi_{\mathcal{L},\bB} (x)$. If $\lambda_i=0$, then $u_i^\top \bB^{1/2} (x-x^*)=0$.

\end{lem}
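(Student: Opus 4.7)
The plan is to combine three ingredients: (i) the eigenvalue equation $\bW u_i = 0$, (ii) the exactness condition $\mathrm{Null}(\E{\bZ}) = \mathrm{Null}(\bA)$ stated in \eqref{exactnessCondition}, and (iii) a characterization of the $\bB$-projection residual $x - x^*$ as lying in $\mathrm{Range}(\bB^{-1}\bA^\top)$. The idea is that ``$\lambda_i = 0$'' pushes $u_i$ into a subspace annihilated by $\bA \bB^{-1/2}$, while ``$x^* = \Pi_{\cL,\bB}(x)$'' forces $\bB^{1/2}(x-x^*)$ to live in the range of $\bB^{-1/2}\bA^\top$; pairing these two facts yields the claimed orthogonality.

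First I would unpack the hypothesis $\lambda_i = 0$. Since $u_i$ is an eigenvector of $\bW$ with eigenvalue $0$, we have $\bW u_i = 0$, that is $\bB^{-1/2} \E{\bZ} \bB^{-1/2} u_i = 0$. Because $\bB$ is positive definite, $\bB^{-1/2}$ is invertible, so multiplying on the left by $\bB^{1/2}$ gives $\E{\bZ} \bB^{-1/2} u_i = 0$. Applying the exactness characterization \eqref{exactnessCondition}, $\mathrm{Null}(\E{\bZ}) = \mathrm{Null}(\bA)$, we conclude that $\bA \bB^{-1/2} u_i = 0$.

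Next I would exploit the geometry of the projection. Since $x^* = \Pi_{\cL,\bB}(x)$, the residual $x - x^*$ is $\bB$-orthogonal to $\mathrm{Null}(\bA)$, which (as noted in the proof of Proposition~\ref{prop:projections}) means $x - x^* \in \mathrm{Range}(\bB^{-1}\bA^\top)$. Equivalently, there exists $y \in \R^m$ with $x - x^* = \bB^{-1} \bA^\top y$. This can be read off directly from the closed-form projection formula \eqref{Projection_IntroThesis}, which gives an explicit such $y$, namely $y = -(\bA\bB^{-1}\bA^\top)^\dagger (\bA x - b)$, but for this lemma I only need the qualitative membership.

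Finally I would combine the two pieces. We compute
\[
u_i^\top \bB^{1/2}(x - x^*) \;=\; u_i^\top \bB^{1/2}\bB^{-1}\bA^\top y \;=\; u_i^\top \bB^{-1/2} \bA^\top y \;=\; (\bA \bB^{-1/2} u_i)^\top y \;=\; 0,
\]
using $\bA \bB^{-1/2} u_i = 0$ from the first step. This is the desired identity, and no step is really a ``main obstacle''; the only subtle point is remembering that exactness is what licenses the passage from $\E{\bZ} \bB^{-1/2} u_i = 0$ to $\bA \bB^{-1/2} u_i = 0$, which is exactly why the lemma explicitly assumes exactness.
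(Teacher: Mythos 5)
Your proof is correct. Note that the paper does not actually supply a proof of this lemma --- it is imported verbatim from \cite{ASDA} --- so there is no in-paper argument to compare against; the route you take (exactness turns $\bW u_i=0$ into $\bA\bB^{-1/2}u_i=0$, the projection formula \eqref{Projection_IntroThesis} puts $x-x^*$ into ${\rm Range}(\bB^{-1}\bA^\top)$, and the two facts pair to zero) is the standard one and is exactly what the cited reference does. The only blemish is the sign of your explicit $y$: from \eqref{Projection_IntroThesis} one gets $x-x^*=+\bB^{-1}\bA^\top(\bA\bB^{-1}\bA^\top)^\dagger(\bA x-b)$, not the negative, but as you say only the membership in the range matters, so this does not affect the argument.
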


\begin{lem}[\cite{elaydi2005introduction, fillmore1968linear}]
\label{recurence}
Consider the second degree linear homogeneous recurrence relation:
\begin{equation}
\label{asdasdasd}
 r^{k+1}= a_1r^k+a_2 r^{k-1}
\end{equation}
with initial conditions $r^0,r^1 \in \R$.
Assume that the constant coefficients $a_1$ and $a_2$ satisfy the inequality $a_1^2 +4a_2<0$ (the roots of the characteristic equation $t^2-a_1t-a_2=0$ are imaginary). Then there are complex constants $C_0$ and $ C_1$ (depending on the initial conditions $r^0$ and $r^1$) such that:
$$r^k=2 M^k (C_0 \cos( \theta k) + C_1 \sin(\theta k))$$
where $M= \bigg(\sqrt{\frac{a_1^2}{4}+\frac{(-a_1^2-4a_2)}{4}} \bigg)=\sqrt{-a_2}$ and $\theta$ is such that $a_1=2 M \cos(\theta)$ and $\sqrt{-a_1^2-4a_2}=2 M \sin(\theta)$.
\end{lem}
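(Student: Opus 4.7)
This is a standard fact about linear homogeneous recurrences with constant coefficients, so the plan is to follow the textbook argument and adapt the normalization to match the form of the conclusion. The starting point is the characteristic equation $t^2 - a_1 t - a_2 = 0$, whose roots are $t_{1,2} = \tfrac{a_1 \pm \sqrt{a_1^2 + 4a_2}}{2}$. Under the assumption $a_1^2 + 4a_2 < 0$ these roots are complex conjugates, with real part $a_1/2$ and imaginary parts $\pm \tfrac{1}{2}\sqrt{-a_1^2 - 4a_2}$. A direct calculation gives that their common modulus is
\[
|t_{1,2}| \;=\; \sqrt{\tfrac{a_1^2}{4} + \tfrac{-a_1^2 - 4a_2}{4}} \;=\; \sqrt{-a_2} \;=\; M,
\]
and so I can write $t_{1,2} = M e^{\pm i\theta}$ for the unique $\theta \in (0,\pi)$ satisfying $a_1 = 2M\cos\theta$ and $\sqrt{-a_1^2-4a_2} = 2M\sin\theta$, which is exactly the $\theta$ appearing in the statement.

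Next, I would invoke the standard superposition principle: since the recurrence $r^{k+1} = a_1 r^k + a_2 r^{k-1}$ is linear and homogeneous and admits the two linearly independent solutions $k \mapsto t_1^k$ and $k \mapsto t_2^k$, its general solution is $r^k = c_1 t_1^k + c_2 t_2^k$ for some $c_1,c_2 \in \mathbb{C}$. Substituting the polar form of the roots and applying Euler's formula yields
\[
r^k \;=\; M^k\bigl[(c_1+c_2)\cos(k\theta) + i(c_1-c_2)\sin(k\theta)\bigr].
\]
Setting $C_0 = (c_1+c_2)/2$ and $C_1 = i(c_1-c_2)/2$ produces the desired form $r^k = 2 M^k (C_0 \cos(k\theta) + C_1 \sin(k\theta))$.

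Finally, I would observe that the constants $C_0,C_1$ are uniquely determined by the initial conditions $r^0,r^1$: evaluating the formula at $k=0$ gives $r^0 = 2 C_0$, and at $k=1$ gives $r^1 = 2M(C_0\cos\theta + C_1 \sin\theta)$. Since $\theta \in (0,\pi)$ we have $\sin\theta \neq 0$, so this $2\times 2$ linear system in $(C_0,C_1)$ is solvable, confirming that every solution of the recurrence has the claimed representation.

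\textbf{Main obstacle.} There is no real obstacle here; the only mildly delicate point is checking that the reduction to polar form is consistent (in particular that $-a_2 > 0$, which follows from $a_2 < -a_1^2/4 \leq 0$, so $M = \sqrt{-a_2}$ is well defined and positive, and that $\sin\theta \neq 0$ so the initial conditions genuinely pin down $C_0,C_1$). Everything else is a routine verification.
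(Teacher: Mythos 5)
Your proof is correct and is exactly the standard characteristic-root argument that the paper itself does not reproduce but merely cites from \cite{elaydi2005introduction, fillmore1968linear}. All the delicate points are handled: the roots are genuinely non-real so $M=\sqrt{-a_2}>0$ and $\sin\theta\neq 0$, the two conjugate roots give linearly independent solutions, and the change of constants $C_0=(c_1+c_2)/2$, $C_1=i(c_1-c_2)/2$ reproduces the stated form with the constants pinned down by $r^0,r^1$.
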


We can now turn to the proof of Theorem~\ref{theoremheavyball}.
Plugging in the expression for the stochastic gradient, mSGD can be written in the form
\begin{eqnarray}
x^{k+1} & =& x^k -\omega \nabla f_{\bS_k}(x^k) + \beta(x^k - x^{k-1}) \notag \\
&\overset{\eqref{Gradf_S_IntroThesis}}{=}& x^k- \omega {\bB}^{-1} \bZ_k(x^k-x^*) + \beta(x^k - x^{k-1}).\label{difernt expad}
\end{eqnarray}

Subtracting  $x^*$ from both sides of \eqref{difernt expad}, we get
\begin{eqnarray*}
x^{k+1}-x^* 
& = & (\bI - \omega {\bB}^{-1} \bZ_k)(x^k-x^*) + \beta(x^k -x^* +x^* - x^{k-1})\\
& =& \left((1+\beta)\bI - \omega {\bB}^{-1} \bZ_k\right)(x^k-x^*) - \beta(x^{k-1}-x^*).
\end{eqnarray*}

Multiplying the last identity from the left by $\bB^{1/2}$, we get
\begin{eqnarray*}
\bB^{1/2} (x^{k+1}-x^*)
&=& \left((1+\beta)\bI - \omega \bB^{-1/2} \bZ_k \bB^{-1/2}\right) \bB^{1/2}(x^{k} -x^*) - \beta \bB^{1/2}(x^{k-1}-x^*).
\end{eqnarray*}

Taking expectations, conditioned on $x^k$ (that is, the expectation is with respect to $\bS_k$):
\begin{equation}
\label{eq:98g8gfsssd}
\bB^{1/2} \Exp[x^{k+1} -x^* \;|\; x^k]=\left((1+\beta)\bI - \omega \bB^{-1/2}  \Exp[\bZ]  \bB^{-1/2}\right) \bB^{1/2}(x^{k} -x^*) - \beta \bB^{1/2}(x^{k-1}-x^*) . 
\end{equation}

Taking expectations again, and using the tower property, we get
\begin{eqnarray*}
\bB^{1/2} \Exp[x^{k+1} -x^*] & = & \bB^{1/2}\Exp\left[\Exp[x^{k+1} -x^* \;|\; x^k]\right]\\
&\overset{\eqref{eq:98g8gfsssd}}{=}& \left((1+\beta)\bI - \omega \bB^{-1/2}  \Exp[\bZ] \bB^{-1/2} \right) \bB^{1/2} \Exp[x^{k} -x^*] - \beta \bB^{1/2} \Exp[x^{k-1}-x^*].
\end{eqnarray*}

Plugging the eigenvalue decomposition ${\bU}\bm{\Lambda} {{\bU}}^\top$ of the matrix $\bW=\bB^{-1/2}  \Exp[\bZ] \bB^{-1/2}$ into the above, and multiplying both sides from the left by ${{\bU}}^\top$, we obtain
\begin{equation}
\label{asdnaskdn}
{{\bU}}^\top \bB^{1/2} \Exp[x^{k+1} -x^*]  = {{\bU}}^\top \left((1+\beta)\bI - \omega {\bU}\bm{\Lambda} {{\bU}}^\top \right)\bB^{1/2} \Exp[x^{k} -x^*] - \beta {{\bU}}^\top \bB^{1/2} \Exp[x^{k-1}-x^*].
\end{equation}

Let us define $s^k\eqdef {{\bU}}^\top \bB^{1/2} \Exp[x^{k} -x^*]  \in \R^n$. Then relation \eqref{asdnaskdn} takes the form of the recursion
$$ s^{k+1}= [(1+\beta)\mI - \omega \bm{\Lambda} ] s^k - \beta s^{k-1},$$
which can be written in a coordinate-by-coordinate form as follows:
\begin{equation}
\label{coordinate}
 s^{k+1}_i= [(1+\beta) - \omega \lambda_i ] s^k_i - \beta s^{k-1}_i   \quad \text{for all} \quad i= 1,2,3,...,n,
\end{equation}
where $s^k_i$ indicates the $i$th coordinate of $s^k$.

We will now fix $i$ and analyze recursion \eqref{coordinate} using Lemma~\ref{recurence}. Note that \eqref{coordinate} is a second degree linear homogeneous recurrence relation of the form \eqref{asdasdasd} with $a_1=1+\beta - \omega \lambda_i $ and $a_2=- \beta$. Recall  that $0\leq\lambda_i \leq1$ for all $i$. Since we assume that $0< \omega \leq 1/\lambda_{\max}$, we know that $0\leq \omega \lambda_i \leq 1$ for all $i$. We now consider two  cases:

\begin{enumerate}
\item $ \lambda_i =0$.   

In this case, \eqref{coordinate} takes the form: \begin{equation} \label{eq:9g898fg9sy}s^{k+1}_i=(1+\beta)s^k_i-\beta s^{k-1}_i .\end{equation} Applying Proposition~\ref{prop:projections}, we know that $x^*=\Pi_{\mathcal{L},\bB}(x^0)=\Pi_{\mathcal{L},\bB}(x^1)$. Using Lemma \ref{Forweakconvergence} twice, once for $x=x^0$ and then for $x=x^1$, we observe that $s^0_i=u_i^\top \bB^{1/2} (x^0-x^*)=0$ and $s^1_i=u_i^\top \bB^{1/2} (x^1-x^*)=0$. Finally, in view of \eqref{eq:9g898fg9sy} we conclude that \begin{equation}\label{eq:8g98gdu9hhOOh} s^k_i=0 \quad \text{for all} \quad k\geq 0 .\end{equation}

\item $ \lambda_i >0$. 

Since $0<\omega \lambda_i \leq 1$ and $\beta\geq 0$, we have $1+\beta - \omega \lambda_i \geq 0$ and hence
\[a_1^2+4a_2=(1+\beta -\omega \lambda_i)^2-4\beta \leq  (1+\beta -\omega \lambda_{\min}^+)^2-4\beta  < 0,\]
where the last inequality can be shown to hold\footnote{The lower bound on $\beta$ is tight. However,  the upper bound is not. However, we do not care much about the regime of large $\beta$ as $\beta$ is the convergence rate, and hence is only interesting if smaller than 1.} for  $\left(1-\sqrt{\omega \lambda_{\min}^+}\right)^2 < \beta < 1 $. Applying Lemma~\ref{recurence} the following bound can be deduced
\begin{eqnarray}
s^k_i  &=& 2(-a_2)^{k/2} (C_0  \cos(\theta k) +C_1 \sin(\theta k)) \; \leq \; 2 \beta^{k/2} P_i, \label{eq:uibd880s-pO}
\end{eqnarray}
where $P_i$ is a constant depending  on the initial conditions (we can simply choose $P_i = |C_0| + |C_1|$).

\end{enumerate}

Now putting the two cases together, for all $k\geq0$ we have
\begin{eqnarray*}
\|\Exp[x^{k} -x^*]\|_{\bB}^2&=& \Exp[x^{k} -x^*]^\top \bB \Exp[x^{k} -x^*] \; = \; \Exp[x^{k} -x^*] \bB^{1/2} \bU {\bU}^\top \bB^{1/2} \Exp[x^{k} -x^*]  \\
&=& \|{\bU}^\top \bB^{1/2} \Exp[x^{k} -x^*] \|^2 \; = \; \|s^k\|^2 \;= \; \sum_{i=1}^{n} (s^k_i)^2 \\
&= &  \sum_{i: \lambda_i=0} (s^k_i)^2  +  \sum_{i:  \lambda_i >0} (s^k_i)^2 \; \overset{\eqref{eq:8g98gdu9hhOOh}}{=}\;    \sum_{i:  \lambda_i >0} (s^k_i)^2\\
& \overset{\eqref{eq:uibd880s-pO}}{\leq} & \sum_{i:  \lambda_i >0} 4 \beta ^k P_i^2 \\
&=& \beta^k C,
\end{eqnarray*}
where $C=4\sum_{i:  \lambda_i >0}  P_i^2$.

\subsection{Proof of Theorem~\ref{thm:DSHB-L2} } \label{app:7}

The proof follows a similar pattern to that of Theorem~\ref{L2}. However, stochasticity in the momentum term introduces an additional layer of complexity, which we shall tackle by utilizing   a more involved version of the tower property.

For simplicity, let $i=i_k$ and $r^{k}_i  \eqdef e_i^\top(x^k-x^{k-1})e_i$. First, we decompose
\begin{eqnarray}
\|x^{k+1}-x^*\|^2 
& =& \|x^k-\omega \nabla f_{\bS_k}(x^k)+\gamma r^k_i -x^*\|^2 \notag\\
& =& \|x^k-\omega \nabla f_{\bS_k}(x^k)-x^*\|^2+2\langle x^k-\omega \nabla f_{\bS_k}(x^k)-x^*, \gamma r^k_i  \rangle + \gamma^2\| r^k_i\|^2. \label{eq:iugh98d894}
\end{eqnarray}
 We shall use the tower property in the form
\begin{equation}\label{eq:tower3SHB}\Exp[\Exp[\Exp[ X  \;|\; x^k,  \mS_k] \;|\; x^k]] = \Exp[X],\end{equation}
where $X$ is some random variable. We shall perform the three expectations in order, from the innermost to the outermost.  Applying the inner expectation to the identity \eqref{eq:iugh98d894}, we get
\begin{eqnarray}\Exp[\|x^{k+1}-x^*\|^2 \;|\; x^k, \mS_k ] 
&=&
 \underbrace{\Exp[\|x^k-\omega \nabla f_{\bS_k}(x^k)-x^*\|^2\;|\; x^k, \mS_k]}_{\encircle{1}} \notag\\
 && \quad +\underbrace{\Exp[2\langle x^k-\omega \nabla f_{\bS_k}(x^k)-x^*, \gamma r^k_i  \rangle \;|\; x^k, \mS_k]}_{\encircle{2}} \notag\\
&& \quad +\underbrace{\Exp[\gamma^2\| r^k_i\|^2\;|\; x^k, \mS_k]}_{\encircle{3}}. \label{eq:098j}
\end{eqnarray}

We will now analyze the three expressions \encircle{1}, \encircle{2}, \encircle{3} separately. The first expression is constant under the expectation, and hence we can write
\begin{eqnarray}
\encircle{1} &=& \|x^k-\omega \nabla f_{\bS_k}(x^k)-x^*\|^2\notag\\
&=& \|x^k-x^*\|^2-2 \omega \langle x^k-x^*,\nabla f_{\bS_k}(x^k) \rangle +\omega^2 \|\nabla f_{\bS_k}(x^k)\|^2\notag\\
& \overset{\eqref{normbound} + \eqref{functionequivalence}}{=} & \|x^k-x^*\|^2-4\omega f_{\bS_k}(x^k)+2\omega^2 f_{\bS_k}(x^k)\notag\\
&=& \|x^k-x^*\|^2-2\omega(2-\omega)f_{\bS_k}(x^k). \label{n1X}
\end{eqnarray}

We will now bound the second expression. Using the identity
\begin{equation}\label{eq:98gf8g8e09}\Exp[r^k_i  \;|\; x^k, \mS_k] = \Exp_i [r^k_i] = \sum_{i=1}^n \frac{1}{n}r^k_i = \frac{1}{n}(x^k-x^{k-1}),\end{equation}
we can write
\begin{eqnarray}
\encircle{2}
&=& \Exp[2\langle x^k-\omega \nabla f_{\bS_k}(x^k)-x^*, \gamma r^k_i  \rangle \;|\; x^k, \mS_k]\notag\\
&=& 2\langle x^k-\omega \nabla f_{\bS_k}(x^k)-x^*, \gamma \Exp[r^k_i  \;|\; x^k, \mS_k] \rangle\notag\\
&\overset{\eqref{eq:98gf8g8e09}}{=}& 2\langle x^k-\omega \nabla f_{\bS_k}(x^k)-x^*, \frac{\gamma}{n} (x^k - x^{k-1})  \rangle\notag \\
& =&
2\frac{\gamma}{n} \langle x^k-x^*, x^k-x^{k-1} \rangle +2\omega \frac{\gamma}{n} \langle \nabla f_{\bS_k}(x^k), x^{k-1} - x^k  \rangle\notag\\ 
&=& 2\frac{\gamma}{n} \langle x^k-x^*, x^k-x^{*} \rangle + 2\frac{\gamma}{n} \langle x^k-x^*, x^*-x^{k-1} \rangle +2\omega \frac{\gamma}{n} \langle \nabla f_{\bS_k}(x^k),x^{k-1}- x^k  \rangle \notag\\ 
&=& 2\frac{\gamma}{n} \|x^k-x^*\|^2 + 2\frac{\gamma}{n} \langle x^k-x^*, x^*-x^{k-1} \rangle +2\omega \frac{\gamma}{n} \langle \nabla f_{\bS_k}(x^k),x^{k-1}- x^k  \rangle. \label{eq:ibnodh90hX}
\end{eqnarray}
Using the fact that for arbitrary vectors $a,b,c \in \R^n$ we have the identity
$2 \langle a-c,c-b \rangle =\|a-b\|^2-\|c-b\|^2-\|a-c\|^2,$
we obtain
$$2 \langle x^k-x^*, x^*-x^{k-1} \rangle=  \|x^k-x^{k-1}\|^2- \|x^{k-1}-x^*\|^2-\|x^k-x^*\|^2.$$
Substituting this into \eqref{eq:ibnodh90hX} gives
\begin{equation}
\label{n2X}
\begin{aligned}
\encircle{2}& = \frac{\gamma}{n} \|x^k-x^*\|^2+\frac{\gamma}{n}\|x^k-x^{k-1}\|^2-\frac{\gamma}{n} \|x^{k-1}-x^*\|^2 + 2\omega \frac{\gamma}{n} \langle \nabla f_{\bS_k}(x^k),x^{k-1}- x^k  \rangle. 
\end{aligned}
\end{equation}

The third expression can be bound as
\begin{eqnarray}
\encircle{3} &=& \Exp[\gamma^2\| r^k_i\|^2\;|\; x^k, \mS_k] \notag\\
&=& \gamma^2 \Exp_i[\| r^k_i\|^2] \notag \\
&=& \gamma^2 \sum_{i=1}^n \frac{1}{n} (x^k_i -x^{k-1}_i)^2 \notag\\
&=& \frac{\gamma^2}{n} \|x^k-x^{k-1}\|^2 \notag \\ 
&=&\frac{\gamma^2}{n}\|(x^{k}-x^*)+(x^*-x^{k-1})\|^2 \notag \\
& \leq &  \frac{2\gamma^2}{n}\|x^{k}-x^*\|^2+ \frac{2\gamma^2}{n}\|x^{k-1}-x^*\|^2. \label{n3X}
\end{eqnarray}

By substituting the  bounds \eqref{n1X}, \eqref{n2X}, \eqref{n3X} into  \eqref{eq:098j} we obtain
\begin{eqnarray}
\Exp[\|x^{k+1}-x^*\|^2 \;|\; x^k, \mS_k ] 
& \leq & \|x^k-x^*\|^2-2\omega(2-\omega) f_{\bS_k}(x^k)\notag\\
&& \quad + \frac{\gamma}{n}  \|x^k-x^*\|^2+ \frac{\gamma}{n} \|x^{k}-x^{k-1}\|^2 -\frac{\gamma}{n}  \|x^{k-1}-x^*\|^2 \notag\\
&& \quad + 2\omega \frac{\gamma}{n}  \langle \nabla f_{\bS_k}(x^k), x^{k-1}- x^k  \rangle  +  2\frac{\gamma^2}{n} \|x^{k}-x^*\|^2 \\
&& \quad + 2\frac{\gamma^2}{n}\|x^{k-1}-x^*\|^2 \notag\\
&\overset{\eqref{n3}}{\leq}& \left(1+3\frac{\gamma}{n} + 2\frac{\gamma^2}{n}\right)\|x^k-x^*\|^2+ \left(\frac{\gamma}{n} + 2\frac{\gamma^2}{n} \right) \|x^{k-1}-x^*\|^2 \notag\\
&& \quad - 2\omega(2-\omega)f_{\bS_k}(x^k) + 2\omega \frac{\gamma}{n} \langle \nabla f_{\bS_k}(x^k),x^{k-1}- x^k  \rangle.\label{eq:iohih638ygbdd}
\end{eqnarray}
We now take the middle expectation (see \eqref{eq:tower3SHB}) and apply it to inequality \eqref{eq:iohih638ygbdd}:
\begin{eqnarray*}
\Exp[\Exp[\|x^{k+1}-x^*\|^2 \;|\; x^k, \mS_k ] \;|\; x^k]  &\leq& \left(1+3\frac{\gamma}{n} + 2\frac{\gamma^2}{n}\right)\|x^k-x^*\|^2+ \left(\frac{\gamma}{n} + 2\frac{\gamma^2}{n} \right) \|x^{k-1}-x^*\|^2 \\
&& \quad -2\omega(2-\omega)f(x^k) + 2\omega \frac{\gamma}{n} \langle \nabla f(x^k),x^{k-1}- x^k  \rangle\\
&\leq& \left(1+3\frac{\gamma}{n} + 2\frac{\gamma^2}{n}\right)\|x^k-x^*\|^2+ \left(\frac{\gamma}{n} + 2\frac{\gamma^2}{n} \right) \|x^{k-1}-x^*\|^2 \\
&& \quad -2\omega(2-\omega)f(x^k) + 2\omega \frac{\gamma}{n}(f(x^{k-1})-f(x^k))\\
&=&  \left(1+3\frac{\gamma}{n} + 2\frac{\gamma^2}{n}\right)\|x^k-x^*\|^2+ \left(\frac{\gamma}{n} + 2\frac{\gamma^2}{n} \right) \|x^{k-1}-x^*\|^2 \\
&& \quad - \left(2\omega(2-\omega) +2\omega\frac{\gamma}{n}\right)f(x^k) + 2\omega \frac{\gamma}{n} f(x^{k-1}).
\end{eqnarray*}
where in the second step we used the inequality
 $\langle \nabla f(x^k),x^{k-1}- x^k \rangle  \leq f(x^{k-1})-f(x^k)$ and the fact that $\omega \gamma \geq 0$, which follows from the assumptions. We now apply  inequalities  \eqref{b2} and \eqref{b3}, obtaining
\begin{eqnarray*}
\Exp[\Exp[\|x^{k+1}-x^*\|^2 \;|\; x^k, \mS_k ] \;|\; x^k]& \leq &
 \underbrace{\left(1+3\frac{\gamma}{n}+2\frac{\gamma^2}{n} - \left(\omega(2-\omega) +\omega\frac{\gamma}{n}\right)\lambda_{\min}^+ \right)}_{a_1}\|x^k-x^*\|^2 \\
 && \quad + \underbrace{\frac{1}{n}\left(\gamma +2\gamma^2 + \omega \gamma \lambda_{\max}\right)}_{a_2}\|x^{k-1}-x^*\|^2.
\end{eqnarray*}

By taking expectation again (outermost expectation in the tower rule \eqref{eq:tower3SHB}), and letting $F^k\eqdef \Exp[\|x^{k}-x^*\|^2_{\bB}]$, we get  the relation
\begin{equation}
\label{recurX}
F^{k+1}  \leq a_1 F^k  + a_2 F^{k-1} .
\end{equation}

It suffices to apply  Lemma~\ref{LemmaGlobal} to the relation  \eqref{recur}. The conditions of the lemma are satisfied. Indeed, $a_2\geq 0$, and if $a_2=0$, then $\gamma=0$ and hence $a_1=1-\omega(1-\omega)\lambda_{\min}^+>0$. The condition $a_1+a_2<1$ holds by assumption.

The convergence result in function values follows as a corollary by applying inequality \eqref{b2} to  \eqref{eq:nfiug582X}.

\chapter{Inexact Randomized Iterative Methods}
\label{ChapterInexact}

\section{Introduction}
In the era of big data where data sets become continuously larger, randomized iterative methods become very popular and are increasingly playing a major role in areas such as numerical linear algebra, scientific computing and optimization. They are preferred mainly because of their cheap per-iteration cost which leads to improvements in terms of complexity upon classical results by orders of magnitude. In addition they can easily scale to extreme dimensions.
However, a common feature of these methods is that in their update rule a particular subproblem needs to be solved \emph{exactly}. In a large scale setting, often this step is computationally very expensive. The purpose of this work is to reduce the cost of this step by allowing for \emph{inexact updates} in the stochastic methods under study.

\subsection{The setting}
In this chapter we are interested to solve the three closely related problems described in the previous chapters. As a reminder, these are:
\begin{enumerate}
\item[(i)] stochastic quadratic optimization \eqref{StochReform_IntroThesis}, 
\item[(ii)] best approximation \eqref{BestApproximation_IntroThesis}, and 
\item[(iii)] (bounded) concave quadratic maximization \eqref{DualProblem_IntroThesis}.
\end{enumerate} 

In particular we propose and analyze {\em inexact} variants of the exact algorithms presented in the introduction of this thesis for solving the above problems. Among the methods studied are: stochastic gradient descent (SGD), stochastic Newton (SN), stochastic proximal point (SPP),  sketch and project method (SPM) and stochastic subspace ascent (SDSA). 
In all of these methods, a certain potentially expensive calculation/operation needs to be performed in each step; it is this operation that we propose to be performed \emph{inexactly}.  For instance, in the case of SGD, it is the computation of the stochastic gradient $\nabla f_{\mS_k}(x^k)$, in the case of  SPM is the computation of the projection $\Pi_{\cL_{\mS}, \bB}(x^k)$, and in the case of SDSA it is the computation of the dual update $\mS_k \lambda^k$.

We perform an iteration complexity analysis under an abstract notion of inexactness and also under a more structured form of inexactness appearing in practical scenarios. An inexact solution of these subproblems can be obtained much more quickly than the exact solution. Since in practical applications the savings thus obtained are larger than the increase in the number of iterations needed for convergence, our inexact methods can be dramatically faster.

\subsection{Structure of the chapter and main contributions}

Let us now outline the main contribution and the structure of this chapter.

In Section~\ref{SecondSection} we describe the subproblems and introduce two notions of inexactness (abstract and structured) that will be used in the rest of this chapter. The Inexact Basic Method (iBasic) is also presented. iBasic is a method that simultaneously captures inexact variants of the algorithms \eqref{SGD_IntroThesis}, \eqref{SNM_IntroThesis}, \eqref{SPPM_IntroThesis} for solving the stochastic optimization problem \eqref{StochReform_IntroThesis} and algorithm \eqref{SPM_IntroThesis} for solving the best approximation problem \eqref{BestApproximation_IntroThesis}. It is an inexact variant of the \emph{Basic Method}, first presented in \cite{ASDA}, where the inexactness is introduced by the addition of an inexactness error $\epsilon^k$ in the original update rule. We illustrate the generality of  iBasic  by presenting popular algorithms that can be cast as special cases. 

In Section~\ref{gental assumption} we establish convergence results of iBasic under general assumptions on the inexactness error $\epsilon^k$ of its update rule (see Algorithm~\ref{inexact_basic}). In this part we do not focus on any specific mechanisms which lead to inexactness; we treat the problem abstractly. However, such errors appear often in practical scenarios and can be associated with inaccurate numerical solvers, quantization,  sparsification and compression mechanisms. In particular, we introduce several abstract assumptions on the inexactness level  and describe our generic convergence results. For all assumptions we establish linear rate of decay of the quantity $\Exp[\|x^{k}-x^*\|_{\mB}^2]$ (i.e. L2 convergence)\footnote{As we explain later, a convergence of the expected function values of problem \ref{StochReform_IntroThesis} can be easily obtained as a corollary of L2 convergence.}. 

Subsequently, in Section~\ref{InexactSolvers} we apply our general convergence results to a more structured notion of inexactness error and  propose a concrete mechanisms leading to such errors.  We provide theoretical guarantees for this method  in situations when  a linearly convergent iterative method (e.g., Conjugate Gradient)  is used to solve the subproblem inexactly. We also highlight the importance of the dual viewpoint through a sketch-and-project interpretation.

In Section~\ref{InexactDualMethods} we study an inexact variant of SDSA, which we called iSDSA, for directly solving the dual problem \eqref{DualProblem_IntroThesis}.  We provide a correspondence between  iBasic and iSDSA and we show that the random iterates of iBasic arise as affine images of iSDSA. We consider both abstract and structured inexactness errors and provide linearly convergent rates in terms of the dual function suboptimality  $\E{D(y^*) - D(y^0)}$.

Finally, in Section~\ref{experiments} we evaluate the performance of the proposed inexact methods through numerical experiments and show the benefits of our approach on both synthetic and real datasets. Concluding remarks are given in Section~\ref{conlcusion}. 

A summary of the convergence results of iBasic under several assumptions on the inexactness error  with pointers to the relevant theorems is available in Table~\ref{OurResultsInexact}. We highlight that similar convergence results can be also obtained for  iSDSA in terms of the dual function suboptimality  $\E{D(y^*) - D(y^0)}$ (check Section~\ref{InexactDualMethods} for more details on iSDSA).

\begin{table}[t!]
\begin{center}
\scalebox{0.85}{
\begin{tabular}{ |c|c|c|c| }
% \multicolumn{4}{|c|}{Complexity Results} \\
 \hline
 \begin{tabular}{c} Assumption on \\ the Inexactness error $\epsilon^k$  \end{tabular}  & $\omega$& \begin{tabular}{c}Upper Bounds\end{tabular} & Theorem \\
 \hline
 \hline
Assumption \ref{Assumption1} &  $(0,2)$ & $\rho^{k/2} \|x^{0}-x^*\|_{\mB} +  \sum_{i=0}^{k-1} \rho^{\frac{k-1-i}{2}}\sigma_i$ &\ref{InexactSGDConstant}\\
 \hline
Assumption \ref{Assumption3} &  $(0,2)$ & $\left(\sqrt{\rho}+q\right)^{2k} \|x^0-x^*\|_{\mB}^2$ &\ref{ISGDwithq}\\
 \hline
Assumptions \ref{Assumption2},\ref{Assumption5} & $(0,2)$  &  $\rho^{k} \|x^{0}-x^*\|^2_{\mB} +  \sum_{i=0}^{k-1} \rho^{k-1-i}{\bar{\sigma}}^2_i$ & \ref{InexactSGDrandom}(i) \\
 \hline 
 Assumptions \ref{Assumption3},\ref{Assumption5} & $(0,2)$  &  $\left(\rho + q^2 \right)^k \|x^0-x^*\|_{\mB}^2$ & \ref{InexactSGDrandom}(ii) \\
 \hline 
Assumptions \ref{Assumption4},\ref{Assumption5} & $(0,2)$  &  $\left(\rho+q^2 \lambda_{\min}^+ \right)^k \|x^0-x^*\|_{\mB}^2$ & \ref{InexactSGDrandom}(iii) \\
 \hline 

\end{tabular}}
\end{center}
\caption{\small Summary of the iteration complexity results obtained in this chapter. $\omega$ denotes the stepsize (relaxation parameter) of the method. In all cases, $x^*=\Pi_{\cL, \bB}(x^0)$ and $\rho=1- \omega (2-\omega)\lambda_{\min}^+ \in (0,1)$ are the quantities appear in the convergence results (here $\lambda_{\min}^+$ denotes the minimum non zero eigenvalue of matrix $\bW$, see equation \eqref{matrixW}). Inexactness parameter $q$ is chosen always in such a way to obtain linear convergence and it can be seen as the quantity that controls the inexactness. In all theorems the quantity of convergence is $\Exp[\|x^k-x^*\|^2_{\bB}]$ (except in Theorem~\ref{InexactSGDConstant} where we analyze $\Exp[\|x^k-x^*\|_{\bB}])$. As we show in Section~\ref{InexactDualMethods}, under similar assumptions, iSDSA has exactly the same convergence with iBasic but the upper bounds of the third column are related to the dual function values $\E{D(y^*) - D(y^0)}$. }
\label{OurResultsInexact}
\end{table}

\subsection{Notation}
Following the rest of this thesis, with boldface upper-case letters we denote matrices and $\bI$ is the identity matrix. By $\cL$ we denote the solution set of the linear system $\bA x=b$. By $\cL_{\bS}$, where $\mS$ is a random matrix, we denote the solution set of the {\em sketched} linear system $\bS^\top \bA x= \bS^\top b$.  In general, we use $*$ to express the exact solution of a sub-problem and $\approx$ to indicate its inexact variant. 
Unless stated otherwise, throughout the chapter, $x^*$ is the projection of $x^0$ onto $\cL$ in the $\mB$-norm: $x^*=\Pi_{\cL, \bB}(x^0)$. 

\section{Inexact Update Rules}
\label{SecondSection}
In this section we start by explaining the key sub-problems that need to be solved exactly in the update rules of the previously described methods. We present iBasic, a method that solves problems \eqref{StochReform_IntroThesis} and \eqref{BestApproximation_IntroThesis} and we show how by varying the main parameters of the method we recover inexact variants of popular algorithms as special cases. Finally closely related work on inexact algorithms for solving different problems is also presented.
\subsection{Expensive sub-problems in update rules}
\label{subproblems}
Let us devote this subsection on explaining how the inexactness can be introduced in the current exact update rules of SGD\footnote{Note that SGD has identical updates to the Stochastic Newton and Stochastic proximal point method. Thus the inexactness can be added to these updates in similar way.} \eqref{SGD_IntroThesis}, Sketch and Project \eqref{SPM_IntroThesis} and SDSA \eqref{SDSA_IntroThesis} for solving the stochastic optimization, best approximation  and the dual problem respectively. As we have shown these methods solve closely related problems and the key subproblems in their update rule are similar. However the introduction of inexactness in the update rule of each one of them can have different interpretation. 

For example for the case of SGD for solving the stochastic optimization problem \eqref{StochReform_IntroThesis} (see also Section~\ref{sectionlinesyst} and \ref{SFPinterpretation} for more details), if we define $\lambda^k_*=(\mS_{k}^\top \mA \mB^{-1} \mA^\top \mS_{k})^{\dagger} \mS_{k}^\top(b-\mA x^k) $  then the stochastic gradient of function $f$ becomes  $\nabla f_{\mS_k}(x^k)\overset{\eqref{Gradf_S_IntroThesis}}{=}-\bB^{-1}\bA ^\top \bS_k \lambda^k_*$ and the update rule of SGD takes the form: $x^{k+1} = x^k+\omega \mB^{-1}\mA^\top \mS_{k} \lambda^k_*$.  Clearly in this update the expensive part is the computation of the quantity $\lambda^k_*$ that can be equivalently computed to be the least norm solution of the smaller (in comparison to $\bA x=b$) linear system $\mS_{k}^\top \mA \mB^{-1} \mA^\top \mS_{k} \lambda =\mS_{k}^\top (b-\mA x^k)$. In our work we are suggesting to use an approximation $\lambda^k_{\approx}$ of the exact solution and with this way avoid executing the possibly expensive step of the update rule.  Thus the inexact update is taking the following form:
$$x^{k+1} =x^k+\omega \mB^{-1}\mA^\top \mS_{k} \lambda^k_{\approx}= x^k - \omega \nabla f_{\mS_k}(x^k)+\underbrace{\omega \bB^{-1}\bA^\top \bS_k (\lambda^k_{\approx}-\lambda^k_*)}_{\epsilon^k}.$$
Here $\epsilon^k$ denotes a more abstract notion of inexactness and it is not necessary to be always equivalent to the quantity $\omega \bB^{-1}\bA^\top \bS_k(\lambda^k_{\approx}-\lambda^k_*)$. It can be interpreted as an expression that acts as an perturbation of the exact update. In the case that $\epsilon^k$ has the above form we say that the notion of inexactness is structured.
In our work we are interested in both the \emph{abstract} and more \emph{structured} notions of inexactness. We first present general convergence results where we require the error $\epsilon^k$ to satisfy general assumptions (without caring how this error is generated) and later we analyze the concept of structured inexactness by presenting algorithms where $\epsilon^k= \omega \bB^{-1}\bA^\top \bS_k(\lambda^k_{\approx}-\lambda^k_*)$.

In similar way, the expensive operation of SPM \eqref{SPM_IntroThesis} is the exact computation of the projection $\Pi_{\cL_{\mS_k},\mB}^*(x^k)$. Thus we are suggesting to replace this step with an inexact variant and compute an approximation of this projection. The inexactness here can be also interpreted using both, the abstract $\epsilon^k$ error and its more structured version $\epsilon^k=\omega \left( \Pi_{\cL_{\mS_k},\mB}^{\approx}(x^k)- \Pi_{\cL_{\mS_k},\mB}^*(x^k) \right)$. At this point, observe that, by using the expression \eqref{Projection_IntroThesis} the structure of the $\epsilon^k$ in SPM and SGD has the same form.

In the SDSA the expensive subproblem in the update rule is the computation of the $\lambda^k_*$ that satisfy $\lambda^k_* \in \arg\max_\lambda D(y^k + \mS_k \lambda)$. Using the definition of the dual function \eqref{DualProblem_IntroThesis} this value can be also computed by evaluating the least norm solution of the linear system $ \mS_{k}^\top \mA \mB^{-1} \mA^\top \mS_{k} \lambda  =\mS_{k}^\top \left(b-\bA(x^0 + \bB^{-1}\bA^\top y^k \right))$. Later in Section~\ref{InexactDualMethods} we analyze both notions of inexactness (abstract and more structured) for inexact variants of SDSA.

Table~\ref{KeySubproblems} presents the key sub-problem that needs to be solved in each algorithm as well as the part where the inexact error is appeared in the update rule.  

\begin{table}[t!]
\begin{center}
\scalebox{0.6}{
\begin{tabular}{ |c|c|c| }
% \multicolumn{4}{|c|}{Complexity Results} \\
 \hline
Exact Algorithms &\begin{tabular}{c} Key Subproblem \\ (problem that we solve inexactly) \end{tabular}  & \begin{tabular}{c} Inexact Update Rules \\ (abstract and structured inexactness error) \end{tabular} \\
 \hline
 \hline
 SGD \eqref{SGD_IntroThesis} & \begin{tabular}{c}Exact computation of $\lambda^k_*$, \\where $ \lambda^k_*=\arg\min_{ \lambda : \bM_k  \lambda =d_k} \| \lambda\|$. \\ Appears in the computation of $\nabla f_{\mS_k}(x^k)=-\bB^{-1}\bA ^\top \bS_k \lambda^k_*$  \end{tabular}&\begin{tabular}{c} \\$x^{k+1} = x^k+\omega \mB^{-1}\mA^\top \mS_{k} \lambda^k_{\approx}$\\$\quad = x^k - \omega \nabla f_{\mS_k}(x^k)+\underbrace{\omega \bB^{-1}\bA^\top \bS_k (\lambda^k_{\approx}-\lambda^k_*)}_{\epsilon^k}.$\end{tabular}\\
 \hline
SPM \eqref{SPM_IntroThesis} & \begin{tabular}{c}Exact computation of the projection \\$ \Pi_{\cL_{\mS_k},\mB}^*(x^k)=\arg\min_{x' \in \cL_{\mS_k}} \|x'-x^k\|_{\bB} $ \\ \end{tabular}& \begin{tabular}{c}\\$x^{k+1} =  \omega \Pi_{\cL_{\mS_k},\mB}^{\approx}(x^k)+ (1-\omega) x^k$\\ $\quad=  \omega \Pi_{\cL_{\mS_k},\mB}(x^k) + (1-\omega) x^k+ \underbrace{\omega \left(  \Pi_{\cL_{\mS_k},\mB}^{\approx}(x^k)- \Pi_{\cL_{\mS_k},\mB}^*(x^k) \right)}_{\epsilon^k} $\end{tabular}\\
 \hline
SDSA \eqref{SDSA_IntroThesis} & \begin{tabular}{c}Exact computation of $\lambda^k_*$, \\ where $\lambda^k_* \in \arg\max_\lambda D(y^k + \mS_k \lambda)$. \end{tabular}& \begin{tabular}{c}\\$y^{k+1} = y^k + \omega  \mS_k \lambda^k_{\approx}=y^k + \omega  \mS_k \lambda^k_*+\underbrace{\omega \bS_k (\lambda^k_{\approx} - \lambda^k_*) }_{\epsilon^k_d}$ \end{tabular} \\
 \hline 
\end{tabular}}
\end{center}
\caption{\small The exact algorithms under study with the potentially expensive to compute key sub-problems of their update rule. The inexact update rules are presented in the last column for both notions of inexactness (abstract and more structured). We use $*$ to define the important quantity that needs to be computed exactly in the update rule of each method and $\approx$ to indicate the proposed inexact variant.}
\label{KeySubproblems}
\end{table}

\subsection{The inexact basic method}
In each iteration of the all aforementioned exact methods a sketch matrix $\bS \sim {\cal D}$ is drawn from a given distribution and then a certain subproblem is solved exactly to obtain the next iterate. The sketch matrix $\bS \in \R^{m \times q}$ requires to have $m$ rows but no assumption on the number of columns is made which means that the number of columns $q$ allows to vary through the iterations and it can be very large.  The setting that we are interested in is precisely that of having such large random matrices $\bS$. In these cases we expect that having approximate solutions of the subproblems will be beneficial.

Recently randomized iterative algorithms that requires to solve large subproblems in each iteration have been extensively studied and it was shown that are really beneficial when they compared to their single coordinates variants ($\bS \in \R^{m \times 1}$) \cite{RBK,l2015randomized,richtarik2014iteration,LoizouRichtarik}. However, in theses cases the evaluation of an exact solution for the suproblem in the update rule can be computationally very expensive.
In this work we propose and analyze inexact variants by allowing to solve the  subproblem that appear in the update rules of the stochastic methods, inexactly. In particular, following the convention established in \cite{ASDA} of naming the main algorithm of the paper \emph{Basic method} we propose the \emph{inexact Basic method (iBasic)} (Algorithm~\ref{inexact_basic}). 

\begin{algorithm}[H]
  \caption{Inexact Basic Method (iBasic)
    \label{inexact_basic}}
  \begin{algorithmic}[1]
    \Require{Distribution $\cD$ from which we draw random matrices $\bS$, positive definite matrix $\bB\in\R^{n\times n}$, stepsize $\omega>0$.}
    \Ensure{$x^0\in\R^n$}
 \For{$k=0,1,2,\cdots$}
 \State Generate a fresh sample $\bS_k \sim {\cal D}$
 \State Set $x^{k+1}=x^k-\omega \mB^{-1}\mA^\top \mS_{k} (\mS_{k}^\top \mA \mB^{-1} \mA^\top \mS_{k})^{\dagger} \mS_{k}^\top(\mA x^k-b)+ \epsilon^k$
 \EndFor
 \end{algorithmic}
\end{algorithm}

The $\epsilon^k$ in the update rule of the method represents the abstract inexactness error described in Subsection~\ref{subproblems}. Note that, iBasic can have several equivalent interpretations. This allow as to study the methods \eqref{SGD_IntroThesis},\eqref{SNM_IntroThesis},\eqref{SPPM_IntroThesis} for solving the stochastic optimization problem and the sketch and project method \eqref{SPM_IntroThesis} for the best approximation problem in a single algorithm only. 
In particular iBasic can be seen as inexact stochastic gradient descent (iSGD) with fixed stepsize applied to \eqref{StochReform_IntroThesis}. From \eqref{Gradf_S_IntroThesis}, $\nabla f_{\mS_k}(x^k) = \mB^{-1} \mA^\top \mH_k (\mA x^k - b) $ and as a result the update rule of iBasic can be equivalently written as: $x^{k+1}=x^k-\omega \nabla f_{\mS_k}(x^k)  + \epsilon^{k}.$ In the case of the best approximation problem \eqref{BestApproximation_IntroThesis}, iBasic can be interpreted as inexact Sketch and Project method (iSPM) as follows:
\begin{eqnarray}
\label{anska}
x^{k+1} & = & x^k-\omega \mB^{-1}\mA^\top \mS_{k} (\mS_{k}^\top \mA \mB^{-1} \mA^\top \mS_{k})^{\dagger} \mS_{k}^\top(\mA x^k-b)+\epsilon^k  \notag\\
& = &  \omega \left[ x^k- \bB^{-1}(\bS_k^\top \bA)^\top (\bS_k^\top \bA B^{-1}(\bS_k^\top \bA) ^ \top )^\dagger (\bS_k^\top \bA x^k-\bS_k^\top b)\right] + (1-\omega) x^k   +\epsilon^k \notag\\
&\overset{\eqref{Projection_IntroThesis}} =&\omega \Pi_{\cL_{\mS_k},\mB}(x^k) + (1-\omega) x^k +\epsilon^k
\end{eqnarray}
For the dual problem \eqref{DualProblem_IntroThesis} we devote Section~\ref{InexactDualMethods} for presenting an inexact variant of the SDSA (iSDSA) and analyze its convergence using the rates obtained for the iBasic in Sections~\ref{gental assumption} and \ref{InexactSolvers}.

\subsection{General framework and further special cases}
\label{Special Cases}

The proposed inexact methods, iBasic  (Algorithm~\ref{inexact_basic}) and iSDSA (Section~\ref{InexactDualMethods}),  belong in the general \emph{sketch and project} framework, first proposed from Gower and Richtarik in \cite{gower2015randomized} for solving consistent linear systems and where a unified analysis of several randomized methods was studied. This interpretation of the algorithms allow us to recover a comprehensive array of well-known methods as special cases by choosing carefully the combination of the main parameters of the algorithms.

In particular, the iBasic has two main parameters (besides the stepsize $\omega>0$ of the update rule). These are the distribution $\cD$ from which we draw random matrices $\bS$ and the positive definite matrix $\bB\in\R^{n\times n}$. By choosing carefully combinations of the parameters $\cD$ and $\bB$ we can recover several existing popular algorithms as special cases of the general method.  For example, special cases of the exact Basic method are the Randomized Kaczmarz, Randomized Gaussian Kaczmarz\footnote{Special case of the iBasic, when the random matrix $\bS$ is chosen to be a Gaussian vector with mean $0 \in R^m$ and a positive definite covariance matrix $\Sigma \in \R^{m\times m}$. That is $\bS \sim N(0,\Sigma)$ \cite{gower2015randomized}.}, Randomized Coordinate Descent and their block variants. For more details about the generality of the sketch and project framework and further algorithms that can be cast as special cases of the analysis we refer the interested reader to Section 3 of \cite{gower2015randomized}. Here we present only the inexact update rules of two special cases that we will later use in the numerical evaluation. 

\emph{Special Cases: }
Let us define with $\bI_{:C}$ the column concatenation of the $m \times m$ identity matrix indexed by a random subset $C$ of $[m]$. 
\begin{itemize}
\item \emph{Inexact Randomized Block Kaczmarz (iRBK)}:
Let $\bB= \bI$ and let pick in each iteration the random matrix $\bS=\bI_{:C} \sim \cD$. In this setup the update rule of the iBasic simplifies to 
\begin{equation}
\label{iRBK}
x^{k+1}=x^k -\omega \bA_{C:}^\top (\bA_{C:}\bA_{C:}^\top)^\dagger (\bA_{C:}x^k-b_C) + \epsilon^k.
\end{equation}
\item \emph{Inexact Randomized Block Coordinate Descent (iRBCD)}\footnote{In the setting of solving linear systems Randomized Coordinate Descent is known also as Gauss-Seidel method. Its block variant can be also interpret as randomized coordinate Newton method (see \cite{qu2015sdna}).}:
If the matrix $\bA$ of the linear system is positive definite then we can choose $\bB= \bA$. Let also pick in each iteration the random matrix $\bS=\bI_{:C} \sim \cD$. In this setup the update rule of the iBasic simplifies to 
\begin{equation}
\label{iRBCD}
x^{k+1}=x^k -\omega \bI_{:C} (\bI_{:C}^\top\bA \bI_{:C})^\dagger \bI_{:C}^\top (\bA x^k-b) + \epsilon^k. 
\end{equation}
\end{itemize}

For more papers related to Kaczmarz method (randomized, greedy, cyclic update rules) we refer the interested reader to \cite{kaczmarz1937angenaherte, loizou2017linearly, popa1995least,byrne2008applied, nutini2016convergence, popa2017convergence, CsibaPL, needell2010randomized, RBK, eldar2011acceleration, MaConvergence15, zouzias2013randomized, l2015randomized, schopfer2016linear}. 
For the coordinate descent method (a.k.a Gauss-Seidel for linear systems) and its block variant, Randomized Block Coordinate Descent we suggest \cite{leventhal2010randomized, nesterov2012efficiency, richtarik2014iteration, richtarik2016parallel, qu2016coordinate,qu2016coordinate2, qu2015quartz, SCP, lee2013efficient, fercoq2015accelerated, allen2016even, tu2017breaking}. 

\subsection{Other related work on inexact methods}
\label{OtherWork}
One of the current trends in the large scale optimization problems is the introduction of inexactness in the update rules of popular deterministic and stochastic methods. The rational behind this is that an approximate/inexact step can often computed very efficiently and can have significant computational gains compare to its exact variants. 

In the area of deterministic algorithms, the inexact variant of the full gradient descent method, $x^{k+1} = x^k - \omega_k [\nabla f(x^k)+\epsilon^k]$, has received a lot of attention \cite{schmidt2011convergence, devolder2014first,so2017non, friedlander2012hybrid,necoara2014rate}. It has been analyzed for the cases of convex and strongly convex functions under several meaningful assumptions on the inexactness error $\epsilon^k$ and its practical benefit  compared to the exact gradient descent is apparent.  For further deterministic inexact methods check \cite{dembo1982inexact} for Inexact Newton methods, \cite{solodov2001unified, salzo2012inexact} for Inexact Proximal Point methods and \cite{birken2015termination} for Inexact Fixed point methods.

In the recent years, with the explosion that happens in areas like machine learning and data science inexactness enters also the updating rules of several stochastic optimization algorithms and many new methods have been proposed and analyzed. 

In the large scale setting, stochastic optimization methods are preferred mainly because of their cheap per iteration cost (compared to their deterministic variants), their property to scale to extreme dimensions and their improved theoretical complexity bounds. In areas like machine learning and data science, where the datasets become larger rapidly, the development of faster and efficient stochastic algorithms is crucial. For this reason, inexactness has recently introduced to the update rules of several stochastic optimization algorithms and new methods have been proposed and analyzed. One of the most interesting work on inexact stochastic algorithms appears in the area of second order methods. In particular on inexact variants of the Sketch-Newton method and subsampled Newton Method for minimize convex and non-convex functions \cite{schmidt201111, berahas2017investigation, bollapragada2016exact,xu2017newton,xu2016sub, yao2018inexact}. Note that our results are related also with this literature since our algorithm can be seen as inexact stochastic Newton method (see equation \eqref{SNM_IntroThesis}). To the best or our knowledge our work is the first that provide convergence analysis of inexact stochastic proximal point methods (equation \eqref{SPPM_IntroThesis}) in any setting. From numerical linear algebra viewpoint inexact sketch and project methods for solving the best approximation problem and its dual problem where also never analyzed before. 

As we already mentioned our framework is quite general and many algorithms, like iRBK \eqref{iRBK} and iRBCD \eqref{iRBCD} can be cast as special cases. As a result, our general convergence analysis includes the analysis of inexact variants of all of these more specific algorithms as special cases. In \cite{RBK} an analysis of the exact randomized block Kacmzarz method has been proposed and in the experiments an inexact variant was used to speedup the method. However, no iteration complexity results were presented for the inexact variant and both the analysis and numerical evaluation have been made for linear systems with full rank matrices that come with natural partition of the rows (this is a much more restricted case than the one analyzed in our setting). For inexact variants of the randomized block coordinate descent algorithm in different settings than ours we suggest  \cite{tappenden2016inexact, fountoulakis2018flexible, cassioli2013convergence, dvurechensky2017randomized}.

Finally an analysis of approximate stochastic gradient descent for solving the empirical risk minimization problem using quadratic constraints and sequential semi-definite programs has been presented in \cite{hu2017analysis}. 

\section{Convergence Results Under General Assumptions}
\label{gental assumption}
In this section we consider scenarios in which the inexactness error $\epsilon^k$  can be controlled, by specifying a per iteration bound $\sigma_k$ on the norm of the error.  In particular, by making different assumptions on the bound $\sigma_k$ we derive general convergence rate results. Our focus is on the abstract notion of inexactness described in Section~\ref{subproblems} and we make no assumptions on how this error is generated. 

An important assumption that needs to be hold in all of our results is \emph{exactness}. A formal presentation of exactness was presented in the introduction of this thesis. We highlight that is a requirement for all of the convergence results of this chapter (It is also required in the analysis of the exact algorithms; see Theorems~\ref{BasicMethodConvergence} and \ref{TheoremSDSA_IntroThesis} in the introduction).

\subsection{Assumptions on inexactness error}
\label{asssad}
In the convergence analysis of iBasic the following assumptions on the inexactness error are used.  We note that Assumptions \ref{Assumption1}, \ref{Assumption3} and \ref{Assumption4} are special cases of Assumption \ref{Assumption2}. Moreover Assumption \ref{Assumption5} is algorithmic dependent and can hold in addition of any of the other four assumptions. In our analysis, depending on the result we aim at, we will require either one of the first four Assumptions to hold by itself, or to hold together with Assumption \ref{Assumption5}. We will always assume exactness.
  
In all assumptions the expectation on the norm of error ($\|\epsilon^k\|^2$) is conditioned on the value of the current iterate $x^k$ and the random matrix $\bS_k$. Moreover it is worth to mention that for the convergence analysis we never assume that the inexactness error has zero mean, that is $\Exp[\epsilon^k]=0$. 

\begin{assumption}{1}{}
\label{Assumption2}
 \begin{equation}
 \label{Assumption2serial}
  \Exp[\|\epsilon^k\|^2_{\bB}\;|\;x^k,\bS_k]\leq\sigma_k^2,
  \end{equation}
where the upper bound $\sigma_k$ is a sequence of random variables (that can possibly depends on both the value of the current iterate $x^k$ and the choice of the random $\bS_k$ at the $k^{th}$ iteration).
\end{assumption}

The following three assumptions on the sequence of upper bounds are more restricted however as we will later see allow us to obtain stronger and more controlled results.

 \begin{assumption}{1}{a}
 \label{Assumption1}
 \begin{eqnarray}
\label{Assumption1serial}
 \Exp[\|\epsilon^k\|^2_{\bB}\;|\;x^k,\bS_k]\leq\sigma_k^2,
\end{eqnarray}
where the upper bound $\sigma_k\in \R$ is a sequence of real numbers. 
\end{assumption}

\begin{assumption}{1}{b}
\label{Assumption3}
 \begin{equation}
 \label{Assumption3serial}
  \Exp[\|\epsilon^k\|^2_{\bB}\;|\;x^k,\bS_k]\leq\sigma_k^2 = q^2\|x^k-x^*\|^2_{\bB},
  \end{equation}
where the upper bound is a special sequence that depends on a non-negative inexactness parameter $q$ and the distance to the optimal value $\|x^k-x^*\|^2_{\bB}$.
\end{assumption}

\begin{assumption}{1}{c}
\label{Assumption4}
 \begin{equation}
 \label{Assumption4serial}
  \Exp[\|\epsilon^k\|^2_{\bB}\;|\;x^k,\bS_k]\leq\sigma_k^2 = 2 q^2 f_{\bS_k}(x^k),
  \end{equation}
where the upper bound is a special sequence that depends on a non-negative inexactness parameter $q$ and the value of the stochastic function $f_{\bS_k}$ computed at the iterate $x^k$. Recall, that in our setting $f_{\bS}(x) = \frac{1}{2}\|\nabla f_{\bS}(x)\|^2_{\bB}$  \eqref{normbound}. Hence, the upper bound can be equivalently expressed as $\sigma_k^2=q^2 \|\nabla f_{\bS}(x)\|^2_{\bB}$.

\end{assumption}

Finally the next assumption is more algorithmic oriented. It holds in cases where the inexactness error $\epsilon^k$ in the update rule is chosen to be orthogonal with respect to the $\bB$-inner product to the vector  $\Pi_{\cL_{\bS_k},{\bB}} (x^k) - x^* = (\mI - \omega \mB^{-1}\mZ_k) (x^k-x^*)$. This statement may seem odd at this point but its usefulness will become more apparent in the next section where inexact algorithms with structured inexactness error will be analyzed. As it turns out, in the case of structured inexactness error (Algorithm~\ref{inexact_solver_algorithm}) this assumption is satisfied.
\begin{assumption}{2}{}
 \label{Assumption5}
 \begin{equation}
\Exp[\left\langle (\mI - \omega \mB^{-1}\mZ_k) (x^k-x^*), \epsilon^k \right \rangle_{\mB}]=0.
  \end{equation}

\end{assumption}

\subsection{Convergence results}
In this section we present the analysis of the convergence rates of iBasic by assuming several combination of the previous presented assumptions. 

All convergence results are described only in terms of convergence of the iterates $x^k$, that is $\|x^k-x^*\|^2_{\bB}$, and not the objective function values $f(x^k)$. This is sufficient, because by $f(x)\leq\frac{\lambda_{\rm max}}{2}\|x-x^*\|^2_{\bB}$ (see Lemma \ref{bounds}) we can directly deduce a convergence rate for the function values. 

The exact Basic method (Algorithm~\ref{inexact_basic} with $\epsilon^k =0$), has been analyzed in \cite{ASDA} and it was shown to converge with $\Exp[\|x^{k}-x^*\|^2_{\mB}] \leq \rho^{k} \|x^{0}-x^*\|^2_{\mB}$ where $\rho=1-\omega(2-\omega) \lambda_{\min}^+$. Our analysis of iBasic is more general and includes the convergence of the exact Basic method as special case when we assume that the upper bound is $\sigma_k=0, \quad \forall k\geq0$. For brevity, in he convergence analysis results of this chapter we also use $$\rho=1-\omega(2-\omega) \lambda_{\min}^+.$$

Let us start by presenting the convergence of iBasic when only Assumption~\ref{Assumption1} holds for the inexactness error.
\begin{thm}
\label{InexactSGDConstant}
Let assume exactness and let $\{x^k\}_{k=0}^\infty$ be the iterates produced by iBasic with $\omega\in(0,2)$.  Set $x^*= \Pi_{\cL,\mB}(x^0)$ and consider the error $\epsilon^k$ be such that it satisfies Assumption \ref{Assumption1}. Then,
\begin{eqnarray}
\label{Theorem1}
\Exp[\|x^{k}-x^*\|_{\mB}] \leq \rho^{k/2} \|x^{0}-x^*\|_{\mB} +  \sum_{i=0}^{k-1} \rho^{\frac{k-1-i}{2}}\sigma_i.
\end{eqnarray}
\end{thm}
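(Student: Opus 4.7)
The plan is to split each iteration into an ``exact'' part and a perturbation, and then unroll a one-step recurrence for $\Exp[\|x^k-x^*\|_{\bB}]$ (not the squared norm). Writing $x^{k+1} = \tilde x^{k+1} + \epsilon^k$ with $\tilde x^{k+1} \eqdef x^k - \omega\nabla f_{\bS_k}(x^k)$, the triangle inequality in the $\bB$-norm gives
\[
\|x^{k+1}-x^*\|_{\bB} \;\leq\; \|\tilde x^{k+1}-x^*\|_{\bB} + \|\epsilon^k\|_{\bB}.
\]

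Conditioning first on $(x^k,\bS_k)$ and using Jensen's inequality together with Assumption~\ref{Assumption1} yields $\Exp[\|\epsilon^k\|_{\bB}\mid x^k,\bS_k] \leq \sqrt{\Exp[\|\epsilon^k\|^2_{\bB}\mid x^k,\bS_k]} \leq \sigma_k$. Next I take expectation with respect to $\bS_k$ (conditioned on $x^k$). For the exact part I would reuse the one-step contraction established inside the proof of Theorem~\ref{BasicMethodConvergence}, namely $\Exp_{\bS_k}[\|\tilde x^{k+1}-x^*\|_{\bB}^2] \leq \rho\|x^k-x^*\|_{\bB}^2$ with $\rho = 1-\omega(2-\omega)\lambda_{\min}^+$ (this is exactly inequality~\eqref{exp_x_k_omega}, which holds pointwise in $x^k$ and only uses $\omega\in(0,2)$ and exactness). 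Applying Jensen once more,
\[
\Exp_{\bS_k}[\|\tilde x^{k+1}-x^*\|_{\bB}] \;\leq\; \sqrt{\Exp_{\bS_k}[\|\tilde x^{k+1}-x^*\|_{\bB}^2]} \;\leq\; \sqrt{\rho}\,\|x^k-x^*\|_{\bB}.
\]
Combining the two bounds and using the tower property produces the scalar recurrence
\[
\Exp[\|x^{k+1}-x^*\|_{\bB}] \;\leq\; \sqrt{\rho}\;\Exp[\|x^k-x^*\|_{\bB}] + \sigma_k.
\]

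Finally I would unroll this inequality. A straightforward induction on $k$ gives
\[
\Exp[\|x^{k}-x^*\|_{\bB}] \;\leq\; \rho^{k/2}\|x^0-x^*\|_{\bB} + \sum_{i=0}^{k-1}\rho^{(k-1-i)/2}\sigma_i,
\]
which is the claimed bound~\eqref{Theorem1}.

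\textbf{Main obstacle.} The only subtle point is that I want a bound on the (non-squared) norm, but the cleanest available contraction is for the squared norm. The right order of operations is crucial: apply the triangle inequality first (in norm, not squared norm), and only then pass to squared quantities via Jensen's inequality, so that the square root of the contraction factor $\rho$ appears. If instead one squared the triangle inequality, a cross-term $\langle \tilde x^{k+1}-x^*,\epsilon^k\rangle_{\bB}$ would appear whose sign is not controlled under Assumption~\ref{Assumption1} alone (this is precisely why Assumption~\ref{Assumption5} is needed for the stronger L2 results stated later in Table~\ref{OurResultsInexact}). Working directly at the level of $\|\cdot\|_{\bB}$ sidesteps this issue and keeps the argument clean.
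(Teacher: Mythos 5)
Your proof is correct, and it takes a genuinely different route from the paper's. You apply the triangle inequality to the (non-squared) $\bB$-norm first and only then invoke the squared one-step contraction via Jensen, which produces the scalar recurrence $\Exp[\|x^{k+1}-x^*\|_{\bB}] \leq \sqrt{\rho}\,\Exp[\|x^k-x^*\|_{\bB}] + \sigma_k$ directly and avoids any cross term. The paper instead expands $\|(\bI-\omega\bB^{-1}\bZ_k)(x^k-x^*)+\epsilon^k\|_{\bB}^2$ into three terms, bounds the cross term by Cauchy--Schwarz together with conditional Jensen (giving $\|(\bI-\omega\bB^{-1}\bZ_k)(x^k-x^*)\|_{\bB}\,\sigma_k$), and arrives at $r_{k+1}\leq\bigl(\sqrt{\rho\, r_k}+\sigma_k\bigr)^2$ for $r_k=\Exp[\|x^k-x^*\|_{\bB}^2]$; taking square roots yields the same linear recursion but in $\sqrt{r_k}$, and the stated bound then follows from $\Exp[\|x^k-x^*\|_{\bB}]\leq\sqrt{\Exp[\|x^k-x^*\|_{\bB}^2]}$. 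The trade-off: the paper's route is longer but proves the strictly stronger statement that the root-mean-square error $\sqrt{\Exp[\|x^k-x^*\|_{\bB}^2]}$ obeys the same bound, whereas yours controls only the first moment --- which is all the theorem claims. One small correction to your ``main obstacle'' remark: squaring the decomposition does \emph{not} make the cross term uncontrollable under Assumption~\ref{Assumption1} alone --- Cauchy--Schwarz bounds it without any sign information, and that is precisely what the paper does here; Assumption~\ref{Assumption5} is needed only to make the cross term vanish, which is what upgrades $(\sqrt{\rho}+q)^2$ to $\rho+q^2$ in the later L2 results, not to bound it at all. Finally, note that both your argument and the paper's invoke the contraction \eqref{exp_x_k_omega}, which rests on the quadratic growth bound \eqref{b3} evaluated at $x^k$ with the fixed reference point $x^*=\Pi_{\cL,\bB}(x^0)$; for inexact iterates that may leave the affine subspace $x^0+\range{\bB^{-1}\bA^\top}$ this is a subtlety, but it is shared with (and inherited from) the paper's own proof, so it is not a gap specific to your approach.
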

\begin{proof}
See Section~\ref{Appendix1}.
\end{proof}
\begin{cor}
\label{FirstCorollary}
 In the special case that the upper bound $\sigma_k$ in Assumption \ref{Assumption1} is fixed, that is $\sigma_k=\sigma$ for all $k>0$ then inequality \eqref{Theorem1} of Theorem \ref{InexactSGDConstant} takes the following form:
\begin{eqnarray}
\Exp[\|x^{k}-x^*\|_{\mB}] \leq  \rho^{k/2} \|x^{0}-x^*\|_{\mB} +  \sigma \frac{\rho^{1/2}}{1-\rho}.
\end{eqnarray}
This means that we obtain a linear convergence rate up to a solution level that is proportional to the upper bound $\sigma$\footnote{Several similar more specific assumptions can be made for the upper bound $\sigma_k$. For example if the upper bound satisfies $\sigma_k=\sigma^k$ with $\sigma \in (0,1)$ for all $k>0$ then it can be shown that $C\in (0,1)$ exist such that inequality \eqref{Theorem1} of Theorem \ref{InexactSGDConstant} takes the form: $\Exp[\|x^{k}-x^*\|_{\mB}] \leq   \cO(C^k) $ (see \cite{so2017non, friedlander2012hybrid} for similar results). }.
\end{cor}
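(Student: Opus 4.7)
The plan is to derive the stated bound as a direct specialization of Theorem~\ref{InexactSGDConstant}. First I would substitute $\sigma_i = \sigma$ into the general inequality
\[
\Exp[\|x^{k}-x^*\|_{\mB}] \leq \rho^{k/2}\|x^{0}-x^*\|_{\mB} + \sum_{i=0}^{k-1} \rho^{(k-1-i)/2}\sigma_i,
\]
pull $\sigma$ out of the sum, and then perform a change of summation index $j = k-1-i$ to recognize the remaining expression as a finite geometric series:
\[
\sum_{i=0}^{k-1} \rho^{(k-1-i)/2} = \sum_{j=0}^{k-1} \rho^{j/2}.
\]

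Next I would use $\rho = 1-\omega(2-\omega)\lambda_{\min}^+ \in (0,1)$, which holds because $\omega\in(0,2)$ and exactness implies $\lambda_{\min}^+ > 0$ (so that $\rho^{1/2}\in(0,1)$ as well). This lets me apply the closed form $\sum_{j=0}^{k-1}\rho^{j/2} = (1-\rho^{k/2})/(1-\rho^{1/2})$, and then upper-bound the partial sum by the infinite geometric tail $1/(1-\rho^{1/2})$. A short algebraic rearrangement (multiplying numerator and denominator by a suitable factor involving $1+\rho^{1/2}$) converts this into the form of the bound appearing in the corollary's right-hand side, yielding the linear convergence up to a neighborhood of radius proportional to~$\sigma$.

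The argument is essentially routine; the only step requiring any care is the bookkeeping of the geometric sum and the choice of the convenient form of its limit. Since no probabilistic or optimization-theoretic machinery beyond Theorem~\ref{InexactSGDConstant} is needed, I do not foresee any real obstacle — the proof is a one-line substitution followed by evaluation of a geometric series, and the interpretation (linear convergence to an $\cO(\sigma)$-neighborhood) follows immediately from observing that the first term $\rho^{k/2}\|x^0-x^*\|_\mB$ decays to zero while the second term is a $k$-independent constant multiple of $\sigma$.
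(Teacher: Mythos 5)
Your substitution and change of index are exactly right: with $\sigma_i=\sigma$ the error term in \eqref{Theorem1} becomes $\sigma\sum_{j=0}^{k-1}\rho^{j/2}=\sigma\,\frac{1-\rho^{k/2}}{1-\rho^{1/2}}\leq \frac{\sigma}{1-\rho^{1/2}}$. The gap is in your last step: no ``algebraic rearrangement'' turns $\frac{1}{1-\rho^{1/2}}$ into $\frac{\rho^{1/2}}{1-\rho}$. Multiplying numerator and denominator by $1+\rho^{1/2}$ gives $\frac{1}{1-\rho^{1/2}}=\frac{1+\rho^{1/2}}{1-\rho}$, and since $1+\rho^{1/2}>\rho^{1/2}$ this is \emph{strictly larger} than the constant $\frac{\rho^{1/2}}{1-\rho}$ claimed in the corollary. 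So the quantity you correctly derived cannot be upper-bounded by the corollary's right-hand side; the step as described would require an inequality that goes the wrong way.

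The source of the mismatch is actually in the paper, not in your series manipulation. The paper's own proof rewrites the sum as $\rho^{1/2}\sigma\sum_{i=0}^{k-1}\rho^{k-1-i}=\rho^{1/2}\sigma\,\frac{1-\rho^{k}}{1-\rho}$, i.e.\ it silently replaces the exponent $\tfrac{k-1-i}{2}$ from Theorem~\ref{InexactSGDConstant} by $\tfrac12+(k-1-i)$; since $\rho\in(0,1)$, each such term is \emph{smaller} than the corresponding term $\rho^{(k-1-i)/2}$ of the theorem, so the constant $\sigma\frac{\rho^{1/2}}{1-\rho}$ does not follow from \eqref{Theorem1} as stated. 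Your honest computation gives the valid bound
\begin{equation*}
\Exp[\|x^{k}-x^*\|_{\mB}] \;\leq\; \rho^{k/2}\|x^{0}-x^*\|_{\mB} + \frac{\sigma}{1-\rho^{1/2}},
\end{equation*}
which still delivers the corollary's qualitative message (linear convergence up to an $\cO(\sigma)$-neighbourhood), only with the constant $\frac{1}{1-\rho^{1/2}}=\frac{1+\rho^{1/2}}{1-\rho}$ in place of $\frac{\rho^{1/2}}{1-\rho}$. You should either state the corollary with this corrected constant or explicitly flag that the constant printed in the paper is not attainable from Theorem~\ref{InexactSGDConstant}; you should not assert an algebraic identity that does not hold.
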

\begin{proof}
See Section~\ref{Appendix2}.
\end{proof}

Inspired from \cite{friedlander2012hybrid}, let us now analyze iBasic using the sequence of upper bounds that described in Assumption~\ref{Assumption3}. This construction of the upper bounds  allows us to obtain stronger and more controlled results. In particular using the upper bound of Assumption~\ref{Assumption3} the sequence of expected errors converge linearly to the exact $x^*$ (not in a potential neighborhood like the previous result). In addition Assumption~\ref{Assumption3} guarantees that the distance to the optimal solution reduces with the increasing of the number of iterations. However for this stronger convergence a bound for $\lambda_{\min}^+$ is required, a quantity that in many problems is unknown to the user or intractable to compute. Nevertheless, there are cases that this value has a closed form expression and can be computed before hand without any further cost. See for example \cite{LoizouRichtarik,loizou2018accelerated,loizou2018provably,hanzely2019privacy} where methods for solving the average consensus were presented and the value of $\lambda_{\min}^+$ corresponds to the algebraic connectivity of the network under study.
\begin{thm}
\label{ISGDwithq}
Assume exactness. Let $\{x^k\}_{k=0}^\infty$ be the iterates produced by iBasic with $\omega\in(0,2)$. Set $x^*= \Pi_{\cL, \mB}(x^0)$ and consider the inexactness error $\epsilon^k$ be such that it satisfies Assumption \ref{Assumption3},
with $0\leq q < 1-\sqrt{\rho}$. Then 
\begin{eqnarray}\label{Theorem2withq}
\Exp[\|x^{k}-x^*\|_{\mB}^2]\leq \left(\sqrt{\rho}+q\right)^{2k} \|x^0-x^*\|_{\mB}^2.
\end{eqnarray}
\end{thm}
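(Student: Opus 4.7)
\textbf{Proof proposal for Theorem~\ref{ISGDwithq}.}

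The plan is to split one iteration of iBasic into its ``exact part'' plus the inexactness error, bound each piece using results we already have, and then use the Cauchy--Schwarz inequality to glue them together so that the recursion for $\Exp[\|x^{k}-x^*\|_{\bB}^2]$ contracts by the factor $(\sqrt{\rho}+q)^2$.

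First, I would introduce $y^{k+1} \eqdef x^k-\omega\nabla f_{\bS_k}(x^k)$, so that by the update rule of Algorithm~\ref{inexact_basic} we have $x^{k+1}-x^* = (y^{k+1}-x^*)+\epsilon^k$. Taking $\bB$-norms and applying the triangle inequality gives $\|x^{k+1}-x^*\|_{\bB} \leq \|y^{k+1}-x^*\|_{\bB}+\|\epsilon^k\|_{\bB}$. Squaring and expanding,
\[
\|x^{k+1}-x^*\|_{\bB}^2 \leq \|y^{k+1}-x^*\|_{\bB}^2 + 2\|y^{k+1}-x^*\|_{\bB}\|\epsilon^k\|_{\bB} + \|\epsilon^k\|_{\bB}^2.
\]

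Next, I would take the full expectation and use the Cauchy--Schwarz inequality on the cross term, obtaining
\[
\Exp[\|x^{k+1}-x^*\|_{\bB}^2] \leq \left( \sqrt{\Exp[\|y^{k+1}-x^*\|_{\bB}^2]} + \sqrt{\Exp[\|\epsilon^k\|_{\bB}^2]} \right)^{2}.
\]
Now I would invoke the per-iteration contraction already established for the exact Basic method in the proof of Theorem~\ref{BasicMethodConvergence} (cf.\ \eqref{exp_x_k_omega}), namely $\Exp_{\bS_k}[\|y^{k+1}-x^*\|_{\bB}^2 \,|\, x^k] \leq \rho \|x^k-x^*\|_{\bB}^2$, and the bound on the inexactness coming from Assumption~\ref{Assumption3}, $\Exp[\|\epsilon^k\|_{\bB}^2 \,|\, x^k,\bS_k] \leq q^2\|x^k-x^*\|_{\bB}^2$. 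Taking expectations via the tower property turns both of these into bounds in terms of $\Exp[\|x^k-x^*\|_{\bB}^2]$, and plugging back yields
\[
\Exp[\|x^{k+1}-x^*\|_{\bB}^2] \leq (\sqrt{\rho}+q)^{2}\,\Exp[\|x^{k}-x^*\|_{\bB}^2].
\]
Unrolling this one-step recurrence and noting that the hypothesis $q<1-\sqrt{\rho}$ guarantees $\sqrt{\rho}+q<1$ (so the rate is genuinely contractive) delivers \eqref{Theorem2withq}.

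The only step requiring some care is the Cauchy--Schwarz argument on the cross term: because we do not assume that $\epsilon^k$ is mean-zero or orthogonal to $y^{k+1}-x^*$ in the $\bB$-inner product (Assumption~\ref{Assumption5} is not invoked here), we cannot kill the cross term outright and must instead accept the cost of losing a square root, which is precisely what makes $(\sqrt{\rho}+q)^2$ rather than $\rho+q^2$ appear as the rate. This is also the reason the admissible range of $q$ is $q<1-\sqrt{\rho}$ instead of the weaker $q<\sqrt{1-\rho}$; the latter will only become available later when Assumption~\ref{Assumption5} lets us drop the cross term, as Theorem~\ref{InexactSGDrandom}(ii) shows.
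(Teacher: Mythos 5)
Your proposal is correct and follows essentially the same route as the paper's proof: both decompose the update into the exact sketch-and-project step $(\mI-\omega\mB^{-1}\mZ_k)(x^k-x^*)$ plus the error $\epsilon^k$, invoke the contraction $\rho$ of the exact step together with the bound $q^2\|x^k-x^*\|_{\bB}^2$ from Assumption~\ref{Assumption3}, and control the cross term via Cauchy--Schwarz to obtain the one-step recursion with factor $(\sqrt{\rho}+q)^2$. The only (harmless) difference is bookkeeping: you apply the triangle inequality pointwise and then Cauchy--Schwarz at the level of full expectations, whereas the paper expands the square exactly and handles the cross term through conditional expectations, Jensen's inequality and the variance inequality before taking the outer expectation.
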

\begin{proof}
See Section~\ref{Appendix3}.
\end{proof}

At Theorem~\ref{ISGDwithq}, to guarantee linear convergence the \emph{inexact parameter} $q$ should live in the interval $\left[0,1-\sqrt{\rho}\right)$. In particular, $q$ is the parameter that controls the level of inexactness of Algorithm \ref{inexact_basic}. Not surprisingly the fastest convergence rate is obtained when $q=0$; in such case the method becomes equivalent with its exact variant and the convergence rate simplifies to $\rho=1- \omega (2-\omega)\lambda_{\min}^+$.  Note also that similar to the exact case the optimal convergence rate is obtained for $\omega=1$ \cite{ASDA}.

Moreover, the upper bound $\sigma_k$ of Assumption~\ref{Assumption3} depends on two important quantities, the $\lambda_{\min}^+$ (through the upper bound of the inexactness parameter $q$) and the distance to the optimal solution $\|x^k-x^*\|^2_{\bB}$. Thus, it can have natural interpretation. In particular the inexactness error is allowed to be large either when the current iterate is far from the optimal solution ($\|x^k-x^*\|^2_{\bB}$ large) or when the problem is well conditioned and $\lambda_{\min}^+$ is large. In the opposite scenario, when we have ill conditioned problem or we are already close enough to the optimum $x^*$ we should be more careful and allow less errors to the updates of the method. 

In the next theorem we provide the complexity results of iBasic in the case that the Assumption~\ref{Assumption5} is satisfied combined with one of the previous assumptions.
\begin{thm}
\label{InexactSGDrandom}
Let assume exactness and let $\{x^k\}_{k=0}^\infty$ be the iterates produced by iBasic with $\omega\in(0,2)$.  Set $x^*= \Pi_{\cL,\mB}(x^0)$. Let also assume that the inexactness error $\epsilon^k$ be such that it satisfies Assumption~\ref{Assumption5}. Then:
\begin{enumerate}
\item[(i)] If Assumption \ref{Assumption2} holds: 
\begin{eqnarray}
\label{klasnaso}
\Exp[\|x^{k}-x^*\|_{\mB}^2] \leq  \rho^{k} \|x^{0}-x^*\|^2_{\mB} +  \sum_{i=0}^{k-1} \rho^{k-1-i}{\bar{\sigma}}^2_i,
\end{eqnarray}
where $\bar{\sigma}_i^2=\Exp[\sigma_i^2], \forall i \in [k-1].$
\item[(ii)] If Assumption \ref{Assumption3} holds with $q \in \left(0,\sqrt{\rho}\right)$: 
\begin{eqnarray}
\label{jaksxal}
\Exp[\|x^{k}-x^*\|_{\mB}^2] & \leq& (\rho + q^2)^k \|x^0-x^*\|_{\mB}^2. 
\end{eqnarray}
\item[(iii)] If Assumption \ref{Assumption4} holds with $q \in \left(0, \sqrt{\omega(2-\omega)}\right)$: 
\begin{equation}
\Exp[\|x^{k}-x^*\|_{\mB}^2]\leq(1- (\omega (2-\omega)-q^2 )\lambda_{\min}^+ )^k \|x^0-x^*\|_{\mB}^2=(\rho+q^2\lambda_{\min}^+ )^k \|x^0-x^*\|_{\mB}^2.
\end{equation}
\end{enumerate} 
\begin{proof}
See Section~\ref{Appendix4}.
\end{proof}
\end{thm}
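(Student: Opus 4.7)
\textbf{Proof proposal for Theorem~\ref{InexactSGDrandom}.}

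The plan is to analyze the three parts in a unified way, exploiting Assumption~\ref{Assumption5} to eliminate the cross term that is the essential obstacle to obtaining clean rates in the other results. First I would write the update of iBasic as
\[
x^{k+1}-x^* \;=\; (\bI-\omega \bB^{-1}\bZ_k)(x^k-x^*) + \epsilon^k,
\]
and expand the squared $\bB$-norm into three pieces:
\[
\|x^{k+1}-x^*\|_{\bB}^2 = \|(\bI-\omega\bB^{-1}\bZ_k)(x^k-x^*)\|_{\bB}^2 + 2\langle (\bI-\omega\bB^{-1}\bZ_k)(x^k-x^*),\,\epsilon^k\rangle_{\bB} + \|\epsilon^k\|_{\bB}^2.
\]
Taking conditional expectation (conditioned on $x^k$ and $\bS_k$), the cross term vanishes by Assumption~\ref{Assumption5}. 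For the first term, I would reuse the computation \eqref{x_k_omega} from the proof of Theorem~\ref{BasicMethodConvergence}, which gives $\|(\bI-\omega\bB^{-1}\bZ_k)(x^k-x^*)\|_{\bB}^2 = \|x^k-x^*\|_{\bB}^2 - 2\omega(2-\omega)f_{\bS_k}(x^k)$. Combining these, and then taking expectation with respect to $\bS_k$ (applying the tower property and using $\Exp_{\bS_k}[f_{\bS_k}(x^k)]=f(x^k)$), leads to the master recursion
\[
\Exp[\|x^{k+1}-x^*\|_{\bB}^2\mid x^k] \;\leq\; \|x^k-x^*\|_{\bB}^2 - 2\omega(2-\omega)f(x^k) + \Exp\bigl[\Exp[\|\epsilon^k\|_{\bB}^2\mid x^k,\bS_k]\mid x^k\bigr].
\]

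From here each of the three cases follows by substituting the appropriate error bound. For part~(i), using Assumption~\ref{Assumption2} and the quadratic growth inequality \eqref{b3} yields $\Exp[\|x^{k+1}-x^*\|_{\bB}^2] \leq \rho\,\Exp[\|x^k-x^*\|_{\bB}^2] + \bar{\sigma}_k^2$, which unrolls directly to \eqref{klasnaso}. For part~(ii), Assumption~\ref{Assumption3} gives $\Exp[\|\epsilon^k\|_{\bB}^2\mid x^k,\bS_k]\leq q^2\|x^k-x^*\|_{\bB}^2$, and after combining with \eqref{b3} we obtain the one-step contraction $\Exp[\|x^{k+1}-x^*\|_{\bB}^2\mid x^k] \leq (\rho+q^2)\|x^k-x^*\|_{\bB}^2$, whose unrolling is \eqref{jaksxal}; the hypothesis $q<\sqrt{\rho}$ is precisely what keeps $\rho+q^2<1$. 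For part~(iii), Assumption~\ref{Assumption4} absorbs directly into the $f(x^k)$ term: the recursion becomes $\Exp[\|x^{k+1}-x^*\|_{\bB}^2\mid x^k] \leq \|x^k-x^*\|_{\bB}^2 - 2(\omega(2-\omega)-q^2)f(x^k)$, and because $q<\sqrt{\omega(2-\omega)}$ the coefficient of $f(x^k)$ is positive, so \eqref{b3} can be applied to yield a contraction with rate $\rho + q^2\lambda_{\min}^+$.

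The main conceptual step is the use of Assumption~\ref{Assumption5} to cancel the cross term; once this is done, the rest of the argument is essentially bookkeeping built on top of the exact-case analysis of Theorem~\ref{BasicMethodConvergence}. The only genuine subtlety I foresee is checking the sign/admissibility of parameters in part~(iii): one must confirm that the assumed range $q\in(0,\sqrt{\omega(2-\omega)})$ both keeps the $f(x^k)$-coefficient nonnegative (so that quadratic growth can be invoked as an upper bound on $-f(x^k)$) and makes the resulting linear factor $\rho+q^2\lambda_{\min}^+$ strictly less than one, which follows from $0\leq\lambda_{\min}^+\leq 1$.
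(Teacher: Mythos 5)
Your proposal is correct and follows essentially the same route as the paper: the same three-term decomposition, the identity \eqref{x_k_omega} for the contraction term, Assumption~\ref{Assumption5} to kill the cross term, and then case-by-case substitution of the error bound together with \eqref{b2}--\eqref{b3}. One small imprecision: Assumption~\ref{Assumption5} asserts only that the \emph{full} expectation of the cross term is zero, so it does not literally vanish after conditioning on $(x^k,\bS_k)$ as you state; the paper defers this cancellation to the outermost expectation in the tower property, which is all that is needed since every claimed bound is in full expectation.
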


\begin{rem}
In the case that Assumptions~\ref{Assumption1} and \ref{Assumption5} hold simultaneously, the convergence of iBasic is similar to \eqref{klasnaso} but in this case ${\bar{\sigma}}^2_i=\sigma_i^2,\, \forall i \in [k-1]$ (due to Assumption~\ref{Assumption1}, $\sigma_k \in \R$ is a sequence of real numbers). In addition, note that for $q \in (0, \min\{\sqrt{\rho}, 1-\sqrt{\rho}\})$ having Assumption~\ref{Assumption5} on top of Assumption~\ref{Assumption3} leads to improvement of the convergence rate. In particular, from Theorem~\ref{ISGDwithq}, iBasic converges with rate $(\sqrt{\rho}+q)^{2}= \rho+q^2+2\sqrt{\rho}q$ while having both assumptions this is simplified to the faster $\rho + q^2$ \eqref{jaksxal}. 
\end{rem}

\section{iBasic with Structured Inexactness Error}
\label{InexactSolvers}
Up to this point, the analysis of iBasic was focused in more general abstract cases where the inexactness error $\epsilon^k$ of the update rule satisfies several general assumptions.  In this section we are focusing on a more structured form of inexactness error and we provide convergence analysis in the case that a linearly convergent algorithm is used for the computation of the expensive key subproblem of the method. 

\subsection{Linear system in the update rule}
\label{sectionlinesyst}

As we already mentioned in Section~\ref{subproblems} the update rule of the exact Basic method (Algorithm~\ref{inexact_basic} with $\epsilon^k=0$) can be expressed as $ x^{k+1} = x^k+\omega \mB^{-1}\mA^\top \mS_{k} \lambda^k_*$, where $\lambda^k_*= (\mS_{k}^\top \mA \mB^{-1} \mA^\top \mS_{k})^{\dagger} \mS_{k}^\top(b-\mA x^k)$.

Using this expression the exact Basic method can be equivalently interpreted as the following two step procedure:
\begin{enumerate}
\item Find the least norm solution\footnote{We are precisely looking for the least norm solution of the linear system $\bM_k  \lambda =d_k$ because this solution can be written down in a compact way using the Moore-Penrose pseudoinverse.  This is equivalent with the expression that appears in our update: $\lambda^k_*=(\mS_{k}^\top \mA \mB^{-1} \mA^\top \mS_{k})^{\dagger} \mS_{k}^\top(b-\mA x^k)=\bM_k^{\dagger} d_k $. However it can be easily shown that the method will still converge with the same rate of convergence even if we choose any other solution of the linear system $\bM_k  \lambda =d_k$.} of $
\underbrace{\mS_{k}^\top \mA \mB^{-1} \mA^\top \mS_{k}}_{\bM_k} \lambda = \underbrace{\mS_{k}^\top(b-\mA x^k)}_{d_k}
$. That is find $ \lambda^k_*=\arg\min_{ \lambda \in \cQ_k} \| \lambda\|$ where $\cQ_k= \left\{ \lambda \in \R^q : \bM_k  \lambda =d_k\right\}$.
\item Compute the next iterate:  $x^{k+1}=x^k + \omega \mB^{-1} \mA ^\top \mS_{k}  \lambda^k_*.$
\end{enumerate}

In the case that the random matrix $\bS_k$ is large (this is the case that we are interested in), solving exactly the linear system $\bM_k  \lambda=d_k$ in each step can be prohibitively expensive. To reduce this cost we allow the inner linear system $\bM_k  \lambda=d_k$ to be solved inexactly using an iterative method. In particular we propose and analyze the following inexact algorithm: 

\begin{algorithm}[H]
  \caption{iBasic with structured inexactness error
    \label{inexact_solver_algorithm}}
  \begin{algorithmic}[1]
    \Require{Distribution $\cD$ from which we draw random matrices $\bS$, positive definite matrix $\bB\in\R^{n\times n}$, stepsize $\omega>0$.}
    \Ensure{$x^0\in\R^n$}
 \For{$k=0,1,2,\cdots$}
 \State Generate a fresh sample $\bS_k \sim {\cal D}$
 \State  Using an iterative method compute an approximation $\lambda^k_{\approx}$ of the least norm solution of the linear system:
 \begin{equation}
 \label{linearsysteminCode}
\underbrace{\mS_{k}^\top \mA \mB^{-1} \mA^\top \mS_{k}}_{\bM_k} \lambda = \underbrace{\mS_{k}^\top(b-\mA x^k)}_{d_k}.
\end{equation}
 \State Set $x^{k+1}=x^k + \omega \mB^{-1} \mA ^\top \mS_{k} \lambda^k_{\approx}$.
 \EndFor
 \end{algorithmic}
\end{algorithm}

For the computation of the inexact solution of the linear system \eqref{linearsysteminCode} any known iterative method for solving general linear systems can be used. In our analysis we focus on linearly convergent methods. For example based on the properties of the linear system \eqref{linearsysteminCode},  conjugate gradient (CG) or sketch and project method (SPM) can be used for the execution of step 3. In these cases, we name Algorithm~\ref{inexact_solver_algorithm},  \emph{InexactCG} and \emph{InexactSP } respectively.

It is known that the classical CG can solve linear systems with positive definite matrices. In our approach matrix $\bM_k $ is positive definite only when the original linear system $\bA x= b$ has full rank matrix $\bA$. On the other side SPM can solve any consistent linear system and as a result can solve the inner linear system $\bM_k  \lambda^k=d_k$ without any further assumption on the original linear system. In this case, one should be careful because the system has no unique solution. We are interested to find the least norm solution of $\bM_k  \lambda^k=d_k$ which means that the starting point of the sketch and project at the $k^{th}$ iteration should be always $\lambda^k_0=0$. Recall that any special case of the sketch and project method (Section~\ref{Special Cases}) solves the best approximation problem.

Let us now define $\lambda^k_r$ to be the approximate solution $\lambda^k_{\approx}$ of the $q \times q$ linear system \eqref{linearsysteminCode} obtained after $r$ steps of the linearly convergent iterative method.  Using this, the update rule of Algorithm~\ref{inexact_solver_algorithm}, takes the form:
\begin{equation}
\label{inesacdwdad}
x^{k+1}=x^k+ \omega \mB^{-1} \mA ^\top \mS_{k} \lambda^k_r.
\end{equation}

\begin{rem}
\label{corresINexacterror}
The update rule \eqref{inesacdwdad} of Algorithm~\ref{inexact_solver_algorithm} is equivalent to the update rule of iBasic (Algorithm~\ref{inexact_basic}) when the error $\epsilon^k$ is chosen to be,
\begin{equation}
\label{specEpsilon}
\epsilon^k=\omega \bB^{-1}\bA^\top \bS_k (\lambda^k_r-\lambda^k_*).
\end{equation}
This is precisely the connection between the abstract and more concrete/structured notion of inexactness that first presented in Table~\ref{KeySubproblems}.
\end{rem}

Let us now define a Lemma that is useful for the analysis of this section and it verifies that Algorithm \ref{inexact_solver_algorithm} with unit stepsize satisfies the general Assumption \ref{Assumption5} presented in Section~\ref{asssad}. 

\begin{lem}
\label{lemmaFigure}
Let us denote $x^k_*=\Pi_{\cL_{\bS_k},\bB}(x^k)$ the projection of $x^k$ onto $\cL_{\bS_k}$ in the $\mB$-norm and $x^*= \Pi_{\cL,\mB}(x^0)$. Let also assume that $\omega=1$ (unit stepsize). Then for the updates of Algorithm~\ref{inexact_solver_algorithm} it holds that:
\begin{equation}
\label{iandia}
 \left\langle x^k_*-x^*, \epsilon^k \right \rangle_{\mB}=\left\langle (\mI - \omega \mB^{-1}\mZ_k) (x^k-x^*), \epsilon^k \right \rangle_{\mB}=0, \quad \forall k \geq 0.
\end{equation}
\end{lem}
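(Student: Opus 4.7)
The first equality in \eqref{iandia} is just an algebraic rewriting. Using the closed form \eqref{Projection_IntroThesis} for the projection onto $\cL_{\bS_k}$ and the fact that $\bA x^{*}=b$, I would compute
\[
x^k_{*}-x^{*} = (x^k-x^{*}) - \bB^{-1}\bA^\top \bS_k\bigl(\bS_k^\top \bA \bB^{-1}\bA^\top \bS_k\bigr)^{\dagger} \bS_k^\top \bA(x^k-x^{*}) = (\bI-\bB^{-1}\bZ_k)(x^k-x^{*}),
\]
which for $\omega=1$ matches $(\bI-\omega\bB^{-1}\bZ_k)(x^k-x^{*})$.

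For the main equality, I would plug in the structured form of the error from Remark~\ref{corresINexacterror}, namely $\epsilon^k = \bB^{-1}\bA^\top \bS_k(\lambda^k_r-\lambda^k_{*})$ with $\omega=1$, and expand the $\bB$-inner product. Writing $\bM_k = \bS_k^\top \bA \bB^{-1}\bA^\top \bS_k$, and using that $\bZ_k=\bZ_k^\top$, the inner product becomes
\[
\bigl\langle x^k_{*}-x^{*},\,\epsilon^k\bigr\rangle_{\bB}
= (x^k-x^{*})^\top \bA^\top \bS_k\bigl[\bI - \bM_k^{\dagger}\bM_k\bigr](\lambda^k_r-\lambda^k_{*}).
\]
Since $\bM_k$ is symmetric and positive semidefinite, $\bM_k^{\dagger}\bM_k$ is the orthogonal projector onto $\mathrm{Range}(\bM_k)$, so the claim reduces to showing $\lambda^k_r-\lambda^k_{*}\in\mathrm{Range}(\bM_k)$.

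The critical step is the range inclusion for both $\lambda^k_{*}$ and $\lambda^k_r$. For the exact least-norm solution $\lambda^k_{*}=\bM_k^{\dagger} d_k$, the inclusion is immediate from the identity $\mathrm{Range}(\bM_k^{\dagger})=\mathrm{Range}(\bM_k)$ for symmetric matrices. For the inexact iterate $\lambda^k_r$, the inclusion comes from the choice of solver discussed in Section~\ref{InexactSolvers} together with the zero initialization $\lambda^k_0=0$: for CG, the iterates lie in the Krylov subspace generated by $d_k\in\mathrm{Range}(\bM_k)$, which is contained in $\mathrm{Range}(\bM_k)$; for the sketch-and-project solver applied to $\bM_k\lambda=d_k$ from $\lambda^k_0=0$, each step adds a correction in $\mathrm{Range}(\bM_k)$, so $\lambda^k_r\in\mathrm{Range}(\bM_k)$ by induction. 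Hence $(\bI-\bM_k^{\dagger}\bM_k)(\lambda^k_r-\lambda^k_{*})=0$, which closes the argument.

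The main obstacle, and the only part requiring care, is the range argument for $\lambda^k_r$; it is where the assumption ``$\omega=1$'' and the precise specification of the inner solver (linearly convergent, zero-started, operating only within $\mathrm{Range}(\bM_k)$) enter. If the inner solver were allowed to produce iterates with a component in $\mathrm{Null}(\bM_k)$, the conclusion would fail, so I would state this structural property explicitly as a standing hypothesis on the inner solver before invoking the pseudoinverse identity.
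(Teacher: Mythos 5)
Your proof is correct, but it takes a longer route than the paper's and ends with a side claim that is not true. The paper's proof is a two-line orthogonality argument that never inspects $\lambda^k_r$: since $x^k_*$ and $x^*$ both lie in $\cL_{\bS_k}$, the difference $x^k_*-x^*$ lies in ${\rm Null}(\bS_k^\top \bA)$, while $\epsilon^k=\bB^{-1}\bA^\top \bS_k(\lambda^k_r-\lambda^k_*)$ lies in ${\rm Range}(\bB^{-1}\bA^\top \bS_k)$ whatever $\lambda^k_r$ is; and for any $u\in{\rm Null}(\bS_k^\top \bA)$ and any $w$ one has $\langle u,\bB^{-1}\bA^\top \bS_k w\rangle_{\bB}=(\bS_k^\top \bA u)^\top w=0$. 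Your expansion $\langle x^k_*-x^*,\epsilon^k\rangle_{\bB}=(x^k-x^*)^\top \bA^\top \bS_k(\bI-\bM_k^{\dagger}\bM_k)(\lambda^k_r-\lambda^k_*)$ is fine, and your range argument for $\lambda^k_r-\lambda^k_*$ does close it, so the lemma is proved. What buys you nothing, and is in fact false, is the closing remark that the conclusion would fail if the inner solver produced iterates with a component in ${\rm Null}(\bM_k)$: the left factor $\bS_k^\top \bA(x^k-x^*)$ already lies in ${\rm Range}(\bS_k^\top \bA)={\rm Range}(\bM_k)$, and since $\bI-\bM_k^{\dagger}\bM_k$ is the symmetric projector onto ${\rm Null}(\bM_k)$ it annihilates that factor from the left. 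Hence \eqref{iandia} holds for an arbitrary approximate solution $\lambda^k_r$, with no structural hypothesis on the inner solver beyond the form of $\epsilon^k$ recorded in Remark~\ref{corresINexacterror}; this unconditional character is precisely what the paper's null-space/range-space argument makes visible, and it is why the paper can state the lemma for Algorithm~\ref{inexact_solver_algorithm} without restricting the method used in its step 3.
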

\begin{proof}
Note that $x^k_*-x^*=x^k-\nabla f_{\mS_k}(x^k)-x^* \in Null (\bS_k^\top \bA)$ . Moreover $\epsilon^k\overset{\eqref{specEpsilon}}=\bB^{-1}\bA^\top \bS_k (\lambda^k_r - \lambda^k_*) \in Range(\bB^{-1} \bA^\top \bS_k)$. From the knowledge that the null space of an arbitrary matrix is the orthogonal complement of the range space of its transpose we have that $Null (\bS_k^\top \bA)$ is orthogonal with respect to the $\bB$-inner product to $Range(\bB^{-1} \bA^\top \bS_k)$. This completes the proof (see Figure~\ref{SketchProject} for the graphical interpretation).
\end{proof}

\begin{figure}[t!]
  \centering
\includegraphics[width=8cm]{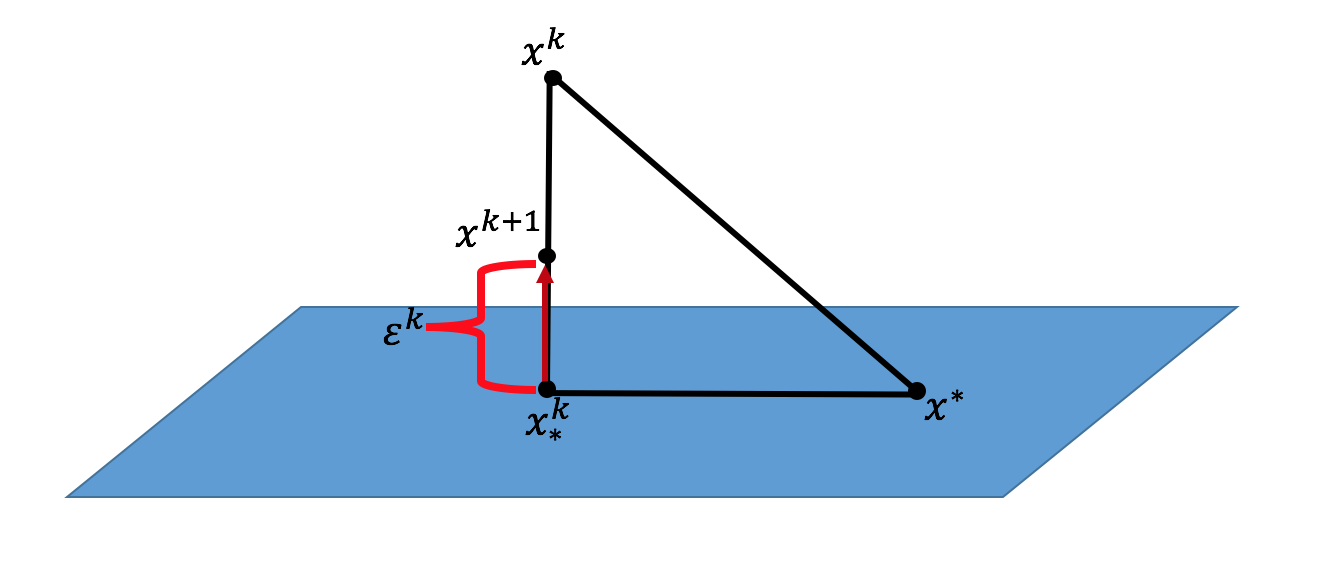}
\caption{\small Graphical interpretation of orthogonality (justifies equation \eqref{iandia}). It shows that the two vectors, $x^k_*-x^*$ and $\epsilon^k$, are orthogonal complements of each other with respect to the $\bB$-inner product. $x^{k+1}$ is the point that Algorithm \ref{inexact_solver_algorithm} computes in each step. The colored region represents the $Null (\bS_k^\top \bA)$. $x^k_*=\Pi_{\cL_{\bS_k}, \bB}(x^k)$, $x^*= \Pi_{\cL, \mB}(x^0)$ and $\epsilon^k$ is the inexactness error. }
\label{SketchProject}
\end{figure}

\subsection{Sketch and project interpretation}
\label{SFPinterpretation}
Let us now give a different interpretation of the inexact update rule of Algorithm~\ref{inexact_solver_algorithm} using the sketch and project approach. That will make us appreciate more the importance of the dual viewpoint and make clear the connection between the primal and dual methods.

%Recall that the exact sketch and project method (with unit stepsize) \eqref{Sket}

%Recall that in the special case of unit stepsize (see equation \eqref{spstepsize1}) the exact sketch and project method perform updates of the form:
%\begin{equation}
%\label{innerbestapprox}
%x^{k+1}= \text{argmin}_{x\in \R^n} \frac{1}{2}\|x-x^k\|_\mB^2 \quad \text{subject to} \quad \bS_k^\top \mA x = \bS_k^\top b .
%\end{equation}
%That is, a {\em sketched} system $ \mS^\top \mA x =  \mS^\top b$ is first chosen and then a the next iterate is computed by making a projection of the current iterate $x^k$ onto this system. 

In general, execute a projection step is one of the most common task in numerical linear algebra/optimization literature. However in the large scale setting even this task can be prohibitively expensive and it can be difficult to execute inexactly.  For this reason we suggest to move to the dual space where the inexactness can be easily controlled. 

Observe that the update rule of the exact sketch and project method \eqref{Sket} has the same structure as the best approximation problem \eqref{BestApproximation_IntroThesis} where the linear system under study is the sketched system $\bS_k^\top \mA x = \bS_k^\top b$ and the starting point is the current iterate $ x^k$. Hence we can easily compute its dual:
\begin{equation}
\label{innerdualprobalme}
\max_{\lambda \in \R^q} D_k(\lambda) \eqdef (\bS_k^\top b - \bS_k^\top \bA x^k)^\top \lambda - \frac{1}{2}\|\bA^\top \bS_k \lambda\|^2_{\bB^{-1}}.
\end{equation}
where $\lambda \in \R^q$ is the dual variable. The $\lambda^k_*$ (possibly more than one) that solves the dual problem in each iteration $k$, is the one that satisfies $\nabla D_k (\lambda^k_*)=0$. By computing the derivative this is equivalent with finding the $\lambda$ that satisfies the linear system $\mS_{k}^\top \mA \mB^{-1} \mA^\top \mS_{k} \lambda = \mS_{k}^\top(b-\mA x^k)$. This is the same linear system we desire to solve inexactly in Algorithm \ref{inexact_solver_algorithm}. Thus, computing an inexact solution $\lambda^k_{\approx}$ of the linear system is equivalent with computing an inexact solution of the dual problem \eqref{innerdualprobalme}. Then by using the affine mapping \eqref{Corresp_IntroThesis1} that connects the primal and the dual spaces we can also evaluate an inexact solution of the original primal problem \eqref{Sket}.

The following result relates the inexact levels of these quantities. In particular it shows that dual suboptimality of $\lambda^k$ in terms of dual function values is equal to the distance of the dual values $\lambda^k$ in the $\bM_k$-norm.
\begin{lem}
\label{lemmaD(y)}
Let us define $\lambda^k_* \in \R^q$ be the exact solution of the linear system $\mS_{k}^\top \mA \mB^{-1} \mA^\top \mS_{k} \lambda = \mS_{k}^\top(b-\mA x^k)$ or equivalently of dual problem \eqref{innerdualprobalme}. Let us also denote with $\lambda^k_{\approx} \in \R^q$ the inexact solution. Then:
$$D_k(\lambda^k_*)-D_k(\lambda^k_{\approx})= \frac{1}{2}\|\lambda^k_{\approx}-\lambda^k_*\|^2_{\mS_{k}^\top \mA \mB^{-1} \mA^\top \mS_{k}}.$$
\end{lem}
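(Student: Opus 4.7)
The plan is to exploit the fact that the dual objective is a (concave) quadratic form in $\lambda$, so its suboptimality is determined entirely by the quadratic part. First, using the shorthand $\bM_k \eqdef \bS_k^\top \bA \bB^{-1}\bA^\top \bS_k$ and $d_k \eqdef \bS_k^\top(b-\bA x^k)$, I would rewrite
\[
D_k(\lambda) = d_k^\top \lambda - \tfrac{1}{2}\lambda^\top \bM_k \lambda,
\]
which matches the definition \eqref{innerdualprobalme}. Since $D_k$ is concave quadratic, the first-order optimality condition $\nabla D_k(\lambda^k_*) = d_k - \bM_k \lambda^k_* = 0$ yields the key identity $\bM_k \lambda^k_* = d_k$ (note that although $\bM_k$ may be singular, $\lambda^k_*$ defined as in the statement via the pseudoinverse does satisfy this, since the linear system \eqref{linearsysteminCode} is consistent by construction).

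Next, I would substitute this identity into the suboptimality expression. Expanding,
\[
D_k(\lambda^k_*) - D_k(\lambda^k_{\approx}) = d_k^\top(\lambda^k_* - \lambda^k_{\approx}) - \tfrac{1}{2}(\lambda^k_*)^\top \bM_k \lambda^k_* + \tfrac{1}{2}(\lambda^k_{\approx})^\top \bM_k \lambda^k_{\approx},
\]
and replacing $d_k$ by $\bM_k \lambda^k_*$ in the first term gives
\[
(\lambda^k_*)^\top \bM_k (\lambda^k_* - \lambda^k_{\approx}) - \tfrac{1}{2}(\lambda^k_*)^\top \bM_k \lambda^k_* + \tfrac{1}{2}(\lambda^k_{\approx})^\top \bM_k \lambda^k_{\approx}.
\]
Collecting terms, this becomes $\tfrac{1}{2}(\lambda^k_{\approx}-\lambda^k_*)^\top \bM_k (\lambda^k_{\approx}-\lambda^k_*) = \tfrac{1}{2}\|\lambda^k_{\approx}-\lambda^k_*\|^2_{\bM_k}$, as desired.

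There is really no obstacle here: the argument is pure algebra, identical in spirit to the classical identity $h(x^*) - h(x) = \tfrac{1}{2}\|x - x^*\|^2_Q$ for a convex quadratic $h(x) = \tfrac{1}{2}x^\top Q x - b^\top x$ with optimum $x^*$ satisfying $Qx^* = b$. The only mild subtlety is that $\bM_k$ is only positive semidefinite in general, so $\|\cdot\|_{\bM_k}$ is a seminorm rather than a norm; this is harmless because the identity holds as an equality of quadratic forms regardless. The reason this lemma is useful downstream is that it lets us translate the inexactness bound guaranteed by a linearly convergent inner solver (e.g., CG or sketch-and-project applied to $\bM_k\lambda = d_k$) directly into a control on the error $\epsilon^k = \omega \bB^{-1}\bA^\top \bS_k(\lambda^k_{\approx}-\lambda^k_*)$ that drives the outer analysis of Algorithm~\ref{inexact_solver_algorithm}.
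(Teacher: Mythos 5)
Your proof is correct and is essentially the same argument as the paper's: both expand $D_k(\lambda^k_*)-D_k(\lambda^k_{\approx})$ from the definition, substitute the identity $\bM_k\lambda^k_* = d_k$ into the linear term, and collect the result into the quadratic form $\tfrac{1}{2}\|\lambda^k_{\approx}-\lambda^k_*\|^2_{\bM_k}$. The only cosmetic difference is how that identity is justified — you invoke the first-order optimality condition of the concave quadratic together with consistency of $\bM_k\lambda = d_k$, while the paper cites the primal--dual correspondence \eqref{Corresp_IntroThesisOptimal}; both are valid and equivalent here.
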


\begin{proof}
\begin{eqnarray*}
D_k(\lambda^k_*)-D_k(\lambda^k_{\approx}) & \overset {\eqref{innerdualprobalme}}{=} &  [\bS_k^\top b - \bS_k^\top \bA x^k]^\top [\lambda^k_*-\lambda^k_{\approx}]-\frac{1}{2} (\lambda^k_*)^\top \mS_{k}^\top \mA \mB^{-1} \mA^\top \mS_{k} \lambda^k_*\notag\\ 
&&+\frac{1}{2} (\lambda^k_{\approx})^\top \mS_{k}^\top \mA \mB^{-1} \mA^\top \mS_{k} \lambda^k_{\approx} \notag\\
& \overset {\eqref{Corresp_IntroThesisOptimal}}{=} & (\lambda^k_*)^\top \mS_{k}^\top \mA \mB^{-1} \mA^\top \mS_{k}  [\lambda^k_*-\lambda^k_{\approx}]-\frac{1}{2} (\lambda^k_*)^\top \mS_{k}^\top \mA \mB^{-1} \mA^\top \mS_{k} \lambda^k_*\notag\\ 
&&+ \frac{1}{2} (\lambda^k_{\approx})^\top \mS_{k}^\top \mA \mB^{-1} \mA^\top \mS_{k} \lambda^k_{\approx} \notag\\
& = & \frac{1}{2}(\lambda^k_{\approx}-\lambda^k_*)^\top \mS_{k}^\top \mA \mB^{-1} \mA^\top \mS_{k}(\lambda^k_{\approx}-\lambda^k_*) \notag\\
& = & \frac{1}{2}\|\lambda^k_{\approx}-\lambda^k_*\|^2_{\mS_{k}^\top \mA \mB^{-1} \mA^\top \mS_{k}} 
\end{eqnarray*}
where in the second equality we use equation \eqref{Corresp_IntroThesisOptimal} to connect the optimal solutions of \eqref{Sket} and \eqref{innerdualprobalme} and obtain $[\bS_k^\top b - \bS_k^\top \bA x^k]^\top=(\lambda^k_*)^\top \mS_{k}^\top \mA \mB^{-1} \mA^\top \mS_{k}.$
\end{proof}

\subsection{Complexity results}
\label{MainTheory}
In this part we analyze the performance of Algorithm \ref{inexact_solver_algorithm} when a linearly convergent iterative method is used for solving inexactly the linear system \eqref{linearsysteminCode} in step 3 of Algorithm \ref{inexact_solver_algorithm} . We denote with $\lambda^k_r$ the approximate solution of the linear system after we run the iterative method for $r$ steps.

Before state the main convergence result let us present a lemma that summarize some observations that are true in our setting.
\begin{lem}
Let $\lambda^k_*=(\mS_{k}^\top \mA \mB^{-1} \mA^\top \mS_{k})^{\dagger} \mS_{k}^\top(b-\mA x^k)$  be the exact solution and $\lambda^k_r$ be approximate solution of the linear system \eqref{linearsysteminCode}. Then, $\|\lambda^k_*\|^2_{\bM_k}=2 f_{\bS_k}(x^k) $ and $\|\epsilon^k\|_{\mB}^2= \|\lambda^k_r -\lambda^k_*\|_{\bM_k}^2$.
\begin{proof}
\begin{eqnarray}
\label{asdasfarf}
\|\lambda^k_*\|^2_{\bM_k} &=& \|\bM_k^\dagger \bS_k^\top \bA (x^*-x^k) \|^2_{\bM_k} = (x^k-x^*)^\top \bA^\top \bS_k \underbrace{\bM_k^{\dagger} \bM_k \bM_k^{\dagger}}_{ \bM_k^{\dagger}} \mS_{k}^\top \mA  (x^k-x^*) \notag\\
 &\overset{\eqref{ZETA_Intro}}=& (x^k-x^*)^\top \bZ_k (x^k-x^*) \overset{\eqref{f_sZeta_IntroThesis}}= 2 f_{\bS_k}(x^k). 
\end{eqnarray}
Moreover,
\begin{eqnarray}
\label{noasdnk}
\|\epsilon^k\|_{\mB}^2 &\overset{Remark~\ref{corresINexacterror}}=& \|\bB^{-1}\bA^\top \bS_k (\lambda^k_r -\lambda^k_*)\|_{\mB}^2 
=  \|\lambda^k_r -\lambda^k_*\|_{\mS_{k}^\top \mA \mB^{-1} \mA ^\top \mS_{k}}^2
=   \|\lambda^k_r -\lambda^k_*\|_{\bM_k}^2.
\end{eqnarray}
\end{proof}
\end{lem}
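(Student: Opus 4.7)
The plan is to establish the two identities separately by unpacking the definitions of $\lambda^k_*$, $\bM_k$, and $\epsilon^k$, and then exploiting the defining property $\bM_k^\dagger \bM_k \bM_k^\dagger = \bM_k^\dagger$ of the Moore--Penrose pseudoinverse together with the closed-form expressions \eqref{ZETA_Intro} and \eqref{f_sZeta_IntroThesis} already introduced in the thesis.

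For the first identity, I would start from the expression $\lambda^k_* = \bM_k^\dagger \bS_k^\top(b-\bA x^k)$. Since $x^* \in \cL$ we have $b = \bA x^*$, so $\bS_k^\top(b-\bA x^k) = \bS_k^\top \bA(x^*-x^k)$. Substituting this into $\|\lambda^k_*\|_{\bM_k}^2 = (\lambda^k_*)^\top \bM_k \lambda^k_*$ produces the sandwich $\bM_k^\dagger \bM_k \bM_k^\dagger$, which collapses to $\bM_k^\dagger$. Recognizing $\bA^\top \bS_k \bM_k^\dagger \bS_k^\top \bA$ as the matrix $\bZ_k$ of \eqref{ZETA_Intro}, and then invoking \eqref{f_sZeta_IntroThesis}, immediately gives $\|\lambda^k_*\|_{\bM_k}^2 = (x^k-x^*)^\top \bZ_k (x^k-x^*) = 2 f_{\bS_k}(x^k)$.

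For the second identity, I would use the structured form of the inexactness error from Remark~\ref{corresINexacterror}, namely $\epsilon^k = \omega \bB^{-1}\bA^\top \bS_k(\lambda^k_r - \lambda^k_*)$ (taking the unit stepsize case consistent with the rest of this section, so $\omega=1$). Then
\[
\|\epsilon^k\|_\bB^2 = (\lambda^k_r-\lambda^k_*)^\top \bS_k^\top \bA \bB^{-1}\bB\bB^{-1}\bA^\top \bS_k (\lambda^k_r-\lambda^k_*) = (\lambda^k_r-\lambda^k_*)^\top \bM_k (\lambda^k_r-\lambda^k_*),
\]
which is precisely $\|\lambda^k_r-\lambda^k_*\|_{\bM_k}^2$ by the definition of $\bM_k = \bS_k^\top \bA \bB^{-1} \bA^\top \bS_k$ given in \eqref{linearsysteminCode}.

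Neither step poses a real obstacle: both reduce to substituting definitions and applying a pseudoinverse identity. The only subtle point is being careful with $\bM_k^\dagger \bM_k \bM_k^\dagger = \bM_k^\dagger$; this is where one might stumble if one forgets that $\bS_k$ may have dependent columns, so $\bM_k$ is generally only positive semidefinite (not invertible). Everything else is pure computation.
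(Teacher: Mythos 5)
Your proposal is correct and follows essentially the same route as the paper's own proof: both identities are obtained by substituting $b=\bA x^*$ and the structured form of $\epsilon^k$ from Remark~\ref{corresINexacterror} (with $\omega=1$), collapsing $\bM_k^\dagger\bM_k\bM_k^\dagger$ to $\bM_k^\dagger$, and invoking \eqref{ZETA_Intro} and \eqref{f_sZeta_IntroThesis}. No gaps.
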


\begin{thm}
\label{MainTheorem}
Let us assume that for the computation of the inexact solution of the linear system \eqref{linearsysteminCode} in step 3 of Algorithm~\ref{inexact_solver_algorithm}, a linearly convergent iterative method is chosen such that \footnote{In the case that deterministic iterative method is used, like CG, we have that $\|\lambda^k_r-\lambda^k_*\|_{\bM_k}^2 \leq \rho_{\bS_k}^r \|\lambda^k_0-\lambda^k_*\|^2_{\bM_k}$ which is also true in expectation.}: 
\begin{equation}
\label{linaelr}
\Exp[\|\lambda^k_r-\lambda^k_*\|_{\bM_k}^2 \;|\;x^k,\bS_k] \leq \rho_{\bS_k}^r \|\lambda^k_0-\lambda^k_*\|^2_{\bM_k}, 
\end{equation}
where $\lambda^k_0=0$ for any $k>0$ and $\rho_{\bS_k} \in (0,1)$ for every choice of $\bS_k \sim \cD$.
Let exactness hold and let $\{x^k\}_{k=0}^\infty$ be the iterates produced by Algorithm~\ref{inexact_solver_algorithm} with unit stepsize ($\omega=1$).  Set $x^*= \Pi_{\cL,\mB}(x^0)$. 
Suppose further that there exists a scalar $\theta <1$ such that with
probability 1,  $\rho_{\bS_k} \leq \theta $. Then,  Algorithm \ref{inexact_solver_algorithm} converges linearly with:
$$\Exp[\|x^{k}-x^*\|_{\mB}^2] \leq \left[ 1- \left(1 -\theta^r  \right) \lambda_{\min}^+ \right] ^k \|x^0-x^*\|_{\mB}^2. $$
\end{thm}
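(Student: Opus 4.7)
The plan is to recognize Theorem~\ref{MainTheorem} as a direct corollary of the already established Theorem~\ref{InexactSGDrandom}(iii), once we verify that Algorithm~\ref{inexact_solver_algorithm} falls into its framework under the stated hypotheses. Specifically, I would first invoke Remark~\ref{corresINexacterror} to rewrite one step of Algorithm~\ref{inexact_solver_algorithm} as an iteration of iBasic with structured inexactness error $\epsilon^k = \omega \bB^{-1}\bA^\top \bS_k (\lambda^k_r - \lambda^k_*)$. Since we have $\omega=1$, Lemma~\ref{lemmaFigure} immediately guarantees that Assumption~\ref{Assumption5} is satisfied.

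Next I would verify that Assumption~\ref{Assumption4} holds with inexactness parameter $q^2 = \theta^r$. The key chain of identities and inequalities is
\begin{equation*}
\Exp[\|\epsilon^k\|_{\mB}^2 \mid x^k, \bS_k] \overset{\eqref{noasdnk}}{=} \Exp[\|\lambda^k_r - \lambda^k_*\|_{\bM_k}^2 \mid x^k, \bS_k] \overset{\eqref{linaelr}}{\leq} \rho_{\bS_k}^r \|\lambda^k_0 - \lambda^k_*\|_{\bM_k}^2 = \rho_{\bS_k}^r \|\lambda^k_*\|_{\bM_k}^2 \overset{\eqref{asdasfarf}}{=} 2\rho_{\bS_k}^r f_{\bS_k}(x^k),
\end{equation*}
where we used $\lambda^k_0 = 0$ in the third step. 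Using the hypothesis $\rho_{\bS_k}\leq\theta<1$ almost surely, this upper bound is dominated by $2\theta^r f_{\bS_k}(x^k)$, which is precisely Assumption~\ref{Assumption4} with $q^2=\theta^r$. The admissibility condition $q \in (0,\sqrt{\omega(2-\omega)}) = (0,1)$ required by Theorem~\ref{InexactSGDrandom}(iii) is satisfied because $\theta<1$ implies $\theta^r<1$.

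Finally, since both Assumption~\ref{Assumption5} and Assumption~\ref{Assumption4} (with $q^2 = \theta^r$) hold, I apply Theorem~\ref{InexactSGDrandom}(iii) directly, obtaining
\begin{equation*}
\Exp[\|x^{k}-x^*\|_{\mB}^2] \leq \bigl(1 - (\omega(2-\omega) - q^2)\lambda_{\min}^+\bigr)^k \|x^0-x^*\|_{\mB}^2 = \bigl(1 - (1-\theta^r)\lambda_{\min}^+\bigr)^k \|x^0-x^*\|_{\mB}^2,
\end{equation*}
which is exactly the claimed bound.

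No step is really a major obstacle here: all the heavy lifting has already been done in Lemma~\ref{lemmaFigure}, in the identities \eqref{noasdnk} and \eqref{asdasfarf}, and in Theorem~\ref{InexactSGDrandom}(iii). The only mildly delicate point is to make sure that the tower property is applied correctly --- the linear convergence bound \eqref{linaelr} is stated as a conditional expectation given $(x^k,\bS_k)$, which matches exactly the conditioning used in Assumption~\ref{Assumption4}, so no extra care is needed beyond noting this compatibility and uniformly bounding $\rho_{\bS_k}$ by $\theta$ to make the final rate independent of the random sketch.
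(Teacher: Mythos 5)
Your proposal is correct and follows essentially the same route as the paper's own proof: both reduce the statement to Theorem~\ref{InexactSGDrandom}(iii) by verifying Assumption~\ref{Assumption5} via Lemma~\ref{lemmaFigure} and Assumption~\ref{Assumption4} with $q=\theta^{r/2}$ via the chain \eqref{noasdnk}, \eqref{linaelr}, $\lambda^k_0=0$, and \eqref{asdasfarf}. The only (immaterial) difference is that you bound $\rho_{\bS_k}$ by $\theta$ after simplifying $\|\lambda^k_*\|^2_{\bM_k}$ to $2f_{\bS_k}(x^k)$ rather than before.
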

\begin{proof}
Theorem~\ref{MainTheorem} can be interpreted as corollary of the general Theorem~\ref{InexactSGDrandom}(iii). Thus, it is sufficient to show that Algorithm \ref{inexact_solver_algorithm} satisfies the two Assumptions~\ref{Assumption4} and \ref{Assumption5}. Firstly, note that from Lemma~\ref{lemmaFigure}, Assumption \ref{Assumption5} is true. Moreover, 
\begin{eqnarray*}
\Exp[\|\epsilon^k\|_{\bM_k}^2 \;|\;x^k,\bS_k] &\overset{\eqref{noasdnk}}=&\Exp[\|\lambda^k_r-\lambda^k_*\|_{\bM_k}^2 \;|\;x^k,\bS_k] \overset{\eqref{linaelr}}\leq \rho_{\bS_k}^r \|\lambda^k_0-\lambda^k_*\|^2_{\bM_k} \notag\\
&\leq &\theta^r \|\lambda^k_0-\lambda^k_*\|^2_{\bM_k} \overset{\lambda^k_0=0}{=}  \theta^r \|\lambda^k_*\|^2_{\bM_k}\overset{\eqref{asdasfarf}}{=}  2 \theta^r f_{\bS_k}(x^k) 
\end{eqnarray*}
which means that Assumption~\ref{Assumption4} also holds with $q = \theta^{r/2} \in(0,1)$. This completes the proof.
\end{proof}

Having present the main result of this section let us now state some remarks that will help understand the convergence rate of the last Theorem.
\begin{rem}
From its definition $\theta^r \in (0,1)$ and as a result $\left(1 -\theta^r  \right) \lambda_{\min}^+ \leq \lambda_{\min}^+ $. This means that the method converges linearly but always with worst rate than its exact variant. 
\end{rem}

\begin{rem}
Let as assume that $\theta$ is fixed. Then as the number of iterations in step 3 of the algorithm ($r\rightarrow\infty$) increasing $(1-\theta^r) \rightarrow 1$ and as a result the method behaves similar to the exact case.
\end{rem}

\begin{rem}
The $\lambda_{\min}^+$ depends only on the random matrices $\bS \sim \cD$ and to the positive definite matrix $\bB$ and is independent to the iterative process used in step 3. The iterative process of step 3 controls only the parameter $\theta$ of the convergence rate.
\end{rem}

\begin{rem}
Let us assume that we run Algorithm \ref{inexact_solver_algorithm} two separate times for two different choices of the linearly convergence iterative method of step 3. Let also assume that the distribution $\cD$ of the random matrices and the positive definite matrix $\bB$ are the same for both instances and that for step 3 the iterative method run  for $r$ steps for both algorithms. Let assume that $\theta_1< \theta_2$ then we have that $\rho_1=1- \left(1 -\theta_1^r  \right) \lambda_{\min}^+< 1- \left(1 -\theta_2^r  \right) \lambda_{\min}^+=\rho_2$. This means in the case that $\theta$ is easily computable, we should always prefer the inexact method with smaller $\theta$.
\end{rem}

The convergence of Theorem~\ref{MainTheorem} is quite general and it holds for any linearly convergent methods that can inexactly solve  \eqref{linearsysteminCode}. However, in case that the iterative method is known we can have more concrete results. See below the more specified results for the cases of Conjugate gradient (CG) and Sketch and project method (SPM). 

\paragraph{Convergence of InexactCG:}
CG is deterministic iterative method for solving linear systems $\bA x=b$ with symmetric and positive definite matrix $\bA \in \R^{n\times n}$ in finite number of iterations. In particular, it can be shown that converges to the unique solution in at most $n$ steps. The worst case behavior of CG is given by \cite{wright1999numerical,golub2012matrix} \footnote{A sharper convergence rate of CG \cite{wright1999numerical} for solving $\bA x=b$ can be also used 
$$
\|x^k-x^*\|^2_{\bA}\leq \left( \frac{\lambda_{n-k}-\lambda_1}{\lambda_{n-k}+\lambda_1}\right)^{2} \|x^0-x^*\|^2_{\bA},
$$
where matrix $\bA\in \R^{n\times n}$ has $\lambda_1\leq \lambda_2 \leq \dots \leq \lambda_n$ eigenvalues.}:
\begin{equation}
\label{conjgrad}
\|x^k-x^*\|_{\bA}\leq \left( \frac{\sqrt{\kappa(\bA)}-1}{\sqrt{\kappa(\bA)}+1}\right)^{2k} \|x^0-x^*\|_{\bA},
\end{equation}
where $ x^k$ is the $k^{th}$ iteration of the method and $\kappa(\bA)$ the condition number of matrix $\bA$.

Having present the convergence of CG for general linear systems, let us now return back to our setting. We denote $\lambda^k_r \in \R^q$ to be the approximate solution of the inner linear system \eqref{linearsysteminCode} after $r$ conjugate gradient steps. Thus using \eqref{conjgrad} we know that
$\|\lambda^k_r-\lambda^k_*\|_{\bM_k}^2 \leq \rho_{\bS_k}^{4r} \|\lambda^k_0-\lambda^k_*\|^2_{\bM_k},$
where $\rho_{\bS_k}= \left( \frac{\sqrt{\kappa(\bM_k)}-1}{\sqrt{\kappa(\bM_k)}+1}\right)$. Now by making the same assumption as the general Theorem~\ref{MainTheorem} the InexactCG converges with
$\Exp[\|x^{k}-x^*\|_{\mB}^2] \leq \left[ 1- \left(1 -\theta_{CG}^r  \right) \lambda_{\min}^+ \right] ^k \|x^0-x^*\|_{\mB}^2, $
where $\theta_{CG} <1$ such that $\rho_{\bS_k}=\left( \frac{\sqrt{\kappa(\bM_k)}-1}{\sqrt{\kappa(\bM_k)}+1}\right)^{4} \leq \theta_{CG}$ with probability 1.

\paragraph{Convergence of InexactSP:}
In this setting we suggest to run the sketch and project method (SPM) for solving inexactly the linear system \eqref{linearsysteminCode}. This allow us to have no assumptions on the structure of the original system $\bA x=b$ and as a result we are able to solve more general problems compared to what problems InexactCG can solve\footnote{Recall that InexactCG requires the matrix $\bM_k$ to be positive definite (this is true when matrix $\bA$ is a full rank matrix)}. Like before, by making the same assumptions as in Theorem~\ref{MainTheorem} the more specific convergence $\Exp[\|x^{k}-x^*\|_{\mB}^2] \leq \left[ 1- \left(1 -\theta_{SP}^r  \right) \lambda_{\min}^+ \right] ^k \|x^0-x^*\|_{\mB}^2, $ for the InexactSP can be obtained. Now the quantity $\rho_{\bS_k}$ denotes the convergence rate of the exact Basic method\footnote{Recall that iBasic and its exact variant ($\epsilon^k=0$) can be expressed as sketch and project methods \eqref{anska}.} when this applied to solve linear system \eqref{linearsysteminCode} and $\theta_{SP} <1$ is a scalar such that $\rho_{\bS_k} \leq \theta_{SP}$ with probability 1.

\section{Inexact Dual Method}
\label{InexactDualMethods}
In the previous sections we focused on the analysis of inexact stochastic methods for solving the stochastic optimization problem \eqref{StochReform_IntroThesis} and the best approximation \eqref{BestApproximation_IntroThesis}.  In this section we turn into the dual of the best approximation \eqref{DualProblem_IntroThesis} and we propose and analyze an inexact variant of the SDSA \eqref{SDSA_IntroThesis}. We call the new method iSDSA and is formalized as Algorithm \ref{inexact_dual}. In the update rule $\epsilon_d^k$ indicates the dual inexactness error that appears in the $k^{th}$ iteration of iSDSA.

\begin{algorithm}[H]
  \caption{Inexact Stochastic Dual Subspace Ascent (iSDSA)
    \label{inexact_dual}}
  \begin{algorithmic}[1]
    \Require{Distribution $\cD$ from which we draw random matrices $\bS$, positive definite matrix $\bB\in\R^{n\times n}$, stepsize $\omega>0$.}
    \Ensure{$y^0=0 \in \R^m$, $x^0 \in \R^n$}
 \For{$k=0,1,2,\cdots$}
 \State Draw a fresh sample $\bS_k \sim \cD$
 \State  Set $y^{k+1}=y^k+\omega \mS_k \left(\mS_k^\top \bA \bB^{-1}\bA^\top \mS_k \right)^\dagger\bS_k^\top \left(b-\bA(x^0 + \bB^{-1}\bA^\top y^k) \right) + \epsilon_d^k$
 \EndFor
 \end{algorithmic}
\end{algorithm}

\subsection{Correspondence between the primal and dual methods}
With the sequence of the dual iterates $\{y^k\}_{k=0}^\infty$ produced by the iSDSA we can associate a sequence of primal iterates  $\{x^k\}_{k=0}^\infty$ using the affine mapping \eqref{Corresp_IntroThesis}.  In our first result we show that the random iterates produced by iBasic arise as an affine image of iSDSA under this affine mapping.

\begin{thm}(Correspondence between the primal and dual methods)
\label{LemmaCorrespondance}
Let $\{x^k\}_{k=0}^\infty$  be the iterates produced by iBasic (Algorithm~\ref{inexact_basic}). Let $y^0=0$, and $\{y^k\}_{k=0}^\infty$ the iterates of the iSDSA. Assume that the two methods use the same stepsize $\omega >0 $ and the same sequence of random matrices $\bS_k$. Assume also that $\epsilon^k=\bB^{-1}\bA^\top \epsilon_d^k$ where $\epsilon^k$ and $\epsilon_d^k$ are the inexactness errors appear in the update rules of iBasic and iSDSA respectively. Then $$x^k=\phi(y^k)=x^0+\bB^{-1}\bA^\top y^k.$$ for all $k\geq0$. That is, the primal iterates arise as affine images of the dual iterates.
\end{thm}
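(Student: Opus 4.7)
The plan is to prove the claim by induction on $k$, mirroring the strategy used for the exact case in Proposition~\ref{bcaoskla} and for the momentum case in Theorem~\ref{thm:dual_corresp}. The key observation is that the assumption $\epsilon^k = \bB^{-1}\bA^\top \epsilon_d^k$ is precisely the condition needed so that the image of the dual error under the mapping $\phi$ matches the primal error; everything else is exactly the computation that worked in the exact case.

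For the base case $k=0$, I simply invoke the initialization $y^0 = 0$ to get $\phi(y^0) = x^0 + \bB^{-1}\bA^\top \cdot 0 = x^0$, matching the starting point of iBasic. For the inductive step, assuming $x^k = \phi(y^k) = x^0 + \bB^{-1}\bA^\top y^k$, I would apply $\phi$ to the iSDSA update rule and collect terms:
\begin{align*}
\phi(y^{k+1}) &= x^0 + \bB^{-1}\bA^\top y^{k+1} \\
&= x^0 + \bB^{-1}\bA^\top y^k + \omega\,\bB^{-1}\bA^\top \bS_k (\bS_k^\top \bA \bB^{-1}\bA^\top \bS_k)^\dagger \bS_k^\top\!\left(b-\bA\phi(y^k)\right) + \bB^{-1}\bA^\top \epsilon_d^k.
\end{align*}
Using the inductive hypothesis to replace $\phi(y^k)$ by $x^k$ in the residual $b - \bA\phi(y^k)$, and the assumption $\epsilon^k = \bB^{-1}\bA^\top \epsilon_d^k$, this becomes exactly the iBasic update applied to $x^k$, which by definition equals $x^{k+1}$. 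This closes the induction.

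The main obstacle is essentially bookkeeping: making sure the residual $b - \bA \phi(y^k)$ arising in the iSDSA step is correctly identified with $-(\bA x^k - b)$ under the inductive hypothesis, so that the sketched pseudoinverse term in the iSDSA update matches the projection-style term in the iBasic update. There is no hidden analytical difficulty — no expectation, no norm bound is needed — the result holds pathwise, and the equivalence extends cleanly from the exact and momentum settings because the primal and dual inexactness are coupled by the linear relation $\epsilon^k = \bB^{-1}\bA^\top \epsilon_d^k$, which is exactly the derivative of $\phi$.
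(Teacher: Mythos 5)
Your proposal is correct and follows essentially the same route as the paper: the paper also expands $\phi(y^{k+1})$ via the iSDSA update, uses $\epsilon^k=\bB^{-1}\bA^\top\epsilon_d^k$ to identify the resulting recursion with the iBasic recursion, and concludes by noting the starting points coincide since $y^0=0$ — which is just your induction written in the ``same recursion, same initial point'' style.
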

\begin{proof}
\begin{eqnarray*}
\phi(y^{k+1}) &\overset{\eqref{Corresp_IntroThesis}}{=}& x^0  + \bB^{-1}\bA^\top y^{k+1} \overset{ \eqref{lambdak_IntroThesis},\text{Alg.}\ref{inexact_dual}}{=} x^0+\bB^{-1}\bA^\top\left[y^k+\omega \bS_k \lambda^k +\epsilon_d^k \right]\\
&\overset{\eqref{ZETA_Intro}, \eqref{lambdak_IntroThesis} }{=}&  \underbrace{x^0+\bB^{-1}\bA^\top y^k}_{\phi(y^k)}+\omega \bB^{-1} \bZ_k \left(x^*-( \underbrace{x^0+\bB^{-1}\bA^\top y^k}_{\phi(y^k)}) \right)+\bB^{-1}\bA^\top \epsilon_d^k\\
&=& \phi(y^k)- \omega \bB^{-1}\bZ_k( \phi(y^k)-x^*) +\bB^{-1}\bA^\top \epsilon_d^k\\
\end{eqnarray*}
Thus by choosing the inexactness error of the primal method to be $\epsilon^k=\bB^{-1}\bA^\top \epsilon_d^k$ the sequence of vectors  $\{\phi(y^k)\}$ satisfies the same recursion as the sequence $\{x^k\}$ defined by iBasic.  It remains to check that the first element of both recursions coincide. Indeed, since $y^0=0$, we have $x^0 = \phi(0) =  \phi(y^0)$.
\end{proof}

\subsection{iSDSA with structured inexactness error}
In this subsection we present Algorithm~\ref{inexact_solver_Dualalgorithm}. It can be seen as a special case of iSDSA but with a more structured inexactness error. 

\begin{algorithm}[H]
  \caption{iSDSA with structured inexactness error
    \label{inexact_solver_Dualalgorithm}}
  \begin{algorithmic}[1]
    \Require{Distribution $\cD$ from which we draw random matrices $\bS$, positive definite matrix $\bB\in\R^{n\times n}$, stepsize $\omega>0$.}
    \Ensure{$y^0=0\in\R^m$, $x^0 \in \R^n$}
 \For{$k=0,1,2,\cdots$}
 \State Generate a fresh sample $\bS_k \sim {\cal D}$
 \State Using an Iterative method compute an approximation $\lambda^k_{\approx}$ of the least norm solution of the linear system:
 \begin{equation}
\underbrace{\mS_{k}^\top \mA \mB^{-1} \mA^\top \mS_{k}}_{\bM_k} \lambda =  \underbrace{\mS_{k}^\top(b-\bA(x^0 + \bB^{-1}\bA^\top y^k)}_{d_k}
\end{equation}
 \State Set $y^{k+1}=y^k + \omega \mS_{k} \lambda^k_{\approx}$
 \EndFor
 \end{algorithmic}
\end{algorithm}

Similar to their primal variants, it can be easily checked that Algorithm~\ref{inexact_solver_Dualalgorithm} is a special case of the iSDSA ( Algorithm~\ref{inexact_dual}) when the dual inexactness error is chosen to be $\epsilon^k_d= \bS_k(\lambda^k_r-\lambda^k_*)$. Note that, using the observation of Remark~\ref{corresINexacterror} that $\epsilon^k=\omega \bB^{-1}\bA^\top \bS_k (\lambda^k_r-\lambda^k_*)$ and the above expression of $\epsilon^k_d$ we can easily verify that the expression
$\epsilon^k=\bB^{-1}\bA^\top \epsilon_d^k$ holds.
This is precisely the connection between the primal and dual inexactness errors that have already been used in the proof of Theorem~\ref{LemmaCorrespondance}.

\subsection{Convergence of dual function values}
We are now ready to state a linear convergence result describing the behavior of the inexact dual method in terms of the function values $D(y^k)$. The following result is focused on the convergence of iSDSA by making similar assumption to Assumption~\ref{Assumption3}. Similar convergence results can be obtained using any other assumption of Section~\ref{asssad}. The convergence of Algorithm~\ref{inexact_solver_Dualalgorithm}, can be also easily derived using similar arguments with the one presented in Section~\ref{InexactSolvers} and the convergence guarantees of Theorem~\ref{MainTheorem}.

\begin{thm}(Convergence of dual objective).
Assume exactness. Let $y^0=0$ and let $\{y^k\}_{k=0}^\infty$ to be the dual iterates of iSDSA (Algorithm~\ref{inexact_dual}) with $\omega \in (0,2)$.  Set $x^*= \Pi_{\cL,\mB}(x^0)$ and let $y^*$ be any dual optimal solution. Consider the inexactness error $\epsilon^k_d$ be such that it satisfies 
 $\Exp[\|\bB^{-1}\bA^\top\epsilon^k_d\|^2_{\bB}\;|\;y^k,\bS_k]\leq\sigma_k^2= q^2 2 \left[D(y^*)-D(y^k)\right]$
where $0\leq q < 1-\sqrt{\rho}$. Then 
\begin{eqnarray}
\Exp[D(y^*)-D(y^k)] \leq \left(\sqrt{\rho}+q\right)^{2k} \left[ D(y^*)-D(y^0)\right].
\end{eqnarray}
\end{thm}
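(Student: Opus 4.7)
The plan is to reduce this dual convergence statement to the already-established primal convergence result in Theorem~\ref{ISGDwithq} via the primal--dual correspondence of Theorem~\ref{LemmaCorrespondance} and the Fenchel-type identity $D(y^*)-D(y)=\tfrac{1}{2}\|\phi(y)-x^*\|^2_{\bB}$ (the dual-suboptimality-equals-squared-primal-distance relation behind Proposition~\ref{PropositionDualPrimal}).

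First I would set up the primal shadow of iSDSA. Given the dual iterates $\{y^k\}$ produced by Algorithm~\ref{inexact_dual} with inexactness error $\epsilon_d^k$, define the primal sequence $x^k \eqdef \phi(y^k) = x^0 + \bB^{-1}\bA^\top y^k$. Setting the primal inexactness error $\epsilon^k \eqdef \bB^{-1}\bA^\top \epsilon_d^k$, Theorem~\ref{LemmaCorrespondance} guarantees that $\{x^k\}$ coincides with the sequence produced by iBasic (Algorithm~\ref{inexact_basic}) using the same stepsize $\omega$, the same matrices $\bS_k$, and this specific primal error $\epsilon^k$. In particular $y^0=0$ gives $x^0 = \phi(y^0)$, so the two recursions start from compatible points.

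Next I would translate the dual bound on the error into the primal Assumption~\ref{Assumption3}. By the identity $D(y^*)-D(y)=\tfrac{1}{2}\|\phi(y)-x^*\|^2_{\bB}$ (see Proposition~\ref{PropositionDualPrimal} and equation~\eqref{identity}, which hold for any $y$), we have $2[D(y^*)-D(y^k)] = \|x^k-x^*\|^2_{\bB}$. Substituting this into the hypothesis
\[
\Exp\!\left[\|\bB^{-1}\bA^\top\epsilon^k_d\|^2_{\bB}\mid y^k,\bS_k\right] \leq q^2\cdot 2[D(y^*)-D(y^k)]
\]
yields
\[
\Exp\!\left[\|\epsilon^k\|^2_{\bB}\mid x^k,\bS_k\right] \leq q^2\,\|x^k-x^*\|^2_{\bB},
\]
which is exactly Assumption~\ref{Assumption3} with the same parameter $q$. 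The assumed range $0\leq q < 1-\sqrt{\rho}$ is precisely the one required by Theorem~\ref{ISGDwithq}.

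Finally, I would invoke Theorem~\ref{ISGDwithq} on the primal shadow to obtain $\Exp[\|x^k-x^*\|^2_{\bB}] \leq (\sqrt{\rho}+q)^{2k}\|x^0-x^*\|^2_{\bB}$, and then translate back to the dual using the same identity on both sides:
\[
\Exp[D(y^*)-D(y^k)] = \tfrac{1}{2}\Exp[\|x^k-x^*\|^2_{\bB}] \leq (\sqrt{\rho}+q)^{2k}\cdot\tfrac{1}{2}\|x^0-x^*\|^2_{\bB} = (\sqrt{\rho}+q)^{2k}[D(y^*)-D(y^0)],
\]
which is the claimed bound. The whole argument is essentially bookkeeping, with the only subtlety being to verify that the dual-to-primal error translation $\epsilon^k = \bB^{-1}\bA^\top\epsilon_d^k$ makes the primal shadow of the inexact dual method coincide exactly with iBasic on a matching error; this is the content of Theorem~\ref{LemmaCorrespondance}, so no additional work is required. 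There is no real obstacle: once the correspondence and the dual-gap identity are in place, the result is an immediate corollary of Theorem~\ref{ISGDwithq}.
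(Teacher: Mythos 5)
Your proposal is correct and follows essentially the same route as the paper: the paper's proof is a one-line appeal to Theorem~\ref{ISGDwithq} combined with the primal--dual correspondence of Theorem~\ref{LemmaCorrespondance} and the identity $\frac{1}{2}\|x^k-x^*\|^2_{\bB} = D(y^*)-D(y^k)$, which is precisely the reduction you carry out. Your write-up merely makes explicit the translation of the dual inexactness bound into Assumption~\ref{Assumption3}, which the paper leaves implicit.
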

\begin{proof}
The proof follows by applying Theorem~\ref{ISGDwithq} together with Theorem~\ref{LemmaCorrespondance} and the identity $\frac{1}{2}\|x^k-x^*\|^2_\mB = D(y^*) - D(y^k)$ \eqref{identity}.
\end{proof}

Note that in the case that $q=0$, iSDSA simplifies to its exact variant SDSA and the convergence rate coincide with the one presented in Theorem~\ref{TheoremSDSA_IntroThesis}. Following similar arguments to those in \cite{gower2015stochastic}, the same rate can be proved for the duality gap $\Exp[P(x^k)-D(y^k)]$.

\section{Numerical Evaluation}
\label{experiments}
In this section we perform preliminary numerical tests for studying the computational behavior of iBasic with structured inexactness error when is used to solve the best approximation problem \eqref{BestApproximation_IntroThesis} or equivalently the stochastic optimization problem \eqref{StochReform_IntroThesis}\footnote{Note that from Section~\ref{InexactDualMethods} and the correspondence between the primal and dual methods, iSDSA will have similar behavior when is applied to the dual problem \eqref{DualProblem_IntroThesis}.}. As we have already mentioned, iBasic can be interpreted as sketch-and-project method, and as a result a comprehensive array of well-known algorithms can be recovered as special cases by varying the main parameters of the methods (Section \ref{Special Cases}).  In particular, in our experiments we focus on the evaluation of two popular special cases, the inexact Randomized Block Kaczmarz (iRBK) (equation \eqref{iRBK}) and inexact randomized block coordinate descent method (iRBCD) (equation \eqref{iRBCD})%\footnote{Nevertheless we note that similar results can be obtained for many other algorithms that belong in the Sketch and Project framework.}. 
We implement Algorithm~\ref{inexact_solver_algorithm} presented in Section~\ref{InexactSolvers} using CG \footnote{Recall that in order to use CG, the matrix $\bM_k$ that appears in linear system \eqref{linearsysteminCode} should be positive definite. This is true in the case that the matrix $\bA$ of the original system has full column rank matrix. Note however that the analysis of Section~\ref{InexactSolvers} holds for any consistent linear system $\bA x=b$ and without making any further assumption on its structure or the linearly convergence methods.} to inexactly solve the linear system of the update rule (equation \eqref{linearsysteminCode}). Recall that in this case we named the method InexactCG.

The convergence analysis of previous sections is quite general and holds for several combinations of the two main parameters of the method, the positive definite matrix $\bB$ and the distribution $\cD$ of the random matrices $\bS$. For obtaining iRBK as special case we have to choose $\bB=\bI \in \R^{n \times n}$ (Identity matrix) and for the iRBCD the given matrix $\bA $ should be positive definite and choose $\bB=\bA$. For both methods the distribution $\cD$ should be over random matrices $\bS=\bI_{:C}$ where $\bI_{:C}$ is the column concatenation of the $m \times m$ identity matrix indexed by a random subset $C$ of $[m]$. In our experiments we choose to have one specific distribution over these matrices. In particular, we assume that the random matrix in each iteration is chosen uniformly at random to be $\bS=\bI_{:d}$ with the subset $d$ of $[m]$ to have fixed pre-specified cardinality.

The code for all experiments is written in the Julia 0.6.3 programming language and run on a Mac laptop computer (OS X El Capitan), 2.7 GHz Intel Core i5 with 8 GB of RAM.

To coincide with the theoretical convergence results of Algorithm~\ref{inexact_solver_algorithm} the relaxation parameter (stepsize) of the methods study in our experiments is chosen to be $\omega=1$ (no relaxation). In all implementations, we use $x^0=0 \in \R^n$ as an initial point and in comparing the methods with their inexact variants we use the relative error measure $\|x^k-x^*\|^2_\bB / \|x^0-x^*\|^2_\bB \overset{x^0=0}{=}\|x^k-x^*\|^2_\bB / \|x^*\|^2_\bB $. We run each method (exact and inexact) until the relative error is below $10^{-5}$. For the horizontal axis we use either the number of iterations or the wall-clock time measured using the tic-toc Julia function. In the exact variants, the linear system \eqref{linearsysteminCode} in Algorithm~\ref{inexact_solver_algorithm} needs to be solved exactly. In our experiments we follow the implementation of \cite{gower2015randomized} for both exact RBCD and exact RBK where the built-in direct solver (sometimes referred to as "backslash") is used. 

\paragraph{Experimental setup:}
For the construction of consistent linear systems $\bA x=b$ we use the setup described in Section~\ref{subsectionEvaluation}. In particular, the linear systems used for the numerical evaluation of iRBK and iRBCD have been generated as described in Section~\ref{subsectionEvaluation} for algorithms mRK and mRCD, respectively.
%\begin{itemize}
%\item \textbf{For iRBK:}
%Let matrix $\bA \in \R^{m \times n}$ being given (it can be either synthetic or real data). Then a vector $z \in \R^n$ is chosen to be i.i.d $\mathcal{N}(0,1)$ and the right hand side of the linear system is set to $b=\bA z$. With this way the consistency of the linear system with matrix $\bA$ and right hand side $b$ is ensured. 
%\item \textbf{For iRBCD:}  A Gaussian matrix $\bP \in \R^{m \times n}$ is generated and then matrix $\bA = \bP^\top \bP \in \R^{n \times n}$ is used in the linear system (with this way matrix $\bA$ is positive definite with probability 1). The vector $z\in \R^n$ is chosen to be i.i.d $\mathcal{N}(0,1)$ and again to ensure consistency of the linear system, the right hand side is set to $b=\bA z$.
%\end{itemize}

\subsection{Importance of large block size}
Many recent works have shown that using larger block sizes can be very beneficial for the performance of randomized iterative algorithms \cite{gower2015randomized, richtarik2014iteration, RBK, LoizouRichtarik}. In Figure~\ref{BlockSizeRBKRBCD} we numerically verify this statement. We show that both RBK and RBCD (no inexact updates) outperform in number of iterations and wall clock time their serial variants where only one coordinate is chosen (block of size $d=1$) per iteration. This justify the necessity of choosing methods with large block sizes. Recall that this is precisely the class of algorithms that could have an expensive subproblem in their update rule which is required to be solved exactly and as a result can benefit the most from the introduction of inexactness.

\begin{figure}[t!]
\centering
\begin{subfigure}{.34\textwidth}
  \centering
  \includegraphics[width=1\linewidth]{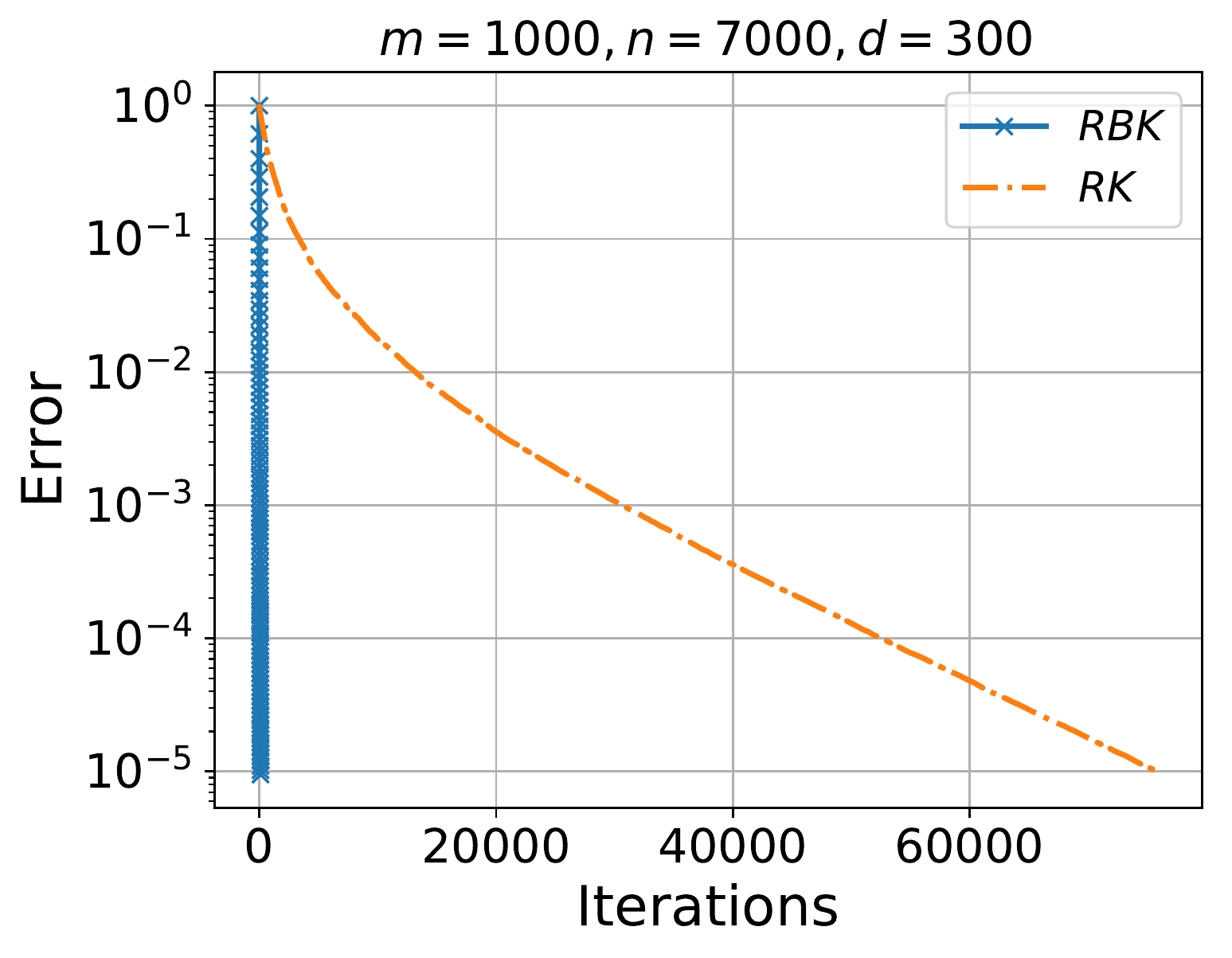}
  %\caption{}
\end{subfigure}%
\begin{subfigure}{.34\textwidth}
  \centering
  \includegraphics[width=1\linewidth]{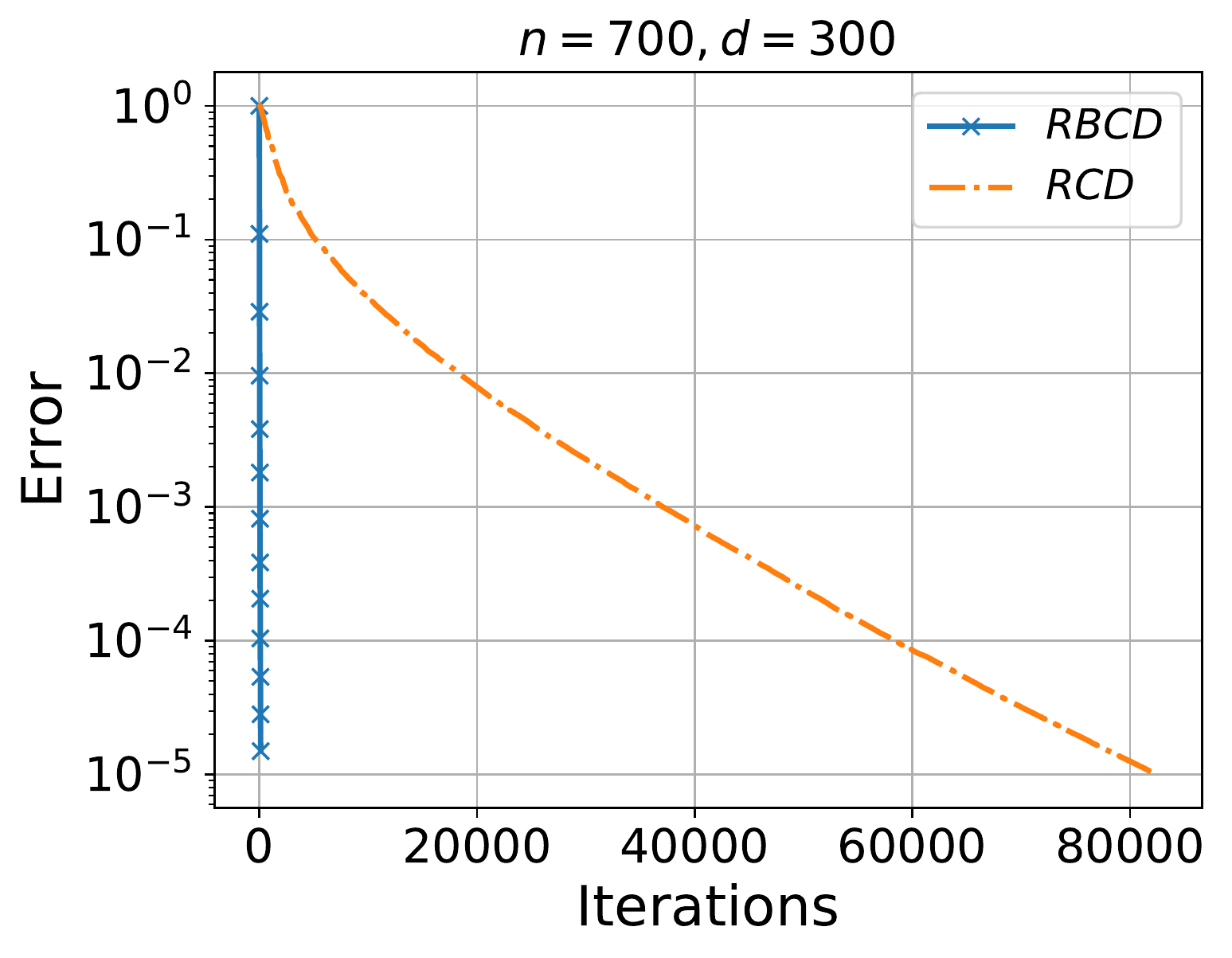}
 % \caption{}
\end{subfigure}\\
\begin{subfigure}{.34\textwidth}
  \centering
  \includegraphics[width=\linewidth]{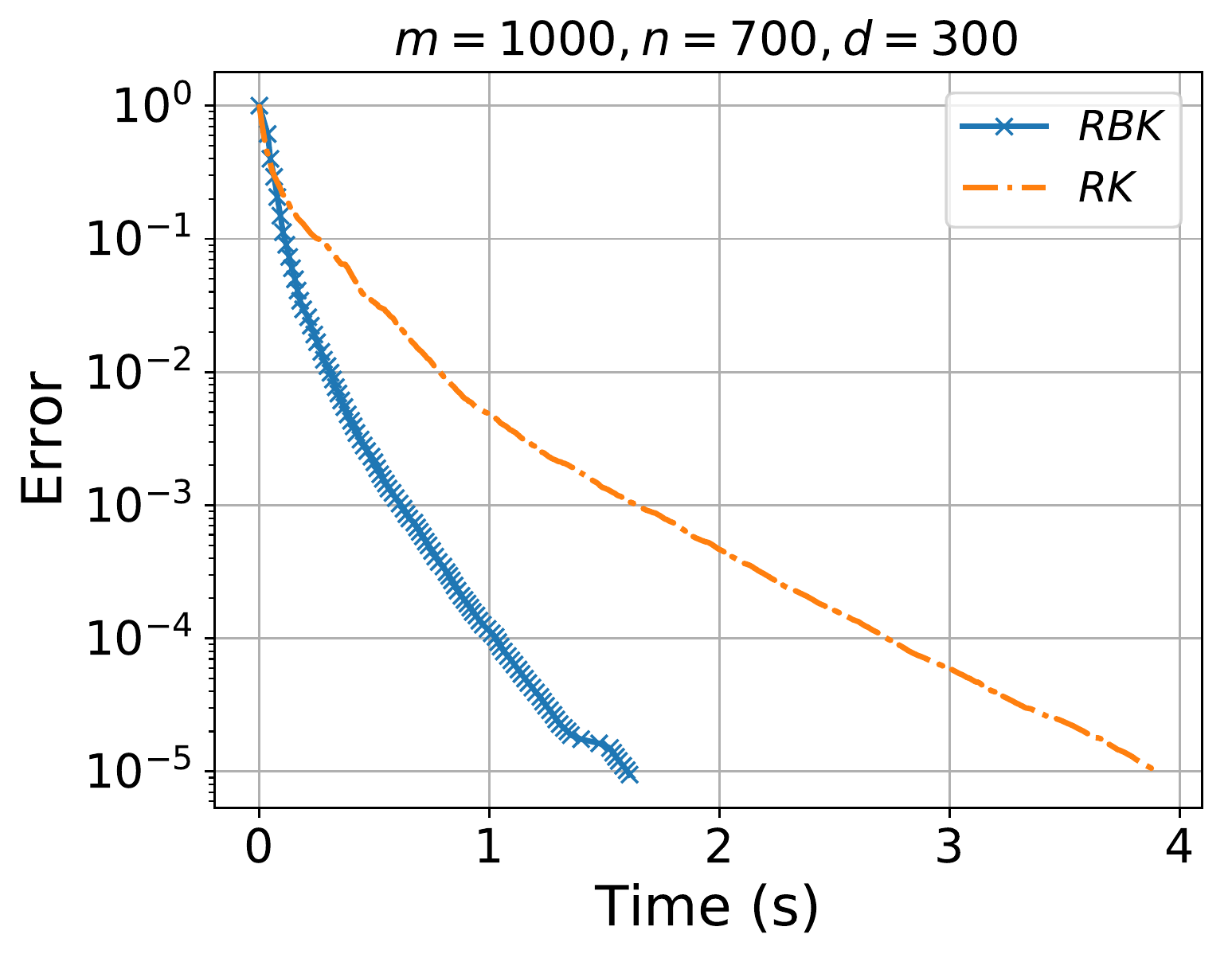}
  \caption{RK vs RBK}
\end{subfigure}
\begin{subfigure}{.34\textwidth}
  \centering
  \includegraphics[width=1\linewidth]{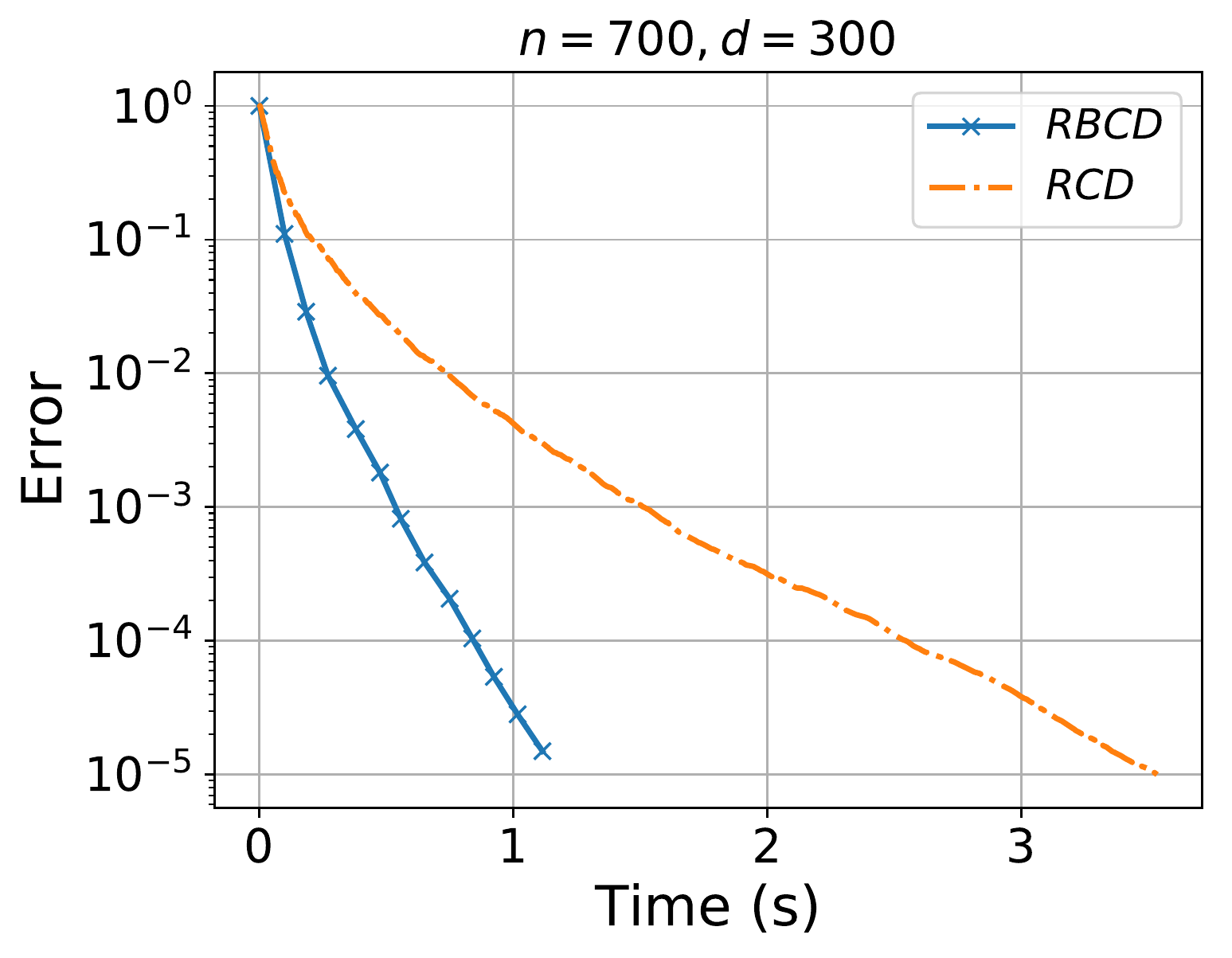}
  \caption{RCD vs RBCD }
\end{subfigure}\\
\caption{\small Comparison of the performance of the exact RBK and RBCD with their non-block variants RK and RCD. For the Kaczmarz methods (first column)  $\bA \in \R^{1000,700}$ is a Gaussian matrix and for the Coordinate descent methods (second column)  $\bA = \bP^\top \bP \in \R^{700 \times 700}$ where $\bP \in \R^{1000 \times 700}$ is Gaussian matrix.  To guarantee consistency $b=\bA z$ where $z$ is also Gaussian vector. The block size that chosen for the block variants is $d=300$.}
\label{BlockSizeRBKRBCD}
\end{figure}

\subsection{Inexactness and block size (iRBCD)}
In this experiment, we first construct a positive definite linear system following the previously described procedure for iRBCD. We first generate a Gaussian matrix $\bP \in \R^{10000 \times 7000}$ and then the positive definite matrix $\bA = \bP^\top \bP \in \R^{7000 \times 7000}$ is used to define a consistent liner system. We run iRBCD in this specific linear system and compare its performance with its exact variance for several block sizes $d$ (numbers of column of matrix $\bS$). For evaluating the inexact solution of the linear system in the update rule we run CG for either 2, 5 or 10 iterations. In Figure~\ref{iRBCDfigure}, we plot the evolution of the relative error in terms of both the number of iterations and the wall-clock time.

We observe that for any block size the inexact methods are always faster in terms of wall clock time than their exact variants even if they require (as is expected) equal or larger number of iterations. Moreover it is obvious that the performance of the inexact method becomes much better than the exact variant as the size $d$ increases and as a results the sub-problem that needs to be solved in each step becomes more expensive. It is worth to highlight that for the chosen systems, the exact RBCD behaves better in terms of wall clock time as the size of block increases (this coincides with the findings of the previous experiment).

\begin{figure}[t!]
\centering
\begin{subfigure}{.24\textwidth}
  \centering
  \includegraphics[width=1\linewidth]{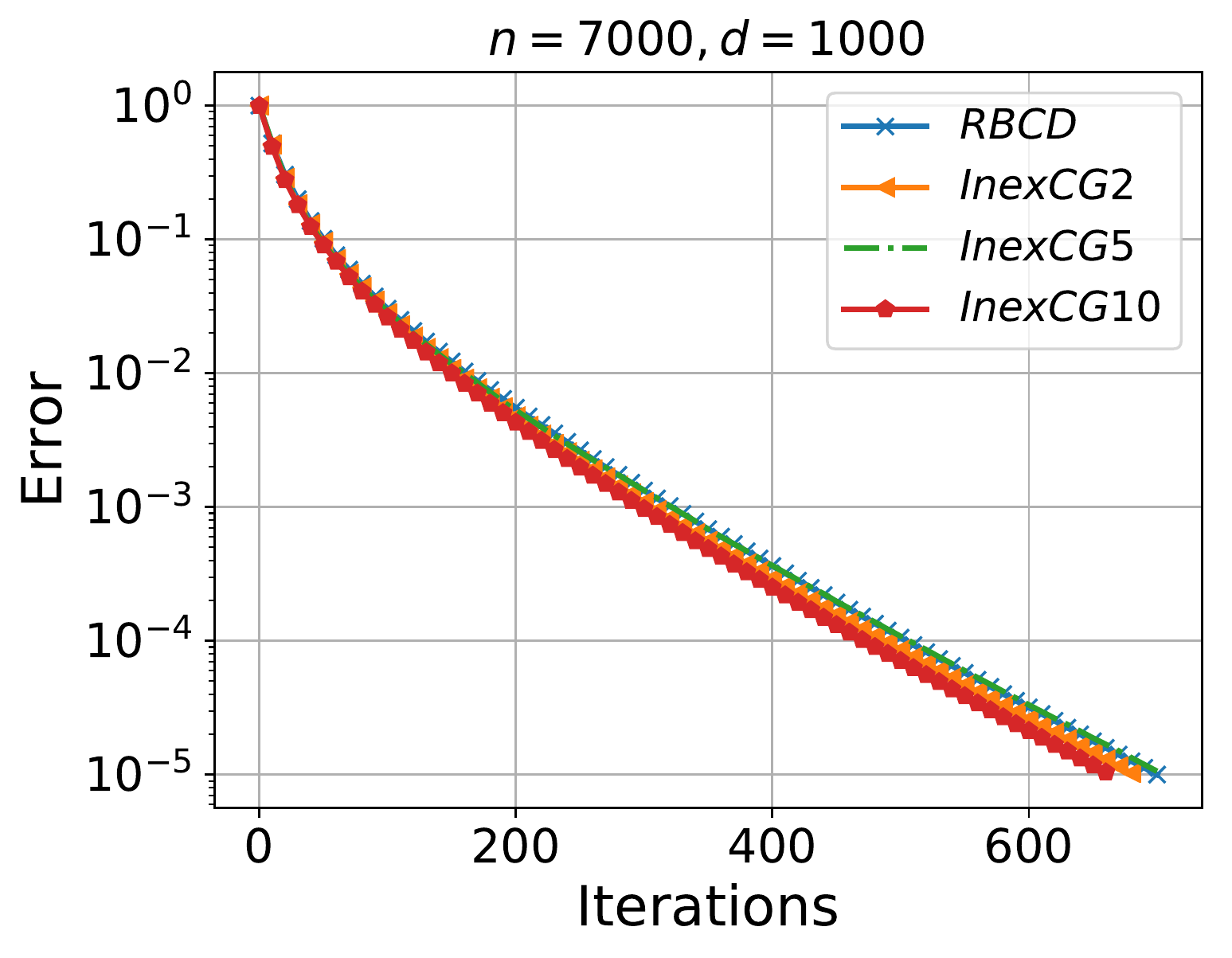}
  %\caption{}
\end{subfigure}%
\begin{subfigure}{.24\textwidth}
  \centering
  \includegraphics[width=1\linewidth]{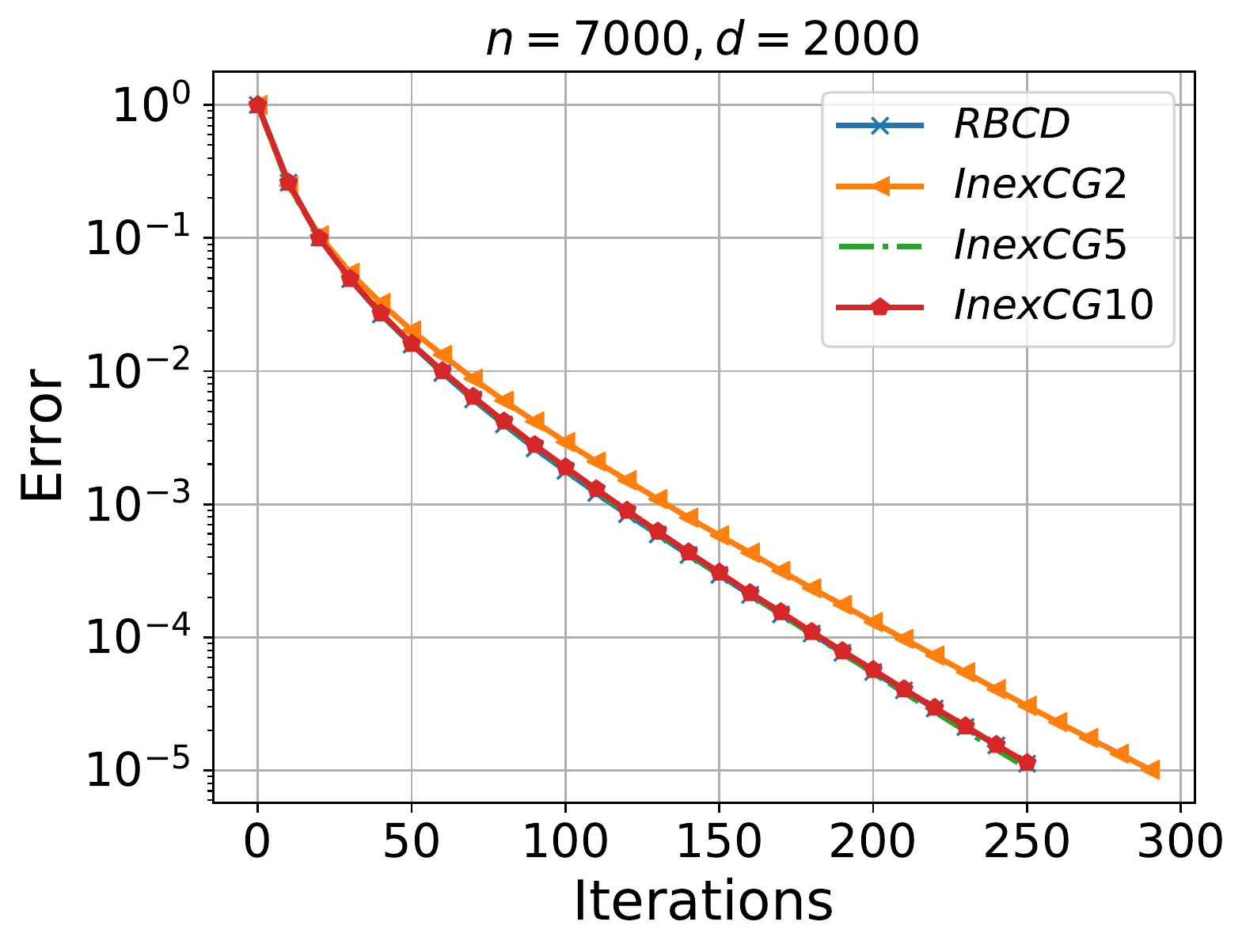}
  %\caption{}
\end{subfigure}%
\begin{subfigure}{.24\textwidth}
  \centering
  \includegraphics[width=1\linewidth]{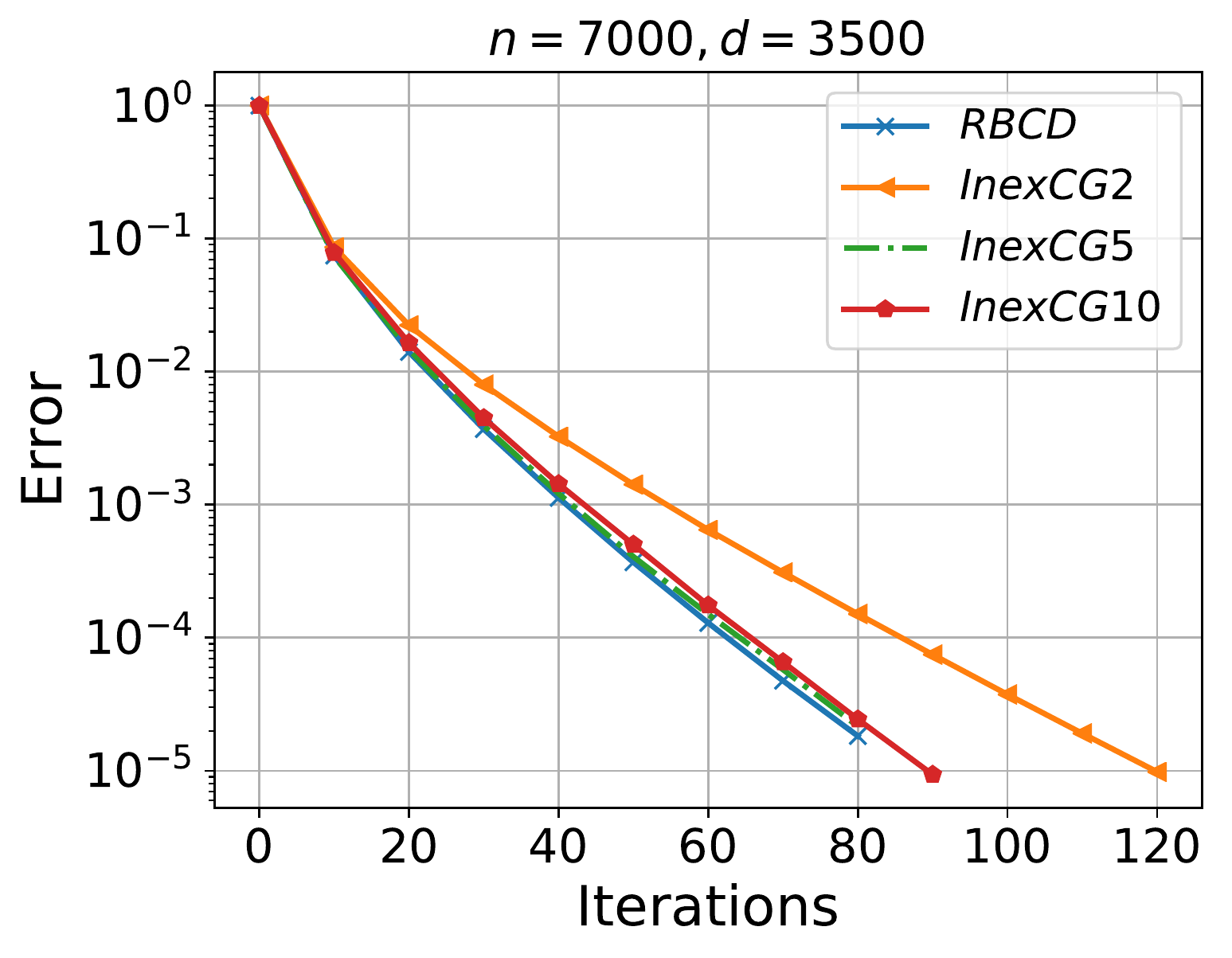}
  %\caption{}
\end{subfigure}%
\begin{subfigure}{.24\textwidth}
  \centering
  \includegraphics[width=1\linewidth]{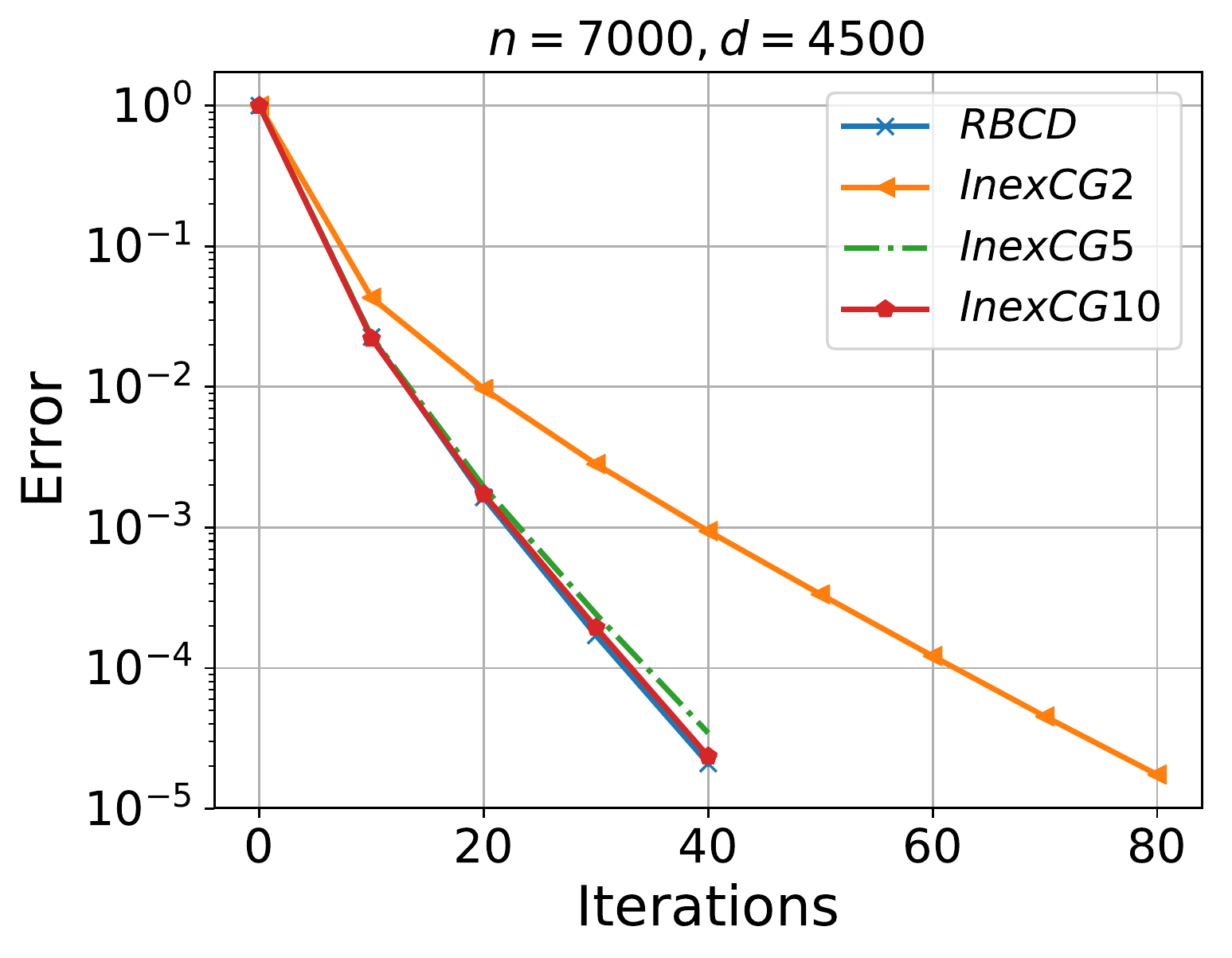}
  %\caption{}
\end{subfigure}\\
\begin{subfigure}{.24\textwidth}
  \centering
  \includegraphics[width=1\linewidth]{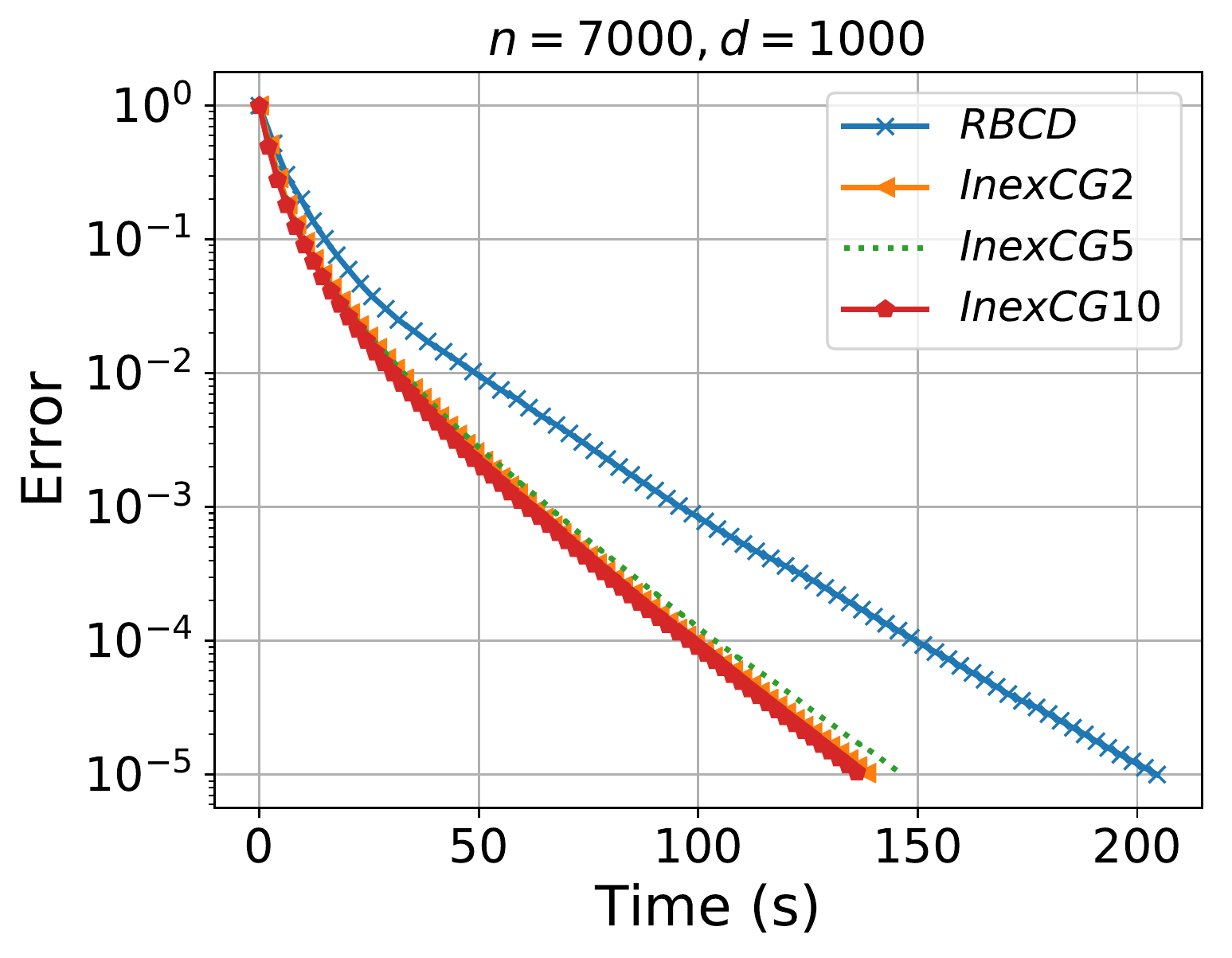}
  \caption{d=1000}
\end{subfigure}
\begin{subfigure}{.24\textwidth}
  \centering
  \includegraphics[width=1\linewidth]{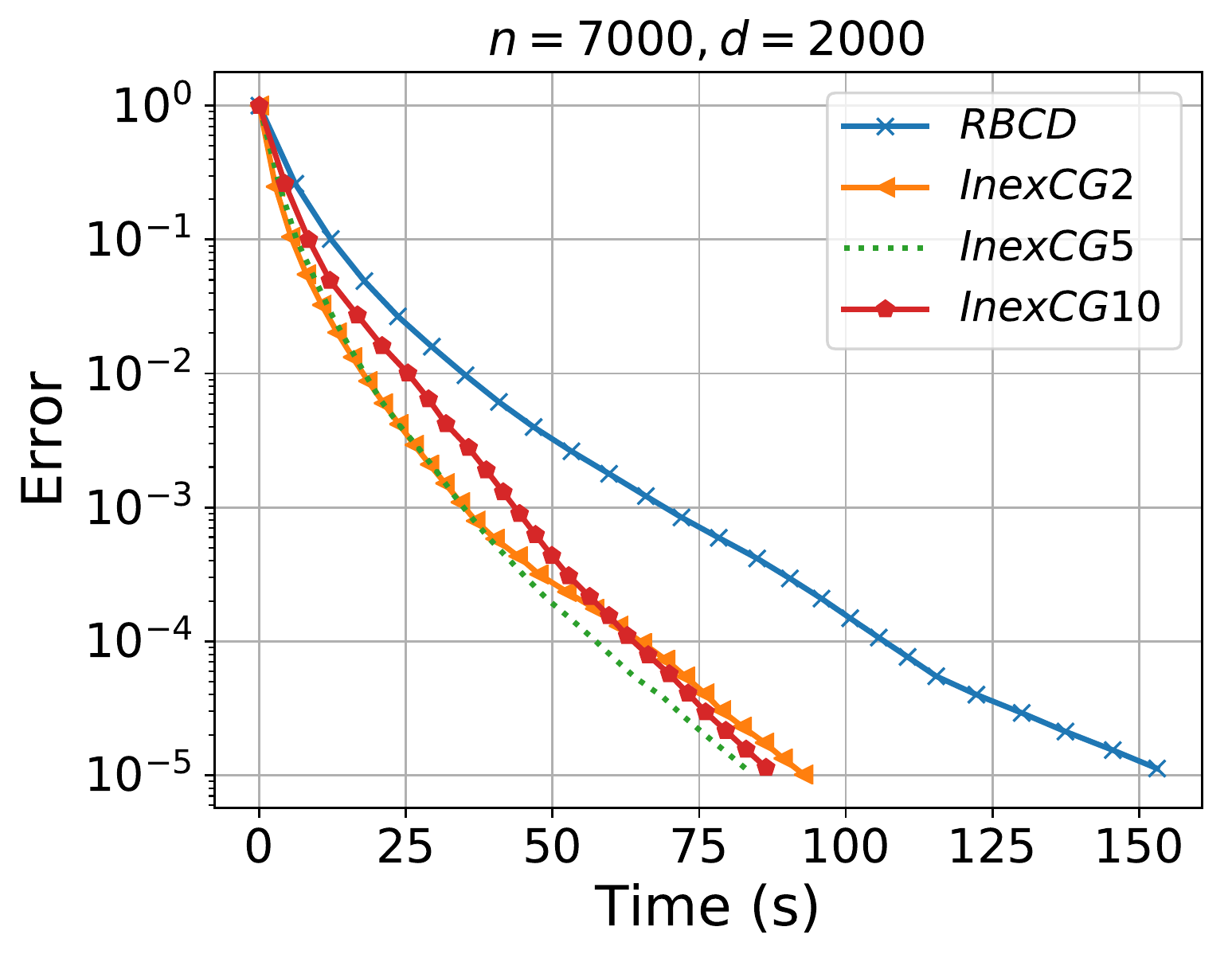}
  \caption{d=2000}
\end{subfigure}
\begin{subfigure}{.24\textwidth}
  \centering
  \includegraphics[width=1\linewidth]{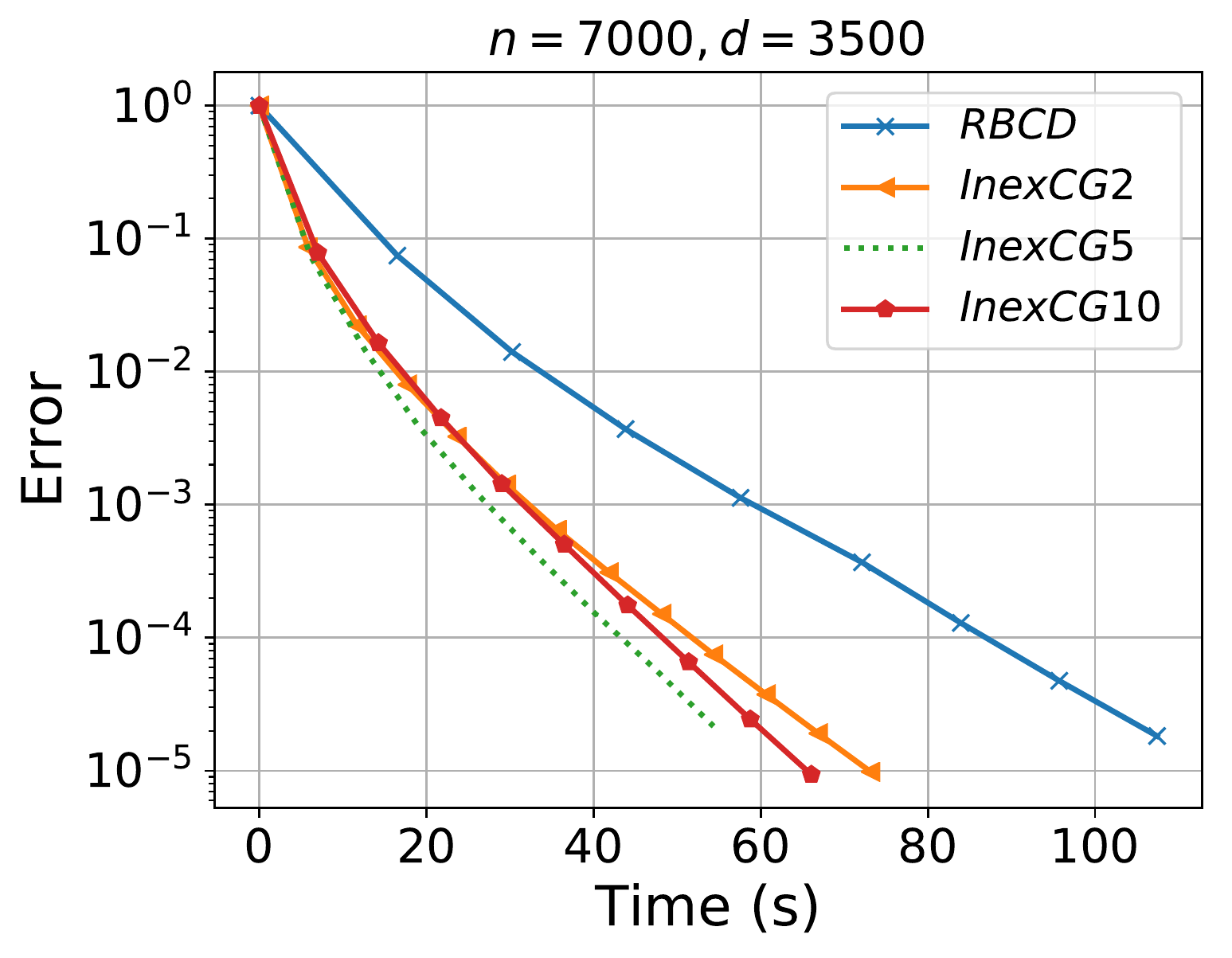}
  \caption{d=3500}
\end{subfigure}
\begin{subfigure}{.24\textwidth}
  \centering
  \includegraphics[width=1\linewidth]{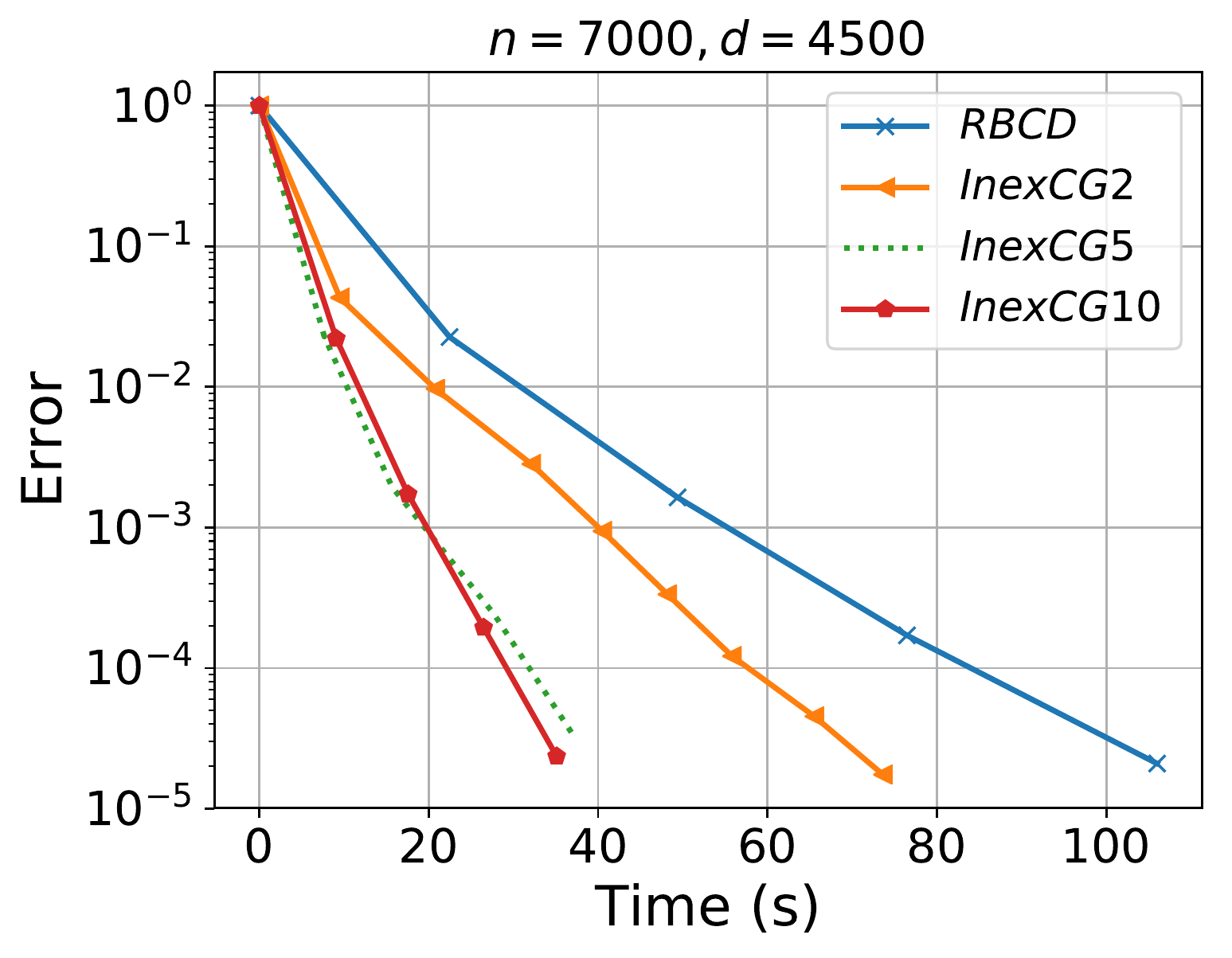}
  \caption{d=4500}
\end{subfigure}\\
\caption{\small Performance of iRBCD (InexactCG) and exact RBCD for solving a consistent linear systems with $\bA = \bP^\top \bP \in \R^{7000 \times 7000}$, where $\bP \in \R^{10000 \times 7000}$ is a Gaussian matrix. The right hand side for the system is chosen to be $b=\bA z$ where $z$ is also a Gaussian vector. Several block sizes are used: $d=1000, 2000, 3500, 4500.$ The graphs in the first (second) row plot the iterations (time) against relative error $\|x^k-x^*\|^2_\bA / \|x^*\|^2_\bA $.}
\label{iRBCDfigure}
\end{figure}

\subsection{Evaluation of iRBK}
In the last experiment we evaluate the performance of iRBK in both synthetic and real datasets. For computing the inexact solution of the linear system in the update rule we run CG for pre-specified number of iterations that can vary depending the datasets.  In particular, we compare iRBK and RBK on synthetic linear systems generated with the Julia Gaussian matrix functions ``randn(m,n)" and ``sprandn(m,n,r)" (input $r$ of sprandn function indicates the density of the matrix). For the real datasets, we test the performance of iRBK and RBK using real matrices from the library of support vector machine problems LIBSVM \cite{chang2011libsvm}. Each dataset of the LIBSVM consists of a matrix $\bA \in \R^{m \times n}$ ($m$ features and $n$ characteristics) and a vector of labels $b \in \R^m$. In our experiments we choose to use only the matrices of the datasets and ignore the label vectors \footnote{Note that the real matrices of the Splice and Madelon datasets are full rank matrices.}. As before, to ensure consistency of the linear system, we choose a Gaussian vector $z\in\R^n$ and the right hand side of the linear system is set to $b = \mA z$ (for both the synthetic and the real matrices). By observing Figure~\ref{RKinsideRBKreal} it is clear that for all problems under study the performance of iRBK in terms of wall clock time is much better than its exact variant RBK.

\begin{figure}[t!]
\centering
\begin{subfigure}{.24\textwidth}
  \centering
  \includegraphics[width=1\linewidth]{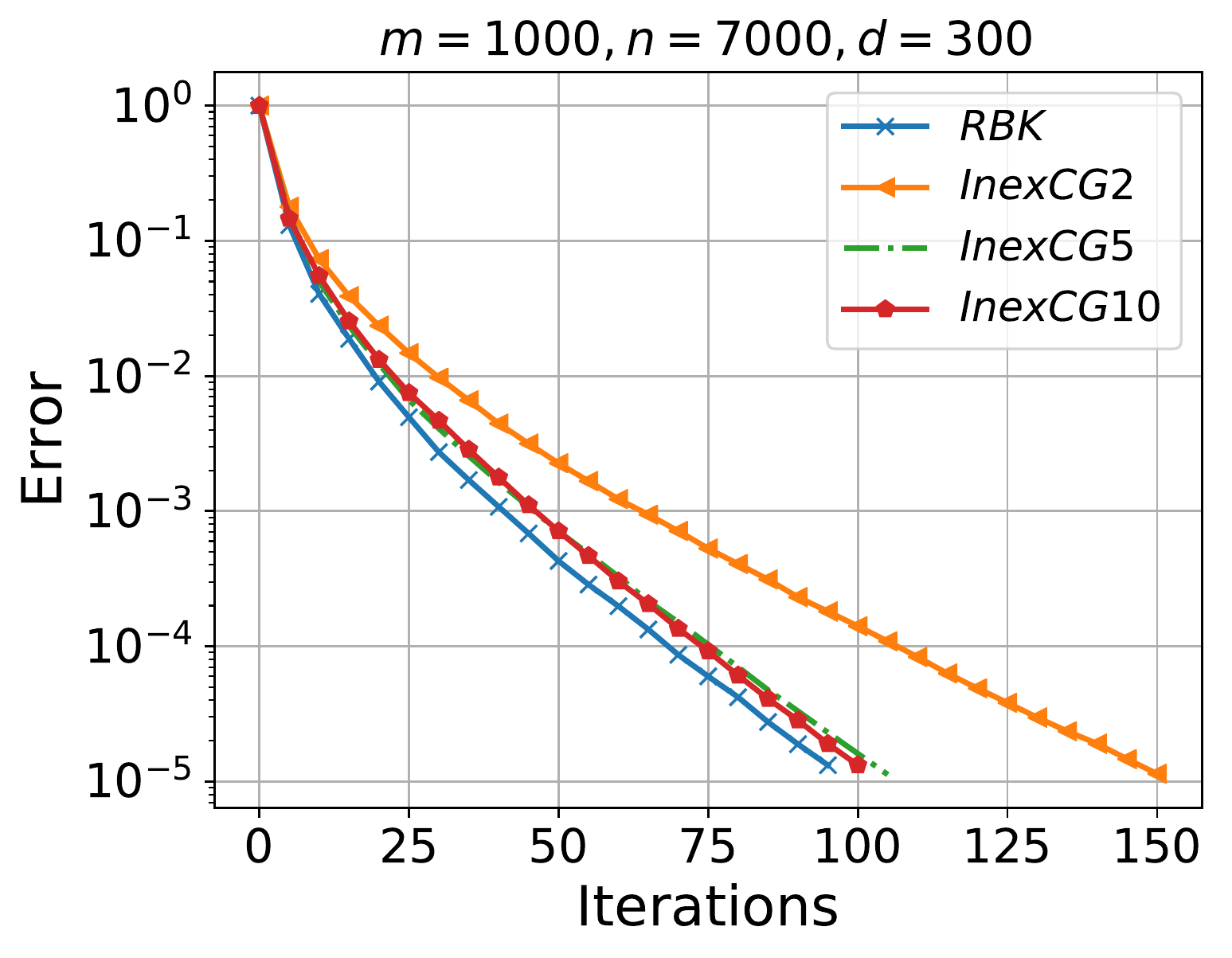}
  %\caption{}
\end{subfigure}%
\begin{subfigure}{.24\textwidth}
  \centering
  \includegraphics[width=1\linewidth]{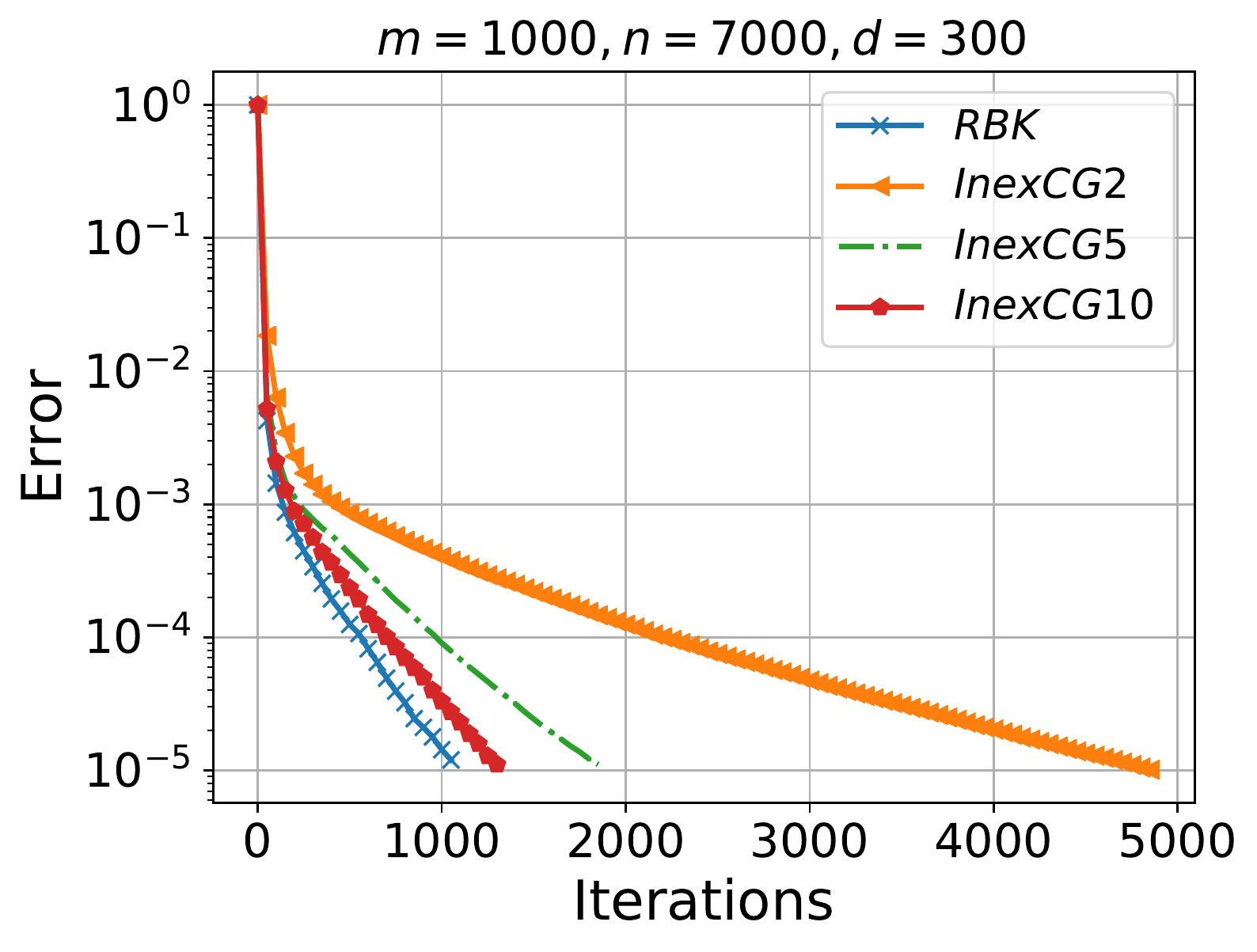}
 % \caption{}
\end{subfigure}
\begin{subfigure}{.24\textwidth}
  \centering
  \includegraphics[width=\linewidth]{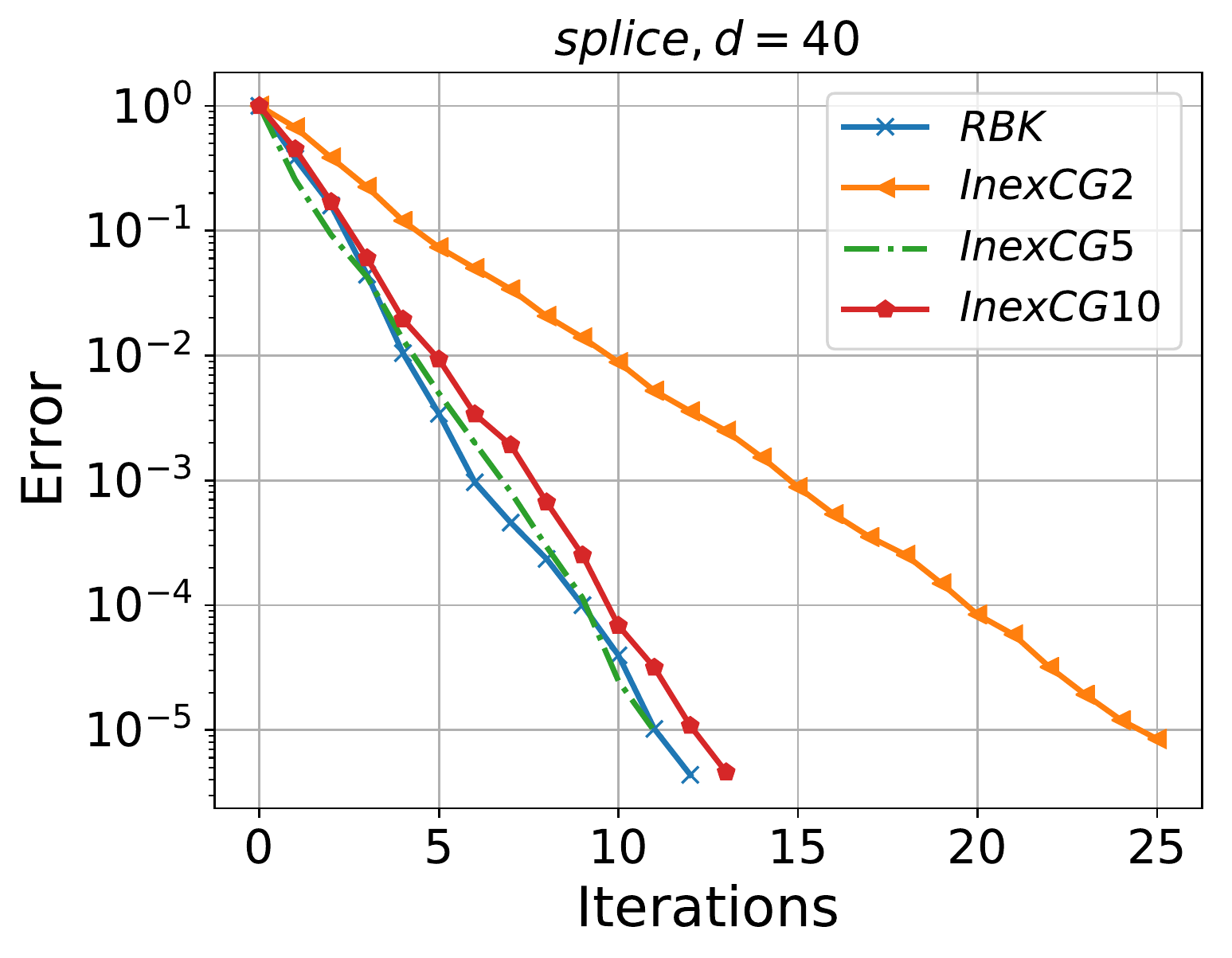}
 % \caption{}
\end{subfigure}
\begin{subfigure}{.24\textwidth}
  \centering
  \includegraphics[width=1\linewidth]{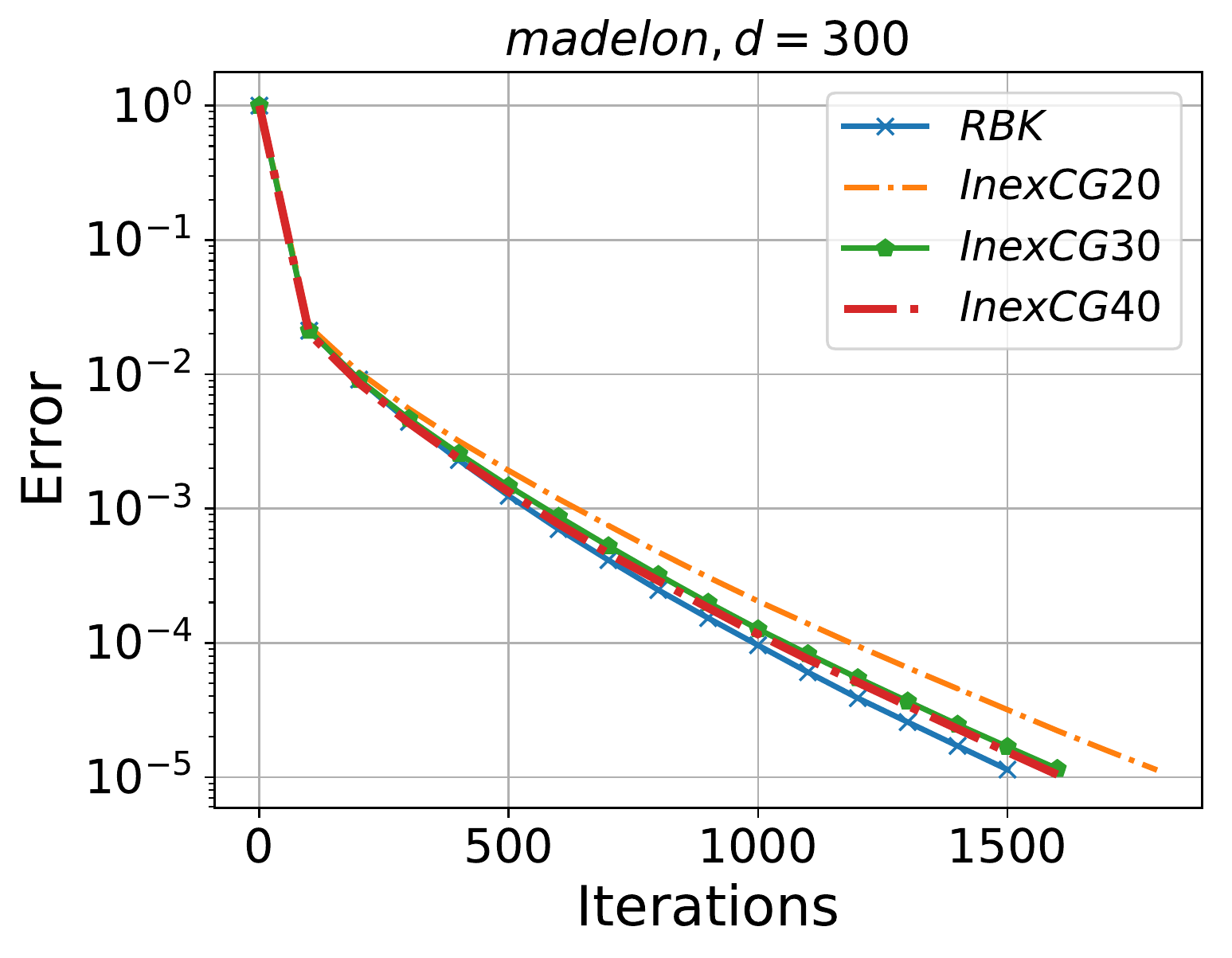}
%  \caption{}
\end{subfigure}\\
\begin{subfigure}{.24\textwidth}
  \centering
  \includegraphics[width=1\linewidth]{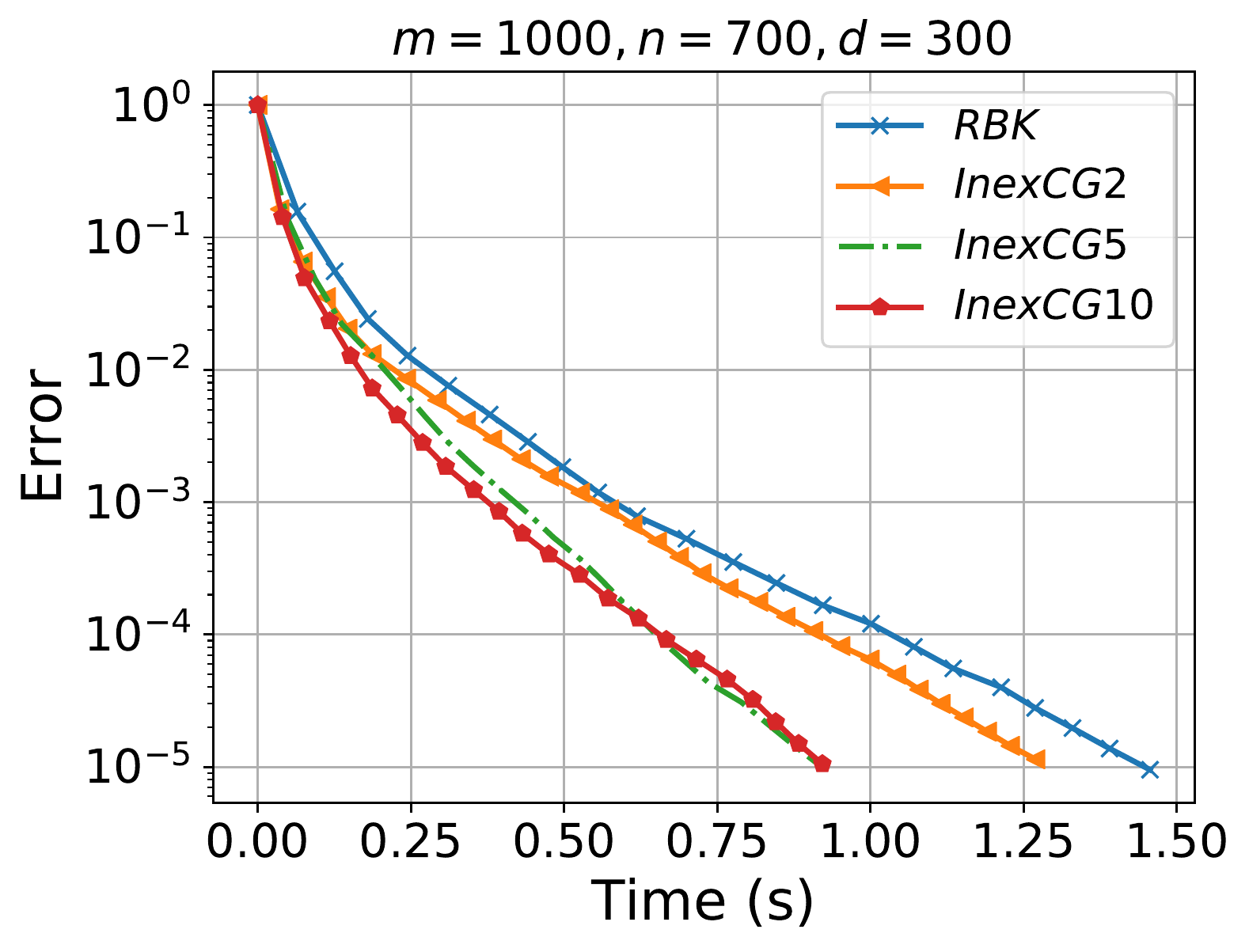}
  \caption{randn(m,n)}
\end{subfigure}%
\begin{subfigure}{.24\textwidth}
  \centering
  \includegraphics[width=1\linewidth]{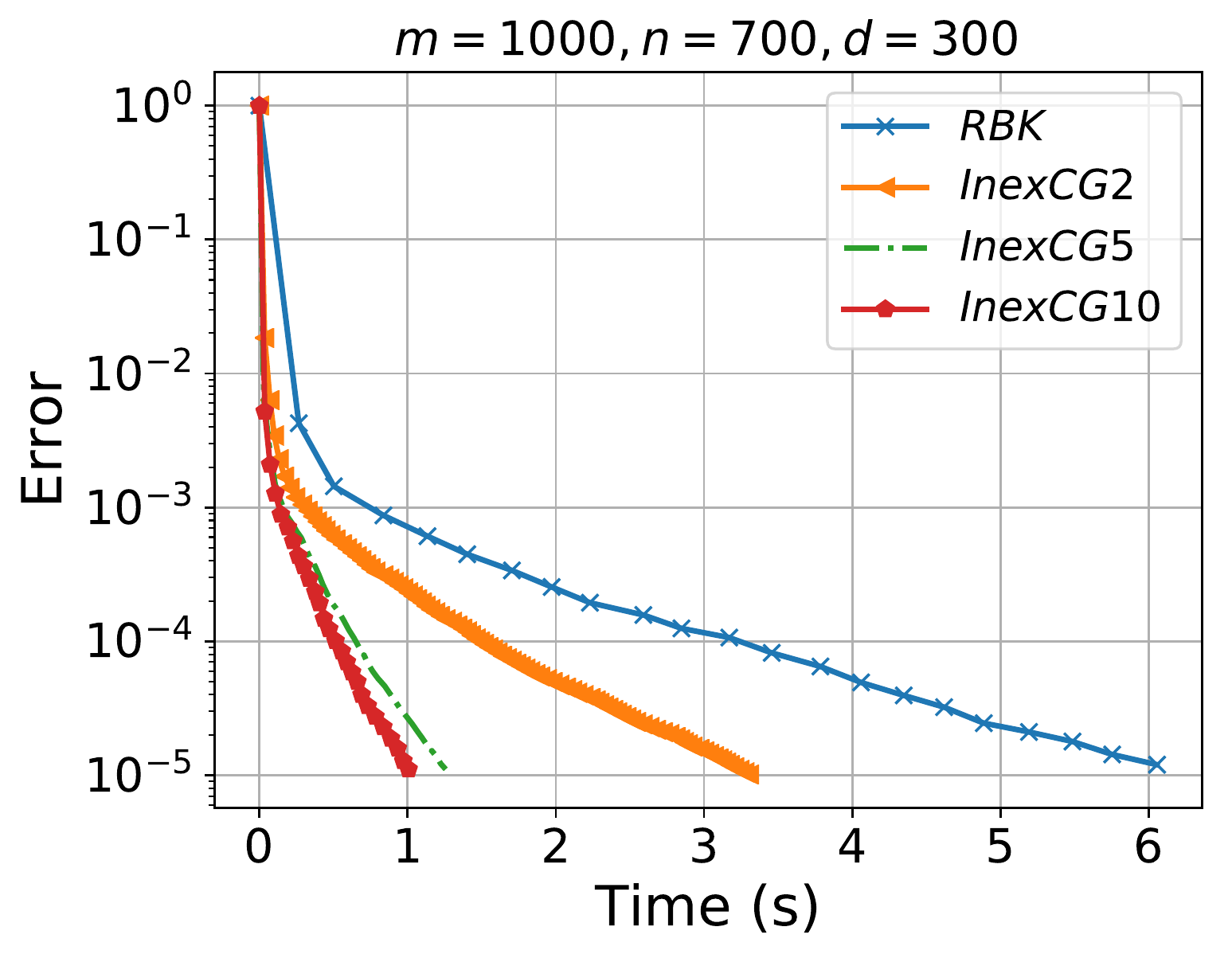}
  \caption{sprandn(m,n,0.01)}
\end{subfigure}
\begin{subfigure}{.24\textwidth}
  \centering
  \includegraphics[width=\linewidth]{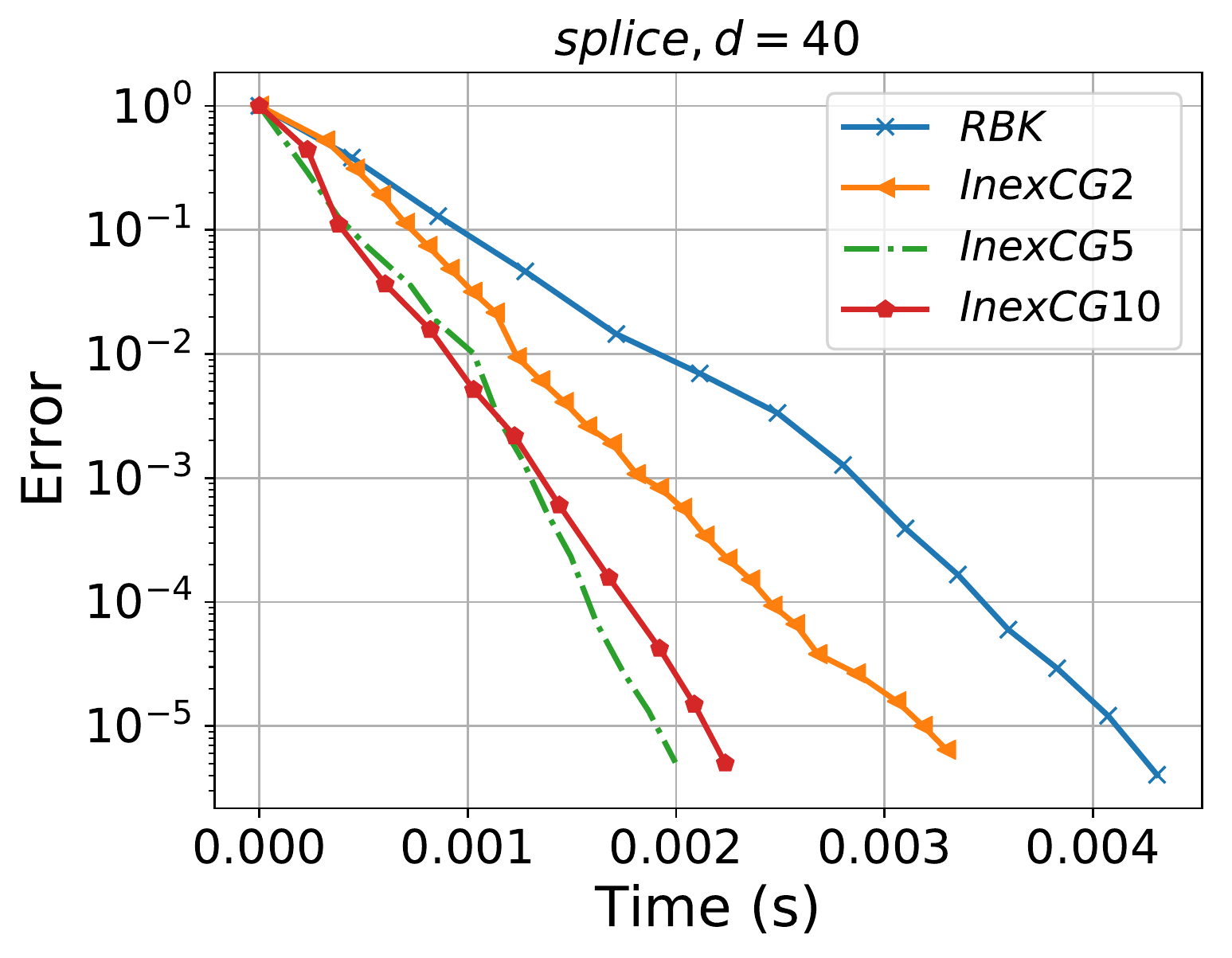}
  \caption{splice}
\end{subfigure}
\begin{subfigure}{.24\textwidth}
  \centering
  \includegraphics[width=1\linewidth]{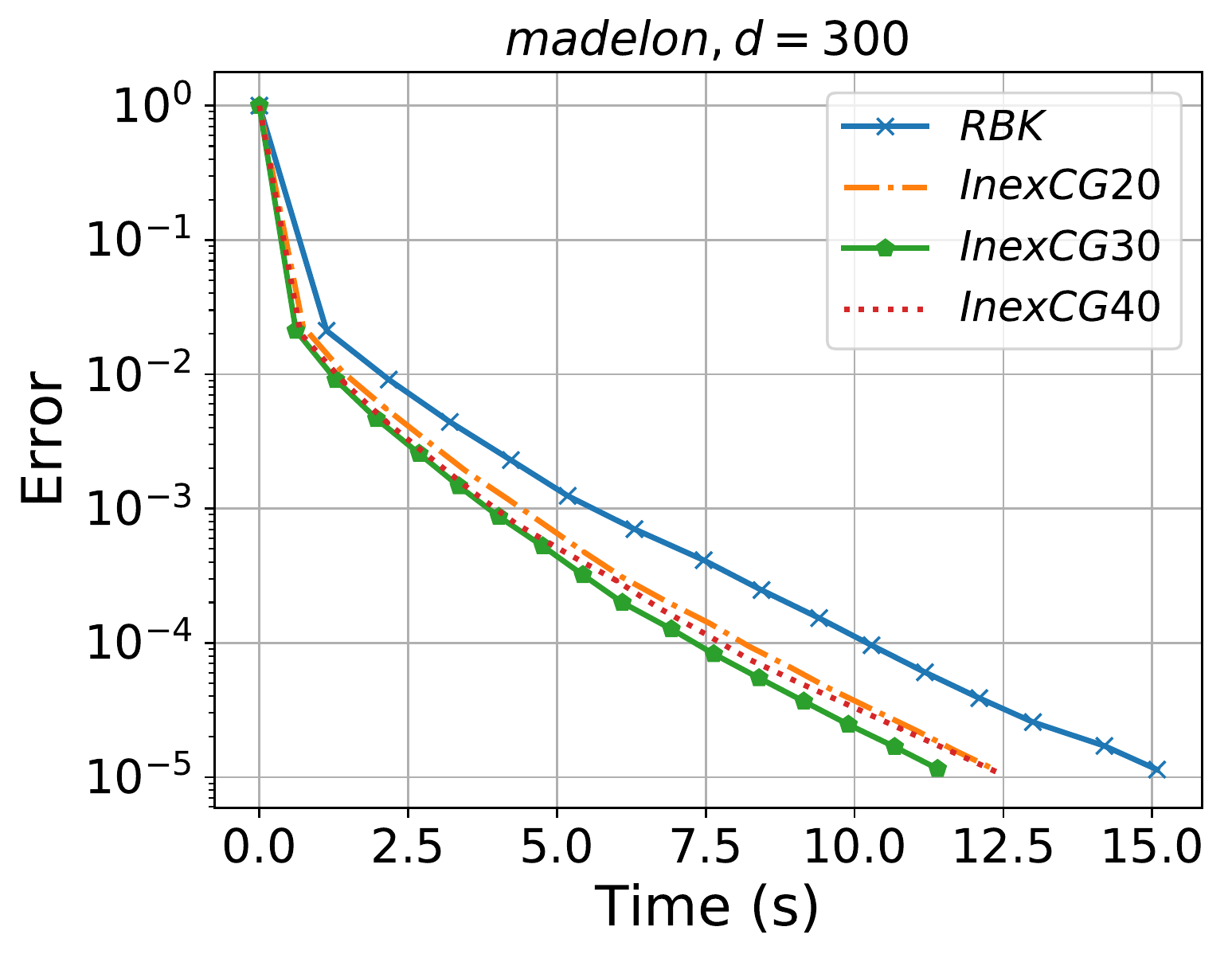}
  \caption{madelon}
\end{subfigure}\\
\caption{\small The performance of iRBK (InexactCG) and RBK on synthetic and real datasets. Synthetic matrices: (a) randn(m,n) with (m,n)=(1000,700), (b) sprandn(m,n,0.01) with (m,n)=(1000,700). Real Matrices from LIBSVM \cite{chang2011libsvm} :  (c) splice : (m,n)=(1000,60), (d) madelon: (m,n)=(2000,500). The graphs in the first (second) row plot the iterations (time) against relative error $\|x^k-x^*\|^2 / \|x^*\|^2$. The quantity $d$ in the title of each plot indicates the size of the block size for both iRBK and RBK.}
\label{RKinsideRBKreal}
\end{figure}

\section{Conclusion}
\label{conlcusion}

In this chapter we propose and analyze inexact variants of several stochastic algorithms for solving quadratic optimization problems and linear systems. We provide linear convergence rate under several assumptions on the inexactness error. The proposed methods require more iterations than their exact variants to achieve the same accuracy. However, as we show through our numerical evaluations, the inexact algorithms require significantly less time to converge. 

With the continuously increasing size of datasets, inexactness should  definitely be a tool that practitioners should use in their implementations even in the case of stochastic methods that have much cheaper-to-compute iteration complexity than their deterministic variants.  Recently, accelerated and parallel stochastic optimization methods \cite{loizou2017momentum, ASDA, tu2017breaking} have been proposed for solving linear systems. We speculate that the addition of inexactness to these update rules will lead to methods faster in practice. We also believe that our approach and complexity results can be extended to the more general case of minimization of convex and non-convex functions in the stochastic setting.  

\section{Proofs of Main Results}
In our convergence analysis we use several popular inequalities. Look Table~\ref{inequalities}  for the abbreviations and the relevant formulas.

A key step in the proofs of the theorems is to use the tower property of the expectation. We use it in the form
\begin{eqnarray}\label{eq:tower3}
\Exp[\Exp[\Exp[ X  \;|\; x^k,  \mS_k] \;|\; x^k]] = \Exp[X],
\end{eqnarray}
where $X$ is some random variable. In all proofs we perform the three expectations in order, from the innermost to the outermost. Similar to the main part of this chapter we use $\rho=1 - \omega(2-\omega)\lambda_{\rm min}^+$.

The following remark on random variables is also used in our proofs.
 \begin{rem}\label{variance_ineq}
Let $x$ and $y$ be random vectors and let $\sigma$ positive constant. If we assume $\Exp[\|x\|_{\mB}^2\;|\;y] \leq \sigma^2$ then by using the variance inequality (check Table~\ref{inequalities}) we obtain $\Exp[\|x\|_{\mB}\;|\;y] \leq \sigma$.
In our setting if we assume $\Exp[\|\epsilon^k\|_{\mB}^2\;|\;x^k,\bS_k] \leq \sigma_k^2$ where $\epsilon^k$ is the inexactness error and $x^k$ is the current iterate then by the variance inequality it holds that $\Exp[\|\epsilon^k\|_{\mB}\;|\;x^k,\bS_k] \leq \sigma_k$.
\end{rem}

 \begin{table}[H]
\begin{center}
 \begin{tabular}{ |p{3cm}||p{2.3cm}||p{4cm}||p{2.9cm}|  }
 \hline
 \multicolumn{4}{|c|}{Useful inequalities} \\
 \hline
\begin{tabular}{c}  Inequalities \\ (Full names) \end{tabular} & Abbreviations & Formula  & Assumptions\\
 \hline
 \hline
Jensen Inequality & \textit{ Jensen } & $f[\Exp(x)]\leq \Exp[f(x)]$ &  $f$ is convex\\
 \hline
 Conditioned Jensen &   \textit{ C.J. }  & $f(\Exp[x \;|\; s]) \leq \Exp[f(x) \;|\; s]$ &  $f$ is convex\\
  \hline
\begin{tabular}{c}    Cauchy-Swartz \\ (B-norm) \end{tabular} & \textit{ C.S. } &  $| \langle a,b \rangle_{\bB}| \leq \|a\|_{\bB} \|b\|_{\bB}$ & $a,b \in \R^n$ \\
   \hline
 Variance Inequality &  \textit{ V.I } & $(\Exp[X])^2\leq \Exp[X^2]$ & $X$ random variable\\
  \hline
\end{tabular}
\end{center}
 \caption{Popular inequalities with abbreviations and formulas.}
\label{inequalities}
\end{table}

\subsection{Proof of Theorem~\ref{InexactSGDConstant}}
\label{Appendix1}
\begin{proof}
First we decompose:
\begin{eqnarray}\label{main_equality}
\|x^{k+1}-x^*\|_{\mB}^2 &=& \| (\mI - \omega \mB^{-1}\mZ_k) (x^k-x^*) +  \epsilon^k\|_{\mB}^2\notag\\
&=& \|(\mI - \omega \mB^{-1}\mZ_k) (x^k-x^*)\|_{\mB}^2+\|\epsilon^k\|_{\mB}^2 \notag\\
&&+2 \left\langle (\mI - \omega \mB^{-1}\mZ_k) (x^k-x^*), \epsilon^k \right \rangle.
\end{eqnarray}
Applying the innermost expectation of \eqref{eq:tower3} to \eqref{main_equality}, we get:
\begin{eqnarray}\label{mainequation}
\Exp\left[\|x^{k+1}-x^*\|_{\mB}^2\;|\;x^k, \bS_k\right] &=&\underbrace{\Exp \left[ \|(\mI - \omega \mB^{-1}\mZ_k) (x^k-x^*)\|_{\mB}^2\;|\;x^k,\bS_k \right]}_{T1} + \underbrace{\Exp \left[ \|\epsilon^k\|_{\mB}^2\;|\;x^k,\bS_k \right]}_{T2}\notag\\
&&+ 2\underbrace{\Exp \left[\left\langle (\mI - \omega \mB^{-1}\mZ_k) (x^k-x^*), \epsilon^k \right \rangle_{\mB}\;|\;x^k,\bS_k \right]}_{T3}.
\end{eqnarray}
We now analyze the three expression T1,T2,T3 separately. 

Note that an upper bound for the expression T2 can be directly obtained from the assumption 
\begin{equation}
\label{ForT2}
T2= \Exp[\|\epsilon^k\|^2_{\bB}\;|\;x^k,\bS_k]\leq\sigma_k^2.
 \end{equation}
The first expression can be written as: 
\begin{eqnarray}\label{ForT1}
T1= \Exp[ \|(\mI - \omega \mB^{-1}\mZ_k) (x^k-x^*)\|_{\mB}^2\;|\;x^k,\bS_k]& = &\|(\mI - \omega \mB^{-1}\mZ_k) (x^k-x^*)\|_{\mB}^2\notag\\
&\overset{\eqref{x_k_omega}}=& \|x^k-x^*\|_{\mB}^2 - 2\omega (2-\omega)f_{\bS_k}(x^k).
\end{eqnarray}
For expression T3: 
\begin{eqnarray}\label{ForT3}
\Exp \left[\left\langle (\mI - \omega \mB^{-1}\mZ_k) (x^k-x^*), \epsilon^k \right \rangle_{\mB}\;|\;x^k,\bS_k \right] &=&\left\langle (\mI - \omega \mB^{-1}\mZ_k) (x^k-x^*), \Exp[\epsilon^k\;|\;x^k,\bS_k] \right \rangle_{\mB}\notag\\
& \overset{{\rm \;C.S.}}{\leq}&  \|(\mI - \omega \mB^{-1}\mZ_k) (x^k-x^*)\|_{\bB} \|\Exp[\epsilon^k\;|\;x^k,\bS_k]\|_{\bB}\notag\\
&\overset{{\rm C.J.}}{\leq}&  \|(\mI - \omega \mB^{-1}\mZ_k) (x^k-x^*)\|_{\bB} \Exp[\|\epsilon^k \|_{\bB}\;|\;x^k,\bS_k]\notag\\
&\overset{(*)}\leq& \|(\mI - \omega \mB^{-1}\mZ_k) (x^k-x^*)\|_{\bB} \sigma_k,
\end{eqnarray}
where in the inequality $(*)$ we use Remark~\ref{variance_ineq} and  \eqref{Assumption1serial}.

By substituting the bounds \eqref{ForT2}, \eqref{ForT1}, and \eqref{ForT3} into \eqref{mainequation} we obtain:
\begin{eqnarray}\label{afterdba}
\Exp \left[\|x^{k+1}-x^*\|_{\mB}^2\;|\;x^k, \bS_k \right] &\leq&  \|x^k-x^*\|_{\mB}^2 - 2\omega (2-\omega)f_{\bS_k}(x^k)+ \sigma_k^2 \notag\\
&&+2 \|(\mI - \omega \mB^{-1}\mZ_k) (x^k-x^*)\|_{\bB} \sigma_k.
\end{eqnarray}
We now take the middle expectation~(see \eqref{eq:tower3}) and apply it to inequality \eqref{afterdba}:
\begin{eqnarray}\label{4}
\Exp \left[\Exp[\|x^{k+1}-x^*\|_{\mB}^2\;|\;x^k, \bS_k]\;|\;x^k \right] &\leq&  \|x^k-x^*\|_{\mB}^2 - 2\omega (2-\omega)f(x^k)+\sigma_k^2 \notag\\
&&+2 \Exp[\|(\mI - \omega \mB^{-1}\mZ_k) (x^k-x^*)\|_{\bB}\;|\;x^k]\sigma_k.
\end{eqnarray}
Now let us find a bound on the quantity $\Exp\left[\|(\mI - \omega \mB^{-1}\mZ_k) (x^k-x^*)\|_{\bB}\;|\;x^k\right]$.
Note that from \eqref{exp_x_k_omega} and \eqref{x_k_omega} we have that $\Exp\left[\|(\mI - \omega \mB^{-1}\mZ_k) (x^k-x^*)\|^2_{\bB}\;|\;x^k\right] \leq \rho \|x^k-x^*\|^2_{\bB}$. By using Remark~\ref{variance_ineq} in the last inequality we obtain:
\begin{eqnarray}\label{5} 
\Exp\left[\|(\mI - \omega \mB^{-1}\mZ_k) (x^k-x^*)\|_{\bB}\;|\;x^k\right] & = &\sqrt{\rho}\|x^k-x^*\|_{\mB}.
\end{eqnarray} 

By substituting \eqref{5} in \eqref{4}:
\begin{eqnarray}
\label{eq19}
\Exp[\Exp[\|x^{k+1}-x^*\|_{\mB}^2\;|\;x^k, \bS_k]\;|\;x^k] &\leq &  \|x^k-x^*\|_{\mB}^2 - 2\omega (2-\omega)f(x^k)+ \sigma_k^2 \notag\\
&&+2\sigma_k \sqrt{\rho}\|x^k-x^*\|_{\mB} \notag\\
&\overset{\eqref{exp_x_k_omega}}\leq& \rho \|x^k-x^*\|_{\mB}^2 + \sigma_k^2 + 2\sigma_k \sqrt{\rho}\|x^k-x^*\|_{\mB}
\end{eqnarray}
We take the final expectation (outermost expectation in the tower rule \eqref{eq:tower3}) on the above expression to find:
\begin{eqnarray}
\label{asjcakd}
\Exp[\|x^{k+1}-x^*\|_{\mB}^2]&=&\Exp[\Exp[\Exp[\|x^{k+1}-x^*\|_{\mB}^2\;|\;x^k, \bS_k]\;|\;x^k]]\notag\\
& \leq& \rho \Exp[\|x^k-x^*\|_{\mB}^2] + \sigma_k^2 + 2 \sigma_k \sqrt{\rho} \,\ \Exp[\|x^k-x^*\|_{\mB}] \notag\\
&\overset{V.I}\leq& \rho \Exp[\|x^k-x^*\|_{\mB}^2] + \sigma_k^2 + 2 \sigma_k \sqrt{\rho} \sqrt{\Exp[\|x^k-x^*\|_{\mB}^2]} 
\end{eqnarray}
Using $r_k = \E{\|x^{k}-x^*\|_{\mB}^2}$ equation \eqref{asjcakd} takes the form: 
\begin{eqnarray*}
r_{k+1}\leq & \rho r_k  +  \sigma_k^2 + 2\sigma_k \sqrt{\rho} \sqrt{r_k} = \left( \sqrt{\rho r_k}+\sigma_k \right)^2
 \end{eqnarray*}
 If we further substitute $p_k=\sqrt{r_k}$ and $\ell=\sqrt{\rho}$ the recurrence simplifies to:
 \begin{eqnarray*}
p_{k+1}\leq & \ell p_k +\sigma_k 
 \end{eqnarray*}
By unrolling the final inequality:
\begin{eqnarray*}
p_k \leq \ell^k r_0 + (\ell^0\sigma_{k-1}+\ell\sigma_{k-2}+\cdots + \ell^{k-1}\sigma_0) = \ell^k p_0 +  \sum_{i=0}^{k-1} \ell^{k-1-i}\sigma_i.
\end{eqnarray*}
Hence,  $$\sqrt{\Exp[\|x^{k}-x^*\|_{\mB}^2]} \leq  \rho^{k/2} \|x^{0}-x^*\|_{\mB} +  \sum_{i=0}^{k-1} \rho^{\frac{k-1-i}{2}}\sigma_i.$$
The result is obtained by using V.I in the last expression.
\end{proof}

\subsection{Proof of Corollary~\ref{FirstCorollary}}
\label{Appendix2}
By denoting $r_k=\Exp[\|x^{k}-x^*\|_{\mB}]$ in \eqref{Theorem1} we obtain:
\[r_k \leq \rho^{k/2} r_0 + \rho^{1/2} \sigma \sum_{i=0}^{k-1} \rho^{k-1-i} =\rho^{k/2} r_0 + \rho^{1/2} \sigma \sum_{i=0}^{k-1} \rho^{i}=\rho^{k/2} r_0 + \rho^{1/2} \sigma  \frac{1-\rho^k}{1-\rho}.\]
Since $1-\rho^k\le 1$ the result is obtained. 

\subsection{Proof of Theorem~\ref{ISGDwithq}}
\label{Appendix3}
In order to prove Theorem~\ref{ISGDwithq} we need to follow similar steps to the proof of Theorem~\ref{InexactSGDConstant}. The main  differences of the two proofs appear at the points that we need to upper bound the norm of the inexactness error ($\|\epsilon^k\|^2$). In particular instead of using the general sequence $\sigma_k^2 \in \R$ we utilize the bound $q^2\|x^k-x^*\|^2_{\bB}$ from Assumption~\ref{Assumption3}. Thus, it is sufficient to focus at the parts of the proof that these bound is used.

Similar to the proof of Theorem~\ref{InexactSGDConstant} we first decompose to obtain the equation \eqref{mainequation}. There, the expression T1 can be upper bounded from \eqref{ForT1} but now using the Assumption~\ref{Assumption3} the expression T2  and T3 can be upper bounded as follows:
\begin{equation}
\label{For2T2}
T2= \Exp[\|\epsilon^k\|^2_{\bB}\;|\;x^k,\bS_k]\leq q^2\|x^k-x^*\|^2_{\bB}.
 \end{equation}
\begin{eqnarray}
\label{For2T3}
T3&=&\Exp[\left\langle (\mI - \omega \mB^{-1}\mZ_k) (x^k-x^*), \epsilon^k \right \rangle_{\mB}\;|\;x^k,\bS_k] \notag\\
&\overset{{\rm \;Remark\;}\ref{variance_ineq}\;{\rm and\;} \eqref{ForT3}}\leq &\|(\mI - \omega \mB^{-1}\mZ_k) (x^k-x^*)\|_{\bB} q\|x^k-x^*\|
\end{eqnarray}

As a result by substituting the bounds \eqref{ForT1}, \eqref{For2T2}, and \eqref{For2T3} into \eqref{mainequation} we obtain:
\begin{eqnarray}
\Exp[\|x^{k+1}-x^*\|_{\mB}^2\;|\;x^k, \bS_k] &\overset{\eqref{mainequation}}\leq&  \|x^k-x^*\|_{\mB}^2 - 2\omega (2-\omega)f_{\bS_k}(x^k)+ q^2\|x^k-x^*\|^2_{\bB} \notag\\
&&+2 \|(\mI - \omega \mB^{-1}\mZ_k) (x^k-x^*)\|_{\bB} q\|x^k-x^*\|_{\bB}.
\end{eqnarray}

By following the same steps to the proof of Theorem~\ref{InexactSGDConstant} the equation \eqref{eq19} takes the form:
\begin{eqnarray}
\Exp[\Exp[\|x^{k+1}-x^*\|_{\mB}^2\;|\;x^k, \bS_k]\;|\;x^k] 
&\leq& \rho \|x^k-x^*\|_{\mB}^2 + q^2\|x^k-x^*\|^2_{\bB} \notag\\ && + \quad 2q\|x^k-x^*\|_{\bB} \sqrt{\rho}\|x^k-x^*\|_{\mB} \notag\\
&=& \left(\rho+2q \sqrt{\rho}+q^2\right)\|x^k-x^*\|_{\mB}^2.\notag\\
& = &\left(\sqrt{\rho}+q\right)^2 \|x^k-x^*\|_{\mB}^2
\end{eqnarray}
We take the final expectation (outermost expectation in the tower rule \eqref{eq:tower3}) on the above expression to find:
\begin{eqnarray}
\Exp[\|x^{k+1}-x^*\|_{\mB}^2]&=&\Exp \left[\Exp[\Exp[\|x^{k+1}-x^*\|_{\mB}^2\;|\;x^k, \bS_k]\;|\;x^k] \right]\notag\\
& \leq& \left(\sqrt{\rho}+q\right)^2 \Exp[\|x^k-x^*\|_{\mB}^2].
\end{eqnarray}
The final result follows by unrolling the recurrence. 

\subsection{Proof of Theorem \ref{InexactSGDrandom}}
\label{Appendix4}
\begin{proof}
Similar to the previous two proofs by decomposing the update rule and using the innermost expectation of \eqref{eq:tower3} we obtain equation \eqref{mainequation}.  An upper bound of expression T1 is again given by inequality \eqref{ForT1}. For the expression T2 depending the assumption that we have on the norm of the inexactness error different upper bounds can be used. In particular, 
\begin{enumerate}
\item[(i)] If Assumption \ref{Assumption2} holds then:  
 $ T2=\Exp[\|\epsilon^k\|^2_{\bB}\;|\;x^k,\bS_k]\leq\sigma_k^2.$
\item[(ii)] If Assumption \ref{Assumption3} holds then: $T2=\Exp[\|\epsilon^k\|^2_{\bB}\;|\;x^k,\bS_k]\leq\sigma_k^2 = q^2\|x^k-x^*\|^2_{\bB}.$
\item[(iii)] If Assumption \ref{Assumption4} holds then:  
$ T2=\Exp[\|\epsilon^k\|^2_{\bB}\;|\;x^k,\bS_k]\leq\sigma_k^2 = 2 q^2 f_{S_k}(x^k).$
\end{enumerate}

The main difference from the previous proofs, is that due to the Assumption \ref{Assumption5} and tower property \eqref{eq:tower3} the expression T3 will eventually be equal to zero. More specifically, we have that:
\begin{eqnarray}
\Exp \left[\Exp \left[\Exp \left[ \left\langle (\mI - \omega \mB^{-1}\mZ_k) (x^k-x^*), \epsilon^k \right \rangle_{\mB}  \;|\; x^k,  \mS_k \right] \;|\; x^k\right] \right] &=& \Exp \left[\left\langle (\mI - \omega \mB^{-1}\mZ_k) (x^k-x^*), \epsilon^k \right \rangle_{\mB} \right] \notag\\
&=& T3=0,
\end{eqnarray}
Thus, in this case equation \eqref{afterdba} takes the form:
\begin{eqnarray}\label{mainequationansdjkad}
\Exp[\|x^{k+1}-x^*\|_{\mB}^2\;|\;x^k, \bS_k] &\leq & \|x^k-x^*\|_{\mB}^2 - 2\omega (2-\omega)f_{\bS_k}(x^k) + \sigma_k^2.
\end{eqnarray}
Using the above expression depending the assumption that we have we obtain the following results:
\begin{enumerate}
\item[(i)] 
By taking the middle expectation~(see \eqref{eq:tower3}) and apply it to the above inequality:
\begin{eqnarray}
\Exp\left[\Exp\left[\|x^{k+1}-x^*\|_{\mB}^2\;|\;x^k, \bS_k \right]\;|\;x^k\right] &\leq&  \|x^k-x^*\|_{\mB}^2 - 2\omega (2-\omega)f(x^k)+\Exp[\sigma_k^2 \;|\;x^k] \notag\\
&\overset{\eqref{exp_x_k_omega}}\leq&  \rho \|x^k-x^*\|_{\mB}^2 +\Exp[\sigma_k^2 \;|\;x^k] 
\end{eqnarray}

We take the final expectation (outermost expectation in the tower rule \eqref{eq:tower3}) on the above expression to find:
\begin{eqnarray}
\Exp\left[\|x^{k+1}-x^*\|_{\mB}^2 \right]&=&\Exp \left[\Exp \left[\Exp \left[\|x^{k+1}-x^*\|_{\mB}^2\;|\;x^k, \bS_k\right]\;|\;x^k\right] \right]\notag\\
& \leq& \rho\Exp[\|x^k-x^*\|_{\mB}^2] + \Exp[\Exp[\sigma_k^2 \;|\;x^k]] \notag\\
& =& \rho \Exp[\|x^k-x^*\|_{\mB}^2] + \Exp[\sigma_k^2] \notag\\
& =& \rho \Exp[\|x^k-x^*\|_{\mB}^2] + {\bar{\sigma}}_k^2 
\end{eqnarray}
Using $r_k = \E{\|x^{k}-x^*\|_{\mB}^2}$ the last inequality takes the form
$r_{k+1}\leq \rho r_k  +   {\bar{\sigma}}_k^2$. By unrolling the last expression:
$r_k \leq \rho^k r_0 + (\rho^0 {\bar{\sigma}}_{k-1}^2+\rho {\bar{\sigma}}^2_{k-2}+\cdots + \rho^{k-1} {\bar{\sigma}}^2_0) =\rho^k r_0 +  \sum_{i=0}^{k-1} \rho^{k-1-i} {\bar{\sigma}}^2_i.$
Hence,  $$\Exp[\|x^{k}-x^*\|_{\mB}^2] \leq  \rho^{k} \|x^{0}-x^*\|^2_{\mB} +  \sum_{i=0}^{k-1} \rho^{k-1-i}{\bar{\sigma}}^2_i.$$
\item[(ii)]
For the case (ii) inequality \eqref{mainequationansdjkad} takes the form:
$$
\Exp[\|x^{k+1}-x^*\|_{\mB}^2\;|\;x^k, \bS_k] \leq  \|x^k-x^*\|_{\mB}^2 - 2\omega (2-\omega)f_{\bS_k}(x^k) + q^2\|x^k-x^*\|^2_{\bB},
$$
and by taking the middle expectation~(see \eqref{eq:tower3}) we obtain:
\begin{eqnarray}
\Exp[\Exp[\|x^{k+1}-x^*\|_{\mB}^2\;|\;x^k, \bS_k]\;|\;x^k]  &\leq & \|x^k-x^*\|_{\mB}^2 - 2\omega (2-\omega)f(x^k) + q^2\|x^k-x^*\|^2_{\bB}\notag\\
& \overset{\eqref{exp_x_k_omega}}\leq & \rho \|x^k-x^*\|_{\mB}^2 + q^2\|x^k-x^*\|^2_{\bB} \notag\\
& =& (\rho + q^2) \|x^k-x^*\|_{\mB}^2. 
\end{eqnarray}
By taking the final expectation of the tower rule \eqref{eq:tower3} and apply it to the above inequality:
\begin{eqnarray}
\Exp[\|x^{k+1}-x^*\|_{\mB}^2] & \leq& (\rho + q^2) \Exp[\|x^k-x^*\|_{\mB}^2]. 
\end{eqnarray}
and the result is obtain by unrolling the last expression.
\item[(iii)] 
For the case (iii) inequality \eqref{mainequationansdjkad} takes the form:
\begin{eqnarray}
\Exp[\|x^{k+1}-x^*\|_{\mB}^2\;|\;x^k, \bS_k] &\leq & \|x^k-x^*\|_{\mB}^2 - 2(\omega (2-\omega)-q^2 ) f_{\bS_k}(x^k),
\end{eqnarray}
and by taking the middle expectation~(see \eqref{eq:tower3}) we obtain:
\begin{eqnarray}
\Exp[\Exp[\|x^{k+1}-x^*\|_{\mB}^2\;|\;x^k, \bS_k]\;|\;x^k]  &\leq &\|x^k-x^*\|_{\mB}^2 - 2(\omega (2-\omega)-q^2 ) f(x^k) \notag\\
& \overset{\eqref{b3}}\leq & \|x^k-x^*\|_{\mB}^2 - (\omega (2-\omega)-q^2 )\lambda_{\min}^+ \|x^k-x^*\|_{\mB}^2 \notag\\
& =& (1- (\omega (2-\omega)-q^2 )\lambda_{\min}^+ ) \|x^k-x^*\|_{\mB}^2.
\end{eqnarray}
By taking the final expectation of the tower rule \eqref{eq:tower3} to the above inequality:
\begin{eqnarray}
\Exp[\|x^{k+1}-x^*\|_{\mB}^2] & \leq& (1- (\omega (2-\omega)-q^2 )\lambda_{\min}^+ ) \Exp[\|x^k-x^*\|_{\mB}^2].
\end{eqnarray}
and the result is obtain by unrolling the last expression.
\end{enumerate}
\end{proof}

\chapter{Revisiting Randomized Gossip Algorithms}
\label{ChapterGossip}

\section{Introduction}
\label{sec:ACP}
Average consensus is a fundamental problem in distributed computing and multi-agent systems. It comes up in many real world applications such as coordination of autonomous agents, estimation, rumour spreading in social networks, PageRank and distributed data fusion on ad-hoc networks and decentralized optimization. Due to its great importance there is much classical \cite{tsitsiklis1986distributed,degroot1974reaching} and recent   \cite{xiao2005scheme, xiao2004fast, boyd2006randomized} work on the design of efficient algorithms/protocols for solving it.

In the average consensus (AC) problem we are given an undirected connected network $\cG=(\cV,\cE)$ with node set $\cV=\{1,2,\dots,n\}$ and edges $\cE$. Each node $i \in \cV$ ``knows'' a private value $c_i \in \R$. The goal of AC is for every node to compute the average of these private values, $\bar{c}\eqdef\frac{1}{n}\sum_i c_i$, in a decentralized fashion. That is, the exchange of information can only occur between connected nodes (neighbors). 

Among the most attractive protocols for solving the average consensus problem are gossip algorithms. The development and design of gossip algorithms was studied extensively in the last decade. The seminal 2006 paper of Boyd et al.\ \cite{boyd2006randomized} motivated a fury of subsequent research and gossip algorithms now appear in many applications, including distributed data fusion in sensor networks \cite{xiao2005scheme}, load balancing \cite{cybenko1989dynamic} and clock synchronization \cite{freris2012fast}.  For a survey of selected relevant work prior to 2010, we refer the reader to the work of  Dimakis et al.\ \cite{dimakis2010gossip}. For more recent results on randomized gossip algorithms we suggest \cite{zouzias2015randomized, liu2013analysis,olshevsky2014linear,liu2018privacy, nedic2018network, aybat2017decentralized}. See also  \cite{dimakis2008geographic, aysal2009broadcast, olshevsky2009convergence}. 

\subsection{Main contributions} 
In this chapter, we connect two areas of research which until now have remained remarkably disjoint in the literature: randomized iterative (projection) methods for solving linear systems and randomized gossip protocols for solving the average consensus. This connection enables us to make contributions by borrowing from each body of literature to the other and using it we propose a new framework for the design and analysis of novel efficient randomized gossip protocols.

The main contributions of our work include:

\begin{itemize}
\item \textbf{RandNLA.} We show how classical randomized iterative methods for solving linear systems can be interpreted as gossip algorithms when applied to special systems encoding the underlying network and explain in detail their decentralized nature.
Through our general framework we recover a comprehensive array of well-known gossip protocols as special cases. In addition our approach allows for the development of novel block and dual variants of all of these methods. From a numerical analysis viewpoint our work is the first that explores in depth, the decentralized nature of randomized iterative methods for solving linear systems and proposes them as efficient methods for solving the average consensus problem (and its weighted variant). 

\item \textbf{Weighted AC.}  The methods presented in this chapter solve the more general \textit{weighted} average consensus (Weighted AC) problem (Section~\ref{weightedAC}) popular in the area of distributed cooperative spectrum sensing networks. The proposed protocols are the first randomized gossip algorithms that directly solve this problem with finite-time convergence rate analysis. In particular, we prove linear convergence of the proposed protocols and explain how we can obtain further acceleration using momentum. To the best of our knowledge, the existing decentralized protocols that solve the weighted average consensus problem show convergence but without convergence analysis. 

\item \textbf{Acceleration.} We present novel and provably \textit{accelerated} randomized gossip protocols. In each step, of the proposed algorithms, all nodes of the network update their values using their own information but only a subset of them exchange messages. The protocols are inspired by the recently proposed accelerated variants of randomized Kaczmarz-type methods and use momentum terms on top of the sketch and project update rule (gossip communication) to obtain better theoretical and practical performance.
To the best of our knowledge, our accelerated protocols are the first randomized gossip algorithms that converge to a consensus with a provably accelerated linear rate without making any further assumptions on the structure of the network. Achieving an accelerated linear rate in this setting using randomized gossip protocols was an open problem.

\item \textbf{Duality.} We reveal a hidden duality of randomized gossip algorithms, with the dual iterative process maintaining variables attached to the edges of the network. We show how the randomized coordinate descent and randomized Newton methods work as edge-based dual randomized gossip algorithms.

\item \textbf{Experiments.} We corroborate our theoretical results with extensive experimental testing on typical wireless network topologies.
We numerically verify the linear convergence of the our protocols for solving the weighted AC problem. We explain the benefit of using block variants in the gossip protocols where more than two nodes update their values in each iteration. We explore the performance of the proposed provably accelerated gossip protocols and show that they significantly outperform the standard pairwise gossip algorithm and existing fast pairwise gossip protocols with momentum. An experiment showing the importance of over-relaxation in the gossip setting is also presented.
\end{itemize}

We believe that this work could potentially open up new avenues of research in the area of decentralized gossip protocols. 

\subsection{Structure of the chapter}
This chapter is organized as follows. Section~\ref{background} introduces the necessary background on basic randomized iterative methods for linear systems that will be used for the development of randomized gossip protocols.  Related work on the literature of linear system solvers, randomized gossip algorithms for averaging and gossip algorithms for consensus optimization is presented.   In Section~\ref{skecthsection} the more general weighted average consensus problem is described and the connections between the two areas of research (randomized projection methods for linear systems and gossip algorithms) is established. In particular we explain how methods for solving linear systems can be interpreted as gossip algorithms when applied to special systems encoding the underlying network and elaborate in detail their distributed nature. Novel block gossip variants are also presented. In Section~\ref{AccelerateGossip} we describe and analyze fast and provably accelerated randomized gossip algorithms. In each step of these protocols all nodes of the network update their values but only a subset of them exchange their private values. Section~\ref{DualBlock} describes dual randomized gossip algorithms that operate with values that are associated to the edges of the network and Section~\ref{FurtherConnections} highlights further connections between methods for solving linear systems and gossip algorithms.
Numerical evaluation of the new gossip protocols is presented in Section~\ref{experimentsGossip}. Finally, concluding remarks are given in Section~\ref{conclusion}.

\subsection{Notation}
For convenience, a table of the most frequently used notation of this chapter is included in Section~\ref{NotationTable}. 
In particular, with boldface upper-case letters denote matrices; $\bI$ is the identity matrix. By $\|\cdot \|$ and $\|\cdot \|_F$ we denote the Euclidean norm and  the Frobenius norm, respectively.  For a positive integer number $n$, we write $[n]\eqdef \{1,2, \dots ,n\}$.  By $\cL$ we denote the solution set of the linear system $\bA x=b$, where $\bA \in \R^{m\times n}$ and $b\in \R^m$.

Vector $x^k = (x^k_1,\dots,x^k_n) \in \R^n$ represents the vector with the private values of the $n$ nodes of the network at the $k^{th}$ iteration while with $x_i^{k}$ we denote the value of node $i \in [n]$ at the $k^{th}$ iteration. $\cN_i \subseteq \cV$ denotes the set of nodes that are neighbors of node $i \in \cV$.  By $\ac(\cG)$ we denote the algebraic connectivity of graph $\cG$. Throughout the chapter, $x^*$ is the projection of $x^0$ onto $\cL$ in the $\bB$-norm. We write $x^*=\Pi_{\cL,\bB}(x^0)$.

The complexity of all gossip protocols presented in this chapter is described by the spectrum of matrix 
\begin{equation}
\label{MatrixW}
\bW=\bB^{-1/2}\mA^\top \Exp[\bH] \mA\bB^{-1/2}\overset{\eqref{ZETA_Intro}}{=}\bB^{-1/2}\Exp[\bZ]\bB^{-1/2},
\end{equation}
where the expectation is taken over $\bS\sim \cD$.  
With $\lambda_{\min}^+$ and $\lambda_{\max}$ we indicate the smallest nonzero and the largest eigenvalue of matrix $\bW$, respectively. Recall that this is exactly the same matrix used in the previous chapters of this thesis.

Finally, with $\bQ \in \R^{|\cE| \times n}$ we define the incidence matrix and with $\bL \in \R^{n\times n} $ the Laplacian matrix of the network. Note that it holds that $\bL=\bQ^\top \bQ$. Further, with $\bD$ we denote the degree matrix of the graph. That is, $\bD=\text{\textbf{Diag}}(d_1,d_2,\dots, d_n) \in \R^{n \times n}$ where $d_i$ is the degree of node $i \in \cV$. 

\section{Background - Technical Preliminaries}
\label{background}

As we have already mentioned in this thesis, solving linear systems is a central problem in numerical linear algebra and plays an important role in computer science, control theory, scientific computing, optimization, computer vision, machine learning, and many other fields. With the advent of the age of big data, practitioners are looking for ways to solve linear systems of unprecedented sizes. In this large scale setting, randomized iterative methods are preferred mainly because of their cheap per iteration cost and because they can easily scale to extreme dimensions.

\subsection{Randomized iterative methods for linear systems}
%Kaczmarz-type methods are very popular for solving linear systems $\bA x =b$ with many equations. The (deterministic) Kaczmarz method for solving consistent linear systems was originally introduced by Kaczmarz in 1937 \cite{kaczmarz1937angenaherte}. Despite the fact that a large volume of papers  was written  on the topic, the first provably linearly convergent variant of the Kaczmarz method---the randomized Kaczmarz Method (RK)---was developed more than 70 years later, by Strohmer and Vershynin \cite{RK}. This result sparked renewed interest in design of randomized methods for solving linear systems \cite{needell2010randomized, RBK, eldar2011acceleration, MaConvergence15, zouzias2013randomized, l2015randomized, schopfer2016linear, liu2016accelerated}. More recently, as we have already mentioned in the introduction of this thesis,  Gower and Richt\'{a}rik \cite{gower2015randomized} and Richt\'{a}rik and Tak\'{a}c \cite{ASDA} provide a unified analysis for several randomized iterative methods for solving linear systems using a sketch-and-project framework. We adopt this framework in this chapter. 

Recall that in the introduction of this thesis we presented the sketch and project method \eqref{SPM_IntroThesis} and we explained how this algorithm is identical to SGD \eqref{SGD_IntroThesis}, SN \eqref{SNM_IntroThesis} and SPP \eqref{SPPM_IntroThesis} for solving the stochastic quadratic optimization problem \eqref{StochReform_IntroThesis}. For the benefit of the reader and for the easier comparison to the gossip algorithms, a formal presentation of the Sketch and Project method for solving a consistent linear system $\bA x=b$ is presented in Algorithm~\ref{FullSkecth}. 

\begin{algorithm}[H]
	\caption{Sketch and Project Method \cite{ASDA}}
	\label{FullSkecth}
	\small \small
	\begin{algorithmic}[1]
		%\Procedure{MyProcedure}{}
		\State {\bf Parameters:} Distribution $\mathcal{D}$ from which method samples matrices; stepsize/relaxation parameter $\omega \in \R$; momentum parameter $\beta$.
		\State {\bf Initialize:} $x^0,x^1 \in \R^n$
		\For{$k=0,1,2,\dots$} 
		\State Draw a fresh $\bS_k \sim \cD$
		\State   Set 
$x^{k+1}=x^k -\omega \bB^{-1}\bA^\top \bS_k (\bS_k^\top \bA \bB^{-1} \bA^\top \bS_k)^\dagger \bS_k^\top (\bA x^k-b)$
		\EndFor
		\State {\bf Output:} The last iterate $x^k$
	\end{algorithmic}
\end{algorithm}

In this chapter, we are mostly interested in two special cases of the sketch and project framework--- the randomized Kaczmarz (RK) method and its block variant, the randomized block Kaczmarz (RBK) method. In addition, in the following sections we present novel scaled and accelerated variants of these two selected cases and interpret their gossip nature. In particular, we focus on explaining how these methods can solve the average consensus problem and its more general version, the weighted average consensus (subsection~\ref{weightedAC}).

Let  $e_i \in \R^m$  be the $i^{\text{th}}$ unit coordinate vector in $ \R^m$ and let $\bI_{:C}$ be column submatrix of the $m \times m$ identity matrix with columns indexed by  $C\subseteq [m]$.  Then RK and RBK methods can be obtained as special cases of Algorithm~\ref{FullSkecth} as follows:
\begin{itemize}
\item RK: Let $\bB=\bI$ and $\bS_k=e_i$, where $i \in [m]$ is chosen independently at each iteration, with probability $p_i>0$. In this setup the update rule of Algorithm~\ref{FullSkecth} simplifies to  
\begin{equation}
\label{RK}
x^{k+1}=x^k - \omega \frac{\bA_{i :} x^k -b_{i}}{\|\bA_{i :}\|^2} \bA_{i :}^ \top  .
\end{equation}
\item RBK: Let $\bB=\bI$ and $\bS=\bI_{:C}$, where set $C\subseteq [m]$ is chosen independently at each iteration, with probability $p_C\geq 0$. In this setup the update rule of Algorithm~\ref{FullSkecth} simplifies to  
\begin{equation}
\label{RBK}
x^{k+1}=x^k - \omega \bA_{C:}^\top (\bA_{C:}\bA_{C:}^\top)^\dagger (\bA_{C:}x^k-b_C).
\end{equation}
\end{itemize}

As we explained in Chapter~\ref{ChapterIntroduction}, the sketch and project method, converges linearly to one particular solution of the linear system: the projection (on $\bB$-norm) of the initial iterate $x^0$ onto the solution set of the linear system, $x^*=\Pi_{\cL,\bB}(x^0)$. Therefore, the method solve the best approximation problem \eqref{BestApproximation_IntroThesis}.

The convergence performance of the Sketch and Project method (Algorithm~\ref{FullSkecth}) for solving the best approximation problem is described by the following theorem\footnote{For the proof of Theorem~\ref{ConvergenceSketchProject} check Theorem~\ref{BasicMethodConvergence} in Section~\ref{IntroductoryAnalysis} and recall that SGD and the sketch and project method are identical in this setting.}.

\begin{thm}
\label{ConvergenceSketchProject}
Let assume exactness and let $\{x^k\}_{k=0}^\infty$ be the iterates produced by the sketch and project method (Algorithm~\ref{FullSkecth}) with step-size $\omega \in (0,2)$. Set, $x^*=\Pi_{\cL,\bB}(x^0)$. Then,
 \begin{equation}
 \label{ConvergenceBasic}
 \Exp[\|x^k-x^*\|_{\bB}^2]\leq \rho^k \|x^0-x^*\|_{\bB}^2,
 \end{equation}
where
\begin{equation}
\label{RateRho}
\rho \eqdef 1 - \omega (2-\omega) \lambda_{\min}^+ \in [0,1].
\end{equation}
\end{thm}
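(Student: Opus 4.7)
The plan is to reduce this theorem directly to Theorem~\ref{BasicMethodConvergence}, which has already been established in Section~\ref{IntroductoryAnalysis}. The first step is to observe that the Sketch and Project update in Algorithm~\ref{FullSkecth} coincides, iterate for iterate, with one step of SGD applied to the stochastic reformulation~\eqref{StochReform_IntroThesis}. Indeed, using the closed form expression \eqref{Gradf_S_IntroThesis} for $\nabla f_{\bS_k}(x^k)$, the update on line~5 can be rewritten as $x^{k+1}=x^k-\omega \nabla f_{\bS_k}(x^k)$, which is exactly \eqref{SGD_IntroThesis}. Thus, for the purposes of the analysis, the sequence $\{x^k\}$ in Algorithm~\ref{FullSkecth} is the same as the SGD sequence studied earlier.

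Next, I would carry out the single-step contraction argument. Starting from $x^{k+1}-x^*=(x^k-x^*)-\omega \nabla f_{\bS_k}(x^k)$ and expanding the $\bB$-norm squared, I would invoke the two identities from Lemma~\ref{cnaoisna}, namely $f_{\bS_k}(x^k)=\tfrac{1}{2}\|\nabla f_{\bS_k}(x^k)\|_{\bB}^2$ and $f_{\bS_k}(x^k)=\tfrac{1}{2}\langle \nabla f_{\bS_k}(x^k),x^k-x^*\rangle_{\bB}$, to collapse the cross-term and the gradient-norm term simultaneously. This yields the clean per-sample identity
\[
\|x^{k+1}-x^*\|_{\bB}^2 = \|x^k-x^*\|_{\bB}^2 - 2\omega(2-\omega)\,f_{\bS_k}(x^k).
\]

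Taking conditional expectation in $\bS_k$ replaces $f_{\bS_k}(x^k)$ by $f(x^k)$, and then I would apply the quadratic growth bound \eqref{b3} from Lemma~\ref{bounds}, namely $f(x^k)\geq \tfrac{\lambda_{\min}^+}{2}\|x^k-x^*\|_{\bB}^2$, which is exactly where the exactness hypothesis and the fact $x^*=\Pi_{\cL,\bB}(x^0)$ enter. Since $\omega\in(0,2)$ ensures $\omega(2-\omega)>0$, this gives the one-step contraction $\Exp_{\bS_k}[\|x^{k+1}-x^*\|_{\bB}^2]\leq \rho\,\|x^k-x^*\|_{\bB}^2$ with $\rho=1-\omega(2-\omega)\lambda_{\min}^+$. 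Finally, taking full expectation and unrolling the recursion over $k$ yields \eqref{ConvergenceBasic}. The bound $\rho\in[0,1]$ follows from $\omega(2-\omega)\in(0,1]$ together with the fact, established in \cite{ASDA}, that the eigenvalues of $\bW$ lie in $[0,1]$.

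The argument is essentially bookkeeping given the preparatory lemmas, so I do not anticipate a genuine obstacle. The only subtle point worth flagging is the necessity of invoking exactness precisely at the step where quadratic growth is applied with $x^*=\Pi_{\cL,\bB}(x^0)$: without exactness, minimizers of $f$ could lie outside $\cL$, in which case the distance to the specific point $\Pi_{\cL,\bB}(x^0)$ would not admit such a lower bound by $f(x^k)$. Once exactness is in hand, the proof is a two-line computation plus one invocation of the tower property.
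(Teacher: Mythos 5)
Your proposal is correct and follows exactly the paper's route: the paper proves this theorem by noting that sketch-and-project coincides with SGD on the stochastic reformulation and then invoking Theorem~\ref{BasicMethodConvergence}, whose proof is precisely the expansion you describe (the identities of Lemma~\ref{cnaoisna} collapsing the cross and gradient-norm terms, followed by the quadratic growth bound \eqref{b3} and the tower property). Your remark on where exactness enters — in applying \eqref{b3} with $x^*=\Pi_{\cL,\bB}(x^0)$ — is also the right observation.
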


In other words, using standard arguments, from Theorem~\ref{ConvergenceSketchProject} we observe that for a given $\epsilon>0$ we have that:
$$k \geq \frac{1}{1-\rho} \log \left(\frac{1}{\epsilon} \right) \quad \Rightarrow \quad \Exp[\|x^k-x^*\|_{\bB}^2]\leq \epsilon \|x^0-x^*\|_{\bB}^2$$

\subsection{Other related work}

%\paragraph{On Sketch and Project Methods.} 
%Variants of the sketch-and-project methods have been recently proposed for solving several other problems. \cite{gower2016randomized, gower2016linearly} use similar ideas for the development of linearly convergent randomized iterative methods for computing/estimating the inverse and pseudoinverse of a large matrix, respectively. A limited memory variant of the stochastic block BFGS method  for solving the empirical risk minimization problem arising in machine learning  was proposed by \ \cite{gower2016stochastic}. Tu et al.\ \cite{tu2017breaking} utilize the sketch-and-project framework to show that breaking block locality can accelerate block Gauss-Seidel methods. In addition, they develop an accelerated variant of the method for a specific distribution $\cD$.  An accelerated (in the sense of Nesterov) variant of the sketch and prokect method proposed in \cite{gower2018accelerated} for the more general Euclidean setting and applied to matrix inversion and quasi-Newton updates. As we have already mentioned, in \cite{ASDA},  through the development of stochastic reformulations, a stochastic gradient descent interpretation of the sketch and project method have been proposed. Similar stochastic reformulations that have the sketch and project method as special case were also proposed in the more general setting of convex optimization \cite{gower2019sgd} and in the context of variance-reduced methods \cite{gower2018stochastic}.

\paragraph{Gossip algorithms for average consensus} The problem of average consensus has been extensively studied in the automatic control and signal processing literature for the past two decades~\cite{dimakis2010gossip}, and was first introduced for decentralized processing in the seminal work~\cite{tsitsiklis1986distributed}. A clear connection between the rate of convergence and spectral characteristics of the underlying network topology over which message passing occurs was first established in~\cite{boyd2006randomized} for pairwise randomized gossip algorithms. 

Motivated by network topologies with salient properties of wireless networks (e.g., nodes can communicate directly only with other nearby nodes), several methods were proposed to accelerate the convergence of gossip algorithms. For instance, \cite{benezit2010order} proposed averaging among a set of nodes forming a path in the network (this protocol can be seen as special case of our block variants in Section~\ref{BlockGossip}). Broadcast gossip algorithms have also been analyzed \cite{aysal2009broadcast} where the nodes communicate with more than one of their neighbors by broadcasting their values.

While the gossip algorithms studied in~\cite{boyd2006randomized,benezit2010order,aysal2009broadcast} are all first-order (the update of $x^{k+1}$ only depends on $x^k$), a faster randomized pairwise gossip protocol was proposed in~\cite{cao2006accelerated} which suggested to incorporate additional memory to accelerate convergence. The first analysis of this protocol was later proposed in~\cite{liu2013analysis} under strong condition. It is worth to mention that in the setting of deterministic gossip algorithms theoretical guarantees for accelerated convergence were obtained in \cite{oreshkin2010optimization,kokiopoulou2009polynomial}.
In Section~\ref{AccelerateGossip} we propose fast and provably accelerated randomized gossip algorithms with memory and compare them in more detail with the fast randomized algorithm proposed in~\cite{cao2006accelerated, liu2013analysis}.

\paragraph{Gossip algorithms for multiagent consensus optimization.} 
In the past decade there has been substantial interest in consensus-based mulitiagent optimization methods that use gossip updates in their update rule \cite{nedic2018network, yuan2016convergence, shi2015extra}. In multiagent consensus optimization setting , $n$ agents or nodes, cooperate to solve an optimization problem. In particular, a local objective function $f_i:\R^d \rightarrow \R$ is associated with each node $i \in [n]$ and the goal is for all nodes to solve the optimization problem \begin{equation}
\label{optMulti}
\min_{x \in \R^d} \frac{1}{n}\sum_{i=1}^n f_i(x)
\end{equation} by communicate only with their neighbors. In this setting gossip algorithms works in two steps by first executing some local computation followed by communication over the network \cite{nedic2018network}. Note that the average consensus problem with $c_i$ as node $i$ initial value can be case as a special case of the optimization problem \eqref{optMulti} when the function values are $f_i(x)=(x- c_i)^2$.

Recently there has been an increasing interest in applying mulitagent optimization  methods to solve convex and non-convex optimization problems arising in machine learning~\cite{tsianos2012communication, lian2018asynchronous, assran2018stochastic, assran2018asynchronous,colin2016gossip, koloskova2019decentralized, hendrikx2018accelerated}. In this setting most consensus-based optimization methods make use of standard, first-order gossip, such as those described in~\cite{boyd2006randomized}, and incorporating momentum into their updates to improve their practical performance. %In Section~\ref{AccelerateGossip} we propose provably accelerated gossip protocols that use similar momentum terms in their update rules.

\section{Sketch and Project Methods as Gossip Algorithms}
\label{skecthsection}
In this section we show how by carefully choosing the linear system in the constraints of the best approximation problem \eqref{BestApproximation_IntroThesis} and the combination of the parameters of the Sketch and Project method (Algorithm~\ref{FullSkecth}) we can design efficient randomized gossip algorithms. We show that the proposed protocols can actually solve the weighted average consensus problem, a more general version of the average consensus problem described in Section~\ref{sec:ACP}. In particular we focus, on a scaled variant of the RK method \eqref{RK} and on the RBK \eqref{RBK} and understand the convergence rates of these methods in the consensus setting, their distributed nature and how they are connected with existing gossip protocols. 

\subsection{Weighted average consensus}
\label{weightedAC}
In the \emph{weighted average consensus} (Weighted AC) problem we are given an undirected connected network $\cG=(\cV,\cE)$ with node set $\cV=\{1,2,\dots,n\}$ and edges $\cE$. Each node $i \in \cV$ holds a private value $c_i \in \R$ and its weight $w_i$. The goal of this problem is for every node to compute the weighted average of the private values, 
$$\bar{c}\eqdef \frac{\sum_{i=1}^n w_i c_i}{\sum_{i=1}^n w_i},$$
in a distributed fashion. That is, the exchange of information can only occur between connected nodes (neighbors). 

Note that in the special case when the weights of all nodes are the same ($w_i=r$ for all $i \in [n]$) the weighted average consensus is reduced to the standard average consensus problem. However, there are more special cases that could be interesting. For instance the weights can represent the degree of the nodes ($w_i= d_i$) or they can denote a probability vector and satisfy $\sum_i^n w_i=1$ with $w_i>0$. 

It can be easily shown that the weighted average consensus problem can be expressed as optimization problem as follows:
\begin{equation}
\label{weightedCOnsensus}
\min_{x = (x_1,\dots, x_n) \in \R^n} \frac{1}{2} \|x-c\|_{\bB}^2 
\quad \text{subject to}  \quad x_1=x_2=\dots=x_n
\end{equation}
where matrix $\bB=\textbf{Diag}(w_1, w_2,\dots, w_n)$ is a diagonal positive definite matrix (that is $w_i>0$ for all $i \in [n]$) and $c=(c_1,\dots,c_n)^\top$ the vector with the initial values $c_i$ of all nodes $i \in \cV$. The optimal solution of this problem is $x^*_i=\frac{\sum_{i=1}^n w_i c_i}{\sum_{i=1}^n w_i}$ for all $i \in [n]$ which is exactly the solution of the weighted average consensus.

As we have explained, the standard average consensus problem can be cast as a special case of weighted average consensus. However, in the situation when the nodes have access to global information related to the network, such as the size of the network (number of nodes $n=|\cV|$) and the sum of the weights $\sum_{i=1}^n w_i$, then any algorithm that solves the standard average consensus can be used to solve the weighted average consensus problem with the initial private values of the nodes changed from $c_i$ to $\frac{n w_i c_i}{\sum_{i=1}^n w_i}$. 

The weighted AC problem is popular in the area of distributed cooperative spectrum sensing networks \cite{hernandes2018improved, pedroche2014convergence, zhang2015distributed, zhang2011distributed}. In this setting, one of the goals is to develop decentralized protocols for solving the cooperative sensing problem in cognitive radio systems. The weights in this case represent a ratio related to the channel conditions of each node/agent \cite{hernandes2018improved}. The development of methods for solving the weighted AC problem is an active area of research (check \cite{hernandes2018improved} for a recent comparison of existing algorithms). However, to the best of our knowledge, existing analysis for the proposed algorithms focuses on showing convergence and not on providing convergence rates. Our framework allows us to obtain novel randomized gossip algorithms for solving the weighted AC problem. In addition, we provide a tight analysis of their convergence rates. In particular, we show convergence with a linear rate. See Section~\ref{Weightexperiments} for an experiment confirming linear convergence of one of our proposed protocols on typical wireless network topologies.

\subsection{Gossip algorithms through sketch and project framework}
We propose that randomized gossip algorithms should be viewed as special case of the Sketch and Project update to a particular problem of the form \eqref{BestApproximation_IntroThesis}. In particular, we let $c=(c_1,\dots,c_n)$ be the initial values stored at the nodes of $\cG$, and choose $\bA$ and $b$ so that the constraint $\bA x = b$ is equivalent to the requirement that $x_i=x_j$ (the value stored at node $i$ is equal to the value stored at node $j$) for all $(i,j)\in \cE$.

\begin{defn}
\label{defACsystem}
 We say that  $\bA x = b$ is an ``average consensus (AC) system'' when $\bA x = b$ iff $x_i = x_j$ for all $(i,j) \in \cE$.
\end{defn}

It is easy to see that $\bA x = b$ is an AC system precisely when $b=0$ and the nullspace of $\bA$ is $\{t 1_n : t\in \R\}$, where $1_n$ is the vector of all ones in $\R^n$.  Hence, $\bA$ has rank $n-1$. Moreover in the case that $x^0=c$, it is easy to see that for any AC system, the solution of \eqref{BestApproximation_IntroThesis}  necessarily is $x^* = \bar{c} \cdot 1_n$ --- this is why we singled out AC systems. In this sense, {\em any} algorithm for solving \eqref{BestApproximation_IntroThesis} will ``find'' the (weighted) average $\bar{c}$. However, in order to obtain a distributed algorithm we need to make sure that only ``local'' (with respect to $\cG$) exchange of information is allowed.  

\paragraph{Choices of AC systems.} It can be shown that many linear systems satisfy the above definition. 

For example, we can choose:
\begin{enumerate}
\item $ b=0$ and $\bA=\bQ \in \R^{|\cE| \times n}$ to be the incidence matrix of  $\cG$. That is, $\bQ \in \R^{|\cE| \times n}$ such that  $\bQ x = 0$ directly encodes the constraints $x_i=x_j$ for $(i,j)\in \cE$. That is, row  $e=(i,j) \in \cE$ of matrix $\bQ$ contains value $1$ in column $i$, value $-1$ in column $j$ (we use an arbitrary but fixed order of nodes defining each edge in order to fix $\bQ$) and zeros elsewhere. 
\item  A different choice is to pick $b=0$ and  $\mA =\mL=\bQ^\top \bQ$, where $\mL$ is the Laplacian matrix of network $\cG$. 
\end{enumerate}
Depending on what AC system is used, the sketch and project methods can have different interpretations as gossip protocols.  

In this work we mainly focus on the above two AC systems but we highlight that other choices are possible\footnote{Novel gossip algorithms can be proposed by using different AC systems to formulate the average consensus problem. For example one possibility is using the random walk normalized Laplacian $\bL^{rw}=\bD^{-1} \bL$. For the case of degree-regular networks the symmetric normalized Laplacian matrix $\bL^{sym}=\bD^{-1/2} \bL \bD^{-1/2}$ can also being used.}. In Section~\ref{accSubsection} for the provably accelerated gossip protocols we also use a normalized variant ($\|\bA_{i:}\|^2=1$) of the Incidence matrix. 

\subsubsection{Standard form and mass preservation}
Assume that $\bA x = b$ is an AC system. Note that since $b=0$, the update rule of Algorithm~\ref{FullSkecth} simplifies to:
\begin{equation}
\label{updateSkProj}
x^{k+1}= \left[ \bI- \omega\bA^\top \bH_k\bA \right] x^k=\left[ \bI- \omega  \bZ_k \right] x^k.
\end{equation}
This is the standard form in which randomized gossip algorithms are written. What is new here is that the iteration matrix $\bI- \omega  \bZ_k$ has a specific structure which guarantees convergence to $x^*$ under very weak assumptions (see Theorem~\ref{ConvergenceSketchProject}). Note that if $x^0=c$, i.e., the starting primal iterate is the vector of private values (as should be expected from any gossip algorithm), then the iterates of \eqref{updateSkProj} enjoy a mass preservation property (the proof follows the fact that $\bA 1_n = 0$):
\begin{thm}[Mass preservation]
If $\bA x =b$ is an AC system, then the iterates produced by \eqref{updateSkProj} satisfy:  $\frac{1}{n}\sum_{i=1}^{n}x_i^k=\bar{c}$, for all  $k \geq 0$.
 \end{thm}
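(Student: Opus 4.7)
\medskip
\noindent\textbf{Proof proposal.} The plan is to show that the quantity $1_n^\top x^k$ is invariant under the iteration \eqref{updateSkProj}, and then conclude by dividing by $n$ and invoking the initialization $x^0 = c$. The entire argument hinges on a single structural fact: for any AC system we have $\bA 1_n = 0$. Indeed, by Definition~\ref{defACsystem}, the constraint $\bA x = b$ is equivalent to $x_i = x_j$ for every edge $(i,j)\in \cE$. Since $\cG$ is connected, the vector $1_n$ trivially satisfies these equalities, and moreover the discussion following Definition~\ref{defACsystem} explicitly records that $b = 0$ and $\mathrm{Null}(\bA) = \{t 1_n : t \in \R\}$. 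Consequently $\bA 1_n = 0$.

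From this observation I would deduce that $1_n^\top \bZ_k = 0$ for every realization of $\bS_k \sim \cD$. This is immediate from the formula
\[
\bZ_k \;=\; \bA^\top \bS_k (\bS_k^\top \bA \bB^{-1} \bA^\top \bS_k)^\dagger \bS_k^\top \bA,
\]
since $1_n^\top \bA^\top = (\bA 1_n)^\top = 0$ kills the leftmost factor. Left-multiplying the recursion \eqref{updateSkProj} by $1_n^\top$ then gives
\[
1_n^\top x^{k+1} \;=\; 1_n^\top x^k \;-\; \omega\, \underbrace{1_n^\top \bZ_k}_{=\,0}\, x^k \;=\; 1_n^\top x^k.
\]

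A trivial induction on $k$ yields $1_n^\top x^k = 1_n^\top x^0 = 1_n^\top c = \sum_{i=1}^n c_i$ for every $k \geq 0$. Dividing both sides by $n$ and recalling that, in the unweighted convention used in the statement, $\bar{c} = \frac{1}{n}\sum_{i=1}^n c_i$, we obtain the advertised identity $\frac{1}{n}\sum_{i=1}^n x_i^k = \bar{c}$. There is no real obstacle in this argument: once the identity $\bA 1_n = 0$ is in hand the rest is a one-line linear-algebraic calculation, and the only subtlety worth flagging in the write-up is that the conclusion is robust to the choice of AC system (incidence matrix, Laplacian, or any other admissible $\bA$), since all of them share the nullspace characterization used above.
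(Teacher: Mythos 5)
Your proposal is correct and follows essentially the same route as the paper: the paper's proof also left-multiplies the update $x^{k+1}=(\bI-\omega\bA^\top\bH_k\bA)x^k$ by $\tfrac{1}{n}1_n^\top$ and kills the second term using $\bA 1_n=0$, which holds because the nullspace of the matrix of any AC system is $\{t1_n : t\in\R\}$. Your write-up merely makes the induction and the nullspace justification slightly more explicit, so there is nothing to correct.
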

 \begin{proof} Let fix $k\geq0$ then,
 $$\frac{1}{n} 1_n^\top x^{k+1}=\frac{1}{n} 1_n^\top ( \bI - \omega\bA^\top \bH_k \bA) x^k=\frac{1}{n} 1_n^\top  \bI x^k - \frac{1}{n} 1_n^\top \omega\bA^\top \bH_k \bA x^k\overset{\bA 1_n = 0}{=}\frac{1}{n} 1_n^\top x^k.$$
 \end{proof}
 
\subsubsection{$\varepsilon$-Averaging time}

Let $z^k\eqdef \|x^k - x^*\|$. The typical measure of convergence speed employed in the randomized gossip literature, called $\varepsilon$-averaging time and here denoted by $T_{ave}(\varepsilon)$, represents the smallest time $k$ for which  $x^{k}$ gets within $\varepsilon z^0$ from $x^*$, with probability greater than $1-\varepsilon$, uniformly over all starting values $x^0=c$. More formally, we define
%\begin{equation}
%\label{Tave}
\[
T_{ave}(\varepsilon)\eqdef \sup_{c\in \R^n} \inf  \left\{k\;:\; \Prob \left(z^k > \varepsilon z^0 \right)\leq\varepsilon \right\}.
\]
%\end{equation}
This definition differs slightly from the standard one in that we use $z^0$ instead of $\|c\|$.

Inequality \eqref{ConvergenceBasic}, together with Markov inequality, can be used  to give a bound on $K(\varepsilon)$, formalized next:

\begin{thm}\label{thm:complexity_standard} Assume $\bA x=b$ is an AC system. Let $x^0=c$ and $\bB$ be positive definite diagonal matrix. Assume exactness. Then for any $0<\varepsilon < 1$ we have
$$T_{ave}(\epsilon) \leq 3 \frac{\log(1/\varepsilon)}{\log(1/\rho)} \leq 3\frac{\log(1/\epsilon)}{1-\rho},$$
where $\rho$ is defined in \eqref{RateRho}. 
\end{thm}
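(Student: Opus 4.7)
The plan is to deduce the averaging-time bound from the mean-square linear convergence already established in Theorem~\ref{ConvergenceSketchProject} via a single application of Markov's inequality. Concretely, since the convergence rate $\rho$ from \eqref{RateRho} is the key quantity and we want a high-probability statement, we convert the second-moment bound into a tail bound by squaring.

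First, I would observe that for any fixed starting vector $x^0=c$ the random variable $(z^k)^2 = \|x^k-x^*\|^2$ is nonnegative with (by Theorem~\ref{ConvergenceSketchProject} applied to the AC system, using that $\bB$ is positive definite diagonal so the $\bB$-norm is equivalent to the Euclidean norm up to constants that will disappear into the ratio) expectation bounded by $\rho^k (z^0)^2$. Markov's inequality then yields
\[
\Prob\!\left(z^k > \varepsilon z^0\right) \;=\; \Prob\!\left((z^k)^2 > \varepsilon^2 (z^0)^2\right) \;\leq\; \frac{\Exp[(z^k)^2]}{\varepsilon^2 (z^0)^2} \;\leq\; \frac{\rho^k}{\varepsilon^2}.
\]
Forcing the right-hand side to be at most $\varepsilon$ gives the condition $\rho^k \leq \varepsilon^3$, i.e.\ $k\log(1/\rho) \geq 3\log(1/\varepsilon)$, so any integer $k \geq 3\log(1/\varepsilon)/\log(1/\rho)$ suffices. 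Taking the supremum over $c\in\R^n$ is automatic since the bound does not depend on $c$; this establishes the first inequality in the theorem.

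For the second inequality, I would invoke the elementary estimate $\log(1/t) \geq 1 - t$ for all $t\in(0,1]$ (obtained from $\log(1+u)\leq u$ with $u=t-1$), applied at $t=\rho \in [0,1)$, which immediately yields $1/\log(1/\rho) \leq 1/(1-\rho)$ and hence the second claimed bound.

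The only subtlety worth pausing on is the norm used in the definition of $z^k$ versus the $\bB$-norm in \eqref{ConvergenceBasic}: since $\bB$ is positive definite diagonal, the two norms differ by a bounded condition-number factor that is the same in numerator and denominator of the Markov ratio (both $\Exp[(z^k)^2]$ and $(z^0)^2$ receive the same rescaling when switching norms on a fixed deterministic starting direction), and so it cancels out, leaving the clean factor $\rho^k/\varepsilon^2$. This is the one place a reader might worry, but it does not affect the stated rate, only the implicit constant already absorbed in the factor $3$.
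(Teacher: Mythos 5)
Your proof is correct and follows essentially the same route as the paper's: apply the mean-square linear rate of Theorem~\ref{ConvergenceSketchProject}, use Markov's inequality on the squared error ratio to get $\Prob\left(z^k>\varepsilon z^0\right)\leq \rho^k/\varepsilon^2$, force $\rho^k\leq\varepsilon^3$, and finish with $\log(1/\rho)\geq 1-\rho$. One caveat: your closing remark that the $\bB$-norm versus Euclidean-norm discrepancy ``cancels in numerator and denominator'' is not correct for a general positive definite diagonal $\bB$ --- converting $\Exp[\|x^k-x^*\|^2]$ and $\|x^0-x^*\|^2$ to $\bB$-norms costs a factor $\lambda_{\max}(\bB)/\lambda_{\min}(\bB)$ that does not cancel; the paper sidesteps this by (implicitly) measuring $z^k$ in the $\bB$-norm throughout, which is the cleaner fix and leaves your main argument untouched.
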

\begin{proof}
See Section~\ref{ProofTave}.
\end{proof}

Note that under the assumptions of the above theorem, $\bW=\bB^{-1/2} \Exp[\bZ] \bB^{-1/2}$ only has a single zero eigenvalue, and hence $\lambda_{\min}^+ (\bW)$ is the second smallest eigenvalue of $\bW$. Thus, $\rho$ is the second largest eigenvalue of $\bI - \bW$. The bound on $K(\varepsilon)$ appearing in Thm~\ref{thm:complexity_standard} is often written with $\rho$ replaced by $\lambda_2(\bI - \bW)$ \cite{boyd2006randomized}.

In the rest of this section we show how two special cases of the sketch and project framework, the randomized Kaczmarz (RK) and its block variant, randomized block Kaczmatz (RBK) work as gossip algorithms for the two AC systems described above. 

\subsection{Randomized Kaczmarz method as gossip algorithm}
As we described before the sketch and project update rule of Algorithm~\ref{FullSkecth} has several parameters that should be chosen in advance by the user. These are the stepsize $\omega$ (relaxation parameter), the positive definite matrix $\bB$ and the distribution $\cD$ of the random matrices $\bS$. 

In this section we focus on one particular special case of the sketch and project framework, a scaled/weighted variant of the randomized Kaczmarz method (RK) presented in \eqref{RK},  and we show how this method works as gossip algorithm when applied to special systems encoding the underlying network. In particular, the linear systems that we solve are the two AC systems described in the previous section where the matrix is either the incidence matrix $\bQ$ or the Laplacian matrix $\bL$ of the network.

As we described in \eqref{RK} the standard RK method can be cast as special case of Algorithm~\ref{FullSkecth} by choosing $\bB=\bI$ and $\bS=e_i$. In this section, we focus on a small modification of this algorithm and we choose the positive definite matrix $\bB$ to be  $\bB=\textbf{Diag}(w_1, w_2,\dots, w_n)$, the diagonal matrix of the weights presented in the weighted average consensus problem. 

\paragraph{Scaled RK:} Let us have a general consistent linear system $\bA x= b$ with $\bA \in \R^{m \times n}$. Let us also choose $\bB=\textbf{Diag}(w_1, w_2,\dots, w_n)$ and $\bS_k=e_i$, where $i\in [m]$ is chosen in each iteration independently, with probability $p_i>0$. In this setup the update rule of Algorithm~\ref{FullSkecth} simplifies to  
\begin{equation}
\label{scaledRK}
x^{k+1}=x^k - \omega \frac{e_i^\top (\bA x^k -b) }{e_i^\top \bA \bB^{-1} \bA^\top e_i} \bB^{-1}\bA^ \top e_i  =x^k - \omega \frac{\bA_{i :} x^k -b_{i}}{\|\bB^{-1/2} \bA_{i :}^\top\|^2_{2}} \bB^{-1}\bA_{i :}^ \top.
\end{equation}
This small modification of RK allow us to solve the more general weighted average consensus presented in Section~\ref{weightedAC} (and at the same time the standard average consensus problem if $\bB= r \bI$ where $r \in \R$). To the best of our knowledge, even if this variant is special case of the general Sketch and project update, was never precisely presented before in any setting.

\subsubsection{AC system with incidence matrix $\bQ$}
Let us represent the constraints of problem \eqref{weightedCOnsensus} as linear system with matrix $\bA=\bQ \in \R^{|\cE| \times n}$ be the Incidence matrix of the graph and right had side $b=0$. Lets also assume that the random matrices $\bS \sim \cD$ are unit coordinate vectors in $\R^{m}=\R^{|\cE|}$. 

Let $e=(i,j) \in \cE$ then from the definition of matrix $\bQ$ we have that $\bQ_{e:}^\top=f_i-f_j$ where $f_i, f_j$ are unit coordinate vectors in $\R^n$. In addition, from the definition the diagonal positive definite matrix $\bB$ we have that 
\begin{equation}
\label{ansdl}
\|\bB^{-1/2} \bQ_{e :}^\top\|^2=\|\bB^{-1/2} (f_i-f_j)\|^2=\frac{1}{w_1}+\frac{1}{w_j}.
\end{equation}

Thus in this case the update rule \eqref{scaledRK} simplifies:
\begin{eqnarray}
\label{updateSRKQ}
x^{k+1}&\overset{b=0, \bA=\bQ, \eqref{scaledRK}}{=}& x^k - \omega \frac{\bQ_{e :} x^k}{\|\bB^{-1/2} \bQ_{e :}^\top\|^2} \bB^{-1}\bQ_{e :}^ \top \notag \\
&\overset{\eqref{ansdl}}{=}&x^k - \omega \frac{\bQ_{e :} x^k}{\frac{1}{w_1}+\frac{1}{w_j}} \bB^{-1}\bQ_{e :}^ \top \notag \\
&=&x^k-\frac{\omega (x^k_i-x^k_j)}{\frac{1}{w_i}+\frac{1}{w_j}}  \left(\frac{1}{w_i} f_i- \frac{1}{w_j } f_j \right). 
\end{eqnarray}

From \eqref{updateSRKQ} it can be easily seen that only the values of coordinates $i$ and $j$ update their values. These coordinates correspond to the private values $x^k_i$ and $x^k_j$ of the nodes of the selected edge $e=(i,j)$. In particular the values of $x^k_i $ and $x^k_j$ are updated as follows:

\begin{equation}
\label{ScRKwithQ}
x^{k+1}_i=\left(1-\omega \frac{w_j}{w_j+w_i} \right) x^{k}_i + \omega \frac{ w_j}{w_j+w_i} x^k_j \quad \text{and} \quad x^{k+1}_j= \omega \frac{ w_i}{w_j+w_i} x^{k}_i + \left(1-\omega\frac{ w_i}{w_j+w_i} \right) x^k_j.
\end{equation}

\begin{rem}
In the special case that $\bB=r \bI$ where $r\in \R$ (we solve the standard average consensus problem) the update of the two nodes is simplified to
$$x^{k+1}_i=\left(1-\frac{\omega}{2} \right) x^{k}_i + \frac{\omega }{2} x^k_j \quad \text{and} \quad x^{k+1}_j= \frac{\omega}{2} x^{k}_i + \left(1-\frac{\omega}{2} \right) x^k_j.$$
If we further select $\omega=1$ then this becomes:
\begin{equation}
\label{pairwiseUpdate}
x^{k+1}_i=x^{k+1}_j= \frac{x^{k}_i +x^k_j}{2},
\end{equation}
which is the update of the standard pairwise randomized gossip algorithm first presented and analyzed in \cite{boyd2006randomized}.
\end{rem}

\subsubsection{AC system with Laplacian matrix $\bL$}
The AC system takes the form $\bL x=0$, where matrix $\bL \in \R^{n \times n}$ is the Laplacian matrix of the network. In this case, each row of the matrix corresponds to a node. Using the definition of the Laplacian, we have that $\bL_{i:}^\top=d_if_i-\sum_{j \in \cN_i}f_j$, where $f_i, f_j$ are unit coordinate vectors in $\R^n$ and $d_i$ is the degree of node $i \in \cV$. 

Thus, by letting $ \bB=\textbf{Diag}(w_1, w_2,\dots, w_n)$ to be the diagonal matrix of the weights we obtain:

\begin{equation}
\label{ansdl2}
\|\bB^{-1/2} \bL_{i :}^\top\|^2=\left\|\bB^{-1/2} (d_if_i-\sum_{j \in \cN_i}f_j) \right\|^2=\frac{d_i^2}{w_i}+\sum_{j \in \cN_i} \frac{1}{w_j}.
\end{equation}

In this case, the update rule \eqref{scaledRK} simplifies to:
\begin{eqnarray}
\label{updateSRKL}
x^{k+1}&\overset{b=0, \bA=\bL, \eqref{scaledRK}}{=}& x^k - \omega \frac{\bL_{i :} x^k}{\|\bB^{-1/2} \bL_{i :}^\top\|^2_{2}} \bB^{-1}\bL_{i :}^ \top \notag \\
&\overset{\eqref{ansdl2}}{=}&x^k - \omega \frac{\bL_{i :} x^k}{\frac{d_i^2}{w_i}+\sum_{j \in \cN_i} \frac{1}{w_j}} \bB^{-1}\bL_{i :}^ \top \notag \\
&=&x^k-\frac{\omega \left(d_i x^k_i- \sum_{j\in \cN_i} x^k_j \right) }{\frac{d_i^2}{w_i}+\sum_{j \in \cN_i} \frac{1}{w_j}}  \left(\frac{d_i}{w_i} f_i- \sum_{j\in \cN_i} \frac{1}{w_j } f_j \right).
\end{eqnarray}

From \eqref{updateSRKL}, it is clear that only coordinates $ \{i\} \cup \cN_i$ update their values. All the other coordinates remain unchanged. In particular, the value of the selected node $i$ (coordinate $i$) is updated as follows:

\begin{eqnarray}
x^{k+1}_i = x^k_i-\frac{\omega \left(d_i x^k_i- \sum_{j\in \cN_i} x^k_j \right) }{\frac{d_i^2}{w_i}+\sum_{j \in \cN_i} \frac{1}{w_j }}  \frac{d_i}{w_i}, 
\end{eqnarray}
while the values of its neighbors $ j \in \cN_i$ are updated as:
\begin{eqnarray}
x^{k+1}_j =x^k_j+\frac{\omega  \left(d_i x^k_i- \sum_{\ell \in \cN_i} x^k_\ell \right)  }{\frac{d_i^2}{w_i}+\sum_{\ell \in \cN_i} \frac{1}{w_\ell}} \frac{1}{w_j }. 
\end{eqnarray}

\begin{rem}
\label{naosjkl}
Let  $\omega=1$ and $\bB=r \bI$ where $r\in \R$ then the selected nodes update their values as follows:
\begin{eqnarray}
\label{aoskmpas}
x^{k+1}_i = \frac{ \sum_{\ell \in \{i \cup \cN_i\}} x^k_{\ell} }{d_i+1} \quad \text{and} \quad
x^{k+1}_j =x^k_j+\frac{ (d_i x^k_i- \sum_{\ell \in \cN_i} x^k_\ell)  }{d_i^2+d_i}.
\end{eqnarray}
That is, the selected node $i$ updates its value to the average of its neighbors and itself, while all the nodes $j \in \cN_i$ update their values using the current value of node $i$ and all nodes in $\cN_i$.
\end{rem}

In a wireless network, to implement such an update, node $i$ would first broadcast its current value to all of its neighbors. Then it would need to receive values from each neighbor to compute the sums over $\cN_i$, after which node $i$ would broadcast the sum to all neighbors (since there may be two neighbors $j_1, j_2 \in \cN_i$ for which $(j_1, j_2) \notin \cE$). In a wired network, using standard concepts from the MPI library, such an update rule could be implemented efficiently by defining a process group consisting of $\{i\} \cup \cN_i$, and performing one \texttt{Broadcast} in this group from $i$ (containing $x_i$) followed by an \texttt{AllReduce} to sum $x_\ell$ over $\ell \in \cN_i$. Note that the terms involving diagonal entries of $\bB$ and the degrees $d_i$ could be sent once, cached, and reused throughout the algorithm execution to reduce communication overhead.

\subsubsection{Details on complexity results}
Recall that the convergence rate of the sketch and project method (Algorithm~\ref{FullSkecth}) is equivalent to:
$$\rho \eqdef 1 - \omega (2-\omega) \lambda_{\min}^+(\bW),$$
where $\omega \in (0,2)$ and $\bW= \bB^{-1/2}\bA^\top \Exp[\bH] \bA\bB^{-1/2}$ (from Theorem~\ref{ConvergenceSketchProject}).  In this subsection we explain how the convergence rate of the scaled RK method \eqref{scaledRK} is modified for different choices of the main parameters of the method.

Let us choose $\omega=1$ (no over-relaxation). In this case, the rate is simplified to $\rho=1 - \lambda_{\min}^+$. 

Note that the different ways of modeling the problem (AC system) and the selection of the main parameters (weight matrix $\bB$ and distribution $\cD$) determine the convergence rate of the method through the spectrum of matrix $\bW$.

Recall that in the $k^{th}$ iterate of the scaled RK method \eqref{scaledRK} a random vector $\bS_k=e_i$ is chosen with probability $p_i>0$. 
For convenience, let us choose\footnote{Similar probabilities have been chosen in \cite{gower2015randomized} for the convergence of the standard RK method ($\bB=\bI$). The distribution $\cD$ of the matrices $\bS$ used in equation \eqref{convProb} is common in the area of randomized iterative methods for linear systems and is used to simplify the analysis and the expressions of the convergence rates. For more choices of distributions we refer the interested reader to \cite{gower2015randomized}. It is worth to mention that the probability distribution that optimizes the convergence rate of the RK and other projection methods can be expressed as the solution to a convex semidefinite program \cite{gower2015randomized, dai2014randomized}.}:
\begin{equation}
\label{convProb}
p_i= \frac{\|\bB^{-1/2} \bA_{i:}^\top\|^2}{\|\bB^{-1/2}\bA^\top\|^2_F} \;.
\end{equation}
Then we have that:
\begin{eqnarray}
\label{ajskmxa}
\Exp[ \bH] &=&\Exp[ \mS (\mS^\top \mA \bB^{-1} \mA^\top \mS)^\dagger \mS^\top] \notag\\
&=&\sum_{i=1}^m p_i \frac{e_i e_i^\top}{e_i^\top \mA \bB^{-1} \mA^\top e_i}=\sum_{i=1}^m p_i \frac{e_i e_i^\top}{\|\bA_{i:}^\top\|_{\bB^{-1} }^2}=\sum_{i=1}^m p_i \frac{e_i e_i^\top}{\|\bB^{-1/2} \bA_{i:}^\top\|^2}\notag\\
& \overset{\eqref{convProb}}{=} & \sum_{i=1}^m \frac{e_i e_i^\top}{\|\bB^{-1/2}\bA^\top\|^2_F}=\frac{1}{\|\bB^{-1/2}\bA^\top\|^2_F} \bI,
\end{eqnarray}
and
\begin{equation}
\label{Wconvinience}
\bW\overset{\eqref{MatrixW}, \eqref{ajskmxa}}{=} \frac{\bB^{-1/2}\bA^\top \bA\bB^{-1/2}}{\|\bB^{-1/2}\bA^\top\|^2_F}.
\end{equation}

\paragraph{Incidence Matrix:} Let us choose the AC system to be the one with the incidence matrix $\bA=\bQ$. Then $\|\bB^{-1/2}\bQ^\top\|^2_F= \sum_{i=1}^n \frac{d_i}{b_i}$ and we obtain 
$$\bW\overset{\bA=\bQ, \eqref{Wconvinience}}{=}\frac{\bB^{-1/2}\bL \bB^{-1/2}}{\|\bB^{-1/2}\bQ^\top\|^2_F}=\frac{\bB^{-1/2}\bL \bB^{-1/2}}{\sum_{i=1}^n \frac{d_i}{\bB_{ii}}}.$$

If we further have $\bB=\bD$, then $\bW=\frac{\bD^{-1/2}\bL \bD^{-1/2}}{n}$ and the convergence rate simplifies to:
$$\rho=1-\frac{\lambda_{\min}^+\left(\bD^{-1/2}\bL \bD^{-1/2}\right)}{n}=1-\frac{\lambda_{\min}^+\left(\bL^{sym} \right)}{n}.$$

If $\bB=r \bI$ where $r\in \R$ (solve the standard average consensus problem), then $\bW=\frac{\bL}{\|\bQ\|^2_F}= \frac{\bL}{\sum_{i=1}^n d_i}=\frac{\bL}{2m}$ and the convergence rate simplifies to 
\begin{equation}
\label{ratePairwise}
\rho=1-\frac{\lambda_{\min}^+(\bL)}{2m}=1-\frac{\ac(\cG)}{2m},
\end{equation}
which is exactly the same convergence rate of the pairwise gossip algorithm presented in \cite{boyd2006randomized}. This was expected, since the gossip protocol in this case works exactly the same as the one proposed in \cite{boyd2006randomized}, see equation \eqref{pairwiseUpdate}.

\paragraph{Laplacian Matrix:} If we choose to formulate the AC system using the Laplacian matrix $\bL$, that is $\bA=\bL$, then $\|\bB^{-1/2}\bL^\top\|^2_F= \sum_{i=1}^n \frac{d_i(d_i+1)}{\bB_{ii}}$ and we have:
$$\bW\overset{\bA=\bL, \eqref{Wconvinience}}{=}\frac{\bB^{-1/2}\bL^\top \bL \bB^{-1/2}}{\sum_{i=1}^n \frac{d_i(d_i+1)}{\bB_{ii}}}.$$

If $\bB=\bD$, then the convergence rate simplifies to:
$$\rho=1-\frac{\lambda_{\min}^+(\bD^{-1/2}\bL^\top \bL\bD^{-1/2})}{\sum_{i=1}^n (d_i+1)}=1-\dfrac{\lambda_{\min}^+\left(\bD^{-1/2}\bL^2 \bD^{-1/2}\right)}{n+ \sum_{i=1}^n d_i}\overset{\sum_{i=1}^n d_i=2m}{=}1-\dfrac{\lambda_{\min}^+\left(\bD^{-1/2}\bL^2 \bD^{-1/2}\right)}{n+ 2m}.$$

If $\bB=r \bI$, where $r\in \R$, then $\bW=\frac{\bL^2}{\|\bL\|^2_F}= \frac{\bL^2}{\sum_{i=1}^n d_i(d_i+1)}$ and the convergence rate simplifies to $$\rho=1-\frac{\lambda_{\min}^+(\bL^2)}{\sum_{i=1}^n d_i(d_i+1)}=1-\frac{\ac(\cG)^2}{\sum_{i=1}^n d_i(d_i+1)}.$$

\subsection{Block gossip algorithms}
\label{BlockGossip}
Up to this point we focused on the basic connections between the convergence analysis of the sketch and project methods and the literature of randomized gossip algorithms. We show how specific variants of the randomized Kaczmarz method (RK) can be interpreted as gossip algorithms for solving the weighted and standard average consensus problems. 

In this part we extend the previously described methods to their block variants related to randomized block Kaczmarz (RBK) method \eqref{RBK}. In particular, in each step of Algorithm~\ref{FullSkecth}, the random matrix $\bS$ is selected to be a random column submatrix of the $m \times m$ identity matrix corresponding to columns indexed by a random subset $C  \subseteq [m]$. That is, $\bS=\bI_{:C}$, where a set $C\subseteq [m]$ is chosen in each iteration independently, with probability $p_C\geq 0$ (see equation \eqref{RBK}). Note that in the special case that set $C$ is a singleton with probability 1 the algorithm is simply the randomized Kaczmarz method of the previous section.

To keep things simple, we assume that $\bB=\bI$ (standard average consensus, without weights) and choose the stepsize $\omega=1$. In the next section, we will describe gossip algorithms with heavy ball momentum and explain in detail how the gossip interpretation of RBK change in the more general case of $\omega \in (0,2)$.

Similar to the previous subsections, we formulate the consensus problem using either $\bA=\bQ$ or $\bA=\bL$ as the matrix in the AC system.
In this setup, the iterative process of Algorithm~\ref{FullSkecth} has the form:
\begin{eqnarray}
\label{RBKgossip}
x^{k+1} &\overset{\eqref{RBK},\eqref{updateSkProj}}{=}& x^k - \bA^ \top \bI_{:C}(\bI_{:C}^\top \bA\bA^\top \bI_{:C})^{\dagger}\bI_{:C}^\top \bA x^k= x^k - \bA_{C:}^\top (\bA_{C:}\bA_{C:}^\top)^\dagger \bA_{C:}x^k,
\end{eqnarray}
which, as explained in the introduction, can be equivalently written as: 
\begin{equation}
\label{RBKaLgorithm}
x^{k+1}=\underset{x \in \R^n}{\operatorname{argmin}} \{\|x-x^k\|^2 \;:\; \bI_{:C}^\top \bA x=0\}.
\end{equation} 
Essentially in each step of this method the next iterate is evaluated to be the projection of the current iterate $x^k$ onto the solution set of a row subsystem of $\bA x=0$. 

\paragraph{AC system with Incidence Matrix:} In the case that $\bA=\bQ$ the selected rows correspond to a random subset $C \subseteq \cE$ of selected edges. While \eqref{RBKgossip} may seem to be a complicated algebraic (resp.\ variational) characterization of the method, due to our choice of $\bA=\bQ$ we have the following result which gives a natural interpretation of RBK as a gossip algorithm (see also Figure~\ref{fig:RBK}).

\begin{thm}[RBK as Gossip algorithm: RBKG]
\label{TheoremRBK}
Consider the AC system with the constraints being expressed using the Incidence matrix $\bQ$.  
Then each iteration of RBK (Algorithm~\eqref{RBKgossip}) works as gossip algorithm as follows:
\begin{enumerate}
\item Select a random set of edges $C \subseteq \cE$, 
\item Form subgraph $\cG_k$ of $\cG$ from the selected edges 
\item For each connected component of $\cG_k$, replace node values with their average.
\end{enumerate}
\end{thm}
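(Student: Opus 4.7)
My plan is to work directly with the variational characterization \eqref{RBKaLgorithm} of the RBK iterate, namely
\[
x^{k+1}=\arg\min_{x \in \R^n} \{\|x-x^k\|^2 \;:\; \bI_{:C}^\top \bQ x=0\},
\]
and to identify the constraint set and its Euclidean projection explicitly. First I would unpack the constraint. Since $\bQ$ is the incidence matrix, the row of $\bQ$ indexed by an edge $e=(i,j)$ is $f_i^\top - f_j^\top$, so $(\bI_{:C}^\top \bQ x)_e = x_i - x_j$. Consequently $\bI_{:C}^\top \bQ x = 0$ is equivalent to the system of equalities $x_i=x_j$ for every $(i,j)\in C$.

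Next I would translate these pairwise equalities into a statement about the subgraph $\cG_k=(\cV,C)$. Because equality is transitive, $x_i=x_j$ holds for all $(i,j)\in C$ if and only if $x$ is constant on every connected component of $\cG_k$. Thus the feasible set of \eqref{RBKaLgorithm} is the linear subspace
\[
\cM_k \eqdef \{x \in \R^n : x_i=x_j \text{ whenever } i,j \text{ lie in the same component of } \cG_k\},
\]
which is nothing but the span of the indicator vectors of the components of $\cG_k$.

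The final step is the projection computation. Because $\cM_k$ is the direct sum, over the components $V_1,\dots,V_r$ of $\cG_k$, of the one-dimensional subspaces spanned by $\mathbf{1}_{V_\ell}$, and because these indicator vectors are mutually orthogonal in the Euclidean inner product, the Euclidean projection onto $\cM_k$ splits component-by-component. On each component $V_\ell$, projection onto $\mathrm{span}(\mathbf{1}_{V_\ell})$ sends the restriction of $x^k$ to $V_\ell$ to the vector whose entries are all equal to $\tfrac{1}{|V_\ell|}\sum_{i\in V_\ell} x^k_i$. Hence $x^{k+1}_i$ equals the average of $x^k$ over the component of $\cG_k$ containing $i$; nodes that appear in no edge of $C$ are singleton components and so retain their current value. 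This is precisely the three-step gossip procedure stated in the theorem.

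I do not expect a real obstacle: the argument is essentially a clean unpacking of the projection formula plus the elementary graph-theoretic observation that pairwise equalities along edges aggregate to equality on connected components. The only point requiring slight care is the orthogonality of the component indicators $\mathbf{1}_{V_\ell}$, which justifies doing the projection separately on each component; this follows immediately from the fact that the $V_\ell$ partition $\cV$.
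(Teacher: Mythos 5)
Your proposal is correct and follows essentially the same route as the paper: both reduce the constraint $\bI_{:C}^\top \bQ x=0$ to the statement that $x$ is constant on each connected component of $\cG_k$, and then identify the minimizer as the componentwise average. The only cosmetic difference is that the paper reparametrizes by the common component values $z_r$ and sets the gradient of the resulting unconstrained objective to zero, whereas you obtain the same formula by projecting onto the span of the mutually orthogonal component indicator vectors.
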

\begin{proof}
See Section~\ref{ProofRBK}.
\end{proof}
Using the convergence result of general Theorem~\ref{ConvergenceSketchProject} and the form of matrix $\bW$ (recall that in this case we assume $\bB=\bI$, $\bS=\bI_{:C}\sim \cD$ and $\omega=1$), we obtain the following complexity for the algorithm:
\begin{equation}
\label{anisojxalksda}
\Exp[\|x^k-x^*\|^2]\leq \left[1 - \lambda_{\min}^+ \left( \Exp\left[\bQ_{C:}^\top (\bQ_{C:}\bQ_{C:}^\top)^\dagger \bQ_{C:}\right] \right) \right]^k \|x^0-x^*\|^2.
\end{equation}
For more details on the above convergence rate of randomized block Kaczmarz method with meaningfully bounds on the rate in a more general setting we suggest the papers \cite{RBK,l2015randomized}.

There is a very closed relationship between the gossip interpretation of RBK explained in Theorem~\ref{TheoremRBK} and several existing randomized gossip algorithms that in each step update the values of more than two nodes. 
For example the {\em path averaging} algorithm porposed in \cite{benezit2010order} is a special case of RBK, when set $C$ is restricted to correspond to a path of vertices. That is, in path averaging, in each iteration a path of nodes is selected and the nodes that belong to it update their values to their exact average.
A different example is the recently proposed clique gossiping \cite{liu2017clique} where the network is already divided into cliques and through a random procedure a clique is activated and the nodes of it update their values to their exact average.
In \cite{boyd2006randomized} a synchronous variant of gossip algorithm is presented where in each step multiple node pairs communicate exactly at the same time with the restriction that these simultaneously active node pairs are disjoint.

It is easy to see that all of the above algorithms can be cast as special cases of RBK if the distribution $\cD$ of the random matrices is chosen carefully to be over random matrices $\bS$ (column sub-matrices of Identity) that update specific set of edges in each iteration. As a result our general convergence analysis can recover the complexity results proposed in the above works.

Finally, as we mentioned, in the special case in which set $C$ is always a singleton, Algorithm~\eqref{RBKgossip} reduces to the standard randomized Kaczmarz method. This means that only a random edge is selected in each iteration and the nodes incident with this edge replace their local values with their average. This is the pairwise gossip algorithm of Boyd er al. \cite{boyd2006randomized} presented in equation \eqref{pairwiseUpdate}. Theorem~\ref{TheoremRBK} extends this interpretation to the case of the RBK method. 

\begin{figure}[htb]
\begin{minipage}[b]{1.0\linewidth}
  \centering
  \centerline{\includegraphics[scale=0.4]{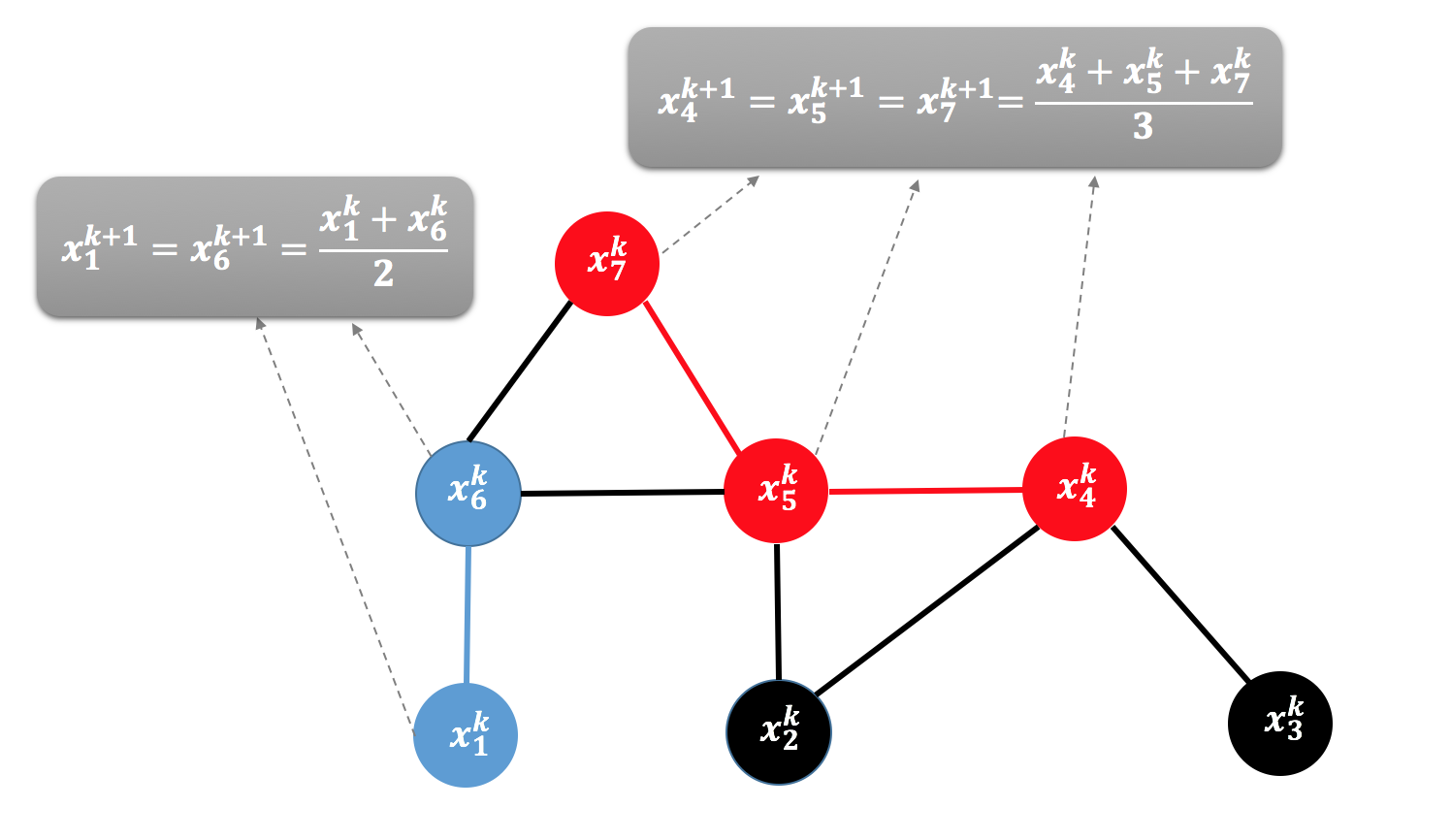}}
%  \vspace{2.0cm}
  \caption{\footnotesize Example of how the RBK method works as gossip algorithm in case of AC system with Incidence matrix. In the presented network 3 edges are randomly selected and a subgraph of two connected components (blue and red) is formed. Then the nodes of each connected component update their private values to their average.}
  \label{fig:RBK}
\end{minipage}
\end{figure}

\paragraph{AC system with Laplacian Matrix:}
For this choice of AC system the update is more complicated. To simplify the way that the block variant work as gossip we make an extra assumption. We assume that the selected rows of the constraint $\bI_{:C}^\top \bL x=0$ in update \eqref{RBKaLgorithm} have no-zero elements at different coordinates. This allows to have a direct extension of the serial variant presented in Remark~\ref{naosjkl}. Thus, in this setup, the RBK update rule \eqref{RBKgossip} works as gossip algorithm as follows:

\begin{enumerate}
\item $|C|$ nodes are activated (with restriction that the nodes are not neighbors and they do not share common neighbors)
\item For each node $i \in C$ we have the following update:
\begin{eqnarray}
x^{k+1}_i = \frac{ \sum_{\ell \in \{i \cup \cN_i\}} x^k_{\ell} }{d_i+1} \quad \text{and} \quad
x^{k+1}_j =x^k_j+\frac{ \left(d_i x^k_i- \sum_{\ell \in \cN_i} x^k_\ell \right)  }{d_i^2+d_i} .
\end{eqnarray}
\end{enumerate}
The above update rule can be seen as a parallel variant of update \eqref{aoskmpas}. Similar to the convergence in the case of Incidence matrix, the RBK for solving the AC system with a Laplacian matrix converges to $x^*$ with the following rate (using result of Theorem~\ref{ConvergenceSketchProject}):
$$\Exp[\|x^k-x^*\|^2]\leq \left[1 - \lambda_{\min}^+ \left( \Exp\left[\bL_{C:}^\top (\bL_{C:}\bL_{C:}^\top)^\dagger \bL_{C:}\right] \right) \right]^k \|x^0-x^*\|^2.$$

\section{Faster and Provably Accelerated Randomized Gossip Algorithms}
\label{AccelerateGossip}

The main goal in the design of gossip protocols is for the computation and communication to be done as quickly and efficiently as possible. In this section, our focus is precisely this. We design randomized gossip protocols which converge to consensus fast with provable accelerated linear rates. To the best of our knowledge, the proposed protocols are the first randomized gossip algorithms that converge to consensus with an accelerated linear rate. 

In particular, we present novel protocols for solving the average consensus problem where in each step all nodes of the network update their values but only a subset of them exchange their private values. The protocols are inspired from the recently developed accelerated variants of randomized Kaczmarz-type methods for solving consistent linear systems where the addition of momentum terms on top of the sketch and project update rule provides better theoretical and practical performance.

In the area of optimization algorithms, there are two popular ways to accelerate an algorithm using momentum. The first one is using the Polyak's heavy ball momentum \cite{polyak1964some} and the second one is using the theoretically much better understood momentum introduced by Nesterov \cite{nesterov1983method, nesterov2013introductory}.  Both momentum approaches have been recently proposed and analyzed to improve the performance of randomized iterative methods for solving linear systems.

To simplify the presentation, the accelerated algorithms and their convergence rates are presented for solving the standard average consensus problem ($\bB=\bI$). Using a similar approach as in the previous section, the update rules and the convergence rates can be easily modified to solve the more general weighted average consensus problem. For the protocols in this section we use the incidence matrix $\bA=\bQ$ or its normalized variant to formulate the AC system.

\subsection{Gossip algorithms with heavy ball momentum}

In Chapter~\ref{ChapterMomentum} of this thesis we have analyzed heavy ball momentum variants of several algorithms for solving the stochastic optimization problem \eqref{StochReform_IntroThesis} and as we explained the best approximation problem \eqref{BestApproximation_IntroThesis}. In this section we revisit Algorithm~\ref{anjsdnaodala} of Chapter~\ref{ChapterMomentum} and we focus on its sketch and project viewpoint. In particular, we explain how it works as gossip algorithm when is applied to the AC system with the incidence matrix.

\subsubsection{Sketch and project with heavy ball momentum}

The sketch and project method with heavy ball momentum is formally presented in the following algorithm. 

\begin{algorithm}[H]
	\caption{Sketch and Project with Heavy Ball Momentum}
	\label{SHBgradient}
	\small \small
	\begin{algorithmic}[1]
		%\Procedure{MyProcedure}{}
		\State {\bf Parameters:} Distribution $\mathcal{D}$ from which method samples matrices; stepsize/relaxation parameter $\omega \in \R$; momentum parameter $\beta$.
		\State {\bf Initialize:} $x^0,x^1 \in \R^n$
		\For{$k=1,2,\dots$} 
		\State Draw a fresh $\bS_k \sim \cD$
		\State   Set 
\begin{equation}
\label{SPmomentum}
x^{k+1}=x^k -\omega \bB^{-1}\bA^\top \bS_k (\bS_k^\top \bA \bB^{-1} \bA^\top \bS_k)^\dagger \bS_k^\top (\bA x^k-b) + \beta(x^k - x^{k-1}).
\end{equation}
		%$x^{k+1} = x^k - \omega \nabla f_{\mS_k}(x^k) + \beta(x^k-x^{k-1})$
		\EndFor
		\State {\bf Output:} The last iterate $x^k$
	\end{algorithmic}
\end{algorithm}

Using, $\bB=\bI$ and the same choice of distribution $\cD$ as in equations \eqref{RK} and \eqref{RBK} we can now obtain momentum variants of the RK and RBK as special case of the above algorithm as follows: 
\begin{itemize}
\item RK with momentum (mRK): \\
\begin{equation}
\label{ncajsolkmal}
x^{k+1}=x^k -\omega \frac{\bA_{i :} x^k -b_{i}}{\|\bA_{i :}\|^2} \bA_{i :}^ \top + \beta(x^k - x^{k-1}).
\end{equation}
\item RBK with momentum (mRBK):
\begin{equation}
\label{nacsklals}
x^{k+1}=x^k -\omega \bA_{C:}^\top (\bA_{C:}\bA_{C:}^\top)^\dagger (\bA_{C:}x^k-b_C) + \beta(x^k - x^{k-1}). 
\end{equation}
\end{itemize}

For more details on the convergence analysis of Algorithm~\ref{SHBgradient} see Section~\ref{sec:primal} and recall that in our setting the sketch and project update rule is identical to the SGD (Chapter~\ref{ChapterIntroduction}). As a result Algorithm~\ref{SHBgradient} is identical to Algorithm~\ref{anjsdnaodala} (mSGD/mSN/mSPP).

Having presented Algorithm~\ref{SHBgradient}, let us now describe its behavior as a randomized gossip protocol when applied to the AC system $\bA x=0$ with $\bA=\bQ \in |\cE| \times n$ (incidence matrix of the network).  

Note that since $b=0$ (from the AC system definition), the update rule \eqref{SPmomentum} of Algorithm~\ref{SHBgradient} is simplified to (and by having $\bB=\bI$):
\begin{equation}
\label{momentumupdateb0}
x^{k+1}=\left[\bI - \omega \bA^\top \bS_k (\bS_k^\top \bA \bA^\top \bS_k)^\dagger \bS_k^\top \bA \right] x^k + \beta(x^k - x^{k-1}).
\end{equation}

In the rest of this section we focus on  two special cases of \eqref{momentumupdateb0}: RK with heavy ball momentum (equation \eqref{ncajsolkmal} with $b_i=0$) and RBK with heavy ball momentum (equation \eqref{nacsklals} with $b_C=0$).

\subsubsection{Randomized Kaczmarz gossip with heavy ball momentum}
As we have seen in previous section when the standard RK is applied to solve the AC system $\bQ x=0$, one can recover the famous pairwise gossip algorithm~\cite{boyd2006randomized}.  Algorithm~\ref{RKmomentum} describes how a relaxed variant of randomized Kaczmarz with heavy ball momentum ($0<\omega < 2$ and $ 0 \leq \beta <1$) behaves as a gossip algorithm. See also Figure~\eqref{fig:mRK} for a graphical illustration of the method.

\begin{algorithm}[t!]
	\caption{mRK: Randomized Kaczmarz with momentum as a gossip algorithm}
	\label{RKmomentum}
	\small \small
	\begin{algorithmic}[1]
		%\Procedure{MyProcedure}{}
		\State {\bf Parameters:} Distribution $\mathcal{D}$ from which method samples matrices; stepsize/relaxation parameter $\omega \in \R$; heavy ball/momentum parameter $\beta$.
		\State {\bf Initialize:} $x^0 ,x^1 \in \R^n$
		\For{$k=1,2,\dots$} 
		\State Pick an edge $e=(i,j)$ following the distribution $\cD$
		\State The values of the nodes are updated as follows:
\begin{itemize}
\item Node $i$: $x_i^{k+1}= \frac{2-\omega}{2}x_i^k+ \frac{\omega}{2}x_j^k+\beta (x_i^k - x_i^{k-1})$
\item Node $j$: $x_j^{k+1}= \frac{2-\omega}{2} x_j^k+\frac{\omega}{2}x_i^k+\beta (x_j^k - x_j^{k-1})$
\item Any other node $\ell$: $x_\ell^{k+1}=x_\ell^k+\beta (x_\ell^k - x_\ell^{k-1})$
\end{itemize}
		\EndFor
		\State {\bf Output:} The last iterate $x^k$
	\end{algorithmic}
\end{algorithm}

\begin{rem}
In the special case that $\beta=0$ (zero momentum) only the two nodes of edge $e=(i,j)$ update their values. In this case the two selected nodes do not update their values to their exact average but to a convex combination that depends on the stepsize $\omega \in (0,2)$. To obtain the pairwise gossip algorithm of \cite{boyd2006randomized}, one should further choose $\omega=1$.
\end{rem}

\textbf{Distributed Nature of the Algorithm:} Here we highlight a few ways to implement mRK in a distributed fashion.
\begin{itemize}
\item \emph{Pairwise broadcast gossip:}  In this protocol each node $i \in \cV$ of the network $\cG$ has a clock that ticks at the times of a rate 1 Poisson process. The inter-tick times are exponentially distributed, independent across nodes, and independent across time. This is equivalent to a global clock ticking at a rate $n$ Poisson process which wakes up an edge of the network at random. In particular, in this implementation mRK works as follows:  In the $k^{th}$ iteration (time slot) the clock of node $i$ ticks and node $i$ randomly contact one of its neighbors and simultaneously broadcast a signal to inform the nodes of the whole network that is updating (this signal does not contain any private information of node $i$). The two nodes $(i,j)$ share their information and update their private values following the update rule of Algorithm~\ref{RKmomentum} while all the other nodes update their values using their own information. In each iteration only one pair of nodes exchange their private values. 
 
\item \emph{Synchronous pairwise gossip:} In this protocol a single global clock is available to all nodes. The time is assumed to be slotted commonly across nodes and in each time slot only a pair of nodes of the network is randomly activated and exchange their information following the update rule of Algorithm~\ref{RKmomentum}.  The remaining not activated nodes update their values using their own last two private values.  Note that this implementation of mRK comes with the disadvantage that it requires a central entity which in each step requires to choose the activated pair of nodes\footnote{We speculate that a completely distributed synchronous gossip algorithm that finds pair of nodes in a distributed manner without any additional computational burden can be design following the same procedure proposed in Section III.C of  \cite{boyd2006randomized}.}.

\item  \emph{Asynchronous pairwise gossip with common counter:} 
Note that the update rule of the selected pair of nodes $(i,j)$ in Algorithm~\ref{RKmomentum} can be rewritten as follows:
$$ x_i^{k+1}= x_i^k + \beta (x_i^k - x_i^{k-1}) + \frac{\omega}{2} (x_j^k -x_i^k),$$
$$x_j^{k+1}= x_j^k + \beta (x_j^k - x_j^{k-1}) + \frac{\omega}{2} (x_i^k -x_j^k).$$
In particular observe that the first part of the above expressions $x_i^k + \beta (x_i^k - x_i^{k-1})$ (for the case of node $i$) is exactly the same with the update rule of the non activate nodes at $k^{th}$ iterate (check step 5 of Algorithm~\ref{RKmomentum}) . Thus, if we assume that all nodes share a common counter that keeps track of the current iteration count and that each node $i \in \cV$ remembers the iteration counter $k_i$ of when it was last activated, then step 5 of Algorithm~\ref{RKmomentum} takes the form:
\begin{itemize}
\item $ x_i^{k+1}= i_k \left[x_i^k + \beta (x_i^k - x_i^{k-1}) \right]+ \frac{\omega}{2} (x_j^k -x_i^k),$
\item $x_j^{k+1}= j_k \left[x_j^k + \beta (x_j^k - x_j^{k-1}) \right] + \frac{\omega}{2} (x_i^k -x_j^k),$
\item $k_i = k_j =k+1,$
\item Any other node $\ell$: $x_\ell^{k+1}=x_\ell^k,$
\end{itemize}
where $i_k=k-k_{i}$ ($j_k=k-k_{j}$) denotes the number of iterations between the current iterate and the last time that the $i^{th}$ ($j^{th}$) node is activated. In this implementation only a pair of nodes communicate and update their values in each iteration (thus the justification of asynchronous), however it requires the nodes to share a common counter that keeps track the current iteration count in order to be able to compute the value of $i_k=k-k_{i}$.
\end{itemize}

\begin{figure}[t!]
\vspace{6pt}
\begin{minipage}[b]{1.0\linewidth}
  \centering
  \centerline{\includegraphics[scale=0.55]{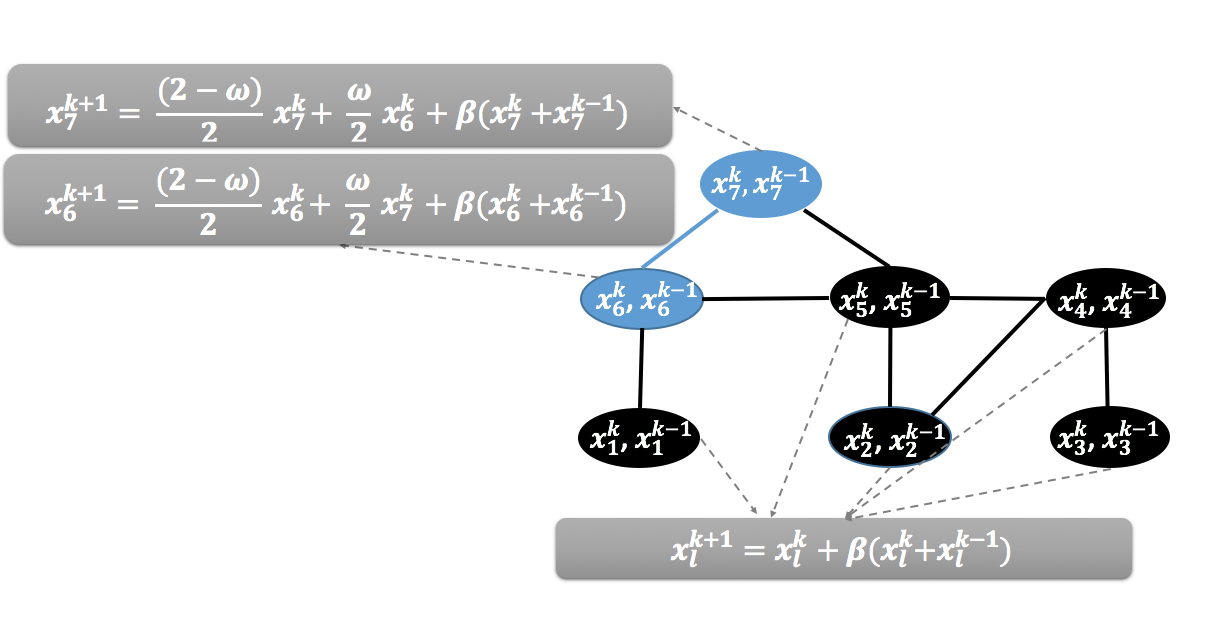}}
%  \vspace{2.0cm}
  \caption{\footnotesize Example of how mRK works as gossip algorithm. In the presented network the edge that connects nodes $6$ and $7$ is randomly selected. The pair of nodes exchange their information and update their values following the update rule of the Algorithm~\ref{RKmomentum} while the rest of the nodes, $\ell \in [5]$, update their values using only their own previous private values.}
  \label{fig:mRK}
\end{minipage}
\end{figure}

\subsubsection{Connections with existing fast randomized gossip algorithms}
\label{connectionOfAcceleratedMethods}

In the randomized gossip literature there is one particular method closely related to our approach. It was first proposed in \cite{cao2006accelerated} and its analysis under strong conditions was presented in \cite{liu2013analysis}. In this work local memory is exploited by installing shift registers at each agent. In particular we are interested in the case of two registers where the first stores the agent's current value and the second the agent's value before the latest update. The algorithm can be described as follows. Suppose that edge $e=(i,j)$ is chosen at time $k$. Then,
\begin{itemize}
\item Node $i$: $x_i^{k+1}= \omega(\frac{x_i^k+x_j^k}{2})+(1-\omega)x_i^{k-1},$
\item Node $j$: $x_i^{k+1}=  \omega(\frac{x_i^k+x_j^k}{2})+(1-\omega)x_j^{k-1},$
\item Any other node $\ell$: $x_\ell^{k+1}=x_\ell^k,$
\end{itemize}
where $\omega \in [1,2)$. The method was analyzed in \cite{liu2013analysis} under a strong assumption on the probabilities of choosing the pair of nodes, that as the authors mentioned, is unrealistic in practical scenarios, and for networks like the random geometric graphs. At this point we should highlight that the results presented in Chapter~\ref{ChapterMomentum} hold for essentially any distribution $\cD$ \footnote{The only restriction is the exactness condition to be satisfied. See Theorem~\ref{L2}.} and as a result in the proposed gossip variants with heavy ball momentum such problem cannot occur.

Note that, in the special case that we choose $\beta=\omega-1$ in the update rule of Algorithm~\ref{RKmomentum} is simplified to:
\begin{itemize}
\item Node $i$: $x_i^{k+1}= \omega(\frac{x_i^k+x_j^k}{2})+(1-\omega)x_i^{k-1},$
\item Node $j$: $x_i^{k+1}=  \omega(\frac{x_i^k+x_j^k}{2})+(1-\omega)x_j^{k-1},$
\item Any other node $\ell$: $x_\ell^{k+1}=\omega x_\ell^k+(1-\omega)x_\ell^{k-1}.$
\end{itemize}

Recall that in order to apply Theorem~\ref{L2}, we need to assume that $0< \omega < 2$ and $\beta=\omega-1 \geq 0$ which also means that $\omega \in [1,2)$. Thus for $\omega \in [1,2)$ and momentum parameter $\beta=\omega-1$ it is easy to see that our approach is very similar to the shift-register algorithm. Both methods update the selected pair of nodes in the same way. However, in Algorithm~\ref{RKmomentum} the not selected nodes of the network do not remain idle but instead update their values using their own previous information.

By defining the momentum matrix $\bM=\text{\textbf{Diag}}(\beta_{1},\beta_{2},\dots, \beta_{n})$, the above closely related algorithms can be expressed, in vector form, as:
\begin{equation}
\label{updatewithB}
x^{k+1} = x^k - \frac{\omega}{2} (x_i^k-x_j^k)(e_i-e_j) + \bM (x^k-x^{k-1}).
\end{equation}

In particular, in mRK every diagonal element of matrix $\bM$ is equal to $\omega-1$, while in the algorithm of \cite{cao2006accelerated, liu2013analysis} all the diagonal elements are zeros except the two values that correspond to nodes $i$ and $j$ that are equal to $\beta_{i}=\beta_{j}=\omega-1$.

\begin{rem}
The shift register algorithm of \cite{liu2013analysis} and Algorithm~\ref{RKmomentum} of this work can be seen as the two limit cases of the update rule \eqref{updatewithB}. As we mentioned, the shift register method \cite{liu2013analysis} uses only two non-zero diagonal elements in $\bM$, while our method has a full diagonal. We believe that further methods can be developed in the future by exploring the cases where more than two but not all elements of the diagonal matrix $\bM$ are non-zero. It might be possible to obtain better convergence if one carefully chooses these values based on the network topology. We leave this as an open problem for future research.
\end{rem}

\subsubsection{Randomized block Kaczmarz gossip with heavy ball momentum}
Recall that Theorem~\ref{TheoremRBK} explains how RBK (with no momentum and no relaxation) can be interpreted as a gossip algorithm. In this subsection by using this result we explain how relaxed RBK with momentum works. Note that the update rule of RBK with momentum can be rewritten as follows:
\begin{equation}
\label{updateRBK2}
x^{k+1} \overset{\eqref{momentumupdateb0},\eqref{nacsklals}}{=} \omega \left(\bI- \bA_{C:}^\top (\bA_{C:}\bA_{C:}^\top)^\dagger \bA_{C:} \right) x^k+(1-\omega)x^k +\beta(x^k-x^{k-1}),
\end{equation}
and recall that $x^{k+1} =\left(\bI - \bA_{C:}^\top (\bA_{C:}\bA_{C:}^\top)^\dagger \bA_{C:} \right) x^k$ is the update rule of the standard RBK  \eqref{RBKgossip}.

Thus, in analogy to the standard RBK, in the $k^{th}$ step, a random set of edges is selected and $q \leq n$ connected components are formed as a result. This includes the connected components that belong to both sub-graph $\cG_k$ and also the singleton connected components (nodes outside the $\cG_k$). Let us define the set of the nodes that belong in the $r \in [q]$ connected component at the $k^{th}$ step $\cV_r^k$, such that $\cV= \cup_{r\in [q]} \cV_r^k$ and $|\cV|=\sum_{r=1}^{q} |\cV_r^k|$ for any $k>0$. 

Using the update rule \eqref{updateRBK2}, Algorithm~\ref{RBKmomentum} shows how mRBK is updating the private values of the nodes of the network (see also Figure~\ref{fig:mRBK} for the graphical interpretation).

\begin{algorithm}[t!]
	\caption{mRBK: Randomized Block Kaczmarz Gossip with momentum}
	\label{RBKmomentum}
	\small \small
	\begin{algorithmic}[1]
		%\Procedure{MyProcedure}{}
		\State {\bf Parameters:} Distribution $\mathcal{D}$ from which method samples matrices;  stepsize/relaxation parameter $\omega \in \R$;  heavy ball/momentum parameter $\beta$.
		\State {\bf Initialize:} $x^0,x^1 \in \R^n$
		\For{$k=1,2,...$} 
		\State Select a random set of edges $\cS \subseteq \cE$
		\State Form subgraph $\cG_k$ of $\cG$ from the selected  edges 
		\State Node values  are updated as follows:
\begin{itemize}
\item For each connected component $\cV_r^k$ of $\cG_k$, replace the values of its nodes with: 
\begin{equation}
\label{updateruelblock}
x_i^{k+1}=\omega \frac{\sum_{j \in \cV_r^k} x_j^{k}}{|\cV_r^k|} +(1-\omega)x_i^k+\beta (x_i^k-x_i^{k-1}).
\end{equation}
\item Any other node $\ell$: $x_\ell^{k+1}=x_\ell^k+\beta (x_\ell^k - x_\ell^{k-1})$
\end{itemize}
		\EndFor
		\State {\bf Output:} The last iterate $x^k$
	\end{algorithmic}
\end{algorithm}

Note that in the update rule of mRBK the nodes that are not attached to a selected edge (do not belong in the sub-graph $\cG_k$) update their values via $x_\ell^{k+1}=x_\ell^k+\beta (x_\ell^k - x_\ell^{k-1})$. By considering these nodes as singleton connected components their update rule is exactly the same with the nodes of sub-graph $\cG_k$. This is easy to see as follows:
\begin{eqnarray}
x_\ell^{k+1}&\overset{\eqref{updateruelblock}}{=}&\omega \frac{\sum_{j \in \cV_r^k} x_j^{k}}{|\cV_r^k|} +(1-\omega)x_\ell^k+\beta (x_\ell^k-x_\ell^{k-1})\notag\\
&\overset{|\cV_r^k|=1}{=}&\omega x_\ell^k +(1-\omega)x_\ell^k+\beta (x_\ell^k-x_\ell^{k-1})\notag\\
&=& x_\ell^k+\beta (x_\ell^k - x_\ell^{k-1}).
\end{eqnarray}

\begin{rem}
In the special case that only one edge is selected in each iteration ($\bS_k \in \R^{m \times 1}$)  the update rule of mRBK is simplified to the update rule of mRK. In this case the sub-graph $\cG_k$ is the pair of the two selected edges.  
\end{rem}

\begin{rem}
In previous section we explained how several existing gossip protocols for solving the average consensus problem are special cases of the RBK (Theorem~\ref{TheoremRBK}). For example two gossip algorithms that can be cast as special cases of the standard RBK are the path averaging proposed in \cite{benezit2010order} and the clique gossiping \cite{liu2017clique}. In path averaging, in each iteration a path of nodes is selected and its nodes update their values to their exact average ($\omega=1$). In clique gossiping, the network is already divided into cliques and through a random procedure a clique is activated and the nodes of it update their values to their exact average ($\omega=1$). Since mRBK contains the standard RBK as a special case (when $\beta=0$), we expect that these special protocols can also be accelerated with the addition of momentum parameter $\beta \in (0,1)$.
\end{rem}

\begin{figure}[t!]%[htb]%
\begin{minipage}[b]{1.0\linewidth}
  \centering
  \vspace{6pt}
  \centerline{\includegraphics[scale=0.6]{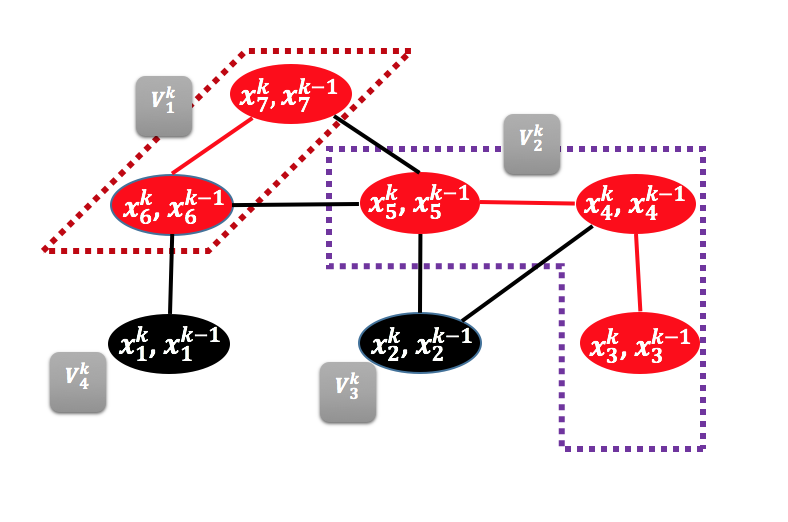}}
%  \vspace{2.0cm}
  \caption{\footnotesize Example of how the mRBK method works as gossip algorithm. In the presented network the red edges are randomly chosen in the $k^{th}$ iteration, and they form subgraph $\cG_k$ and four connected component. In this figure $V_1^k$ and $V_2^k$ are the two connected components that belong in the subgraph $\cG_k$ while $V_3^k$ and $V_4^k$ are the singleton connected components. Then the nodes update their values by communicate with the other nodes of their connected component using the update rule \eqref{updateruelblock}. For example the node number 5 that belongs in the connected component $V_2^k$ will update its value using the values of node 4 and 3 that also belong in the same component as follows:
$x_5^{k+1}=\omega \frac{x_3^k+x_4^k+x_5^k}{3} +(1-\omega)x_5^k+\beta (x_5^k-x_5^{k-1})$.}
  \label{fig:mRBK}
\end{minipage}
\end{figure}

\subsubsection{Mass preservation}
One of the key properties of some of the most efficient randomized gossip algorithms is mass preservation. That is, the sum (and as a result the average) of the private values of the nodes remains fixed during the iterative procedure ($\textstyle \sum_{i=1}^{n}x_i^{k}=\sum_{i=1}^{n}x_i^{0}, \quad \forall k \geq 1$). The original pairwise gossip algorithm proposed in \cite{boyd2006randomized} satisfied the mass preservation property, while exisiting fast gossip algorithms \cite{cao2006accelerated,liu2013analysis}  preserving a scaled sum.  
In this subsection we show that mRK and mRBK gossip protocols presented above satisfy the mass preservation property. In particular, we prove mass preservation for the case of the block randomized gossip protocol (Algorithm~\ref{RBKmomentum}) with momentum. This is sufficient since the randomized Kaczmarz gossip with momentum (mRK), Algorithm~\ref{RKmomentum} can be cast as special case.

\begin{thm}
Assume that $x^0=x^1=c$. That is, the two registers of each node have the same initial value.  Then for the Algorithms~\ref{RKmomentum} and \ref{RBKmomentum} we have $\sum_{i=1}^{n}x_i^k=\sum_{i=1}^{n}c_i$ for any $k\geq 0$ and as a result, $\frac{1}{n}\sum_{i=1}^{n}x_i^k=\bar{c}$. 
\end{thm}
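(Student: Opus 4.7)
The plan is to prove the stronger mRBK statement by induction on $k$, since Algorithm~\ref{RKmomentum} is recovered by restricting the random sketch to a single edge (so the mRK case will follow as an immediate corollary). The base cases $k=0$ and $k=1$ are immediate from the hypothesis $x^0=x^1=c$. For the inductive step, I would assume $\sum_{i=1}^n x_i^k = \sum_{i=1}^n x_i^{k-1} = \sum_{i=1}^n c_i$ and show that the same holds at step $k+1$.

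The key step is to observe, as noted in the discussion preceding the theorem, that the update rule for nodes outside the selected subgraph $\cG_k$ is exactly the singleton instance of the connected-component rule \eqref{updateruelblock}. Hence I can write, uniformly for every node $i \in \cV$,
\begin{equation*}
x_i^{k+1} = \omega \,\frac{\sum_{j\in \cV_r^k} x_j^k}{|\cV_r^k|} + (1-\omega)\, x_i^k + \beta\,(x_i^k - x_i^{k-1}),
\end{equation*}
where $\cV_r^k$ is the (possibly singleton) connected component containing $i$. Summing this identity over $i \in \cV_r^k$ for a fixed component causes the average term to collapse:
\begin{equation*}
\sum_{i\in \cV_r^k} x_i^{k+1} = \omega \sum_{j\in \cV_r^k} x_j^k + (1-\omega)\sum_{i\in \cV_r^k} x_i^k + \beta \sum_{i\in \cV_r^k}(x_i^k-x_i^{k-1}) = \sum_{i\in \cV_r^k} x_i^k + \beta \sum_{i\in \cV_r^k}(x_i^k-x_i^{k-1}).
\end{equation*}
Since $\cV = \bigcup_r \cV_r^k$ is a disjoint union, summing over all components $r$ gives
\begin{equation*}
\sum_{i=1}^n x_i^{k+1} = \sum_{i=1}^n x_i^k + \beta\Bigl(\sum_{i=1}^n x_i^k - \sum_{i=1}^n x_i^{k-1}\Bigr).
\end{equation*}
Invoking the inductive hypothesis, the parenthesized term vanishes, so $\sum_i x_i^{k+1}=\sum_i x_i^k = \sum_i c_i$, closing the induction. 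Dividing by $n$ yields the average identity.

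I do not expect any real obstacle here: the only subtlety is recognizing that the ``any other node'' update in Algorithm~\ref{RBKmomentum} is algebraically a singleton-component instance of \eqref{updateruelblock}, which lets us avoid splitting the sum into ``active'' and ``inactive'' nodes. The argument is deterministic given the draw of $\bS_k$, so no expectation or probabilistic reasoning is needed; and the momentum term is neutralized purely by the inductive hypothesis that the sum at steps $k$ and $k-1$ coincide, which is why initializing $x^0=x^1=c$ (rather than merely $x^0=c$) is essential.
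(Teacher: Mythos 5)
Your proof is correct and follows essentially the same route as the paper's: treat the inactive nodes as singleton connected components so that the update rule \eqref{updateruelblock} applies uniformly, sum over each component to collapse the averaging term, and obtain the recursion $\sum_i x_i^{k+1} = (1+\beta)\sum_i x_i^k - \beta\sum_i x_i^{k-1}$, which together with $x^0=x^1$ forces all partial sums to equal $\sum_i c_i$. The only cosmetic difference is that you phrase the final step as an explicit induction, whereas the paper states the recursion and unrolls it directly.
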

\begin{proof}
We prove the result for the more general Algorithm~\ref{RBKmomentum}. Assume that in the $k^{th}$ step of the method $q$ connected components are formed.  Let the set of the nodes of each connected component be $\cV_r^k$ so that $\cV= \cup_{r=\{1,2,...q\}} \cV_r^k$ and $|\cV|=\sum_{{r}=1}^{q} |\cV_r^k|$ for any $k>0$.  Thus:
\begin{equation}
\label{generalsum}
\textstyle \sum_{i=1}^{n}x_i^{k+1}=\sum_{i \in \cV_1^k} x_i^{k+1} +\dots + \sum_{i \in \cV_q^k} x_i^{k+1}.
\end{equation}
Let us first focus, without loss of generality, on  connected component $r \in [q]$ and simplify the expression for the sum of its nodes:
\begin{eqnarray}
 \sum_{i\in \cV_r^k} x_i^{k+1}
&\overset{\eqref{updateruelblock}}=& \textstyle \sum_{i \in \cV_r^k} \omega \frac{\sum_{j \in \cV_r^k} x_j^{k}}{|\cV_r^k|} +   (1-\omega) \sum_{i \in \cV_r^k} x_i^k  +\beta \sum_{i \in \cV_r^k}  (x_i^k-x_i^{k-1})\notag\\
 &=&|\cV_r^k| \frac{\omega \sum_{j \in \cV_r^k} x_j^{k}}{|\cV_r^k|}+ (1-\omega) \sum_{i \in \cV_r^k} x_i^k 
 +\beta \sum_{i \in \cV_r^k}  (x_i^k-x_i^{k-1})\notag\\
 &=&(1+\beta) \sum_{i \in \cV_r^k}x_i^k-\beta \sum_{i \in \cV_r^k}x_i^{k-1}.
\end{eqnarray}
 By substituting this for all $r \in [q]$ into the right hand side of \eqref{generalsum} and from the fact that $\cV= \cup_{r\in [q]}
 \cV_r^k$, we obtain:
$$ \sum_{i=1}^{n}x_i^{k+1}= (1+\beta) \sum_{i=1}^{n}x_i^k-\beta \sum_{i=1}^{n} x_i^{k-1}.$$
Since $x^0=x^1$, we have $\sum_{i=1}^{n}x_i^{0}=\sum_{i=1}^{n}x_i^{1}$, and as a result $
\sum_{i=1}^{n}x_i^{k} = \sum_{i=1}^{n}x_i^{0}$ for all $ k \geq 0$.
\end{proof}
\subsection{Provably accelerated randomized gossip algorithms}
\label{accSubsection}

In this subsection we focus on one specific case of the Sketch and Project framework, the RK method \eqref{RK}. We present two accelerated variants of RK where the Nesterov's momentum is used, for solving consistent linear systems and we describe their theoretical convergence results. Based on these methods we propose two provably accelerated gossip protocols, along with some remarks on their implementation.

\subsubsection{Accelerated Kaczmarz methods using Nesterov's momentum}
\label{AcceleratedVariants}
There are two different but very similar ways to provably accelerate the randomized Kaczmarz method using Nesterov's acceleration. The first paper that proves \emph{asymptotic convergence }with an accelerated linear rate is \cite{liu2016accelerated}. The proof technique is similar to the framework developed by Nesterov in \cite{nesterov2012efficiency} for the acceleration of coordinate descent methods.  In \cite{tu2017breaking,gower2018accelerated} a modified version for the selection of the parameters was proposed and a \emph{non-asymptotic} accelerated linear rate was established. In Algorithm~\ref{alg:AccKaczmarz}, pseudocode of the Accelerated Kaczmarz method (AccRK) is presented where both variants can be cast as special cases, by choosing the parameters with the correct way. 
\begin{algorithm}[!h]
\begin{algorithmic}[1]
\State Data: Matrix $\mA\in \R^{m\times n}$; vector $b\in \R^m$
\State Choose $x^0\in \R^n$ and set $v^0 = x^0$
\State Parameters: 
Evaluate the sequences of the scalars $\alpha_k, \beta_k ,\gamma_k$ following one of two possible options.
\For {$k = 0, 1, 2, \dots, K$}
 \State $y^k = \alpha_k v^k + (1-\alpha_k) x^k$ 
\State Draw a fresh sample $i_k \in [m]$ with equal probability 
\State $x^{k+1} = y^k - \frac{\bA_{i_k :} y^k -b_{i_k}}{\|\bA_{i_k :}\|^2} \bA_{i_k :}^ \top.$ 
\State $v^{k+1} = \beta_k v^k + (1-\beta_k) y^k -\gamma_k \frac{\bA_{i_k :} y^k -b_{i_k}}{\|\bA_{i_k :}\|^2} \bA_{i_k :}^ \top.$
\EndFor
\end{algorithmic}
\caption{Accelerated Randomized Kaczmarz Method (AccRK)}
\label{alg:AccKaczmarz}
\end{algorithm}

There are two options for selecting the parameters of the AccRK for solving consistent linear systems with normalized matrices, which we describe next. 
\begin{enumerate}
\item From \cite{liu2016accelerated}: 
Choose $\lambda \in [0,\lambda_{\min}^+(\bA^\top \bA)]$ and set $\gamma_{-1}=0$.
Generate the sequence $\{\gamma_k: k=0,1,\dots, K+1\}$ by choosing $\gamma_k$ to be the largest root of $$\gamma_k^2-\frac{\gamma_k}{m}=(1-\frac{\gamma_k}\lambda{m})\gamma_{k-1}^2,$$ and generate the sequences $\{\alpha	_k :  k=0,1,\dots,K+1\}$ and $\{\beta_k :  k=0,1,\dots,K+1\}$ by setting $$\alpha_k=\frac{m-\gamma_k\lambda}{\gamma_k(m^2-\lambda)}, \quad \beta_k=1-\frac{\gamma_k \lambda}{m}.$$
\item From \cite{gower2018accelerated}: Let 
\begin{equation}
\label{thenu}
\nu= \max_{u \in \range{\mA^\top}   } \frac{ u^\top \left[\sum_{i=1}^m  \mA_{i:}^\top \mA_{i:} (\mA^\top \mA)^\dagger \mA_{i:}^\top \mA_{i:} \right]u }{ u^\top \frac{\bA^\top \bA }{m}u }.
\end{equation}
Choose the three sequences to be fixed constants as follows:
$\beta_k =\beta = 1-\sqrt{ \frac{\lambda_{\min}^+(\bW)}{\nu}} $, \;$\gamma_k=\gamma = \sqrt{ \frac{1}{\lambda_{\min}^+(\bW) \nu}} $, \; $\alpha_k=\alpha =  \frac{1}{1+\gamma \nu} \in (0,1)$ where $\bW=\frac{\bA^\top \bA}{m}$.
\end{enumerate}
\subsubsection{Theoretical guarantees of AccRK}
The two variants (Option 1 and Option 2) of AccRK are closely related, however their convergence analyses are different. Below we present the theoretical guarantees of the two options as presented in \cite{liu2016accelerated} and \cite{gower2018accelerated}.
%\textbf{Option 1:}
\begin{thm}[\cite{liu2016accelerated}]
\label{causjoakls}
Let $\{x^k\}_{k=0}^\infty$ be the sequence of random iterates produced by Algorithm~\ref{alg:AccKaczmarz} with the Option 1 for the parameters. Let $\bA$ be normalized matrix and let $\lambda \in [0,\lambda_{\min}^+(\bA^\top \bA)]$. Set $\sigma_1=1+\frac{\sqrt{\lambda}}{2m}$ and $\sigma_2=1-\frac{\sqrt{\lambda}}{2m}$. Then for any $k\geq 0$ we have that:
$$\Exp[\|x^{k}-x^*\|^2 ] \leq \frac{4 \lambda}{(\sigma_1^{k}-\sigma^{k}_2)^2}\|x^0-x^*\|^2_{(\bA^\top \bA)^\dagger}.$$
\end{thm}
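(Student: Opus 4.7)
The plan is to follow the classical framework of Nesterov's accelerated coordinate descent \cite{nesterov2012efficiency}, adapted to the Kaczmarz (row-sampling) setting. The parameter choices listed in Option~1 are exactly those tailored so that a carefully-chosen Lyapunov function decreases in expectation, with $\gamma_k$ playing the role of the ``big-theta'' scaling in Nesterov's estimate sequences. I would proceed in four steps.

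\textbf{Step 1: One-step analysis of the Kaczmarz update.} Since $\bA$ is row-normalized and $i_k$ is drawn uniformly from $[m]$, the step $x^{k+1}=y^k-(\bA_{i_k:}y^k-b_{i_k})\bA_{i_k:}^\top$ is the exact projection of $y^k$ onto the hyperplane $\{x:\bA_{i_k:}x=b_{i_k}\}$, so it satisfies Pythagoras and, after conditioning on $y^k$ and taking expectation over $i_k$,
\begin{equation*}
\Exp[\|x^{k+1}-x^*\|^2\mid y^k]=\|y^k-x^*\|^2-\tfrac{1}{m}\|\bA(y^k-x^*)\|^2.
\end{equation*}
An analogous computation on line~8, using $v^{k+1}=\beta_k v^k+(1-\beta_k)y^k-\gamma_k(\bA_{i_k:}y^k-b_{i_k})\bA_{i_k:}^\top$, gives an expression for $\Exp[\|v^{k+1}-x^*\|^2\mid v^k,y^k]$ that is affine in the ``stochastic gradient'' $\bA_{i_k:}^\top \bA_{i_k:}(y^k-x^*)$ and in the pseudo-inverse norm $\|y^k-x^*\|^2_{(\bA^\top\bA)^\dagger}$.

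\textbf{Step 2: Lyapunov/estimate-sequence construction.} I would introduce the potential
\begin{equation*}
\Phi^k\;\eqdef\;\gamma_{k-1}^2\,\|x^k-x^*\|^2_{(\bA^\top\bA)^\dagger}+2\,\|v^k-x^*\|^2,
\end{equation*}
(or the standard Nesterov variant with $\gamma_{k-1}^2$ multiplying a suitable ``function-value'' surrogate). Expanding $\Phi^{k+1}$, substituting the definition of $y^k=\alpha_k v^k+(1-\alpha_k)x^k$ and using the quadratic recursion $\gamma_k^2-\gamma_k/m=(1-\gamma_k\lambda/m)\gamma_{k-1}^2$ together with $\alpha_k=(m-\gamma_k\lambda)/(\gamma_k(m^2-\lambda))$ and $\beta_k=1-\gamma_k\lambda/m$, the cross terms cancel and the $\lambda$-term is absorbed by the pseudo-inverse norm bound $\|u\|^2\geq \lambda\,\|u\|^2_{(\bA^\top\bA)^\dagger}$ for $u\in\range(\bA^\top)$ (valid because $\lambda\leq \lambda_{\min}^+(\bA^\top\bA)$). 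This yields
\begin{equation*}
\Exp[\Phi^{k+1}\mid\cF_k]\;\leq\;\Phi^k,
\end{equation*}
i.e.\ the potential is a supermartingale. Unrolling gives $\gamma_{k-1}^2\,\Exp\|x^k-x^*\|^2_{(\bA^\top\bA)^\dagger}\leq \Phi^0 = 2\|x^0-x^*\|^2$, and since $\|x^k-x^*\|^2\leq \|\bA\|^2\,\|x^k-x^*\|^2_{(\bA^\top\bA)^\dagger}$ on $\range(\bA^\top)$, a constant-times-$\gamma_{k-1}^{-2}$ decay follows, with the base suboptimality measured in the $(\bA^\top\bA)^\dagger$-norm.

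\textbf{Step 3: Growth of the $\gamma_k$ sequence.} Rewrite the recursion as a linear fractional recurrence and note that the quadratic $t^2-t/m-(1-t\lambda/m)\gamma_{k-1}^2=0$ determines $\gamma_k$ from $\gamma_{k-1}$. A standard computation (identical to Lemma~2.2.3 of \cite{nesterov2013introductory} or the estimate-sequence lemma in \cite{nesterov2012efficiency}) shows that the auxiliary sequence $u_k\eqdef\gamma_k/(2\sqrt{\lambda})$ satisfies a second-order linear recursion whose characteristic roots are $\sigma_1=1+\sqrt{\lambda}/(2m)$ and $\sigma_2=1-\sqrt{\lambda}/(2m)$. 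From the initialization $\gamma_{-1}=0$ one obtains the closed-form lower bound
\begin{equation*}
\gamma_k\;\geq\;\frac{\sigma_1^{k+1}-\sigma_2^{k+1}}{2\sqrt{\lambda}}.
\end{equation*}

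\textbf{Step 4: Conclusion.} Combining the monotonicity of $\Phi^k$ from Step~2 with the lower bound of Step~3, and shifting the index,
\begin{equation*}
\Exp\|x^k-x^*\|^2\;\leq\;\frac{2\Phi^0}{\gamma_{k-1}^2}\;\leq\;\frac{4\lambda}{(\sigma_1^k-\sigma_2^k)^2}\,\|x^0-x^*\|^2_{(\bA^\top\bA)^\dagger},
\end{equation*}
which is the claimed bound.

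The main obstacle is Step~2: checking that the specific (and somewhat opaque) choices of $\alpha_k,\beta_k,\gamma_k$ make the cross terms in $\Exp[\Phi^{k+1}\mid\cF_k]-\Phi^k$ cancel exactly, using only the parameter identities and the stochastic-projection identity of Step~1. This is a careful but purely algebraic verification; once it is done, Steps~3 and 4 are routine manipulations of a scalar second-order recursion.
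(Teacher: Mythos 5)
First, a point of reference: the thesis does not prove Theorem~\ref{causjoakls} at all --- it is imported verbatim from \cite{liu2016accelerated} and used as a black box --- so there is no in-paper proof to compare your argument against. Your overall architecture (the Pythagorean one-step identity for the normalized Kaczmarz projection in Step~1, a Nesterov-type potential whose one-step decrease is forced by the parameter identities, and a lower bound $\gamma_k\geq(\sigma_1^{k+1}-\sigma_2^{k+1})/(2\sqrt{\lambda})$ obtained from the characteristic roots of a second-order linear recursion) is indeed the strategy of Liu and Wright, and Steps~1 and~3 are sound.

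The gap is in Steps~2 and~4: your Lyapunov function attaches the $(\bA^\top\bA)^\dagger$-norm to the wrong iterate. Look at the shape of the target bound. Since $\gamma_{-1}=0$ and $v^0=x^0$, the only term of $\Phi^0$ that survives is the $v$-term, and it must equal a multiple of $\|x^0-x^*\|^2_{(\bA^\top\bA)^\dagger}$ to produce the right-hand side of the theorem; symmetrically, the $x$-term must carry the plain Euclidean norm so that unrolling controls $\Exp[\|x^k-x^*\|^2]$. The correct potential is therefore of the form $\gamma_{k-1}^2\|x^k-x^*\|^2 + c\,\|v^k-x^*\|^2_{(\bA^\top\bA)^\dagger}$ (compare the potential $\Psi^k$ displayed for Option~2 in Theorem~\ref{theorem2}, which has exactly this arrangement). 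With your swapped choice, $\Phi^0=2\|x^0-x^*\|^2$ and the supermartingale property only yields $\gamma_{k-1}^2\,\Exp[\|x^k-x^*\|^2_{(\bA^\top\bA)^\dagger}]\leq 2\|x^0-x^*\|^2$; converting the left side to the Euclidean norm costs a factor $\lambda_{\max}(\bA^\top\bA)$, and the Euclidean norm on the right cannot be traded for $\lambda\|x^0-x^*\|^2_{(\bA^\top\bA)^\dagger}$ because that inequality goes the other way ($\lambda\|u\|^2_{(\bA^\top\bA)^\dagger}\leq\|u\|^2$ for $u\in\range{\bA^\top}$ when $\lambda\leq\lambda_{\min}^+(\bA^\top\bA)$). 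Consequently the final display of Step~4 is not implied by your Step~2 and is false for general $\bA$. This is not a cosmetic slip: the cancellation you assert but do not verify in Step~2 is precisely where the weighting matters --- the quadratic term of the $v$-update, $\gamma_k^2(\bA_{i_k:}(y^k-x^*))^2\,\bA_{i_k:}(\bA^\top\bA)^\dagger\bA_{i_k:}^\top$, only combines correctly in expectation with the Euclidean decrease of the $x$-update when the pseudo-inverse norm sits on $v$. Fix the potential and carry out the (mechanical but unavoidable) verification of the one-step decrease; the rest of your argument then goes through.
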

\begin{cor}[\cite{liu2016accelerated}]
\label{basjda}
Note that as $k\rightarrow\infty$, we have that $\sigma^k_2\rightarrow 0$. This means that the decrease of the right hand side is governed mainly by the behavior of the term $\sigma_1$ in the denominator and as a result the method converge \emph{asymptotically} with a decrease factor per iteration:
$\sigma_1^{-2}=(1+\frac{\sqrt{\lambda}}{2m})^{-2}\approx 1-\frac{\sqrt{\lambda}}{m}.$
That is, as $k\rightarrow\infty$:
$$\Exp[\|x^{k}-x^*\|^2 ] \leq \left(1- \sqrt{\lambda}/m \right)^k 4 \lambda \|x^0-x^*\|^2_{(\bA^\top \bA)^\dagger}$$
\end{cor}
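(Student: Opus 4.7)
The plan is to simply unpack the bound provided in Theorem~\ref{causjoakls} as $k\to\infty$ and then extract the per-iteration contraction factor via a first-order Taylor expansion.

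First, I would verify that $\sigma_2^k \to 0$. Since we require $\lambda \in [0, \lambda_{\min}^+(\bA^\top \bA)]$ and the rows of $\bA$ are normalized so that $\lambda_{\min}^+(\bA^\top \bA) \leq \|\bA\|_F^2 = m$, we have $\tfrac{\sqrt{\lambda}}{2m} \leq \tfrac{1}{2\sqrt{m}} < 1$ in the regime of interest, hence $|\sigma_2| = \left|1-\tfrac{\sqrt{\lambda}}{2m}\right| < 1$ and so $\sigma_2^k \to 0$. Meanwhile $\sigma_1 = 1+\tfrac{\sqrt{\lambda}}{2m} > 1$, so $\sigma_1^k \to \infty$. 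Factoring out $\sigma_1^k$ I would then write
\[
\sigma_1^k - \sigma_2^k \;=\; \sigma_1^k \left(1 - (\sigma_2/\sigma_1)^k\right),
\]
and since $|\sigma_2/\sigma_1|<1$ the bracketed factor tends to $1$. Squaring, $(\sigma_1^k-\sigma_2^k)^2 \sim \sigma_1^{2k}$ as $k\to\infty$.

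Second, I would substitute this asymptotic equivalent into the bound of Theorem~\ref{causjoakls}. This gives, for all sufficiently large $k$,
\[
\Exp[\|x^{k}-x^*\|^2] \;\leq\; \sigma_1^{-2k}\cdot 4\lambda\,\|x^0-x^*\|^2_{(\bA^\top \bA)^\dagger}\,(1+o(1)),
\]
which identifies $\sigma_1^{-2}$ as the asymptotic per-iteration contraction factor claimed in the statement.

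Third, I would apply the elementary expansion $(1+t)^{-2} = 1 - 2t + 3t^2 - \cdots$ valid for $|t|<1$, with $t = \tfrac{\sqrt{\lambda}}{2m}$. This yields
\[
\sigma_1^{-2} \;=\; \left(1+\tfrac{\sqrt{\lambda}}{2m}\right)^{-2} \;=\; 1 - \frac{\sqrt{\lambda}}{m} + O\!\left(\frac{\lambda}{m^2}\right) \;\approx\; 1 - \frac{\sqrt{\lambda}}{m},
\]
which is precisely the approximation displayed in the corollary. There is no genuine obstacle in this argument; the whole proof is a routine asymptotic computation relying only on $|\sigma_2|<1$, $\sigma_1>1$, and a first-order Taylor expansion of $(1+t)^{-2}$ around $t=0$.
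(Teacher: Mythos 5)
Your proposal is correct and follows essentially the same route the paper (implicitly) takes: the corollary is stated in the text as a direct asymptotic reading of Theorem~\ref{causjoakls}, obtained by noting $|\sigma_2|<1$ so that $\sigma_1^k-\sigma_2^k\sim\sigma_1^k$, and then expanding $\sigma_1^{-2}=(1+\tfrac{\sqrt{\lambda}}{2m})^{-2}\approx 1-\tfrac{\sqrt{\lambda}}{m}$ to first order. Your write-up merely makes explicit the $(1+o(1))$ bookkeeping that the paper leaves implicit, and correctly treats the final display as an asymptotic approximation rather than a literal inequality, which is exactly how the paper intends it.
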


Thus, by choosing $\lambda=\lambda_{\min}^+$ and for the case that $\lambda_{\min}^+$ is small, Algorithm~\ref{alg:AccKaczmarz} will have significantly faster convergence rate than RK. Note that the above convergence results hold only for normalized matrices $\bA \in \R^{m \times n}$, that is matrices that have $\|\bA_{i:}\|=1$ for any $i \in m$. 

Using Corollary~\ref{basjda}, Algorithm~\ref{alg:AccKaczmarz} with the first choice of the parameters converges linearly with rate $\left(1- \sqrt{\lambda}/m \right)$. That is, it requires $O \left( m/ \sqrt{\lambda} \log(1/\epsilon) \right)$ iterations to obtain accuracy $\Exp[\|x^{k}-x^*\|^2 ]\leq   \epsilon 4 \lambda \|x^0-x^*\|^2_{(\bA^\top \bA)^\dagger} $.

\begin{thm}[\cite{ gower2018accelerated}]
\label{theorem2}
Let $\bW=\frac{\bA^\top \bA}{m}$ and let assume exactness\footnote{Note that in this setting $\bB=\bI$, which means that $\bW=\Exp[\bZ]$, and the exactness assumption takes the form ${\rm Null}(\mW) = {\rm Null}(\mA)$.}. Let $\{x^k,y^k,v^k\}$ be the iterates  of Algorithm~\ref{alg:AccKaczmarz} with the Option 2 for the parameters.  Then 
$$\Psi^k \leq \left(1-\sqrt{\lambda_{\min}^+(\bW)/\nu}\right)^k \Psi^0,$$
where $\Psi^k =\Exp \left[\| v^k - x^*\|_{\mW^\dagger}^2+  \frac{1}{\mu  } \|x^{k}-x^*\|^2 \right]$.
\end{thm}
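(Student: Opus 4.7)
My plan is to prove the linear rate by constructing the Lyapunov function $\Psi^k$ and showing that a one-step contraction $\Psi^{k+1}\leq (1-\theta)\Psi^k$ holds with $\theta=\sqrt{\lambda_{\min}^+(\mW)/\nu}$. The natural starting point is to expand the two updates of Algorithm~\ref{alg:AccKaczmarz}. Writing $g^k \eqdef \frac{\bA_{i_k:}y^k-b_{i_k}}{\|\bA_{i_k:}\|^2}\bA_{i_k:}^\top$, one has $x^{k+1}=y^k-g^k$ and $v^{k+1}=\beta v^k+(1-\beta)y^k-\gamma g^k$. The key observation is that $\Exp_{i_k}[g^k]=\frac{1}{m}\bA^\top(\bA y^k-b)=\mW(y^k-x^*)$ (after normalization and using consistency), and $\Exp_{i_k}[\|g^k\|^2]$ can be related to $\langle \mW(y^k-x^*),y^k-x^*\rangle$ through the definition of $\nu$ in \eqref{thenu}. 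These two identities (mean and second moment of the stochastic gradient) are the engine of the proof.

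The first main step is to bound $\Exp_{i_k}[\|v^{k+1}-x^*\|_{\mW^\dagger}^2]$. I would write $v^{k+1}-x^*=\beta(v^k-x^*)+(1-\beta)(y^k-x^*)-\gamma g^k$, expand the squared $\mW^\dagger$-norm, take conditional expectation so that the cross term involving $g^k$ becomes $\langle \beta(v^k-x^*)+(1-\beta)(y^k-x^*),-\gamma \mW(y^k-x^*)\rangle_{\mW^\dagger}=-\gamma\langle \cdot ,y^k-x^*\rangle$ (using $\mW^\dagger \mW$ acts as the identity on $\range(\mW)$, with exactness ensuring $v^k-x^*,y^k-x^*\in\range(\mW)$). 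The quadratic term $\gamma^2\Exp\|g^k\|_{\mW^\dagger}^2$ is then bounded above by $\gamma^2\nu\langle y^k-x^*,y^k-x^*\rangle$ via the definition of $\nu$, which is the crucial place where $\nu$ enters.

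The second main step handles $\Exp_{i_k}[\|x^{k+1}-x^*\|^2]$. Since $x^{k+1}=y^k-g^k$ is exactly one Kaczmarz projection step from $y^k$, Theorem~\ref{ConvergenceSketchProject} (or rather the one-iteration identity behind it) gives $\Exp_{i_k}\|x^{k+1}-x^*\|^2=\|y^k-x^*\|^2-\langle y^k-x^*,\mW(y^k-x^*)\rangle$. I would then use the convexity identity $y^k=\alpha v^k+(1-\alpha)x^k$ to re-express $\|y^k-x^*\|^2$ and, more importantly, to unify the terms produced by the two bounds: indeed, the coefficient $\alpha=\tfrac{1}{1+\gamma\nu}$ was chosen precisely so that $(1-\alpha)=\alpha\gamma\nu$, and this identity is what allows one to combine the $\|v^k-x^*\|_{\mW^\dagger}^2$ and $\|x^k-x^*\|^2$ pieces into a clean $(1-\theta)\Psi^k$ on the right-hand side after multiplying the $x$-bound by $1/\mu$ and adding.

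The third step is bookkeeping: choose $\mu$ (implicit in the Lyapunov function) so that the coefficient of $\|v^k-x^*\|_{\mW^\dagger}^2$ becomes $\beta^2+\beta(1-\beta)\cdot(\text{cross terms})\leq 1-\theta$, and the coefficient of $\|x^k-x^*\|^2$ becomes $(1-\alpha)^2+\ldots\leq(1-\theta)/\mu$. With $\beta=1-\theta$, $\gamma=1/(\theta\lambda_{\min}^+(\mW))$ and $\alpha=1/(1+\gamma\nu)$, these inequalities reduce algebraically to a single condition that is satisfied as an equality by the stated parameter choice; here the spectral inequality $\langle y^k-x^*,\mW(y^k-x^*)\rangle\geq \lambda_{\min}^+(\mW)\|y^k-x^*\|^2$ (valid on $\range(\mW)$, which is where the iterates live by induction) is used to absorb the remaining negative term. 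Taking total expectation and iterating gives the claim.

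The hard part will be the algebraic bookkeeping in the third step: balancing the three parameters $(\alpha,\beta,\gamma)$ and the free constant $\mu$ so that the cross terms in $v^{k+1}-x^*$ cancel against the contraction of $\|x^{k+1}-x^*\|^2$, and verifying that the exactness assumption genuinely guarantees the iterates stay in $\range(\mW)$ (so that $\mW^\dagger$ behaves as a true inverse there and $\lambda_{\min}^+(\mW)$ lower-bounds the relevant Rayleigh quotient). Everything else is essentially an expansion plus the $\nu$-inequality, but making the Lyapunov contraction match the target rate $1-\sqrt{\lambda_{\min}^+(\mW)/\nu}$ on the nose requires the specific calibration Option~2 was designed to provide.
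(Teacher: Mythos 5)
The paper does not actually prove Theorem~\ref{theorem2}: it is imported verbatim from \cite{gower2018accelerated}, so there is no in-paper proof to compare against. Your proposal is, however, essentially the argument used in that source (and in the closely related accelerated sketch-and-project analyses): a Lyapunov function coupling $\|v^k-x^*\|_{\mW^\dagger}^2$ and $\|x^k-x^*\|^2$, the mean/second-moment identities for the stochastic Kaczmarz direction, the one-step identity $\Exp_{i_k}\|x^{k+1}-x^*\|^2=\|y^k-x^*\|^2-\langle y^k-x^*,\mW(y^k-x^*)\rangle$ (which is exactly the paper's \eqref{x_k_omega}--\eqref{exp_x_k_omega} with $\omega=1$ applied at $y^k$), and the calibration $(1-\alpha)=\alpha\gamma\nu$. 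So the strategy is the right one.

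Two points deserve correction before this could be called a proof. First, the $\nu$-inequality is stated in the wrong norm: from \eqref{thenu} one gets $\Exp_{i_k}\bigl[\|g^k\|_{\mW^\dagger}^2\bigr]\leq \nu\,\langle \mW(y^k-x^*),\,y^k-x^*\rangle=\nu\|y^k-x^*\|_{\mW}^2$, not $\nu\|y^k-x^*\|^2$. This is not cosmetic: it is precisely the $\mW$-weighted quantity $\langle y^k-x^*,\mW(y^k-x^*)\rangle$ that must be absorbed by the negative term produced by the Kaczmarz step on $x$, and with the Euclidean norm in its place the cancellation that yields the rate $1-\sqrt{\lambda_{\min}^+(\mW)/\nu}$ does not go through. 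Second, the constant $\mu$ in $\Psi^k$ is not a free parameter to be tuned at the end; in the cited result it is fixed as $\mu=\lambda_{\min}^+(\mW)$, and the theorem's statement of $\Psi^k$ presupposes that choice. Treating it as adjustable is harmless for discovering the proof, but you must verify that the bookkeeping closes with that specific value, since the claimed inequality is for that specific $\Psi^k$. Your remaining steps (exactness keeping the iterates in $\range(\mA^\top)=\range(\mW)$ so that $\mW^\dagger\mW$ acts as the identity and $\lambda_{\min}^+(\mW)$ bounds the Rayleigh quotient) are correct and are indeed where exactness is used.
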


The above result implies that Algorithm~\ref{alg:AccKaczmarz} converges linearly with rate $1-\sqrt{\lambda_{\min}^+(\bW)/\nu}$, which translates to a total of $\cO\left(\sqrt{\nu/\lambda_{\min}^+(\bW)}\log(1/\epsilon)\right)$ iterations to bring the quantity $\Psi^k$ below $\epsilon>0$. It can be shown that $1 \leq \nu \leq 1/\lambda_{\min}^+(\bW)$, (Lemma 2 in \cite{gower2018accelerated}) where $\nu$ is as defined in \eqref{thenu}. Thus,
$\sqrt{\frac{1}{\lambda_{\min}^+(\bW)}} \leq \sqrt{\frac{\nu}{\lambda_{\min}^+(\bW)}} \leq \frac{1}{\lambda_{\min}^+(\bW)},$
which means that the rate of AccRK (Option 2) is always better than that of the RK with unit stepsize which is equal to $\cO\left(\frac{1}{\lambda_{\min}^+(\bW)} \log(1/\epsilon)\right)$ (see Theorem~\ref{ConvergenceSketchProject}). 

In \cite{ gower2018accelerated}, Theorem~\ref{theorem2} has been proposed for solving more general consistent linear systems (the matrix $\bA$ of the system is not assumed to be normalized). In this case $\bW=\Exp[\bZ]$ and the parameter $\nu$ is slightly more complicated than the one of equation \eqref{thenu}. We refer the interested reader to \cite{ gower2018accelerated} for more details.

\paragraph{Comparison of the convergence rates:}
Before describe the distributed nature of the AccRK and explain how it can be interpreted as a gossip algorithm, let us compare the convergence rates of the two options of the parameters for the case of general normalized consistent linear systems ($\|\bA_{i:}\|=1$ for any $i \in [m]$).

Using Theorems~\ref{causjoakls} and \ref{theorem2}, it is clear that the iteration complexity of AccRK is 
\begin{equation}
\label{IterCompleOption1}
O \left(\frac{m}{\sqrt{\lambda}} \log(1/\epsilon) \right)\overset{\lambda=\lambda_{\min}^+(\bA^\top \bA)}{=} O \left( \frac{m}{\sqrt{\lambda_{\min}^+(\bA^\top \bA)}} \log(1/\epsilon) \right),
\end{equation} and 
\begin{equation}
\label{IterCompleOption2}
\cO\left(\sqrt{\frac{\nu m}{\lambda_{\min}^+(\bA^\top \bA)}}\log(1/\epsilon)\right),
\end{equation} for the Option 1 and Option 2 for the parameters, respectively.

In the following derivation we compare the iteration complexity of the two methods.

\begin{lem}
\label{naoiskma}
Let matrices $\bC \in \R^{n \times n}$  and $\bC_i \in \R^{n \times n}$ where $i \in [m]$ be positive semidefinite, and satisfying $\sum_{i=1}^m \bC_i =\bC$. Then 
$$\sum_{i=1}^m \bC_i  \bC^\dagger \bC_i \preceq \bC.$$
\end{lem}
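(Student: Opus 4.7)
The natural approach is to reduce the inequality to a single \emph{projector} statement via a PSD factorization. Since each $\bC_i$ is symmetric positive semidefinite, I would first write $\bC_i = \bD_i \bD_i^\top$ for some $\bD_i \in \R^{n \times r_i}$ (e.g., via the eigen-decomposition or Cholesky-type factorization). Stacking these horizontally, set $\bD \eqdef [\bD_1, \bD_2, \dots, \bD_m]$, so that by hypothesis
\begin{equation*}
\bC \;=\; \sum_{i=1}^m \bC_i \;=\; \sum_{i=1}^m \bD_i\bD_i^\top \;=\; \bD\bD^\top.
\end{equation*}

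The key observation I would then use is the standard pseudoinverse identity $\bD^\top (\bD\bD^\top)^\dagger \bD = \bD^\top \bC^\dagger \bD$, which (as one checks via the SVD $\bD = \bU\bSigma \bV^\top$) equals the orthogonal projector onto $\range(\bD^\top)$ and therefore satisfies $\bD^\top \bC^\dagger \bD \preceq \bI$. Partitioning this product into blocks aligned with the columns of $\bD$, the diagonal blocks are precisely $\bD_i^\top \bC^\dagger \bD_i$, and since every principal submatrix of a matrix dominated by $\bI$ is itself dominated by $\bI$, I obtain the key estimate
\begin{equation*}
\bD_i^\top \bC^\dagger \bD_i \;\preceq\; \bI \qquad \text{for every } i \in [m].
\end{equation*}

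With this in hand, the proof finishes by a quadratic-form computation: for arbitrary $x \in \R^n$, setting $u_i \eqdef \bD_i^\top x$,
\begin{equation*}
x^\top \!\left(\sum_{i=1}^m \bC_i \bC^\dagger \bC_i\right)\! x
= \sum_{i=1}^m u_i^\top \bigl(\bD_i^\top \bC^\dagger \bD_i\bigr) u_i
\;\leq\; \sum_{i=1}^m \|u_i\|^2
= \sum_{i=1}^m x^\top \bD_i \bD_i^\top x
= x^\top \bC x,
\end{equation*}
which is exactly $\sum_i \bC_i \bC^\dagger \bC_i \preceq \bC$.

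The hard part (really the only non-routine step) is justifying that $\bD^\top \bC^\dagger \bD$ is an orthogonal projector when $\bC^\dagger$ is the Moore--Penrose pseudoinverse of a rank-deficient PSD matrix; this is where the paper's care with $\bC^\dagger$ rather than $\bC^{-1}$ matters. I would handle it by a short SVD computation, or alternatively by invoking the identity $(\bD\bD^\top)^\dagger = (\bD^\top)^\dagger \bD^\dagger$ together with $\bD^\top (\bD^\top)^\dagger = \Pi_{\range(\bD^\top)}$. Everything else is just bookkeeping.
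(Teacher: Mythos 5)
Your proof is correct, but it takes a genuinely different route from the paper's. The paper argues pointwise in $i$: from $\bC_i \preceq \bC$ it asserts $\bC_i^\dagger \succeq \bC^\dagger$, conjugates by $\bC_i$ to get $\bC_i = \bC_i\bC_i^\dagger\bC_i \succeq \bC_i\bC^\dagger\bC_i$, and sums over $i$. You instead factor $\bC_i = \bD_i\bD_i^\top$, stack the factors into $\bD$ with $\bC = \bD\bD^\top$, show $\bD^\top\bC^\dagger\bD$ is the orthogonal projector onto $\range{\bD^\top}$ (hence $\preceq \bI$), and read off the diagonal blocks before a routine quadratic-form computation. What each buys: the paper's argument is shorter, but yours is the more robust of the two, because the anti-monotonicity of the Moore--Penrose pseudoinverse that the paper invokes is \emph{false} as a full L\"{o}wner inequality once ranks differ --- take $\bC_1 = \mathrm{diag}(1,0)$ and $\bC = \bI$; then $\bC_1 \preceq \bC$ but $\bC_1^\dagger = \bC_1 \not\succeq \bI = \bC^\dagger$. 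The paper's conclusion survives only because the inequality is needed just on $\range{\bC_i}$, where the restricted monotonicity $y^\top\bC^\dagger y \leq y^\top\bC_i^\dagger y$ does hold (e.g.\ via $y^\top\bM^\dagger y = \max_x\{2x^\top y - x^\top\bM x\}$ for $y\in\range{\bM}$); your projector route never meets this issue. The one step you flag as non-routine --- that $\bD^\top(\bD\bD^\top)^\dagger\bD$ is a projector --- is handled exactly as you propose, either by the SVD of $\bD$ or by $(\bD\bD^\top)^\dagger=(\bD^\dagger)^\top\bD^\dagger$ together with the symmetry and idempotence of $\bD^\dagger\bD$; the block extraction and the final bookkeeping are correct as written.
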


\begin{proof}
From the definition of the matrices it holds that $\bC_i \preceq \bC$ for any $i \in [m]$. Using the properties of Moore-Penrose pseudoinverse,  this implies that
\begin{equation}
\label{cnaosklasdpoad}
\bC_i^\dagger \succeq \bC^\dagger.
\end{equation}
Therefore
\begin{equation}
\label{cbaiusjkalks}
\bC_i= \bC_i \bC_i^\dagger \bC_i \overset{\eqref{cnaosklasdpoad}}{\succeq} \bC_i \bC^\dagger \bC_i.
\end{equation} 
From the definition of the matrices by taking the sum over all $i \in [m]$ we obtain:
$$\bC= \sum_{i=1}^m \bC_i \overset{\eqref{cbaiusjkalks}}{\succeq} \sum_{i=1}^m \bC_i \bC^\dagger \bC_i,$$
which completes the proof.
\end{proof}

Let us now choose $\bC_i = \mA_{i:}^\top \mA_{i:}$ and $\bC=\mA^\top \mA$. Note that from their definition the matrices are positive semidefinite and satisfy $\sum_{i=1}^m \mA_{i:}^\top \mA_{i:}= \mA^\top \mA$.
Using Lemma~\ref{naoiskma} it is clear that:
$$\sum_{i=1}^m  \mA_{i:}^\top \mA_{i:} (\mA^\top \mA)^\dagger \mA_{i:}^\top \mA_{i:} \preceq \bA^\top \bA,$$
or in other words, for any vector $v \notin {\rm Null}{(\bA)}$ we set the inequality
$$ \frac{v^\top \left[\sum_{i=1}^m  \mA_{i:}^\top \mA_{i:} (\mA^\top \mA)^\dagger \mA_{i:}^\top \mA_{i:}\right] v}{v^\top [\bA^\top \bA] v} \leq 1.$$
Multiplying both sides by $m$, we set:
$$ \frac{v^\top \left[\sum_{i=1}^m  \mA_{i:}^\top \mA_{i:} (\mA^\top \mA)^\dagger \mA_{i:}^\top \mA_{i:}\right] v}{v^\top [\frac{\bA^\top \bA}{m}] v} \leq m.$$

Using the above derivation, it is clear from the definition of the parameter $\nu$ \eqref{thenu}, that $\nu \leq m.$
By combining our finding with the bounds already obtained in  \cite{ gower2018accelerated} for the parameter $\nu$, we have that:
\begin{equation}
\label{acsnklasda}
1 \leq \nu \leq \min \left\{ m, \frac{1}{\lambda_{\min}^+(\bW)} \right\}.
\end{equation}
Thus, by comparing the two iteration complexities of equations \eqref{IterCompleOption1} and \eqref{IterCompleOption2} it is clear that Option 2 for the parameters \cite{ gower2018accelerated} is always faster in theory than Option 1 \cite{liu2016accelerated}. To the best of our knowledge, such comparison of the two choices of the parameters for the AccRK was never presented before.

\subsubsection{Accelerated randomized gossip algorithms}
\label{sec:AccGossip}
Having presented the complexity analysis guarantees of AccRK for solving consistent linear systems with normalized matrices, let us now explain how the two options of AccRK behave as gossip algorithms when they are used to solve the linear system $\bA x=0$ where $\bA \in \R^{|\cE| \times n}$ is the normalized incidence matrix of the network. That is, each row $e=(i,j)$ of $\bA$ can be represented as $ (\bA_{e:})^\top= \frac{1}{\sqrt{2}}(e_i -e_j)$ where $e_i$ (resp.$e_j$) is the $i^{th}$ (resp. $j^{th}$) unit coordinate vector in $\R^{n}$.

By using this particular linear system, the expression $\frac{\bA_{i :} y^k -b_{i}}{\|\bA_{i :}\|^2} \bA_{i :}^ \top$ that appears in steps 7 and 8 of AccRK takes the following form when the row $e=(i,j) \in \cE$ is sampled:
$$\frac{\bA_{e :} y^k -b_{i}}{\|\bA_{e :}\|^2} \bA_{e :}^ \top \overset{b=0}{=} \frac{\bA_{e :} y^k}{\|\bA_{e :}\|^2} \bA_{e :}^ \top \overset{\text{form of A}}{=} \frac{ y_i^k - y_j^k}{2}(e_i-e_j).$$

Recall that with $\bL$ we denote the Laplacian matrix of the network. For solving the above AC system (see Definition~\ref{defACsystem}), the standard RK requires $\cO\left( \left(\frac{2m}{\lambda_{\min}^+(\bL)}\right)\log(1/\epsilon)\right)$ iterations to achieve expected accuracy $\epsilon>0$. To understand the acceleration in the gossip framework this should be compared to the $$\cO\left(m \sqrt{\frac{2}{\lambda_{\min}^+(\bL)}} \log(1/\epsilon)\right)$$ of AccRK (Option 1) and the $$\cO\left(\sqrt{\frac{2m\nu}{\lambda_{\min}^+(\bL)} } \log(1/\epsilon)\right)$$ of AccRK (Option 2).

Algorithm~\ref{alg:acceleratedNew} describes in a single framework how the two variants of AccRK of Section~\ref{AcceleratedVariants} behave as gossip algorithms when are used to solve the above linear system. Note that each node $\ell \in \cV$ of the network has two local registers to save the quantities $v^k_\ell$ and $x^k_\ell$. In each step using these two values every node $\ell \in \cV$ of the network (activated or not) computes the quantity $y^k_\ell =\alpha_k v^k_\ell + (1-\alpha_k) x^k_\ell$. Then in the $k^{th}$ iteration the activated nodes $i$ and $j$ of the randomly selected edge $e=(i,j)$ exchange their values $y^k_i$ and $y^k_j$ and update the values of $x^k_i$,  $x^k_j$ and $v^k_i$, $v^k_j$ as shown in Algorithm~\ref{alg:acceleratedNew}. The rest of the nodes use only their own $y^k_\ell$ to update the values of $v^k_i$ and $x^k_i$ without communicate with any other node.

The parameter $\lambda^+_{\min}(\bL)$ can be estimated by all nodes in a decentralized manner using the method described in~\cite{charalambous2016distributed}. In order to implement this algorithm, we assume that all nodes have synchronized clocks and that they know the rate at which gossip updates are performed, so that inactive nodes also update their local values. This may not be feasible in all applications, but when it is possible (e.g., if nodes are equipped with inexpensive GPS receivers, or have reliable clocks) then they can benefit from the significant speedup achieved.

\begin{algorithm}[!h]
\begin{algorithmic}[1]
\State \textbf{Data:} Matrix $\mA\in \R^{m\times n}$ (normalized incidence matrix); vector $b=0\in \R^m$
\State Choose $x^0\in \R^n$ and set $v^0 = x^0$
\State\textbf{ Parameters:} 
Evaluate the sequences of the scalars $\alpha_k, \beta_k ,\gamma_k$ following one of two possible options.
\For {$k = 0, 1, 2, \dots, K$}
\State Each node $\ell \in \cV$ evaluate $y^k_\ell = \alpha_k v^k_\ell + (1-\alpha_k) x^k_\ell$.
\State Pick an edge $e=(i,j)$ uniformly at random.
\State Then the nodes update their values as follows:
\begin{itemize}
\item The selected node $i$ and node $j$: 
$$x^{k+1}_i = x^{k+1}_j  = (y^k_i+y^k_j)/2$$ 
$$v^{k+1}_i = \beta_k v^k_i + (1-\beta_k) y^k_i -\gamma_k (y^k_i-y^k_j)/2$$
$$v^{k+1}_j = \beta_k v^k_j + (1-\beta_k) y^k_j -\gamma_k (y^k_j-y^k_i)/2$$
\item Any other node $\ell \in \cV$:
$$x^{k+1}_\ell = y^k_\ell \quad,\quad v^{k+1}_\ell = \beta_k v^k_\ell + (1-\beta_k) y^k_\ell$$
\end{itemize}
\EndFor
\end{algorithmic}
\caption{Accelerated Randomized Gossip Algorithm (AccGossip)}
\label{alg:acceleratedNew}
\end{algorithm}

\section{Dual Randomized Gossip Algorithms}
\label{DualBlock}
An important tool in optimization literature is duality. In our setting, instead of solving the original minimization problem (primal problem) one may try to develop dual in nature methods that have as a goal to directly solve the dual maximization problem.  Then the primal solution can be recovered through the use of optimality conditions and the development of an affine mapping between the two spaces (primal and dual).

In this section, using existing dual methods and the connection already established between the two areas of research (methods for linear systems and gossip algorithms), we present a different viewpoint that allows the development of novel dual randomized gossip algorithms. 

Without loss of generality we focus on the case of $\bB=\bI$ (no weighted average consensus). For simplicity, we formulate the AC system as the one with the incidence matrix of the network ($\bA=\bQ$) and focus on presenting the distributed nature of dual randomized gossip algorithms with no momentum. While we focus only on no-momentum protocols, we note that accelerated variants of the dual methods could be easily obtained using tools from Section~\ref{AccelerateGossip}.

\subsection{Dual problem and SDSA}
As we have already presented in Section~\ref{BestaprooximationSection_INtro}, the Lagrangian dual of the best approximation problem \eqref{BestApproximation_IntroThesis} is the (bounded) unconstrained concave quadratic maximization problem:
\begin{equation}\label{eq:Dual}
\max_{y\in \R^m} D(y) \eqdef (b-\bA x^0)^\top y - \frac{1}{2}\|\bA^\top y\|_{\bB^{-1}}^2.
\end{equation}

A direct method for solving the dual problem is Stochastic Dual Subspace Accent (SDSA), a randomized iterative algorithm first proposed in \cite{gower2015stochastic}, which updates the dual vectors $y^k$ as follows: 
\begin{equation}
\label{alg:dual}
 y^{k+1}= y^k + \omega \bS_k \left( \bS_k^\top \bA \bB^{-1} \bA^\top  \bS_k \right)^\dagger \bS_k^\top \left( b-\bA(x^0+ \bB^{-1} \bA^\top y^k) \right).
\end{equation}

In Section~\ref{BestaprooximationSection_INtro} we showed that the iterates $\{x^k\}_{k\geq0}$ of the sketch and project method (Algorithm~\ref{FullSkecth}) can be arised as affine images of the iterates $\{y^k\}_{k\geq0}$ of the dual method \eqref{alg:dual} through the mapping: 
\begin{equation}
\label{eq:dual-corresp}
x^{k} = \phi(y^k) =  x^0 + \bB^{-1} \mA^\top y^k, 
\end{equation}
and we provided a proof for the linear convergence of SDSA (see Theorem~\ref{TheoremSDSA_IntroThesis}). Recall that SDSA and the sketch and project method (Algorithm~\ref{FullSkecth}) converge to a solution of the dual problem and primal problem, respectively, with exactly the same convergence rate. 

Let us choose $\bB=\bI$. In the special case that the random matrix $\bS_k$ is chosen randomly from the set of unit coordinate/basis vectors in $\R^m$,  the dual method \eqref{alg:dual} is the randomized coordinate descent \cite{leventhal2010randomized,  richtarik2014iteration}, and the corresponding primal method is RK \eqref{RK}.  More generally, if $\bS_k$ is a random column submatrix of the $m \times m$ identity matrix, the dual method is the randomized Newton method \cite{qu2015sdna}, and the corresponding primal method is RBK \eqref{RBK}. Next we shall describe the more general block case in more detail.

\subsection{Randomized Newton method as a dual gossip algorithm}
\label{RNMdual}
In this subsection we bring a new insight into the randomized gossip framework by presenting how the dual iterative process that is associated to RBK method solves the AC problem with $\bA=\bQ$ (incidence matrix). Recall that the right hand side of the linear system is $b=0$. For simplicity, we focus on the case of $\bB=\bI$ and $\omega=1$.

Under this setting ($\bA=\bQ$, $\bB=\bI$ and $\omega=1$) the dual iterative process \eqref{alg:dual} takes the form:
\begin{equation}
\label{SDSA}
 y^{k+1}= y^k - \bI_{C:}(\bI_{C:}^\top \bQ \bQ^\top \bI_{C:})^\dagger \bQ(x^0+\bQ^\top y^k),
\end{equation}
and from Theorem~\ref{TheoremSDSA_IntroThesis} converges to a solution of the dual problem as follows:
$$\Exp \left[D(y^*)-D(y^k) \right] \leq \left[1 - \lambda_{\min}^+ \left( \Exp\left[\bQ_{C:}^\top (\bQ_{C:}\bQ_{C:}^\top)^\dagger \bQ_{C:}\right] \right) \right]^k \left[D(y^*)-D(y^0)\right].$$
Note that the convergence rate is exactly the same with the rate of the RBK under the same assumptions (see \eqref{anisojxalksda}).

This algorithm is a randomized variant of the Newton method applied to the problem of maximizing the quadratic function $D(y)$ defined in \eqref{eq:Dual}. Indeed, in each iteration we perform the update $y^{k+1} = y^k +\bI_{C:} \lambda^k$, where $\lambda^k$ is chosen greedily so that $D(y^{k+1})$ is maximized. In doing so, we invert a random principal submatrix of the Hessian of $D$, whence the name.  

 \textit{Randomized Newton Method} (RNM) was first proposed by Qu et al.\ \cite{qu2015sdna}. RNM was first analyzed as an algorithm for minimizing \emph{smooth strongly convex functions}. In \cite{gower2015stochastic} it was also extended to the case of a \emph{smooth but weakly convex quadratics}.  This method was not previously associated with any gossip algorithm.

The most important distinction of RNM compared to  existing gossip algorithms is that it operates with values that are associated to the {\em edges} of the network. To the best of our knowledge, it is the first {\em randomized dual gossip method}. In particular, instead of iterating over values  stored at  the nodes, RNM uses these values to update ``dual weights'' $y^k \in \R^m$ that correspond to the edges $\cE$ of the network. However, deterministic dual  distributed averaging algorithms were proposed before  \cite{rabbat2005generalized, ghadimi2014admm}. Edge-based methods have also been proposed before; in particular  in \cite{wei20131} an asynchronous distributed ADMM algorithm presented for solving the more general consensus optimization problem with convex functions. 

\textbf{Natural Interpretation.} In  iteration $k$, \emph{RNM} (Algorithm~\eqref{SDSA}) executes the following steps:
1) Select a random set of edges $\cS_k \subseteq \cE$, 2)  Form a subgraph $\cG_k$ of $\cG$ from the selected edges, 3) The values of the edges in each connected component of $\cG_k$ are updated: their new values are a linear combination of the private values of the nodes belonging to the connected component and of the adjacent edges of their connected components. (see also example of Figure~\ref{figureRNM}).

\textbf{Dual Variables as Advice.} The weights $y^k$ of the edges  have a natural interpretation as \textit{advice} that each selected node receives from the network in order to update its value (to one that will eventually converge to the desired average).

Consider RNM performing the $k^{th}$ iteration and let $\cV_r$ denote the set of nodes of the selected connected component that node $i$ belongs to. Then, from Theorem~\ref{TheoremRBK} we know that $x_i^{k+1}=\sum_{i \in \cV_r} x_i^k / |\cV_r|$. Hence, by using \eqref{eq:dual-corresp}, we obtain the following identity:
\begin{equation}
\label{advice}
\textstyle
 (\bA^\top y^{k+1})_i=\frac{1}{|\cV_r|} \sum_{i \in \cV_{r}}(c_i+(\bA^\top y^{k})_i)- c_i.
\end{equation}
Thus in each step $(\bA^\top y^{k+1})_i$ represents the term (advice) that must be added to the initial value $c_i$ of node $i$ in order to update its value to the average of the values of the nodes of the connected component  $i$ belongs to.

\textbf{Importance of the dual perspective:} It was shown  in  \cite{qu2015sdna} that when RNM (and as a result, RBK, through the affine mapping \eqref{eq:dual-corresp}) is viewed as a family of methods indexed by the size $\tau = |\cS|$ (we choose $\cS$ of fixed size in the experiments),   then $\tau \to 1/(1-\rho)$, where $\rho$ is defined in \eqref{RateRho}, decreases {\em superlinearly} fast in $\tau$. That is, as $\tau$ increases by some factor, the iteration complexity  drops by a factor that is at least as large. Through preliminary numerical experiments in Section~\ref{cakslas} we experimentally show that this is true for the case of AC systems as well.
\begin{figure}[htb]
\begin{minipage}[b]{1.0\linewidth}
  \centering
  \centerline{\includegraphics[scale=0.4]{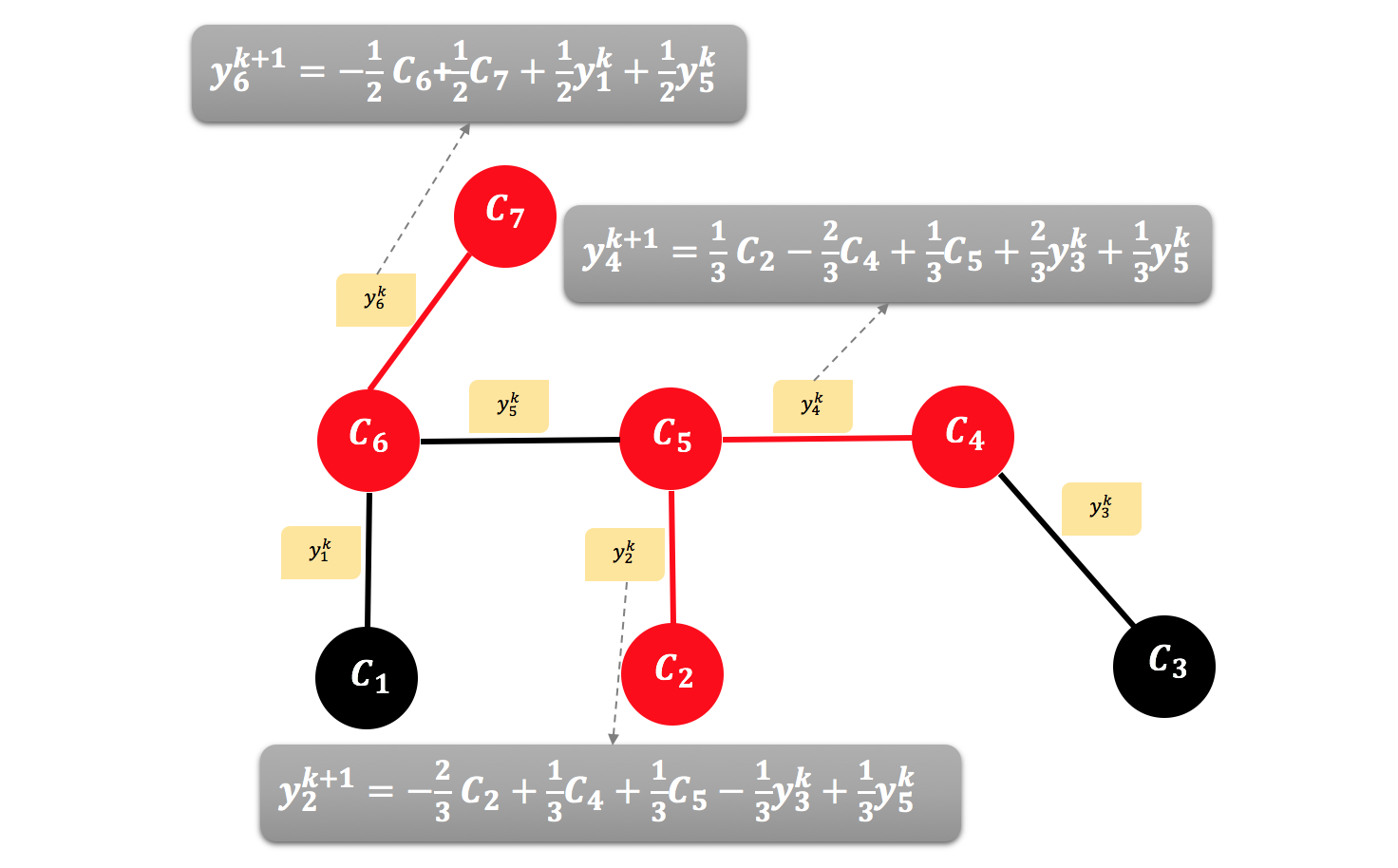}}
%  \vspace{2.0cm}
  \caption{\footnotesize Example of how the RNM method works as gossip algorithm. In this specific case 3 edges are selected and form a sub-graph with two connected components. Then the values at the edges update their values using the private values of the nodes belonging to their connected component and the values associate to the adjacent edges of their connected components.}
  \label{figureRNM}
\end{minipage}
\end{figure}

\section{Further Connections Between Methods for Solving Linear Systems and Gossip Algorithms}
\label{FurtherConnections}
In this section we highlight some further interesting connections between linear systems solvers and gossip protocols for average consensus:
\begin{itemize}
\item \textbf{Eavesdrop gossip as special case of Kaczmarz-Motzkin method.}
In \cite{ustebay2008greedy} greedy gossip with eavesdropping (GGE), a novel randomized gossip algorithm for distributed computation of the average consensus problem was proposed and analyzed. 
In particular it was shown that that greedy updates of GGE lead to rapid convergence. In this protocol, the greedy updates are made possible by exploiting the broadcast nature of wireless communications. During the operation of GGE, when a node decides to gossip, instead of choosing one of its neighbors at random, it makes a greedy selection, choosing the node which has the value most different from its own. In particular the method behaves as follows:

At the $k^{th}$ iteration of GGE, a node $i_k$ is chosen uniformly at random from $[n]$. Then, $i_k$ identifies a neighboring node $j_k \in \cN_i$ satisfying:
$$j_k \in \max_{j \in \cN_i} \left\{ \frac{1}{2} (x_i^k-x_j^k)^2 \right\}$$
which means that the selected node $i_k$ identifies a neighbor that currently has the most different value from its own. This choice is possible because each node $i\in \cV$  maintains not only its own local variable $x_i^k$, but also a copy of the current values at its neighbors $x_j^k$ for $j \in \cN_i$. In the case that node $i_k$ has multiple neighbors whose values are all equally (and maximally) different from  its current value, it chooses one of these neighbors at random. Then node $i_k$ and $j_k$ update their values to:
$$x_i^{k+1}=x_j^{k+1}=\frac{1}{2} (x_i^k+x_j^k).$$

In the area of randomized methods for solving large linear system there is one particular method,  the Kaczmarz-Motzkin algorithm \cite{de2017sampling, haddock2018motzkin} that can work as gossip algorithm with the same update as the GGE when is use to solve the homogeneous linear system with matrix the Incidence matrix of the network.

Update rule of Kaczmarz-Motzkin algorithm (KMA) \cite{de2017sampling, haddock2018motzkin}:
\begin{enumerate}
\item Choose sample of $d_k$ constraints, $P_k$, uniformly at random from among the rows of matrix $\bA$.
\item From among these $d_k$ constraints, choose $t_k=\text{argmax}_{i \in P_k} \bA_{i:} x^k-b_i.$
\item Update the value: $x^{k+1}=x^k - \frac{\bA_{t_k :} x^k -b_{i}}{\|\bA_{t_k :}\|^2} \bA_{t_k :}^ \top.$
\end{enumerate}

It is easy to verify that when the Kaczmarz-Motzkin algorithm is used for solving the AC system with $\bA=\bQ$ (incidence matrix) and in each step of the method the chosen constraints $d_k$ of the linear system correspond to edges attached to one node it behaves exactly like the GGE. From numerical analysis viewpoint an easy way to choose the constraints $d_k$ that are compatible to the desired edges is in each iteration to find the indexes of the non-zeros of a uniformly at random selected column (node) and then select the rows corresponding to these indexes.

Therefore, since GGE \cite{ustebay2008greedy} is a special case of the KMA (when the later applied to special AC system with Incidence matrix) it means that we can obtain the convergence rate of GGE by simply use the tight conergence analysis presented in \cite{de2017sampling, haddock2018motzkin} \footnote{Note that the convergence theorems of \cite{de2017sampling, haddock2018motzkin} use $d_k=d$. However, with a small modification in the original proof the theorem can capture the case of different $d_k$.}. In \cite{ustebay2008greedy} it was mentioned that analyzing the convergence behavior of GGE is non-trivial and not an easy task. By establishing the above connection the convergence rates of GGE can be easily obtained as special case of the theorems presented in \cite{de2017sampling}. 

In Section~\ref{AccelerateGossip} we presented provably accelerated variants of the pairwise gossip algorithm and of its block variant. Following the same approach one can easily develop accelerated variants of the GGE using the recently proposed analysis for the accelerated Kaczmarz-Motzkin algorithm presented in \cite{morshed2019accelerated}. 

\item \textbf{Inexact Sketch and Project Methods:}

In Chapter~\ref{ChapterInexact}, several inexact variants of the sketch and project method \eqref{FullSkecth} have been proposed. As we have already mentioned the sketch and project method is a two step procedure algorithm where first the sketched system is formulated and then the last iterate $x^k$ is \textit{exactly} projected into the solution set of the sketched system. In Chapter~\ref{ChapterInexact}, we replace the exact projection with an inexact variant and we suggest to run a different algorithm (this can be the sketch and project method itself) in the sketched system to obtain an approximate solution.  It was shown that in terms of time the inexact updates can be faster than their exact variants.

In the setting of randomized gossip algorithms for the AC system with Incidence matrix ($\bA=\bQ$) , $\bB=\bI$ and $\omega =1$ a variant of the inexact sketch and project method will work as follows (similar to the update proved in Theorem~\ref{TheoremRBK}):
\begin{enumerate}
\item Select a random set of edges $C \subseteq \cE$.
\item Form subgraph $\cG_k$ of $\cG$ from the selected edges.
\item Run the pairwise gossip algorithm of \cite{boyd2006randomized} (or any variant of the sketch and project method) on the subgraph $\cG_k$ until an accuracy $\epsilon$ is achieved (reach a neighborhood of the exact average).
\end{enumerate}

\item \textbf{Non-randomized gossip algorithms as special cases of Kaczmarz methods:}

In the gossip algorithms literature there are efficient protocols that are not randomized\cite{mou2010deterministic, he2011periodic, liu2011deterministic, yu2017distributed}. Typically, in these algorithms the pairwise exchanges between nodes it happens in a deterministic, such as predefined cyclic, order. For example, $T$-periodic gossiping is a protocol which stipulates that each node must interact with each of its neighbours exactly once every $T$ time units. It was shown that under suitable connectivity assumptions of the network $\mathcal{G}$, the $T$-periodic gossip sequence will converge at a rate determined by the magnitude of the second largest eigenvalue of the stochastic matrix determined by the sequence of pairwise exchanges which occurs over a period. It has been shown that if the underlying graph is a tree, the mentioned eigenvalue is constant for all possible $T$-periodic gossip protocols.

In this work we focus only on randomized gossip protocols. However we speculate that the above non-randomized gossip algorithms would be able to express as special cases of popular non-randomized projection methods for solving linear systems   \cite{popa2017convergence, nutini2016convergence, dutight}.  Establishing connections like that is an interesting future direction of research and can possibly lead to the development of novel block and accelerated variants of many non-randomized gossip algorithms, similar to the protocols we present in Sections~\ref{skecthsection} and \ref{AccelerateGossip}.

\end{itemize}

\section{Numerical Evaluation}
\label{experimentsGossip}
In this section, we empirically validate our theoretical results and evaluate the performance of the proposed randomized gossip algorithms. The section is divided into four main parts, in each of which we highlight a different aspect of our contributions. 

In the first experiment, we numerically verify the linear convergence of the Scaled RK algorithm (see equation \eqref{scaledRK})  for solving the weighted average consensus problem presented in Section~\ref{weightedAC}. In the second part, we explain the benefit of using block variants in the gossip protocols where more than two nodes update their values in each iteration (protocols presented in Section~\ref{BlockGossip}).  In the third part, we explore the performance of the faster and provably accelerated gossip algorithms proposed in Section~\ref{AccelerateGossip}. In the last experiment, we numerically show that relaxed variants of the pairwise randomized gossip algorithm converge faster than the standard randomized pairwise gossip with unit stepsize (no relaxation). This gives a specific setting where the phenomenon of over-relaxation of iterative methods for solving linear systems is beneficial.

In the comparison of all gossip algorithms we use the relative error measure $\|x^k-x^*\|_{\bB}^2 / \|x^0-x^*\|_{\bB}^2 $ where $x^0 =c \in \R^n$ is the starting vector of the values of the nodes and matrix $\bB$ is the positive definite diagonal matrix with weights in its diagonal (recall that in the case of standard average consensus this can be simply $\bB=\bI$). Depending on the experiment, we choose the values of the starting vector $c \in \R^n$ to follow either a Gaussian distribution or uniform distribution or to be integer values such that $c_i=i \in \R$.  In the plots, the horizontal axis represents the number of iterations except in the figures of subsection~\ref{cakslas}, where the horizontal axis represents the block size.

In our implementations we use three popular graph topologies from the area of wireless sensor networks. These are the cycle (ring graph), the 2-dimension grid and the random geometric graph (RGG) with radius $r=\sqrt{\log(n)/n}$. In all experiments we formulate the average consensus problem (or its weighted variant) using the incidence matrix. That is, $\bA=\bQ$ is used as the AC system. Code was written in Julia 0.6.3. 

\subsection{Convergence on weighted average consensus}
\label{Weightexperiments}
As we explained in Section~\ref{skecthsection}, the sketch and project method (Algorithm~\ref{FullSkecth}) can solve the more general weighted AC problem. In this first experiment we numerically verify the linear convergence of the Scaled RK algorithm \eqref{scaledRK} for solving this problem in the case of $\bB=\bD$.  That is, the matrix $\bB$ of the weights is the degree matrix $\bD$ of the graph ($\bB_{ii}=d_i$,  $\forall i \in [n]$). In this setting the exact update rule of the method is given in equation \eqref{ScRKwithQ}, where in order to have convergence to the weighted average the chosen nodes are required to share not only their private values but also their weight $\bB_{ii}$ (in our experiment this is equal to the degree of the node $d_i$).  In this experiment the starting vector of values $x^0=c \in \R^n$ is a Gaussian vector. The linear convergence of the algorithm is clear in Figure~\ref{scaledRKFigure}.

\begin{figure}[t]
\centering
%\vspace{6pt}
\begin{subfigure}{.3\textwidth}
  \centering
  \includegraphics[width=1\linewidth]{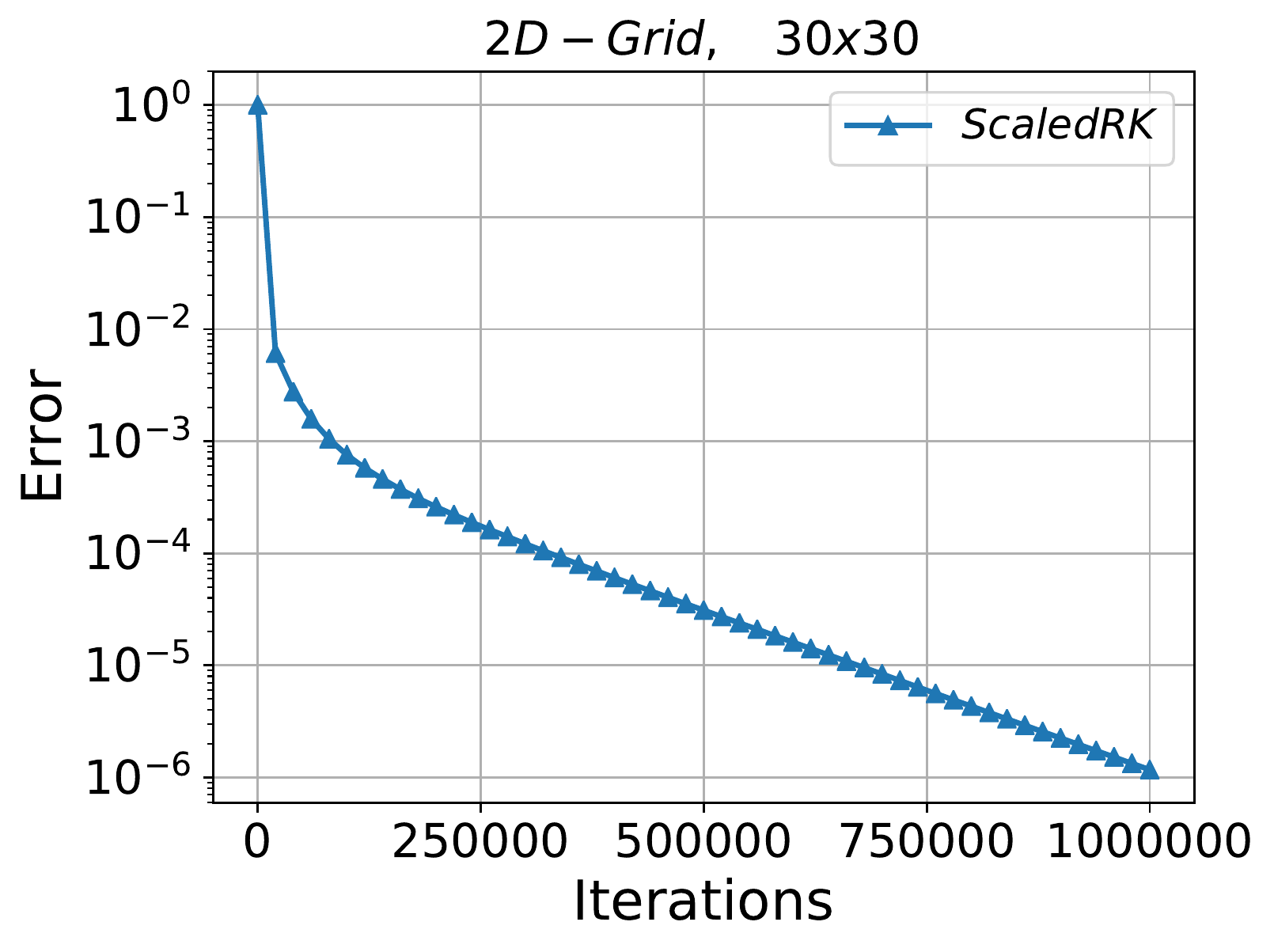}
\end{subfigure}%
\begin{subfigure}{.3\textwidth}
  \centering
  \includegraphics[width=1\linewidth]{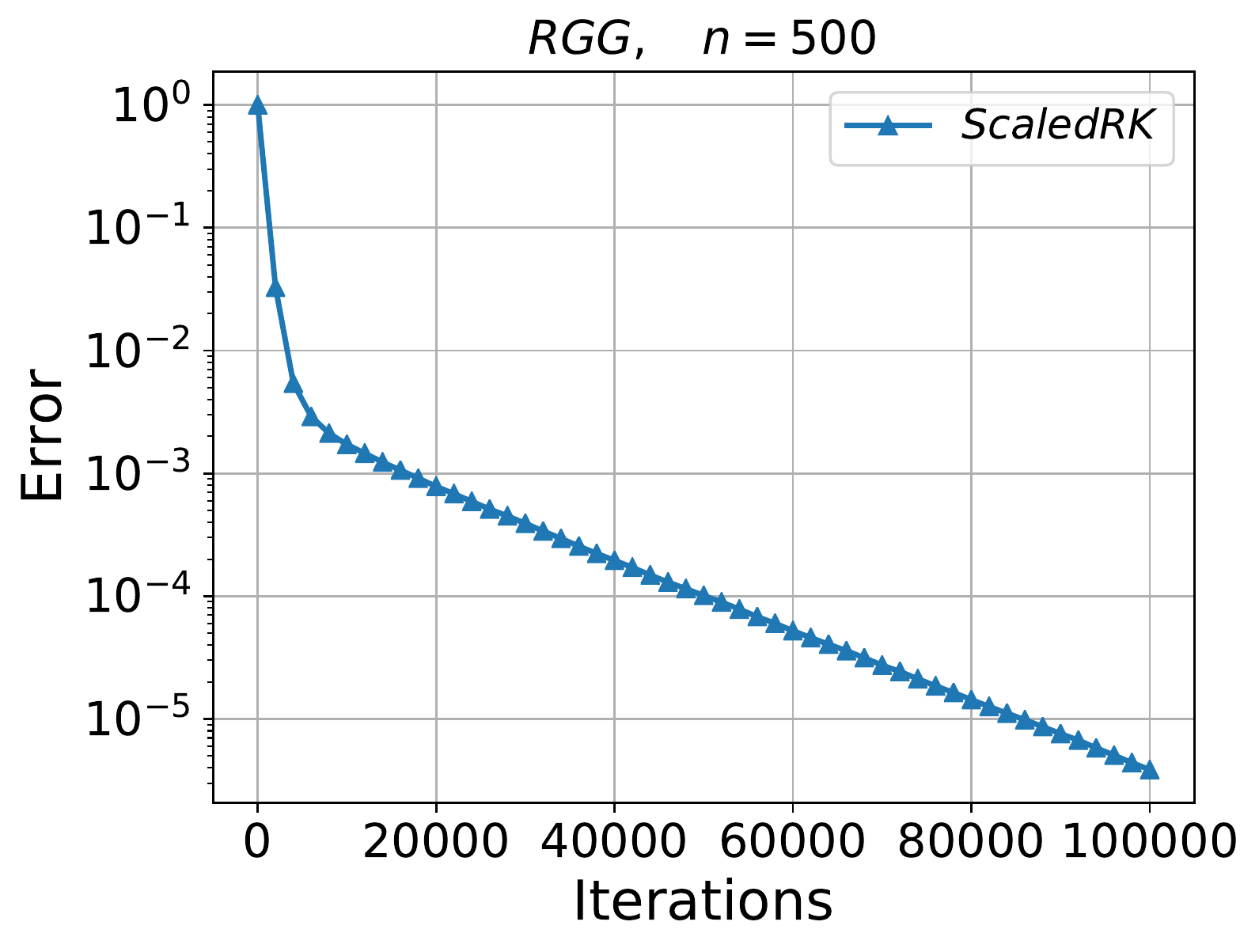}
\end{subfigure}
\begin{subfigure}{.3\textwidth}
  \centering
  \includegraphics[width=1\linewidth]{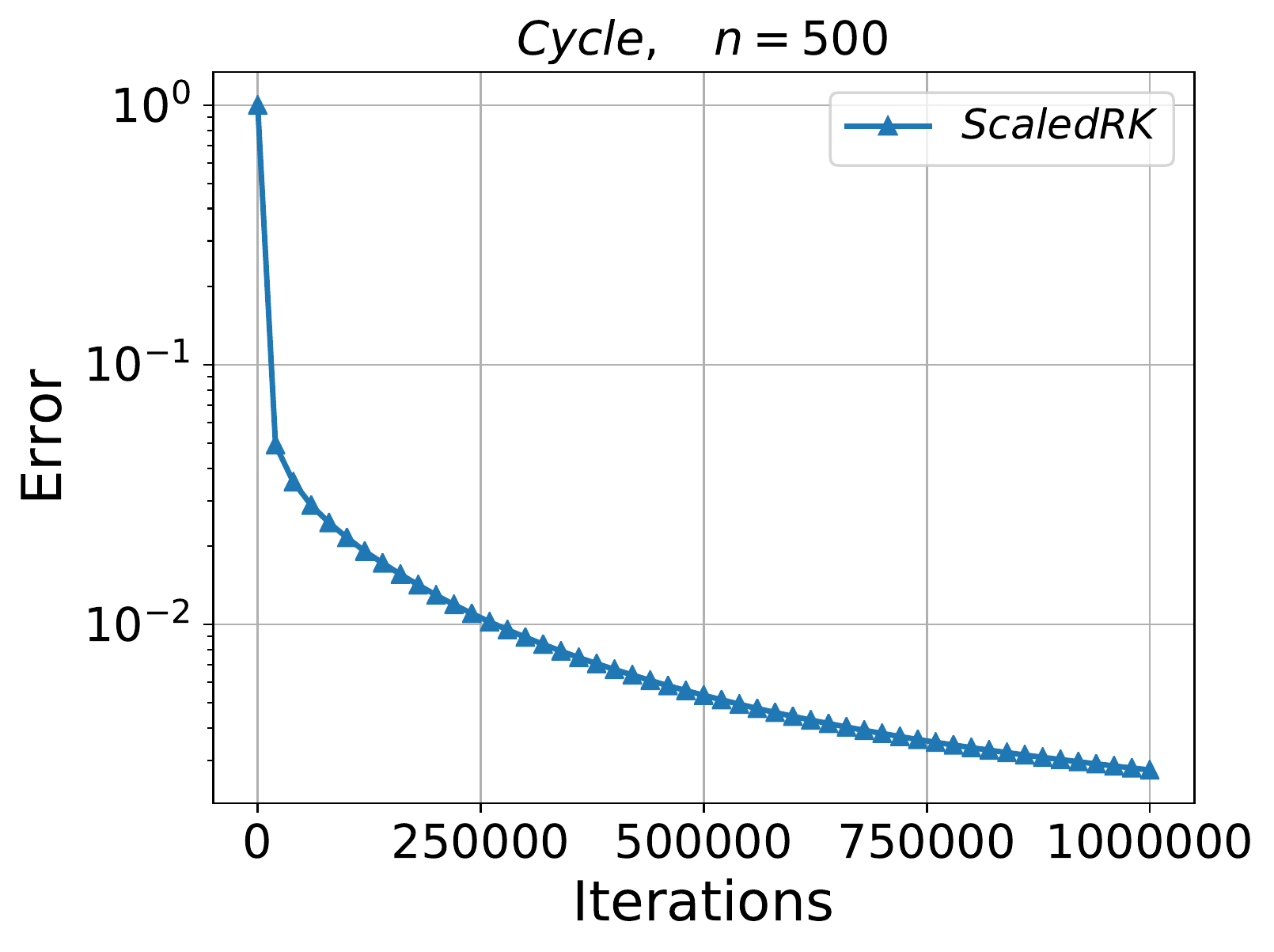}
\end{subfigure}\\
\caption{\footnotesize Performance of ScaledRK  in a 2-dimension grid, random geometric graph (RGG) and a cycle graph for solving the weighted average consensus problem. The weight matrix is chosen to be $\bB=\bD$, the degree matrix of the network. The $n$ in the title of each plot indicates the number of nodes of the network. For the grid graph this is $n \times n$.}
\label{scaledRKFigure}
\end{figure}

\subsection{Benefit of block variants}
\label{cakslas}
We devote this experiment to evaluate the performance of the randomized block gossip algorithms presented in Sections \ref{BlockGossip} and \ref{DualBlock}.  In particular, we would like to highlight the benefit of using larger block size in the update rule of randomized Kaczmarz method and as a result through our established connection of the randomized pairwise gossip algorithm \cite{boyd2006randomized} (see equation \eqref{pairwiseUpdate}).

Recall that in Section~\ref{DualBlock} we show that both RBK and RNM converge to the solution of the primal and dual problems respectively with the same rate and that their iterates are related via a simple affine transform \eqref{eq:dual-corresp}. In addition note that an interesting feature of the RNM \cite{qu2015sdna}, is that when the method viewed as algorithm indexed by the size $ \tau=|C|$, it enjoys superlinear speedup in $\tau$. That is, as $\tau$ (block size) increases by some factor, the iteration complexity drops by a factor that is at least as large (see Section~\ref{RNMdual}). Since RBK and RNM share the same rates this property naturally holds for RBK as well.

We show that for a  connected network $\cG$, the complexity improves superlinearly in $\tau = |C|$, where $C$ is chosen as a subset of $\cE$ of size $\tau$, uniformly at random (recall the in the update rule of RBK the random matrix is $\bS=\bI_{:C}$). Similar to the rest of this section in comparing the number of iterations for different values of $\tau$, we use the relative error  $\varepsilon=\|x^k-x^*\|^2 / \|x^0-x^*\|^2$. We let $x^0_i=c_i=i$ for each node $i \in \cV$ (vector of integers). We run RBK until the relative error becomes smaller than $0.01$.  The blue solid line in the figures denotes the actual number of iterations (after running the code) needed in order to achieve $\varepsilon\leq 10 ^{-2}$ for different values of $\tau$. The green dotted line represents the function $f(\tau)\eqdef \frac{\ell}{\tau}$, where $\ell$ is the number of iterations of RBK with $\tau=1$ (i.e., the pairwise gossip algorithm).  The green line depicts linear speedup;  the fact that the blue line (obtained through  experiments) is below the green line points to superlinear speedup.  In this experiment we use the Cycle graph with $n=30$ and $n=100$ nodes (Figure~\ref{fig:test3}) and  the $4 \times 4$ two dimension grid graph (Figure~\ref{fig:test4}). Note that, when $|C|=m$ the convergence rate of the  method becomes $\rho=0$ and as a result it converges in one step.

\begin{figure}[!h]
\label{RingGraph}
\centering
\begin{subfigure}{.3\textwidth}
  \centering
  \includegraphics[width=1\linewidth]{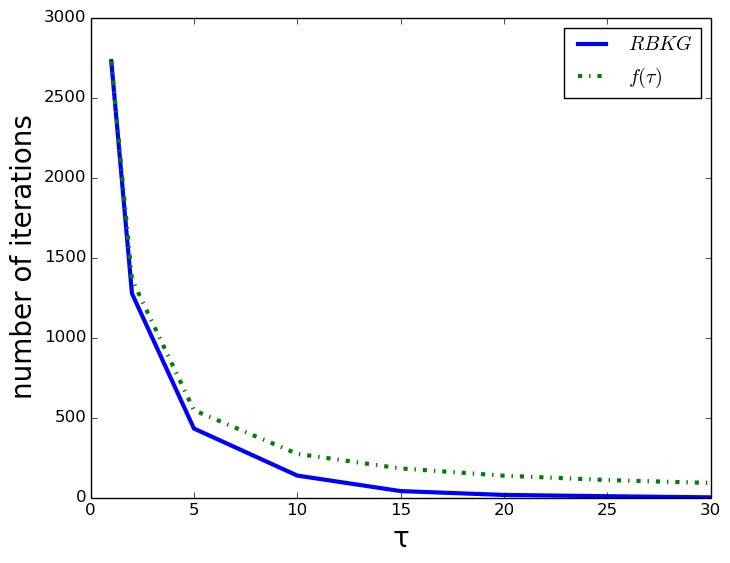}
  \caption{\footnotesize Cycle, $n=30$}
  \label{fig:sub1}
\end{subfigure}%
\begin{subfigure}{.3\textwidth}
  \centering
  \includegraphics[width=1\linewidth]{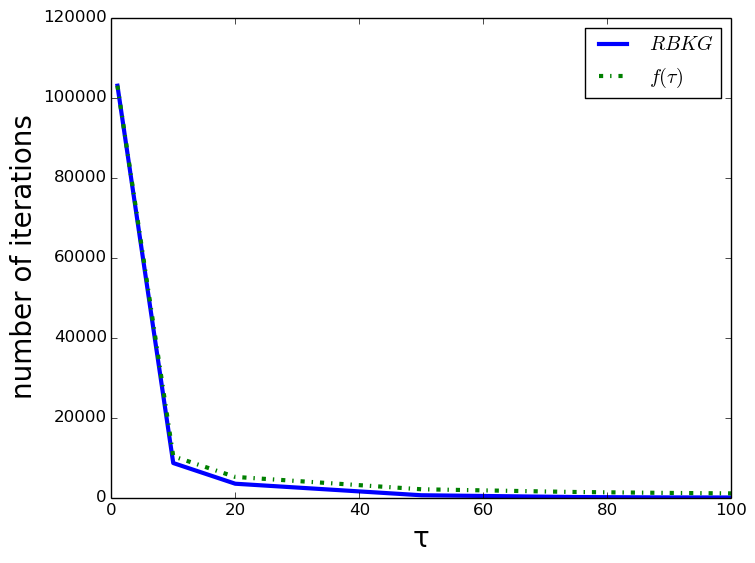}
  \caption{\footnotesize Cycle, $n=100$}
  \label{fig:sub2}
\end{subfigure}
\caption{\footnotesize Superlinear speedup of RBK on cycle graphs.}
\label{fig:test3}
\end{figure}
\begin{figure}[!h]
\label{gridGraph}
\centering
\begin{subfigure}{.3\textwidth}
  \centering
  \includegraphics[width=1\linewidth]{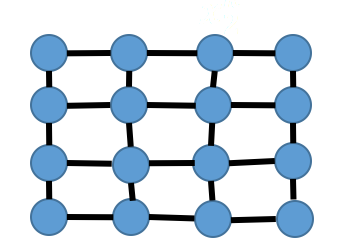}
  \caption{\footnotesize 2D-Grid, $4 \times 4$}
  \label{fig:sub3}
\end{subfigure}%
\begin{subfigure}{.3\textwidth}
  \centering
  \includegraphics[width=1\linewidth]{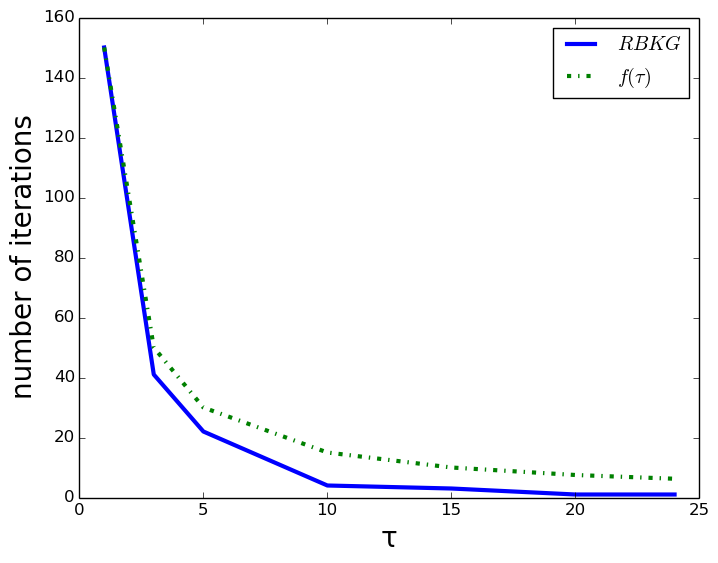}
  \caption{\footnotesize Speedup in $\tau$}
  \label{fig:sub4}
\end{subfigure}
\caption{\footnotesize Superlinear speedup of RBK on a $4 \times 4$ two dimension grid graph.}
\label{fig:test4}
\end{figure}

\subsection{Accelerated gossip algorithms}
We devote this subsection to experimentally evaluate the performance of the proposed accelerated gossip algorithms: mRK (Algorithm~\ref{RKmomentum}),  mRBK (Algorithm~\ref{RBKmomentum}) and AccGossip with the two options of the parameters (Algorithm~\ref{alg:acceleratedNew}). In particular we perform four experiments. In the first two we focus on the performance of the mRK and how the choice of stepsize (relaxation parameter) $\omega$ and heavy ball momentum parameter $\beta$ affect the performance of the method. In the next experiment we show that the addition of heavy ball momentum can be also beneficial for the performance of the block variant mRBK. In the last experiment we compare the standard pairwise gossip algorithm (baseline method) from \cite{boyd2006randomized}, the mRK and the AccGossip and show that the probably accelerated gossip algorithm, AccGossip outperforms the other algorithms and converge as predicted from the theory with an accelerated linear rate.

\subsubsection{Impact of momentum parameter on mRK}
As we have already presented in the standard pairwise gossip algorithm (equation \eqref{pairwiseUpdate}) the two selected nodes that exchange information update their values to their exact average while all the other nodes remain idle. In our framework this update can be cast as special case of mRK when $\beta=0$ and $\omega=1$.

In this experiment we keep the stepsize fixed and equal to $\omega=1$ which means that the pair of the chosen nodes update their values to their exact average and we show that by choosing a suitable momentum parameter $\beta \in (0,1)$ we can obtain faster convergence to the consensus for all networks under study. The momentum parameter $\beta$ is chosen following the suggestions made in Chapter~\ref{ChapterMomentum} for solving general consistent linear systems. See Figure~\ref{mRKomega1} for more details. It is worth to point out that for all networks under study the addition of a heavy ball momentum term is beneficial in the performance of the method.

\begin{figure}[t]
\centering
\begin{subfigure}{.3\textwidth}
  \centering
  \includegraphics[width=1\linewidth]{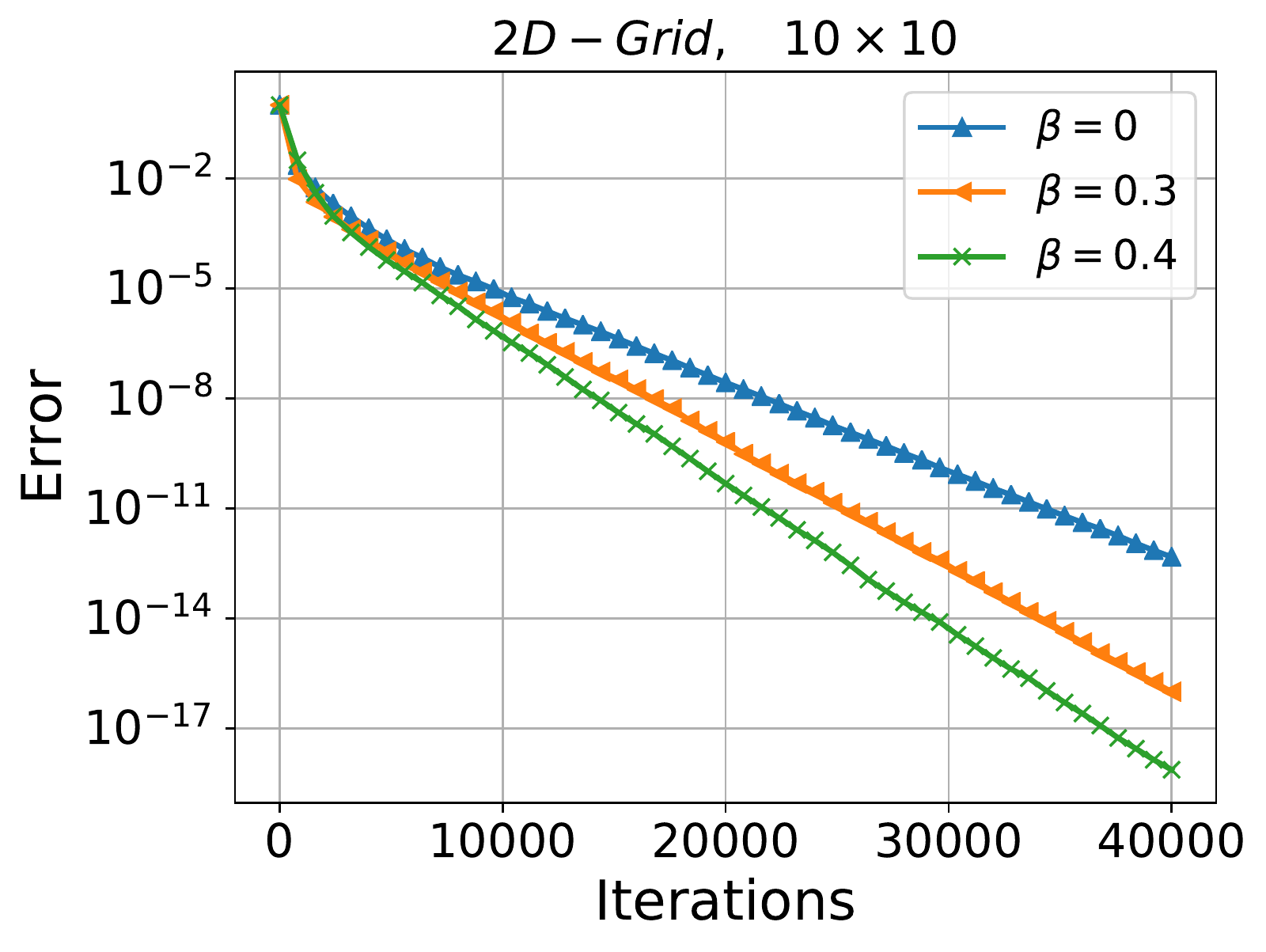}
\end{subfigure}%
\begin{subfigure}{.3\textwidth}
  \centering
  \includegraphics[width=1\linewidth]{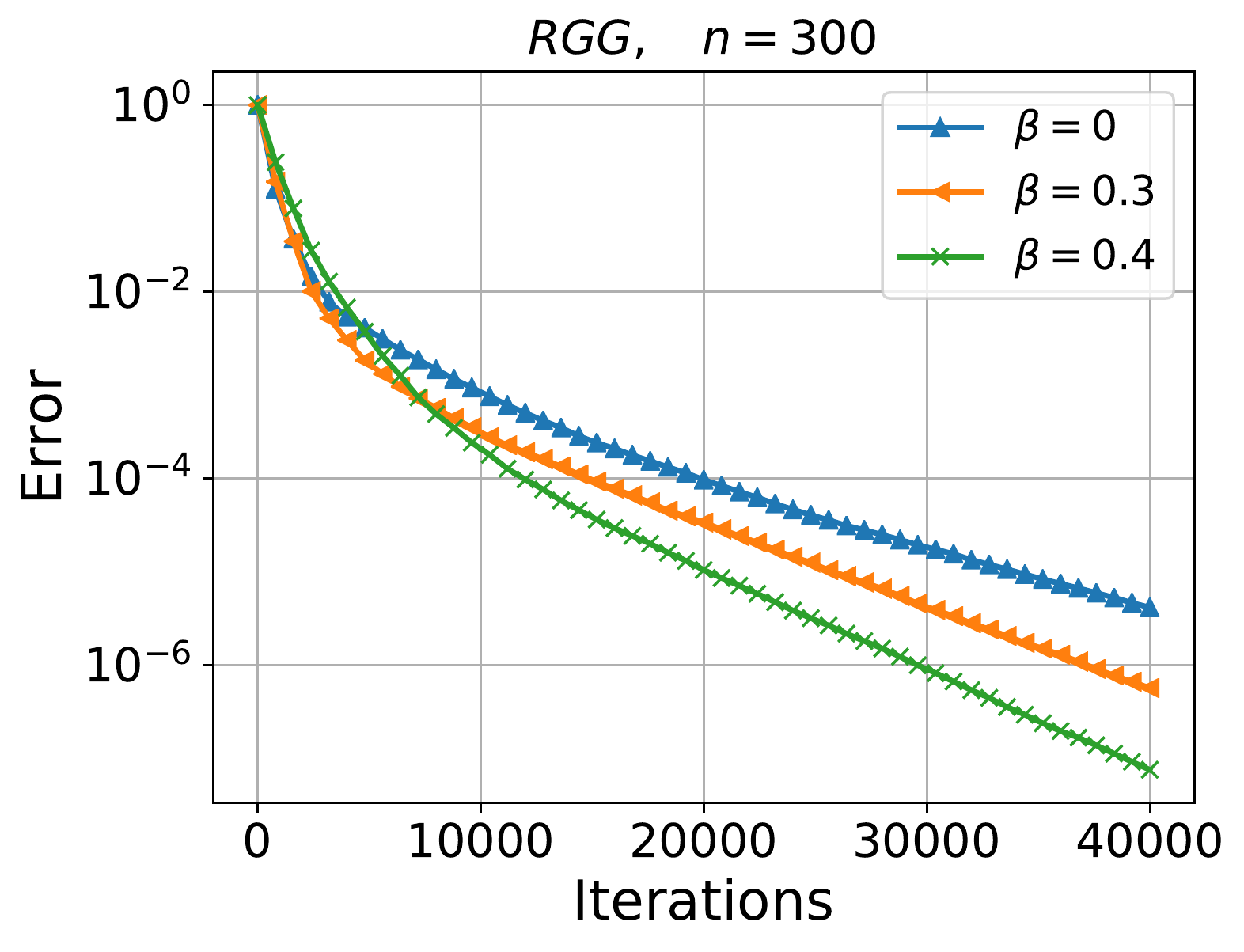}
\end{subfigure}%
\begin{subfigure}{.3\textwidth}
  \centering
  \includegraphics[width=1\linewidth]{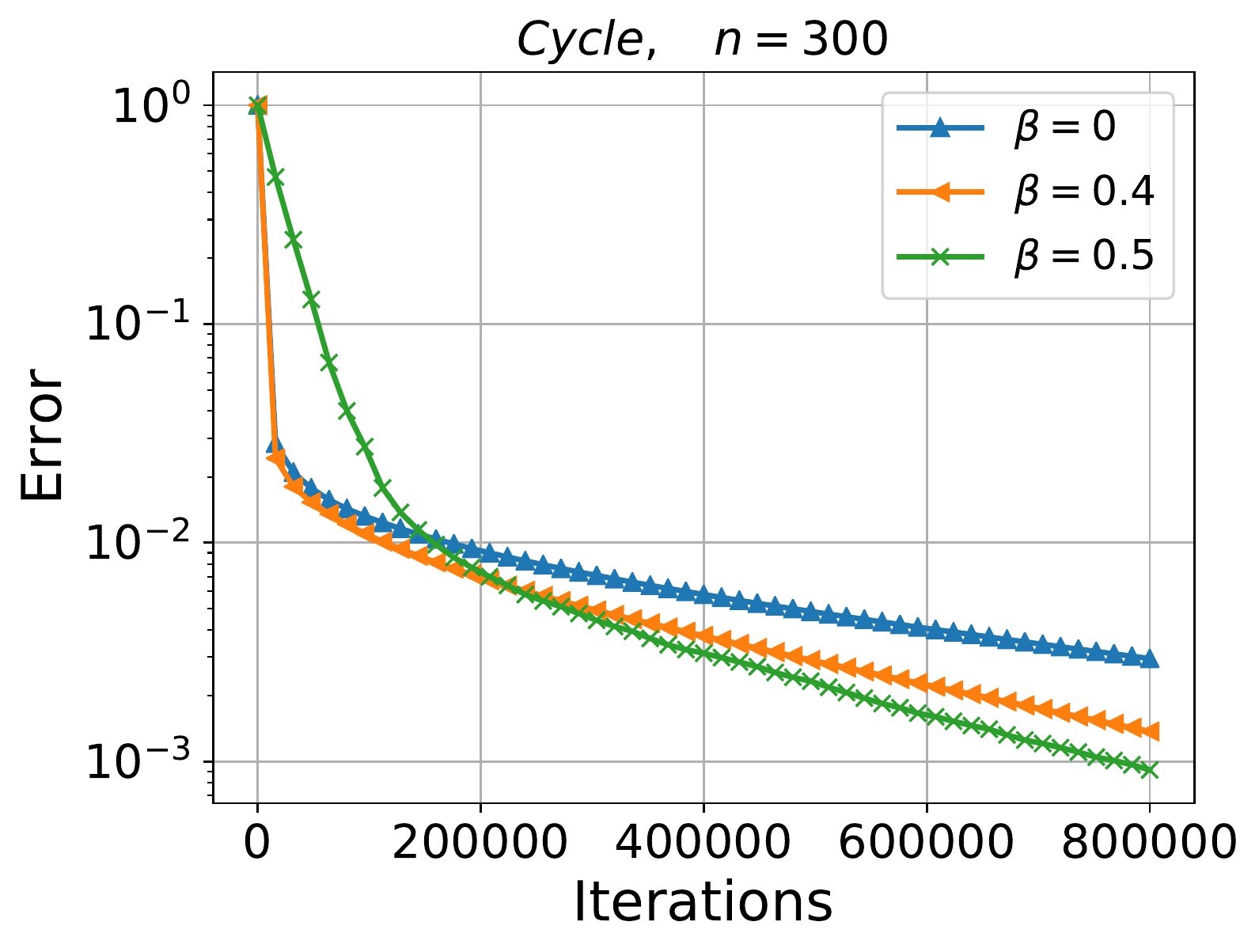}
\end{subfigure}\\
\begin{subfigure}{.3\textwidth}
  \centering
  \includegraphics[width=1\linewidth]{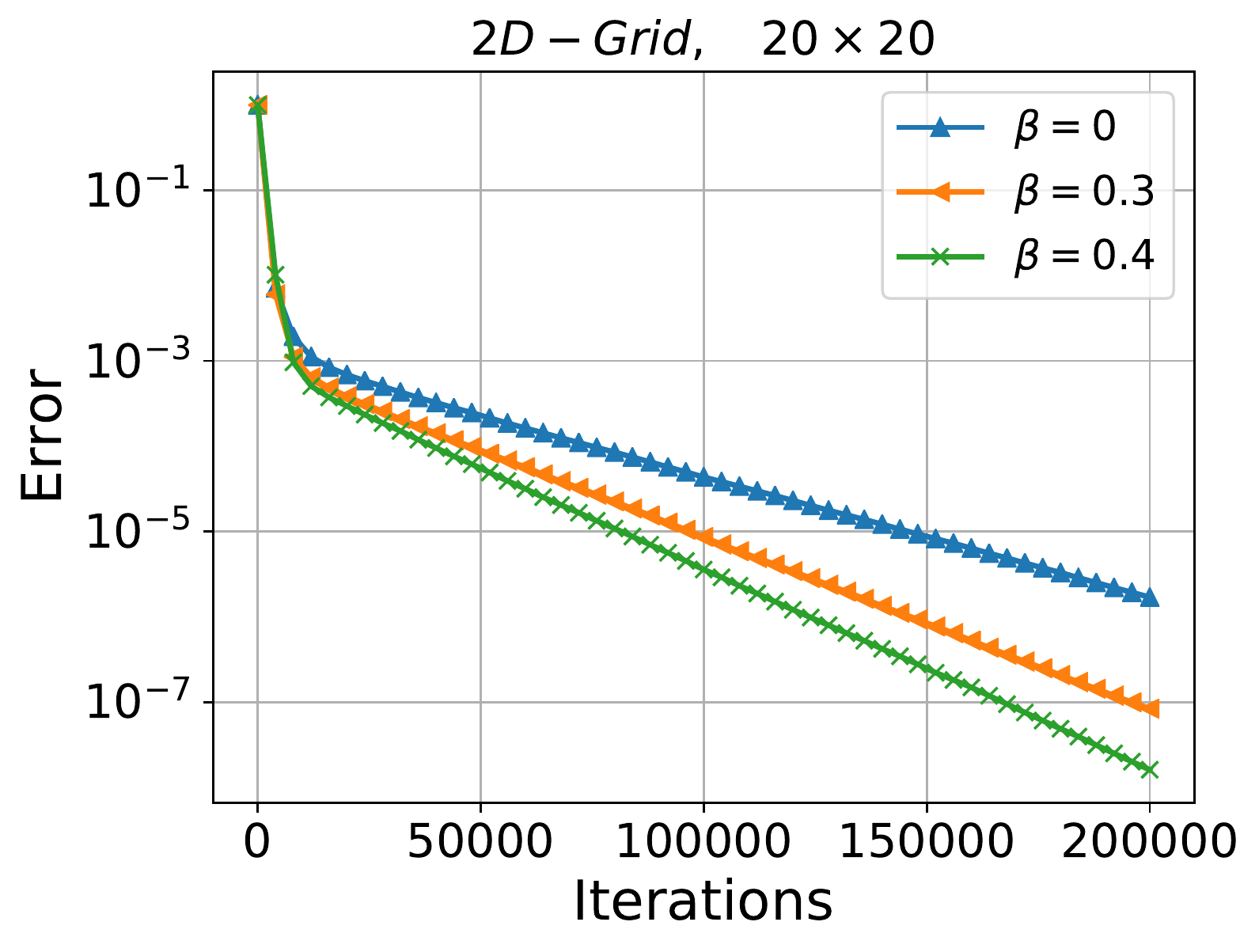}
\end{subfigure}
\begin{subfigure}{.3\textwidth}
  \centering
  \includegraphics[width=1\linewidth]{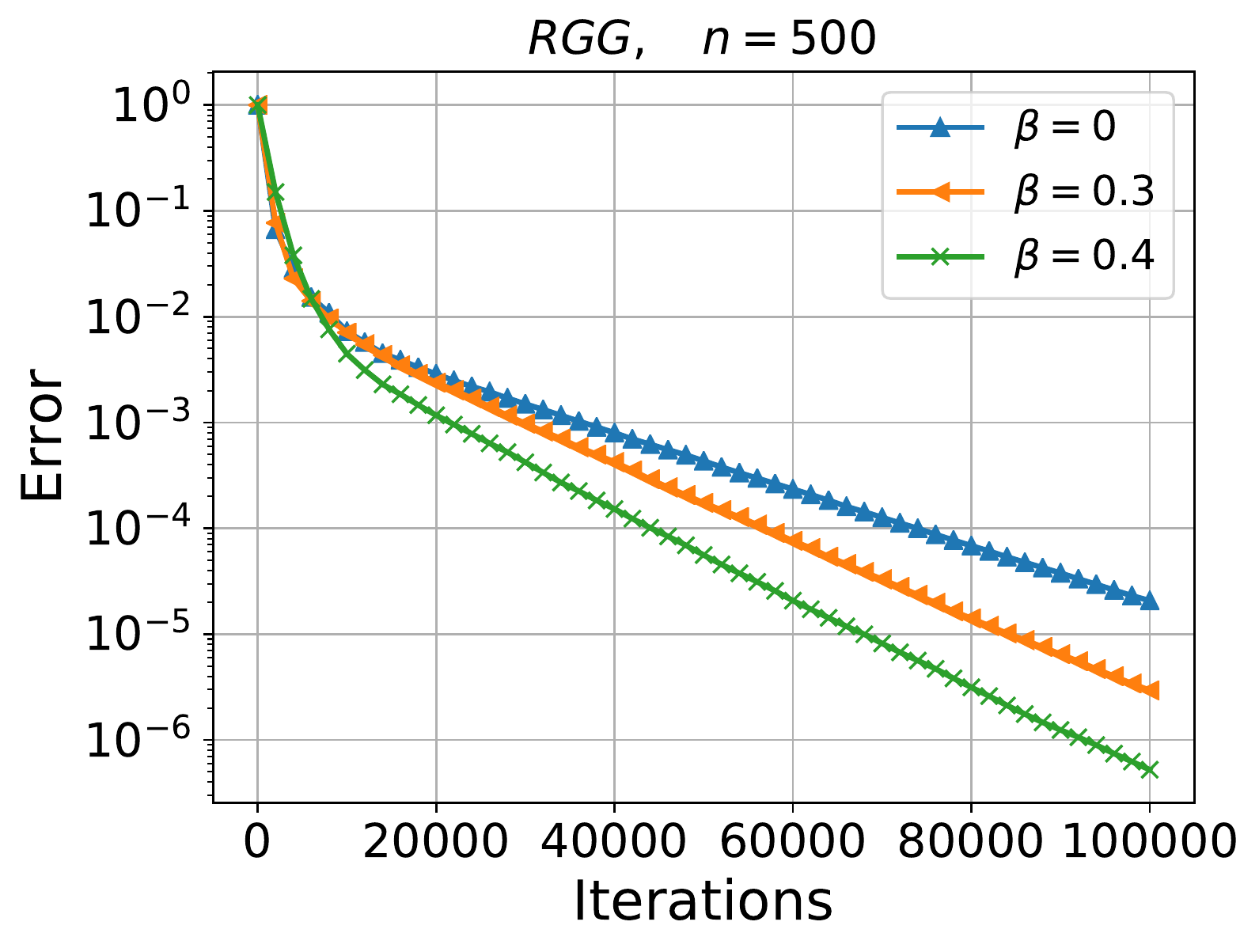}
\end{subfigure}
\begin{subfigure}{.3\textwidth}
  \centering
  \includegraphics[width=1\linewidth]{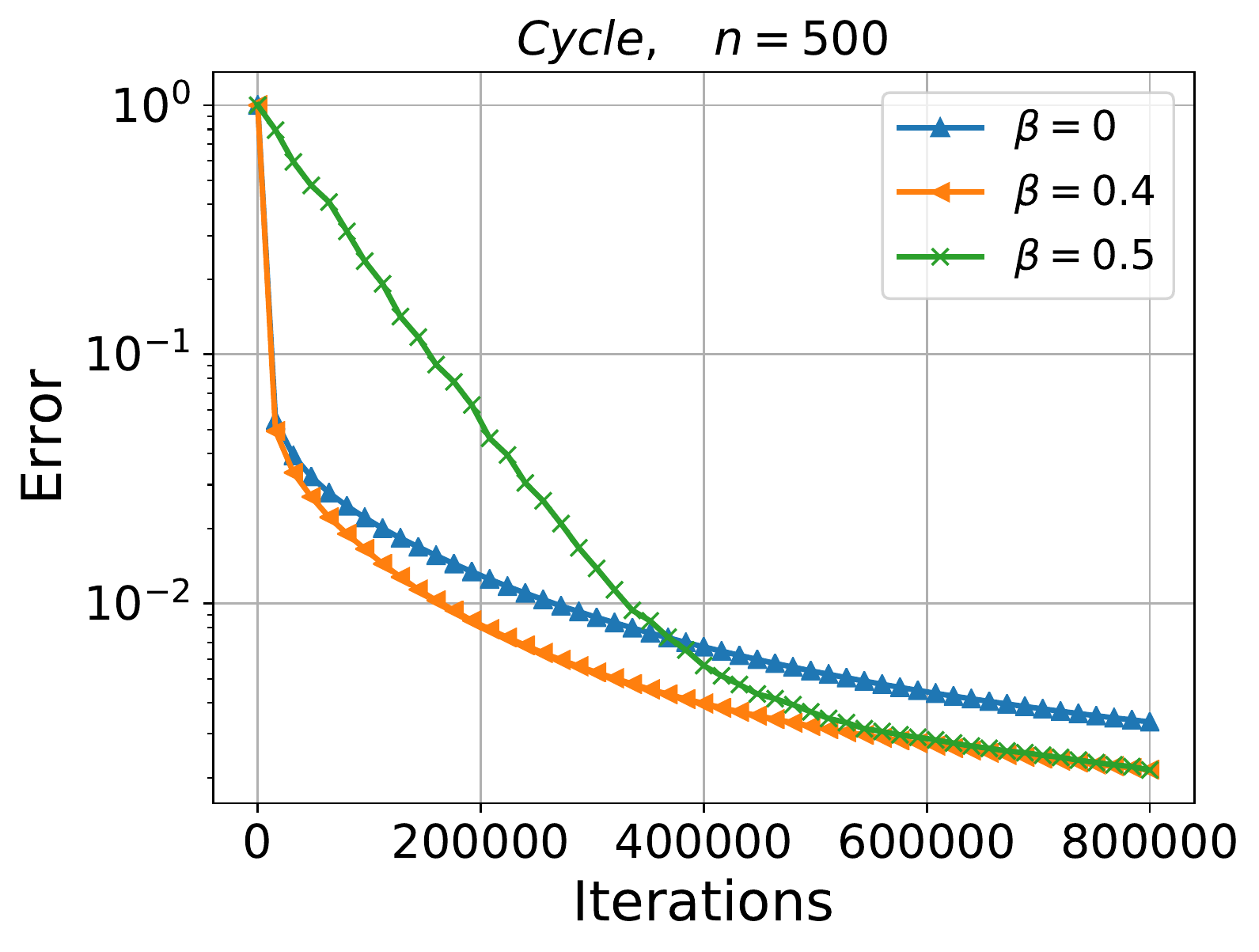}
\end{subfigure}\\
\caption{\footnotesize Performance of mRK for fixed step-size $\omega=1$ and several momentum parameters $\beta$ in a 2-dimension grid, random geometric graph (RGG) and a cycle graph. The choice $\beta=0$ corresponds to the randomized pairwise gossip algorithm proposed in \cite{boyd2006randomized}. The starting vector $x^0=c \in \R^n$ is a Gaussian vector. The $n$ in the title of each plot indicates the number of nodes of the network. For the grid graph this is $n \times n$. }
\label{mRKomega1}
\end{figure}

\subsubsection{Comparison of mRK and shift-Register algorithm \cite{liu2013analysis}}
In this experiment we compare mRK with the shift register gossip algorithm (pairwise momentum method, abbreviation: Pmom) analyzed in \cite{liu2013analysis}. We choose the parameters $\omega$ and $\beta$ of mRK in such a way in order to satisfy the connection established in Section~\ref{connectionOfAcceleratedMethods}. That is, we choose $\beta=\omega-1$ for any choice of $\omega \in (1,2)$. Observe that in all plots of Figure~\ref{shiftregister} mRK outperforms the corresponding shift-register algorithm. 

\begin{figure}[t]
\centering
\begin{subfigure}{.3\textwidth}
  \centering
  \includegraphics[width=1\linewidth]{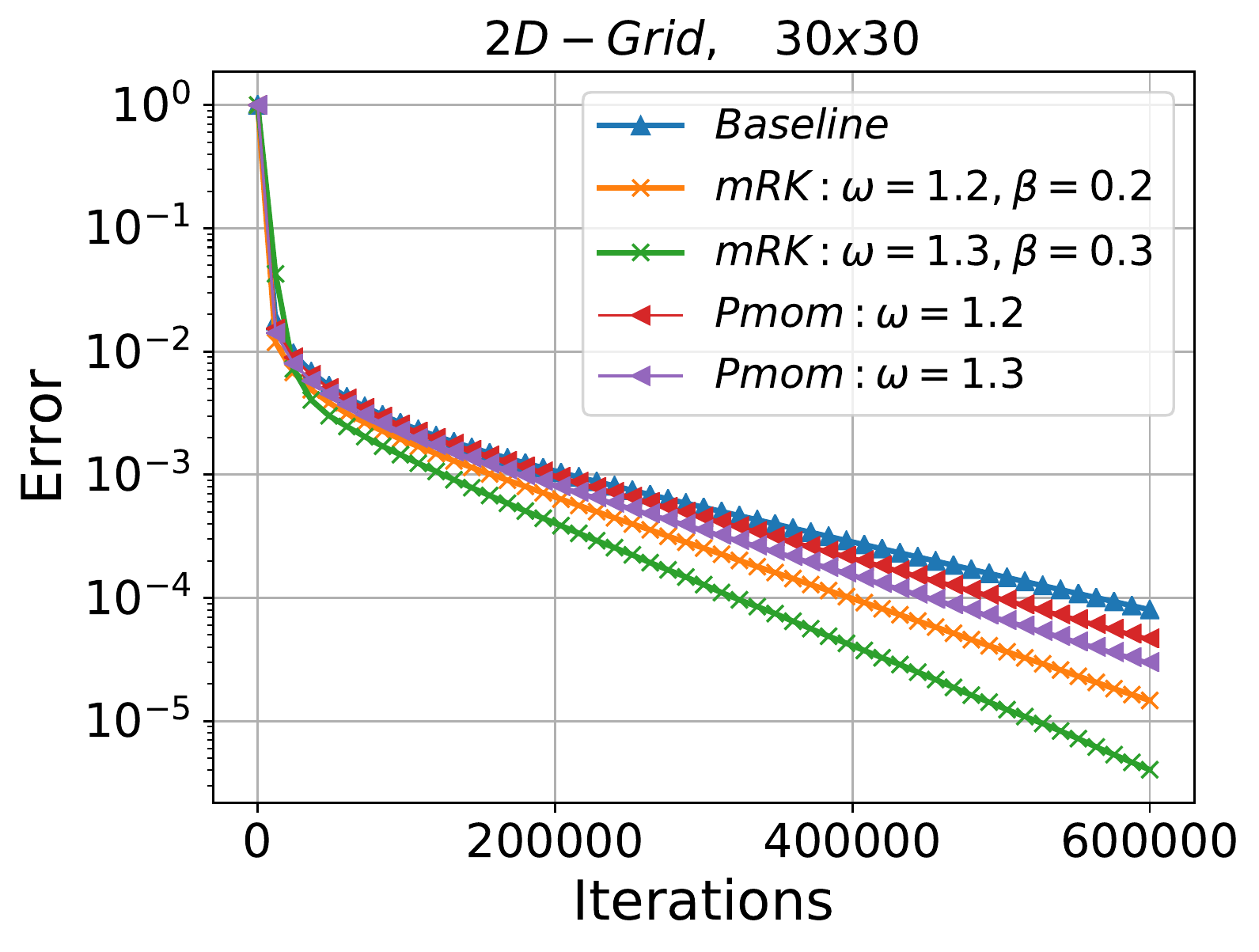}
\end{subfigure}%
\begin{subfigure}{.3\textwidth}
  \centering
  \includegraphics[width=1\linewidth]{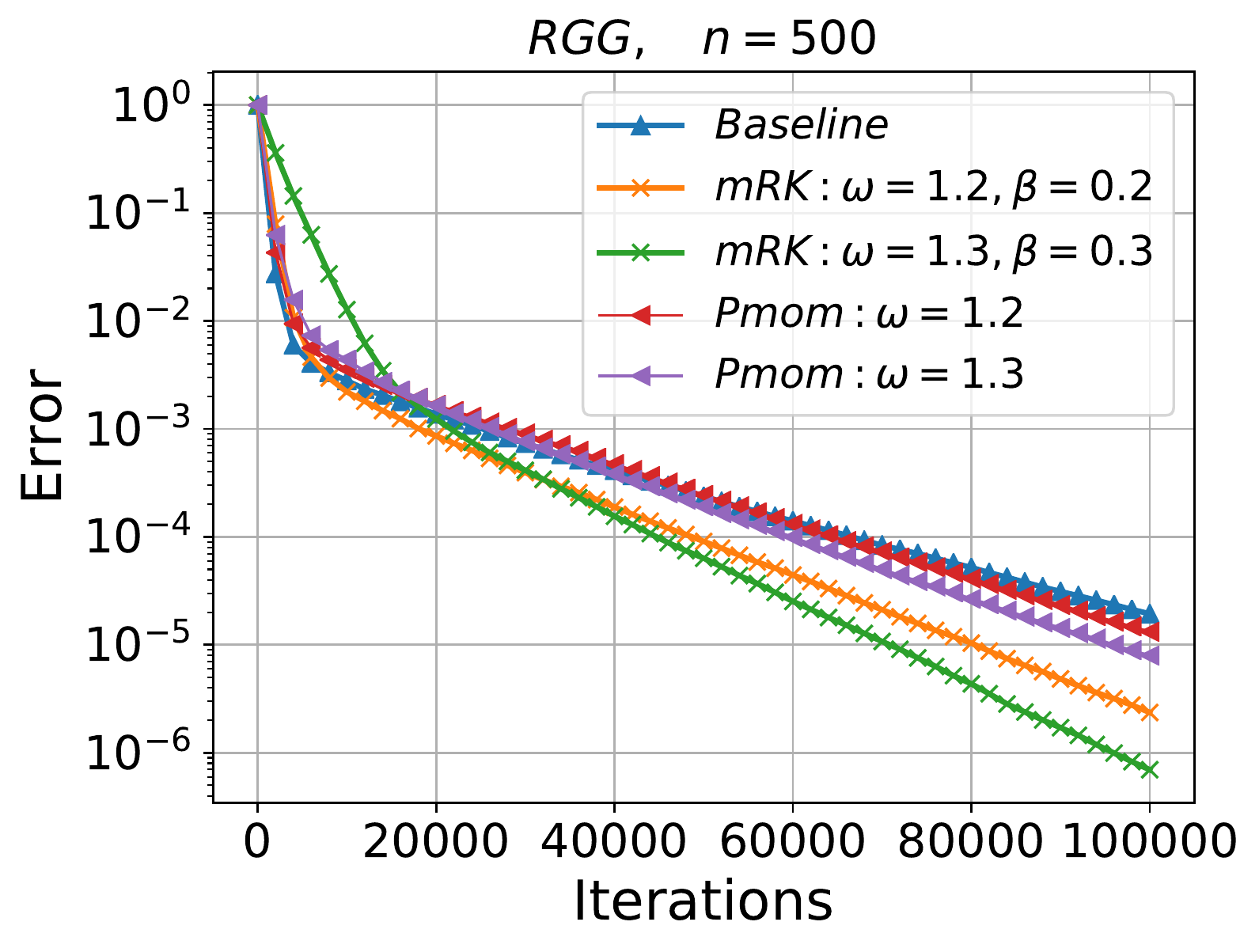}
\end{subfigure}%
\begin{subfigure}{.3\textwidth}
  \centering
  \includegraphics[width=1\linewidth]{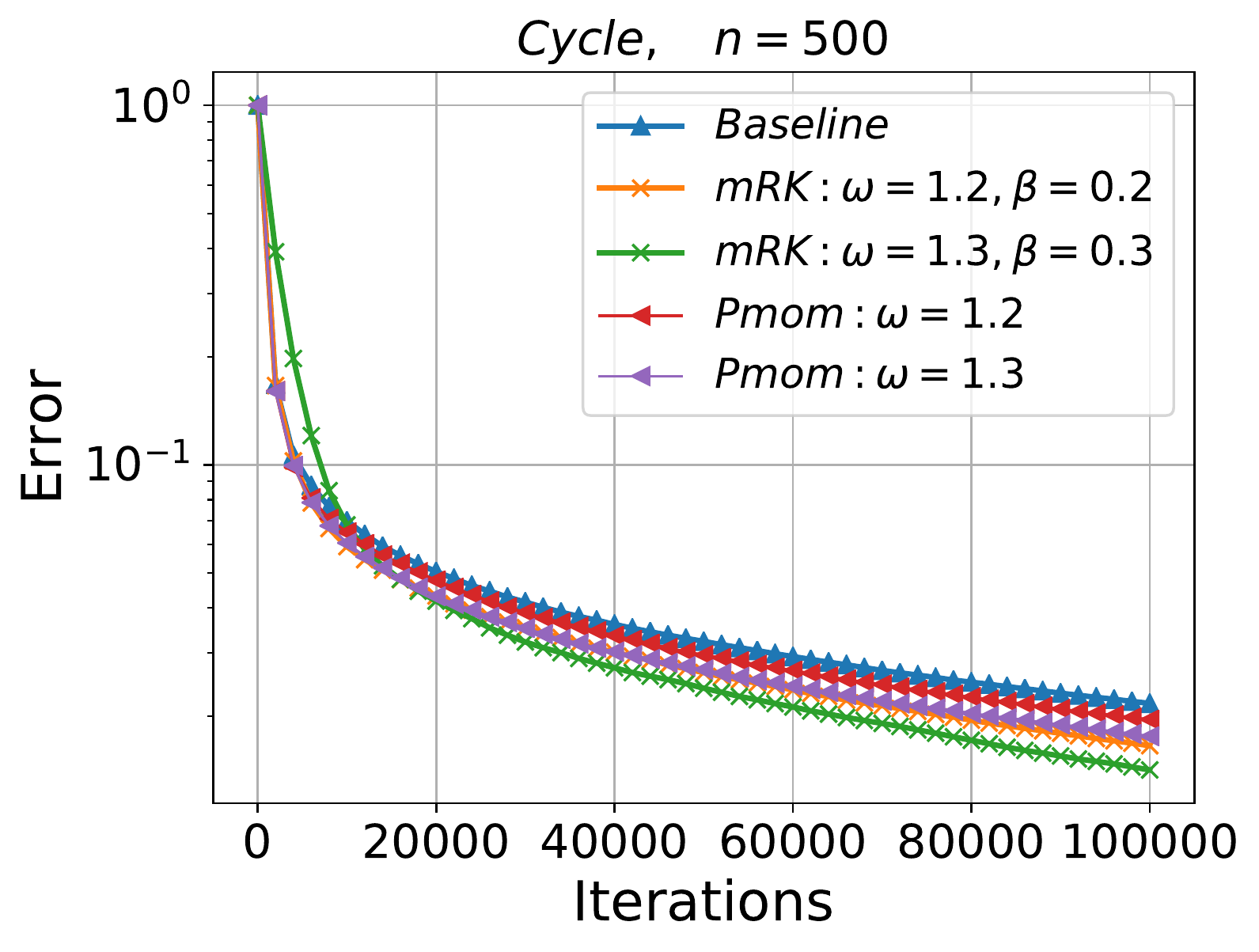}
\end{subfigure}\\
\begin{subfigure}{.3\textwidth}
  \centering
  \includegraphics[width=1\linewidth]{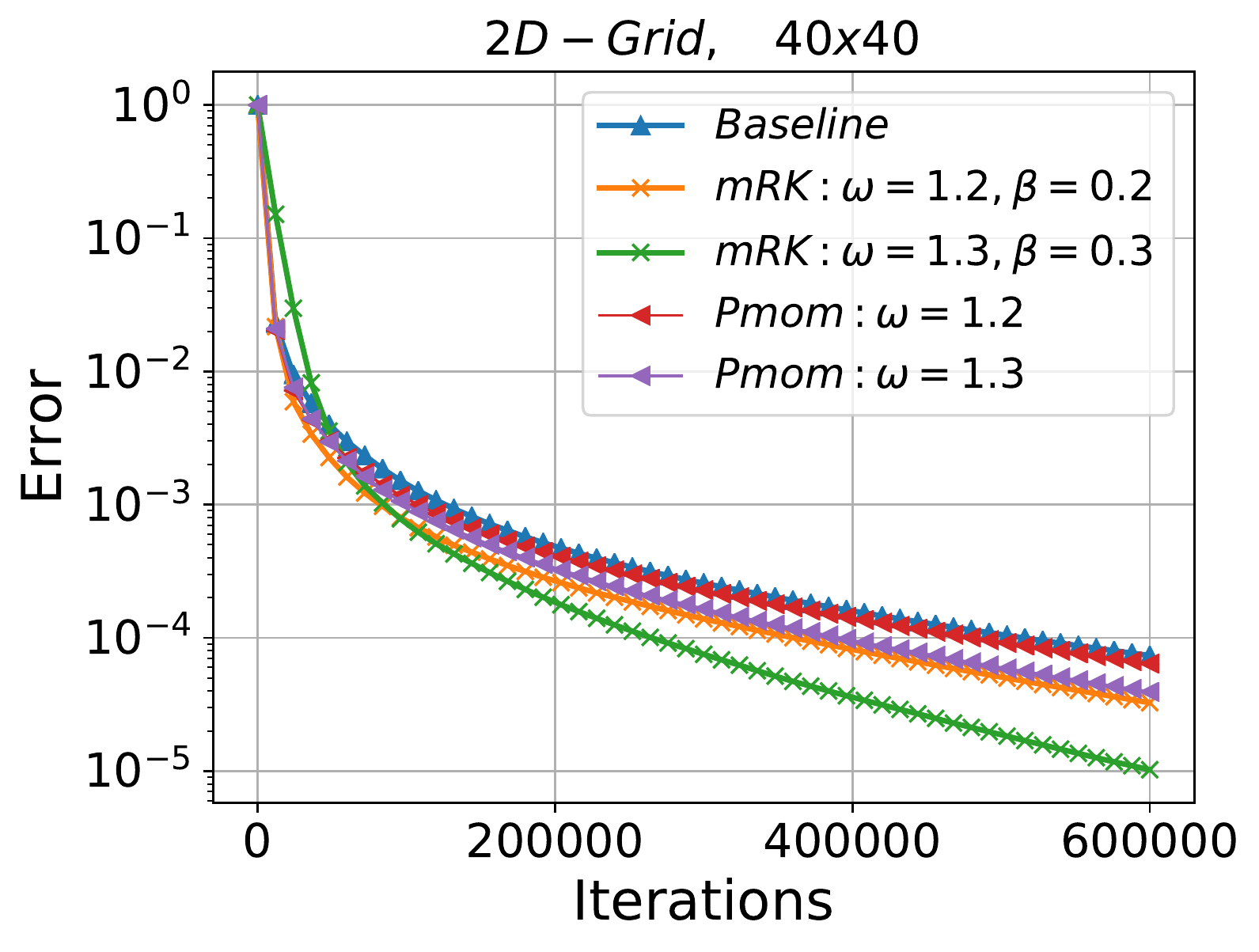}
\end{subfigure}
\begin{subfigure}{.3\textwidth}
  \centering
  \includegraphics[width=1\linewidth]{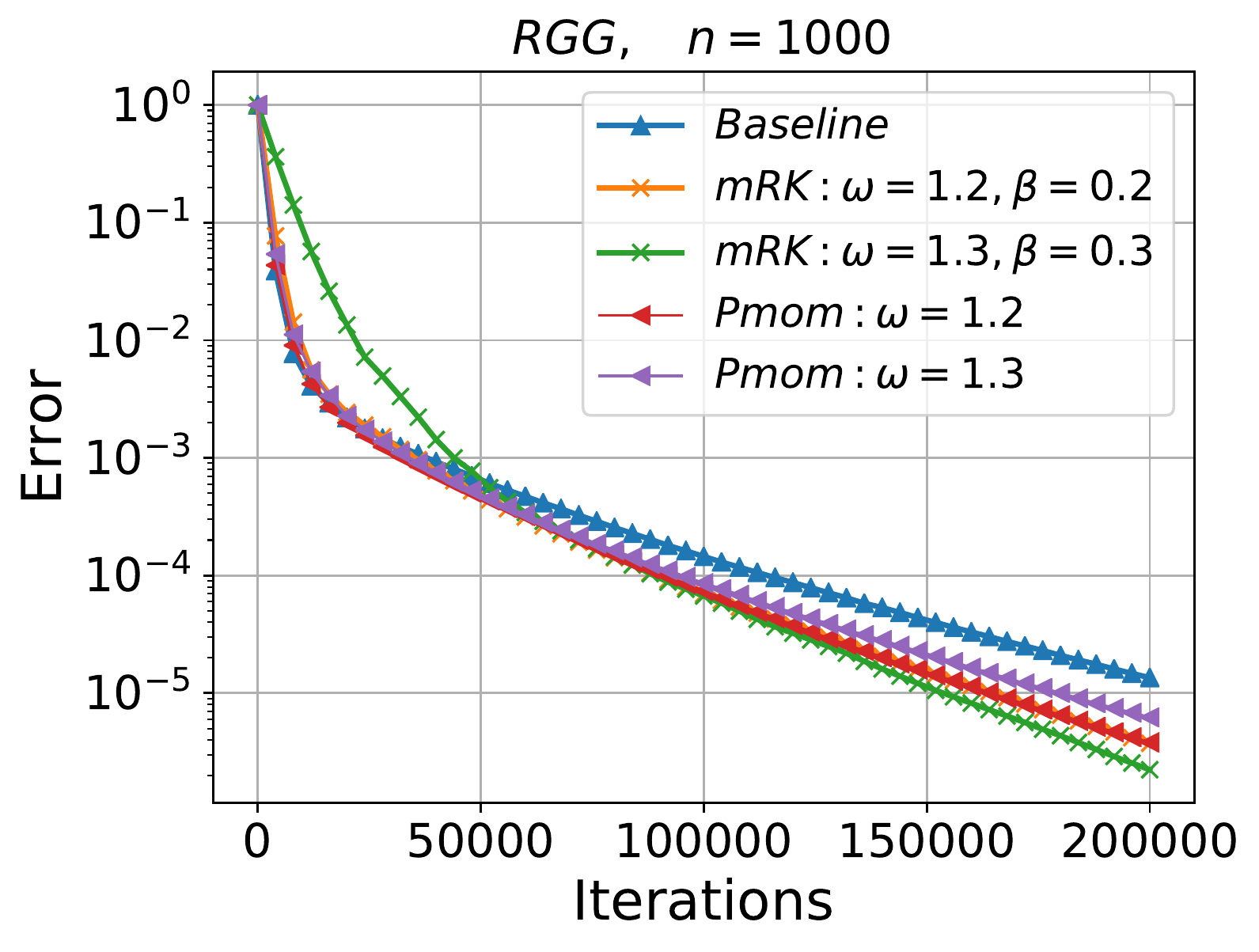}
\end{subfigure}
\begin{subfigure}{.3\textwidth}
  \centering
  \includegraphics[width=1\linewidth]{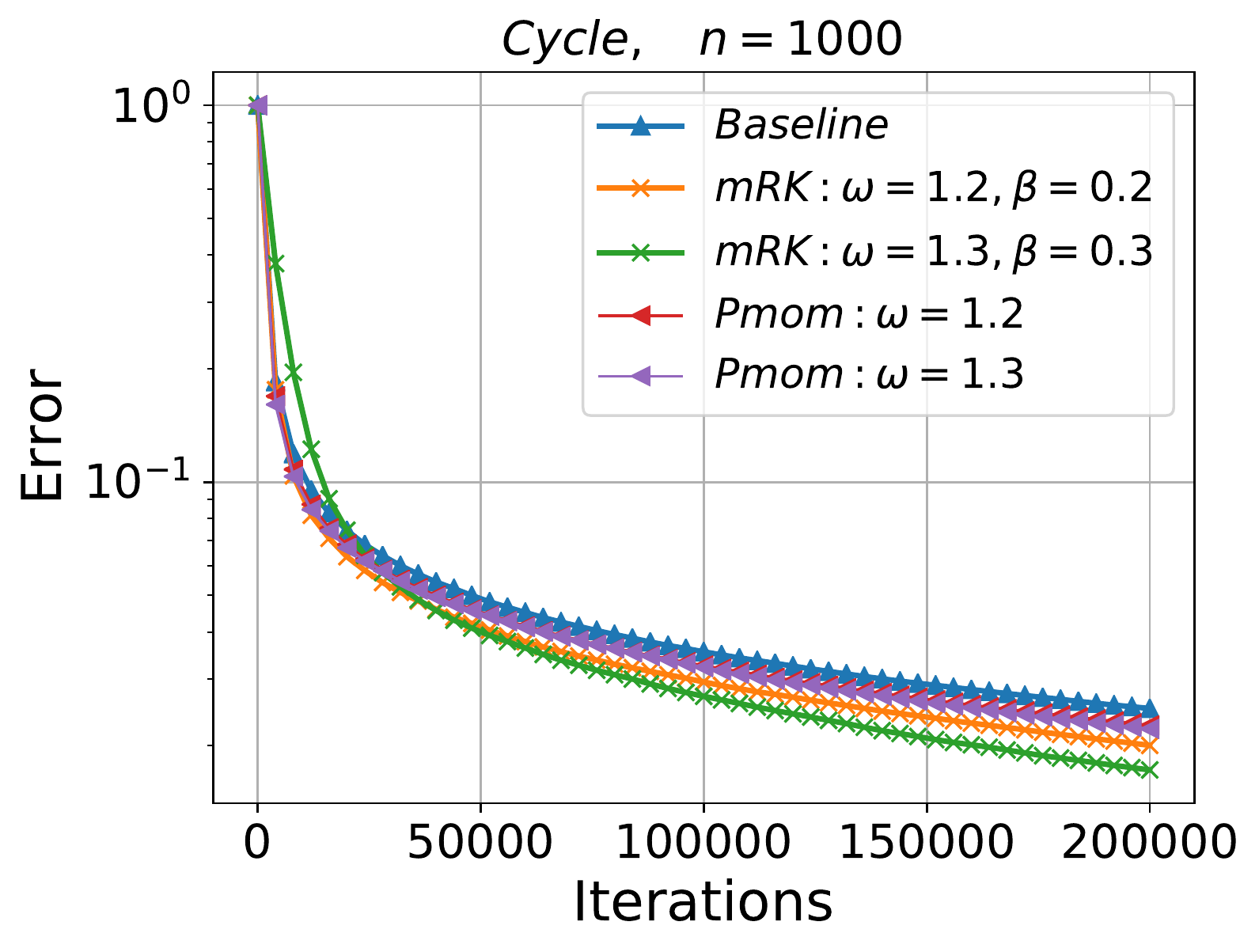}
\end{subfigure}\\
\caption{\footnotesize Comparison of mRK and the pairwise momentum method (Pmom), proposed in \cite{liu2013analysis} (shift-register algorithm of Section~\ref{connectionOfAcceleratedMethods}). Following the connection between mRK and Pmom established in Section~\ref{connectionOfAcceleratedMethods} the momentum parameter of mRK is chosen to be $\beta=\omega-1$ and the stepsizes are selected to be either $\omega= 1.2$ or $\omega=1.3$.  The baseline method is the standard randomized pairwise gossip algorithm from \cite{boyd2006randomized}. The starting vector $x^0=c \in \R^n$ is a Gaussian vector. The $n$ in the title of each plot indicates the number of nodes of the network. For the grid graph this is $n \times n$.}
\label{shiftregister}
\end{figure}

\subsubsection{Impact of momentum parameter on mRBK}
In this experiment our goal is to show that the addition of heavy ball momentum accelerates the RBK gossip algorithm presented in Section~\ref{BlockGossip}. Without loss of generality we choose the block size to be  equal to $\tau=5$. That is, the random matrix $\bS_k\sim \cD$ in the update rule of mRBK is a $m \times 5$ column submatrix of the indetity $m \times m$ matrix. Thus, in each iteration $5$ edges of the network are chosen to form the subgraph $\cG_k$ and the values of the nodes are updated according to Algorithm~\ref{RBKmomentum}. Note that similar plots can be obtained for any choice of block size. We run all algorithms with fixed stepsize $\omega=1$. From Figure~\ref{RBKfigures}, it is obvious that for all networks under study, choosing a suitable momentum parameter $\beta \in (0,1)$ gives faster convergence than having no momentum, $\beta =0$. 

\begin{figure}[t]
\centering
\begin{subfigure}{.3\textwidth}
  \centering
  \includegraphics[width=1\linewidth]{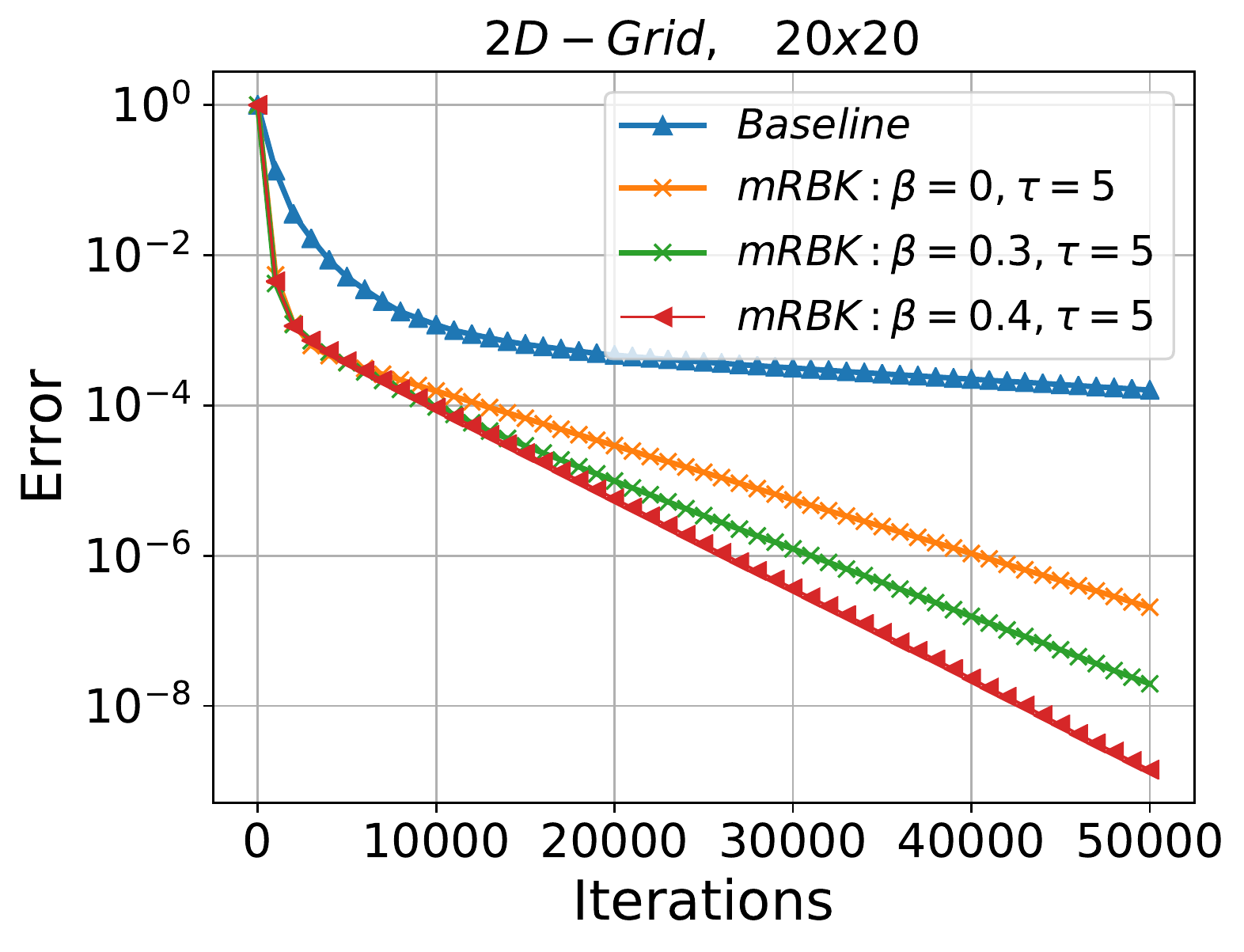}
\end{subfigure}%
\begin{subfigure}{.3\textwidth}
  \centering
  \includegraphics[width=1\linewidth]{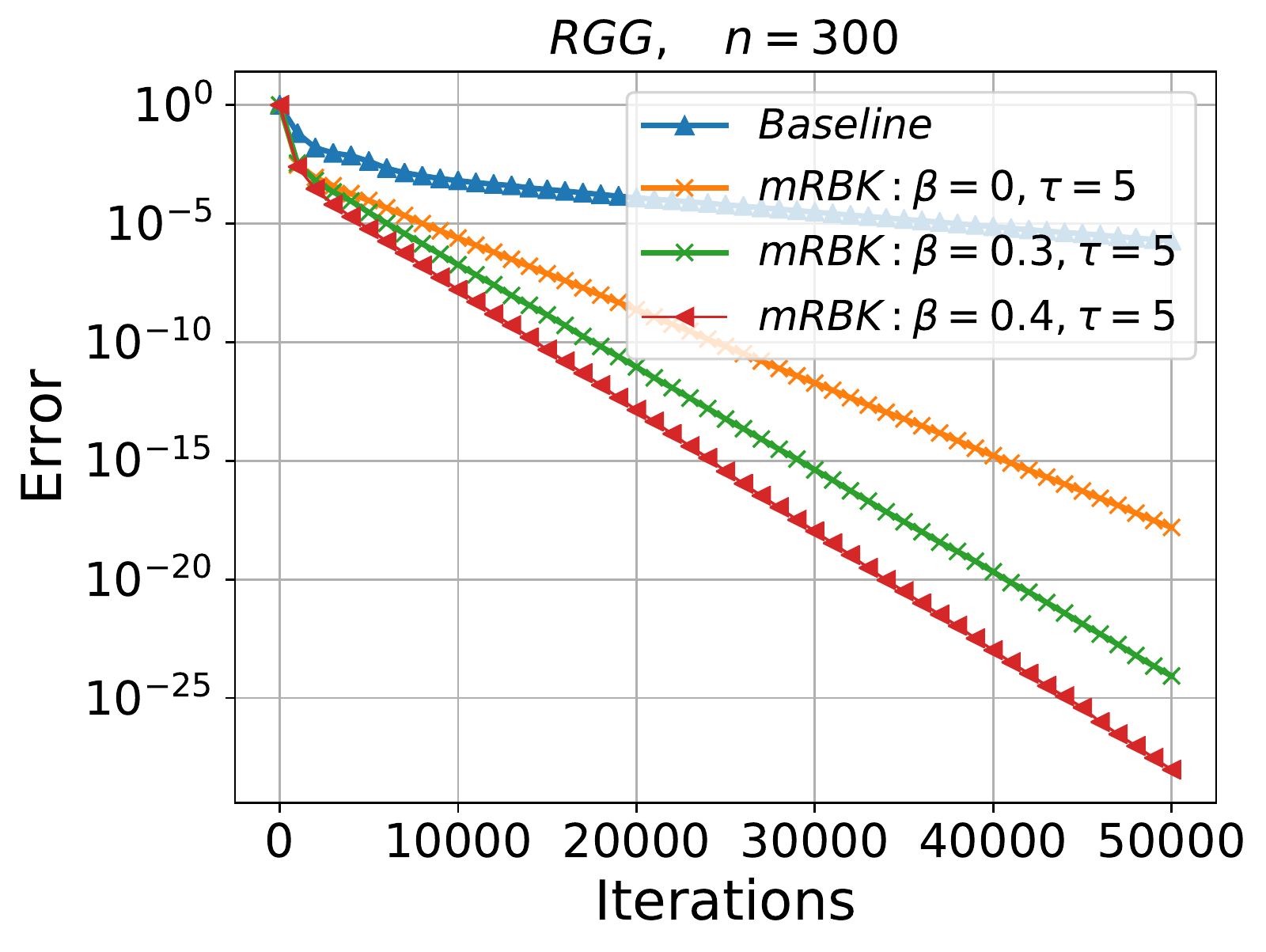}
\end{subfigure}%
\begin{subfigure}{.3\textwidth}
  \centering
  \includegraphics[width=1\linewidth]{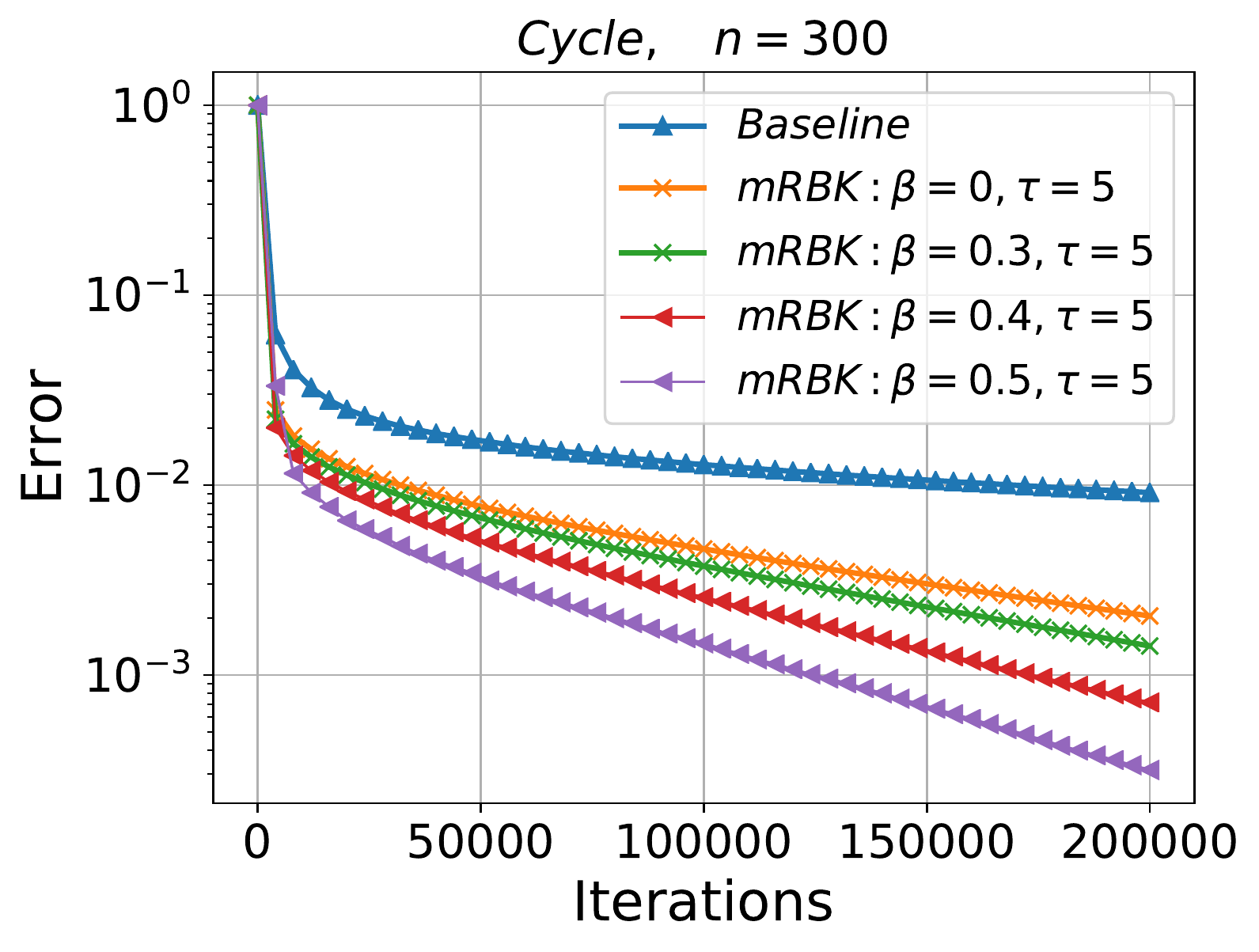}
\end{subfigure}\\
\begin{subfigure}{.3\textwidth}
  \centering
  \includegraphics[width=1\linewidth]{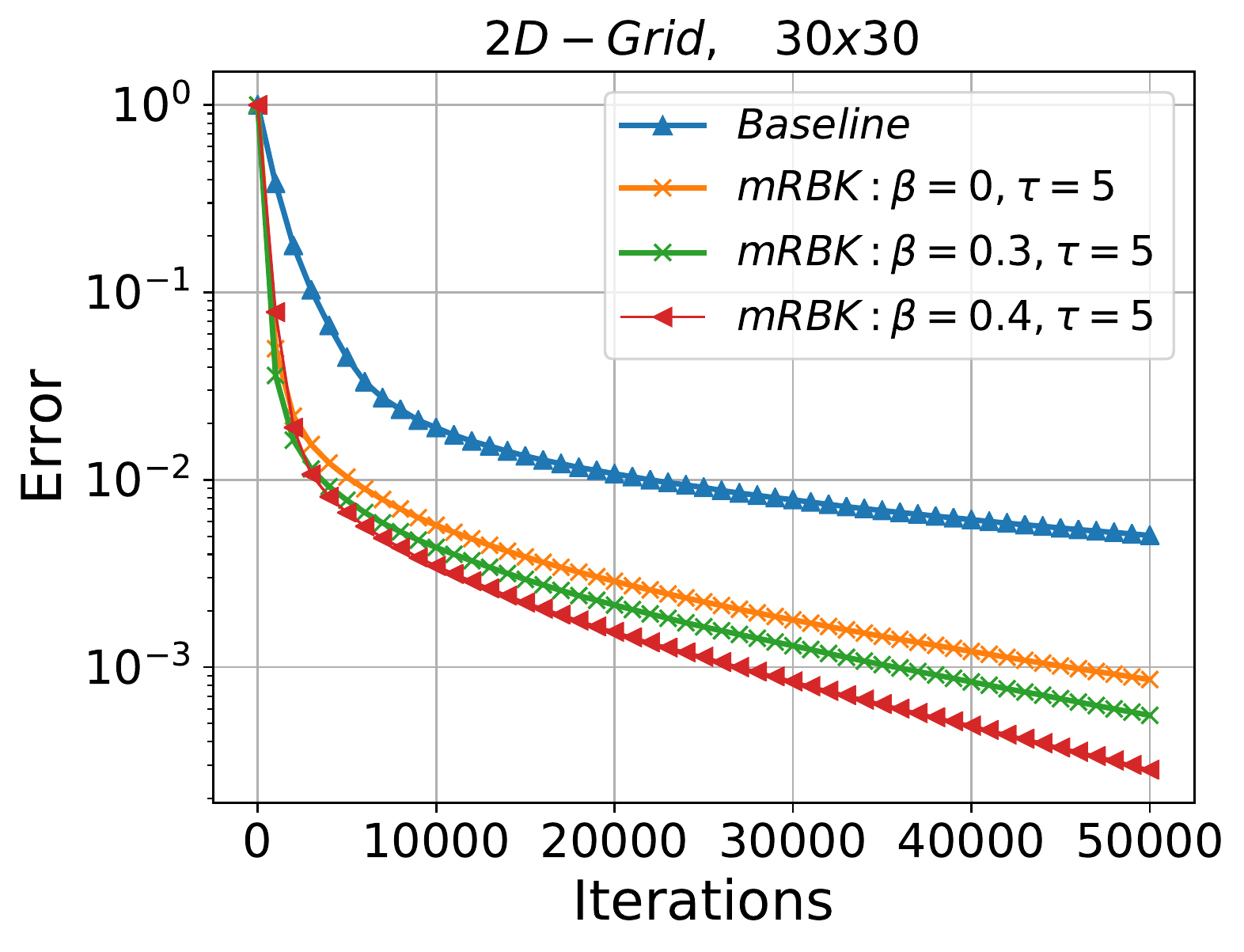}
\end{subfigure}
\begin{subfigure}{.3\textwidth}
  \centering
  \includegraphics[width=1\linewidth]{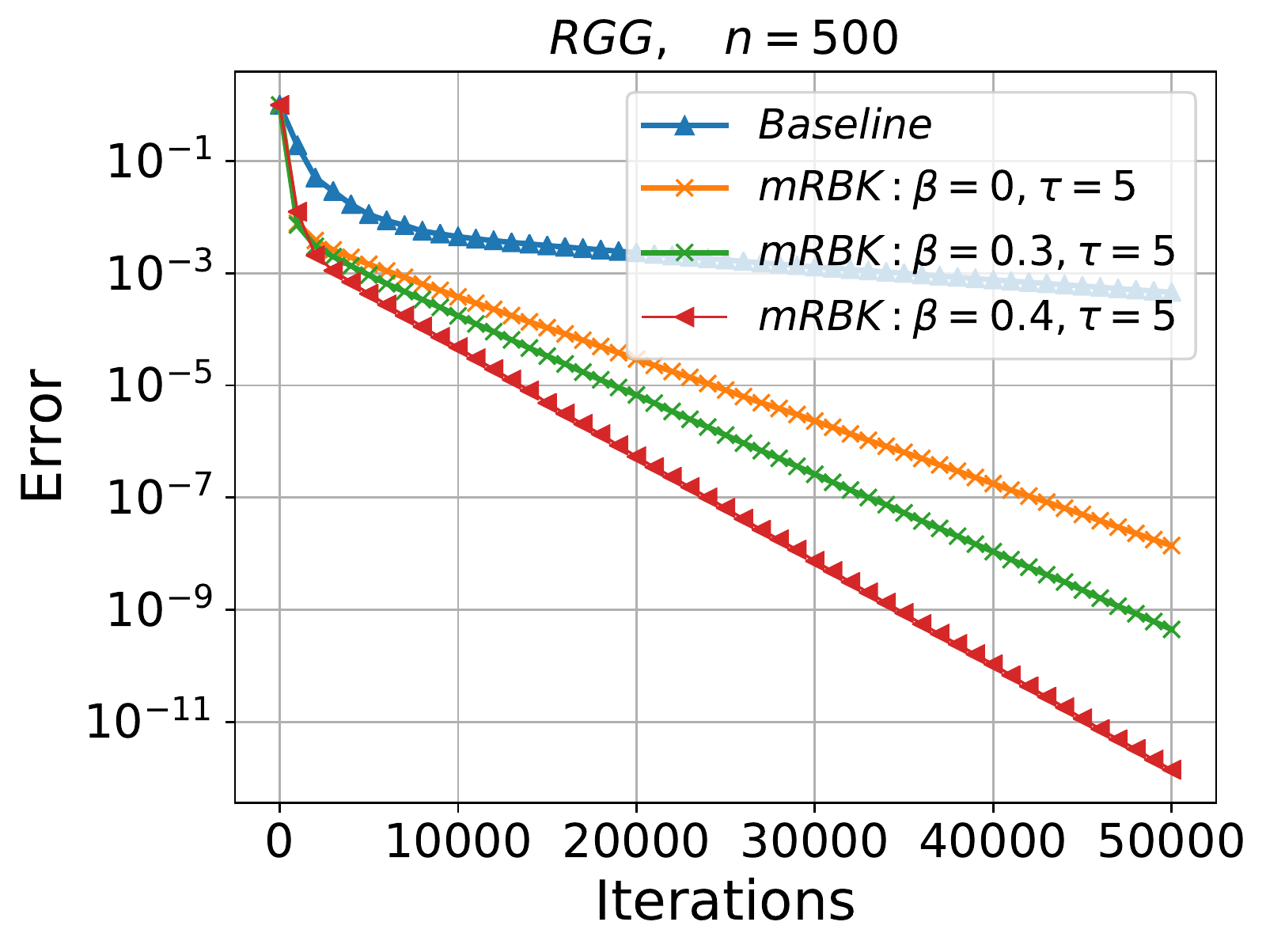}
\end{subfigure}
\begin{subfigure}{.3\textwidth}
  \centering
  \includegraphics[width=1\linewidth]{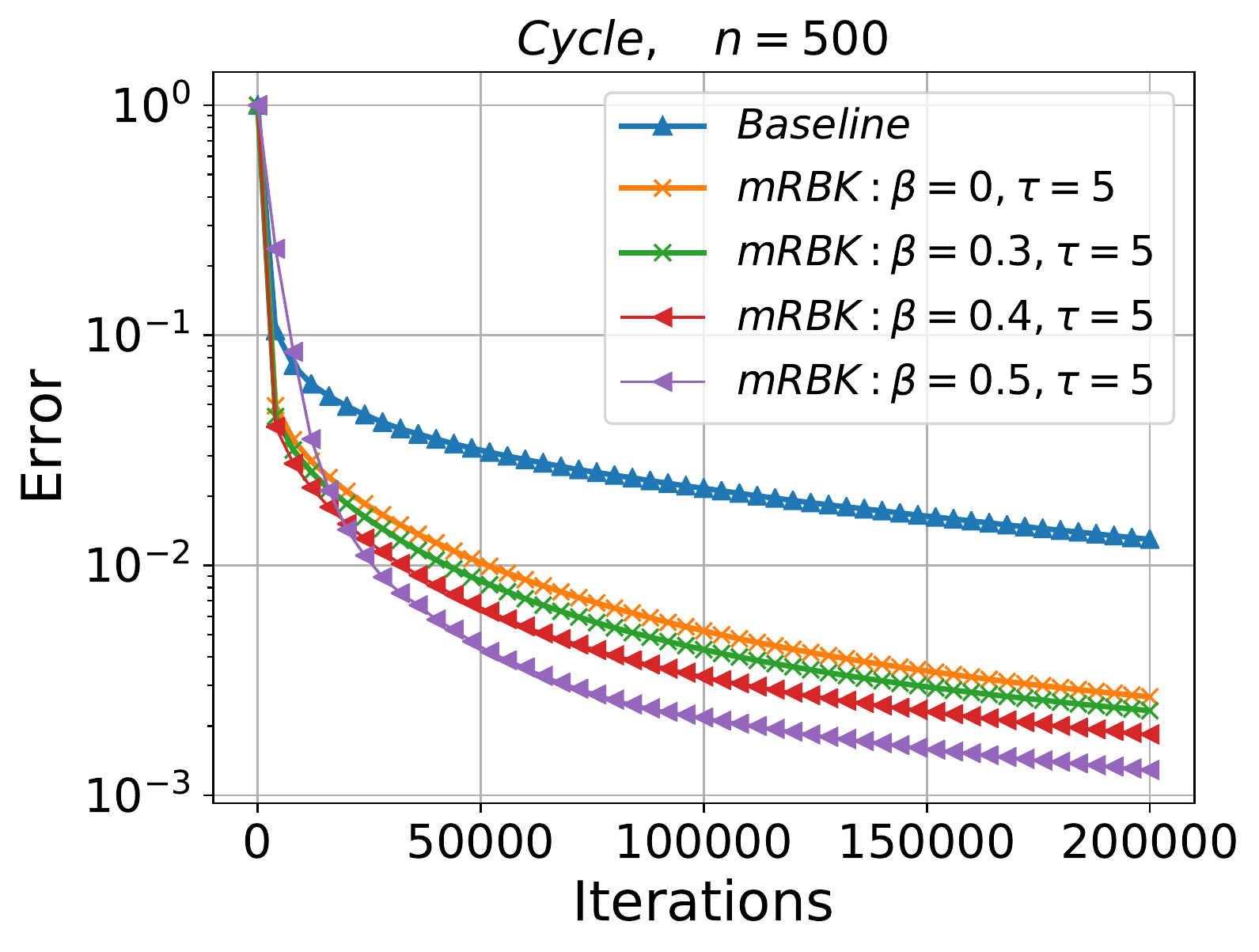}
\end{subfigure}\\
\caption{\footnotesize Comparison of mRBK with its no momentum variant RBK ($\beta=0$).  The stepsize for all methods is $\omega=1$ and the block size is $\tau=5$. The baseline method in the plots denotes the standard randomized pairwise gossip algorithm (block $\tau=1$) and is plotted to highlight the benefits of having larger block sizes (at least in terms of iterations). The starting vector $x^0=c \in \R^n$ is a Gaussian vector. The $n$ in the title of each plot indicates the number of nodes. For the grid graph this is $n \times n$.}
\label{RBKfigures}
\end{figure}

\subsubsection{Performance of AccGossip}
In the last experiment on faster gossip algorithms we evaluate the performance of the proposed provably accelerated gossip protocols of Section~\ref{accSubsection}. In particular we compare the standard RK (pairwise gossip algorithm of \cite{boyd2006randomized}) the mRK (Algorithm~\ref{RKmomentum}) and the AccGossip (Algorithm~\ref{alg:acceleratedNew}) with the two options for the selection of the parameters presented in Section~\ref{AcceleratedVariants}. 

The starting vector of values $x^0=c$ is taken to be a Gaussian vector.  For the implementation of mRK we use the same parameters with the ones suggested in the stochastic heavy ball (SGB) setting in Chapter~\ref{ChapterMomentum}. For the AccRK (Option 1) we use $\lambda=\lambda_{\min}^+(\bA^\top\bA)$ and for AccRK (Option 2) we select $\nu=m$\footnote{For the networks under study we have
 $m < \frac{1}{\lambda_{\min}^+(\bW)}$. Thus, by choosing $\nu=m$ we select the pessimistic upper bound of the parameter \eqref{acsnklasda} and not its exact value \eqref{thenu}. As we can see from the experiments, the performance is still accelerated and almost identical to the performance of AccRK (Option 1) for this choice of $\nu$.}. From Figure~\ref{AccGossipPlots} it is clear that for all networks under study the two randomized gossip protocols with Nesterov momentum are faster than both the pairwise gossip algorithm of \cite{boyd2006randomized} and the mRK/SHB (Algorithm~\ref{RKmomentum}). To the best of our knowledge Algorithm~\ref{alg:acceleratedNew} (Option 1 and Option 2) is the first randomized gossip protocol that converges with provably accelerated linear rate and as we can see from our experiment its faster convergence is also obvious in practice.

\begin{figure}[t]
\centering
\begin{subfigure}{.3\textwidth}
  \centering
  \includegraphics[width=1\linewidth]{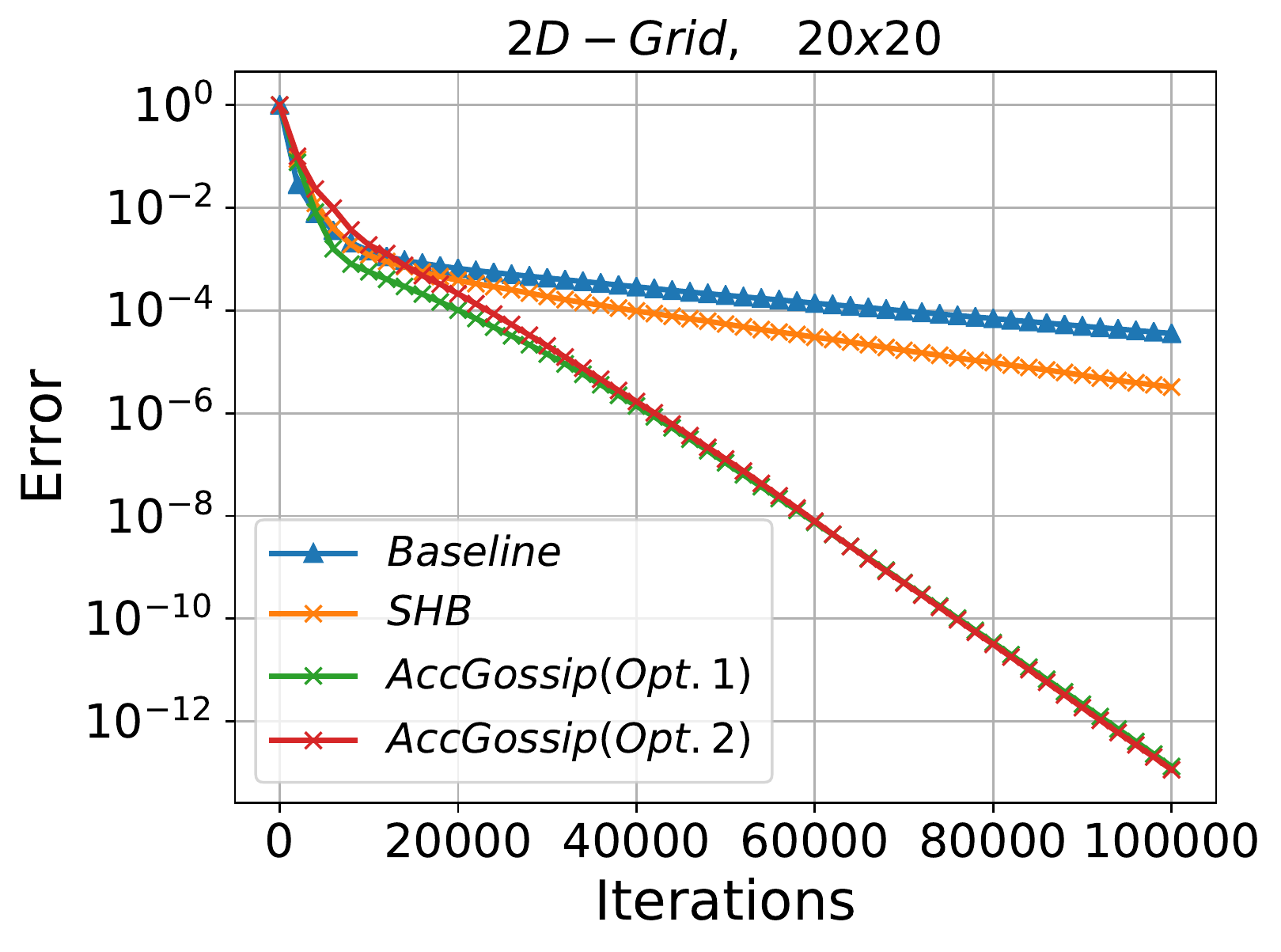}
\end{subfigure}%
\begin{subfigure}{.3\textwidth}
  \centering
  \includegraphics[width=1\linewidth]{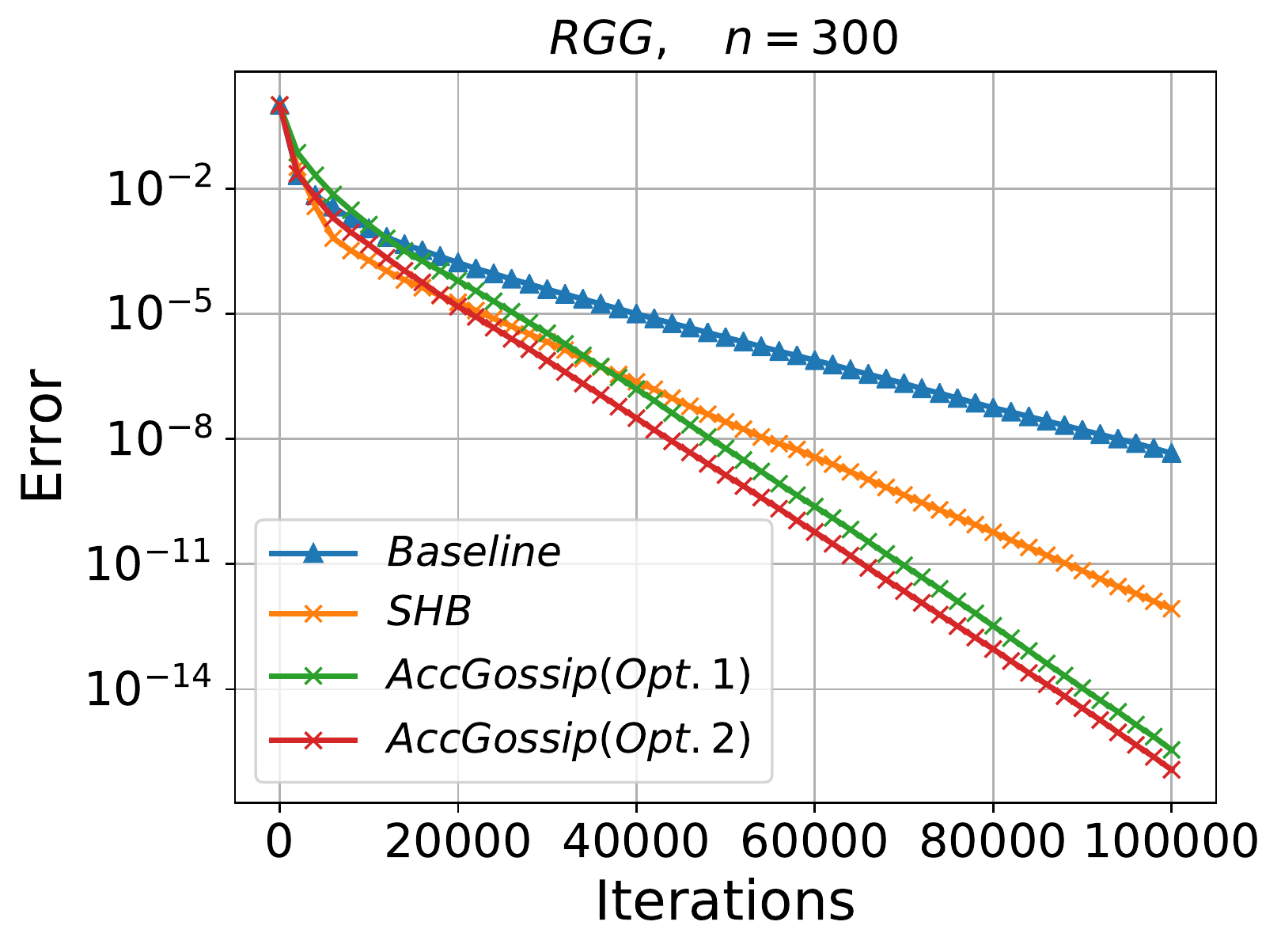}
\end{subfigure}%
\begin{subfigure}{.3\textwidth}
  \centering
  \includegraphics[width=1\linewidth]{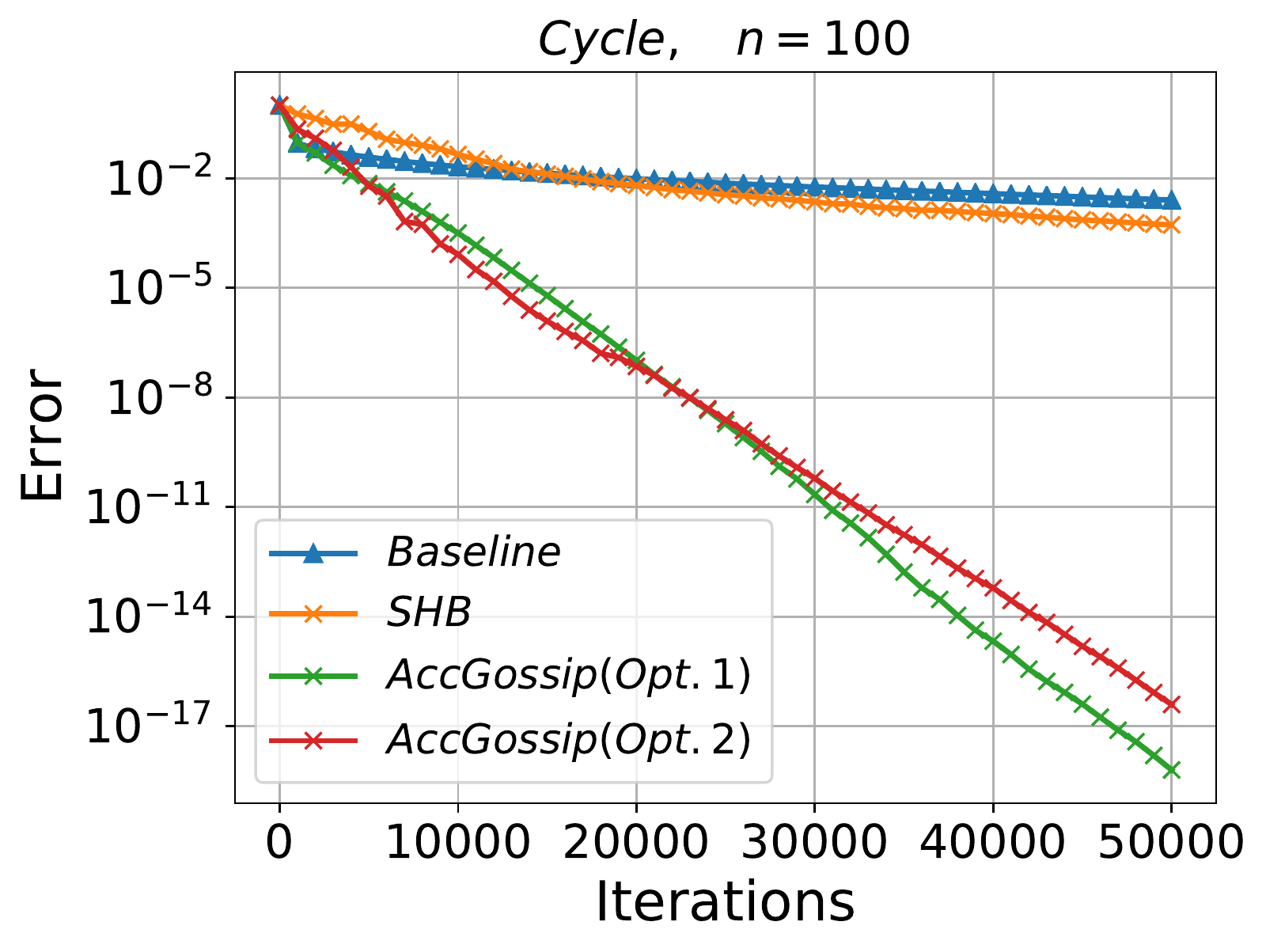}
\end{subfigure}\\
\begin{subfigure}{.3\textwidth}
  \centering
  \includegraphics[width=1\linewidth]{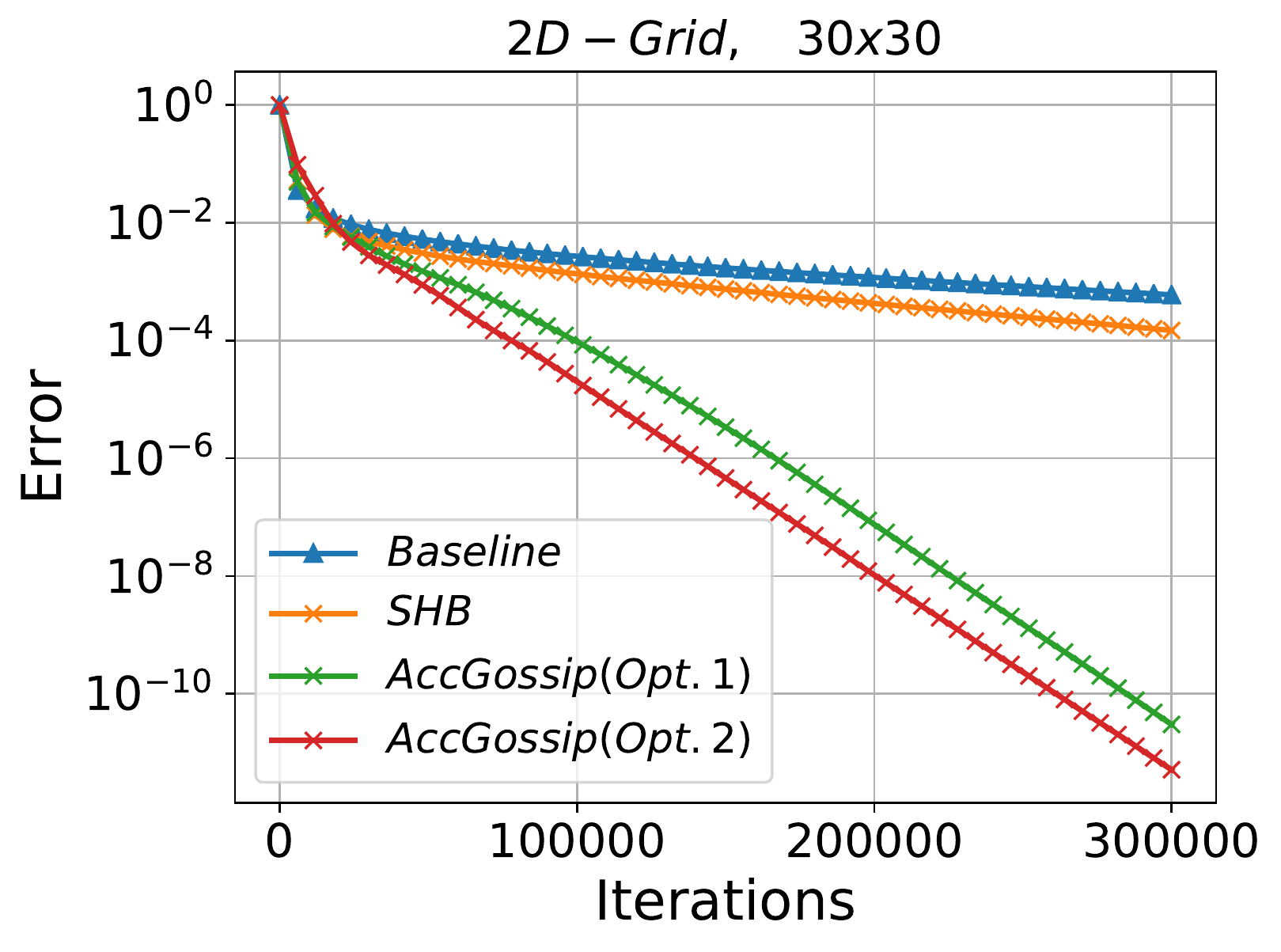}
\end{subfigure}
\begin{subfigure}{.3\textwidth}
  \centering
  \includegraphics[width=1\linewidth]{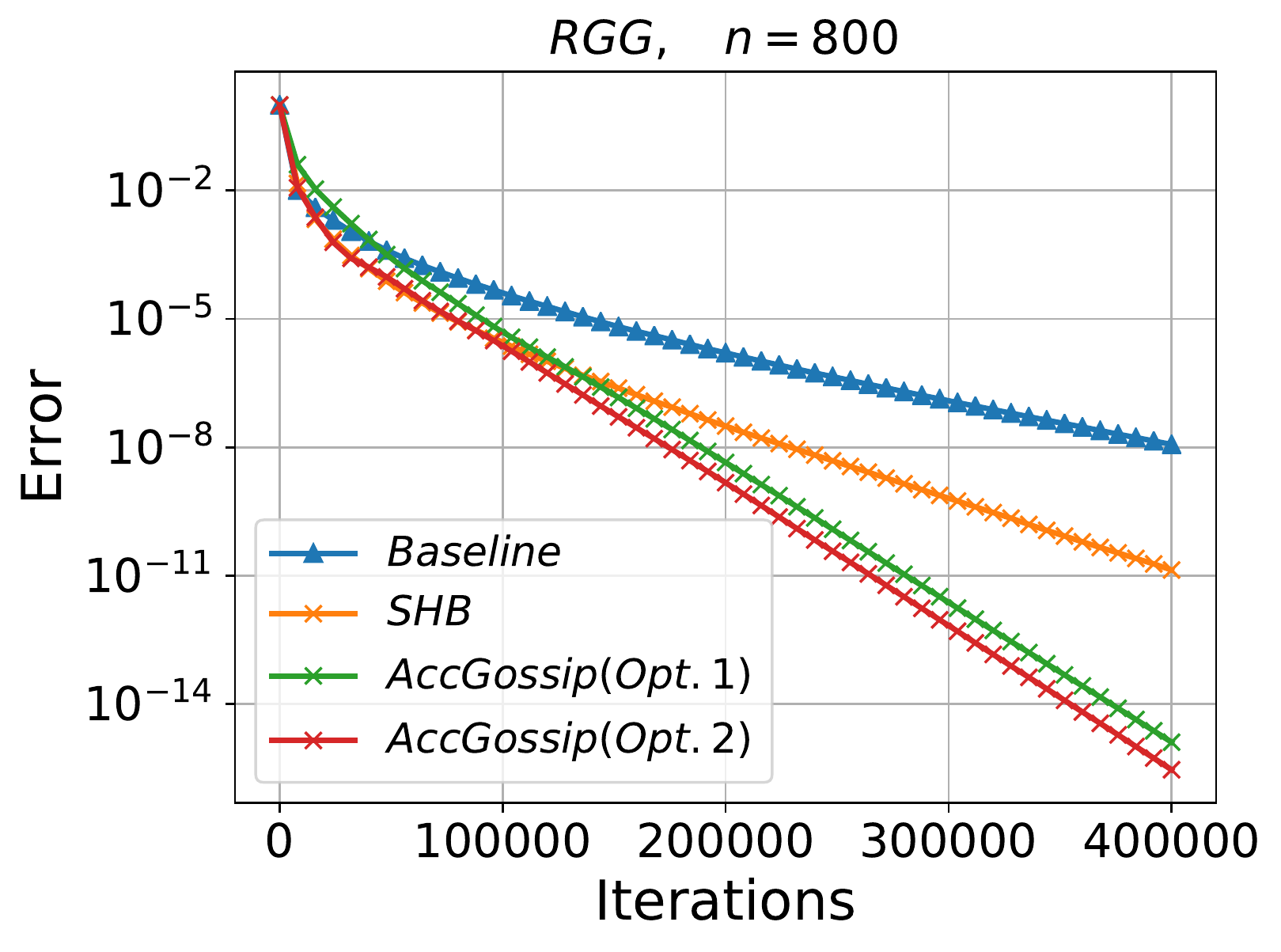}
\end{subfigure}
\begin{subfigure}{.3\textwidth}
  \centering
  \includegraphics[width=1\linewidth]{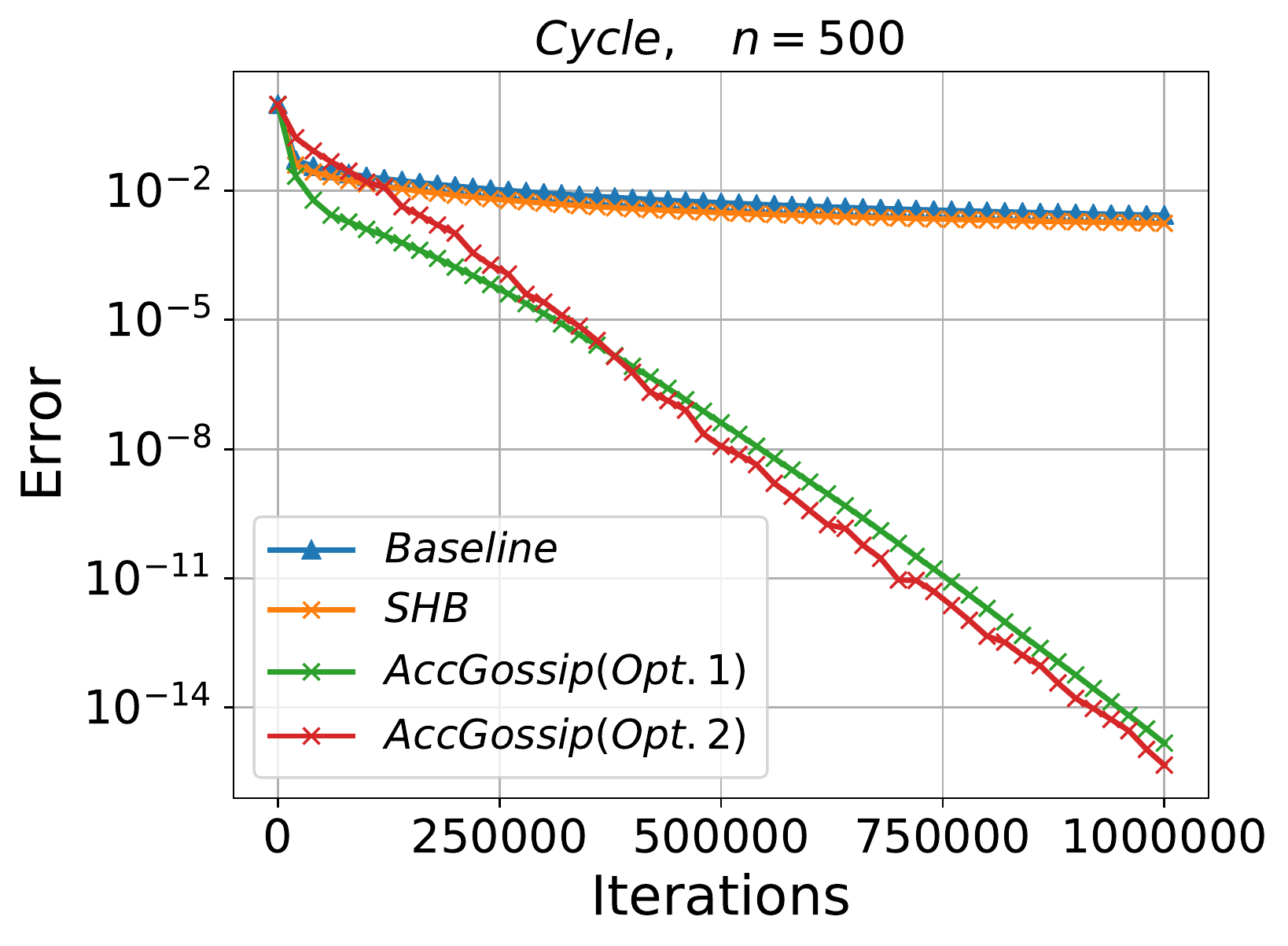}
\end{subfigure}\\
\caption{\footnotesize Performance of AccGossip (Option 1 and Option 2 for the parameters) in a 2-dimension grid, random geometric graph (RGG) and a cycle graph. The Baseline method corresponds to the randomized pairwise gossip algorithm proposed in \cite{boyd2006randomized} and the SHB represents the mRK (Algorithm~\ref{RKmomentum}) with the best choice of parameters as proposed in Chapter~\ref{ChapterMomentum} ; The $n$ in the title of each plot indicates the number of nodes of the network. For the grid graph this is $n \times n$.}
\label{AccGossipPlots}
\end{figure}

\subsection{Relaxed randomized gossip without momentum}
In the area of randomized iterative methods for linear systems it is know that over-relaxation (using of larger step-sizes) can be particularly helpful in practical scenarios. However, to the best of our knowledge there is not theoretical justification of why this is happening.

In our last experiment we explore the performance of relaxed randomized gossip algorithms ($\omega \neq 1$) without momentum and show that in this setting having larger stepsize can be particularly beneficial.

As we mentioned before (see Theorem~\ref{ConvergenceSketchProject}) the sketch and project method (Algorithm~\ref{FullSkecth}) converges with linear rate when the step-size (relaxation parameter) of the method is $\omega \in (0,2)$ and the best theoretical rate is achieved when $\omega=1$.  In this experiment we explore the performance of the standard pairwise gossip algorithm when the step-size of the update rule is chosen in $(1,2)$. Since there is no theoretical proof of why over-relaxation can be helpful we perform the experiments using different starting values of the nodes. In particular we choose the values of vector $c \in R^n$ to follow (i) Gaussian distribution, (ii) Uniform Distribution and (iii) to be integers values such that $c_i=i \in R$. Our findings are presented in Figure~\ref{RelaxedGossipFigure}. Note that for all networks under study and for all choices of starting values having larger stepsize, $\omega \in (1,2)$ can lead to better performance. Interesting observation from Figure~\ref{RelaxedGossipFigure} is that the stepsizes $\omega =1.8$ and $\omega=1.9$ give the best performance (among the selected choices of stepsizes) for all networks and for all choices of starting vector $x^0=c$.

\begin{figure}[t]
\centering
%\vspace{6pt}
\begin{subfigure}{.3\textwidth}
  \centering
  \includegraphics[width=1\linewidth]{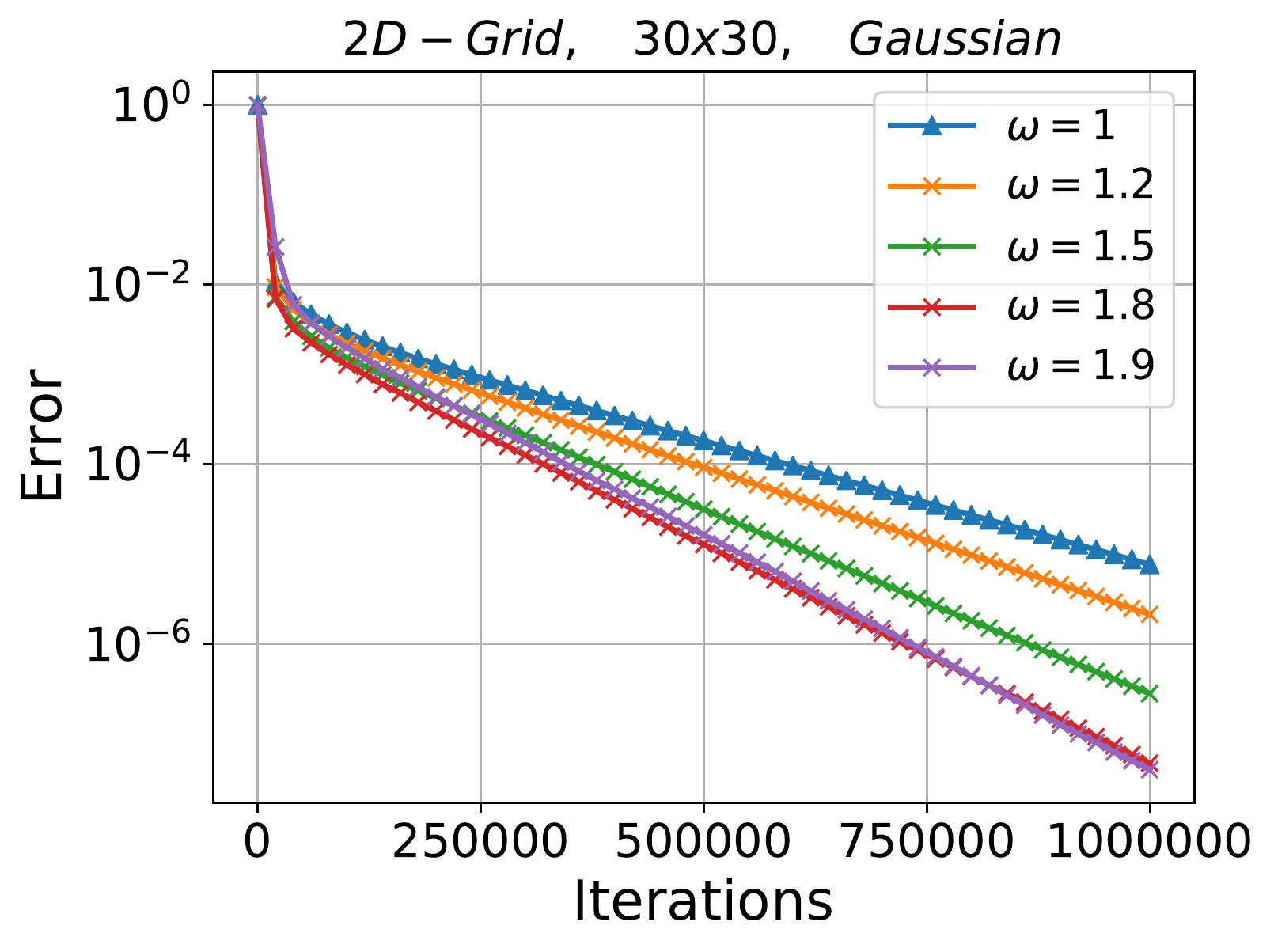}
\end{subfigure}%
\begin{subfigure}{.3\textwidth}
  \centering
  \includegraphics[width=1\linewidth]{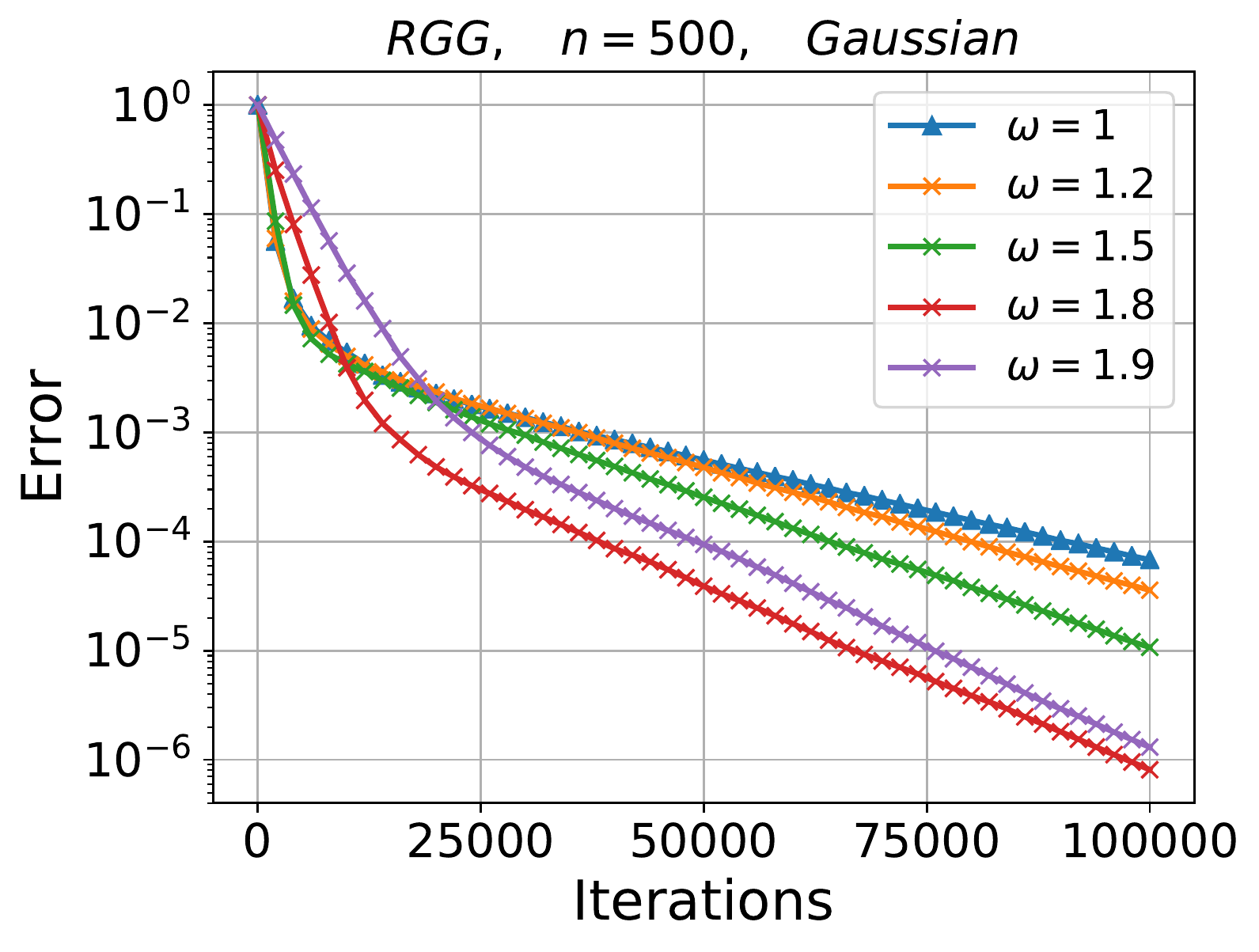}
\end{subfigure}
\begin{subfigure}{.3\textwidth}
  \centering
  \includegraphics[width=1\linewidth]{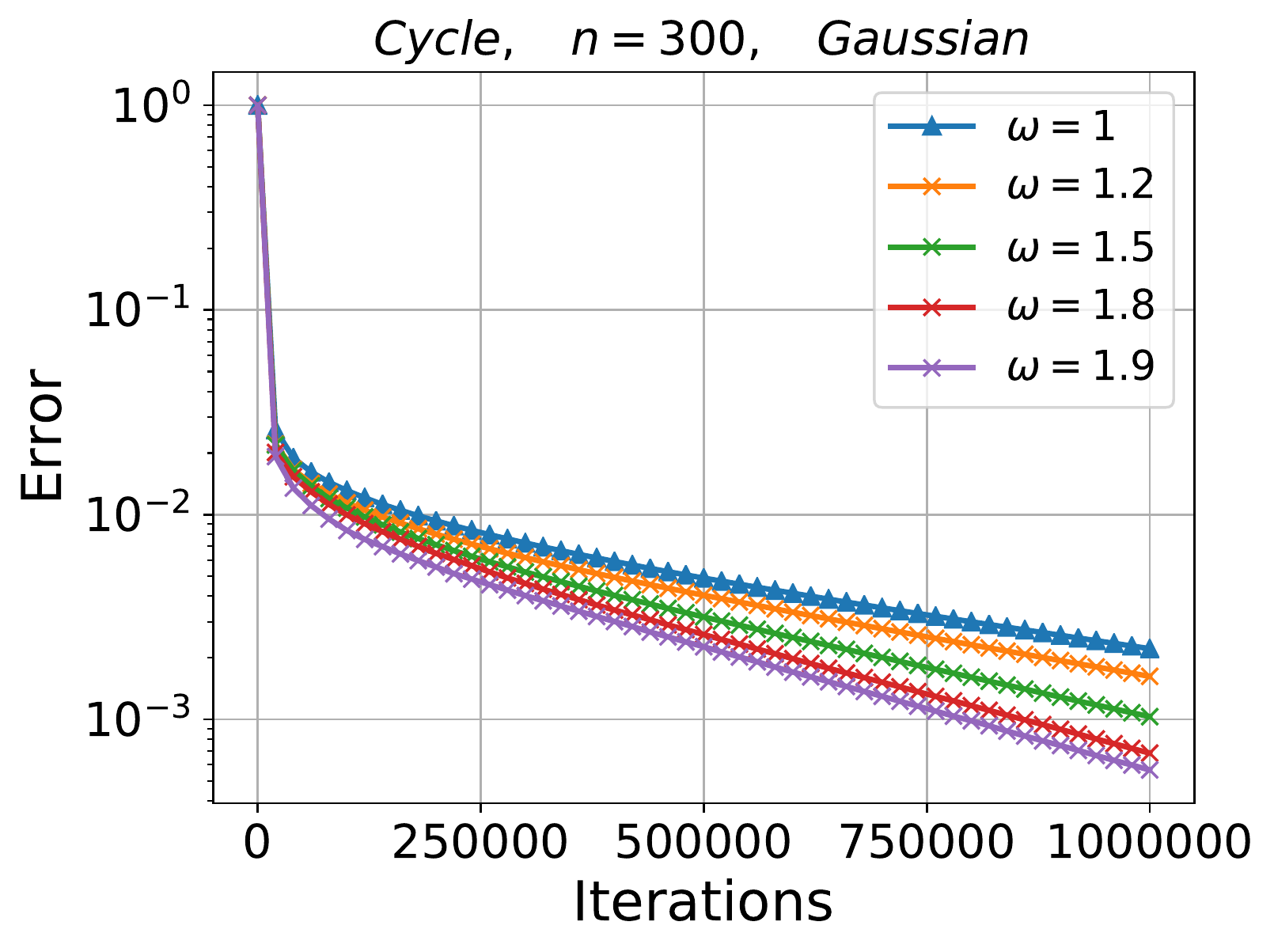}
\end{subfigure}\\
\begin{subfigure}{.3\textwidth}
  \centering
  \includegraphics[width=1\linewidth]{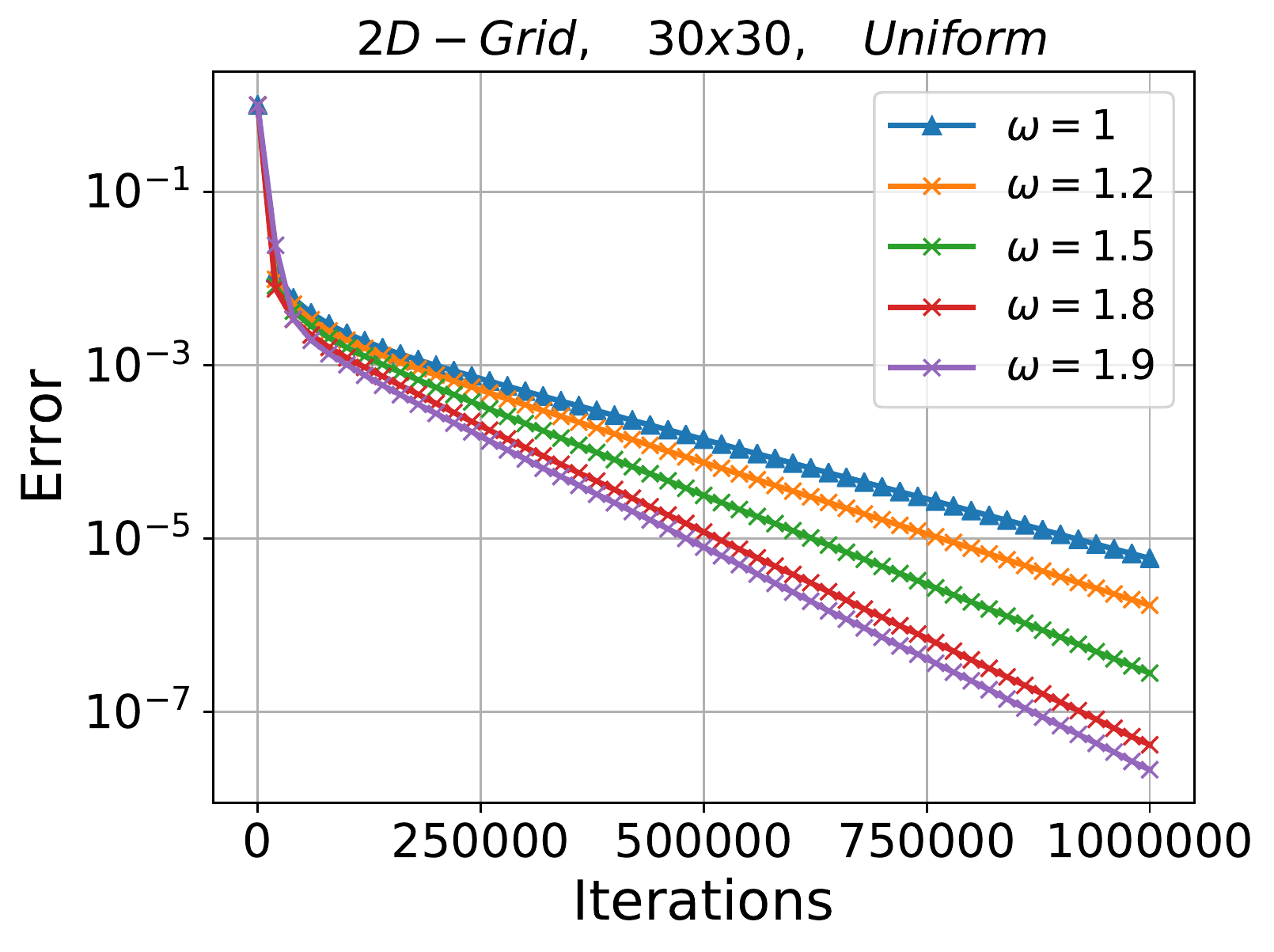}
\end{subfigure}%
\begin{subfigure}{.3\textwidth}
  \centering
  \includegraphics[width=1\linewidth]{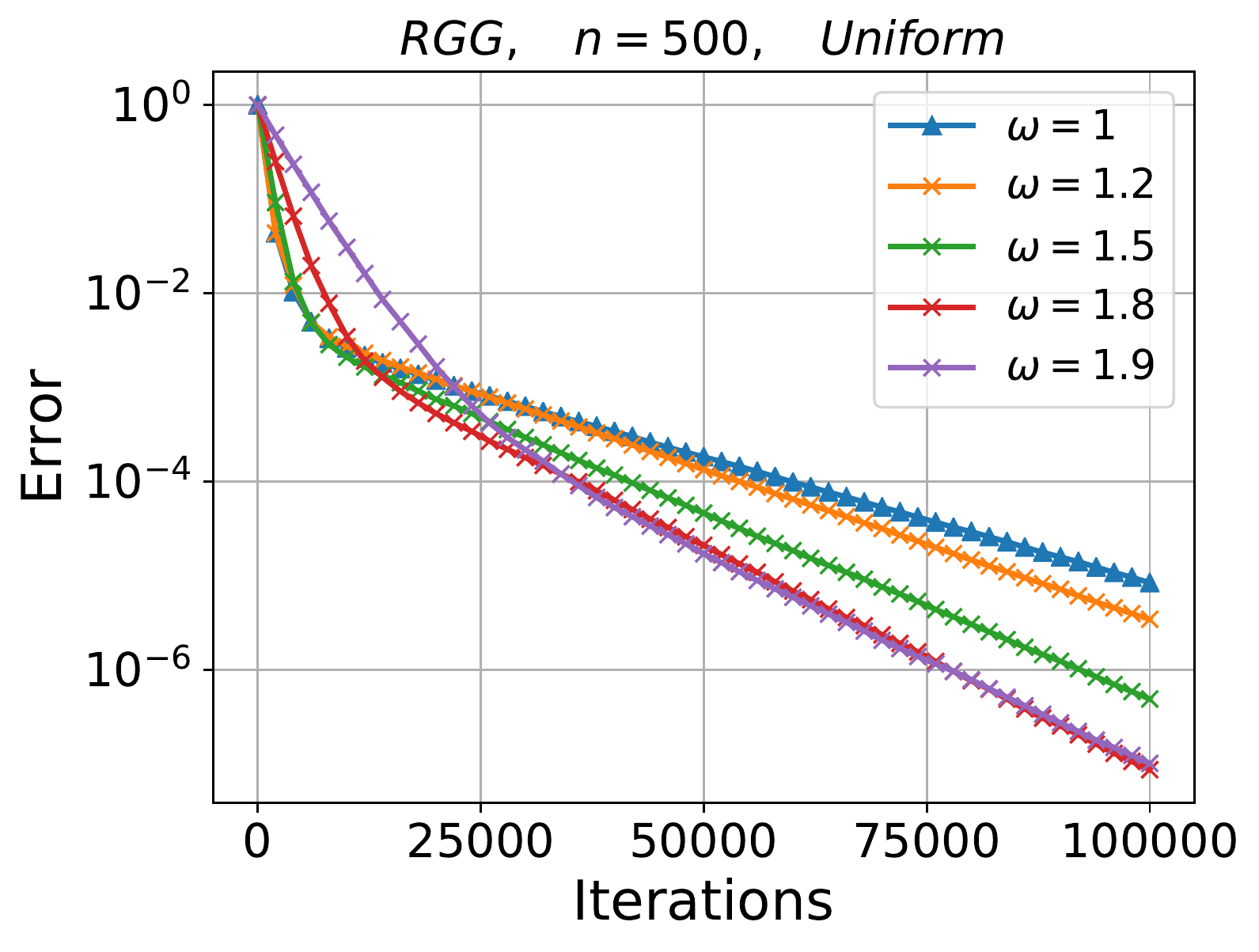}
\end{subfigure}
\begin{subfigure}{.3\textwidth}
  \centering
  \includegraphics[width=1\linewidth]{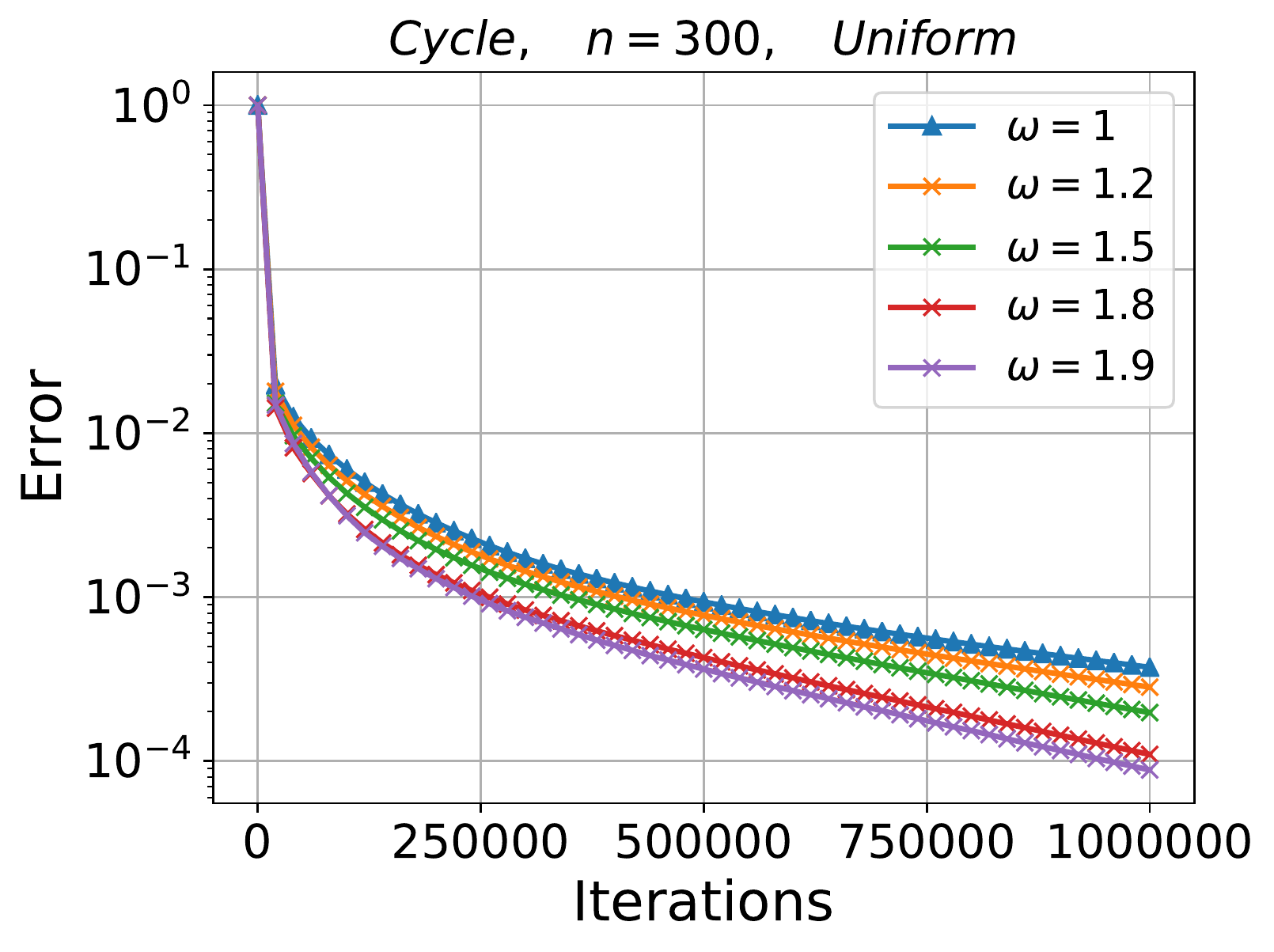}
\end{subfigure}\\
\begin{subfigure}{.3\textwidth}
  \centering
  \includegraphics[width=1\linewidth]{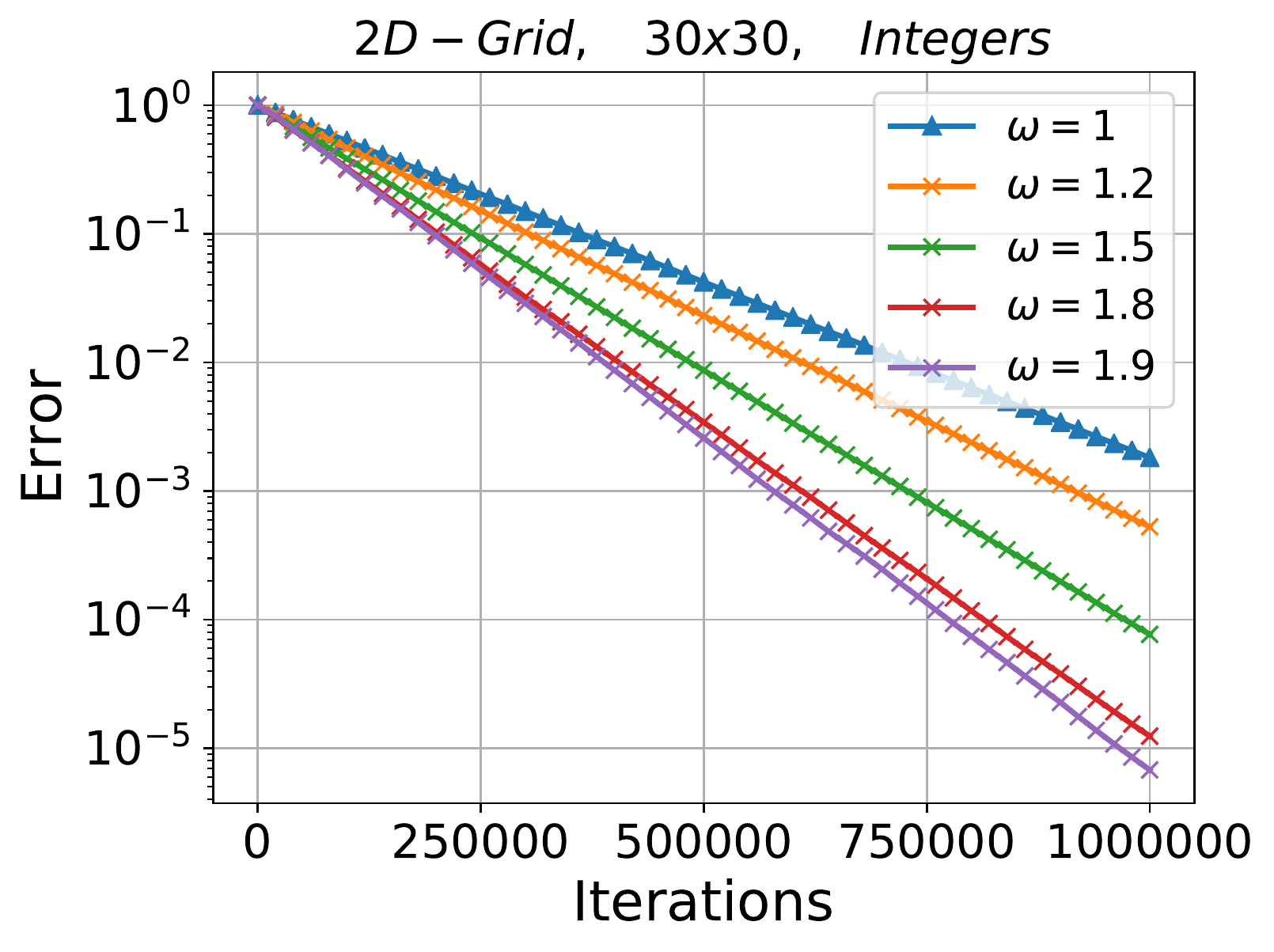}
\end{subfigure}%
\begin{subfigure}{.3\textwidth}
  \centering
  \includegraphics[width=1\linewidth]{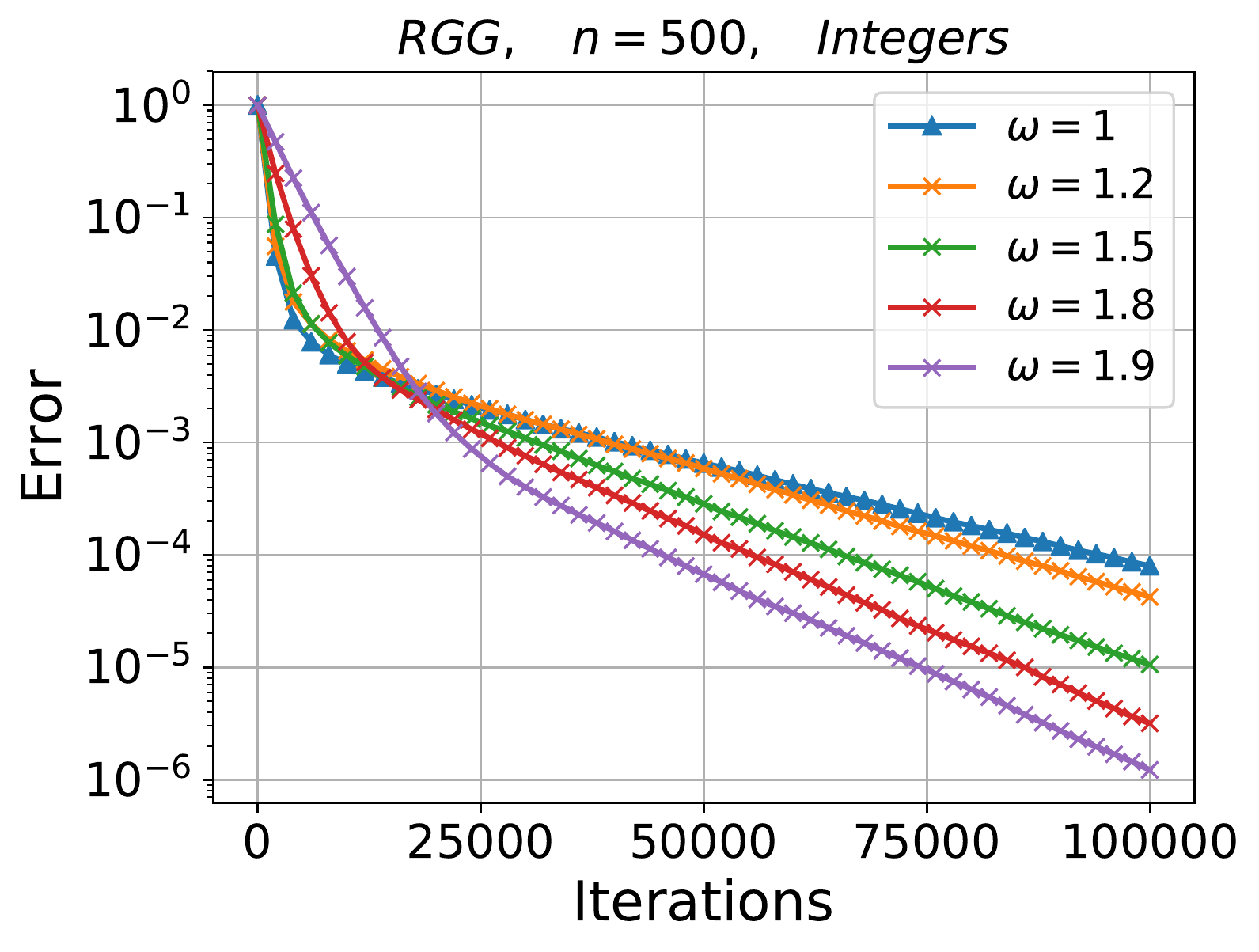}
\end{subfigure}
\begin{subfigure}{.3\textwidth}
  \centering
  \includegraphics[width=1\linewidth]{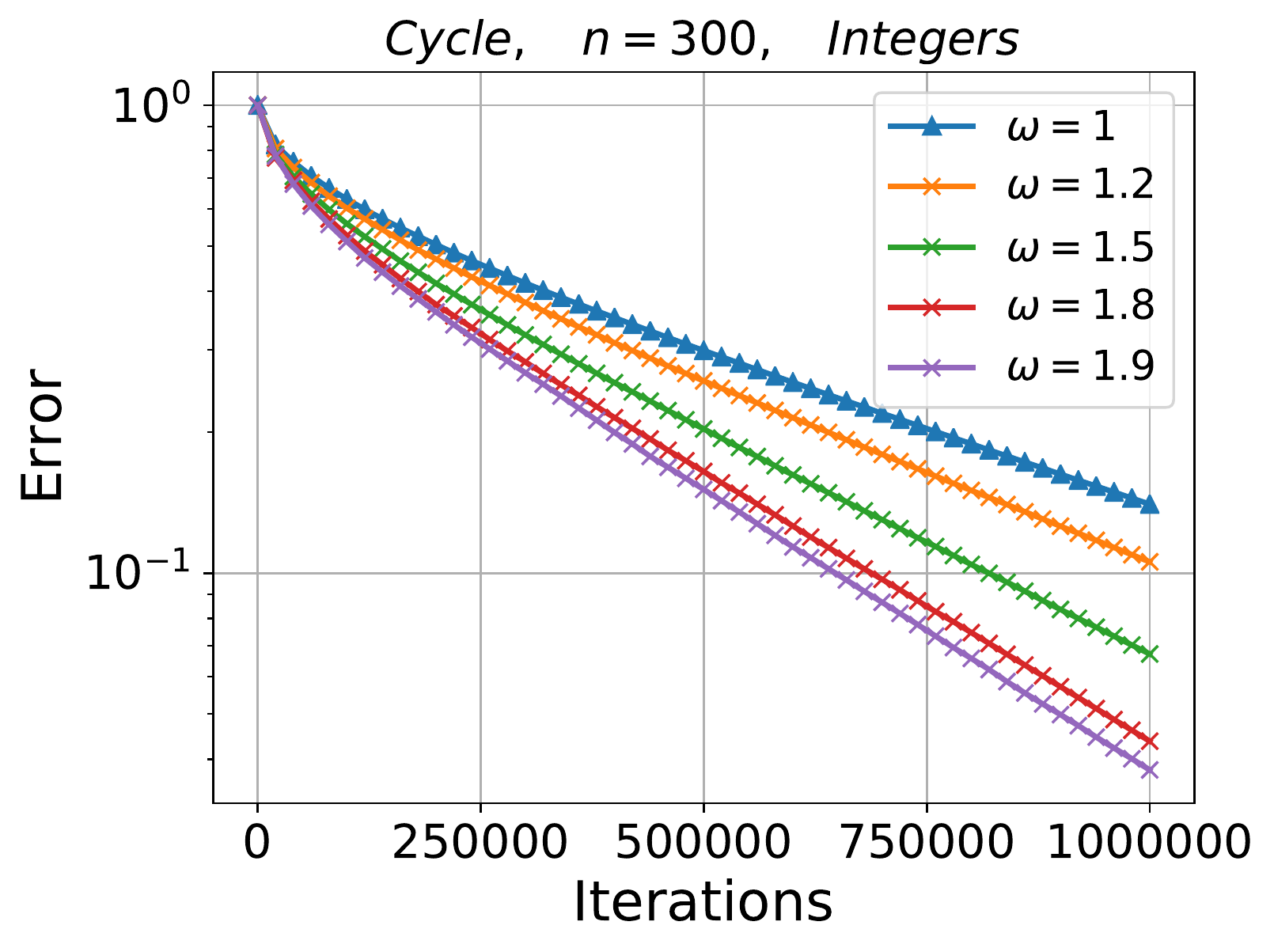}
\end{subfigure}
\caption{\footnotesize Performance of Relaxed randomized pairwise Gossip algorithm in a 2-dimension grid, random geometric graph (RGG) and a cycle graph. The case of $\omega=1$ corresponds to the randomized pairwise gossip algorithm proposed in \cite{boyd2006randomized} ; The $n$ in the title of each plot indicates the number of nodes of the network. For the grid graph this is $n \times n$. The title of each plot indicates the vector of starting values that is used. }
\label{RelaxedGossipFigure}
\end{figure}

\section{Conclusion}
\label{conclusion}
In this chapter, we present a general framework for the analysis and design of randomized gossip algorithms.  Using tools from numerical linear algebra and the area of randomized projection methods for solving linear systems we propose novel serial, block and accelerated gossip protocols for solving the average consensus and weighted average consensus problems. 

We believe that this work could open up several future avenues for research. Using similar approach with the one presented in this manuscript, many popular projection methods can be interpreted as gossip algorithms when used to solve linear systems encoding the underlying network. This can lead to the development of novel distributed protocols for average consensus.  

In addition, we speculate that the gossip protocols presented in this work can be extended to the more general setting of multi-agent consensus optimization where the goal is to minimize the average of convex or non-convex functions $\frac{1}{n} \sum_{i=1}^n f_i(x)$ in a decentralized way \cite{nedic2018network}. 

\section{Missing Proofs} 
 \subsection{Proof of Theorem \ref{thm:complexity_standard}}
 \label{ProofTave}
 \begin{proof}
Let $z^k \eqdef \|x^k-x^*\|$ , $x^0=c$ is the starting point and $\rho$ is as defined in \eqref{RateRho}.
From Theorem~\ref{ConvergenceSketchProject} we know that sketch and project method converges with\begin{equation}
 \label{ajcn}
 \Exp[\|x^k-x^*\|_{\bB}^2]\leq \rho^k \|x^0-x^*\|_{\bB}^2,
 \end{equation}
where $x^*$ is the solution of \eqref{BestApproximation_IntroThesis}. Inequality \eqref{ajcn}, together with Markov inequality can be used to give the following bound
\begin{equation} 
\Prob(z^k / z^0 \geq \varepsilon^2) \leq \frac{\Exp(z^k/z^0)}{\varepsilon^2} \leq \frac{\rho^k}{\varepsilon^2}.
\end{equation}
Therefore, as long as $k$ is large enough so that $\rho^k \leq \varepsilon^3$, we have
$\Prob \left(z^k / z^0 \geq \varepsilon^2 \right) \leq \varepsilon$. 
That is, if 
$$\rho^k \leq \varepsilon^3 \Leftrightarrow k \geq \frac{3\log\varepsilon}{\log\rho}\Leftrightarrow k \geq \frac{3\log(1/\varepsilon)}{\log(1/ \rho)}, $$
then:$$\Prob \left(\frac{\|x^{k}-\bar{c}1\|}{\|x^{0}-\bar{c}1\|}\geq \varepsilon \right)\leq\varepsilon.$$

Hence, an upper bound for value $T_{ave}(\epsilon)$ can be obtained as follows,
\begin{eqnarray}
T_{ave}(\epsilon)&=&\sup_{c\in \R^n} \inf  \left\{k\;:\; \Prob \left(z^k > \varepsilon z^0 \right)\leq\varepsilon \right\} \leq \sup_{c\in \R^n} \inf  \left\{k\;:\; k \geq \frac{3\log(1/\varepsilon)}{\log(1/ \rho)}  \right\} \notag\\
 & =&\sup_{c\in \R^n} \frac{3 \log(1/\varepsilon)}{\log(1/ \rho)}= \frac{3 \log(1/\varepsilon)}{\log(1/ \rho)}
  \leq  \frac{3 \log(1/\varepsilon)}{1-\rho},
\end{eqnarray}
where in last inequality we use $1/\log(1/ \rho) \leq 1/1-\rho$ which is true because $\rho \in (0,1)$.  
\end{proof}

\subsection{Proof of Theorem \ref{TheoremRBK}}
\label{ProofRBK}

\begin{proof}
The following notation conventions are used in this proof.
With $q_k$ we indicate the number of connected components of subgraph $\cG_k$, while with $\cV_r$ we denote the set of nodes of each connected component $q_k$ $(r \in \{1,2,\dots,q_k\})$. Finally, $|\cV_r|$ shows the cardinality of set $\cV_r$. Notice that, if $\cV$ is the set of all nodes of the graph then $\cV= \underset{r=\{1,2,...q\}}{\cup} \cV_r$ and $|\cV|=\sum\limits_{{r}=1}^{q} |\cV_r|$. 

Note that from equation \eqref{RBKaLgorithm}, the update of RBK for $\bA=\bQ$(Incidence matrix) can be expressed as follows:
\begin{equation}
\label{constrained}
\begin{aligned}
& \underset{x}{\text{minimize}}
& & \phi^k(x)\eqdef \|x-x^k\|^2\\
& \text{subject to}
& & \bI_{:C}^\top \bQ x=0
\end{aligned}
\end{equation}
Notice that $\bI_{:C}^\top \bQ$ is a row submatrix of matrix $\bQ$ with rows those that correspond to the random set $C \subseteq \cE$ of the edges. From the expression of matrix $\bQ$ we have that
$$(\bI_{:C}^\top \bQ)_{e:}^\top=f_i-f_j , \quad \forall e=(i,j) \in C \subseteq \cE.$$

Now, using this, it can be seen that the constraint $\bI_{:C}^\top \bQ x=0$ of problem \eqref{BestApproximation_IntroThesis} is equivalent to $q$ equations (number of connected components) where each one of them forces the values $x_i^{k+1}$ of the nodes $i \in \cV_r$ to be equal. That is, if we use $z_r$ to represent the value of all nodes that belong in the connected component $r$ then:
\begin{equation}
\label{zr}
 x_i^{k+1}= z_r \quad \forall i \in \cV_r,
\end{equation}
and the constrained optimization problem \eqref{BestApproximation_IntroThesis} can expressed as unconstrained as follows:
\begin{equation}
\label{fin}
 \underset{z}{\text{minimize}}  \quad \phi^k(z)=\sum\limits_{i \in \cV_1}^{}(z_1-x_i^{k})^2+...+\sum\limits_{i \in \cV_q}^{}(z_q-x_i^{k})^2,
\end{equation}
where $z=(z_1,z_2,\dots,z_q) \in \R^q$ is the vector of all values $z_r$ when $r \in \{1,2,\dots,q\}$.
Since our problem is unconstrained the minimum of equation \eqref{fin} is obtained when $\nabla \phi^k(z)=0$.

By evaluating partial derivatives of \eqref{fin} we obtain:
$$\frac{\partial \phi^k(z)}{\partial z_r}=0 \Longleftrightarrow \sum\limits_{i \in \cV_r}^{} 2(z_r- x_i^k)=0.$$
As a result, 
$$z_r=\frac{\sum\limits_{i \in \cV_r}^{} x_i^k}{|\cV_r|}, \quad \forall r \in \{1,2,\dots,q\}. $$
Thus from \eqref{zr}, the value of each node $i \in \cV_r$ will be updated to 
$$x_i^{k+1}= z_r=\frac{\sum\limits_{i \in \cV_r}^{} x_i^k}{|\cV_r|}.$$ .
\end{proof}
\newpage

\chapter{Privacy Preserving Randomized Gossip Algorithms}
\label{ChapterPrivacy}

\section{Introduction}
\label{sec:introduction}
In this chapter, similar to Chapter~\ref{ChapterGossip}, we consider the average consensus (AC) problem. 
In particular, we focus on randomized gossip algorithms for solving the AC problem and propose techniques for protecting the information of the initial values $c_i$, as these may be sensitive. We develop and analyze three privacy preserving variants of the randomized pairwise gossip algorithm (``randomly pick an edge $(i,j)\in \cE$ and then replace the values stored at vertices $i$ and $j$ by their average'') first proposed in \cite{boyd2006randomized} for solving the average consensus problem.  While we shall not formalize the notion of privacy preservation in this work, it will be intuitively clear that our methods indeed make it harder for nodes to infer information about the private values of other nodes, {\it which might be useful in practice.} 

\subsection{Background}
The literature on decentralized protocols for solving the average consensus problem is vast and has long history \cite{tsitsiklis1984problems, tsitsiklis1986distributed, bertsekas1989parallel, kempe2003gossip}. In particular, the algorithms for solving this problem can be divided into two broad categories: the average consensus algorithms \cite{xiao2004fast} which work in a synchronous setting and the gossip algorithms \cite{boyd2006randomized, shah2009gossip} which they consider ideal protocols for the asynchronous time model \cite{boyd2006randomized}.  
In the average consensus algorithms, all nodes of the network update their values simultaneously by communicating with a set of their neighbours and in each iteration of the algorithmic procedure the same update occurs. On the other hand, in gossip protocols only one edge of the whole network is selected at each iteration and only the nodes, that this edge connects, exchange their private information and update their values to their average. 

In this chapter, we focus on modifying the basic algorithm of \cite{boyd2006randomized}, which we refer to as ``Standard Gossip'' algorithm. In the following, we review some of the most important gossip protocols for solving the average consensus proposed in the last decade. While we do not address any privacy considerations for these protocols, they can serve as inspiration for further work. For a survey of relevant work, we refer the interested reader to \cite{dimakis2010gossip, olfati2007consensus, ren2007information, nedic2018network}.

The \emph{Geographic Gossip algorithm} was proposed in \cite{dimakis2008geographic}, in which the authors combine the gossip approach with a geographic routing towards a randomly chosen location with the main goal to improve the convergence rate of Standard Gossip algorithm. In each step, a node is activated, assuming that it is aware of its geographic location and some additional assumptions on the network topology, it chooses another node from the rest of the network (not necessarily one of its neighbours) and performs a pairwise averaging with this node. Later, using the same assumptions, this algorithm was extended into \emph{Geographic Gossip Algorithm with Path Averaging} \cite{benezit2010order}, in which connected sequences of nodes were chosen in each step and they averaged their values. More recently, in \cite{freschi2016accelerating} and \cite{freschi2017} authors propose a geographic and path averaging methods which converge to the average consensus without the assumption that nodes are aware of their geographic location. Recall that, in Section~\ref{BlockGossip} we show how the path averaging gossip algorithm can be seen as special case of the Randomized Block Kaczmarz method for solving consistent linear systems.

Another important randomized gossip algorithm is the \emph{Broadcast Gossip algorithm}, first proposed in \cite{aysal2009broadcast} and then extended in \cite{franceschelli2011distributed, wu2013, jun2013performance}. The idea of this algorithm is simple: In each step, a node in the network is activated uniformly at random, following the asynchronous time model, and broadcasts its value to its neighbours. The neighbours receive this value and update their own values. It was experimentally shown that this method converges faster than the pairwise and geographic randomized gossip algorithms.

Alternative gossip protocols are the so-called \emph{non-randomized Gossip algorithms} \cite{mou2010deterministic, he2011periodic, liu2011deterministic, yu2017distributed}. Typically, this class of algorithms executes the pairwise exchanges between nodes in a deterministic, such as predefined cyclic, order. $T$-periodic gossiping is a protocol which stipulates that each node must interact with each of its neighbours exactly once every $T$ time units. Under suitable connectivity assumptions of the network $\mathcal{G}$, the $T$-periodic gossip sequence will converge at a rate determined by the magnitude of the second largest eigenvalue of the stochastic matrix determined by the sequence of pairwise exchanges which occurs over a period. It has been shown that if the underlying graph is a tree, the mentioned eigenvalue is constant for all possible $T$-periodic gossip protocols.

 \emph{Accelerated Gossip algorithms} have also been proposed for solving the average consensus problem. In this setting, the nodes of the network incorporate additional memory to accelerate convergence. In particular,  the nodes update their value using an update rule that involves not only the current values of the sampled nodes but also their previous values. This idea is closely related to the shift register methods studied in numerical linear algebra for improving the convergence rate of linear system solvers. The works \cite{cao2006accelerated, liu2013analysis} have shown theoretically and numerically, that under specific assumptions this idea can improve the performance of the Standard Gossip algorithm. For more details on these gossip protocols check also Section~\ref{connectionOfAcceleratedMethods} of this thesis.

\emph{Randomized Kaczmarz-type Gossip algorithms.} In Chapter~\ref{ChapterGossip} of this thesis we presented how popular randomized Kaczmarz-type methods for solving large linear systems can also solve the AC problem. We explained how these methods can be interpreted as randomized gossip algorithms when applied to special systems encoding the underlying network structure and present in detail their decentralized nature. 

\paragraph{Asynchronous Time Model:} In this chapter, we are interested in the asynchronous time model \cite{boyd2006randomized, bertsekas1989parallel}. More precisely, we assume that each node of our network has a clock which ticks at a rate of $1$ Poisson process. This is equivalent of having available a global clock which ticks according to a rate $n$ Poisson process and selects an edge of the network uniformly at random. In general, the synchronous setting (all nodes update the values of their nodes simultaneously using information from a set of their neighbours) is convenient for theoretical considerations but is not representative of some practical scenarios, such as the distributed nature of sensor networks. For more details on clock modeling we refer the reader to \cite{boyd2006randomized}, as the contribution of this chapter is orthogonal to these considerations.

\paragraph{Privacy and Average Consensus:} 
Finally, the introduction of notions of privacy within the AC problem is relatively recent in the literature, and the existing works consider two different ideas.
\begin{enumerate}
\item In \cite{huang2012differentially}, the concept of differential privacy \cite{dwork2014algorithmic} is used to protect the output value $\bar{c}$ computed by all nodes. In this work, an exponentially decaying Laplacian noise is added to the consensus computation. This notion of privacy refers to protection of the {\it final average}, and formal guarantees are provided. 
\item A different approach with a more stricter goal is the design of privacy-preserving average consensus protocols that guarantee protection of the {\it initial values} $c_i$ of the nodes \cite{nozari2017differentially, manitara2013privacy, mo2017privacy}. In this setting each node should be unable to infer a lot about the initial values $c_i$ of any other node. In the existing works, this is mainly achieved with the clever addition of noise through the iterative procedure that guarantees preservation of privacy and at the same time converges to the exact average. We shall however mention, that none of these works address any specific notion of privacy (no clear measure of privacy is presented) and it is still not clear how the formal concept of differential privacy \cite{dwork2014algorithmic} can be applied in this setting.
\end{enumerate}

It is worth to highlight that all of the above-mentioned privacy preserving average consensus papers propose protocols which work on the synchronous setting (all nodes update their values simultaneously). To the best of our knowledge our proposed protocols are the first that solve the AC problem and at the same time protect the initial values of the nodes using the asynchronous time model (by having gossip updates).

\subsection{Main contributions}
In this chapter, we present three different approaches for solving the Average Consensus problem while at the same time protecting the information about the initial values. To the best of our knowledge, this work is the first which combines the \emph{gossip framework} with the privacy concept of protection of the initial values. It is important to stress that, we provide tools for protection of the initial values, but we do not address any specific notion of privacy or a threat model, nor how these quantitatively translate to any explicit measure. These would be highly application dependent, and we only provide theoretical convergence rates for the techniques we propose.

The methods we propose are all dual in nature. The dual setting of this chapter will be explained in detail in Section~\ref{sec:duality}. Recall that in Chapters~\ref{ChapterIntroduction}, \ref{ChapterMomentum} and \ref{ChapterInexact} of this thesis, we have shown how duality and dual algorithms can be used for solving consistent linear systems. In addition, in Chapter~\ref{ChapterGossip}, the dual viewpoint was extended to the concept of the average consensus problem and the first dual gossip algorithms were presented. As we have seen, the dual updates correspond to updates of the primal variables, via an affine mapping. Using this relationship of the primal and the dual spaces the convergence analysis of the dual methods can be easily obtained once the analysis of the primal methods is available (see for example the proof of Theorem~\ref{TheoremSDSA_IntroThesis} in the introduction of this thesis). In this chapter, one of our contributions is a novel dual analysis of randomized pairwise gossip (without the use of rates that obtain first through a primal analysis) which exactly recovers existing convergence rates for the primal iterates. 

We now outline the three different techniques we propose in this chapter, which we refer to as ``Binary Oracle'', ``$\epsilon$-Gap Oracle'' and ``Controlled Noise Insertion''. The first two are, to best of our knowledge, the first proposals of weakening the oracle used in the gossip framework. Privacy preservation is attained implicitly, as the nodes do not exchange the full information about their values. The last technique is inspired by the addition of noise proposed in \cite{le2014differentially, nozari2017differentially, manitara2013privacy, mo2017privacy} for the synchronous setting. We extend this technique by providing explicit finite time convergence guarantees.

{\bf Binary Oracle.}
We propose to reduce the amount of information transmitted in each iteration to a single bit\footnote{We do not refer to the size of the object being transmitted over the network, but the binary information that can be inferred from the exchange. In practice, this might be achieved using secure multiparty protocols \cite{cramer2015secure}, causing the overall network bandwidth to slightly increase compared to the usual implementation of standard gossip algorithm.}. More precisely, when an edge is selected, each corresponding node will only receive information whether the value on the other node is smaller or larger. Instead of setting the value on the selected nodes to their average, each node increases or decreases its value by a pre-specified step.

{\bf $\epsilon$-Gap Oracle.}
In this case, we have an oracle that returns one of three options and is parametrized by $\epsilon$. If the difference in values of sampled nodes is larger than $\epsilon$, an update similar to the one in Binary Oracle is taken. Otherwise, the values remain unchanged. An advantage compared to the Binary Oracle is that this approach will converge to a certain accuracy and stop there, determined by $\epsilon$ (Binary Oracle will oscillate around optimum for a fixed stepsize). However, in general, it will disclose more information about the initial values.

{\bf Controlled Noise Insertion.}
This approach is inspired by the works of \cite{manitara2013privacy, mo2017privacy}, and protects the initial values by inserting noise in the process. Broadly speaking, in each iteration, each of the sampled nodes first adds a noise to its current value, and an average is computed afterward. Convergence is guaranteed due to the correlation in the noise across iterations. Each node remembers the noise it added last time it was sampled, and in the following iteration, the previously added noise is first subtracted, and a fresh noise of smaller magnitude is added. Empirically, the protection of initial values is provided by first injecting noise into the system, which propagates across the network, but is gradually withdrawn to ensure convergence to the true average.

\begin{table}[t!]%[!h]
\centering
\scalebox{0.9}{
\begin{tabular}{ |p{4.1cm}||M{3.5cm}|M{4.3cm}|M{0.6cm}|  }
 \hline
 \multicolumn{4}{|c|}{Main Results} \\
 \hline
 \hbox{Randomized Gossip Methods} & Convergence Rate & Success Measure & Thm \\
 \hline
 \hline
 Standard Gossip \cite{boyd2006randomized} & $\left( 1-\frac{\ac(\cG)}{2m} \right)^k$ & $\E{\frac{1}{2}\|\bar{c} \ones - x^k\|^2}$ & \ref{thm:G} \\
  \hline
 \hline
 {\bf New:} Private Gossip with Binary Oracle & $1 / \sqrt{k}$ & $\min_{t \leq k} \E{\frac{1}{m}\sum_{e} |x^t_i - x^t_j|}$ & \ref{thm:jhs988sh} \\
 \hline
 {\bf New:} Private Gossip with $\epsilon$-Gap Oracle & $ 1 / (k \epsilon^2)$ & $\E{\frac{1}{k}\sum_{t=0}^{k-1}\Delta^t (\epsilon)}$ & \ref{thm:09y09s9ffs} \\
 \hline
 {\bf New:} Private Gossip with Controlled Noise Insertion & $\left( 1-\min\left( \frac{\ac(\cG)}{2m},\frac{\gamma}{m}\right) \right)^k$ & $\E{ D(y^*)- D(y^{k}) }$& \ref{T: ng general convergence} \\
 \hline
\end{tabular}}
\caption{Complexity results of all proposed privacy preserving randomized gossip algorithms.}
\label{table1}
\end{table}

\paragraph{Convergence Rates of our Methods:} In Table~\ref{table1}, we present the summary of convergence guarantees for the above three techniques. By $\|\cdot\|$ we denote the standard Euclidean norm. 

The two approaches which restrict the amount of information disclosed, Binary Oracle and $\epsilon$-Gap Oracle, converge slower than the standard Gossip. In particular, these algorithms have sublinear convergence rate. At first sight, this should not be surprising, since we indeed use much less information. However, in Theorem~\ref{thm: stepsize_adaptive}, we show that if we had in a certain sense perfect global information, we could use it to construct a sequence of adaptive stepsizes, which would push the capability of the binary oracle to a linear convergence rate. However, this rate is still $m$-times slower than the standard rate of the binary gossip algorithm.  We note, however, that having the global information at hand is an impractical assumption. Nevertheless, this result highlights that there is a potentially large scope for improvement, which we leave for future work.

The approach of Controlled Noise Insertion yields a linear convergence rate which is driven by the minimum of two factors. Without going into details, which of these is bigger depends on the speed by which the magnitude of the inserted noise decays. If the noise decays fast enough, we recover the convergence rate of the standard the gossip algorithm. In the case of slow decay, the convergence is driven by this decay. By $\ac(\cG)$ we denote the {\em algebraic connectivity} of graph $\cG$ \cite{fiedler1973algebraic}. The parameter $\gamma$ controls the decay speed of the inserted noise, see Corollary~\ref{corolary}.

{\bf Measures of Success:} Note that the convergence of each randomized gossip algorithm in Table~\ref{table1} naturally depends on a different measure of suboptimality. All of them converge to $0$ as we approach the optimal solution. The details of these measures will be described later in the main body of this chapter. In particular a lemma that formally describes the key connections between these measures is presented in Section \ref{measures of success}. For now lets us give a brief description of these results.
The standard Gossip and Controlled Noise Insertion essentially depend on the same quantity, but we present the latter in terms of dual values as this is what our proofs are based on. Lemma~\ref{L: rel measures} formally specifies this equivalence. The binary oracle depends on the average difference among directly connected nodes. The measure for the $\epsilon$-Gap Oracle depends on quantities $\Delta^t(\epsilon)=\frac1m\left|\{ (i,j)\in  \cE: |x_i^t-x_j^t|\geq \epsilon\} \right|$, which is the number of edges that the values of their connecting nodes differ by more than $\epsilon$. 

\subsection{Structure of the chapter}
The remainder of this chapter is organized as follows: Section~\ref{sec:duality} introduces the basic setup that is used through the chapter. A detailed explanation of the duality behind the randomized pairwise gossip algorithm is given. We also include a novel and insightful dual analysis of this method as it will make it easier for the reader to parse later development.   In Section~\ref{sec:Private} we present our three private gossip algorithms as well as the associated iteration complexity results. Section~\ref{sec:experimentsPrivacy} is devoted to the numerical evaluation of our methods. Finally, conclusions are drawn in Section~\ref{sec:conclusion}. 

\section{Dual Analysis of Randomized Pairwise Gossip} \label{sec:duality}
As we outlined in the introduction of this chapter, our approach for extending the (standard) randomized pairwise gossip algorithm to privacy preserving variants utilizes duality. The purpose of this section is to formalize this duality. In addition, we provide a novel and self-contained dual analysis of randomized pairwise gossip. While this is of an independent interest, we include the proofs as their understanding aids in the understanding of the more involved proofs of our private gossip algorithms developed in the remainder of the chapter.

The main problems under study are the best approximation problem \eqref{BestApproximation_IntroThesis} and its dual \eqref{DualProblem_IntroThesis} that we have seen multiple times throughout the thesis. However, similar to Chapter~\ref{ChapterGossip} we focus on the more specific setting of the average consensus. To keep the chapter self-contained and for the benefit of the reader we present the definitions of these problems and we explain again how they are related to the average consensus problem. 

\subsection{Primal and dual problems}
\label{sec:primal_dual}

Consider solving the (primal) problem of projecting a given vector $c=x^0\in \R^n$ onto the solution space of a linear system: 
\begin{equation}\label{eq:primal}\min_{x\in \R^n} P(x) \eqdef \frac{1}{2}\|x-x^0\|^2 \quad \text{subject to} \quad \bA x=b,\end{equation} 
where $\bA\in \R^{m\times n}$, $b\in \R^m$, $x^0\in \R^n$. Note that this the best approximation problem \eqref{BestApproximation_IntroThesis} with $\bB=\bI$ (Identity matrix).
We assume the problem is feasible, i.e., that the system $\mA x = b$ is consistent. With the above optimization problem we associate the dual problem
\begin{equation}\label{eq:dual}\max_{y\in \R^m} D(y)\eqdef (b-\bA x^0)^\top y - \frac{1}{2}\|\bA^\top y\|^2.\end{equation}
As we have explained in the previous chapters, the dual is an unconstrained concave (but not necessarily strongly concave) quadratic maximization problem. It can be seen that as soon as the system $\mA x = b$ is feasible, the dual problem is bounded. Moreover, all bounded concave quadratics in $\R^m$ can be written in the as $D(y)$ for some matrix $\mA$ and vectors $b$ and $x^0$ (up to an additive constant).

With any dual vector $y$ we associate the primal vector via an affine transformation: $\phi(y) = x^0 + \mA^\top y.$ It can be shown that if $y^*$ is dual optimal, then $x^*=\phi(y^*)$ is primal optimal \cite{gower2015stochastic}. Hence, any dual algorithm producing a sequence of dual variables $y^t \to y^*$ gives rise to a corresponding primal algorithm producing the sequence $x^t \eqdef \phi(y^t) \to x^*$. We shall now consider one such dual algorithm.

\subsection{Stochastic dual subspace ascent}

Stochastic dual subspace ascent (SDSA) is a stochastic method for solving the dual problem \eqref{eq:dual}. In Section~\ref{BestaprooximationSection_INtro} we have already described how by choosing appropriately the main parameters of SDSA we can recover many known algorithms as special cases. In this chapter we focus only on one special case of the general algorithm. For the more general update rule of SDSA check equations \eqref{SDSA_IntroThesis} and \eqref{lambdak_IntroThesis}. In particular, following the notation of the rest of the thesis, we select $\omega=1$ (stepsize of the method) and $\bB=\bI$ (positive definite matrix that defines the geometry of the space). 
If we further use the fact that AC linear systems (see Definition~\ref{defACsystem}) have zero right hand side ($b=0$), then the update rule of SDSA takes the form:

\begin{equation}\label{eq:SDSA}y^{t+1} = y^t - \bS_t(\bS_t^\top \bA \bA^\top \bS_t)^\dagger \bS_t^\top \bA(x^0 + \bA^\top y^t),\end{equation}
where $\mS_t$ is a random matrix drawn independently at each iteration $t$ from an arbitrary but fixed distribution $\cD$, and $\dagger$ denotes the Moore-Penrose pseudoinverse. 
 
The corresponding primal iterates are defined via:
\begin{equation}
x^{t} \eqdef \phi(y^t) = x^0 + \bA^\top y^t.
\label{Eq: duality mapping}
\end{equation}

The relevance of this all to average consensus follows through the observation that for a specific choice of matrix $\mA$ and distribution $\cD$, the primal method produced by combining \eqref{Eq: duality mapping} and \eqref{eq:SDSA} is equivalent to the (standard) randomized pairwise gossip method (see discussion in Chapter~\ref{ChapterGossip}). In that case, SDSA is a dual variant of randomized pairwise gossip. In particular, in this chapter, we define $\cD$ as follows: $\bS_t$ is a  unit basis vector in $\R^m$, chosen uniformly at random from the collection of all such unit basis vectors, denoted $\{f_e \;|\; e\in \cE\}$. In this case, SDSA is the \textit{randomized coordinate ascent method} applied to the dual problem.

\subsection{Randomized gossip setup: choosing $\mA$}

We wish $(\mA,b)$ to be an average consensus (AC) system (see Definition~\ref{defACsystem}).  As we explained in Chapter~\ref{ChapterGossip} if $\mA x = b$ is an AC system, then the solution of the primal problem \eqref{eq:primal} is necessarily
$x^* = \bar{c} \cdot \ones$, where $\bar{c} = \frac{1}{n}\sum_{i=1}^n x^0_i$ is the value that each node needs to compute in the standard average consensus problem ($x^*_i = \bar{c}$ for all $i \in \cV$). 

In the rest of this chapter we focus on a specific AC system; the one  in which the matrix $\bA$  is the incidence matrix of the graph $\cG$.  In particular, we let $\mA\in \R^{m\times n}$ be the matrix defined as follows. Row $e=(i,j)\in \cE$  of $\mA$ is given by $\mA_{ei} = 1$, $\mA_{ej}=-1$ and $\mA_{el}=0$ if $l\notin \{i,j\}$.  Notice that the system $
\mA x=0$ encodes the constraints $x_i=x_j$ for all $(i,j)\in \cE$, as desired.

\subsection{Randomized pairwise gossip}
\label{sec:gossip}

We provide both primal and dual form of the (standard) randomized pairwise gossip algorithm. 

The primal form is standard and needs no lengthy commentary. At the beginning of the process, node $i$ contains private information $c_i = x^0_i$. In each iteration we sample a pair of connected nodes $(i,j)\in \cE$ uniformly at random, and update $x_i$ and $x_j$ to their average. We let the values at the remaining nodes intact.

\setcounter{algorithm}{\algG-1}
\begin{algorithm}[H]
  \caption{(Primal form)}
  \begin{algorithmic}[1]
    \Require{Vector of private values $c\in \R^n$.}
    \Ensure{Set $x^0=c$.}
 \For{$t= 0,1,\dots, k-1$}
 \State Choose edge $e = (i,j)\in \cE$ uniformly at random.
 \State Update the primal variable: 
$$x^{t+1}_l = \begin{cases} \frac{x^t_i+x^t_j}{2},\quad & l \in \{i,j\}\\ 
x^{t}_l, \quad & l \notin \{i,j\}.
 \end{cases}
$$
 \EndFor
 \State \textbf{Return} $x^k$
 \end{algorithmic}
\end{algorithm}

The dual form of the standard randomized pairwise gossip method is a specific instance of SDSA, as described in \eqref{eq:SDSA}, with $x^0=c$ and $\mS_t$ being a randomly chosen standard unit basis vector $f_e$ in $\R^m$ ($e$ is a randomly selected edge). It can be seen \cite{gower2015stochastic} that in that case, \eqref{eq:SDSA} takes the following form:

\setcounter{algorithm}{\algG-1}
\begin{algorithm}[H]
  \caption{(Dual form)}
  \begin{algorithmic}[1]
    \Require{Vector of private values $c\in \R^n$.}
    \Ensure{ Set $y^0=0\in\R^m$.}
 \For{$t= 0,1,\dots, k-1$}
 \State Choose edge $e = (i,j)\in \cE$ uniformly at random.
 \State Update the dual variable: 
$$y^{t+1} =  y^t + \lambda^t f_e \quad \mathrm{where} \quad \lambda^t =\mathrm{argmax}_{\lambda'} D(y^t + \lambda' f_e). $$
 \EndFor
 \State \textbf{Return} $y^k$
 \end{algorithmic}
\end{algorithm}

The following lemma is useful for the analysis of all our methods. It describes the increase in the dual function value after an arbitrary change to a single dual variable $e$.

\begin{lem} \label{lem:98y98yss}
Define $z=y^t + \lambda f_e$, where $e=(i,j)$ and $\lambda\in \R$. Then
 \begin{equation} \label{eq:89g9s8guffxx} D(z) - D(y^t) = -\lambda(x^t_i - x^t_j) - \lambda^2.\end{equation}
\end{lem}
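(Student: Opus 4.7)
The plan is a direct calculation: expand $D(z)$ using the closed-form expression \eqref{eq:dual}, subtract $D(y^t)$, and simplify using (i) the primal-dual affine map $x^t = x^0 + \bA^\top y^t$ and (ii) the specific structure of the incidence matrix $\bA$.

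First, I would write $z = y^t + \lambda f_e$ and expand
\[
D(z) = (b - \bA x^0)^\top(y^t + \lambda f_e) - \tfrac{1}{2}\|\bA^\top y^t + \lambda \bA^\top f_e\|^2.
\]
Expanding the quadratic term via $\|u + \lambda v\|^2 = \|u\|^2 + 2\lambda \langle u, v\rangle + \lambda^2 \|v\|^2$, the pieces independent of $\lambda$ collapse to $D(y^t)$, leaving
\[
D(z) - D(y^t) = \lambda f_e^\top(b - \bA x^0) - \lambda \langle \bA^\top y^t, \bA^\top f_e\rangle - \tfrac{\lambda^2}{2}\|\bA^\top f_e\|^2.
\]
The middle term is $-\lambda f_e^\top \bA \bA^\top y^t$, so the two linear-in-$\lambda$ terms combine into $\lambda f_e^\top\bigl(b - \bA(x^0 + \bA^\top y^t)\bigr) = \lambda f_e^\top(b - \bA x^t)$ by the identification \eqref{Eq: duality mapping}.

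Next, I would specialize to the AC system. Since $(\bA,b)$ is an AC system, $b = 0$, so the linear coefficient becomes $-\lambda f_e^\top \bA x^t$. Because $f_e$ is the unit basis vector indexed by edge $e = (i,j)\in \cE$, the product $f_e^\top \bA x^t$ is simply the $e$-th row of $\bA$ applied to $x^t$. By the construction of $\bA$ as the incidence matrix of $\cG$, this row has $+1$ in column $i$, $-1$ in column $j$, and zeros elsewhere; hence $f_e^\top \bA x^t = x^t_i - x^t_j$, giving the claimed linear term $-\lambda(x^t_i - x^t_j)$.

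Finally, I would compute the quadratic coefficient: $\bA^\top f_e = e_i - e_j$ (standard basis vectors in $\R^n$), so $\|\bA^\top f_e\|^2 = 2$, and the quadratic contribution is $-\tfrac{\lambda^2}{2}\cdot 2 = -\lambda^2$. Assembling both pieces yields $D(z) - D(y^t) = -\lambda(x^t_i - x^t_j) - \lambda^2$, as desired. None of the steps here is a genuine obstacle, since everything is either algebraic expansion of a quadratic form or direct unpacking of the incidence-matrix structure; the only point worth flagging is to be careful that the mixed term $-\lambda \langle \bA^\top y^t, \bA^\top f_e \rangle$ combines with $\lambda f_e^\top (b - \bA x^0)$ in precisely the way that invokes the primal variable $x^t$ through \eqref{Eq: duality mapping}, which is what makes the statement clean.
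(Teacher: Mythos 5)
Your proposal is correct and follows essentially the same route as the paper's proof: a direct expansion of the quadratic dual objective, collecting the linear-in-$\lambda$ terms into $-\lambda f_e^\top \bA x^t = -\lambda(x^t_i - x^t_j)$ via the map $x^t = x^0 + \bA^\top y^t$, and computing $\|\bA^\top f_e\|^2 = 2$ from the incidence-matrix structure. The only cosmetic difference is that you carry the general right-hand side $b$ through the calculation before setting $b=0$, whereas the paper substitutes $b=0$ from the outset.
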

\begin{proof}
The claim follows by direct calculation:
\begin{eqnarray*}
 D(y^{t}+ \lambda f_e) - D(y^t)
&=& -(\bA c)^\top (y^t + \lambda f_e) - \frac{1}{2}\|\bA^\top (y^t+ \lambda f_e)\|^2 + (\bA c)^\top y^t + \frac{1}{2}\| \bA^\top y^t\|^2\\
&=&  - \lambda f_e^\top \bA\underbrace{(c +  \bA^\top y^t)}_{x^t} - \frac{1}{2}\lambda^2 \underbrace{\| \bA^\top f_e\|^2}_{=2}  \quad = \quad  -\lambda (x^t_i-x^t_j) - \lambda^2.
\end{eqnarray*}
\end{proof}

The maximizer in $\lambda$ of the expression in \eqref{eq:89g9s8guffxx} leads to the exact line search formula
$\lambda^t = (x_j^t-x_i^t)/2$
used in the dual form of the method. 

\subsection{Complexity results}
\label{complexityResultsSection}

With graph $\cG = \{\cV,\cE\}$ we now associate a certain quantity, which we shall denote $\beta = \beta(\cG)$. It is the smallest nonnegative number $\beta$ such that the following inequality\footnote{We write $\sum_{(i,j)}$ to indicate sum over all {\em unordered} pairs of vertices. That is, we do not count $(i,j)$ and $(j,i)$ separately, only once. By $\sum_{(i,j)\in \cE}$ we denote a sum over all edges of $\cG$. On the other hand, by writing $\sum_i \sum_j $, we are summing over all (unordered) pairs of vertices twice.} holds for all $x\in \R^n$:

\begin{equation}\label{eq:hdgugvej}
 \sum_{(i,j)} (x_j  -x_i)^2 \leq \beta \sum_{(i,j)\in \cE} (x_j  -x_i)^2.
\end{equation}

The Laplacian matrix of graph $\cG$ is given by $\mL = \mA^\top \mA$. Let $\lambda_1(\mL)\geq \lambda_2(\mL) \geq \dots \geq \lambda_{n-1}(\mL)\geq \lambda_n(\mL)$ be the eigenvalues of $\mL$. The {\em algebraic connectivity} of $\cG$ is the second smallest eigenvalue of $\mL$:
\begin{equation} \label{eq:algebraic_connectivity} \ac(\cG) = \lambda_{n-1}(\mL).\end{equation}
We have $\lambda_{n}(\mL)=0$. Since we assume $\cG$ to be connected, we have $\ac(\cG)>0$. 
Thus,  $\ac(\cG)$ is the smallest nonzero eigenvalue of the Laplacian: $\ac(\cG)=\lambda_{\min}^+(\mL) = \lambda_{\min}^+(\mA^\top \mA ).$
As the next result states, the quantities $\beta(\cG)$ and $\ac(\cG)$ are inversely proportional.

\begin{lem}
\label{lem:beta} 
$\beta(\cG) = \frac{n}{\ac(\cG)}.$
\end{lem}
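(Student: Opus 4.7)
The plan is to rewrite both sides of \eqref{eq:hdgugvej} as quadratic forms, restrict to the orthogonal complement of $\ones$ where both forms are non-degenerate, and then recognize $\beta(\cG)$ as the reciprocal of a Rayleigh quotient equal to $\ac(\cG)/n$.

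First I would handle the right-hand side: by definition of the incidence matrix $\bA$ and the Laplacian $\bL=\bA^\top\bA$, we have $\sum_{(i,j)\in\cE}(x_j-x_i)^2 = \|\bA x\|^2 = x^\top \bL x$. For the left-hand side, since the sum is over unordered pairs, I would write
\[
\sum_{(i,j)}(x_j-x_i)^2 \;=\; \tfrac{1}{2}\sum_{i=1}^n\sum_{j=1}^n (x_j-x_i)^2 \;=\; n\|x\|^2 - \Bigl(\sum_{i=1}^n x_i\Bigr)^{\!2} \;=\; n\, x^\top \bigl(\bI - \tfrac{1}{n}\ones\ones^\top\bigr) x.
\]
Letting $\bP \eqdef \bI - \tfrac{1}{n}\ones\ones^\top$ denote the orthogonal projector onto $\ones^\perp$, inequality \eqref{eq:hdgugvej} becomes $n\, x^\top \bP x \leq \beta\, x^\top \bL x$ for all $x\in\R^n$, and $\beta(\cG)$ is by definition the smallest nonnegative constant making this hold.

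Next, I would observe that both $\bP$ and $\bL$ have $\ones$ in their nullspaces (the latter because $\cG$ is connected), so the inequality holds trivially with both sides zero on $\mathrm{span}(\ones)$, and $\beta(\cG)$ is determined by its behaviour on the complementary subspace. Writing any $x\in\R^n$ as $x = \alpha\ones + u$ with $u\perp\ones$, one checks that $x^\top \bP x = \|u\|^2$ and $x^\top \bL x = u^\top \bL u$, so
\[
\beta(\cG) \;=\; \sup_{u\perp\ones,\, u\neq 0} \frac{n\|u\|^2}{u^\top \bL u}.
\]
Finally I would apply the Courant--Fischer (min-max) characterization of eigenvalues to the symmetric positive semidefinite matrix $\bL$, whose smallest eigenvalue is $0$ with eigenvector $\ones$: the minimum of the Rayleigh quotient $u^\top \bL u/\|u\|^2$ over the subspace $\ones^\perp$ equals the second smallest eigenvalue $\lambda_{n-1}(\bL) = \ac(\cG)$ (which is strictly positive by connectivity, so the supremum is attained and finite). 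Therefore
\[
\beta(\cG) \;=\; \frac{n}{\lambda_{n-1}(\bL)} \;=\; \frac{n}{\ac(\cG)},
\]
which is exactly the claim.

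The main obstacle is essentially bookkeeping: correctly evaluating the double sum $\sum_i\sum_j(x_j-x_i)^2$ (including the factor of $\tfrac{1}{2}$ coming from the unordered-pair convention) and then justifying that the Courant--Fischer supremum over $\ones^\perp$ is indeed $1/\ac(\cG)$ rather than $1/\lambda_n(\bL)=\infty$, which is why connectivity of $\cG$ is used. No deep argument is required beyond these linear-algebraic identities.
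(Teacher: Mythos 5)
Your proposal is correct and follows essentially the same route as the paper: both identify $\sum_{(i,j)}(x_j-x_i)^2$ with the quadratic form of $n\bI-\ones\ones^\top$ (the complete-graph Laplacian, which is $n$ times your projector $\bP$), observe that both sides vanish on $\mathrm{span}(\ones)$, and reduce the comparison to the second-smallest eigenvalue of $\bL$ on $\ones^\perp$. Your Rayleigh-quotient/Courant--Fischer phrasing has the small advantage of making explicit that $n/\ac(\cG)$ is the \emph{smallest} admissible constant, a point the paper's termwise eigenvalue comparison only verifies in one direction.
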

\begin{proof}
See Section~\ref{proofPrivacy1}.
\end{proof}

The following theorem gives a complexity result for (standard) randomized gossip. Our analysis is dual in nature.

\begin{thm}\label{thm:G} Consider the randomized gossip algorithm (Algorithm~\algG) with uniform edge-selection probabilities: $p_e=1/m$. Then:
\[\E{D(y^*) - D(y^{k}) } \leq \left(1-\frac{\ac(\cG)}{2m }\right)^k[D(y^*) - D(y^{0}) ]. \]
\end{thm}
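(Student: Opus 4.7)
The plan is to prove Theorem~\ref{thm:G} through a direct dual analysis: I will show a one-step contraction of the dual suboptimality gap in conditional expectation, and then unroll the recursion. The key observation is that Lemma~\ref{lem:98y98yss} already gives me the exact progress in dual value from a single edge update. Plugging in the exact line-search step $\lambda^t = (x_j^t - x_i^t)/2$ used by the dual form of Algorithm~\algG, a short computation yields $D(y^{t+1}) - D(y^t) = (x_i^t - x_j^t)^2/4$ when edge $e=(i,j)$ is selected. Averaging over the uniform distribution on edges gives the conditional one-step improvement
\[
\E{D(y^{t+1}) - D(y^t) \mid y^t} \;=\; \frac{1}{4m}\sum_{(i,j)\in\cE}(x_i^t - x_j^t)^2 \;=\; \frac{1}{4m}\,(x^t)^\top \mL x^t,
\]
where $\mL = \mA^\top\mA$ is the graph Laplacian.

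Next, I would relate this progress to the current dual suboptimality gap $D(y^*) - D(y^t)$. Because the primal iterates $x^t = \phi(y^t) = c + \mA^\top y^t$ lie in an affine subspace parallel to $\range(\mA^\top) = \ones^\perp$, we have $\ones^\top x^t = \ones^\top c$ for all $t$, so $x^t - \bar c \ones$ is orthogonal to $\ones = \ker(\mL)$. Using $\mL\ones = 0$ and the eigenvalue characterization $\ac(\cG) = \lambda_{\min}^+(\mL)$, this gives $(x^t)^\top \mL x^t = (x^t - \bar c\ones)^\top \mL (x^t - \bar c\ones) \geq \ac(\cG)\,\|x^t - \bar c\ones\|^2$. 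On the other hand, the identity $D(y^*) - D(y^t) = \tfrac{1}{2}\|x^t - x^*\|^2$ (with $x^* = \bar c\ones$), which was established earlier in the thesis and follows immediately from expanding $D$ and using $\mA x^0 = \mA x^* = 0$ shifted by the dual correspondence, turns the previous bound into
\[
\E{D(y^{t+1}) - D(y^t) \mid y^t} \;\geq\; \frac{\ac(\cG)}{2m}\,\bigl(D(y^*) - D(y^t)\bigr).
\]

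Rearranging and subtracting both sides from $D(y^*)$ yields the conditional contraction
\[
\E{D(y^*) - D(y^{t+1}) \mid y^t} \;\leq\; \left(1 - \frac{\ac(\cG)}{2m}\right)\bigl(D(y^*) - D(y^t)\bigr).
\]
Taking total expectations, using the tower property, and unrolling for $t = 0, 1, \dots, k-1$ gives the claimed bound. The only potential obstacle I foresee is the step linking $D(y^*) - D(y^t)$ to $\tfrac12\|x^t - x^*\|^2$ — this identity is crucial and should be verified carefully using $\phi$, the consistency $\mA x^* = b = 0$, and the form of $D$; everything else is either Lemma~\ref{lem:98y98yss}, the definition of $\ac(\cG)$, or the orthogonality to $\ones$ coming from the mass-preservation property of the primal iterates.
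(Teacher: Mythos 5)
Your proof is correct and follows essentially the same route as the paper: the exact one-step gain $(x_i^t-x_j^t)^2/4$ from Lemma~\ref{lem:98y98yss} with the line-search step, the bound $\sum_{e=(i,j)\in\cE}(x_i^t-x_j^t)^2 \geq \ac(\cG)\,\|x^t-\bar c\ones\|^2$ (which the paper packages as Lemma~\ref{L: D bound by beta} via $\beta(\cG)=n/\ac(\cG)$, but your direct spectral argument via mass preservation and $\lambda_{\min}^+(\mL)$ is equivalent), and the identity $D(y^*)-D(y^t)=\tfrac12\|x^t-\bar c\ones\|^2$. One small correction to your parenthetical justification of that identity: $\mA x^0=\mA c$ is not zero in general; the identity follows from $\mA x^*=b=0$ and the correspondence $x^t=c+\mA^\top y^t$ alone, exactly as in Proposition~\ref{PropositionDualPrimal}.
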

\begin{proof}
See Section~\ref{proofPrivacy2}
\end{proof}

Theorem~\ref{thm:G}  yields the complexity estimate
${\cal O}\left(\frac{2m}{\ac(\cG)} \log(1/\epsilon)\right)$,
which exactly matches the complexity result obtained from the primal analysis (see \eqref{ratePairwise} in Chapter~\ref{ChapterGossip}). Hence, the primal and dual analyses give the same rate. 

Randomized coordinate descent methods were first analyzed in \cite{leventhal2010randomized, nesterov2012efficiency, richtarik2014iteration, richtarik2016parallel}. For a recent treatment, see \cite{qu2016coordinate, qu2016coordinate2}. Duality in randomized coordinate descent methods was studied in \cite{SDCA, qu2015quartz}. Acceleration was studied in \cite{lee2013efficient, fercoq2015accelerated, allen2016even}. These methods extend to nonsmooth problems of various flavours \cite{SPCDM, SCP}.

With all of this preparation, we are now ready to formulate and analyze our private gossip algorithms; we do so in Section~\ref{sec:Private}.

\section{Private Gossip Algorithms} \label{sec:Private}

In this section, we introduce three novel private gossip algorithms, complete with iteration complexity guarantees. In Section~\ref{measures of success} the key relationships between the measures of success (see Table~\ref{table1}) of all proposed algorithms are presented.  In Section~\ref{sec:B} the privacy is protected via a binary communication protocol. In Section~\ref{sec:E} we communicate more: besides binary information, we allow for the communication of a bound on the gap, introducing the $\epsilon$-gap oracle. In Section~\ref{sec: noise} we introduce a privacy-protection mechanism based on a procedure we call {\em controlled noise insertion}.

\subsection{Measures of success}
\label{measures of success}
We devote this subsection to present Lemma~\ref{L: rel measures} that formally specifies the connections between the different measures of suboptimality of the privacy preserving algorithms, firstly presented in Table~\ref{table1}. 
%The dual function $D$ is the one formally defined in Section~\ref{sec:duality}.
\begin{lem} 
\label{L: rel measures}
(Relationship between convergence measures)
Suppose that $x$ is primal variable corresponding to the dual variable $y$ as defined in \eqref{Eq: duality mapping}.
Dual suboptimality can be expressed as the following \cite{gower2015stochastic}:
\begin{equation}
\label{eq:99d8gds}
D(y^*)-D(y) = \frac{1}{2}\|\bar{c} \ones - x\|^2.
\end{equation}
Moreover, for any $x\in \R^{n}$ we have : 
\begin{eqnarray}
 \frac{1}{2n}\sum_{i=1}^n \sum_{j=1}^n  (x_j  -x_i)^2
&=& 
\|\bar{c}\ones - x\|^2 
\label{Eq: equality}
\\
\sum_{e=(i,j)\in \cE}|x_i-x_j|
&\leq& 
 \sqrt{mn}\|\bar{c} \ones -x \|,
\label{Eq: s upper bound}
\\
\sum_{e=(i,j)\in \cE}|x_i-x_j|
&\geq& 
\sqrt{\ac(\cG)}\|\bar{c} \ones-x \|,
\label{Eq: s lower bound}
\\
\sum_{e=(i,j)\in \cE}|x_i-x_j|
&\geq& \epsilon
  \left|\{ (i,j)\in  \cE: |x_i-x_j|\geq\epsilon\}\right|.
\label{Eq: delta bound}
\end{eqnarray}
\end{lem}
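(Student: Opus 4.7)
\medskip

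\noindent\textbf{Proof plan.} My plan is to dispatch the five claims in order, treating them as independent algebraic identities/inequalities whose derivations share a common toolkit: mass preservation, Cauchy--Schwarz, and the spectral characterization of $\mathrm{ac}(\cG)$ as the smallest nonzero eigenvalue of the Laplacian $\mL = \bA^\top \bA$. Identity \eqref{eq:99d8gds} is quoted from \cite{gower2015stochastic} and will only need a one-line reference (it is essentially a restatement of the identity already used in the dual proof of Theorem~\ref{thm:G} in Section~\ref{complexityResultsSection}).

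\medskip

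\noindent\textbf{The equality \eqref{Eq: equality}.} First I would expand the square and separate the sums: writing $s = \sum_i x_i$ and $\bar{x}=s/n$, a direct calculation gives
\[
\tfrac{1}{2n}\sum_{i=1}^n\sum_{j=1}^n (x_j-x_i)^2 \;=\; \|x\|^2 - n\bar{x}^2.
\]
On the other hand $\|\bar{c}\ones-x\|^2 = \|x\|^2 - 2n\bar{c}\bar{x} + n\bar{c}^2$. The two expressions agree precisely when $\bar{x}=\bar{c}$, i.e.\ when the mean of $x$ equals $\bar{c}$. This holds for all primal iterates produced by our gossip schemes because of the mass preservation property (and more generally for every $x$ in the affine subspace $c + \operatorname{Range}(\bA^\top)$, to which the primal iterates belong via \eqref{Eq: duality mapping}). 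I will state this as the working hypothesis at the start of the proof.

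\medskip

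\noindent\textbf{The upper bound \eqref{Eq: s upper bound}.} Apply Cauchy--Schwarz to the sum over edges,
\[
\sum_{e=(i,j)\in\cE}|x_i-x_j| \;\leq\; \sqrt{m}\,\sqrt{\sum_{e=(i,j)\in\cE}(x_i-x_j)^2},
\]
and bound the edge-sum by the complete unordered pair sum, which by \eqref{Eq: equality} equals $n\|\bar{c}\ones-x\|^2$ (careful with the factor of $2$: the double sum $\sum_{i,j}$ counts each unordered pair twice, so $\sum_{(i,j)\,\text{unordered}}(x_i-x_j)^2 = n\|\bar{c}\ones-x\|^2$). Combining the two inequalities yields the required $\sqrt{mn}\,\|\bar{c}\ones-x\|$.

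\medskip

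\noindent\textbf{The lower bound \eqref{Eq: s lower bound}.} For nonnegative reals, $(\sum a_e)^2 \geq \sum a_e^2$, so
\[
\sum_{e=(i,j)\in\cE}|x_i-x_j| \;\geq\; \sqrt{\sum_{e=(i,j)\in\cE}(x_i-x_j)^2} \;=\; \sqrt{x^\top \mL x}.
\]
Here is the main obstacle and the key use of spectral information: $\mL$ is positive semidefinite with a one-dimensional nullspace spanned by $\ones$, so writing $x = \bar{x}\ones + (x-\bar{x}\ones)$ gives $x^\top \mL x = (x-\bar{x}\ones)^\top \mL (x-\bar{x}\ones) \geq \ac(\cG)\|x-\bar{x}\ones\|^2$, by the Courant--Fischer characterization applied on $\ones^\perp$ together with the definition \eqref{eq:algebraic_connectivity}. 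Invoking mass preservation ($\bar{x}=\bar{c}$) concludes the argument.

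\medskip

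\noindent\textbf{The counting bound \eqref{Eq: delta bound}.} This is the easiest: restrict the sum to the edges where $|x_i-x_j|\geq\epsilon$ and lower bound each term by $\epsilon$,
\[
\sum_{e=(i,j)\in\cE}|x_i-x_j| \;\geq\; \sum_{\substack{e=(i,j)\in\cE\\ |x_i-x_j|\geq\epsilon}}|x_i-x_j| \;\geq\; \epsilon\,\bigl|\{(i,j)\in\cE: |x_i-x_j|\geq\epsilon\}\bigr|.
\]
The only subtlety across the whole lemma is the reliance on mass preservation for \eqref{Eq: equality} and \eqref{Eq: s lower bound}; everything else is elementary Cauchy--Schwarz and thresholding.
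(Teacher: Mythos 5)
Your proposal is correct and follows essentially the same route as the paper: Cauchy--Schwarz for the upper bound, the elementary $(\sum_e a_e)^2 \geq \sum_e a_e^2$ plus a spectral lower bound on $x^\top \mL x$ for the lower bound, and thresholding for the counting bound. Two small differences are worth noting. For \eqref{Eq: equality} the paper routes through an auxiliary summation identity ($\sum_i (\sum_j (x_j-x_i))^2 = \frac{n}{2}\sum_i\sum_j(x_j-x_i)^2$) and silently substitutes $\bar{c}=\frac{1}{n}\sum_j x_j$, whereas you expand both sides directly and observe that they agree precisely when the mean of $x$ equals $\bar{c}$; making this hypothesis explicit (and noting it holds on the affine subspace $c+\operatorname{Range}(\bA^\top)$, hence for all iterates by mass preservation) is a genuine clarification of the paper's looser ``for any $x\in\R^n$'' phrasing, which as stated is false for arbitrary $x$. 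For \eqref{Eq: s lower bound} you invoke Courant--Fischer on $\ones^{\perp}$ directly, while the paper cites its Lemma~\ref{L: D bound by beta}, itself derived from the relation $\beta(\cG)=n/\ac(\cG)$; the two arguments are equivalent, but yours is more self-contained.
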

\begin{proof}
See Section~\ref{meslemma}.
\end{proof}

\subsection{Private gossip via binary oracle} 
\label{sec:B}

We now present the gossip algorithm with Binary Oracle in detail and provide theoretical convergence guarantee. The information exchanged between sampled nodes is constrained to a single bit, describing which of the nodes has the higher value. As mentioned earlier, we only present the conceptual idea, not how exactly would the oracle be implemented within a secure multiparty protocol between participating nodes \cite{cramer2015secure}.

We will first introduce the dual version of the algorithm.

\setcounter{algorithm}{\algB-1}
\begin{algorithm}[H]
  \caption{(Dual form)}
  \begin{algorithmic}[1]
    \Require{Vector of private values $c\in \R^n$, sequence of positive stepsizes $\{\lambda^t\}_{t=0}^{\infty}$}
    \Ensure{ Set $y^0=0\in \R^m$, $x^0=c$.}
 \For{$t= 0,1,\dots, k-1$}
 \State Choose edge $e = (i,j)\in \cE$ uniformly at random.
 \State Update the dual variable: 
$$y^{t+1} = \begin{cases} y^t + \lambda^t f_e, &\quad  x^t_i< x^t_j,\\
 y^t - \lambda^t f_e, & \quad x^t_i\geq x^t_j.
\end{cases}
 $$
 \State Set 
\begin{eqnarray*}
x^{t+1}_i &=& \begin{cases} x^t_i + \lambda^t, &\quad  x^t_i < x^t_j,\\
 x^t_i - \lambda^t , & \quad x^t_i\geq x^t_j.
\end{cases}
\\ 
 x^{t+1}_j &=& \begin{cases} x^t_j - \lambda^t, &\quad  x^t_i < x^t_j,\\
 x^t_j + \lambda^t  & \quad x^t_i\geq x^t_j.
\end{cases} 
\\
  x^{t+1}_l&=&x^t_l \qquad l\not\in\{i,j\}
\end{eqnarray*} 
 \EndFor
 \State \textbf{Return} $y^k$
 \end{algorithmic}
\end{algorithm}

The update of primal variables above is equivalent to set $x^{t+1}$ as primal point corresponding to dual iterate: $x^{t+1} = c+ \bA^\top y^{t+1} = x^t + \bA^\top (y^{t+1}-y^t)$. In other words, the primal iterates $\{x^t\}$ associated with the dual iterates $\{y^t\}$ can be written in the form:
\[x^{t+1} = \begin{cases} x^t + \lambda^t \bA_{e:}^\top, &\quad  x^t_i < x^t_j,\\
 x^t - \lambda^t \bA_{e:}^\top, & \quad x^t_i\geq x^t_j.
\end{cases} 
 \] 
It is easy to verify that due to the structure of $\bA$, this is equivalent to the updates above.

Since the evolution of dual variables $\{y^k\}$ serves only the purpose of the analysis, the method can be written in the primal-only form as follows:

\setcounter{algorithm}{\algB-1}
\begin{algorithm}[H]
  \caption{(Primal form)}
  \begin{algorithmic}[1]
    \Require{Vector of private values $c\in \R^n$, sequence of positive stepsizes $\{\lambda^t\}_{t=0}^{\infty}$}
    \Ensure{Set $x^0=c$.}
 \For{$t= 0,1,\dots, k-1$}
 \State Choose edge $e = (i,j)\in \cE$ uniformly at random.
 \State Set 
\begin{eqnarray*}
x^{t+1}_i &=& \begin{cases} x^t_i + \lambda^t, &\quad  x^t_i < x^t_j,\\
 x^t_i - \lambda^t , & \quad x^t_i\geq x^t_j.
\end{cases}
\\ 
 x^{t+1}_j &=& \begin{cases} x^t_j - \lambda^t, &\quad  x^t_i < x^t_j,\\
 x^t_j + \lambda^t  & \quad x^t_i\geq x^t_j.
\end{cases} 
\\
  x^{t+1}_l&=&x^t_l \qquad l\not\in\{i,j\}
\end{eqnarray*}
 \EndFor
 \State \textbf{Return} $x^k$
 \end{algorithmic}
\end{algorithm}

Given a sequence of stepsizes $\{\lambda^t\}$, it will be convenient to define $\alpha^k\eqdef \sum_{t=0}^{k}\lambda^t$ and $\beta^k \eqdef \sum_{t=0}^{k}\left(\lambda^t\right)^2$.
In the following theorem, we study the convergence of the quantity
\begin{equation} \label{eq: L def}
L^t\eqdef \frac{1}{m}\sum_{e=(i,j)\in \cE} |x^t_i - x^t_j|.
\end{equation} 

\begin{thm}\label{thm:jhs988sh}  For all $k\geq 1$ we have
\begin{equation}
\min_{t=0,1,\dots,k} \E{L^t} \leq \sum_{t=0}^{k} \frac{\lambda^t}{\alpha^k}\E{L^t} \leq U^k \eqdef \frac{D(y^*)-D(y^0)}{\alpha^k} + \frac{\beta^k}{\alpha^k}.\label{Eq: binary general rate}
\end{equation}
Moreover:
\begin{enumerate}
\item[(i)] If we set $\lambda^t = \lambda^0>0$ for all $t$, then  $U^k = \frac{D(y^*)-D(y^0)}{\lambda^0 (k+1)} + \lambda^0$.
\item[(ii)] Let $R$ be any constant such that $R\geq D(y^*)-D(y^0)$. If we fix  $k\geq 1$, then the choice of stepsizes $\{\lambda^0,\dots,\lambda^k\}$ which minimizes $U^k$ correspond to the constant stepsize rule $\lambda^t = \sqrt{\frac{R}{k+1}}$ for all $t=0,1,\dots,k$, and $U^k = 2\sqrt{\frac{R}{k+1}}$.
% \item Let $\lambda^k = \frac{\lambda}{\sqrt{k+1}}$ for some $\lambda>0$ and all $k\geq 0$. Then  $U_k = $
\item[(iii)] If we set $\lambda^t=a/\sqrt{t+1}$ for all $t=0,1,\dots,k$, then 
\begin{equation*}
U^k\leq \frac{D(y^*)-D(y^0)+a^2 \left(\log(k+3/2)+\log(2) \right)}{2a\left( \sqrt{k+2} -1\right)}=\cO\left(\frac{\log(k)}{\sqrt{k}}\right)
\end{equation*}
\end{enumerate}
\end{thm}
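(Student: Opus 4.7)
The starting point is Lemma~\ref{lem:98y98yss}, which for the one-coordinate update $z = y^t + \lambda f_e$ (with $e = (i,j)$) gives the exact identity $D(z) - D(y^t) = -\lambda(x^t_i - x^t_j) - \lambda^2$. In Algorithm~\algB{} we set $\lambda = \mathrm{sign}(x^t_j - x^t_i)\,\lambda^t$, so regardless of which node has the larger value, plugging into the identity yields the clean expression
\begin{equation*}
D(y^{t+1}) - D(y^t) \;=\; \lambda^t\,|x^t_i - x^t_j| \;-\; (\lambda^t)^2.
\end{equation*}
Taking expectation over the uniformly random edge $e$ conditioned on $y^t$ produces $\lambda^t L^t - (\lambda^t)^2$, since the average of $|x^t_i - x^t_j|$ over edges is exactly $L^t$ as defined in \eqref{eq: L def}.

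The next step is to telescope. Taking total expectations and summing from $t=0$ to $t=k$,
\begin{equation*}
\E{D(y^{k+1})} - D(y^0) \;=\; \sum_{t=0}^k \lambda^t \E{L^t} \;-\; \beta^k.
\end{equation*}
Using weak duality $\E{D(y^{k+1})} \leq D(y^*)$, rearranging gives $\sum_{t=0}^k \lambda^t \E{L^t} \leq \bigl(D(y^*) - D(y^0)\bigr) + \beta^k$, and dividing by $\alpha^k > 0$ yields the stated upper bound $U^k$. Since the coefficients $\lambda^t/\alpha^k$ are nonnegative and sum to one, the weighted average dominates the minimum, which establishes \eqref{Eq: binary general rate}.

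Part (i) is a direct substitution of $\alpha^k = (k+1)\lambda^0$ and $\beta^k = (k+1)(\lambda^0)^2$. For part (ii), the strategy is a two-step minimization: Cauchy–Schwarz gives $(\alpha^k)^2 \leq (k+1)\beta^k$, so $U^k \geq R/\alpha^k + \alpha^k/(k+1)$; minimizing the right side in $\alpha^k$ yields $\alpha^k = \sqrt{R(k+1)}$ and value $2\sqrt{R/(k+1)}$, with equality in Cauchy–Schwarz forcing all $\lambda^t$ equal, giving $\lambda^t = \sqrt{R/(k+1)}$. Part (iii) follows by two standard integral estimates: $\alpha^k = a\sum_{t=0}^k (t+1)^{-1/2} \geq a \int_0^{k+1}(s+1)^{-1/2}\,ds = 2a(\sqrt{k+2}-1)$, and $\beta^k = a^2\sum_{t=0}^k (t+1)^{-1} \leq a^2 \int_{1/2}^{k+3/2} s^{-1}\,ds = a^2[\log(k+3/2) + \log 2]$; substituting both into the definition of $U^k$ gives the claimed bound.

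The only real obstacle is the verification that Algorithm~\algB{} chooses the sign of $\lambda$ in the ``ascent'' direction so that the two cases collapse to a single $|x^t_i - x^t_j|$ term; everything else is telescoping, weak duality, and elementary calculus. No new probabilistic machinery or strong convexity of $D$ is needed, which is precisely why the rate is sublinear rather than linear.
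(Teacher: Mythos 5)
Your proposal is correct and follows essentially the same route as the paper: the per-iteration identity from Lemma~\ref{lem:98y98yss} with the sign collapsing the two cases to $\lambda^t|x^t_i-x^t_j|-(\lambda^t)^2$, telescoping with $\E{D(y^{k+1})}\leq D(y^*)$, and the same integral estimates for part (iii). The only cosmetic difference is in part (ii), where you invoke Cauchy--Schwarz to get $(\alpha^k)^2\leq (k+1)\beta^k$ and then minimize in the single variable $\alpha^k$, whereas the paper's Lemma~\ref{L: optimal stepsizes binary} decomposes $\lambda=rx$ into magnitude and direction and optimizes each in turn --- these are the same two-step minimization in different clothing.
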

\begin{proof}
See Section~\ref{proofPrivacy3}
\end{proof}

The part \textit{(ii)} of Theorem~\ref{thm:jhs988sh} is useful in the case that we know exactly the number of iterations before running the algorithm, providing in a sense optimal stepsizes and rate $\cO(1/\sqrt{k})$. However, this might not be the case in practice. Therefore part \textit{(iii)} is also relevant, which yields the rate $\cO(\log(k)/\sqrt{k})$. These bounds are significantly weaker than the standard bound in Theorem~\ref{thm:G}. This should not be surprising though, as we use significantly less information than the Standard Gossip algorithm.

Nevertheless, there is a potential gap in terms of what rate can be practically achievable. The following theorem can be seen as a form of a bound on what convergence rate is possible to be attained by the Binary Oracle. However, this rate can be attained with access to very strong information. It requires a specific sequence of stepsizes $\lambda^t$ which is likely unrealistic in practical scenarios. This result points to a gap in the analysis which we leave open. We do not know whether the sublinear convergence rate in Theorem~\ref{thm:jhs988sh} is necessary or improvable without additional information about the system.

\begin{thm}
\label{thm: stepsize_adaptive}
For Algorithm~\algB\ with stepsizes chosen in iteration $t$ adaptively to the current values of $x^t$ as $\lambda^t = \frac{1}{2m}\sum_{e\in \cE}|x_i^t-x_j^t|$, we have
\begin{equation*}
\E{\|\overline{c}\ones-x^{k} \|^2}
\leq
\left(1-\frac{\ac(\cG)}{2m^2}\right)^k\|\overline{c}\ones-x^{0} \|^2
\end{equation*}
\end{thm}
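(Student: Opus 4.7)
The plan is to track the evolution of $\|\bar{c}\ones - x^{t}\|^2$ across a single iteration and exploit the inequality \eqref{Eq: s lower bound} from Lemma \ref{L: rel measures} in exactly the same spirit as the dual proof of Theorem~\ref{thm:G}.

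First, I would write the primal update in a compact form. If edge $e=(i,j)$ is drawn, then from the update rule the primal iterate satisfies $x^{t+1} - x^{t} = -\lambda^{t}\,\mathrm{sign}(x^{t}_i - x^{t}_j)(f_i - f_j)$, where $f_i,f_j$ are standard unit vectors in $\R^{n}$. A direct expansion then gives, for this particular realization of $e$,
\begin{equation*}
\|\bar{c}\ones - x^{t+1}\|^2
= \|\bar{c}\ones - x^{t}\|^2 - 2\lambda^{t}|x^{t}_i - x^{t}_j| + 2(\lambda^{t})^2,
\end{equation*}
because the scalar product $\langle \bar c\ones - x^t, f_i-f_j\rangle = -(x^t_i - x^t_j)$ combines with the sign to produce $-|x^t_i - x^t_j|$, and $\|f_i-f_j\|^2 = 2$.

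Second, I would take the expectation over the uniformly random edge $e\in\cE$ (with $|\cE|=m$), and substitute the adaptive stepsize $\lambda^{t}=\tfrac{1}{2m}\sum_{e\in\cE}|x^{t}_i - x^{t}_j|\eqdef \tfrac{S^{t}}{2m}$. This gives
\begin{equation*}
\E\!\left[\|\bar{c}\ones - x^{t+1}\|^2 \mid x^{t}\right]
= \|\bar{c}\ones - x^{t}\|^2 - \frac{2\lambda^{t}}{m}S^{t} + 2(\lambda^{t})^2
= \|\bar{c}\ones - x^{t}\|^2 - \frac{(S^{t})^2}{2m^2},
\end{equation*}
where the last equality is precisely the simplification produced by the minimizing choice $\lambda^{t}=S^{t}/(2m)$ of the per-iteration upper bound (one may double-check this by completing the square in $\lambda^{t}$).

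Third, I would invoke inequality \eqref{Eq: s lower bound} of Lemma~\ref{L: rel measures}, namely $S^{t}\geq \sqrt{\ac(\cG)}\,\|\bar{c}\ones - x^{t}\|$, to conclude
\begin{equation*}
\E\!\left[\|\bar{c}\ones - x^{t+1}\|^2 \mid x^{t}\right]
\leq \left(1-\frac{\ac(\cG)}{2m^2}\right)\|\bar{c}\ones - x^{t}\|^2.
\end{equation*}
Taking full expectation, applying the tower property, and unrolling the recursion over $t=0,1,\dots,k-1$ delivers the claimed bound. There is no real obstacle in this argument: the adaptive choice of $\lambda^{t}$ is exactly the one that makes the quadratic-in-$\lambda^{t}$ upper bound on the one-step error tight, and the spectral step \eqref{Eq: s lower bound} converts the resulting edge-based quantity $S^{t}$ into a multiple of the error norm. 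The only mild subtlety worth flagging is that this stepsize requires global knowledge of $S^{t}$ (not available to an individual node from a single binary exchange), which is precisely why Theorem~\ref{thm: stepsize_adaptive} is framed as a best-case benchmark rather than an implementable scheme.
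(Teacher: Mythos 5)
Your proof is correct, and every step checks out: the one-step expansion gives the exact identity $\E{\|\bar{c}\ones-x^{t+1}\|^2\mid x^t}=\|\bar{c}\ones-x^t\|^2-\tfrac{(S^t)^2}{2m^2}$ with $S^t=\sum_{e\in\cE}|x_i^t-x_j^t|$, and squaring \eqref{Eq: s lower bound} then yields the contraction factor. The only difference from the paper is presentational: the paper runs the same computation entirely in the dual, using Lemma~\ref{lem:98y98yss} to get $\E{D(y^{t+1})-D(y^t)\mid y^t}=\tfrac{(S^t)^2}{4m^2}$, bounding this below by $\tfrac{1}{4m^2}\sum_{e}(x_i^t-x_j^t)^2$, and then converting to the primal error via the identity \eqref{eq:99d8gds} together with Lemma~\ref{L: D bound by beta}. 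Since $D(y^*)-D(y)=\tfrac12\|\bar{c}\ones-x\|^2$, your primal expansion and the paper's dual increase are the same quantity up to a factor of $2$, and the chain $(\sum_e|x_i-x_j|)^2\geq\sum_e(x_i-x_j)^2\geq\ac(\cG)\|\bar{c}\ones-x\|^2$ used in the paper is precisely the content of the inequality \eqref{Eq: s lower bound} you invoke. What your route buys is a self-contained, duality-free argument (in the spirit of the primal analysis of Theorem~\ref{thm:G}); what the paper's route buys is uniformity with the dual framework used throughout the chapter, where all three oracles are analyzed through increases of $D$. Your closing remark about the impracticality of the global stepsize also matches the paper's framing of this result as a benchmark rather than an implementable scheme.
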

\begin{proof}
See Section~\ref{proofPrivacy4}
\end{proof}
Comparing Theorem~\ref{thm: stepsize_adaptive} with the result for standard Gossip in Theorem~\ref{thm:G}, the convergence rate is worse by factor of $m$, which is the price we pay for the weaker oracle.

An alternative to choosing adaptive stepsizes is the use of adaptive probabilities \cite{AdaSDCA}. We leave such a study for future work.

\subsection{Private gossip via  $\epsilon$-gap oracle} 
\label{sec:E}

Here we present the gossip algorithm with $\epsilon$-Gap Oracle in detail and provide theoretical convergence guarantees. The information exchanged between the sampled nodes is restricted to be one of three cases, based on the difference of their values. As mentioned earlier, we only present the conceptual idea, not how exactly would the oracle be implemented within a secure multiparty protocol between participating nodes \cite{cramer2015secure}.

We will first introduce the dual version of the algorithm.

\setcounter{algorithm}{\algE-1}
\begin{algorithm}[H]
  \caption{(Dual form)}
  \begin{algorithmic}[1]
    \Require{Vector of private values $c\in \R^n$; error tolerance $\epsilon>0$}
    \Ensure{ Set $y^0 = 0 \in \R^m$; $x^0=c$.}
 \For{$t= 0,1,\dots, k-1$}
 \State Choose edge $e = (i,j)\in \cE$ uniformly at random.
 \State Update the dual variable: $$y^{t+1} = \begin{cases} y^t +\frac{ \epsilon}{2} f_e, &\quad  x^t_i-x^t_j < -\epsilon\\
 y^t - \frac{ \epsilon}{2} f_e, & \quad x^t_j-x^t_i < -\epsilon,\\
y^t, & \quad \text{otherwise.}
\end{cases}
$$
 \State If $x^t_i \leq x^t_j -  \epsilon$ then $x^{t+1}_i = x^t_i + \frac{ \epsilon}{2}$ and $x^{t+1}_j = x^t_j - \frac{ \epsilon}{2}$ 
\State  If $x^t_j \leq x^t_i -  \epsilon$ then $x^{t+1}_i = x^t_i - \frac{ \epsilon}{2}$ and $x^{t+1}_j = x^t_j + \frac{ \epsilon}{2}$ 
 \EndFor
 \State \textbf{Return} $y^k$
 \end{algorithmic}
\end{algorithm}

Note that the primal iterates $\{x^t\}$ associated with the dual iterates $\{y^t\}$ can be written in the form:
$$
x^{t+1} = \begin{cases} x^t + \frac{ \epsilon}{2} \bA_{e:}^\top, &\quad  x^t_i-x^t_j < -\epsilon\\
 x^t - \frac{ \epsilon}{2} \bA_{e:}^\top, & \quad x^t_j-x^t_i < -\epsilon,\\
x^t, & \quad \text{otherwise.}
\end{cases}
$$
The above is equivalent to setting
$x^{t+1} = x^t + \bA^\top (y^{t+1}-y^t)=c+ \bA^\top y^{t+1}$. 

Since the evolution of dual variables $\{y^t\}$ serves only the purpose of the analysis, the method can be written in the primal-only form as follows:

\setcounter{algorithm}{\algE-1}
\begin{algorithm}[H]
  \caption{(Primal form)}
  \begin{algorithmic}[1]
    \Require{Vector of private values $c\in \R^n$; error tolerance $\epsilon>0$}
    \Ensure{Set $x^0=c$.}
 \For{$t= 0,1,\dots, k-1$}
 \State Set $x^{t+1} = x^t$
 \State Choose edge $e = (i,j)\in \cE$ uniformly at random.
 \State  If $x^t_i \leq x^t_j -  \epsilon$ then $x^{t+1}_i = x^t_i + \frac{ \epsilon}{2}$ and $x^{t+1}_j = x^t_j - \frac{ \epsilon}{2}$ 
\State  If $x^t_j \leq x^t_i -  \epsilon$ then $x^{t+1}_i = x^t_i - \frac{ \epsilon}{2}$ and $x^{t+1}_j = x^t_j + \frac{ \epsilon}{2}$ 
 \EndFor
 \State \textbf{Return} $x^k$
 \end{algorithmic}
\end{algorithm}

Before stating the convergence result, let us define a quantity the convergence will naturally depend on. For each edge $e=(i,j)\in \cE$ and iteration $t \geq 0$ define the random variable \[\Delta^t_e(\epsilon) \quad \eqdef \quad \begin{cases} 1, \qquad |x^t_i-x^t_j|\geq \epsilon,\\
0, \qquad \text{otherwise.}\end{cases}\]
Moreover, let 
\begin{equation}
\label{eq:delta_k}
\Delta^t (\epsilon)\quad \eqdef \quad \frac{1}{m}\sum_{e\in \cE} \Delta^t_e(\epsilon).
\end{equation}

The following Lemma bounds the expected increase in dual function value in each iteration.

\begin{lem}\label{lem:09ys09y9ss} For all $t \geq 0$ we have
$\E{D(y^{t+1})-D(y^t) } \geq \frac{\epsilon^2}{4} \E{\Delta^t(\epsilon)}.$
\end{lem}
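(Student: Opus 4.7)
The plan is to condition on $x^t$ and compute the conditional expected increase in the dual function value exactly by summing over all $m$ edges (each selected with probability $1/m$), then use Lemma~\ref{lem:98y98yss} to lower-bound the contribution of each edge.

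First, I would split the analysis into three cases based on which branch of the update the selected edge $e=(i,j)$ falls into. In the ``no update'' branch $|x^t_i - x^t_j| < \epsilon$ the dual variable is unchanged and $D(y^{t+1})-D(y^t)=0$, which also matches $\Delta^t_e(\epsilon)=0$. In the two ``active'' branches the update is $y^{t+1}=y^t + \lambda f_e$ with $\lambda = \pm \epsilon/2$, and Lemma~\ref{lem:98y98yss} gives
\[
D(y^{t+1})-D(y^t) \;=\; -\lambda(x^t_i - x^t_j) - \lambda^2.
\]
For $x^t_i - x^t_j \leq -\epsilon$ (so $\lambda = +\epsilon/2$) we have $-\lambda(x^t_i-x^t_j) \geq (\epsilon/2)\cdot \epsilon = \epsilon^2/2$, and subtracting $\lambda^2 = \epsilon^2/4$ yields an increase of at least $\epsilon^2/4$. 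The symmetric case $x^t_j - x^t_i \leq -\epsilon$ (so $\lambda=-\epsilon/2$) gives the same lower bound by an identical computation.

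Next I would assemble these per-edge bounds. Since the selected edge is uniform on $\cE$, conditioning on $x^t$:
\[
\E{D(y^{t+1})-D(y^t)\mid x^t}
\;=\;\frac{1}{m}\sum_{e=(i,j)\in \cE} \bigl(D(y^{t+1})-D(y^t)\bigr)\bigg|_{\text{edge }e\text{ chosen}}
\;\geq\; \frac{1}{m}\sum_{e\in \cE} \frac{\epsilon^2}{4}\,\Delta^t_e(\epsilon),
\]
where the inequality uses the case analysis above: the bound $\epsilon^2/4$ holds on active edges (those with $\Delta^t_e(\epsilon)=1$) and the trivially correct bound $0$ holds on inactive ones. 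Recognising the right-hand side via the definition \eqref{eq:delta_k} of $\Delta^t(\epsilon)$ turns it into $\tfrac{\epsilon^2}{4}\Delta^t(\epsilon)$. Finally, taking the outer expectation in $x^t$ and using the tower property gives the claimed inequality.

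There is no real obstacle here; the only point requiring a small amount of care is keeping track of signs in the two active branches so that the product $-\lambda(x^t_i-x^t_j)$ is correctly identified as non-negative and lower-bounded by $\epsilon^2/2$ in both cases before subtracting the quadratic penalty $\lambda^2=\epsilon^2/4$.
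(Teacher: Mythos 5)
Your proof is correct and follows essentially the same route as the paper's: both apply Lemma~\ref{lem:98y98yss} with $\lambda=\pm\epsilon/2$, carry out the identical three-case sign analysis to get the per-edge lower bound $\epsilon^2/4$ on active edges and $0$ otherwise, and then take the expectation over the uniform edge selection followed by the tower property. No gaps.
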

\begin{proof}
See Section~\ref{proofPrivacy5}
\end{proof}

Our complexity result will be expressed in terms of the quantity:
\begin{equation} \label{eq: delta def}
\delta^k(\epsilon)\quad \eqdef \quad  \E{\frac{1}{k}\sum_{t=0}^{k-1}\Delta^t(\epsilon)} = \frac{1}{k}\sum_{t=0}^{k-1}\E{\Delta^t (\epsilon)}.
\end{equation}

\begin{thm}\label{thm:09y09s9ffs} For all $k\geq 1$ we have \[\delta^k(\epsilon) \leq \frac{4\left(D(y^*)-D(y^0)\right)}{k\epsilon^2}.\]
\end{thm}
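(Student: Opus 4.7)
The plan is to use Lemma~\ref{lem:09ys09y9ss} as a per-iteration lower bound on the expected dual progress, and then telescope the bound across iterations $t = 0, 1, \dots, k-1$. Specifically, summing the inequality $\E{D(y^{t+1}) - D(y^t)} \geq \tfrac{\epsilon^2}{4}\E{\Delta^t(\epsilon)}$ over $t$ collapses the left-hand side into $\E{D(y^k)} - D(y^0)$ (a telescoping sum), while the right-hand side becomes $\tfrac{\epsilon^2}{4}\sum_{t=0}^{k-1}\E{\Delta^t(\epsilon)} = \tfrac{k\epsilon^2}{4}\,\delta^k(\epsilon)$ by definition \eqref{eq: delta def}.

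Next, I would upper-bound $\E{D(y^k)} - D(y^0)$ by $D(y^*) - D(y^0)$, which is valid because $y^*$ is a maximizer of the dual objective $D$ and hence $\E{D(y^k)} \leq D(y^*)$ deterministically. Combining the telescoped inequality with this bound gives
\[
\tfrac{k\epsilon^2}{4}\,\delta^k(\epsilon) \;\leq\; D(y^*) - D(y^0),
\]
and rearranging for $\delta^k(\epsilon)$ immediately yields the claimed bound $\delta^k(\epsilon) \leq \tfrac{4(D(y^*) - D(y^0))}{k\epsilon^2}$.

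There is essentially no obstacle here: the entire argument is a one-line telescoping of the per-step progress lemma, and the only non-trivial content has already been isolated in Lemma~\ref{lem:09ys09y9ss} (whose proof handles the probabilistic analysis via Lemma~\ref{lem:98y98yss} and the uniform sampling of edges). The only thing worth being careful about is the tower property when writing $\sum_t \E{D(y^{t+1}) - D(y^t)} = \E{D(y^k)} - D(y^0)$; this is legitimate because $y^0$ is deterministic and expectations commute with finite sums. Thus the proof is essentially immediate from Lemma~\ref{lem:09ys09y9ss}.
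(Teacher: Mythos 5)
Your proposal is correct and matches the paper's own proof exactly: both telescope the per-iteration bound from Lemma~\ref{lem:09ys09y9ss}, bound $\E{D(y^k)} - D(y^0)$ by $D(y^*) - D(y^0)$ using optimality of $y^*$, and rearrange using the definition \eqref{eq: delta def}. No gaps.
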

\begin{proof}
See Section~\ref{proofPrivacy6}
\end{proof}

Note that if $\Delta^k(\epsilon)=0$, it does not mean the primal iterate $x^k$ is optimal. This only implies that the values of all pairs of directly connected nodes differ by less than $\epsilon$.

\subsection{Private gossip via controlled noise insertion}
\label{sec: noise}

In this section, we present the gossip algorithm with Controlled Noise Insertion. As mentioned in the introduction of this chapter, the approach is similar to the technique proposed in \cite{manitara2013privacy, mo2017privacy}. Those works, however, address only algorithms in the synchronous setting, while our work is the first to use this idea in the asynchronous setting. Unlike the above, we provide finite time convergence guarantees and allow each node to add the noise differently, which yields a stronger result.

In our approach, each node adds noise to the computation independently of all other nodes. However, the noise added is correlated between iterations for each node. We assume that every node owns two parameters --- the initial magnitude of the generated noise $\sigma_i^2$ and rate of decay of the noise $\phi_i$. The node inserts noise $w_i^{t_i}$ to the system every time that an edge corresponding to the node was chosen, where variable $t_i$ carries an information how many times the noise was added to the system in the past by node $i$. Therefore, if we denote by $t$ the current number of iterations, we have $\sum_{i=1}^nt_i = 2t$.

In order to ensure convergence to the optimal solution, we need to choose a specific structure of the noise in order to guarantee the mean of the values $x_i$ converges to the initial mean. In particular, in each iteration a node $i$ is selected, we subtract the noise that was added last time, and add a fresh noise with smaller magnitude:
\begin{equation}
\label{thewdefinition}
w_i^{t_i} = \phi_i^{t_i}v_i^{t_i}-\phi_i^{t_i-1}v_i^{t_i-1},
\end{equation}
where $0 \leq \phi_i<1, v_i^{-1} = 0$ and $v_i^{t_i}\sim N(0,\sigma_i^2)$ for all iteration counters $k_i \geq 0$ is independent to all other randomness in the algorithm. This ensures that all noise added initially is gradually withdrawn from the whole network.

After the addition of noise, a standard Gossip update is made, which sets the values of sampled nodes to their average. Hence, we have
\begin{eqnarray*}
\lim_{t\rightarrow \infty}
\E{\left(
\overline{c}-
\frac1n\sum_{i=1}^n x_i^t
\right)^2}
&=&
\lim_{t\rightarrow \infty}
\E{\left(
\frac1n\sum_{i=1}^n \phi_i^{t_i-1}v_i^{t_i-1}
\right)^2}\\
&\leq&
\lim_{t\rightarrow \infty}
\E{\frac1n \sum_{i=1}^n \left(
 \phi_i^{t_i-1}v_i^{t_i-1}
\right)^2}
\\
&=&
\frac1n\lim_{t\rightarrow \infty}
\sum_{i=1}^n \E{  \left(
 \phi_i^{t_i-1}v_i^{t_i-1}
\right)^2} \\
&=&
\frac1n\lim_{t\rightarrow \infty}
\sum_{i=1}^n \E{  
 \phi_i^{2t_i-2}}\E{\left(v_i^{t_i-1}\right)^2
}
\\
&=&
\frac1n\lim_{t\rightarrow \infty}
\sum_{i=1}^n \E{  
 \phi_i^{2t_i-2}}\sigma_i^2
 =
 \frac1n \sum_{i=1}^n \sigma_i^2 \lim_{t\rightarrow \infty}
\E{  
 \phi_i^{2t_i-2}}
 \\
 &=&
 0,
\end{eqnarray*}
as desired. 

It is not the purpose of this work to define any quantifiable notion of protection of the initial values formally. However, we note that it is likely the case that the protection of private value $c_i$ will be stronger for bigger $\sigma_i$ and for $\phi_i$ closer to $1$.

For simplicity, we provide only the primal algorithm below.

\setcounter{algorithm}{\algN-1}
\begin{algorithm}[H]
  \caption{(Primal form)}
  \begin{algorithmic}[1]
    \Require{Vector of private values $c\in \R^n$; initial variances $\sigma^2_i \in \R_+$ and variance decrease rate $\phi_i$ such that $0\leq \phi_i < 1$ for all nodes $i$.}
    \Ensure{Set $x^0=c$; $t_1=t_2=\dots = t_n=0$, $v_1^{-1}=v_2^{-1}=\dots = v_n^{-1}=0$.}
 \For{$t= 0,1,\dots, k-1$}
 \State Choose edge $e = (i,j)\in \cE$ uniformly at random.
 \State Generate $v_i^{t_i}\sim N(0,\sigma^2_i)$ and $v_j^{t_j}\sim N(0,\sigma^2_j)$
 \State Set $$w_i^{t_i} = 
 \phi_i^{t_i}v_i^{t_i}-\phi_i^{t_i-1}v_i^{t_i-1} 
$$
 $$w_j^{t_j} = 
 \phi_j^{t_j}v_j^{t_j}-\phi_j^{t_j-1}v_j^{t_j-1} 
 $$
\State Update the primal variable: \[x^{t+1}_i =x^{t+1}_j= 
\frac{x^t_i+w^{t_i}_i+x^t_j+w^{t_j}_j}{2},\ \forall\, l \neq i,j:\,x^{t+1}_l=x^{t}_l
  \]
\State Set $t_i=t_i+1$, $t_j=t_j+1$
 \EndFor
 \State \textbf{Return} $x^k$
 \end{algorithmic}
\end{algorithm}

We now provide results of dual analysis of Algorithm~\algN. The following lemma provides us the expected decrease in dual suboptimality for each iteration.

\begin{lem}\label{L: noise exp iteration bound}
Let $d_i$ denote the number of neighbours of node $i$. Then, 
\begin{eqnarray}
\begin{split}
\E{ D(y^*)- D(y^{t+1}) } \leq & 
\left( 1-\frac{\ac(\cG)}{2m}\right)\E{D(y^*)- D(y^{t})} +\frac{1}{4m}\sum_{i=1}^n d_i\sigma^2_i \E{\phi_i^{2t_i}}
\\& \qquad-
\frac{1}{2m}\sum_{e\in \cE} \E{\left( \phi_i^{t_i-1}v_i^{t_i-1} x_j^t+ \phi_j^{t_j-1}v_j^{t_j-1} x_i^t
\right)}.
\end{split}
\label{Eq: noise gossip iteration bound final}
\end{eqnarray}
\end{lem}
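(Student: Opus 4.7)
The plan is to identify an implicit dual sequence $\{y^t\}$ driven by an update of the form $y^{t+1}=y^t+\lambda^t f_e$ that is compatible with the primal-only iteration, apply Lemma~\ref{lem:98y98yss} to compute the one-step change in $D$, and then take expectations over the random edge and the fresh Gaussian noise. The natural choice is $\lambda^t=\tfrac12\bigl((x_j^t+w_j^{t_j})-(x_i^t+w_i^{t_i})\bigr)$, which (by a short induction) maintains the invariant $x^t=c+\bA^\top y^t+N^t$, where $N_i^t\eqdef \phi_i^{t_i-1}v_i^{t_i-1}$ is the residual noise stored at node $i$. Under this identification, the primal correspond of $y^t$ is $\phi(y^t)=x^t-N^t$, and the dual suboptimality can still be written as $D(y^*)-D(y^t)=\tfrac12\|\bar c\ones-(x^t-N^t)\|^2$ via \eqref{eq:99d8gds}.

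With this set up, the first step of the proof is to apply Lemma~\ref{lem:98y98yss} with $\lambda=\lambda^t$ and $\phi(y^t)=x^t-N^t$ to obtain a closed-form expression for $D(y^{t+1})-D(y^t)$. Writing $\Delta_e\eqdef x_j^t-x_i^t$, $W_e\eqdef w_j^{t_j}-w_i^{t_i}$ and $M_e\eqdef N_j^t-N_i^t$, a short algebraic manipulation yields $D(y^{t+1})-D(y^t)=\tfrac14(\Delta_e^2-W_e^2-2M_e(\Delta_e+W_e))$. The second step is to take conditional expectation with respect to the fresh noise $v_i^{t_i},v_j^{t_j}$, which are independent of $\mathcal{F}_t$ with mean zero and variances $\sigma_i^2,\sigma_j^2$. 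This yields $\E{W_e\mid\mathcal{F}_t}=-M_e$ and $\E{W_e^2\mid\mathcal{F}_t}=\phi_i^{2t_i}\sigma_i^2+\phi_j^{2t_j}\sigma_j^2+M_e^2$; substituting gives the clean per-edge identity in which the variance contribution $\phi_i^{2t_i}\sigma_i^2+\phi_j^{2t_j}\sigma_j^2$ is isolated and the remaining quadratic combines into $(\Delta_e-M_e)^2=(\tilde x_j^t-\tilde x_i^t)^2$ with $\tilde x^t\eqdef x^t-N^t$.

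The third step is to average over the uniform edge choice, which converts the per-edge variance into $\tfrac{1}{m}\sum_{e=(i,j)}(\phi_i^{2t_i}\sigma_i^2+\phi_j^{2t_j}\sigma_j^2)=\tfrac{1}{m}\sum_{i=1}^n d_i\phi_i^{2t_i}\sigma_i^2$, since each node $i$ appears in exactly $d_i$ edges. For the signal term, I expand $(\Delta_e-M_e)^2=\Delta_e^2-2\Delta_e M_e+M_e^2$ and re-group the cross product $-2\Delta_e M_e=2(N_i^t x_j^t+N_j^t x_i^t)-2(N_i^t x_i^t+N_j^t x_j^t)$; the first half is precisely the cross-node quantity appearing in the third term of \eqref{Eq: noise gossip iteration bound final}, while the second half combines with $\sum_e M_e^2$ and $\sum_e\Delta_e^2$ to form a term controlled by the algebraic connectivity. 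Specifically, applying Lemma~\ref{lem:beta} together with \eqref{eq:hdgugvej} and \eqref{Eq: equality} to $\tilde x^t$ (whose sample mean is $\bar c$ by construction) gives $\sum_{e\in\cE}(\tilde x_j^t-\tilde x_i^t)^2\ge \ac(\cG)\|\bar c\ones-\tilde x^t\|^2=2\ac(\cG)(D(y^*)-D(y^t))$, which produces the contraction factor $\ac(\cG)/(2m)$.

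The main obstacle will be the bookkeeping in the third step: the expansion of $(\tilde x_j^t-\tilde x_i^t)^2$ produces a mix of genuinely deterministic-given-$\mathcal F_t$ cross products between the stored noise $N_i^t=\phi_i^{t_i-1}v_i^{t_i-1}$ and the current primal values $x_i^t,x_j^t$, and these cannot be simplified away because $v_i^{t_i-1}$ is correlated with $x_j^t$ through the prior averaging steps. Keeping track of which terms are $\mathcal F_t$-measurable (and hence survive the inner expectation unchanged) versus which are centered by the fresh noise is what produces exactly the three terms on the right-hand side of \eqref{Eq: noise gossip iteration bound final}. Once these cross terms are kept explicitly (rather than combined back into $(\tilde x_j^t-\tilde x_i^t)^2$), rearranging $D(y^*)-D(y^{t+1})=(D(y^*)-D(y^t))-(D(y^{t+1})-D(y^t))$ and taking the outer expectation delivers the stated bound.
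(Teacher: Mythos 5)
Your dual bookkeeping is internally consistent up to the conditional‑expectation step: the invariant $x^t = c + \bA^\top y^t + N^t$ with $N_i^t \eqdef \phi_i^{t_i-1}v_i^{t_i-1}$ does hold for your choice of $\lambda^t$, and the per‑edge computation $\Exp[ D(y^{t+1})-D(y^t) \,|\, x^t, e^t] = \tfrac14(\tilde x_j^t-\tilde x_i^t)^2 - \tfrac14\left(\sigma_i^2\phi_i^{2t_i}+\sigma_j^2\phi_j^{2t_j}\right)$ is correct for \emph{your} dual sequence. The gap is in the final step, and it is twofold. First, your $D(y^t)$ is not the one the lemma is about: with $\phi(y^t)=\tilde x^t = x^t - N^t$, identity \eqref{eq:99d8gds} gives $D(y^*)-D(y^t)=\tfrac12\|\bar c\ones - \tilde x^t\|^2$, whereas the paper's proof (and the downstream Theorem~\ref{T: ng general convergence}) works with $D(y^*)-D(y^t)=\tfrac12\|\bar c\ones - x^t\|^2$, the suboptimality of the noisy iterate actually stored at the nodes; the specific three‑term right‑hand side of \eqref{Eq: noise gossip iteration bound final} --- in particular the cross term involving $x_j^t$ rather than $\tilde x_j^t$ --- is tied to the latter interpretation. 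Second, even inside your own framework the regrouping double‑counts: you cannot both peel off $\tfrac{1}{2m}\sum_{e}\left(N_i^tx_j^t+N_j^tx_i^t\right)$ from $\sum_{e}(\tilde x_j^t-\tilde x_i^t)^2$ \emph{and} lower‑bound the full sum $\sum_{e}(\tilde x_j^t-\tilde x_i^t)^2$ by $2\ac(\cG)\left(D(y^*)-D(y^t)\right)$. Once the cross term is extracted, what remains is $\sum_{e}\left[(x_j^t-x_i^t)^2 - 2(N_i^tx_i^t+N_j^tx_j^t)+(N_j^t-N_i^t)^2\right]$, which is not of the form $\sum_e (z_i-z_j)^2$ for a mean‑$\bar c$ vector $z$, so the connectivity inequality does not apply to it. If instead you apply the connectivity bound to the full sum, you obtain a clean two‑term contraction for $\tfrac12\|\bar c\ones-\tilde x^t\|^2$ with no cross term --- a correct and arguably tidier inequality (which would even suffice for the downstream theorem), but not the lemma as stated.

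The paper's route avoids both issues by never introducing $\tilde x^t$: it expands $\tfrac12\|\bar c\ones-x^t\|^2 - \tfrac12\|\bar c\ones-x^{t+1}\|^2$ directly in terms of $x^t$ and the injected noise $w_i^{t_i}$, applies the algebraic‑connectivity bound to $\sum_e (x_j^t-x_i^t)^2$ alone, and cancels the same‑node contributions $\sum_i d_i\,\E{N_i^t x_i^t}$ against $\sum_i d_i\,\E{(N_i^t)^2}$ using the identity $\E{\phi_i^{t_i-1}v_i^{t_i-1}x_i^t}=\tfrac12\E{\left(\phi_i^{t_i-1}v_i^{t_i-1}\right)^2}$ (the paper's Lemma~\ref{Lm: another_independence}), which holds because exactly half of the previously inserted noise at node $i$ sits inside $x_i^t$ after the averaging step. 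That identity is the ingredient your argument is missing; without it the same‑node cross terms do not cancel and the stated three‑term bound cannot be assembled.
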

\begin{proof}
See Section~\ref{proofPrivacy7}
\end{proof}

We use the lemma to prove our main result, in which we show linear convergence for the algorithm. For notational simplicity, we decided to have $\rho^t=(\rho)^t$, i.e. superscript of $\rho$ denotes its power, not an iteration counter.

\begin{thm}
Let us define the following quantities:
\begin{eqnarray*}
\rho&\eqdef& 1-\frac{\ac(\cG)}{2m},
\\
\psi^t&\eqdef&\frac{1}{\sum_{i=1}^n\left(d_i\sigma_i^2\right)}\sum_{i=1}^n d_i \sigma_i^2\left(1-\frac{d_i}{m}\left(1-\phi_i^2\right) \right)^{t}.
\end{eqnarray*}

Then for all $k\geq 1$ we have the following bound
\begin{equation*}
\E{ D(y^*)- D(y^{k}) } \leq  \rho^k \left( D(y^*)- D(y^{0}) \right)  + \frac{\sum\left(d_i\sigma_i^2\right)}{4m}\sum_{t=1}^k \rho^{k-t}\psi^{t}.
\end{equation*}
\label{T: ng general convergence}
\end{thm}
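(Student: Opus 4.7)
The starting point will be the one-step inequality of Lemma~\ref{L: noise exp iteration bound}. My plan is (i)~to argue that the last (``cross'') term on the right-hand side of \eqref{Eq: noise gossip iteration bound final} is non-positive in expectation and may therefore be discarded, (ii)~to evaluate $\E{\phi_i^{2t_i}}$ in closed form, and (iii)~to unroll the resulting linear recurrence. Throughout I will write $a_t\eqdef\E{D(y^*)-D(y^t)}$ and $C\eqdef\tfrac{1}{4m}\sum_i d_i\sigma_i^2$, so the goal is the bound $a_k\leq \rho^k a_0 + C\sum_{t=1}^{k}\rho^{k-t}\psi^t$.

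For step~(i), I would condition on the sequence of sampled edges and on all noises other than $v_i^{t_i-1}$. Since $v_i^{t_i-1}\sim N(0,\sigma_i^2)$ is independent of this information and has zero mean, only the coefficient with which $v_i^{t_i-1}$ enters $x_j^t$ through the sequence of pairwise-averaging updates survives the expectation. A short propagation argument (the noise $v_i^{t_i-1}$ was last \emph{subtracted} at node $i$ and thereafter only convex-combined with other nodes' values through averaging steps) shows that this coefficient is non-negative, so the whole cross-term contribution to~\eqref{Eq: noise gossip iteration bound final}, which appears with a minus sign, is non-positive and can be dropped.

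For step~(ii), I use the fact that each edge is drawn uniformly at random from $\cE$, so node $i$ is touched in any given iteration with probability $d_i/m$ independently across iterations. Hence the counter $t_i$ at the beginning of iteration $t$ is distributed as $\mathrm{Binomial}(t,d_i/m)$, and the probability-generating function of the Binomial gives
\begin{equation*}
\E{\phi_i^{2t_i}} \;=\; \Big(\tfrac{d_i}{m}\phi_i^2 + 1-\tfrac{d_i}{m}\Big)^t \;=\; \Big(1-\tfrac{d_i}{m}(1-\phi_i^2)\Big)^t.
\end{equation*}
Plugging this into the surviving terms of Lemma~\ref{L: noise exp iteration bound} and recognizing the convex combination appearing in the definition of $\psi^t$ yields the clean one-step inequality $a_{t+1}\leq \rho\,a_t + C\,\psi^t$.

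For step~(iii), a straightforward induction on $t$ (or just iterating the recurrence) gives $a_k \leq \rho^k a_0 + C\sum_{s=0}^{k-1}\rho^{k-1-s}\psi^s$, and after a trivial re-indexing $t=s+1$ this matches the bound stated in the theorem. I expect step~(i) to be the main technical obstacle, as the sign of the cross-term coefficient requires tracking how the noise $v_i^{t_i-1}$ propagates through the subsequent random pairwise averages; the rest is bookkeeping once the Binomial identity in step~(ii) is in place.
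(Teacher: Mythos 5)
Your proposal follows essentially the same route as the paper: your step (i) is Lemma~\ref{L: noisy positive correlation}, your step (ii) is exactly Lemma~\ref{L: noisy binomial} (the Binomial generating-function computation of $\E{\phi_i^{2t_i}}$), and your step (iii) is the same unrolling of the one-step recursion $a_{t+1}\leq\rho\,a_t+C\,\psi^t$. One small correction to the wording of step (i): the relevant noise $\phi_i^{t_i-1}v_i^{t_i-1}$ is the one most recently \emph{added} at node $i$ and not yet withdrawn --- it enters with a positive coefficient and is thereafter only convex-combined, which is precisely why its coefficient in every $x_j^t$ is non-negative; if it had already been subtracted, as your parenthetical states, the convex-combination argument would not yield a sign.
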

\begin{proof}
See Section~\ref{proofPrivacy8}
\end{proof}

Note that $\psi^t$ is a weighted sum of $t$-th powers of real numbers smaller than one.  For large enough $t$, this quantity will depend  on the largest of these numbers. This brings us to define $M$ as the set of indices $i$ for which the quantity $1-\frac{d_i}{m}\left(1-\phi_i^2\right)$ is maximized:  
$$
M=\arg\max_{i} \left\{ 1-\frac{d_i}{m}\left(1-\phi_i^2\right)\right\}. 
$$
Then for any $i_\mathrm{max}\in M$ we have
$$
\psi^t  
\approx
 \frac{1}{\sum_{i=1}^n \left(d_i\sigma_i^2\right)} \sum_{i\in M} d_i \sigma_i^2\left(1-\frac{d_i}{m}\left(1-\phi_i^2\right) \right)^{t}
 =
  \frac{\sum_{i\in M} d_i \sigma_i^2}{\sum_{i=1}^n \left(d_i\sigma_i^2\right)} \left(1-\frac{d_{i_\mathrm{max}}}{m}\left(1-\phi_{i_\mathrm{max}}^2\right) \right)^{t},
$$
which means that increasing $\phi_j$ for $j\not\in M$ will not substantially influence convergence rate. 

Note that as soon as we have
\begin{equation}
\rho>
 1-\frac{d_{i}}{m}\left(1-\phi_{i}^2\right)  \label{eq: treshold}
\end{equation} 
for all $i$, the rate from theorem \ref{T: ng general convergence} will be driven by $\rho^k$ (as $k \rightarrow \infty$) and we will have

\begin{equation}
\E{ D(y^*)- D(y^{k}) } = \tilde{O}\left(\rho^k\right)
\label{eq: tildeO}
\end{equation}

One can think of the above as a threshold: if there is $i$ such that $\phi_i$ is large enough so that the inequality \eqref{eq: treshold} does not hold, the convergence rate is driven by $\phi_{i_\mathrm{max}}$. Otherwise, the rate  is not influenced by the insertion of noise. Thus, in theory, we do not pay anything in terms of performance as long as we do not hit the threshold. One might be interested in choosing $\phi_i$ so that the threshold is attained for all $i$, and thus $M=\{1,\dots,n\}$. This motivates the following result:

\begin{cor}
\label{corolary}
Let us choose 
\begin{equation}
\phi_i \eqdef \sqrt{1-\frac{\gamma}{d_i}}
\label{Eq: phi_i def}
\end{equation}
for all $i$, where  $\gamma \leq d_{\mathrm{min}}$.
Then
\begin{eqnarray*}
\E{ D(y^*)- D(y^{k}) } & \leq &
\left( 1-\min\left( \frac{\ac(\cG)}{2m},\frac{\gamma}{m}\right) \right)^k \left( D(y^*)- D(y^{0})+\frac{\sum_{i=1}^n \left(d_i\sigma_i^2\right)}{4m}k \right). 
\end{eqnarray*}

As a consequence, $\phi_i=\sqrt{1-\frac{\ac(\cG)}{2d_i}}$ is the largest decrease rate of noise for node $i$ such that the guaranteed convergence rate of the algorithm is not violated.
\label{C: noisy gossip special}
\end{cor}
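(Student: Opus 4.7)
My plan is to apply Theorem~\ref{T: ng general convergence} directly, exploiting the fact that the specific choice $\phi_i = \sqrt{1-\gamma/d_i}$ is engineered precisely to make the base of the geometric factor inside $\psi^t$ independent of $i$. Concretely, I would first observe that with this choice, $1-\phi_i^2 = \gamma/d_i$, and hence
\[
1-\frac{d_i}{m}\bigl(1-\phi_i^2\bigr) \;=\; 1-\frac{\gamma}{m}
\]
for every node $i$. Substituting this back into the definition of $\psi^t$ in Theorem~\ref{T: ng general convergence} collapses the weighted average over $i$ (the weights $d_i\sigma_i^2 / \sum_j d_j\sigma_j^2$ sum to one), giving the clean identity $\psi^t = (1-\gamma/m)^t$. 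The hypothesis $\gamma \le d_{\min}$ just guarantees that $\phi_i \in [0,1)$ so that the noise-insertion scheme is well defined.

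Next I would plug $\psi^t = (1-\gamma/m)^t$ into the bound of Theorem~\ref{T: ng general convergence}, which leaves
\[
\E{D(y^*)-D(y^k)} \;\le\; \rho^k\bigl(D(y^*)-D(y^0)\bigr) + \frac{\sum_{i=1}^n d_i \sigma_i^2}{4m}\sum_{t=1}^k \rho^{k-t}(1-\gamma/m)^t .
\]
The only remaining step is to bound the mixed geometric sum. I would use the elementary inequality $\rho^{k-t}(1-\gamma/m)^t \le \bigl(\max(\rho,\,1-\gamma/m)\bigr)^k$, which holds term-wise, to conclude that
\[
\sum_{t=1}^k \rho^{k-t}(1-\gamma/m)^t \;\le\; k\,\bigl(\max(\rho,\,1-\gamma/m)\bigr)^k .
\]
Since $\rho = 1-\ac(\cG)/(2m)$, we have $\max(\rho,\,1-\gamma/m) = 1-\min(\ac(\cG)/(2m),\,\gamma/m)$, and the same quantity trivially upper-bounds $\rho^k$ in the first term as well. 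Pulling the common factor out yields exactly the stated bound.

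There is no real obstacle here; the proof is essentially a substitution followed by a one-line term-wise bound. The only thing one needs to be a little careful about is not confusing the superscript power notation for $\rho^t$ with an iteration index (as the theorem statement warns), and making sure that the max is taken correctly when converting $\max(\rho,1-\gamma/m)$ into $1-\min(\ac(\cG)/(2m),\gamma/m)$. The final claim about the largest admissible decrease rate follows by setting $\gamma = \ac(\cG)/2$: this is the largest value of $\gamma$ for which the noise contribution $1-\gamma/m$ does not exceed the consensus rate $\rho = 1-\ac(\cG)/(2m)$, i.e., the threshold condition \eqref{eq: treshold} is saturated for every $i$, so that further increasing any individual $\phi_i$ would make that node the bottleneck and degrade the guaranteed rate.
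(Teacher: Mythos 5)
Your proposal is correct and follows essentially the same route as the paper's own proof: compute $1-\phi_i^2=\gamma/d_i$ so that $\psi^t$ collapses to $(1-\gamma/m)^t$, then bound the mixed geometric sum term-wise by $k\,\max(\rho,1-\gamma/m)^k$ and identify $\max(\rho,1-\gamma/m)=1-\min(\ac(\cG)/(2m),\gamma/m)$. No gaps.
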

\begin{proof}
See Section~\ref{proofPrivacy9}
\end{proof}

While the above result clearly states the important threshold, it is not always practical as $\ac(\cG)$ might not be known. However, note that if we choose $\frac{nd_{\mathrm{min}}}{2(n-1)}\leq \gamma \leq d_{\mathrm{min}}$, we have 
$\min\left( \frac{\ac(\cG)}{2m},\frac{\gamma}{m}\right) = \frac{\ac(\cG)}{2m}$
since 
$\frac{\ac(\cG)}{2}\leq \frac{n}{n-1}\frac{d_{\mathrm{min}}}{2}\leq \gamma,$
where $e(\cG)$ denotes graph {\em edge connectivity}: the minimal number of edges to be removed so that the graph becomes disconnected.   Inequality $\ac(\cG) \leq \frac{n}{n-1}d_{\mathrm{min}}$ is a well known result in spectral graph theory \cite{fiedler1973algebraic}. As a consequence, if for all $i$ we have
$$
\phi_i\leq \sqrt{1-\frac{(n-1)d_\mathrm{min}}{2nd_i}},
$$
then the convergence rate is not driven by the noise.

\section{Numerical Evaluation}
\label{sec:experimentsPrivacy}
We devote this section to experimentally evaluate the performance of the Algorithms~\algB, \algE\ and \algN\ we proposed in the previous sections, applied to the Average Consensus problem. In the experiments, we used two popular graph topologies the cycle graph (ring network) and the random geometric graph (see Figure~\ref{Illustration} for an illustration of the two graphs). 
\begin{itemize}
\item {\em Cycle graph} with $n$ nodes: $\mathcal{C}(n)$. In our experiments we choose $n=10$. This small simple graph with regular topology is chosen for illustration purposes.
\item {\em Random geometric graph} with $n$ nodes and radius $r$:  $\mathcal{G}(n,r)$. Random geometric graphs \cite{penrose2003random} are very important in practice because of their particular formulation which is ideal for modeling wireless sensor networks \cite{gupta2000capacity, boyd2006randomized}. In our experiments we focus on a $2$-dimensional random geometric graph $\cG(n,r)$ which is formed by placing $n$ nodes uniformly at random in a unit square with edges between nodes which are having euclidean distance less than the given radius $r$. We set this to be to be $r = r(n) = \sqrt{\log(n)/n}$ --- it is well know that the connectivity is preserved in this case~\cite{gupta2000capacity}. We set $n = 100$.
\end{itemize}

\begin{figure}[t]
\centering
\begin{subfigure}{.45\textwidth}
  \centering
  \includegraphics[width=1\linewidth]{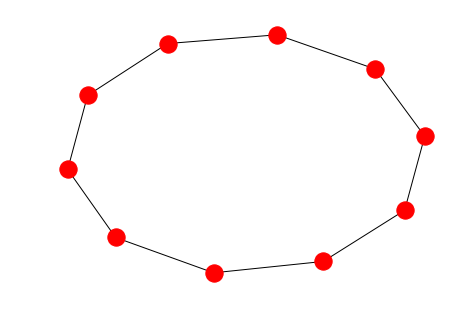}
  \caption{Cycle Graph: $\cC(10)$ }
\end{subfigure}%
\begin{subfigure}{.45\textwidth}
  \centering
  \includegraphics[width=1\linewidth]{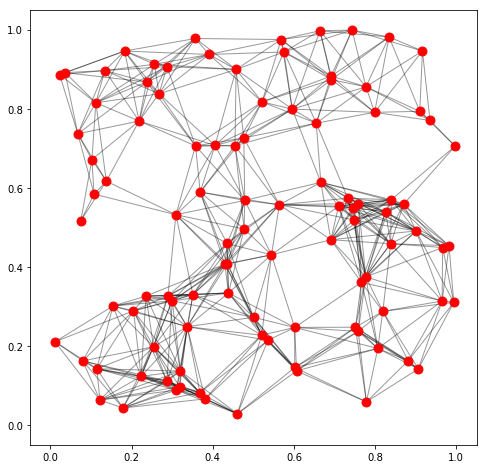}
  \caption{Random Geometric Graph: $\cG(n,r)$ }
\end{subfigure}
\caption{{Illustration of the two graph topologies we use in this section.}}
\label{Illustration}
\end{figure}

\textbf{Setup:}
In all experiments we generate a vector with of initial values $c_i$ from a uniform distribution over $[0,1]$. We run several experiments and present two kinds of figures that help us to understand how the algorithms evolve and verify the theoretical results of the previous sections. These figures are:
\begin{enumerate}
\item The evolution of the initial values of the nodes. In these figures, we plot how the trajectory of the values $x_i^t$ of each node $i$ evolves throughout iterations. The black dotted horizontal line represents the exact average consensus value which all nodes should approach, and thus all other lines should approach this level.
\item The evolution of the relative error measure $\|x^t-x^*\|^2 / \|x^0-x^*\|^2 $ where $x^0 =c \in \R^n$ is the starting vector of the values of the nodes. In these figures we choose to have the relative error, both in normal and logarithmic scale on the vertical axis and the number of iterations on the horizontal axis.
\end{enumerate}
For our evaluation we run each privacy preserving algorithm for several parameters and for a pre-specified number of iterations not necessarily the same for each experiment. 

To illustrate the first concept (trajectories of the values $x_i^t$) , we provide a simple example of the evolution of the initial values $x_i^t$ for the case of the Standard Gossip algorithm \cite{boyd2006randomized} in Figure~\ref{ExactMethod}. The horizontal black dotted line represents the average consensus value. It is the exact average of the initial values $c_i$ of the nodes in the network.
\begin{figure}[H]
\centering
\begin{subfigure}{.45\textwidth}
  \centering
  \includegraphics[width=1\linewidth]{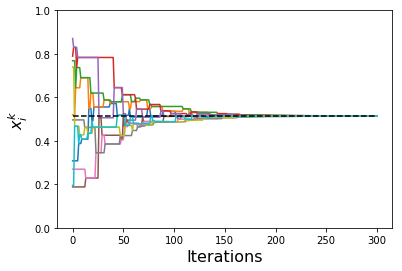}
  \caption{Cycle Graph}
\end{subfigure}%
\begin{subfigure}{.45\textwidth}
  \centering
  \includegraphics[width=1\linewidth]{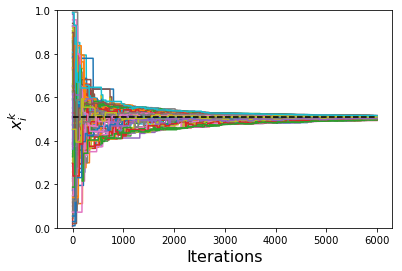}
  \caption{Random Geometric Graph}
\end{subfigure}\\
\caption{Trajectories of the values $x_i^t$ for the Standard Gossip algorithm for Cycle Graph and a random geometric graph. Each line corresponds to value $x_i$ of node $i \in \cV$.}
\label{ExactMethod}
\end{figure}

In the rest of this section we evaluate the performance of the three privacy preserving randomized gossip algorithms of Section~\ref{sec:Private}, and contrast with the above Standard Gossip algorithm, which we refer to as ``Baseline'' in the following figures labels.

\subsection{Private gossip via binary oracle}

In this section, we evaluate the performance of Algorithm~\algB\ presented in Section~\ref{sec:B}. In the algorithm, the input parameters are the positive stepsizes $\{\lambda^t\}_{t=0}^\infty$. The goal of the experiments is to compare the performance of the proposed algorithm using different choices of $\lambda^t$.

In particular, we use decreasing sequences of stepsizes $\lambda^t=1/t$ and $\lambda^t=1/ \sqrt{t}$, and three different fixed values for the stepsizes $\lambda^t=\lambda \in \{0.001, 0.01, 0.1\}$. We also include the adaptive choice $\lambda^t = \frac{1}{4m}\sum_{e\in \cE}|x_i^t-x_j^t|$ which we have proven to converge with linear rate in Theorem~\ref{thm: stepsize_adaptive}. We compare these choices in Figures \ref{Cycle10LamError} and \ref{RGG100LamErr}, along with the Standard Gossip algorithm for clear comparison.

\begin{figure}[H]
\centering
\begin{subfigure}{.3\textwidth}
  \centering
  \includegraphics[width=1\linewidth]{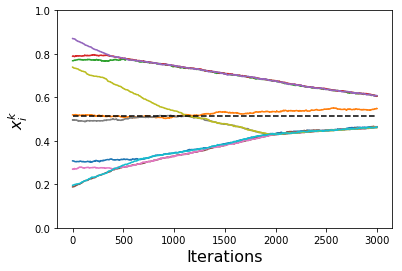}
  \caption{ $\lambda^t=\lambda=0.001$}
\end{subfigure}%
\begin{subfigure}{.3\textwidth}
  \centering
  \includegraphics[width=1\linewidth]{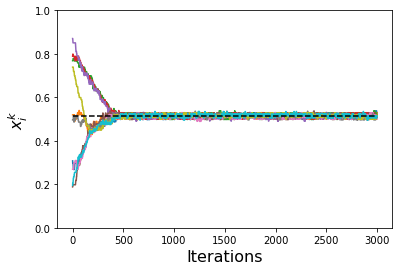}
  \caption{$\lambda^t=\lambda=0.01$}
\end{subfigure}
\begin{subfigure}{.3\textwidth}
  \centering
  \includegraphics[width=1\linewidth]{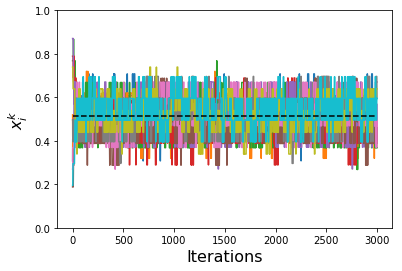}
  \caption{$\lambda^t=\lambda=0.1$}
\end{subfigure}\\
\begin{subfigure}{.3\textwidth}
  \centering
  \includegraphics[width=1\linewidth]{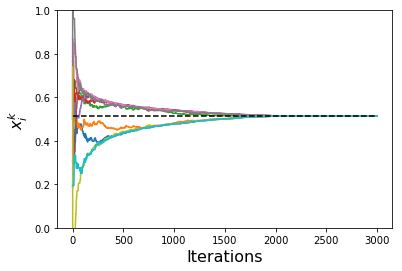}
  \caption{$\lambda^t=\frac{1}{t}$}
\end{subfigure}
\begin{subfigure}{.3\textwidth}
  \centering
  \includegraphics[width=1\linewidth]{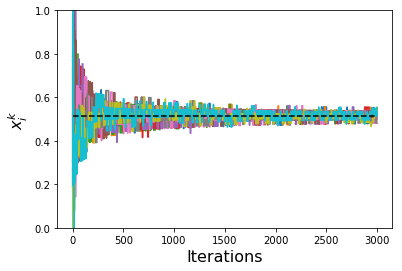}
  \caption{ $\lambda^t=\frac{1}{\sqrt{t}}$}
\end{subfigure}
\begin{subfigure}{.3\textwidth}
  \centering
  \includegraphics[width=1\linewidth]{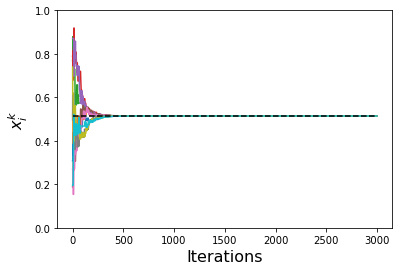}
  \caption{ $\lambda^t=\text{Adaptive}$}
\end{subfigure}
\caption{Trajectories of the values of $x_i^t$ for Binary Oracle run on the cycle graph.}
\label{Cycle10Lam}
\end{figure}

\begin{figure}[H]
\centering
\begin{subfigure}{.45\textwidth}
  \centering
  \includegraphics[width=1\linewidth]{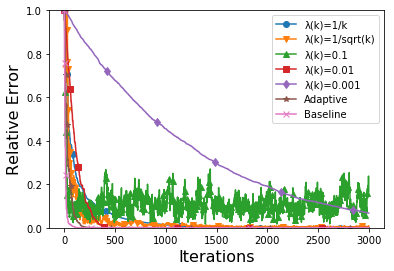}
  \caption{Linear Scale}
\end{subfigure}%
\begin{subfigure}{.45\textwidth}
  \centering
  \includegraphics[width=1\linewidth]{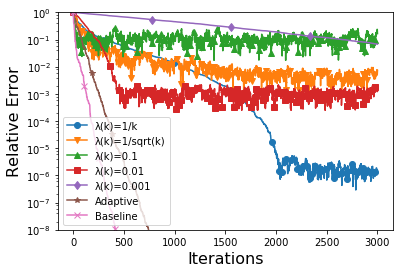}
  \caption{Logarithmic Scale}
\end{subfigure}
\caption{Convergence of the Binary Oracle run on the cycle graph.}
\label{Cycle10LamError}
\end{figure}
 
In general, we clearly see what is expected with the constant stepsizes --- that they converge to a certain neighbourhood and oscillate around optimum. With smaller stepsize, this neighbourhood is more accurate, but it takes longer to reach. With decreasing stepsizes, Theorem~\ref{thm:jhs988sh} suggests that $\lambda^t$ of order $1 / \sqrt{t}$ should be optimal. Figure~\ref{RGG100LamErr} demonstrates this, as the choice of $\lambda^t = 1/t$ decreases the stepsizes too quickly. However, this is not the case in Figure~\ref{Cycle10LamError} in which we observe the opposite effect. This is due to the cycle graph being small and simple, and hence the diminishing stepsize becomes a problem only after a relatively large number of iterations. With the adaptive choice of stepsizes, we recover the linear convergence rate as predicted by Theorem~\ref{thm: stepsize_adaptive}.

The results in Figure~\ref{RGG100LamErr} show one surprising comparison. The adaptive choice of stepsizes does not seem to perform better than $\lambda^t = 1 / \sqrt{t}$. However, we verified that when running for more iterations, the linear rate of adaptive stepsize is present and converges significantly faster to higher accuracies. We chose to present the results for $6000$ iterations since we found it overall cleaner.

\begin{figure}[H]
\centering
\begin{subfigure}{.3\textwidth}
  \centering
  \includegraphics[width=1\linewidth]{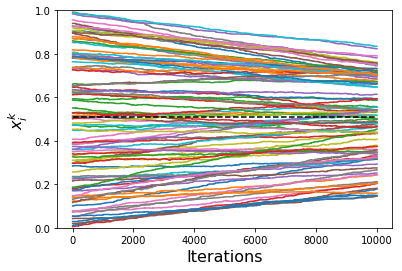}
  \caption{ $\lambda^t=\lambda=0.001$}
\end{subfigure}%
\begin{subfigure}{.3\textwidth}
  \centering
  \includegraphics[width=1\linewidth]{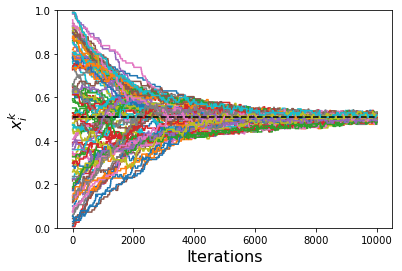}
  \caption{$\lambda^t=\lambda=0.01$}
\end{subfigure}
\begin{subfigure}{.3\textwidth}
  \centering
  \includegraphics[width=1\linewidth]{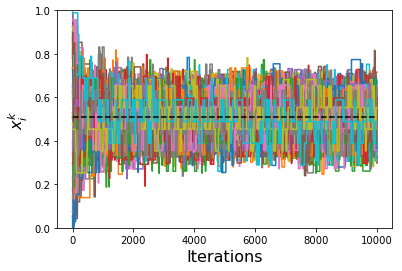}
  \caption{ $\lambda^t=\lambda=0.1$}
\end{subfigure}\\
\begin{subfigure}{.3\textwidth}
  \centering
  \includegraphics[width=1\linewidth]{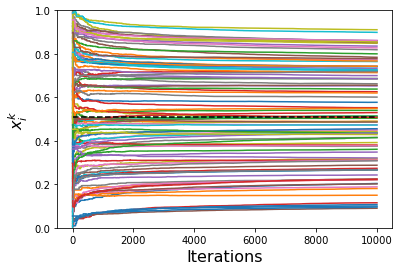}
  \caption{ $\lambda^t=\frac{1}{t}$}
\end{subfigure}
\begin{subfigure}{.3\textwidth}
  \centering
  \includegraphics[width=1\linewidth]{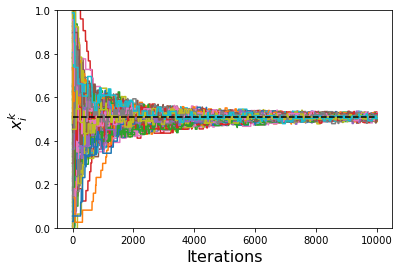}
  \caption{ $\lambda^t=\frac{1}{\sqrt{t}}$}
\end{subfigure}
\begin{subfigure}{.3\textwidth}
  \centering
  \includegraphics[width=1\linewidth]{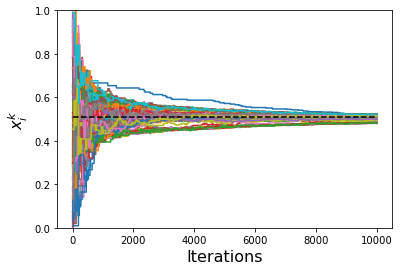}
  \caption{ $\lambda^t=\text{Adaptive}$}
\end{subfigure}
\caption{Trajectories of the values of $x_i^t$ for Binary Oracle run on the random geometric graph.}
\label{RGG100Lam}
\end{figure}

\begin{figure}[H]
\centering
\begin{subfigure}{.45\textwidth}
  \centering
  \includegraphics[width=1\linewidth]{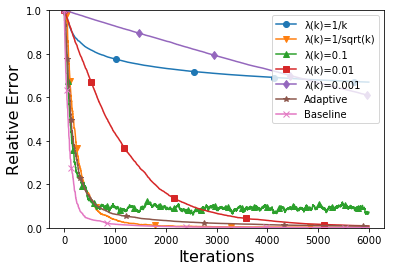}
  \caption{Linear Scale}
\end{subfigure}%
\begin{subfigure}{.45\textwidth}
  \centering
  \includegraphics[width=1\linewidth]{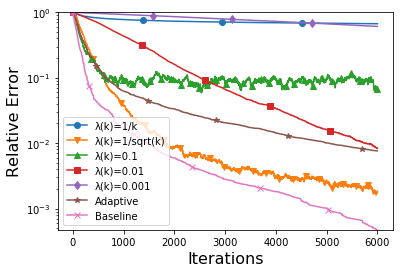}
  \caption{Logarithmic Scale}
\end{subfigure}
\caption{Convergence of the Binary Oracle run on the random geometric graph.}
\label{RGG100LamErr}
\end{figure}

\subsection{Private gossip via $\epsilon$-gap oracle}

In this section, we evaluate the performance of the Algorithm~\algE\ presented in Section~\ref{sec:E}. In the algorithm, the input parameter is the positive error tolerance variable $\epsilon$. For experimental evaluation. we choose three different values for the input, $\epsilon \in \{0.2, 0.02, 0.002\}$, and again use the same cycle and random geometric graphs. The trajectories of the values $x_i^t$ are presented in Figures~\ref{Cycle10Gap} and \ref{RGG100Gap}, respectively. The performance of the algorithm in terms of the relative error is presented in Figures~\ref{Cycle10GapErr} and \ref{RGG100GapErr}.

The performance is exactly matching the expectation --- with larger $\epsilon$, the method converges very fast to a wide neighbourhood of the optimum. For a small value, it converges much closer to the optimum, but it requires more iterations.

\begin{figure}[H]
\centering
%\begin{subfigure}{.24\textwidth}
 % \centering
  %\includegraphics[width=1\linewidth]{Plots/EpsOracle/Cycle10/e00001}
  %\caption{$\epsilon=0.0002$}
%\end{subfigure}%
\begin{subfigure}{.3\textwidth}
  \centering
  \includegraphics[width=1\linewidth]{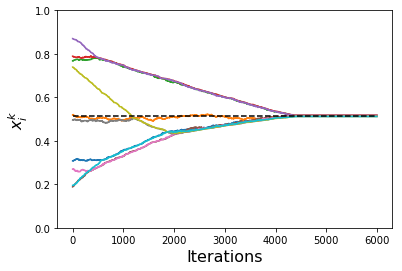}
  \caption{$\epsilon=0.002$}
\end{subfigure}
\begin{subfigure}{.3\textwidth}
  \centering
  \includegraphics[width=1\linewidth]{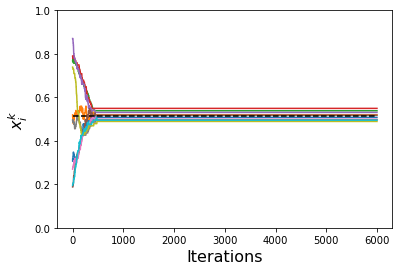}
  \caption{ $\epsilon=0.02$}
\end{subfigure}
\begin{subfigure}{.3\textwidth}
  \centering
  \includegraphics[width=1\linewidth]{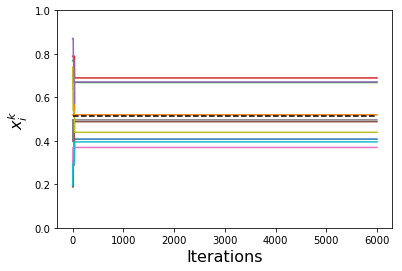}
  \caption{$\epsilon=0.2$}
\end{subfigure}
\caption{Trajectories of the values of $x_i^t$ for $\epsilon$-Gap Oracle run on the cycle graph.}
\label{Cycle10Gap}
\end{figure}

\begin{figure}[H]
\centering
\begin{subfigure}{.45\textwidth}
  \centering
  \includegraphics[width=1\linewidth]{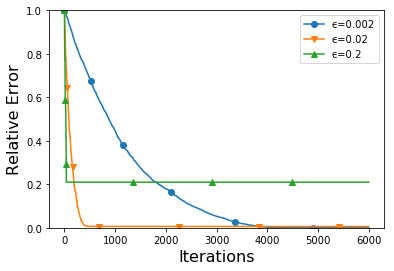}
  \caption{Linear Scale}
\end{subfigure}%
\begin{subfigure}{.45\textwidth}
  \centering
  \includegraphics[width=1\linewidth]{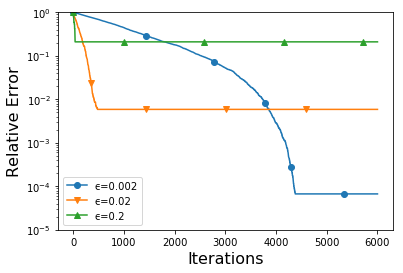}
  \caption{Logarithmic Scale}
\end{subfigure}
\caption{Convergence of the $\epsilon$-Gap Oracle run on the cycle graph.}
\label{Cycle10GapErr}
\end{figure}

\begin{figure}[H]
\centering
\begin{subfigure}{.3\textwidth}
  \centering
  \includegraphics[width=1\linewidth]{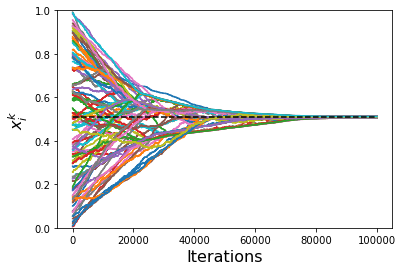}
  \caption{$\epsilon=0.002$}
\end{subfigure}
\begin{subfigure}{.3\textwidth}
  \centering
  \includegraphics[width=1\linewidth]{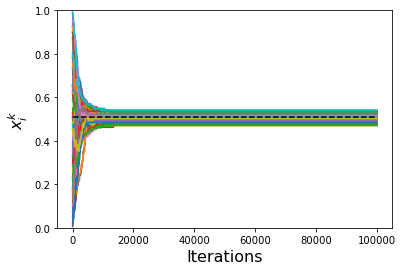}
  \caption{$\epsilon=0.02$}
\end{subfigure}
\begin{subfigure}{.3\textwidth}
  \centering
  \includegraphics[width=1\linewidth]{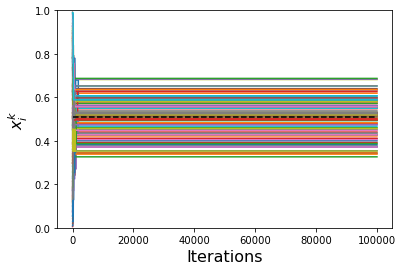}
  \caption{$\epsilon=0.2$}
\end{subfigure}
\caption{Trajectories of the values of $x_i^t$ for $\epsilon$-Gap Oracle run on the random geometric graph.}
\label{RGG100Gap}
\end{figure}

\begin{figure}[H]
\centering
\begin{subfigure}{.45\textwidth}
  \centering
  \includegraphics[width=1\linewidth]{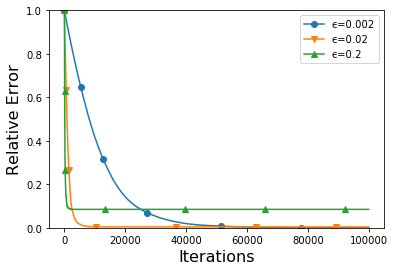}
  \caption{Linear Scale}
\end{subfigure}%
\begin{subfigure}{.45\textwidth}
  \centering
  \includegraphics[width=1\linewidth]{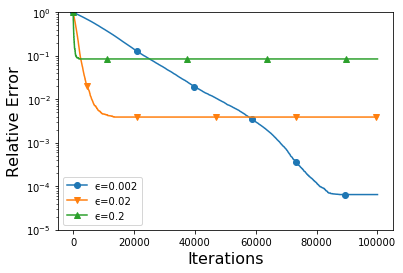}
  \caption{Logarithmic Scale}
\end{subfigure}
\caption{Convergence of the $\epsilon$-Gap Oracle run on the random geometric graph.}
\label{RGG100GapErr}
\end{figure}

\subsection{Private gossip via controlled noise insertion}

In this section, we evaluate the performance of Algorithm~\algN\ presented in Section~\ref{sec: noise}. This algorithm has two different parameters for each node $i$. These are the initial variance $\sigma_i^2 \geq 0$ and the rate of decay, $\phi_i$,  of the noise.

To evaluate the impact of these parameters, we perform several experiments. As earlier, we use the same graph structures for evaluation: cycle graph and random geometric graph. The algorithm converges with a linear rate depending on the minimum of two factors --- see Theorem~\ref{T: ng general convergence} and Corollary~\ref{C: noisy gossip special}. We will verify that this is indeed the case, and for values of $\phi_i$ above a certain threshold, the convergence is driven by the rate at which the noise decays. This is true for both identical values of $\phi_i$ for all $i$, and for varying values as per \eqref{Eq: phi_i def}. We further demonstrate the latter is superior in the sense that it enables insertion of more noise, without sacrificing the convergence speed. Finally, we study the effect of various magnitudes of the noise inserted initially.

\subsubsection{Fixed variance, identical decay rates}

In this part, we run Algorithm~\algN\ with $\sigma_i = 1$ for all $i$, and set $\phi_i = \phi$ for all $i$ and some $\phi$. We study the effect of varying the value of $\phi$ on the convergence of the algorithm. 

In both Figures~\ref{Cycle10NoiseErr}b and \ref{RGG100NoiseErr}b, we see that for small values of $\phi$, we eventually recover the same rate of linear convergence as the Standard Gossip algorithm. If the value of $\phi$ is sufficiently close to $1$ however, the rate is driven by the noise and not by the convergence of the Standard Gossip algorithm. This value is $\phi = 0.98$ for cycle graph, and $\phi=0.995$ for the random geometric graph in the plots we present.

Looking at the individual runs for small values of $\phi$ in Figure~\ref{RGG100NoiseErr}b, we see some variance in terms of when the asymptotic rate is realized. We would like to point out that this \emph{does not} provide additional insight into whether specific small values of $\phi$ are in general better for the following reason. The Standard Gossip algorithm is itself a randomized algorithm, with an inherent uncertainty in the convergence of any particular run. If we ran the algorithms multiple times, we observe variance in the evolution of the suboptimality of similar magnitude, just as what we see in the figure. Hence, the variance is expected, and not significantly influenced by the noise.

\begin{figure}[H]
\centering
\begin{subfigure}{.3\textwidth}
  \centering
  \includegraphics[width=1\linewidth]{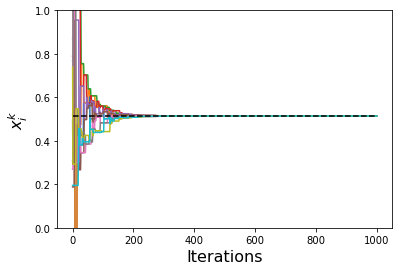}
  \caption{$\phi=0.001$}
\end{subfigure}
\begin{subfigure}{.3\textwidth}
  \centering
  \includegraphics[width=1\linewidth]{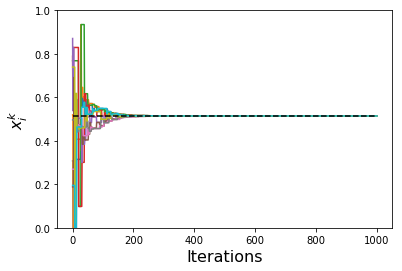}
  \caption{$\phi=0.01$}
\end{subfigure}
\begin{subfigure}{.3\textwidth}
  \centering
  \includegraphics[width=1\linewidth]{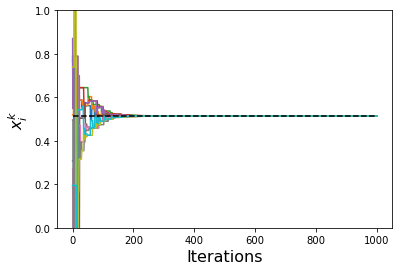}
  \caption{ $\phi=0.1$}
\end{subfigure}\\
\begin{subfigure}{.3\textwidth}
  \centering
  \includegraphics[width=1\linewidth]{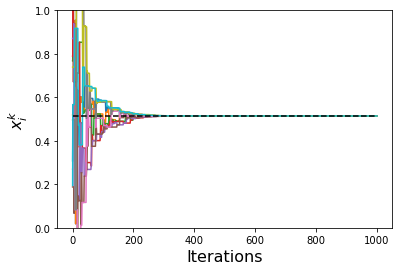}
  \caption{ $\phi=0.5$}
\end{subfigure}
\begin{subfigure}{.3\textwidth}
  \centering
  \includegraphics[width=1\linewidth]{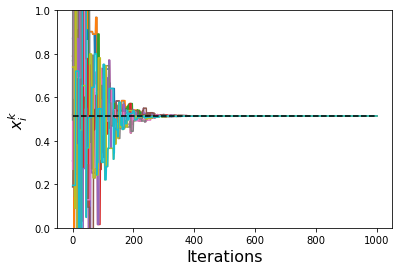}
  \caption{ $\phi=0.9$}
\end{subfigure}
\begin{subfigure}{.3\textwidth}
  \centering
  \includegraphics[width=1\linewidth]{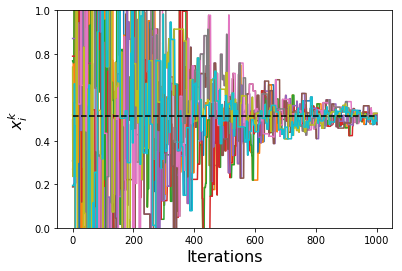}
  \caption{ $\phi=0.98$}
\end{subfigure}
\caption{Trajectories of the values of $x_i^t$ for Controlled Noise Insertion run on the cycle graph for different values of $\phi$.}
\label{Cycle10Noise}
\end{figure}

\begin{figure}[H]
\centering
\begin{subfigure}{.45\textwidth}
  \centering
  \includegraphics[width=1\linewidth]{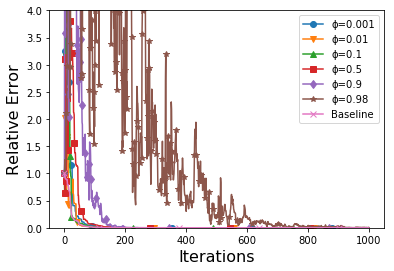}
  \caption{Linear Scale}
\end{subfigure}%
\begin{subfigure}{.45\textwidth}
  \centering
  \includegraphics[width=1\linewidth]{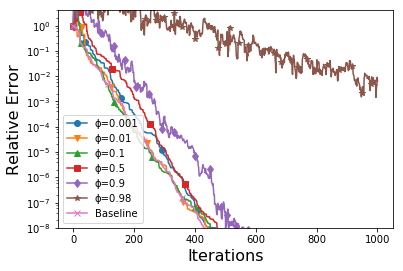}
  \caption{Logarithmic Scale}
\end{subfigure}
\caption{Convergence of the Controlled Noise Insertion run on the cycle graph for different values of $\phi$.}
\label{Cycle10NoiseErr}
\end{figure}

\begin{figure}[H]
\centering
\begin{subfigure}{.3\textwidth}
  \centering
  \includegraphics[width=1\linewidth]{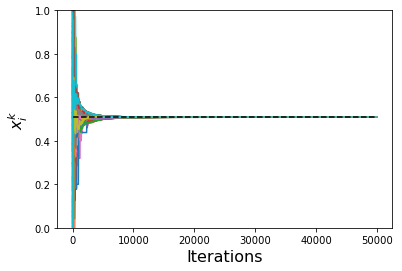}
  \caption{ $\phi=0.001$}
\end{subfigure}%
\begin{subfigure}{.3\textwidth}
  \centering
  \includegraphics[width=1\linewidth]{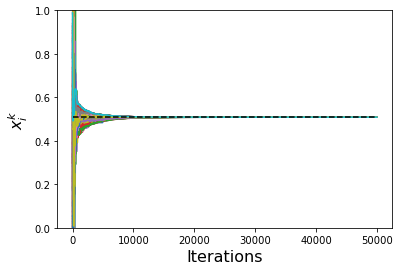}
  \caption{$\phi=0.01$}
\end{subfigure}
\begin{subfigure}{.3\textwidth}
  \centering
  \includegraphics[width=1\linewidth]{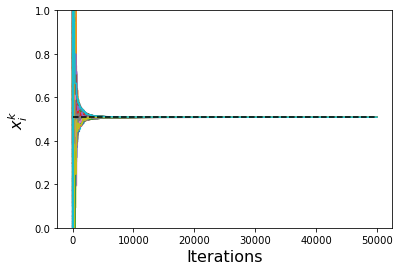}
  \caption{$\phi=0.1$}
\end{subfigure}\\
\begin{subfigure}{.3\textwidth}
  \centering
  \includegraphics[width=1\linewidth]{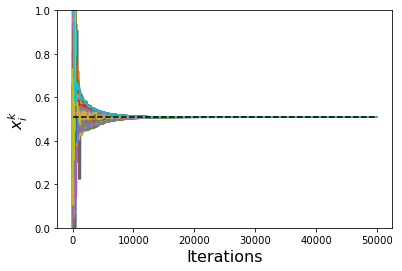}
  \caption{ $\phi=0.5$}
\end{subfigure}
\begin{subfigure}{.3\textwidth}
  \centering
  \includegraphics[width=1\linewidth]{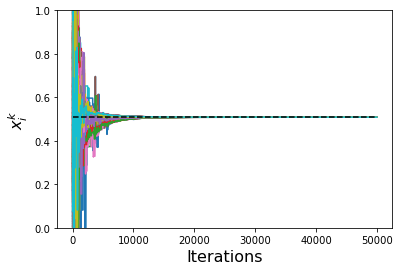}
  \caption{ $\phi=0.9$}
\end{subfigure}
\begin{subfigure}{.3\textwidth}
  \centering
  \includegraphics[width=1\linewidth]{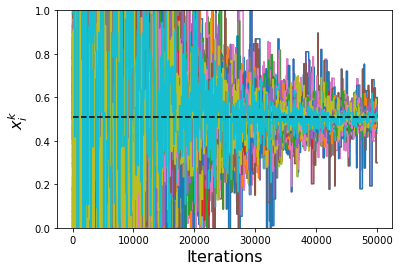}
  \caption{ $\phi=0.995$}
\end{subfigure}
\caption{Trajectories of the values of $x_i^t$ for Controlled Noise Insertion run on the random geometric graph for different values of $\phi$.}
\label{RGG100Noise}
\end{figure}

\begin{figure}[H]
\centering
\begin{subfigure}{.45\textwidth}
  \centering
  \includegraphics[width=1\linewidth]{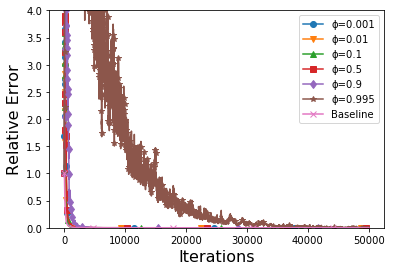}
  \caption{Linear Scale}
\end{subfigure}%
\begin{subfigure}{.45\textwidth}
  \centering
  \includegraphics[width=1\linewidth]{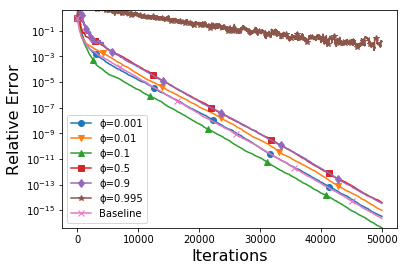}
  \caption{Logarithmic Scale}
\end{subfigure}
\caption{Convergence of the Controlled Noise Insertion run on the random geometric graph for different values of $\phi$.}
\label{RGG100NoiseErr}
\end{figure}

\subsubsection{Variance 1 and different decay rates}

In this section, we perform a similar experiment as above, but the values $\phi_i$ are not all the same. We rather control them by the choice of $\gamma$ as in \eqref{Eq: phi_i def}. Note that by decreasing $\gamma$, we increase $\phi_i$, and thus smaller $\gamma$ means the noise decays at a slower rate. Here, due to the regular structure of the cycle graph, we present only results for the random geometric graph.

It is not straightforward to compare this setting with the setting of identical $\phi_i$, and we return to it in the next section. Here we only remark that we again see the existence of a threshold predicted by theory, beyond which the convergence is dominated by the inserted noise. Otherwise, we recover the rate of the Standard Gossip algorithm.

\begin{figure}[H]
\centering
\begin{subfigure}{.3\textwidth}
  \centering
  \includegraphics[width=1\linewidth]{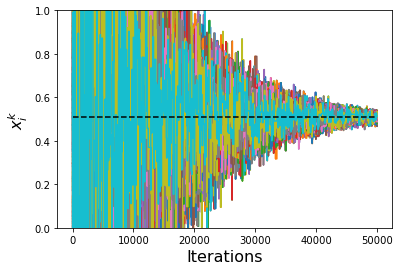}
  \caption{$\gamma=0.1$}
\end{subfigure}
\begin{subfigure}{.3\textwidth}
  \centering
  \includegraphics[width=1\linewidth]{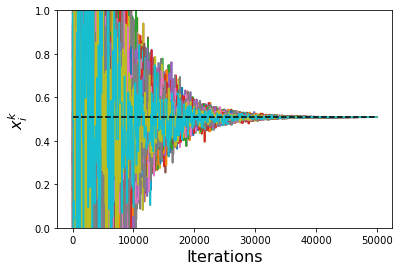}
  \caption{$\gamma=0.2$}
\end{subfigure}
\begin{subfigure}{.3\textwidth}
  \centering
  \includegraphics[width=1\linewidth]{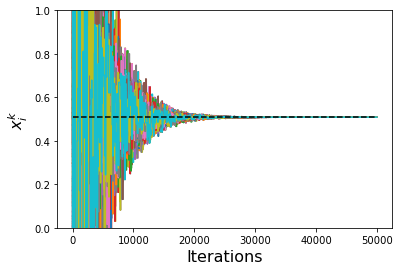}
  \caption{$\gamma=0.3$}
\end{subfigure}\\
\begin{subfigure}{.3\textwidth}
  \centering
  \includegraphics[width=1\linewidth]{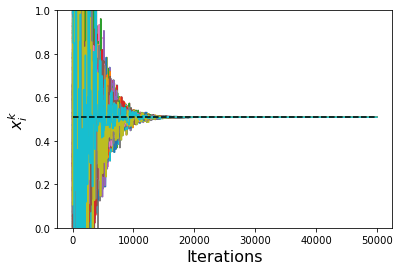}
  \caption{$\gamma=0.5$}
\end{subfigure}
\begin{subfigure}{.3\textwidth}
  \centering
  \includegraphics[width=1\linewidth]{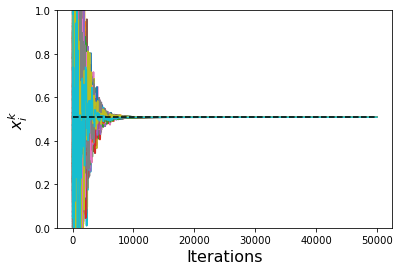}
  \caption{$\gamma=1$}
\end{subfigure}
\begin{subfigure}{.3\textwidth}
  \centering
  \includegraphics[width=1\linewidth]{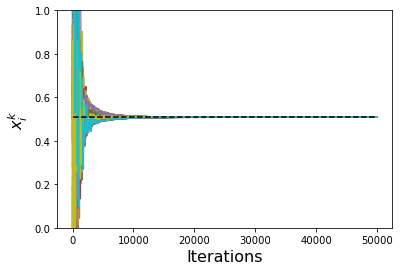}
  \caption{$\gamma=2$}
\end{subfigure}\\
\caption{Trajectories of the values of $x_i^t$ for Controlled Noise Insertion run on the random geometric graph for different values of $\phi_i$, controlled by $\gamma$.}
\label{Cycle10Gamma}
\end{figure}

\begin{figure}[H]
\centering
\begin{subfigure}{.45\textwidth}
  \centering
  \includegraphics[width=1\linewidth]{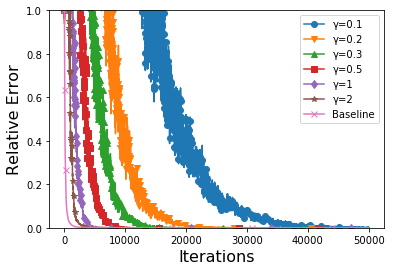}
  \caption{Normal Scale}
\end{subfigure}%
\begin{subfigure}{.45\textwidth}
  \centering
  \includegraphics[width=1\linewidth]{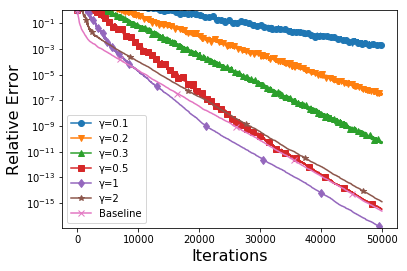}
  \caption{Logarithmic Scale}
\end{subfigure}
\caption{Convergence of the Controlled Noise Insertion run on the random geometric graph for different values of $\phi_i$, controlled by $\gamma$.}
\label{Cycle10GammaErr}
\end{figure}

\subsubsection{Impact of varying $\phi_i$}

In this experiment, we demonstrate the practical utility of letting the rate of decay $\phi_i$ to be different on each node $i$. In order to do so, we run the experiment on the random geometric graph and compare the settings investigated in the previous two sections --- the noise decay rate driven by $\phi$, or by $\gamma$.

In first place, we choose the values of $\phi_i$ such that that the two factors in Corollary~\ref{C: noisy gossip special} are equal. For the particular graph we used, this corresponds to $\gamma \approx 0.17$ with $\phi_i=\sqrt{1-\frac{\ac(\cG)}{2d_i}}$. Second, we make the factors equal, but with constraint of having $\phi_i$ to be equal for all $i$. This corresponds to $\phi_i \approx 0.983$ for all $i$.

The performance for a large number of iterations is displayed in the left side of Figure~\ref{fig:comparison_varying_phi}. We see that the above two choices indeed yield very similar practical performance, which also eventually matches the rate predicted by theory. For a complete comparison, we also include the performance of the Standard Gossip algorithm.

The important message is conveyed in the histogram in the right side of Figure~\ref{fig:comparison_varying_phi}. The histogram shows the distribution of the values of $\phi_i$ for different nodes $i$. The minimum of these values is what we needed in the case of identical $\phi_i$ for all $i$. However, most of the values are significantly higher. This means, that if we allow the noise decay rates to depend on the number of neighbours, we are able to increase the amount of noise inserted, without sacrificing practical performance. This is beneficial, as more noise will likely be beneficial for any formal notion of protection of the initial values.

\begin{figure}[H]
\centering
\begin{subfigure}{.45\textwidth}
  \centering
  \includegraphics[width=1\linewidth]{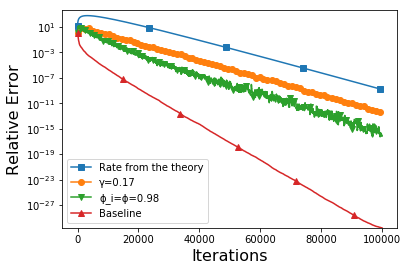}
%  \caption{Linear Scale}
\end{subfigure}%
\begin{subfigure}{.45\textwidth}
  \centering
  \includegraphics[width=1\linewidth]{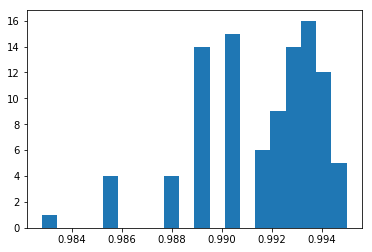}
%  \caption{Logarithmic Scale}
\end{subfigure}

\caption{Left: Performance of the noise oracle with noise decrease rate chosen according to Corollary \ref{C: noisy gossip special}. Right: Histogram of of distribution of $\phi_i$}
\label{fig:comparison_varying_phi}
\end{figure}

\section{Conclusion}
\label{sec:conclusion}

In this chapter, we addressed the Average Consensus problem via novel asynchronous privacy preserving randomized gossip algorithms. In particular, we propose three different algorithmic tools for the protection of the initial private values of the nodes.

The first two proposed algorithms ``Private Gossip via Binary Oracle" and ``Private Gossip via  $\epsilon$-Gap Oracle" are based on the same idea of weakening the oracle used in the gossip update rule. In these two protocols the chosen pair of nodes of each gossip step instead of share their exact values they provide only categorical (or even binary) information to each other. 

In the third protocol ``Private Gossip via Controlled Noise Insertion", we systematically inject and withdraw noise throughout the iterations, so as to ensure convergence to the average consensus value and at the same time protect the private information of the nodes. 

In all cases, we provide explicit convergence rates and evaluate practical convergence on common simulated network topologies. 

Future work inludes the design of privacy preserving variants of several popular and fast gossip protocols  \cite{aysal2009broadcast, mou2010deterministic, he2011periodic, cao2006accelerated, liu2013analysis}.  One can also investigate more challenging types of consensus problems like the finite step consensus or consensus on
networks with time-varying topology, and design gossip protocols that preserve the privacy of the participating agents.  Designing the optimal network structure for information preservation is also an interesting research direction. %What are the limits between privacy preservation and communications efficiency on decentralized protocols? 

As we have already mentioned the gossip algorithms of this chapter do not address any specific notion of privacy (no clear measure of privacy is presented) and it is still not clear how the formal concept of differential privacy \cite{dwork2014algorithmic} can be applied in protocols for solving the average consensus problem. Propose efficient differential privacy guarantees for gossip protocols in general graphs is an interesting open problem.

%Using the ideas presented in this chapter, it will be interested to design privacy-preserving gossip protocols for solving the more general distributed optimization problem, where the goal is to minimize the average of convex functions in a distributed fashion. 

\section{Proofs of Main Results}
 \label{sec:App-Sec3}

\subsection{Proof of Lemma~\ref{lem:beta}}
\label{proofPrivacy1}
Let us first present a lemma that we use in the proof of Lemma~\ref{lem:beta}.
\begin{lem}
The eigenvalues of $\tilde{\mL}=n\mI - \ones \ones^\top$ are $\{0,n,n,\dots,n\}$
\label{L: eigenvalue of tilde L}
\end{lem}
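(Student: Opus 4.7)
The plan is to compute the eigenvalues directly by decomposing $\R^n$ into the span of $\ones$ and its orthogonal complement, exploiting the fact that $\ones\ones^\top$ is a rank-one matrix whose spectral structure is trivial to analyze.

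First I would observe that the rank-one matrix $\ones\ones^\top$ has a very simple spectrum: $\ones\ones^\top \ones = n\ones$, so $\ones$ is an eigenvector with eigenvalue $n$; and for any $v \in \R^n$ with $\ones^\top v = 0$ we have $\ones\ones^\top v = 0$, so the orthogonal complement of $\mathrm{span}\{\ones\}$, which has dimension $n-1$, lies in the null space. This identifies all $n$ eigenvalues of $\ones\ones^\top$.

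Next I would transfer this spectral information to $\tilde{\mL} = n\mI - \ones\ones^\top$ using the fact that $\tilde{\mL}$ and $\ones\ones^\top$ are simultaneously diagonalizable (they commute since $\mI$ commutes with everything). Concretely, applying $\tilde{\mL}$ to $\ones$ gives $\tilde{\mL}\ones = n\ones - \ones\ones^\top \ones = n\ones - n\ones = 0$, so $0$ is an eigenvalue with eigenvector $\ones$. For any $v$ orthogonal to $\ones$, $\tilde{\mL} v = n v - \ones\ones^\top v = n v$, so every such $v$ is an eigenvector with eigenvalue $n$. Since the orthogonal complement of $\mathrm{span}\{\ones\}$ has dimension $n-1$, the eigenvalue $n$ has multiplicity $n-1$. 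Together this accounts for all $n$ eigenvalues and yields the claimed spectrum $\{0, n, n, \dots, n\}$.

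There is no real obstacle here; this is a short direct computation. The main point is simply to make the orthogonal decomposition explicit so that the spectrum is recovered without any appeal to heavier machinery. This lemma then serves as the key ingredient used in the proof of Lemma~\ref{lem:beta}, where the relationship between $\beta(\cG)$ and the algebraic connectivity $\ac(\cG)$ will be established by connecting the quadratic form $\sum_{(i,j)}(x_j-x_i)^2$ to $x^\top \tilde{\mL} x$ and comparing it against $x^\top \mL x$ via the Courant--Fischer characterization on the subspace orthogonal to $\ones$.
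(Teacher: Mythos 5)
Your proof is correct and follows essentially the same route as the paper's: compute $\tilde{\mL}\ones = 0$ and $\tilde{\mL}v = nv$ for any $v$ orthogonal to $\ones$, then count dimensions to get multiplicity $n-1$ for the eigenvalue $n$. The preliminary discussion of the spectrum of $\ones\ones^\top$ and the remark on simultaneous diagonalizability are harmless extras but add nothing beyond the direct computation the paper already performs.
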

\begin{proof}
Clearly, $\tilde{\mL} \ones=0$. Consider some vector $x$ such that $\langle x, \ones\rangle=0$. Then, $\tilde{\mL}x= n\mI x-\ones\ones^{\top} x=nx+\ones\ones^{\top} x=nx$ thus $x$ is an eigenvector corresponding to eigenvalue $n$. Thus, we can pick $n-1$ linearly independent eigenvectors of $\tilde{\mL}$ corresponding to eigenvalue $n$,  which concludes the proof. 
\end{proof} 
Having established the above lemma let us present the proof Lemma~\ref{lem:beta}.

The Laplacian matrix of $\cG$ is the matrix $\mL=\bA^\top \bA$. We have 
$\mL_{ii} = d_i$ (degree of vertex $i$), $\mL_{ij} = \bL_{ji} = -1$ if $(i,j)\in \cE$ and $\bL_{ij} = 0$ otherwise. A simple computation reveals that for any $x\in \R^n$ we have
\[x^\top \bL x = \sum_{e=(i,j)\in \cE} (x_i-x_j)^2.\]

Let $\tilde{\bA}$ be the $n(n-1)/2 \times n$  matrix corresponding to the complete graph $\tilde{\cG}$ on $\cV$. Let $\tilde{\bL} = \tilde{\bA}^\top \tilde{\bA}$ be its Laplacian. We have $\tilde{\bL}_{ii}=n-1$ for all $i$ and $\tilde{\bL}_{ij} =-1$ for $i\neq j$. So, $\tilde{\bL} = n \mI - \ones \ones^\top$.  Then
\[x^\top \tilde{\bL} x = n\|x\|^2 -\left(\sum_{i=1}^n x_i\right)^2=\sum_{(i,j)} (x_i - x_j)^2.\]

Inequality \eqref{eq:hdgugvej} can therefore be recast as follows:
\[x^\top (n \mI - \ones \ones^\top) x \leq  x^\top \beta(\cG) \bL x, \qquad x\in \R^n.\]

Let $\beta=\beta(\cG)$. Note that both $\tilde{\mL}$ and $\beta \mL$ are Hermitian thus have real eigenvalues and there exist an orthonormal basis of their eigenvectors. Suppose that $\{x_1,\dots x_n\}$ are eigenvectors of $\beta L$ corresponding to eigenvalues $\lambda_1(\beta \mL),\lambda_2(\beta \mL) \dots ,\lambda_n(\beta \mL)$. Without loss of generality assume that these eigenvectors form an orthonormal basis and $\lambda_1(\beta \mL)\geq \dots \geq \lambda_n(\beta \mL)$.

Clearly, $\lambda_n(\beta \mL)=0$, $x_n=\ones/\sqrt{n}$, and $\lambda_{n-1}(\beta \mL)=n$. Lemma~\ref{L: eigenvalue of tilde L}
states that eigenvalues of $\tilde{\mL}$ are $\{0,n,n,\dots,n\}$.

 One can easily see that eigenvector corresponding to zero eigenvalue of $\tilde{\mL}$ is $x_n$. Note that eigenvectors $x_1,\dots,x_{n-1}$ generate an eigenspace corresponding to eigenvalue $n$ of $\tilde{\mL}$. 

Consider some $x=\sum_{i=1}^n c_ix_i$, $c_i \in \R$ for all $i$. Then we have
$$
x^{\top}\tilde{\mL}x=
\sum_{i=1}^n \lambda_i\left(\tilde{\mL}\right)c_i^2\leq
\sum_{i=1}^n \lambda_i\left(\beta \mL \right)c_i^2=
 x^{\top}\beta \mL x,
$$
which concludes the proof. 
\subsection{Proof of Theorem~\ref{thm:G}}
\label{proofPrivacy2}
We first establish two lemmas which will be needed to prove Theorem~\ref{thm:G}.
\begin{lem} \label{lem:09u9djh9ffs}
Assume that edge $e=(i,j)$ is selected in iteration $t$ of Algorithm~\algG. Then \begin{equation} \label{eq:89g9s8guff} D(y^{t+1}) - D(y^t) = \frac{1}{4}(x^t_i-x^t_j)^2.\end{equation}
\end{lem}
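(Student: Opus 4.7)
The proof should be a direct application of Lemma~\ref{lem:98y98yss}, which already gives a clean formula for the change in dual value after an arbitrary perturbation $y \mapsto y + \lambda f_e$. The plan is to specialize that formula to the exact-line-search choice of $\lambda$ used by Algorithm~\algG, and then compute explicitly.

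First I would recall from Lemma~\ref{lem:98y98yss} that for any $\lambda \in \R$ and edge $e=(i,j)$,
\begin{equation*}
D(y^{t}+\lambda f_e) - D(y^t) \;=\; -\lambda(x^t_i - x^t_j) - \lambda^2,
\end{equation*}
which is a concave quadratic in $\lambda$. Since Algorithm~\algG selects $\lambda^t = \arg\max_{\lambda'} D(y^t + \lambda' f_e)$, I would differentiate the right-hand side with respect to $\lambda$, obtaining the first-order condition $-(x^t_i - x^t_j) - 2\lambda = 0$, and hence
\begin{equation*}
\lambda^t \;=\; \frac{x^t_j - x^t_i}{2}.
\end{equation*}
This matches the exact line search formula stated immediately after Lemma~\ref{lem:98y98yss}, confirming consistency with the algorithm's update.

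Finally, I would substitute $\lambda = \lambda^t$ back into the identity and simplify:
\begin{equation*}
D(y^{t+1}) - D(y^t) \;=\; -\frac{x^t_j - x^t_i}{2}\,(x^t_i - x^t_j) \;-\; \left(\frac{x^t_j - x^t_i}{2}\right)^{2} \;=\; \frac{(x^t_i - x^t_j)^2}{2} - \frac{(x^t_i - x^t_j)^2}{4} \;=\; \frac{1}{4}(x^t_i - x^t_j)^2,
\end{equation*}
which is exactly the claim. There is no real obstacle here; the heavy lifting is done by Lemma~\ref{lem:98y98yss}, and the only step is the one-line optimization in $\lambda$ followed by back-substitution. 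The result will then serve as the per-iteration progress bound that, when combined with the edge-sampling distribution (uniform over $\cE$) and the spectral inequality \eqref{eq:hdgugvej} together with Lemma~\ref{lem:beta}, yields the linear rate in Theorem~\ref{thm:G}.
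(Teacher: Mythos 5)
Your proof is correct and follows exactly the paper's own argument: both apply Lemma~\ref{lem:98y98yss} and maximize the resulting concave quadratic $-\lambda(x^t_i-x^t_j)-\lambda^2$ over $\lambda$, which yields $\frac{1}{4}(x^t_i-x^t_j)^2$. The only difference is that you write out the first-order condition and back-substitution explicitly, while the paper states the maximum value directly.
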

\begin{proof} We have $y^{t+1} = y^t + \lambda^t f_e$ where $\lambda^t$ is chosen so that $D(y^{t+1})-D(y^t)$ is maximized. Applying Lemma~\ref{lem:98y98yss}, we have
\[
D(y^{t+1}) - D(y^t) =\max_{\lambda} -\lambda (x^t_i-x^t_j) - \lambda^2\\
=  \frac{1}{4}(x_i^t - x_j^t)^2.
\]
\end{proof}

\begin{lem} \label{L: D bound by beta} Let  $x\in \R^n$ such that $\frac{1}{n}\sum_i x_i = \bar{c}$. Then \begin{equation}
\label{eq:8998gd98gikd}\frac{1}{2}\|\bar{c}\ones - x\|^2 \leq \frac{1}{2\ac(\cG)} \sum_{e=(i,j)\in \cE} (x_i-x_j)^2.
\end{equation}
\end{lem}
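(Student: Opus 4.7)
The plan is to prove this by exploiting the spectral structure of the Laplacian $\bL = \bA^\top \bA$ of $\cG$, since the right-hand side can be recognized as a quadratic form in $\bL$ and the left-hand side as a squared norm of a vector lying in $\mathrm{Null}(\bL)^\perp$.

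First, I would rewrite both sides in terms of $\bL$. Using row structure of the incidence matrix $\bA$, one has the identity
\[
\sum_{e=(i,j)\in \cE}(x_i-x_j)^2 \;=\; x^\top \bL x,
\]
which was already used in the proof of Lemma~\ref{lem:beta}. Thus the claim reduces to showing that
\[
\|\bar{c}\ones - x\|^2 \;\leq\; \frac{1}{\ac(\cG)}\, x^\top \bL x
\]
whenever $\ones^\top x = n\bar{c}$.

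Next, set $v \eqdef x - \bar c\ones$. The key observation is that the hypothesis $\frac{1}{n}\sum_i x_i = \bar{c}$ means exactly $\ones^\top v = 0$, i.e., $v \perp \ones$. Since $\cG$ is connected, $\ones$ spans $\mathrm{Null}(\bL)$, so $v$ lies in the range of $\bL$, i.e., in the subspace spanned by the eigenvectors of $\bL$ corresponding to strictly positive eigenvalues. Recalling from \eqref{eq:algebraic_connectivity} that $\ac(\cG) = \lambda_{n-1}(\bL) = \lambda_{\min}^+(\bL)$, the Courant--Fischer variational characterization then gives
\[
v^\top \bL v \;\geq\; \ac(\cG)\,\|v\|^2.
\]
Finally, since $\bL\ones = 0$, a direct expansion shows $v^\top \bL v = (x-\bar c\ones)^\top \bL(x-\bar c\ones) = x^\top \bL x$, so combining with the previous inequality yields $\|\bar c\ones - x\|^2 \leq \ac(\cG)^{-1} x^\top \bL x$, and dividing by $2$ completes the proof.

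There is no real obstacle here: every ingredient (the Laplacian quadratic form, the identification of $\mathrm{Null}(\bL)$, and the variational principle) is standard spectral graph theory, and all of it is already implicit in the earlier proof of Lemma~\ref{lem:beta}. An equivalent route avoiding spectral language would be to apply Lemma~\ref{lem:beta} directly, using the identity $\sum_{(i,j)}(x_i-x_j)^2 = n\|x\|^2 - (\ones^\top x)^2 = n\|\bar c\ones - x\|^2$ (which holds precisely because $\ones^\top x = n\bar c$) together with $\beta(\cG) = n/\ac(\cG)$; this gives the same bound in one line.
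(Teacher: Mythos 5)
Your proof is correct, and your primary route is genuinely different from the one in the paper. The paper argues combinatorially: it first uses the identity \eqref{Eq: equality} to rewrite $\tfrac12\|\bar c\ones - x\|^2$ as $\tfrac{1}{2n}\sum_{(i,j)}(x_j-x_i)^2$ (a sum over \emph{all} unordered pairs, i.e.\ the complete graph on $\cV$), then applies the defining inequality \eqref{eq:hdgugvej} of $\beta(\cG)$ to pass to the sum over the edges of $\cG$, and finally substitutes $\beta(\cG)=n/\ac(\cG)$ from Lemma~\ref{lem:beta}. You instead work directly with the Laplacian quadratic form: $\sum_{e\in\cE}(x_i-x_j)^2 = x^\top\bL x = v^\top\bL v$ with $v=x-\bar c\ones$, and since the hypothesis gives $v\perp\ones$ and $\ones$ spans $\mathrm{Null}(\bL)$ for a connected graph (connectedness is assumed throughout the chapter), the variational characterization of $\lambda_{\min}^+(\bL)=\ac(\cG)$ yields $v^\top\bL v\ge \ac(\cG)\|v\|^2$. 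This is shorter and entirely bypasses the constant $\beta(\cG)$; what the paper's detour buys is that all the spectral content is packaged once into Lemma~\ref{lem:beta}, leaving the present proof as a purely mechanical chain of substitutions. Your closing remark is also accurate: the identity $\sum_{(i,j)}(x_i-x_j)^2=n\|\bar c\ones-x\|^2$ combined with \eqref{eq:hdgugvej} and Lemma~\ref{lem:beta} is precisely the paper's argument, so your write-up in effect contains both the paper's proof and a cleaner direct one.
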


\begin{proof}
\begin{eqnarray*}\frac{1}{2}\|\bar{c}\ones - x\|^2 
&\overset{\eqref{Eq: equality}}{=}& \frac{1}{4n} \sum_{i=1}^n \sum_{j=1}^n  (x_j  -x_i)^2
\quad = \quad \frac{1}{2n} \sum_{(i,j)} (x_j  -x_i)^2\\
&\overset{\eqref{eq:hdgugvej}}{\leq}& \frac{\beta(\cG)}{2n} \sum_{e=(i,j)\in \cE} (x_i-x_j)^2
\quad \overset{\text{Lemma}~\ref{lem:beta}}{=} \quad
\frac{1}{2\ac(\cG)} \sum_{e=(i,j)\in \cE} (x_i-x_j)^2
\end{eqnarray*}
\end{proof}

Having established Lemmas \ref{lem:09u9djh9ffs} and \ref{L: D bound by beta}, we can now proceed with the proof of Theorem~\ref{thm:G}:
\begin{eqnarray*}\E{D(y^*) - D(y^{t+1}) \;|\; y^t} &=& D(y^*) - D(y^{t}) - \E{D(y^{t+1}) - D(y^{t}) \;|\; y^t} 
\\
&\overset{\eqref{eq:89g9s8guff}}{=}&  D(y^*) - D(y^{t}) - \sum_{e=(i,j)\in \cE}\frac{1}{4m} (x^t_i-x^t_j)^2\\
&\overset{\eqref{eq:99d8gds}}{=}& \frac{1}{2}\|\bar{c}\ones - x^t\|^2- \sum_{e=(i,j)\in \cE}\frac{1}{4m} (x^t_i-x^t_j)^2\\
&\overset{\eqref{eq:8998gd98gikd}}{\leq} &  \left(1-\frac{\ac(\cG)}{2m }\right)\frac{1}{2}\|\bar{c}\ones - x^t\|^2\\
&\overset{\eqref{eq:99d8gds}}{=}& \left(1-\frac{\ac(\cG)}{2m}\right)\left(D(y^*) - D(y^{t}) \right) \ .
\end{eqnarray*}
Taking expectation again, we get the recursion
$$\E{D(y^*) - D(y^{t+1}) } \quad \leq \quad \left(1-\frac{\ac(\cG)}{2m}\right)\E{D(y^*) - D(y^{t}) }.$$
\subsection{Proof of Lemma \ref{L: rel measures}}
\label{meslemma}
Let us first present a Lemma that we use in the proof of Lemma~\ref{L: rel measures}.
\begin{lem}
\begin{equation}
\sum_{i=1}^n \left( \sum_{j=1}^n (x_j-x_i) \right)^2
=
\frac{n}{2}\sum_{i=1}^n \sum_{j=1}^n(x_j-x_i)^2 
\label{Eq: sum lemma}
\end{equation}
\end{lem}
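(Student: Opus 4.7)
The plan is to expand both sides by a direct calculation after introducing the auxiliary scalar $S \eqdef \sum_{j=1}^n x_j$, since the key observation is that the inner sum on the left-hand side simplifies dramatically.

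\textbf{First step.} I would rewrite the inner sum on the LHS as
\[
\sum_{j=1}^n (x_j - x_i) = S - n x_i,
\]
so that the LHS becomes $\sum_{i=1}^n (S - nx_i)^2$. Expanding the square gives
\[
\sum_{i=1}^n (S - n x_i)^2 = nS^2 - 2nS\sum_{i=1}^n x_i + n^2\sum_{i=1}^n x_i^2 = -nS^2 + n^2 \sum_{i=1}^n x_i^2,
\]
using that $\sum_i x_i = S$.

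\textbf{Second step.} Expand the double sum on the RHS in the usual way:
\[
\sum_{i=1}^n\sum_{j=1}^n (x_j - x_i)^2 = \sum_{i=1}^n\sum_{j=1}^n\bigl(x_j^2 - 2x_ix_j + x_i^2\bigr) = 2n\sum_{i=1}^n x_i^2 - 2S^2.
\]
Multiplying by $n/2$ yields $n^2\sum_{i=1}^n x_i^2 - nS^2$, which matches the expression obtained in the first step for the LHS. This completes the identity.

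\textbf{Main obstacle.} There is no real obstacle; the identity is a routine algebraic manipulation. The only subtlety is recognizing the simplification $\sum_j (x_j - x_i) = S - nx_i$, which collapses a nested sum of squares into a single quadratic form in the $x_i$'s. An equivalent (and perhaps more conceptual) route would be to divide both sides by $n^2$, note that $(S - nx_i)/n = \bar{x} - x_i$ where $\bar{x}$ is the mean, and observe that both sides equal $n^2 \sum_i (\bar x - x_i)^2$ after using the classical identity $\tfrac{1}{2n}\sum_{i,j}(x_j - x_i)^2 = \sum_i (x_i - \bar x)^2$. I would likely choose the direct expansion route above for brevity.
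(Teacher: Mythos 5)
Your proof is correct and follows essentially the same route as the paper's: both expand the left-hand side via $\sum_j(x_j-x_i)=S-nx_i$ and the right-hand side via $(x_j-x_i)^2=x_j^2-2x_ix_j+x_i^2$, reducing each to $n^2\sum_i x_i^2 - n\left(\sum_i x_i\right)^2$. The algebra checks out at every step.
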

\begin{proof}
Using simple algebra we have
\begin{eqnarray*}
\sum_{i=1}^n \left( \sum_{j=1}^n (x_j-x_i) \right)^2
&=&
\sum_{i=1}^n \left( \sum_{j=1}^n x_j-nx_i \right)^2
\\
&=&
\sum_{i=1}^n \left( \left( \sum_{j=1}^n x_j \right)^2 +n^2x_i^2-2nx_i \left( \sum_{j=1}^n x_j \right) \right)
\\
&=&
n\left(\sum_{j=1}^n x_j \right)^2
+
n^2\sum_{i=1}^nx_i^2
-2n\left(  \sum_{j=1}^n x_j \right)^2
\\
&=&
n^2\sum_{i=1}^nx_i^2
-n\left( \sum_{i=1}^n x_i \right)^2 .
\end{eqnarray*}
Manipulating right hand side of \eqref{Eq: sum lemma} we obtain
\begin{eqnarray*}
\frac{n}{2}\sum_{i=1}^n \sum_{j=1}^n(x_j-x_i)^2 &=&
\frac{n}{2}\sum_{i=1}^n \sum_{j=1}^n\left(x_j^2+x_i^2-2x_ix_j\right)
\\
&=& 
n^2\sum_{i=1}^nx_i^2 -n\sum_{i=1}^n \sum_{j=1}^nx_ix_j
\quad =\quad 
n^2\sum_{i=1}^nx_i^2
-n\left( \sum_{i=1}^n x_i \right)^2 .
\end{eqnarray*}
Clearly, LHS and RHS of \eqref{Eq: sum lemma} are equal.
\end{proof}
In order to show \eqref{Eq: equality} it is enough to notice that
\begin{eqnarray*}
\|\bar{c}\ones - x\|^2 
&=&  \sum_{i=1}^n (\bar{c}-x_i)^2 
\quad = \quad 
 \sum_{i=1}^n \left(\frac{1}{n}\sum_{j=1}^nx_j  -x_i\right)^2 
 \\
&=&
 \sum_{i=1}^n \left(\sum_{j=1}^n \frac{1}{n} (x_j  -x_i)\right)^2 
 \quad
\overset{\eqref{Eq: sum lemma}}{=} \quad
 \frac{1}{2} \sum_{i=1}^n \sum_{j=1}^n \frac{1}{n} (x_j  -x_i)^2 
 \\
&=& \frac{1}{2n} \sum_{i=1}^n \sum_{j=1}^n  (x_j  -x_i)^2.
\end{eqnarray*}
Note that we have 
$$
\frac{1}{nm}\left(\sum_{e=(i,j)\in \cE}|x_i-x_j|\right)^2 \leq \frac{1}{n}\sum_{e\in \cE}(x_i-x_j)^2\leq \frac{1}{n}\sum_{(i,j)}(x_i-x_j)^2 \stackrel{\eqref{Eq: equality}}{=} \|\bar{c} \ones -x\|^2,
$$
which proves \eqref{Eq: s upper bound}.  On the other hand, we have
$$
\frac{1}{\ac(\cG)}  \left(\sum_{e=(i,j)\in \cE}|x_i-x_j|\right)^2
\geq
\frac{1}{\ac(\cG)} \sum_{e\in \cE} \left(x_i-x_j \right)^2
\stackrel{\eqref{eq:8998gd98gikd}}{\geq}
\| \bar{c} \ones -x \|^2,
$$
which concludes \eqref{Eq: s lower bound}. Inequality \eqref{Eq: delta bound} holds trivially.  

\subsection{Proof of Theorem~\ref{thm:jhs988sh}}
\label{proofPrivacy3}
The following lemma is used in the proof of Theorem~\ref{thm:jhs988sh}.
\begin{lem} \label{L: optimal stepsizes binary}
 Fix $k\geq 0$ and let $R >0$. Then
\[\min_{\lambda = (\lambda^0,\dots,\lambda^k)\in \R^{k+1}} \frac{R + \beta^k}{\alpha^k} = 2\sqrt{\frac{R}{k+1}},\]
and the optimal solution is given by $\lambda^t = \sqrt{\frac{R}{k+1}}$ for all $t$.
\end{lem}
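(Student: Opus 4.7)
The plan is a two-stage optimization. I would first fix the value of the denominator $\alpha^k = s := \sum_{t=0}^{k} \lambda^t$ and minimize the numerator $R + \beta^k$ over all $\lambda = (\lambda^0,\dots,\lambda^k)$ consistent with this constraint, and then optimize over the scalar $s > 0$.

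For the inner problem, since $R$ is constant, it reduces to minimizing $\beta^k = \sum_{t=0}^k (\lambda^t)^2$ subject to $\sum_{t=0}^k \lambda^t = s$. By the Cauchy--Schwarz inequality (equivalently, the power-mean inequality between the quadratic and arithmetic means),
$$ \sum_{t=0}^{k} (\lambda^t)^2 \;\geq\; \frac{\left(\sum_{t=0}^{k} \lambda^t\right)^2}{k+1} \;=\; \frac{s^2}{k+1},$$
with equality if and only if all coordinates coincide, i.e.\ $\lambda^t = s/(k+1)$ for every $t$. Hence the minimum value of the objective over all $\lambda$ with $\alpha^k = s$ equals
$$ g(s) \;:=\; \frac{R + s^2/(k+1)}{s} \;=\; \frac{R}{s} + \frac{s}{k+1}.$$

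For the outer problem I would minimize the one-variable function $g(s)$ on $(0,\infty)$. Setting $g'(s) = -R/s^2 + 1/(k+1) = 0$ yields $s^\star = \sqrt{R(k+1)}$, and since $g''(s) = 2R/s^3 > 0$ this is indeed a minimizer. Substituting back,
$$ g(s^\star) \;=\; \frac{R}{\sqrt{R(k+1)}} + \frac{\sqrt{R(k+1)}}{k+1} \;=\; 2\sqrt{\frac{R}{k+1}}.$$
Combining the two stages, the optimal stepsizes are $\lambda^t = s^\star/(k+1) = \sqrt{R/(k+1)}$ for all $t = 0,1,\dots,k$, and the minimum value is $2\sqrt{R/(k+1)}$, as claimed.

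No step is genuinely difficult; the only thing to be a little careful about is that the problem is naturally posed on $s>0$ (and $\lambda^t>0$), which is automatic here since $R>0$ forces the minimizer away from the boundary $s=0$ where $g(s)\to\infty$. The main conceptual point is simply recognizing that the ratio can be decoupled via the Cauchy--Schwarz lower bound on $\beta^k$ in terms of $\alpha^k$, after which the remaining one-dimensional minimization is standard.
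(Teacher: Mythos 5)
Your proof is correct and essentially mirrors the paper's argument: both are two-stage reductions that dispose of the multivariate part via Cauchy--Schwarz and then solve a one-dimensional convex minimization, the only difference being that you fix the sum $\alpha^k=s$ and minimize $\beta^k$ (getting $\beta^k\geq s^2/(k+1)$), whereas the paper fixes the Euclidean norm $r=\|\lambda\|$ (so $\beta^k=r^2$) and maximizes $\alpha^k=r\langle \ones,x\rangle$ over unit directions $x$. Your closing remark about the implicit restriction to positive stepsizes is apt, and applies equally to the paper's own proof, which also tacitly assumes $\alpha^k>0$.
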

\begin{proof} Define $\phi(\lambda) = \frac{R+ \beta^k}{\alpha^k}$. If we write $\lambda = r x$, where $r=\|\lambda\|$ and $x$ is of unit norm, then $\phi(tx)=\frac{R+r^2}{r \langle \ones, x\rangle}$. Clearly, for any fixed $r$, the $x\in \R^{k+1}$  minimizing $x\mapsto \phi(rx)$ is $x=\ones/\|\ones\|$, where $\ones$ is the vector of ones in $\R^{k+1}$. It now only remains to minimize the function $r\mapsto \frac{R+r^2}{r \|\ones\|}$. This function is convex and differentiable. Setting the derivative to zero leads to $r = \sqrt{R}$. Combining the above, we get the optimal solution $\lambda = \frac{r }{\| \ones\|} \ones = \frac{\sqrt{R}}{\|  \ones \|} \ones$.
\end{proof}

Let $e=(i,j)$ be the edge selected at iteration $t\geq 0$. Applying Lemma~\ref{lem:98y98yss}, we see that
$
 D(y^{t+1}) - D(y^t)
= \lambda^t |x^t_i-x^t_j| - \left(\lambda^t\right)^2.
$ Taking expectation with respect to edge selection, we get 
\[\E{D(y^{t+1})-D(y^t) \;|\; y^t} = -\left(\lambda^t\right)^2 +  \lambda^t \cdot \frac{1}{m}\sum_{e=(i,j)\in \cE} |x^t_i - x^t_j|, \]
and taking expectation again and using the tower property, we get the identity
$$\E{D(y^{t+1})-D(y^t) } = -\left(\lambda^t\right)^2 +  \lambda^t\cdot \E{L^t}.$$
Therefore,
\begin{eqnarray*}
D(y^*) - D(y^0)& \geq & \E{D(y^{k+1})-D(y^0)} \\
&=& \E{\sum_{t=0}^{k}D(y^{t+1}) - D(y^t)} \\
&=& \sum_{t=0}^{k}\E{D(y^{t+1}) - D(y^t)} \quad = \quad -\sum_{t=0}^{k}\left(\lambda^t\right)^2 + \sum_{t=0}^{k}  \lambda^t \cdot \E{L^t}. \end{eqnarray*}
It remains to reshuffle the resulting inequality to obtain \eqref{Eq: binary general rate}.

We can see that part (i) follows directly. Optimality of stepsizes in (ii) is due to Lemma~\ref{L: optimal stepsizes binary}. To show (iii) we should state that
$$
\alpha^k=\sum_{t=0}^k \lambda^k=\sum_{1}^{k+1}\frac{a}{\sqrt{t}}\geq a \int_{t=1}^{k+2}t^{-1/2} dt=2a\left( \sqrt{k+2} -1\right)
$$
$$
\beta^k=\sum_{t=0}^k \left(\lambda^k\right)^2=\sum_{t=1}^{k+1}\frac{a^2}{t}\leq a^2 \int_{1/2}^{k+3/2}t^{-1} dt=a^2 \left(\log(k+3/2)+\log(2) \right) 
$$
The inequality above holds due to the fact that for $t>1/2$ we have $t^{-1}\leq \int_{t-1/2}^{t+1/2}x^{-1}dx $ since $x^{-1}$ is convex function. 
\subsection{Proof of Theorem \ref{thm: stepsize_adaptive}}
\label{proofPrivacy4}
Using Lemma~\ref{lem:98y98yss} with we have
\begin{eqnarray*}
\E{D(y^{t+1})-D(y^t)\;|\; y^t}&=&-\left(\lambda^t\right)^2+\lambda^t \frac{1}{m}\sum_{e\in \cE}|x_i^t-x_j^t|
\\
&=&
\frac{1}{4m^2}\left(\sum_{e\in \cE}|x_i^t-x_j^t|\right)^2
\quad \geq \quad \frac{1}{4m^2}\sum_{e\in \cE}\left(x_i^t-x_j^t\right)^2.
\end{eqnarray*}
Taking the expectation again we obtain
\begin{equation} \label{eq: adaptive binary first}
\E{D(y^{t+1})-D(y^t)}
 \geq \frac{1}{4m^2}\E{\sum_{e\in \cE}\left(x_i^t-x_j^t\right)^2}.
\end{equation}
On the other hand, we have
\begin{eqnarray*}
D(y^{t+1})-D(y^t) &=& \left (D(y^{t+1})-D(y^*) \right) +\left(D(y^*)-D(y^t) \right)\\
&=&
\frac12 \|\overline{c}\ones-x^{t} \|^2
-
\frac12 \|\overline{c}\ones-x^{t+1} \|^2 
\\
&=&
\frac{\ac(\cG)}{4m^2}\|\overline{c}\ones-x^{t} \|^2+
\left(1-\frac{\ac(\cG)}{2m^2}\right)\frac12 \|\overline{c}\ones-x^{t} \|^2
\\ &&  -  \quad
\frac12 \|\overline{c}\ones-x^{t+1} \|^2 
\\
&\stackrel{\eqref{eq:8998gd98gikd}}{\leq}&
\frac{1}{4m^2}\sum_{e=(i,j)\in \cE}\left(x_i^t-x_j^t\right)^2+
\left(1-\frac{\ac(\cG)}{2m^2}\right)\frac12 \|\overline{c}\ones-x^{t} \|^2
\\ &&  - \quad
\frac12 \|\overline{c}\ones -x^{t+1} \|^2 .
\end{eqnarray*}
Taking the expectation of the above and combining with \eqref{eq: adaptive binary first} we obtain the desired recursion
$$
\E{\|\overline{c}\ones-x^{t+1} \|^2} 
\quad \leq \quad
\left(1-\frac{\ac(\cG)}{2m^2}\right)
\E{\|\overline{c}\ones-x^{t} \|^2}
.$$

\subsection{Proof of Lemma~\ref{lem:09ys09y9ss}}
\label{proofPrivacy5}
Let $e=(i,j)$ be the edge selected at iteration $t$. Applying Lemma~\ref{lem:98y98yss}, we see that
\[
 D(y^{t+1}) - D(y^t)
=\begin{cases} -\frac{ \epsilon}{2} (x^t_i-x^t_j) - \frac{\epsilon^2}{4} , &\quad  x^t_i-x^t_j \leq -\epsilon\\
\frac{ \epsilon}{2} (x^t_i-x^t_j) - \frac{\epsilon^2}{4}, & \quad x^t_j-x^t_i \leq -\epsilon,\\
0, & \quad \text{otherwise.}
\end{cases}
\]

This implies that \[D(y^{t+1})-D(y^t) \begin{cases}\geq \frac{\epsilon^2}{4}, & \qquad \text{if}\;  \Delta^t_e = 1,\\
 = 0, & \qquad \text{if}\; \Delta^t_e = 0.
\end{cases} \]

Taking expectation in the selection of $e$, we get 
\[\E{D(y^{t+1})-D(y^t) \;|\; y^t} \geq \frac{\epsilon^2}{4} \cdot \Prob(\Delta^t_e = 1 \;|\; y^t) + 0  \cdot \Prob(\Delta^t_e = 0 \;|\; y^t)=\frac{\epsilon^2}{4} \Delta^t.\]
It remains to take expectation again.
\subsection{Proof of Theorem~\ref{thm:09y09s9ffs}}
\label{proofPrivacy6}
Since for all $k\geq 0$ we have $D(y^k) \leq D(y^*)$, it follows that \[D(y^*) - D(y^0) \geq \E{D(y^k)-D(y^0)} = \E{\sum_{t=0}^{k-1}D(y^{t+1}) - D(y^t)}= \sum_{t=0}^{k-1}\E{D(y^{t+1}) - D(y^t)}.\]
It remains to apply Lemma~\ref{lem:09ys09y9ss}.

\subsection{Proof of Lemma~\ref{L: noise exp iteration bound}}
\label{proofPrivacy7}
Let us first present three lemmas that we use in the proof of Lemma~\ref{L: noise exp iteration bound}.
\begin{lem}\label{Lm: independence}
Suppose that we run Algorithm \algN\ for $t$ iterations and $t_i$ denotes the number of times that some edge corresponding to node $i$ was selected during the algorithm.  
\begin{enumerate}
\item  $v_i^{t_i}$ and $t_j$ are independent  for all (i.e., not necessarily distinct) $i,j$.
\item $v_i^{t_i}$ and $\phi_j^{t_j}$ are independent for all (i.e., not necessarily distinct)  $i,j$.
\item $w_i^{t_i}$ and $w_j^{t_j}$ have zero correlation for all $i\neq j$.
\item $x_j^t$ and $\phi_i^{t_i}v_i^{t_i}$ have zero correlation for all (i.e., not necessarily distinct) $i,j$.
\end{enumerate}
\end{lem}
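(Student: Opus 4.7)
The plan is to exploit the following structural fact: the doubly-indexed array of noise variables $\{v_i^s\}_{i\in\cV,\,s\ge 0}$ is mutually independent with $v_i^s\sim\cN(0,\sigma_i^2)$ (using the convention $v_i^{-1}=0$), and the entire array is drawn independently of the edge-selection process. Since each counter $t_i$ is a deterministic function of the edge-selection trajectory only, $t_i$ is independent of the noise $\sigma$-algebra. All four claims then reduce to careful applications of the tower property, conditioning on the counters or on the history of the algorithm so that the random indices into the noise array can be treated as deterministic.

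For (1) I would fix Borel sets $B,C$ and compute
\[
\Prob(v_i^{t_i}\in B,\;t_j\in C)=\sum_{r}\Prob(v_i^r\in B,\,t_i=r,\,t_j\in C)=\sum_r\Prob(v_i^r\in B)\Prob(t_i=r,\,t_j\in C),
\]
where the second equality uses independence of the noise array from edge selections. Since $v_i^r\sim\cN(0,\sigma_i^2)$ for every $r$, the first factor is constant in $r$, and summing over $r$ gives $\Prob(v_i^{t_i}\in B)\Prob(t_j\in C)$. Taking $C=\R$ also shows $v_i^{t_i}\sim\cN(0,\sigma_i^2)$. Claim (2) then follows immediately since $\phi_j^{t_j}$ is a measurable function of $t_j$. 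For (3), I would expand
\[
w_i^{t_i}w_j^{t_j}=\sum_{a\in\{t_i,t_i-1\}}\sum_{b\in\{t_j,t_j-1\}}\pm\,\phi_i^{a}\phi_j^{b}\,v_i^{a}v_j^{b},
\]
condition on $(t_i,t_j)=(r,s)$ so that $\phi_i^a\phi_j^b$ becomes deterministic, and observe that for $i\ne j$ the conditional law of $v_i^a v_j^b$ is the product of two independent mean-zero Gaussians (trivially true for the index $-1$ terms). Hence each summand has vanishing conditional expectation, so $\Exp[w_i^{t_i}w_j^{t_j}]=0$; since $\Exp[w_i^{t_i}]=0$ by the same conditioning, zero correlation follows.

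Part (4) is the one requiring the most care. I would introduce the $\sigma$-algebra $\mathcal G_t$ generated by the first $t$ edge selections together with the noise values $\{v_\ell^{s}:\ell\in\cV,\,0\le s<t_\ell\}$ that have actually been fed into the algorithm by time $t$. By unrolling the update rule one sees that $x_j^t$ and $\phi_i^{t_i}$ are both $\mathcal G_t$-measurable, whereas $v_i^{t_i}$ is the \emph{next}, not-yet-used noise sample for node $i$. By i.i.d.\ of $\{v_i^s\}_s$ and its independence from the edge selections, the conditional law of $v_i^{t_i}$ given $\mathcal G_t$ remains $\cN(0,\sigma_i^2)$, so $\Exp[\phi_i^{t_i}v_i^{t_i}\mid\mathcal G_t]=\phi_i^{t_i}\Exp[v_i^{t_i}\mid\mathcal G_t]=0$. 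The tower property then gives $\Exp[x_j^t\phi_i^{t_i}v_i^{t_i}]=\Exp[x_j^t\Exp[\phi_i^{t_i}v_i^{t_i}\mid\mathcal G_t]]=0$, and combining with $\Exp[\phi_i^{t_i}v_i^{t_i}]=0$ yields the claimed zero correlation.

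The main obstacle will be the bookkeeping in (4): one must be precise about whether $t_i$ in the statement denotes the counter \emph{before} or \emph{after} the possible increment at iteration $t$, and correspondingly verify that the noise value $v_i^{t_i}$ has indeed not yet appeared in the formula for any of $x_1^t,\dots,x_n^t$. Once the filtration $\mathcal G_t$ is set up so that $v_i^{t_i}$ lies outside it, the argument is a routine tower/conditioning computation; parts (1)–(3) are then essentially corollaries of the same independence-plus-conditioning idea applied at the coarser $\sigma$-algebra generated only by edge selections.
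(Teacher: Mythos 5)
Your proof is correct and rests on the same underlying idea as the paper's, whose own proof simply asserts that parts (1)--(2) ``follow from the definition'' and disposes of (3)--(4) in one line each; your tower-property and filtration argument is the rigorous elaboration of exactly those assertions. If anything, your treatment of (3) and (4) is more careful than the paper's, since you correctly recognize that the random exponents $\phi_i^{t_i}$ and the fact that $v_i^{t_i}$ is the \emph{not-yet-used} noise sample must be handled by conditioning (on the counters, resp.\ on $\mathcal G_t$) before the mean-zero property of the fresh Gaussian can be invoked.
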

\begin{proof}
\begin{enumerate}
\item Follows from the definition of $v_i^{t}$.
\item Follows from the definition of $v_i^{t}$.
\item Note that we have $w_i^{t_i} = \phi_i^{t_i}v_i^{t_i} - \phi_i^{t_i-1}v_i^{t_i-1}$ and $w_j^{t_j} = \phi_j^{t_j}v_j^{t_j} - \phi_j^{t_j-1}v_j^{t_j-1}$. 
Clearly, $v_i^{t_i}$ and $w_j^{t_j}$ have zero correlation. Similarly $v_i^{t_i-1}$ and $w_j^{t_j}$ have zero correlation. Thus, $w_i^{t_i}$ and $w_j^{t_j}$ have zero correlation.
\item Clearly, $x_j^t$ is a function initial state and all instances of random variables up to the iteration $t$. Thus, $v_i^{t_i}$ is independent to $x_j^t$ from the definition. Thus, $x_j^t$ and $\phi_i^{t_i}v_i^{t_i}$ have zero correlation.
\end{enumerate}
\end{proof}
\begin{lem}\label{Lm: another_independence}
\begin{equation}
\E{\phi_i^{t_i-1}v_i^{t_i-1}x_i^t} = \frac12\E{\left(\phi_i^{t_i-1}v_i^{t_i-1}\right)^2}.
\end{equation}
\end{lem}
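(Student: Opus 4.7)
The plan is to isolate the contribution of $v_i^{t_i-1}$ to $x_i^t$ by tracing back to the last iteration at which node $i$ was touched, and then to use the independence properties from Lemma~\ref{Lm: independence} to kill all the cross terms. First, I would handle the trivial case $t_i=0$: by the convention $v_i^{-1}=0$ both sides of the identity are zero. So assume $t_i\geq 1$, and let $s=s(t)<t$ denote the most recent iteration at which an edge incident to node $i$ was selected (so that right after iteration $s$ the counter of node $i$ becomes equal to its value at time $t$). Let $j$ be the neighbour with which $i$ was paired at iteration $s$. By Algorithm~\algN\ and the fact that $x_i$ does not change between $s+1$ and $t$, we have the decomposition
\begin{equation*}
x_i^t \;=\; \frac{x_i^s+x_j^s}{2} \;+\; \frac{w_i^{t_i-1}}{2}\;+\;\frac{w_j^{t_j(s)-1}}{2},
\end{equation*}
where $t_j(s)$ is node $j$'s counter value right after iteration $s$.

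Next I would substitute the definition \eqref{thewdefinition} of the noise increments, namely $w_i^{t_i-1}=\phi_i^{t_i-1}v_i^{t_i-1}-\phi_i^{t_i-2}v_i^{t_i-2}$ and the analogous expression for $w_j^{t_j(s)-1}$. Multiplying by $\phi_i^{t_i-1}v_i^{t_i-1}$ and taking expectations yields four groups of terms. The three cross terms I expect to vanish: (a) $\E{\phi_i^{t_i-1}v_i^{t_i-1}\cdot\tfrac{x_i^s+x_j^s}{2}}$ is zero because $v_i^{t_i-1}$ is a fresh mean-zero Gaussian drawn at iteration $s$, and is therefore independent of the history $x_i^s,x_j^s$ and, by items 1--2 of Lemma~\ref{Lm: independence}, of $\phi_i^{t_i-1}$ (which is a function of the counter $t_i$); (b) the mixed term $-\E{\phi_i^{t_i-1}v_i^{t_i-1}\cdot\tfrac{\phi_i^{t_i-2}v_i^{t_i-2}}{2}}$ vanishes because $v_i^{t_i-1}$ and $v_i^{t_i-2}$ are independent fresh Gaussians (different draws at different visits of node $i$) and each has zero mean; (c) the cross-node term $\E{\phi_i^{t_i-1}v_i^{t_i-1}\cdot\tfrac{w_j^{t_j(s)-1}}{2}}$ vanishes by item 3 of Lemma~\ref{Lm: independence}, since the noises associated with different nodes have zero correlation.

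The one surviving term is the square contribution, which gives
\begin{equation*}
\E{\phi_i^{t_i-1}v_i^{t_i-1}\cdot\tfrac{\phi_i^{t_i-1}v_i^{t_i-1}}{2}} \;=\; \tfrac{1}{2}\E{(\phi_i^{t_i-1}v_i^{t_i-1})^2},
\end{equation*}
which is exactly the desired identity. The main obstacle I foresee is a rigorous justification of step (a), i.e.\ disentangling the random counter $t_i$ (which is a stopping-time-like object depending on the whole edge-selection sequence) from the Gaussian noise $v_i^{t_i-1}$. The clean way to do this is to condition on the entire sequence of chosen edges (equivalently, on the counters $(t_1,\dots,t_n)$ at time $t$ and on the iteration $s$ at which the last selection occurred): conditional on this $\sigma$-algebra, $v_i^{t_i-1}$ is still a centred Gaussian independent of $x_i^s,x_j^s$ and of the other $v$'s appearing in the decomposition, so the three cross-term expectations vanish conditionally, and the tower property then gives the claim unconditionally.
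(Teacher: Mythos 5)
Your proposal is correct and follows essentially the same route as the paper's proof: both trace $x_i^t$ back to the last iteration at which node $i$ was selected, use the update rule to decompose $x_i^t$, substitute the noise definition \eqref{thewdefinition}, and eliminate all cross terms via the zero-mean and independence facts of Lemma~\ref{Lm: independence}, leaving only the term $\tfrac12\mathbb{E}\left[(\phi_i^{t_i-1}v_i^{t_i-1})^2\right]$. The only difference is organizational: the paper first adds and subtracts $\phi_i^{t_i-1}v_i^{t_i-1}/2$ and then factorizes the entire remainder as $\mathbb{E}\left[\phi_i^{t_i-1}v_i^{t_i-1}\right]$ times an expectation in one step, whereas you expand and kill each cross term separately (and you additionally make explicit the conditioning on the edge-selection sequence that the paper leaves implicit).
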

\begin{proof}
\begin{eqnarray*}
&&
\E{\phi_i^{t_i-1}v_i^{t_i-1}x_i^t}
\\
&&
=
 \E{\phi_i^{t_i-1}v_i^{t_i-1}\left(\left(x_i^t - \frac{\phi_i^{t_i-1}v_i^{t_i-1}}{2}\right)+\frac{\phi_i^{t_i-1}v_i^{t_i-1}}{2}\right)} 
 \\
&&
=
\E{\phi_i^{t_i-1}v_i^{t_i-1}\left(x_i^t - \frac{\phi_i^{t_i-1}v_i^{t_i-1}}{2}\right)}+\frac12\E{\left(\phi_i^{t_i-1}v_i^{t_i-1}\right)^2} 
\\
&&
 \stackrel{(*)}{=}
 \E{\phi_i^{t_i-1}v_i^{t_i-1}\left(\frac{x_i^{t_i-1} + x_l^{t_l^0} +w^{t_i-1}+w^{t_l^0} -\phi_i^{t_i-1}v_i^{t_i-1}}{2}\right)}
 +
 \frac12\E{\left(\phi_i^{t_i-1}v_i^{t_i-1}\right)^2}
 \\
&&
\stackrel{\eqref{thewdefinition}}{=}
\E{\phi_i^{t_i-1}v_i^{t_i-1}\left(\frac{x_i^{t_i-1} + x_l^{t_l^0} +\phi_i^{t_i-1}v_i^{t_i-1}- \phi_i^{t_i-2}v_i^{t_i-2}}{2}\right)}
\\
&&
 \qquad +
\E{\phi_i^{t_i-1}v_i^{t_i-1}\left(\frac{\phi_i^{t_l^0}v_l^{t_l^0} - \phi_i^{t_l^0-1}v_l^{t_l^0-1} -\phi_i^{t_i-1}v_i^{t_i-1}}{2}\right)}
\\
&&
 \qquad +
\frac12\E{\left(\phi_i^{t_i-1}v_i^{t_i-1}\right)^2}
\\
&&
=
\E{\phi_i^{t_i-1}v_i^{t_i-1}\left(\frac{x_i^{t_i-1} + x_l^{t_l^0} +\phi_i^{t_l^0}v_l^{t_l^0}- \phi_i^{t_l^0-1}v_l^{t_l^0-1}-\phi_i^{t_i-2}v_i^{t_i-2}}{2}\right)}
\\
&&
 \qquad 
 +\frac12\E{\left(\phi_i^{t_i-1}v_i^{t_i-1}\right)^2}
\\
&&
\stackrel{L.\ref{Lm: independence}}{=}\cancelto{0}{\E{\phi_i^{t_i-1}v_i^{t_i-1}}}\E{\left(\frac{x_i^{t_i-1} + x_l^{t_l^0} +\phi_i^{t_l^0}v_l^{t_l^0}-\phi_i^{t_l^0-1}v_l^{t_l^0-1}-\phi_i^{t_i-2}v_i^{t_i-2}}{2}\right)}
\\
&&
 \qquad +
\frac12\E{\left(\phi_i^{t_i-1}v_i^{t_i-1}\right)^2} 
\\
&&
=
\frac12\E{\left(\phi_i^{t_i-1}v_i^{t_i-1}\right)^2},
\end{eqnarray*}
where in the first equality we add and subtracting $\frac{\phi_i^{t_i-1}v_i^{t_i-1}}{2}$. In step $(*)$ we denote by $l$ a node such that that the noise $\phi_i^{t_i-1}v_i^{t_i-1}$ was added to the system when the edge $(i,l)$ was chosen (we do not consider $t_i=0$ since in this case the Lemma \ref{Lm: another_independence} trivially holds).
\end{proof}
\begin{lem}\label{Lm: squared_expetation}
\begin{equation}
\E{
\left(\phi_i^{t_i}v_i^{t_i}+
\phi_j^{t_j}v_j^{t_j}\right)^2
|x^t,e^t
} = \sigma^2_i\phi_i^{2t_i}+\sigma^2_j\phi_j^{2t_j}.
\label{Eq: squared expectation}
\end{equation}
\end{lem}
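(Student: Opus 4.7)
The plan is to expand the square and exploit the freshly-drawn nature of the noise variables $v_i^{t_i}$ and $v_j^{t_j}$ at the current iteration. Concretely, I would write
\[
\left(\phi_i^{t_i}v_i^{t_i}+\phi_j^{t_j}v_j^{t_j}\right)^2
= \phi_i^{2t_i}(v_i^{t_i})^2 + \phi_j^{2t_j}(v_j^{t_j})^2 + 2\phi_i^{t_i}\phi_j^{t_j}\, v_i^{t_i} v_j^{t_j},
\]
and then take the conditional expectation given $x^t$ and the selected edge $e^t=(i,j)$ term by term.

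The first observation is that, conditional on $x^t$ and $e^t$, the counters $t_i$ and $t_j$ are determined by the history encoded in $x^t$, and hence $\phi_i^{t_i}$, $\phi_j^{t_j}$ can be pulled out of the conditional expectation. The second observation, which is the crux, is that the noise samples $v_i^{t_i}$ and $v_j^{t_j}$ are generated \emph{at the current iteration} (see Step~3 of Algorithm~\algN), so by construction they are independent of $x^t$, independent of the edge-selection random variable, and independent of each other, with $v_i^{t_i}\sim N(0,\sigma_i^2)$ and $v_j^{t_j}\sim N(0,\sigma_j^2)$. This justifies $\E{v_i^{t_i}\mid x^t,e^t} = 0$, $\E{v_j^{t_j}\mid x^t,e^t} = 0$, $\E{(v_i^{t_i})^2\mid x^t,e^t}=\sigma_i^2$, $\E{(v_j^{t_j})^2\mid x^t,e^t}=\sigma_j^2$, and $\E{v_i^{t_i}v_j^{t_j}\mid x^t,e^t}=0$.

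Plugging these in kills the cross term and leaves $\phi_i^{2t_i}\sigma_i^2+\phi_j^{2t_j}\sigma_j^2$, which is exactly \eqref{Eq: squared expectation}.

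The only subtle point (and the main obstacle, such as it is) is to argue cleanly that conditioning on $(x^t,e^t)$ does not leak any information about the fresh samples $v_i^{t_i}$, $v_j^{t_j}$; this is immediate from the order of operations in Algorithm~\algN (sample $v$'s in Step~3, only then form $x^{t+1}$ in Step~5), but writing it formally requires defining the natural filtration $\mathcal{F}_t=\sigma(x^0,e^0,v^0,\dots,e^{t-1},v^{t-1},e^t)$ and noting that $v_i^{t_i},v_j^{t_j}$ are independent of $\mathcal{F}_t$ while $x^t$ and the determinining values of $t_i,t_j$ are $\mathcal{F}_t$-measurable. Once this is in place, the computation above is a two-line verification.
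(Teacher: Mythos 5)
Your proof is correct and is essentially the same as the paper's: the paper routes through the identity $\E{X^2}=\Var(X)+(\E{X})^2$ and additivity of variance for independent terms, while you expand the square directly, but both arguments rest on exactly the same facts — the fresh noise samples have zero conditional mean, known variances $\sigma_i^2,\sigma_j^2$, and are independent of each other and of $(x^t,e^t)$, which kills the cross term.
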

\begin{proof}
Since we have $\E{\left(\phi_i^{t_i}v_i^{t_i}+
\phi_j^{t_j}v_j^{t_j}\right)|x^t,e^t} = 0$, and also for any random variable $X$: $\E{X^2} = \Var{\left(X\right)}+\E{X}^2$, we only need to compute the variance:
\begin{eqnarray*}
\Var\left(\phi_i^{t_i}v_i^{t_i}+
\phi_j^{t_j}v_j^{t_j}\right)
 = 
 \Var{\left(\phi_i^{t_i}v_i^{t_i}\right)}+\Var{\left(
\phi_j^{t_j}v_j^{t_j}\right)} 
=
\left(\phi_i^{t_i}\right)^2\Var{\left(v_i^{t_i}\right)}+\left(\phi_j^{t_j}\right)^2\Var{\left(
v_j^{t_j}\right)} .
\end{eqnarray*}
\end{proof}

Having presented the above lemmas we can now proceed with the proof of Lemma~\ref{L: noise exp iteration bound}.

Firstly,  let us compute the increase of the dual function value at iteration $t$:
\begin{eqnarray}
D(y^{t+1})-D(y^t)&=&\frac12 \|\overline{c}\ones-x^t \|^2-\frac12 \|\overline{c}\ones-x^{t+1} \|^2
\nonumber
\\
&=&
\frac12 \left( 
(\overline{c}-x^t_j)^2+(\overline{c}-x^t_i)^2-(\overline{c}-x^{t+1}_j)^2-(\overline{c}-x^{t+1}_i)^2
\right)
\nonumber
\\
&=&
-\overline{c}\left(
 x^{t+1}_j+x^{t+1}_i-x^{t}_j-x^{t}_i
\right)
+\frac12 \left(
\left(x^{t}_j\right)^2+\left(x^{t}_i\right)^2
-\left(x^{t+1}_j\right)^2-\left(x^{t+1}_i\right)^2
\right)
\nonumber
\\
&=&
-\overline{c}\left( w_j^{t_j}+w_i^{t_i} \right)
+\frac12 
\left(
\left(x^{t}_j\right)^2+\left(x^{t}_i\right)^2
\right)
\nonumber
\\
&& \qquad 
-\frac14 \left( 
\left(x_j^t+x_i^t  \right)^2
+ 
2\left(x_j^t+x_i^t \right)\left( w_j^{t_j}+w_i^{t_i} \right)
+\left( w_j^{t_j}+w_i^{t_i} \right)^2
\right)
\nonumber
\\
&=&
\frac14 \left(x_j^t-x_i^t\right)^2-
\left(w_i^{t_i}+w_j^{t_j}\right)
\left(    
\overline{c}+\frac12\left(x_j^t+x_i^t\right)
\right)
-\frac14\left(w_i^{t_i}+w_j^{t_j}\right)^2.
\label{Eq: noise_no_exp}
\end{eqnarray}
Our goal is to estimate an upper bound of the quantity $\E{D(y^{t+1}) - D(y^t)}$. There are three terms in \eqref{Eq: noise_no_exp}. Since the expectation is linear, we will evaluate the expectations of these three terms separately and merge them at the end. 

Taking the expectation over the choice of edge and inserted noise in iteration $t$ we obtain
\begin{equation}
\E{\frac14\left(x_i^t - x_j^t\right)^2|x^t} = \frac{1}{4m}\sum\limits_{e\in \cE}\left(x_i^t-x_j^t\right)^2.\label{Eq: first_term}
\end{equation}
Thus we have
\begin{eqnarray*}
&&
\E{D(y^{t+1})-D(y^{t}) - \frac{1}{4}\left(x_i^t-x_j^t\right)^2 \;|\; x^t}
\\
&&
\qquad
\stackrel{\eqref{Eq: first_term}}{=}
\E{D(y^{t+1})-D(y^{t}) \;|\; x^t} - \frac{1}{4m}\sum\limits_{e\in \cE}\left(x_i^t-x_j^t\right)^2
\\
&&
\qquad
=
\E{D(y^*)-D(y^{t})\;|\; x^t} +\E{D(y^{t+1})-D(y^*)\;|\; x^t}  - \frac{1}{4m}\sum\limits_{e\in \cE}\left(x_i^t-x_j^t\right)^2
\\
&&
\qquad
\stackrel{\eqref{eq:99d8gds}}{=}
\frac12\|\overline{c}\ones-x^t \|^2
-\E{\frac12\|\overline{c}\ones-x^{t+1} \|^2 \;|\; x^t}
- \frac{1}{4m}\sum\limits_{e=(i,j)\in \cE}\left(x_i^t-x_j^t\right)^2
\\
&&
\qquad
\stackrel{\eqref{eq:8998gd98gikd}}{\leq}
\frac12\|\overline{c}\ones-x^t \|^2
-\E{\frac12\|\overline{c}\ones-x^{t+1} \|^2 \;|\; x^t}
- \frac{\ac(\cG)}{4m}\|\overline{c}\ones-x^t \|^2
\\
&&
\qquad
=
\left(1-\frac{\ac(\cG)}{2m}\right)\frac12\|\overline{c}\ones-x^t \|^2
-\E{\frac12\|\overline{c}\ones-x^{t+1} \|^2 \;|\; x^t}
\\
&&
\qquad
\stackrel{\eqref{eq:99d8gds}}{=}
\left(1-\frac{\ac(\cG)}{2m}\right)\left( D(y^*)- D(y^{t}) \right)
-\E{D(y^*)- D(y^{t+1})  \;|\; y^t}.
\end{eqnarray*}
Taking the full expectation of the above and using tower property, we get
\begin{equation}
\label{Eq: noisy gossip first full}
\E{D(y^{t+1})-D(y^{t}) - \frac{1}{4}\left(x_i^t-x_j^t\right)^2}
\leq
\left( 1-\frac{\ac(\cG)}{2m}\right)\E{D(y^*)- D(y^{t}) }
-\E{ D(y^*)- D(y^{t+1}) }.
\end{equation}

Now we are going to take the expectation of the second term of \eqref{Eq: noise_no_exp}. We will use the ``tower rule" of expectations  in the form
\begin{equation}
\label{TowerPropertyGossip}
\E{\E{\E{X\;| \; Y,Z}\;|\;Y}} = \E{X}
\end{equation}
where $X,Y,Z$ are random variables. In particular, we get
\begin{eqnarray}
\label{naoksxla}
&&\E{-\left(w_i^{t_i}+w_j^{t_j}\right)
\left(    
\overline{c}+\frac12\left(x_j^t+x_i^t\right)
\right)} \notag \\ && \qquad=\E{\E{\E{-\left(w_i^{t_i}+w_j^{t_j}\right)
\left(    
\overline{c}+\frac12\left(x_j^t+x_i^t\right)
\right) \;| \; e^t,x^t} \;| \; x^t}}.
\end{eqnarray}
In equation \eqref{naoksxla}, $e^t$ denotes the edge selected at iteration $t$.

Let us first calculate the inner most expectation of the right hand side of \eqref{naoksxla}:
\begin{eqnarray*}
&&
\E{-\left(w_i^{t_i}+w_j^{t_j}\right)
\left(    
\overline{c}+\frac12\left(x_j^t+x_i^t\right)
\right)\;|\; e^t,x^t}
\\
&&
\stackrel{\eqref{thewdefinition}}{=}
\E{\left(\phi_i^{t_i-1}v_i^{t_i-1}+
\phi_j^{t_j-1}v_j^{t_j-1}- \phi_i^{t_i}v_i^{t_i}-
\phi_j^{t_j}v_j^{t_j}
\right)\left(    
\overline{c}+\frac12\left(x_j^t+x_i^t\right)
\right)\;|\; e^t,x^t}
\\
&&
=
\E{ \overbrace{\left(\phi_i^{t_i-1}v_i^{t_i-1}+
\phi_j^{t_j-1}v_j^{t_j-1}
\right)}^{\text{constant}}
 \overbrace{\left(    
\overline{c}+\frac12\left(x_j^t+x_i^t
\right)
\right)}^{\text{constant}}\;|\; e^t,x^t}
\\
&&
\qquad 
+\E{\left(- \phi_i^{t_i}v_i^{t_i}-
\phi_j^{t_j}v_j^{t_j}
\right)
\overbrace{\left(    
\overline{c}+\frac12\left(x_j^t+x_i^t\right)
\right)}^{\text{constant}}\;|\; e^t,x^t}\\
&&
=
\left(\phi_i^{t_i-1}v_i^{t_i-1}+
\phi_j^{t_j-1}v_j^{t_j-1}
\right)\left(    
\overline{c}+\frac12\left(x_j^t+x_i^t\right)
\right)
\\
&&
\qquad
+\cancelto{0}{\E{\left(- \phi_i^{t_i}v_i^{t_i}-
\phi_j^{t_j}v_j^{t_j}
\right)|e,x^t}}\left(    
\overline{c}+\frac12\left(x_j^t+x_i^t\right)
\right)
\\
&&
\stackrel{L. \ref{Lm: independence}}{=}
\left(\phi_i^{t_i-1}v_i^{t_i-1}+
\phi_j^{t_j-1}v_j^{t_j-1}
\right)\left(    
\overline{c}+\frac12\left(x_j^t+x_i^t\right)
\right)
\\
&&
=
\left(\phi_i^{t_i-1}v_i^{t_i-1}+
\phi_j^{t_j-1}v_j^{t_j-1}
\right)   
\overline{c}
+\frac12\left(\phi_i^{t_i-1}v_i^{t_i-1}+
\phi_j^{t_j-1}v_j^{t_j-1}
\right)  \left(x_j^t+x_i^t
\right).
\end{eqnarray*}  

Now we take the expectation of the last expression above with respect to the choice of an edge at $t$-th iteration. We obtain
\begin{eqnarray*}
&&\E{\left(\phi_i^{t_i-1}v_i^{t_i-1}+
\phi_j^{t_j-1}v_j^{t_j-1}
\right)   
\overline{c}
+\frac12\left(\phi_i^{t_i-1}v_i^{t_i-1}+
\phi_j^{t_j-1}v_j^{t_j-1}
\right)  \left(x_j^t+x_i^t
\right)|x^t} 
\\
&&
=
\frac12
\E{\left(\phi_i^{t_i-1}v_i^{t_i-1}+
\phi_j^{t_j-1}v_j^{t_j-1}
\right)  \left(x_j^t+x_i^t\right) |x^t} \\
&&
\qquad
+ \E{\cancelto{0}{\left(\phi_i^{t_i-1}v_i^{t_i-1}+\phi_j^{t_j-1}v_j^{t_j-1}
\right)  \overline{c} |x^t}}
\\
&&
\stackrel{L. \ref{Lm: independence}}{=}
\frac12 \E{\left(\phi_i^{t_i-1}v_i^{t_i-1}+
\phi_j^{t_j-1}v_j^{t_j-1}
\right)  \left(x_j^t+x_i^t\right) |x^t}
\\
&&
=
\frac{1}{2m}\sum_{e\in \cE} \left(\phi_i^{t_i-1}v_i^{t_i-1}+
\phi_j^{t_j-1}v_j^{t_j-1}
\right)  \left(x_j^t+x_i^t\right) 
\\
&&
=
\frac{1}{2m}\sum_{e\in \cE} \left( \phi_i^{t_i-1}v_i^{t_i-1} x_i^t +\phi_j^{t_j-1}v_j^{t_j-1} x_j^t\right)
\\
&&
\qquad
+\frac{1}{2m} \sum_{e\in \cE} \left( \phi_i^{t_i-1}v_i^{t_i-1} x_j^t+ \phi_j^{t_j-1}v_j^{t_j-1} x_i^t
\right)
\\
&&
=
\frac{1}{2m}\sum_{i=1}^n d_i \phi_i^{t_i-1}v_i^{t_i-1} x_i^t
+\frac{1}{2m} \sum_{e\in \cE} \left( \phi_i^{t_i-1}v_i^{t_i-1} x_j^t+ \phi_j^{t_j-1}v_j^{t_j-1} x_i^t
\right),
\end{eqnarray*}
where in the last step we change the summation order.

Taking the expectation with respect to the algorithm we obtain
\begin{eqnarray}
&&
\E{-\left(w_i^{t_i}+w_j^{t_j}\right)
\left(   
\overline{c}+\frac12\left(x_j^t+x_i^t\right)
\right)} 
\nonumber
\\
&& 
\stackrel{\eqref{TowerPropertyGossip}}{=}\E{\E{\E{-\left(w_i^{t_i}+w_j^{t_j}\right)\left(   
\overline{c}+\frac12\left(x_j^t+x_i^t\right)
\right) |x^t,e}|x^t}}
\nonumber
\\
&& 
=
\E{
 \frac{1}{2m}\sum_{i=1}^n d_i \phi_i^{t_i-1}v_i^{t_i-1} x_i^t
+ \frac{1}{2m}\sum_{e\in \cE} \left( \phi_i^{t_i-1}v_i^{t_i-1} x_j^t+ \phi_j^{t_j-1}v_j^{t_j-1} x_i^t
\right)
}
\nonumber
\\
&& 
=
 \frac{1}{2m}\sum_{i=1}^n d_i \E{\phi_i^{t_i-1}v_i^{t_i-1} x_i^t}
+ \frac{1}{2m}\sum_{e\in \cE} \E{\left( \phi_i^{t_i-1}v_i^{t_i-1} x_j^t+ \phi_j^{t_j-1}v_j^{t_j-1} x_i^t
\right)
}
\nonumber
\\
&& 
\stackrel{L.\ref{Lm: another_independence}}{=}
 \frac{1}{4m}\sum_{i=1}^n d_i \E{\left(\phi_i^{t_i-1}v_i^{t_i-1}\right)^2 }
+ \frac{1}{2m}\sum_{e\in \cE} \E{\left( \phi_i^{t_i-1}v_i^{t_i-1} x_j^t+ \phi_j^{t_j-1}v_j^{t_j-1} x_i^t
\right)
}.
\label{Eq: noisy gossip second full}
\end{eqnarray}

Taking an expectation of the third term of \eqref{Eq: noise_no_exp} with respect to lastly added noise, the expression $\E{\left(w_i^{t_i}+w_j^{t_j}\right)^2 |x^t,e^t}$ is equal to
\begin{eqnarray*}
\E{\left(w_i^{t_i}+w_j^{t_j}\right)^2 |x^t,e^t}
&
\stackrel{\eqref{thewdefinition}}{=}&
\E{\left(
\phi_i^{t_i}v_i^{t_i}+
\phi_j^{t_j}v_j^{t_j}
-
\phi_i^{t_i-1}v_i^{t_i-1}-
\phi_j^{t_j-1}v_j^{t_j-1}
\right)^2 |x^t,e^t}
\\
&&  
=
\E{
\left(\phi_i^{t_i}v_i^{t_i}+
\phi_j^{t_j}v_j^{t_j}\right)^2
|x^t,e^t
}
\\
&&\quad -
2
\E{\overbrace{
\left(\phi_i^{t_i}v_i^{t_i}+\phi_j^{t_j}v_j^{t_j}\right)
}^{\E{\phi_i^{t_i}v_i^{t_i}+\phi_j^{t_j}v_j^{t_j}}=0}
\overbrace{
\left(\phi_i^{t_i-1}v_i^{t_i-1}+\phi_j^{t_j-1}v_j^{t_j-1}\right)
}^{\text{constant}}
|x^t,e^t
}
\\
&& \qquad+
 \overbrace{\E{
\left(    
\phi_i^{t_i-1}v_i^{t_i-1}+
\phi_j^{t_j-1}v_j^{t_j-1}
\right)^2
|x^t,e^t}}^{\text{constant}}
\\
&&
\stackrel{\eqref{Eq: squared expectation}}{=}
\sigma_i^2\phi_i^{2t_i}+\sigma^2_j\phi_j^{2t_j}+\left(
\phi_i^{t_i-1}v_i^{t_i-1}+
\phi_j^{t_j-1}v_j^{t_j-1}
\right)^2.
\end{eqnarray*}
Taking the expectation over $e^t$ we obtain:
\begin{eqnarray*}
\E{\left(w_i^{t_i}+w_j^{t_j}\right)^2 |x^t}&\stackrel{\eqref{TowerPropertyGossip}}{=}&\E{\E{\left(w_i^{t_i}+w_j^{t_j}\right)^2 |e^t, x^t}|x^t}
\\
&=&
\E{
\sigma_i^2\phi_i^{2t_i}+\sigma^2_j\phi_j^{2t_j}
+\left(
\phi_i^{t_i-1}v_i^{t_i-1}+
\phi_j^{t_j-1}v_j^{t_j-1}
\right)^2
|x^t}
\\
&=&
\frac{1}{m}\sum_{e \in \cE} \left( \sigma_i^2\phi_i^{2t_i}+\sigma^2_j\phi_j^{2t_j}\right)\\
&& \quad +
\frac1m \sum_{e\in \cE} \left(
\phi_i^{t_i-1}v_i^{t_i-1}+
\phi_j^{t_j-1}v_j^{t_j-1}
\right)^2
\\
&=&
\frac{1}{m}\sum_{i=1}^n d_i\sigma^2_i \phi_i^{2t_i}+
\frac1m \sum_{e\in \cE} \left(
\phi_i^{t_i-1}v_i^{t_i-1}+
\phi_j^{t_j-1}v_j^{t_j-1}
\right)^2.
\end{eqnarray*}
where in the last step we change the summation order.
 
Finally, taking the expectation with respect to the algorithm we get
\begin{eqnarray}
&&\E{\left(w_i^{t_i}+w_j^{t_j}\right)^2 }
\nonumber
\\
&&
\stackrel{\eqref{TowerPropertyGossip}}{=}
\E{\E{\left(w_i^{t_i}+w_j^{t_j}\right)^2|x^t }}
\nonumber
\\
&&
=
\frac{1}{m}\sum_{i=1}^n d_i\sigma^2_i \E{\phi_i^{2t_i}}+
\frac1m \sum_{e\in \cE}\E{ \left(
\phi_i^{t_i-1}v_i^{t_i-1}+
\phi_j^{t_j-1}v_j^{t_j-1}
\right)^2}
\nonumber
\\
&&
=
\frac{1}{m}\sum_{i=1}^n d_i\sigma^2_i \E{\phi_i^{2t_i}}
\nonumber
\\
&& \qquad+
\frac1m \sum_{e\in \cE}\E{ 
\left(\phi_i^{t_i-1}v_i^{t_i-1}\right)^2
+
\left(\phi_j^{t_j-1}v_j^{t_j-1}\right)^2
+ 2\phi_i^{t_i-1}v_i^{t_i-1}\phi_j^{t_j-1}v_j^{t_j-1}
}
\nonumber
\\
&&
\stackrel{(*)}{=}
\frac{1}{m}\sum_{i=1}^n d_i\sigma^2_i \E{\phi_i^{2t_i}}+
\frac1m \sum_{i=1}^n d_i\E{ 
\left(\phi_i^{t_i-1}v_i^{t_i-1}\right)^2}
+ \frac2m \sum_{e\in \cE}\E{\phi_i^{t_i-1}v_i^{t_i-1}\phi_j^{t_j-1}v_j^{t_j-1}
}
\nonumber
\\
\nonumber
&&
\stackrel{L.\ref{Lm: independence}}{=}
\frac{1}{m}\sum_{i=1}^n d_i\sigma^2_i \E{\phi_i^{2t_i}}+
\frac1m \sum_{i=1}^n d_i\E{ 
\left(\phi_i^{t_i-1}v_i^{t_i-1}\right)^2} 
\\
\nonumber
&& \qquad+ \frac2m \sum_{e\in \cE}\cancelto{0}{\E{\phi_i^{t_i-1}v_i^{t_i-1}}}\cancelto{0}{\E{\phi_j^{t_j-1}v_j^{t_j-1}}}
\nonumber
\\
&&
=
\frac{1}{m}\sum_{i=1}^n d_i\sigma^2_i \E{\phi_i^{2t_i}}+
\frac1m \sum_{i=1}^n d_i\E{ 
\left(\phi_i^{t_i-1}v_i^{t_i-1}\right)^2
\label{Eq: noisy gossip third full}
},
\end{eqnarray}
where in step (*) we change the summation order.

Combining \eqref{Eq: noise_no_exp} with \eqref{Eq: noisy gossip first full}, \eqref{Eq: noisy gossip second full} and \eqref{Eq: noisy gossip third full} we obtain
\begin{eqnarray*}
\E{ D(y^*)- D(y^{t+1}) } 
&\leq & 
\left( 1-\frac{\ac(\cG)}{2m}\right)\E{D(y^*)- D(y^{t})} 
-\frac{1}{4m}\sum_{i=1}^n d_i \E{\left(\phi_i^{t_i-1}v_i^{t_i-1}\right)^2 }
\\& &\qquad- \frac{1}{2m}\sum_{e\in \cE} \E{\left( \phi_i^{t_i-1}v_i^{t_i-1} x_j^t+ \phi_j^{t_j-1}v_j^{t_j-1} x_i^t
\right)}
\\
& & \qquad+\frac{1}{4m}\sum_{i=1}^n d_i \sigma^2_i \E{\phi_i^{2t_i}}+
\frac{1}{4m} \sum_{i=1}^n d_i\E{\left(\phi_i^{t_i-1}v_i^{t_i-1}\right)^2}
\\
&=&
\left( 1-\frac{\ac(\cG)}{2m}\right)\E{D(y^*)- D(y^{t})} +\frac{1}{4m}\sum_{i=1}^n d_i\sigma^2_i \E{\phi_i^{2t_i}}
\\& &\qquad-
\frac{1}{2m}\sum_{e\in \cE} \E{\left( \phi_i^{t_i-1}v_i^{t_i-1} x_j^t+ \phi_j^{t_j-1}v_j^{t_j-1} x_i^t
\right)},
\end{eqnarray*}
which concludes the proof.
\subsection{Proof of Theorem \ref{T: ng general convergence}}
\label{proofPrivacy8}
Let us present two lemmas that we use in the proof of Theorem~\ref{T: ng general convergence}.
\begin{lem}
After $t$ iterations of algorithm \algN\ we have
\begin{equation} \label{Eq: binomial}
\E{\phi_i^{2t_i}}=\left(1-\frac{d_i}{m} \left(1-\phi_i^2\right)\right)^{t}.
\end{equation}
\label{L: noisy binomial}
\end{lem}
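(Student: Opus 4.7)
The plan is to recognize that $t_i$, the number of iterations (out of $t$) in which node $i$ participated, is a binomial random variable, and then to compute the expectation via the binomial theorem.

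First, I would argue that $t_i \sim \mathrm{Binomial}(t, d_i/m)$. In each of the $t$ iterations of Algorithm~\algN, an edge $e \in \cE$ is selected uniformly at random, independently of all previous draws. Node $i$ is incident to exactly $d_i$ edges, so the probability that node $i$'s counter increments in any given iteration is $p_i \eqdef d_i/m$, and these events are independent across iterations. Hence $t_i$ is a sum of $t$ i.i.d.\ Bernoulli$(p_i)$ variables.

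Second, I would compute the expectation directly. Conditioning on $t_i$ and using the binomial probability mass function,
\begin{equation*}
\E{\phi_i^{2t_i}} = \sum_{k=0}^{t} \binom{t}{k} p_i^k (1-p_i)^{t-k} \phi_i^{2k} = \sum_{k=0}^{t} \binom{t}{k} (p_i \phi_i^2)^k (1-p_i)^{t-k}.
\end{equation*}
Applying the binomial theorem yields
\begin{equation*}
\E{\phi_i^{2t_i}} = \bigl(p_i \phi_i^2 + (1-p_i)\bigr)^{t} = \Bigl(1 - \tfrac{d_i}{m}\bigl(1-\phi_i^2\bigr)\Bigr)^{t},
\end{equation*}
which is precisely the claim.

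There is no real obstacle here; the main observation is just that the edge selection rule makes $t_i$ binomial, which reduces the statement to an exercise in moment generating functions (equivalently, the binomial theorem). An alternative (and essentially equivalent) route would be an induction on $t$: the recursion $\E{\phi_i^{2t_i^{(t+1)}}} = \frac{d_i}{m}\phi_i^2 \E{\phi_i^{2t_i^{(t)}}} + (1-\frac{d_i}{m}) \E{\phi_i^{2t_i^{(t)}}}$ follows from conditioning on whether node $i$ is touched at step $t+1$ (and using independence of the edge draw from prior history), and telescoping this recursion gives the same closed form.
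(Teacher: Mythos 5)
Your proof is correct and follows essentially the same route as the paper: both identify $t_i$ as Binomial$(t, d_i/m)$ (the paper uses this implicitly when writing out $\Prob(t_i=j)$) and then collapse the sum via the binomial theorem. Your write-up merely makes the independence/incidence argument behind the binomial distribution explicit, which the paper leaves unstated.
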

\begin{proof}
\begin{eqnarray*}
\E{\phi_i^{2t_i}}&=&\sum_{j=0}^t \Prob(t_i=j)\phi_i^{2j}
\quad = \quad
\sum_{j=0}^t\binom{t}{j}\left(\frac{m-d_i}{m} \right)^{t-j}\left(\frac{d_i}{m}\phi_i^2\right)^j
\\
&=&
\left(\frac{m-d_i}{m} +\frac{d_i}{m}\phi_i^2\right)^{t}
\quad = \quad
\left(1-\frac{d_i}{m} \left(1-\phi_i^2\right)\right)^{t}.
\end{eqnarray*}
\end{proof}
\begin{lem}
Random variables $\phi_i^{t_i-1}v_i^{t_i-1} $ and $x_j^t$ are nonegatively correlated, i.e. 
\begin{equation}\label{Eq: positive corelation}
\E{\phi_i^{t_i-1}v_i^{t_i-1}x_j^t}\geq 0.
\end{equation}
\label{L: noisy positive correlation}
\end{lem}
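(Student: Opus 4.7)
The plan is to reduce the claim to a coefficient-nonnegativity statement by conditioning on the sequence of edges chosen and exploiting the linearity of the algorithm. Let $\mathcal{S}^t = (e^0, e^1, \ldots, e^{t-1})$ denote the schedule of edges selected up to time $t$. Given $\mathcal{S}^t$, the counters $t_1, \ldots, t_n$ become deterministic; write $K = t_i$ for the count at node $i$ at time $t$. The Gaussian samples $\{v_\ell^k\}$ are mutually independent, independent of $\mathcal{S}^t$, and independent of the initial data $c$. Because every step of Algorithm~\algN\ (noise addition followed by pairwise averaging) is linear in $(c, v)$, conditionally on $\mathcal{S}^t$ we may expand
\begin{equation*}
x_j^t \;=\; \sum_{\ell} a_{j\ell}(\mathcal{S}^t)\, c_\ell \;+\; \sum_{\ell=1}^n \sum_{k=0}^{t_\ell - 1} b_{j,\ell,k}(\mathcal{S}^t)\, v_\ell^k,
\end{equation*}
with deterministic coefficients. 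Using $\E{v_\ell^k}=0$, $\Var(v_\ell^k)=\sigma_\ell^2$, and independence across $(\ell,k)$,
\begin{equation*}
\E{\phi_i^{K-1} v_i^{K-1}\, x_j^t \,\big|\, \mathcal{S}^t} \;=\; \phi_i^{K-1}\, b_{j,i,K-1}(\mathcal{S}^t)\, \sigma_i^2 .
\end{equation*}
Thus it suffices to show that the coefficient $b_{j,i,K-1}(\mathcal{S}^t)$ of the \emph{most recent} noise sample $v_i^{K-1}$ in $x_j^t$ is nonnegative for every $j$ and every schedule (the case $K=0$ being trivial since $v_i^{-1}=0$).

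The key observation is that the \emph{latest} noise sample at node $i$ has not yet been subtracted. Indeed, let $s^\star \leq t-1$ be the iteration at which node $i$ was selected for the $K$-th time. At this iteration, $v_i^{K-1}$ is freshly drawn and thus has coefficient zero in $x_\ell^{s^\star}$ and in every other noise term $w_\ell^{\cdot}$ being added at step $s^\star$. The averaging update with the other endpoint $\ell$,
\begin{equation*}
x_i^{s^\star+1} = x_\ell^{s^\star+1} = \tfrac{1}{2}\bigl( x_i^{s^\star} + w_i^{K-1} + x_\ell^{s^\star} + w_\ell^{t_\ell-1} \bigr),
\end{equation*}
therefore assigns coefficient $\phi_i^{K-1}/2$ to $v_i^{K-1}$ at nodes $i$ and $\ell$, and coefficient $0$ at every other node. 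Because $K$ is the final count of node $i$ at time $t$, node $i$ is not selected again in iterations $s^\star+1, \ldots, t-1$, so $v_i^{K-1}$ is never subtracted from the system in this time window. All subsequent updates consist of replacing two entries by their (unweighted) average, an operation which preserves nonnegativity of each coefficient entrywise. By induction on iterations $s^\star+1, \ldots, t-1$, we conclude $b_{j,i,K-1}(\mathcal{S}^t) \geq 0$ for every $j$ and every schedule.

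Combining the two observations,
\begin{equation*}
\E{\phi_i^{t_i-1} v_i^{t_i-1}\, x_j^t} \;=\; \E{\E{\phi_i^{K-1} v_i^{K-1}\, x_j^t \,\big|\, \mathcal{S}^t}} \;=\; \sigma_i^2\, \E{\phi_i^{K-1}\, b_{j,i,K-1}(\mathcal{S}^t)} \;\geq\; 0,
\end{equation*}
which proves \eqref{Eq: positive corelation}. The only genuinely nontrivial step is the coefficient-tracking argument in the second paragraph; in particular, one must be careful that the \emph{same} sample $v_i^{K-1}$ appearing in $\phi_i^{t_i-1} v_i^{t_i-1}$ also appears with nonnegative coefficient in $x_j^t$, which is why we index by the random count $K$ and verify that this latest noise has only been spread by averaging and never yet cancelled by a subsequent subtraction at node $i$. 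Older noises $v_i^{k}$ with $k < K-1$ could in principle contribute negative coefficients to $x_j^t$, but those are orthogonal to $v_i^{K-1}$ in $L^2$ and therefore drop out of the expectation.
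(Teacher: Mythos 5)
Your proof is correct, and it rests on the same core observation as the paper's: the most recent noise sample $\phi_i^{t_i-1}v_i^{t_i-1}$ enters the system with a positive sign at the iteration where node $i$ is selected for the $t_i$-th time, is never subtracted before iteration $t$ (that subtraction would occur only at the \emph{next} selection of node $i$, which by definition of $t_i$ does not happen before time $t$), and is thereafter only spread by unweighted pairwise averaging, which preserves nonnegativity of its coefficient at every node. The execution, however, differs. The paper conditions on a single indicator $R_{i,j}$ (whether that noise was injected when edge $(i,j)$ itself was chosen), argues that the conditional expectation is nonnegative on the event $R_{i,j}=1$, and asserts that it vanishes on the complement. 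You instead condition on the entire edge schedule, expand $x_j^t$ linearly in the initial data and all noise samples, and reduce the claim to nonnegativity of the deterministic coefficient of $v_i^{t_i-1}$ in $x_j^t$, which you establish by induction over the averaging steps. Your route is the more complete one: the paper's claim that $\E{\phi_i^{t_i-1}v_i^{t_i-1}x_j^t \;|\; R_{i,j}=0}=0$ is left unjustified and is not literally true, since noise injected on an edge $(i,\ell)$ with $\ell\neq j$ can still propagate to node $j$ through subsequent averagings; your coefficient-tracking shows that this contribution is nonnegative rather than zero, which is exactly what the lemma needs. What the paper's version buys is brevity; what yours buys is a rigorous treatment of the propagation case and a clean isolation (via orthogonality of the independent Gaussians) of why the older, possibly negatively-weighted noise terms $v_i^k$, $k<t_i-1$, drop out of the expectation.
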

\begin{proof}
Denote $R_{i,j}$ to be a random variable equal to 1 if the noise $w_i^{t_i}$ was added to the system when edge $(i,j)$ was chosen and equal to 0 otherwise. We can rewrite the expectation in the following way:
\begin{eqnarray*}
\E{\phi_i^{t_i-1}v_i^{t_i-1}x_j^t}&=&
\overbrace{\E{\phi_i^{t_i-1}v_i^{t_i-1}x_j^t \;|\; R_{i,j}=1}}^{\geq 0}\Prob(R_{i,j}=1)\\
\quad && +\overbrace{\E{\phi_i^{t_i-1}v_i^{t_i-1}x_j^t \;|\; R_{i,j}=0}}^{0}\Prob(R_{i,j}=0) \quad \geq \quad 0.
\end{eqnarray*}
The inequality  $
\E{\phi_i^{t_i-1}v_i^{t_i-1}x_j^t \;|\; R_{i,j}=1}\geq 0
$
holds due to the fact that $\phi_i^{t_i-1}v_i^{t_i-1}$ was added to $x_j$ with the positive sign.
\end{proof}
Combining \eqref{Eq: noise gossip iteration bound final} with the results of Lemmas \ref{L: noisy binomial} and \ref{L: noisy positive correlation} we obtain:
\begin{eqnarray*}
\E{ D(y^*)- D(y^{t+1}) } 
&\overset{\eqref{Eq: noise gossip iteration bound final}}{\leq}&
\left( 1-\frac{\ac(\cG)}{2m}\right)\E{D(y^*)- D(y^{t})} +\frac{1}{4m}\sum_{i=1}^n d_i \sigma^2_i \E{\phi_i^{2t_i}}
\\
& &\qquad-
\frac{1}{2m}\sum_{e\in \cE} \E{\left( \phi_i^{t_i-1}v_i^{t_i-1} x_j^t+ \phi_i^{t_j-1}v_j^{t_j-1} x_i^t
\right)}
\\
&\stackrel{\eqref{Eq: positive corelation}}{\leq} &
\left( 1-\frac{\ac(\cG)}{2m}\right)\E{D(y^*)- D(y^{t})} +\frac{1}{4m}\sum_{i=1}^n d_i\sigma^2_i  \E{\phi_i^{2t_i}}
\\
&\stackrel{\eqref{Eq: binomial}}{=} &
\left( 1-\frac{\ac(\cG)}{2m}\right)\E{D(y^*)- D(y^{t})} 
\\
&&\qquad+
\frac{1}{4m}\sum_{i=1}^n d_i\sigma^2_i  \left(1-\frac{d_i}{m}\left(1-\phi_i^2\right) \right)^{t}
\\
&=&
\left( 1-\frac{\ac(\cG)}{2m}\right)\E{D(y^*)- D(y^{t})} +\frac{\sum\left(d_i\sigma_i^2\right)}{4m}\psi^t.
\end{eqnarray*}

The recursion above gives us inductively the following
\begin{equation*}
\E{ D(y^*)- D(y^{k}) } \leq  \rho^k \left( D(y^*)- D(y^{0}) \right)  + \frac{\sum\left(d_i\sigma_i^2\right)}{4m}\sum_{t=1}^k \rho^{k-t}\psi^{t},
\end{equation*}
which concludes the proof of the theorem.
\subsection{Proof of Corollary \ref{C: noisy gossip special}}
\label{proofPrivacy9}
Note that we have
\begin{eqnarray*}
\psi^t
&=&\frac{1}{\sum_{i=1}^n d_i\sigma_i^2 }
\sum_{i=1}^n d_i \sigma^2_i \left(1-\frac{d_i}{m}\left(1-\left(1-\frac{\gamma}{d_i}\right)\right) \right)^{t}\\
&=&
\frac{1}{\sum_{i=1}^n d_i\sigma_i^2 }
\sum_{i=1}^n d_i \sigma^2_i\left(1-\frac{\gamma}{m}\right)^{t}
\quad = \quad \left(1-\frac{\gamma}{m}\right)^{t}.
\end{eqnarray*}

In view of Theorem~\ref{T: ng general convergence}, this gives us the following:
\begin{eqnarray*}
\E{ D(y^*)- D(y^{k}) } 
&\leq &
 \left( D(y^*)- D(y^{0}) \right) \rho^k + \frac{\sum\left(d_i\sigma_i^2\right)}{4m}\sum_{t=1}^k \rho^{k-t}\psi^{t}
\\
&\leq & 
\left( D(y^*)- D(y^{0}) \right) \rho^k + \frac{\sum\left(d_i\sigma_i^2\right)}{4m}\sum_{t=1}^k \rho^{k-t}\left(1-\frac{\gamma}{m}\right)^{t}
\\
&\leq &
\left( D(y^*)- D(y^{0}) \right) \rho^k + \frac{\sum\left(d_i\sigma_i^2\right)}{4m}k \max\left(\rho,1-\frac{\gamma}{m}\right)^k
\\
&\leq  &
\left( D(y^*)- D(y^{0})+\frac{\sum\left(d_i\sigma_i^2\right)}{4m}k \right) \max\left(\rho,1-\frac{\gamma}{m}\right)^k.
\end{eqnarray*}

\chapter{Conclusion and Future Work}
\label{ChapterConclusion}
\section{Conclusions}
In this thesis we studied the design and analysis of novel efficient randomized iterative methods for solving large scale linear systems, stochastic quadratic optimization problems, the best approximation problem and quadratic optimization problems. Using these methods we also proposed and analyzed efficient gossip protocols for solving the average consensus problem on large scale networks.

In Chapter~\ref{ChapterMomentum}, we studied the convergence of several stochastic optimization algorithms enriched with \textit{heavy ball momentum} for solving  stochastic optimization problems of special structure.  We proved global, non-asymptotic linear convergence rates of all of these methods as well as accelerated linear rate for the case of the norm of expected iterates. We also introduced a new momentum strategy called \textit{stochastic momentum} which is beneficial in the case of sparse data, and proved linear convergence in this setting. We corroborated our theoretical results with extensive experimental testing.

In Chapter~\ref{ChapterInexact}, we proposed and analyzed \textit{inexact} variants of several stochastic algorithms for solving quadratic optimization problems and linear systems. We provided linear convergence rate under several assumptions on the inexactness error. The proposed methods require more iterations than their exact variants to achieve the same accuracy. However, as we show through our numerical evaluations, the inexact algorithms require significantly less time to converge. 

In Chapter~\ref{ChapterGossip}, we presented a general framework for the analysis and design of \textit{randomized gossip} algorithms for solving the average consensus problem.  Using tools from numerical linear algebra and the area of randomized projection methods for solving linear systems, we proposed novel serial, block and accelerated gossip protocols for solving the average consensus and weighted average consensus problems. 

In Chapter~\ref{ChapterPrivacy}, we addressed the average consensus problem via novel asynchronous \textit{privacy preserving} randomized gossip algorithms. In particular, we proposed three different algorithmic tools for the protection of the initial private values of the nodes. The first two proposed algorithms ``Private Gossip via Binary Oracle" and ``Private Gossip via  $\epsilon$-Gap Oracle" are based on the same idea of weakening the oracle used in the gossip update rule. Instead of sharing their exact values, in these two protocols the chosen pair of nodes of each gossip step provide only categorical (or even binary) information to each other.  In the third protocol, ``Private Gossip via Controlled Noise Insertion", we systematically inject and withdraw noise throughout the iterations, so as to ensure convergence to the average consensus value, and at the same time protect the private information of the nodes. For all proposed protocols, we provide explicit convergence rates and evaluate practical convergence on common simulated network topologies.  

\section{Future Work}

Perhaps the most exciting direction for future work is to extend the analysis of the proposed randomized iterative methods to more general settings. In particular, the more natural extension of our results is the analysis of heavy ball momentum variants and inexact variants of the proposed methods (SGD, SN, SPP, SPM and SDSA) in the case of general convex or strongly convex functions. 

From numerical linear algebra viewpoint, we believe that the proposed randomized iterative methods of Chapters~\ref{ChapterMomentum} and~\ref{ChapterInexact} could have great potential to make a practical difference to iterative solvers for large-scale linear systems. In this aspect, a future effort needs to be devoted to the practical development and implementations of the algorithms. For example, one promising direction is to use new sophisticated sketching matrices $\bS$, such as the Walsh-Hadamard matrix \cite{pilanci2016iterative, lu2013faster} in the update rules of the proposed methods.

In this thesis we focused on algorithms with a fixed constant step-size. An interesting extension will be to study the effect of decreasing or adaptive choice for the relaxation parameter. This might provide novel insights, even in the case of quadratic functions and  (not necessarily consistent) linear systems. As we have mentioned in several parts of the thesis, the obtained results hold under the exactness condition, which as we explained, is very weak, allowing for virtually arbitrary distributions $\cD$ from which the random matrices are drawn. A different future direction will be the design of optimized distributions in order to improve further the convergence rates and the overall complexity of the proposed algorithms. 

In addition, we believe that the gossip protocols proposed in Chapters~\ref{ChapterGossip} and \ref{ChapterPrivacy} would be particularly useful in the development of efficient decentralized protocols.

Using the novel framework presented in this thesis, many popular projection methods can be interpreted as gossip algorithms when used to solve linear systems encoding the underlying network. This can lead to the development of novel distributed protocols for average consensus.  

Our work on gossip algorithms is amenable to further extensions. For instance, the proposed novel gossip protocols (block, accelerated, privacy-preserving) can be extended to the more general setting of multi-agent consensus optimization, where the goal is to minimize the average of convex or non-convex functions $(1/n) \sum_{i=1}^n f_i(x)$ in a decentralized way. Such protocols will be particularly useful in settings where the data describing a given optimization problem is so big that it becomes impossible to store it on a single machine. These situations often arise in modern machine learning and deep learning applications.

\bibliographystyle{plain}
\bibliography{ms}%biblioInexact}

\appendix
\chapter{Notation Glossary}
\section{Notation used in Chapters~\ref{ChapterIntroduction}, \ref{ChapterMomentum} and \ref{ChapterInexact}}
\label{ncasjkxasoa}

\begin{table}[!h]
\begin{center}
\begin{tabular}{|c|l|c|}
 \hline
 \multicolumn{2}{|c|}{{\bf The Basics}}\\
 \hline
$\mA, b$    & $m \times n$ matrix and $m\times 1$ vector defining the system $\mA x =b$\\
$\cL$ &  $\{x\;:\; \mA x = b\}$ (solution set of the linear system) \\
$\mB$    & $n \times n$ symmetric positive definite matrix \\
$\langle x, y \rangle_{\mB}$ & $x^\top \mB y$ ($\mB$-inner product) \\
$\|x\|_{\mB}$ & $\sqrt{\langle x, x \rangle_{\mB}}$ ($\mB$-norm)  \\
$\mS$    & a random real matrix with $m$ rows  \\
$\cD$    & distribution from which matrix $\mS$ is drawn ($\mS\sim \cD$) \\ 
$\mH$ & $\mS (\mS^\top \mA \mB^{-1} \mA^\top \mS)^{\dagger} \mS^\top$ \\
$\mZ$ & $\mA^\top \mH \mA$\\
$\range{\mM}$ & range space of matrix $\mM$  \\
$\kernel{\mM}$ & null space of matrix $\mM$  \\
$\Exp[\cdot]$ & expectation\\
 \hline
 \multicolumn{2}{|c|}{{\bf Projections}}\\
  \hline
$\Pi_{\cL,\mB}(x)$ & projection of $x$ onto $\cL$ in the $\mB$-norm\\
$\mB^{-1}\mZ$ & projection matrix, in the $\mB$-norm, onto $\range{\mB^{-1}\mA^\top \mS}$ \\
 \hline
 \multicolumn{2}{|c|}{{\bf Optimization} }\\
 \hline
$\cX$ &  set of minimizers of $f$  \\ 
  $x^*$ & a point in $\cL$\\
$f_{\mS}$, $\nabla f_{\mS}$, $\nabla^2 f_{\mS}$ & stochastic function, its gradient and Hessian \\
$\cL_{\mS}$ & $\{x\;:\; \mS^\top \mA x = \mS^\top b\}$ (set of minimizers of $f_{\mS}$)  \\
$f$ & $\Exp[f_{\mS}]$\\
$\nabla f$ & gradient of $f$ with respect to  the $\mB$-inner product \\
$\nabla^2 f$ & $\mB^{-1}\Exp[\mZ]$ (Hessian of $f$ in the $\mB$-inner product) \\
 \hline
 \multicolumn{2}{|c|}{{\bf Eigenvalues} }\\
 \hline
 $\mW$ & $\mB^{-1/2}\Exp[\mZ]\mB^{-1/2}$ (psd matrix with the same spectrum as $\nabla^2 f$)\\
$\lambda_1,\dots,\lambda_n$ & eigenvalues of $\mW$\\
$\lambda_{\max}, \lambda_{\min}^+$ & largest and smallest nonzero eigenvalues of $\mW$\\
 \hline
 \multicolumn{2}{|c|}{{\bf Algorithms} }\\
 \hline
$\omega$ & relaxation parameter / stepsize  \\
$\beta$ & heavy ball momentum parameter  \\
$\gamma$ & stochastic heavy ball momentum parameter  \\
$\epsilon^k$ & inexactness error \\
$q$ & inexactness parameter\\
$\rho$ & $1-\omega(2-\omega) \lambda_{\min}^+$\\
\hline
\end{tabular}
\end{center}
\caption{Frequently used notation appeared in Chapters~ \ref{ChapterIntroduction}, \ref{ChapterMomentum} and \ref{ChapterInexact}}
\label{tbl:notationInexact}
\end{table}

\newpage

\section{Notation used in Chapter~\ref{ChapterGossip}}
\label{NotationTable}

\begin{table}[!h]
\begin{center}
\begin{tabular}{|c|l|c|}
 \hline
 \multicolumn{2}{|c|}{{\bf The Basics}}\\
 \hline
$\mA, b$    & $m \times n$ matrix and $m\times 1$ vector defining the system $\mA x =b$\\
$\cL$ &  $\{x\;:\; \mA x = b\}$ (solution set of the linear system) \\
$\mB$    & $n \times n$ symmetric positive definite matrix \\
$\langle x, y \rangle_{\mB}$ & $x^\top \mB y$ ($\mB$-inner product) \\
$\|x\|_{\mB}$ & $\sqrt{\langle x, x \rangle_{\mB}}$ ($\mB$-norm)  \\
$\mM^{\dagger}$ & Moore-Penrose pseudoinverse of matrix $\mM$  \\
$\mS$    & a random real matrix with $m$ rows  \\
$\cD$    & distribution from which matrix $\mS$ is drawn ($\mS\sim \cD$) \\ 
$\mH$ & $\mS (\mS^\top \mA \mB^{-1} \mA^\top \mS)^{\dagger} \mS^\top$ \\
$\mZ$ & $\mA^\top \mH \mA$\\
$\range{\mM}$ & range space of matrix $\mM$  \\
${\rm Null}({\mM})$ & null space of matrix $\mM$  \\
$\Prob(\cdot)$ & probability of an event\\
$\Exp[\cdot]$ & expectation\\
 \hline
 \multicolumn{2}{|c|}{{\bf Projections}}\\
  \hline
$\Pi_{\cL,\mB}(x)$ & projection of $x$ onto $\cL$ in the $\mB$-norm\\
$\mB^{-1}\mZ$ & projection matrix, in the $\mB$-norm, onto $\range{\mB^{-1}\mA^\top \mS}$ \\
 \hline
 \multicolumn{2}{|c|}{{{\bf Graphs}} }\\
 \hline
$\cG = (\cV,\cE)$ & an undirected graph with vertices $\cV$ and edges $\cE$ \\
$n$ & $=|\cV|$ (number of vertices)\\
$m$ & $=|\cE|$ (number of edges) \\
$e = (i,j) \in \cE$ & edge of $\cG$ connecting nodes $i,j\in \cV$  \\
$d_i$ & degree of node $i$  \\
$c \in \R^n$ & $=(c_1,\dots,c_n)$; a vector of private values stored at the nodes of $\cG$  \\
$\bar{c}$ &$\bar{c}=\frac{\sum_i^n \bB_{ii} c_i}{\sum_i^n \bB_{ii}}$ (the weighted average of the private values)  \\
$\bQ \in \R^{m\times m}$ & Incidence matrix of $\cG$  \\
$\bL \in \R^{n\times n}$ & $=\bQ^\top \bQ$ (Laplacian matrix of $\cG$)  \\
$\bD \in \R^{n\times n}$ & $=\text{\textbf{Diag}}(d_1,d_2,\dots, d_n)$ (Degree matrix of $\cG$)  \\
$\bL^{rw} \in \R^{n\times n}$ & $=\bD^{-1} \bL$ (random walk normalized Laplacian matrix of $\cG$)  \\
$\bL^{sym} \in \R^{n\times n}$ & $=\bD^{-1/2} \bL \bD^{-1/2}$ (symmetric normalized Laplacian matrix of $\cG$)  \\
$\ac(\cG)$ & $=\lambda_{\min}^+(\bL)$ (algebraic connectivity of $\cG$)  \\
 \hline
 \multicolumn{2}{|c|}{{\bf Eigenvalues} }\\
 \hline
 $\mW$ & $\mB^{-1/2}\Exp[\mZ]\mB^{-1/2}$ (psd matrix)\\
$\lambda_1,\dots,\lambda_n$ & eigenvalues of $\mW$\\
$\lambda_{\max}, \lambda_{\min}^+$ & largest and smallest nonzero eigenvalues of $\mW$\\
 \hline
 \multicolumn{2}{|c|}{{\bf Algorithms} }\\
 \hline
$\omega$ & relaxation parameter / stepsize  \\
$\beta$ & heavy ball momentum parameter \\
$\rho$ & $1-\omega(2-\omega) \lambda_{\min}^+$\\
\hline
\end{tabular}
\end{center}
\caption{Frequently used notation appeared in Chapter~\ref{ChapterGossip}.}
\label{tbl:notation}
\end{table}

\newpage

\section{Notation used in Chapter~\ref{ChapterPrivacy}}

\begin{table}[!h]
\begin{center}
\begin{tabular}{|c|l|c|}
 \hline
 \multicolumn{2}{|c|}{{\bf Graphs} }\\
 \hline
$\cG = (\cV,\cE)$ & an undirected graph with vertices $\cV$ and edges $\cE$ \\
$n$ & $=|\cV|$ (number of vertices)\\
$m$ & $=|\cE|$ (number of edges) \\
$e = (i,j) \in \cE$ & edge of $\cG$ connecting nodes $i,j\in \cV$  \\
$d_i$ & degree of node $i$  \\
$c \in \R^n$ & $=(c_1,\dots,c_n)$; a vector of private values stored at the nodes of $\cG$  \\
$\bar{c}$ &$\bar{c}=\frac{1}{n}\sum_i c_i$ (the average of the private values)  \\
$\bA \in \R^{m\times n}$ &  \\
$\bL \in \R^{m\times m}$ & $=\bA \bA^\top$ (Laplacian of $\cG$)  \\
$\ac(\cG)$ & $=\lambda_{\min}^+(\bL)$ (algebraic connectivity of $\cG$)  \\
$\beta(\cG)$ & $=n/\ac(\cG)$  \\

\hline
\multicolumn{2}{|c|}{{\bf Randomness}}\\
\hline
$\Exp$ & expectation  \\
$\Prob$ & probability  \\
$\Var$ & variance  \\
$v_i^k$ & random variable from $N(0,\sigma_i^2)$   \\

\hline
\multicolumn{2}{|c|}{{\bf Optimization} }\\
\hline
$P: \R^n \to \R$ & primal objective function   \\
$D: \R^m \to \R$ & dual objective function  (a concave quadratic)  \\
$y \in \R^m$ & dual variable  \\
$y^* \in \R^m$ & optimal dual variable  \\
$x \in \R^n$ & primal variable  \\
$x^* \in \R^n$ & $=\bar{c}\ones$ (optimal primal variable)  \\
$\ones$ & a vector of all ones in $\R^n$  \\

\hline
\multicolumn{2}{|c|}{{\bf Summation} }\\
\hline

$\sum_{i}\sum_j$ & sum through all ordered pairs of $i$ and $j$ \\
$\sum_{(i,j)}$ & sum through all unordered pairs of $i$ and $j$ \\
$\sum_{(i,j) \in \cE}$ & sum through all edges of $\cG$ \\

\hline
\end{tabular}
\end{center}
\caption{Frequently used notation appeared in Chapter~\ref{ChapterPrivacy}.}
\label{tbl:notationPrivacy}
\end{table}

\end{document}